\title{\btitle}
\newcommand{\projtens}{\mathbin{\widehat{\otimes}}}
\newcommand{\convol}{\ast}
\newcommand{\projmodtens}[1]{\mathbin{\widehat{\otimes}}_{#1}}
\newcommand{\isom}[1]{\mathop{\mathbin{\cong}}\limits_{#1}}
\begin{document}

\frontmatter

\setstretch{1.3}

\fancyhead{}

\rhead{\thepage}

\lhead{}

\pagestyle{fancy}

\newcommand{\HRule}{\rule{\linewidth}{0.5mm}}

\hypersetup{pdftitle={\btitle}}
\hypersetup{pdfsubject=\subjectname}
\hypersetup{pdfauthor=\authornames}
\hypersetup{pdfkeywords=\keywordnames}


\begin{titlepage}

\begin{center}

\vfill\vfill\vfill\vfill\vfill\vfill

\begin{flushleft}
\Large {\authornames}
\end{flushleft}

\vfill\vfill\vfill\vfill

{\huge \bfseries \btitle}\\[0.4cm]

\vfill\vfill\vfill\vfill\vfill\vfill

{\large \today}\\[4cm]

\vfill
\end{center}

\end{titlepage}


\pagestyle{empty}

\null\vfill

\textit{Never trust any result proved after 11 PM.}

\begin{flushright}
Professional secret
\end{flushright}

\vfill\vfill\vfill\vfill\vfill\vfill\null{}

\clearpage 


\addtotoc{Abstract}

\abstract{\addtocontents{toc}{\vspace{1em}}}

In this book we study metric and topological versions of projectivity
injectivity and flatness of Banach modules over Banach algebras. These two
non-standard versions of Banach homology theories are studied in parallel under
unified approach.

Chapter 1 gives background required for our studies. In paragraph 1.1 we collect
all necessary facts on category theory, topology and measure theory. Paragraph
1.2 contains a brief introduction into Banach structures. Here we discuss Banach
spaces, Banach algebras and Banach modules. In paragraph 1.3 we give a short
introduction into relative Banach homology.

In chapter 2 we establish general properties of projective injective and flat
modules. In some cases we give complete characterizations of such modules.
Results of this chapter are extensively used in chapter 3 when dealing with
specific modules of analysis. Let us discuss contents of this chapter in more
detail. In paragraph 2.1 we derive basic properties of projective injective and
flat modules. We also study different
constructions that preserve homological triviality of Banach modules. These
results are used to characterize projectivity and flatness of cyclic modules. We
also give necessary conditions for projectivity of left ideals of Banach
algebras. As the consequence we describe projective ideals of commutative Banach
algebras that admit bounded approximate identities. Paragraph 2.2 is devoted to
Banach geometric properties of homologically trivial modules. We characterize
projective injective and flat annihilator modules and establish their strong
relation to projective injective and flat Banach spaces. Then we give several
examples confirming that homologically trivial module and its Banach algebra
have similar Banach geometric properties. Examples include the property of being
an $\mathscr{L}_1^g$-space, the Dunford-Pettis property and the l.u.st.
property. Paragraph 2.3 is quite short. Here we list conditions under which
projectivity injectivity and flatness are preserved under transition between
modules over algebra to modules over ideal. Finally, we give a necessary and
sufficient conditions of topological flatness of a Banach module and necessary
condition of injectivity of two-sided ideals.

In chapter 3 we apply general results to specific modules of analysis. In
paragraph 3.1 we investigate projectivity injectivity and flatness of ideals of
$C^*$-algebras. We describe projective left ideals of $C^*$-algebras and give a
criterion for injectivity of $AW^*$-algebras. These characterizations are
indispensable in description of homologically trivial modules over algebras of
bounded and compact operators on a Hilbert space. We perform similar research
for commutative case of algebras of bounded and vanishing functions on discrete
sets and locally compact Hausdorff spaces. In paragraph 3.2 we proceed to study
of standard modules of harmonic analysis. Due to specific Banach geometric
structure of convolution algebra and measure algebra we easily show that most of
standard modules of harmonic analysis are homologically non-trivial. The most
intriguing result of the paragraph is non-projectivity of convolution algebra of
a non-discrete group.

\clearpage







\pagestyle{fancy}

\lhead{\emph{Contents}}

\tableofcontents

\mainmatter{}

\pagestyle{fancy}



\chapter{Preliminaries}\label{ChapterPreliminaries} 

\lhead{Chapter 1. \emph{Preliminaries}} 

In what follows, we present some parts in parallel fashion by listing the
respective options in order, enclosed and separate like this:
$\langle$~$\ldots$ / $\ldots$~$\rangle$. For example: a real number $x$ is
$\langle$~positive / non negative~$\rangle$ if $\langle$~$x>0$ / $x\geq
0$~$\rangle$. Sometimes one of the parts might be empty. We use symbol $:=$ for
equality by definition.

We use the following standard notation for some commonly used sets of numbers:
$\mathbb{C}$ denotes the complex numbers, $\mathbb{R}$ denotes the real numbers,
$\mathbb{Z}$ denotes the integers, $\mathbb{N}$ denotes the natural numbers,
$\mathbb{N}_n$ denotes the set of first $n$ natural numbers, $\mathbb{R}_+$
denotes the set of non negative real numbers, $\mathbb{T}$ denotes the set of
complex numbers of modulus $1$, finally, $\mathbb{D}$ denotes the set of complex
numbers with modulus less than $1$. For $z\in\mathbb{C}$ the symbol
$\overline{z}$ stands for the complex conjugate number.

For a given map $f:M\to M'$ and subset $\langle$~$N\subset M$ / $N'\subset M'$
such that $\operatorname{Im}(f)\subset N'$~$\rangle$ by $\langle$~$f|_N$ /
$f|^{N'}$~$\rangle$ we denote the  $\langle$~restriction of $f$ onto $N$ /
corestriction of $f$ onto $N'$~$\rangle$, that is $\langle$~$f|_N:N\to
M':x\mapsto f(x)$ / $f|^{N'}:M\to N':x\mapsto f(x)$~$\rangle$. The indicator
function of a subset $N$ of the set $M$ is denoted by $\chi_{N}$, so that
$\chi_N(x)=1$ for $x\in N$ and $\chi_N(x)=0$ for $x\in M\setminus N$. We also
use the shortcut $\delta_x=\chi_{ \{x \}}$ where $x\in M$. By $\mathcal{P}(M)$
we denote the set of all subsets of $M$, and $\mathcal{P}_0(M)$ stands for the
set of all finite subsets of $M$. The symbol $M^N$ stands for the set of all
functions from $N$ to $M$. By $\operatorname{Card}(M)$ we denote the cardinality
of $M$. By $\aleph_0$ we denote the cardinality of $\mathbb{N}$.


\section{
  Broad foundations
}\label{SectionBroadFoundations} 


\subsection{
  Categorical language
}\label{SubSectionCategoricalLanguage}

Here we recall some basic facts and definitions from category theory and fix
notation we shall use. We assume that our reader is familiar with such basics of
category theory as category, functor, morphism. Otherwise see
[\cite{HelLectAndExOnFuncAn}, chapter 0] for a quick introduction or
[\cite{KashivShapCatsAndSheavs}, chapter 1] for more details.

For a given category $\mathbf{C}$ by $\operatorname{Ob}(\mathbf{C})$ we denote
the class of its objects. The symbol $\mathbf{C}^o$ stands for the opposite
category. For a given objects $X$ and $Y$ by 
$\operatorname{Hom}_{\mathbf{C}}(X, Y)$ 
we denote the set of morphisms from $X$ to $Y$. Often we shall write
$\phi:X\to Y$ instead of $\phi\in\operatorname{Hom}_{\mathbf{C}}(X,Y)$. A
morphism $\phi:X\to Y$ is called $\langle$~retraction / coretraction~$\rangle$
if it has a $\langle$~right / left~$\rangle$ inverse morphism. Morphism $\phi$
is called an isomorphism if it is retraction and coretraction. Usually we shall
express existence of isomorphism between $X$ and $Y$ as $X\isom{\mathbf{C}} Y$.
We say that two morphisms $\phi:X_1\to Y_1$ and $\psi:X_2\to Y_2$ are equivalent
in $\mathbf{C}$ if there exist isomorphisms $\alpha:X_1\to X_2$ and
$\beta:Y_1\to Y_2$ such that $\beta\phi=\psi\alpha$.

The first obvious example of the category that comes to mind is the category of
all sets and all maps between them. We denote this category by $\mathbf{Set}$.
Other examples will be given later. Two main examples of functors that any
category has are functors of morphisms. For a fixed
$X\in\operatorname{Ob}(\mathbf{C})$ we define covariant and a contravariant
functors
$$
\operatorname{Hom}_{\mathbf{C}}(X,-)
:\mathbf{C}\to\mathbf{Set}:Y\mapsto \operatorname{Hom}_{\mathbf{C}}(X,Y), 
\phi\mapsto(\psi\mapsto \phi\psi),
$$
$$
\operatorname{Hom}_{\mathbf{C}}(-,X)
:\mathbf{C}\to\mathbf{Set}:Y\mapsto \operatorname{Hom}_{\mathbf{C}}(Y,X), 
\phi\mapsto(\psi\mapsto \psi\phi).
$$
This construction has its reminiscent analogs in many categories of mathematics
with slight modification of categories between which these functors act.

We say that two covariant functors $F:\mathbf{C}\to\mathbf{D}$,
$G:\mathbf{C}\to\mathbf{D}$ are isomorphic if there exists a class of
isomorphisms $ \{\eta_X:X\in\operatorname{Ob}(\mathbf{C}) \}$ in $\mathbf{D}$
(called natural isomorphisms), such that $G(f)\eta_X=\eta_Y F(f)$ for all
$f:X\to Y$. In this case we simply write $F\cong G$. A $\langle$~covariant /
contravariant~$\rangle$ functor $F:\mathbf{C}\to\mathbf{D}$ is called
representable by object $X$ if
$\langle$~$F\cong\operatorname{Hom}_{\mathbf{C}}(X,-)$ /
$F\cong\operatorname{Hom}_{\mathbf{C}}(-,X)$~$\rangle$. If functor is
representable, then its representing object is unique up to isomorphism in
$\mathbf{C}$.

Constructions of categorical product and coproduct shall play an important role
in this book. We say that $X$ is the the $\langle$~product / coproduct~$\rangle$
of the family of objects $ \{X_\lambda:\lambda\in\Lambda \}$ if the functor
$\langle$~$
\prod_{\lambda\in\Lambda}\operatorname{Hom}_{\mathbf{C}}(-,X_{\lambda})
:\mathbf{C}\to\mathbf{Set}$
/
$
\prod_{\lambda\in\Lambda}\operatorname{Hom}_{\mathbf{C}}(X_{\lambda},-)
:\mathbf{C}\to\mathbf{Set}$~$\rangle$
is representable by object $X$. As the consequence we get that a
$\langle$~product / coproduct~$\rangle$, if it exists, is unique up to an
isomorphism. Later we shall give examples of the $\langle$~products /
coproducts~$\rangle$ in different categories of functional analysis.


\subsection{
  Topology
}\label{SubSectionTopology}

Let $(S,\tau)$ be a topological space. Elements of $\tau$ are called open sets,
and their complements are called closed sets. Let $E$ be an arbitrary subset of
$S$. By $\operatorname{cl}_S(E)$ we denote the closure of $E$ in $S$, that is
the smallest closed set that contains $E$. Similarly, by
$\operatorname{int}_S(E)$ we denote the interior of $E$, that is the largest
open set that contained in $E$. We say that $E$ is a neighborhood of point 
$s\in S$ if $s\in \operatorname{int}_S(E)$. We say that a set $E$ is dense 
in a set $F$ if $F\subset\operatorname{cl}_S(E)$. Note that $E$ may be regarded
as topological space, if we endow it with subspace topology which equals $
\{U\cap E:U\in\tau \}$. Topological space is called Hausdorff if any two
distinct points have disjoint open neighborhoods. In this book we shall work
with Hausdorff spaces only.

A map $f:X\to Y$ between topological spaces is called continuous if preimage
under $f$ of any open set is open. By $\mathbf{Top}$ we denote the category of
topological spaces with continuous maps in the role of morphisms. Isomorphisms
in $\mathbf{Top}$ are called homeomorphisms. The category $\mathbf{Top}$ admits
products. For a given family of topological spaces $
\{S_\lambda:\lambda\in\Lambda \}$ their product is the Tychonoff product
$\prod_{\lambda\in\Lambda}S_\lambda$, that is the Cartesian product of the
family $ \{S_\lambda:\lambda\in\Lambda \}$ with the coarsest topology making all
natural projections $p_\lambda:\prod_{\lambda\in\Lambda}S_\lambda\to S_\lambda$
continuous.

In this book we shall mainly deal with four types of topological spaces: compact
spaces, paracompact spaces, locally compact spaces and extremely disconnected
spaces. Before giving their definitions we need to recall the notion of cover.
Let $\mathcal{E}$ be a family of subsets of topological space $S$. We say that
$\mathcal{E}$ is a cover if its union equals $S$. We say that cover is open if
all its elements are open sets. A cover is called locally finite if any point of
$S$ has a neighborhood that intersects only finitely many elements of the cover.
We say that cover $\mathcal{E}_1$ is inscribed into cover $\mathcal{E}_2$ if any
element of $\mathcal{E}_1$ is a subset of some element of $\mathcal{E}_2$. A
cover $\mathcal{E}_1$ is called a subcover of $\mathcal{E}_2$ if
$\mathcal{E}_1\subset\mathcal{E}_2$. Finally, a topological space is called 
\begin{enumerate}[label = (\roman*)]
  \item compact if any its open cover admits a finite open subcover; 

  \item paracompact if any its open cover is inscribed into some locally finite
  open cover;
  
  \item pseudocompact if any locally finite family of open sets is finite;

  \item locally compact if any its point has a compact neighborhood;

  \item extremely disconnected spaces if the closure of any its open set is
  open;

  \item Stonean if it is an extremely disconnected Hausdorff compact space.
\end{enumerate}

The property of being $\langle$~compact / paracompact / locally
compact~$\rangle$ space is preserved by $\langle$~closed / closed / open and
closed~$\rangle$ subspaces. A topological space is pseudocompact iff any
continuous function on this space is bounded. Any non compact locally compact
Hausdorff space $S$ can be regarded as dense subspace of some compact Hausdorff
space. There is the smallest and the largest such compactification. The smallest
one is called the Alexandroff compactification $\alpha S$.  By definition
$\alpha S:=S\cup  \{S \}$. A subset of $\alpha S$ is called open if it is an
open subset of $S$ or has the form $ \{S \}\cup S\setminus K$ for some compact
set $K\subset S$. The largest compactification $\beta S$ is called the
Stone-Cech compactification. It may be represented as the image of the embedding
$j:S\to\prod_{f\in C}[0,1]:s\mapsto \prod_{f\in C}f(s)$, where $C$ is a set of
all continuous maps from $S$ to $[0,1]$. Stone-Cech compactification is highly
non constructive. Even $\beta\mathbb{N}$ has no explicit description, though it
is known that $\beta\mathbb{N}$ is an extremely disconnected Hausdorff compact.

Occasionally we shall apply the Urysohn's lemma to locally compact Hausdorff
spaces. It states that for any compact subset $K$ of open set $V$ in a locally
compact Hausdorff space $S$ there exists a continuous function $f:S\to [0,1]$
such that $f|_K=1$ and $f|_{S\setminus V}=0$. 

For more details on topological spaces see comprehensive
treatise~\cite{EngelGenTop}. 


\subsection{
  Filters, nets and limits
}\label{SubSectionFiltersNetsAndLimits} 

We will use two generalizations of the notion of the sequence and the limit of
the sequence.

A family $\mathfrak{F}$ of subsets of the set $M$ is called a filter on a set
$M$ if $\mathfrak{F}$ doesn't contain the empty set, $\mathfrak{F}$ is closed
under finite intersections and $\mathfrak{F}$ contains all supersets of its
elements. In general filters are too large to be described explicitly. To
overcome this difficulty we shall use filterbases. A non empty family
$\mathfrak{B}$ of subset of a set $M$ is called a filterbase on a set $M$ if
$\mathfrak{B}$ doesn't contain empty set and the intersection of any two
elements of $\mathfrak{B}$ contains some element of $\mathfrak{B}$. Given a
filterbase $\mathfrak{B}$ we can construct a filter by adding to $\mathfrak{B}$
all supersets of elements of $\mathfrak{B}$.

We say that filter $\mathfrak{F}_1$ dominates filter $\mathfrak{F}_2$ if
$\mathfrak{F}_2\subset\mathfrak{F}_1$. Therefore the set of all filters on a
given set is partially ordered set. Filters that are maximal with respect to
this order are called ultrafilters. An easy application of Zorn's lemma gives
that any filter is dominated by some ultrafilter.

Let $\mathfrak{F}$ be a filter on  a set $M$, and $\phi:M\to S$ be a map from
$M$ to the Hausdorff topological space $S$. We say that $x$ is a limit of $\phi$
along $\mathfrak{F}$ and write $x=\lim_{\mathfrak{F}} \phi(m)$ if for every open
neighborhood $U$ of $x$ holds $\phi^{-1}(U)\in\mathfrak{F}$. Directly from the
definition it follows that if $\phi$ has a limit along $\mathfrak{F}$ then it
has the same limit along any filter that dominates $\mathfrak{F}$. 

Limit along filter preserve order structure of $\mathbb{R}$. More precisely: if
two functions $\phi:M\to\mathbb{R}$ and $\psi:M\to\mathbb{R}$ have limits along
filter $\mathfrak{F}$ and $\phi\leq\psi$, then 
$$
\lim_{\mathfrak{F}}\phi(m)\leq\lim_{\mathfrak{F}}\psi(m).
$$

Limit along filter respects continuous functions. Rigorously this formulates as
follows. Assume for each $\lambda\in\Lambda$ a function $\phi_\lambda:M\to
S_\lambda$ has a limit along filter $\mathfrak{F}$, then for any continuous
function $g:\prod_{\lambda\in\Lambda}S_\lambda\to Y$ holds
$$
\lim_{\mathfrak{F}}g\left(
  \prod_{\lambda\in\Lambda}\phi_\lambda(m)
\right)
=g\left(\prod_{\lambda\in\Lambda}\lim_{\mathfrak{F}}\phi_\lambda(m)\right).
$$
In particular limit along filter is linear and multiplicative. Just like
ordinary sequences.

The most important feature of filters and the reason of our interest is the
following: if $\mathfrak{U}$ is an ultrafilter on the set $M$ and $\phi:M\to K$
is a function with values in the compact Hausdorff space $K$, then
$\lim_{\mathfrak{U}}\phi(m)$ exists. In particular, we always can speak of
limits along ultrafilters of bounded scalar valued functions.

Another approach to the generalization of the notion of the limit is a limit of
the net. A directed set is a partially ordered set $(N,\leq)$ in which any two
elements have upper bound. Every directed set gives rise to the so called
section filter, whose filterbase consist of so called sections $
\{\nu':\nu\leq\nu' \}$ for some $\nu\in N$. Any function $x:N\to X$ from the
directed set $(N,\leq)$ into the topological space $X$ is called a net. Usually
it is denoted as ${(x_\nu)}_{\nu\in N}$ to allude to sequences. 
A limit of the net $x:N\to X$ is a limit of the function $x$ along section
filter of the directed set $N$. It is denoted $\lim_\nu x_\nu$. We shall exploit
both notions of the limit.

More on this matters can be found in [\cite{BourbElemMathGenTopLivIII}, section
7].


\subsection{
  Measure theory
}\label{SubSectionMeasureTheory}

A family $\Sigma$ of subsets of the set $\Omega$ is called a $\sigma$-algebra if
it contains an empty set, contains complements of all its elements and closed
under countable unions. If $\Sigma$ is a $\sigma$-algebra of subsets of $\Omega$
we call $(\Omega,\Sigma)$ a measurable subspace. Elements of $\Sigma$ are called
measurable sets. 

A function $\mu:\Sigma\to[0,+\infty]$ such that:  
\begin{enumerate}[label = (\roman*)]
  \item $\mu(\varnothing)=0$; 

  \item $\mu\left(\bigcup\limits_{n\in\mathbb{N}} E_n\right)
  =\sum\limits_{n\in\mathbb{N}}\mu(E_n)$ for any family of disjoint sets
  ${(E_n)}_{n\in\mathbb{N}}$ in $\Sigma$; 
\end{enumerate}

is called a measure. The triple $(\Omega,\Sigma,\mu)$ is called a measure space.
If $\mu$ attains only finite values we may drop the first condition. The second
condition is essential and called the $\sigma$-additivity. The simplest example
of measure space is an  arbitrary set $\Lambda$ with $\sigma$-algebra of all
subsets and so called counting measure
$\mu_c:\mathcal{P}(\Lambda)\to[0,+\infty]$. By definition $\mu_c(E)$ equals
$\operatorname{Card}(E)$ if $E$ is finite and $+\infty$ otherwise. If $E$ is a
measurable set, by $\Sigma|_E$ we denote the $\sigma$-algebra $ \{F\cap
E:F\in\Sigma \}$ and by $\mu|_E$ we denote the restriction of $\mu$ to
$\Sigma|_E$. A set $E$ in $\Omega$ is called negligible if there exists a
measurable set $F$ of measure $0$ that contains $E$. Similarly, a set $E$ in
$\Omega$ is conegligible if $\Omega\setminus E$ is negligible. Let $P$ be some
property that depends on points of $\Omega$. We say that $P$ holds almost
everywhere if the set where $P$ is violated is negligible. A measure space is
called $\sigma$-finite if there exists a countable family of measurable sets of
finite measure whose union is the whole space. The class of $\sigma$-finite
spaces is enough for most applications but we shall encounter a more generic
measure spaces.

A measurable space $(\Omega,\Sigma,\mu)$ is called strictly localizable if there
exists a  family of disjoint measurable subsets 
$ \{E_\lambda:\lambda\in\Lambda \}$ of finite measure such that: 
\begin{enumerate}[label = (\roman*)]
  \item $\bigcup_{\lambda\in\Lambda}E_\lambda=\Omega$;

  \item $E$ is measurable iff $E\cap E_\lambda$ is measurable for all
  $\lambda\in\Lambda$;

  \item for any measurable $E$ holds 
  $\mu(E)=\sum_{\lambda\in\Lambda}\mu(E\cap E_\lambda)$. 
\end{enumerate}
  
The class of strictly localizable measure spaces is huge. It includes all
$\sigma$-finite measure spaces, their arbitrary unions, Haar measures of
locally compact groups, counting measures and much more. In what follows we
shall consider only strictly localizable measure spaces.

We shall exploit a more detailed classification of measure spaces. We say that a
measurable set $E$ is an atom if $\mu(E)>0$ and for any measurable subset $F$ of
$E$ either $F$ or $E\setminus F$ is negligible. Directly from the definition it
follows that that all atoms of strictly localizable measure spaces are of finite
measure. In general an atom may not be a mere singleton.

We say that a measure space is non atomic if there is no atoms for its measure.
A measure space is called purely atomic if every measurable set of positive
measure contains an atom. A straightforward application of Zorn's lemma gives
that a purely atomic measure space can be represented as disjoint union of some
family of atoms. This family is countable if measure space is $\sigma$-finite.
These facts allow us to say that the structure of purely atomic measure space is
well understood. The structure of strictly localizable non atomic measure spaces
is given by Maharam's theorem [\cite{FremMeasTh}, 332B]. 

For completeness we shall say a few words on constructions with measures. The
product measure of two measure spaces $(\Omega_1,\Sigma_1,\mu_1)$ and
$(\Omega_2,\Sigma_2,\mu_2)$ we denote by $\mu_1\times \mu_2$. The definition of
product measure for localizable measure spaces is rather involved
[\cite{FremMeasTh}, definition 251F] and we don't give it here.  For our
purposes it is enough to know that the product of two strictly localizable
measure spaces is again strictly localizable [\cite{FremMeasTh}, proposition
251N]. By direct sum of measure spaces $ \{(\Omega_\lambda, \Sigma_\lambda,
\mu_\lambda):\lambda\in\Lambda \}$ we denote the disjoint union of sets 
$\{\Omega_\lambda:\lambda\in\Lambda \}$ with $\sigma$-algebra defined as 
$\Sigma= \{
  E\subset \Omega: E\cap E_\lambda\in\Sigma_\lambda
  \mbox{ for all }\lambda\in\Lambda
 \}$ 
and measure given by the formula 
$\mu(E)=\sum_{\lambda\in\Lambda}\mu_\lambda(E\cap E_\lambda)$. It is clear now
that strictly localizable measure space are exactly direct sums of finite
measure spaces.

Assume $(\Omega,\Sigma,\mu)$ is a $\sigma$-finite measure space, then there
exists a purely atomic measure $\mu_1:\Sigma\to[0,+\infty]$ and a non atomic
measure $\mu_2:\Sigma\to[0,+\infty]$ such that $\mu=\mu_1+\mu_2$. Even more
there exist measurable sets $\Omega_a^{\mu}$ and
$\Omega_{na}^{\mu}=\Omega\setminus \Omega_a^{\mu}$ such that
$\mu_1(\Omega_{na}^{\mu})=\mu_2(\Omega_a^{\mu})=0$. The sets $\Omega_a^{\mu}$
and $\Omega_{na}^{\mu}$ are called respectively the atomic and the non atomic
parts of the measure space $(\Omega,\Sigma,\mu)$.

By measurable function we always mean a complex or real valued function on
measurable space, with the property that preimage of every open set is
measurable. We say that two measurable functions are equivalent if the set where
they are different is negligible. If $f:\Omega\to\mathbb{R}$ is an  integrable
function on $(\Omega,\Sigma,\mu)$, then we may define a new measure 
$$
f\mu:\Sigma\to[0,+\infty]:E\mapsto\int_{E}f(\omega)d\mu(\omega).
$$

The notion of measure can be extended by changing the range of values that the
measure can attain. Any $\sigma$-additive function $\mu:\Sigma\to\mathbb{C}$ on
a measurable space $(\Omega,\Sigma)$ is called a complex measure. Any complex
measure $\mu$ can be represented as $\mu=\mu_1-\mu_2+i(\mu_3-\mu_4)$, where
$\mu_1,\mu_2,\mu_3,\mu_4$ --- are finite measures. As the consequence every
complex measure is finite and therefore we have a well defined total variation
measure:
$$
|\mu|:\Sigma\to\mathbb{R}_+
:E\mapsto\sup\left \{\sum_{n\in\mathbb{N}}|\mu(E_n)|
: \{E_n:n\in\mathbb{N} \}\subset\Sigma -\mbox{partition of }E\right \}
$$

Let $\mu$ and $\nu$ be two measures on a measurable space $(\Omega,\Sigma)$. We
say that $\mu$ and $\nu$ are mutually singular and write $\mu\perp\nu$ if there
exists a measurable set $E$ such that $\mu(E)=\nu(\Omega\setminus E)=0$. The
opposite property is absolute continuity. We say that $\nu$ is absolutely
continuous with respect to $\mu$ and write $\nu\ll\mu$ if $\nu(E)=0$ for every
measurable set $E$ with $\mu(E)=0$. In general, two measures may neither be
absolutely continuous nor singular with respect to each other. We have a
Lebesgue decomposition theorem for this case. For a given two $\sigma$-finite
measures $\mu$ and $\nu$ on a measurable space $(\Omega,\Sigma)$ there exists a
measurable function $\rho_{\nu,\mu}:\Omega\to\mathbb{C}$, a $\sigma$-finite
measure $\nu_s:\Sigma\to[0,+\infty]$ and two measurable sets
$\Omega_s^{\nu,\mu}$, $\Omega_c^{\nu,\mu}=\Omega\setminus\Omega_s^{\nu,\mu}$
such that $\nu=\rho_{\nu,\mu}\mu+\nu_s$ and
$\mu(\Omega_s^{\nu,\mu})=\nu_s(\Omega_c^{\nu,\mu})=0$, i.e. $\mu\perp\nu_s$.

Finally we shall say a few words on measures defined on topological spaces.
Given a topological space $S$ we may consider the minimal $\sigma$-algebra
containing all open subsets of $S$. It is called the Borel $\sigma$-algebra of
$S$ and denoted by $Bor(S)$. Measures and complex measures defined on Borel
$\sigma$-algebras are supported with adjective Borel.

The support of a complex Borel measure $\mu$ is a set of all points
$s\in S$ for which every open neighborhood of $s$ has positive measure. We
denote the set of such points by $\operatorname{supp}(\mu)$. The support is
always closed. A Borel measure $\mu$ is called
\begin{enumerate}[label = (\roman*)]
  \item strictly positive if $\operatorname{supp}(\mu)=S$;

  \item a mesure with full support 
  if $\mu(S\setminus\operatorname{supp}(\mu))=0$;

  \item locally finite if point has an open neighborhood of finite measure; 

  \item inner regular with respect to 
  class $\mathcal{C}$ if for any Borel set $E$ holds 
  $$
  \mu(E)=\sup \{\mu(K): K\subset E, K\in\mathcal{C} \};
  $$

  \item outer regular with respect to 
  class $\mathcal{C}$ if for any Borel set $E$ holds 
  $$
  \mu(E)=\inf \{\mu(K): K\subset E, K\in\mathcal{C} \};
  $$

  \item residual if $\mu(E)=0$ for any Borel nowhere dense set $E$;

  \item normal if its residual and has full support.
\end{enumerate}

Usually we consider regularity of measures with respect to classes of open and
compact sets. Now recall a few facts on topological measures. All compact sets 
have finite measure if measure is locally finite. Any finite inner compact 
regular measure is outer open regular. Any finite inner compact regular and
inner open regular measure is normal. 

We say that a complex Borel measure $\mu:Bor(S)\to\mathbb{C}$ defined on a 
locally compact Hausdorff space $S$ is regular if it is inner compact regular.  

Most of the results and definitions in this section can be found in the first,
second and the fourth volumes of~\cite{FremMeasTh}.


\section{
  Banach structures
}\label{SectionBanachStructures}


\subsection{
  Banach spaces
}\label{SubSectionBanachSpaces}

We assume that our reader is familiar with fundamentals of functional analysis
and its constructions, otherwise consult~\cite{HelLectAndExOnFuncAn}
or~\cite{ConwACoursInFuncAn}. In this book we will highly rely on results about
geometry of Banach spaces.
See~\cite{CarothShortCourseBanSp},~\cite{KalAlbTopicsBanSpTh}
or~\cite{FabHabBanSpTh} for a quick introduction. All Banach spaces are
considered over the complex field, unless otherwise stated. 

By $\langle$~$B_E$ / $B_E^\circ$~$\rangle$ we denote the $\langle$~closed /
open~$\rangle$ unit ball of Banach space $E$ with center at zero. For a given 
set $S\subset E$ by $\operatorname{span}(S)$ we denote its linear span. 

By $E^{cc}$ we denote a Banach space with the same set of vectors as in $E$, the
same addition but with new multiplication by conjugate scalars:  $\alpha
\overline{x}:=\overline{\overline{\alpha}x}$ for $\alpha\in\mathbb{C}$ 
and $x\in E$. Note: elements of $E^{cc}$ we denote by $\overline{x}$. Clearly,
${(E^{cc})}^{cc}=E$.

If $F$ is a closed subspace of the Banach space $E$, then $E/F$ stands for 
linear quotient Banach space. Its norm defined by equality 
$$
\Vert x+F\Vert=\inf \{\Vert x+y\Vert: y\in F \}.
$$
where $x+F\in E/F$.

Now fix two Banach spaces $E$ and $F$. A map $T:E\to F$ is called conjugate
linear if the respective map $T:E^{cc}\to F$ is linear. A linear operator
$T:E\to F$ is called:
\begin{enumerate}[label = (\roman*)]
  \item bounded if its norm $\Vert T\Vert:=\sup \{\Vert T(x)\Vert:x\in B_E \}$ 
  is finite.

  \item contractive if its norm is at most $1$;

  \item compact if $T(B_E)$ is relatively compact in $F$;

  \item nuclear if it can be represented as an absolutely convergent series of
  rank one operators.
\end{enumerate}

Any nuclear operator is compact. Any compact operator is
bounded, any bounded operator is continuous. By $\langle$~$\mathcal{B}(E,F)$ /
$\mathcal{K}(E,F)$ / $\mathcal{N}(E,F)$~$\rangle$ we denote the Banach space of
$\langle$~bounded / compact / nuclear~$\rangle$ linear operators from $E$ to
$F$. If $F=E$ we use the shortcut $\langle$~$\mathcal{B}(E)$ / $\mathcal{K}(E)$
/ $\mathcal{N}(E)$~$\rangle$ for this space. The norms in $\mathcal{B}(E,F)$ and
$\mathcal{K}(E,F)$ are just the usual operator norm. The norm of a nuclear
operator $T$ is defined by equality
$$
\Vert T\Vert
:=\inf\left \{
  \sum_{n=1}^\infty\Vert S_n\Vert
  :T=\sum_{n=1}^\infty S_n,\quad 
  {(S_n)}_{n\in\mathbb{N}} - \mbox{ rank one operators}
\right \}.
$$

By $\mathbf{Ban}$ we shall denote the category of Banach spaces with bounded
linear operators in the role of morphisms, while $\mathbf{Ban}_1$ stands for the
category of Banach spaces with contractive operators in the role of morphisms.
As the consequence, $\operatorname{Hom}_{\mathbf{Ban}}(E,F)$ is just another
name for $\mathcal{B}(E,F)$.

Two Banach spaces $E$ and $F$ are $\langle$~isometrically isomorphic /
topologically isomorphic~$\rangle$ as Banach spaces if there exists a bounded
linear operator $T:E\to F$ which is both $\langle$~isometric and surjective /
topologically injective and topologically surjective~$\rangle$. The fact that
$E$ and $F$ are $\langle$~isometrically isomorphic / topologically
isomorphic~$\rangle$ Banach spaces means that
$\langle$~$E\isom{\mathbf{Ban}_1}F$ / $E\isom{\mathbf{Ban}}F$~$\rangle$. The
Banach-Mazur distance between $E$ and $F$ is defined by the formula 
$$
d_{BM}(E,F):=\inf \{
  \Vert T\Vert\Vert T^{-1}\Vert
  :T \in \mathcal{B}(E,F) \mbox{ --- a topological isomorphism}
 \}.
$$ 
If $E$ and $F$ are not topologically isomorphic the Banach-Mazur distance
between them is infinite.

By $E^*$ we denote the space of bounded linear functionals on $E$, that is
$E^*=\mathcal{B}(E, \mathbb{C})$. It is called a dual space of $E$. Similarly
one can define the second and higher duals of $E$. An important corollary of the
Hahn-Banach theorem says that a bounded linear operator 
$$
\iota_E:E\to E^{**}:x\mapsto (f\mapsto f(x))
$$ 
is isometric. We call this operator the natural embedding of $E$ into its 
second dual $E^{**}$.

For a given bounded linear operator $T:E\to F$ its adjoint is bounded linear 
operator 
$$
T^*:F^*\to E^*:f\mapsto (x\mapsto f(T(x))).
$$
Again from Hahn Banach 
theorem it follows that $\Vert T^*\Vert=\Vert T\Vert$. One can easily check 
that $T^{**}\iota_E=\iota_F T$.  

For a given subspace $F$ of $\langle$~$E$ / $E^*$~$\rangle$ we define 
its $\langle$~right / left~$\rangle$ annihilator 
$\langle$~$F^\perp$ / ${}^\perp F$~$\rangle$ is defined as 
$\langle$~$ \{ f\in E^*: f(x)=0 \mbox{ for all } x\in F \}$ / 
$ \{ x\in E : f(x)=0 \mbox{ for all } f\in F \}$~$\rangle$. 
Clearly, $E^{\perp} = \{ 0\}$, $0^{\perp}=E^*$ 
and ${}^{\perp}E^*= \{ 0\}$, ${}^{\perp}0= E$. Annihilators are closely related
with quotient spaces. One can show that 
operators 
$$
i: F^*\to E^*/F^{\perp}: f\mapsto f|_F+F^\perp,
\quad\quad 
q:{(E/F)}^*\to F^{\perp}: f\mapsto (x\mapsto f(x + F))
$$ 
are isometric isomorphisms.

A few words on classification of bounded linear operators. A bounded linear
operator $T:E\to F$ is called:
\begin{enumerate}[label = (\roman*)]
  \item topologically injective if it performs homeomorphism on its image;

  \item topologically surjective if it is an open map;

  \item coisometric if it maps open unit ball onto open unit ball;

  \item strictly coisometric if it maps closed unit ball onto closed unit ball; 

  \item $c$-topologically injective, if $\Vert x\Vert\leq c\Vert  T(x)\Vert$ for
  all $x\in E$;

  \item $c$-topologically surjective, if $cT(B_E^\circ)\supset B_F^\circ$;

  \item strictly $c$-topologically surjective, if $cT(B_E)\supset B_F$.
\end{enumerate}

Note that $T$ is topologically $\langle$~injective / surjective~$\rangle$ iff it
is $c$-topologically $\langle$~injective / surjective~$\rangle$ for some $c>0$.
Obviously $\langle$~coisometric / strictly coisometric~$\rangle$ operators are
exactly contractive $\langle$~$1$-topologically surjective / strictly
$1$-topologically surjective~$\rangle$ operators. These classes of operators 
behave nicely with respect to taking adjoints:

\begin{enumerate}[label = (\roman*)]
  \item if $ T$ is $c$-topologically surjective or strictly $c$-topologically 
  surjective, then $ T^*$ is $c$-topologically injective;

  \item if $ T$ $c$-topologically injective, then $ T^*$ is strictly
  $c$-topologically surjective;

  \item if $ T^*$ $c$-topologically surjective or strictly $c$-topologically 
  surjective, then $ T$ is $c$-topologically injective;

  \item if $T^*$ $c$-topologically injective and $E$ is complete, then $T$ 
  is $c$-topologically surjective.
\end{enumerate}

Let us discuss relation between Banach spaces and their subspaces. A bounded 
linear operator $T:E\to F$ is called a $\langle$~$c$-retraction /
$c$-coretraction~$\rangle$ if there exist a bounded linear operator $S:F\to E$
such that $\langle$~$T S=1_F$ / $S T=1_E$~$\rangle$ 
and $\Vert T\Vert\Vert S\Vert\leq c$. In this case we say 
that $\langle$~$F$ / $E$~$\rangle$ has a
$c$-complemented copy in $E$ via topologically $\langle$~injective /
surjective~$\rangle$ operator $T$. If the natural embedding of a subspace $F$
into the ambient space $E$ is a $c$-coretraction, then we say that $F$ is
$c$-complemented in $E$. Complemented subspaces can be described in terms of so
called projections. A bounded linear operator $P:E\to E$ is called a projection 
of $E$ onto $F$ if $P^2=P$, $\operatorname{Im}(P)=F$. One can show that 
$F$ is $c$-complemented in $E$ iff there is a projection $P$ from $E$ onto $F$ 
with $\Vert P\Vert\leq c$. We say that $F$ is weakly $c$-complemented in $E$
if $F^*$ is $c$-complemented in $E^*$ via the map $i^*$, which is the adjoint of
the natural embedding. If $F$ is $c$-complemented in $E$ then it is 
weakly $c$-complemented in $E$.  The term ``contractively complemented'' will 
be a synonym for $1$-complemented. Sometimes we will omit a constant $c$ and 
simply say that one Banach space $\langle$~has a complemented copy / 
is complemented~$\rangle$ inside the other. 

All finite dimensional subspaces are complemented, but not necessarily
contractively complemented. An example of contractively complemented subspace is
the following: consider arbitrary Banach space $E$, then $E^*$ is contractively
complemented in $E^{***}$ via Dixmier projection $P=\iota_{E^*}{(\iota_E)}^*$. A
canonical example of uncomplemented subspace is $c_0(\mathbb{N})$ in
$\mathbb{\ell_\infty}(\mathbb{N})$ [\cite{KalAlbTopicsBanSpTh}, theorem 2.5.5].
Nevertheless $c_0(\mathbb{N})$ is weakly complemented in
$\ell_\infty(\mathbb{N})$ by Dixmier projection.

Let $E$, $F$ and $G$ be three Banach spaces, then a bilinear operator
$\Phi:E\times F\to G$ is called bounded if its norm 
$$
\Vert \Phi\Vert:=\sup \{\Vert \Phi(x,y)\Vert:x\in B_E, y\in B_F \}
$$ 
is finite. 
The Banach space of all bounded bilinear operators on $E\times F$ with 
values in $G$ is denoted by $\mathcal{B}(E\times F,G)$.

Now consider the algebraic tensor product $E\otimes F$ of Banach spaces $E$ and
$F$. This linear space can be endowed with different norms, but the most
important is the projective norm. For $u\in E\otimes F$ we define its projective
norm as
$$
\Vert u\Vert
:=\inf\left \{
  \sum_{i=1}^n \Vert x_i\Vert\Vert y_i\Vert
  :u=\sum_{i=1}^n x_i\otimes y_i, 
  {(x_i)}_{i\in\mathbb{N}_n}\subset E, 
  {(y_i)}_{i\in\mathbb{N}_n}\subset F
\right \}
$$
It is indeed a norm, but not complete in general. The symbol $E\projtens F$
stands for the completion of $E\otimes F$ under projective norm. We call the
resulting completion the projective tensor product of Banach spaces $E$ and $F$.
Let $T:E_1\to E_2$ and $S:F_1\to F_2$ be two bounded linear operators between
Banach spaces, then there exists a unique bounded linear operator 
$$
T\projtens S:E_1\projtens F_1\to E_2\projtens F_2
$$ 
such that  $(T\projtens S)(x\projtens y)=T(x)\projtens S(y)$ 
for all $x\in E_1$ and $y\in F_1$. 
Even more $\Vert T\projtens S\Vert=\Vert T\Vert\Vert S\Vert$. 
The main feature of projective
tensor product which makes it so important is the following universal property:
for any Banach spaces $E$, $F$ and $G$ there is a natural isometric isomorphism:
$$
\mathcal{B}(E\projtens F,G)\isom{\mathbf{Ban}_1}\mathcal{B}(E\times F,G)
$$
In other words, projective tensor product linearizes bounded bilinear operators.
Also we have the following two (natural in $E$, $F$ and $G$) isometric
isomorphisms:
$$
\mathcal{B}(E\projtens F,G)
\isom{\mathbf{Ban}_1}\mathcal{B}(E,\mathcal{B}(F,G))
\isom{\mathbf{Ban}_1}\mathcal{B}(F,\mathcal{B}(E,G))
$$
The last isomorphism is called the law of adjoint associativity. There are many
other tensor norms on the algebraic tensor product of Banach spaces. Their
thorough treatment can be found in~\cite{DiestMetTheoryOfTensProd}.

Now we are able to craft four very important functors:
$$
\mathcal{B}(-,E):\mathbf{Ban}\to\mathbf{Ban}
\qquad\qquad
\mathcal{B}(E,-):\mathbf{Ban}\to\mathbf{Ban}
$$
$$
-\projtens E:\mathbf{Ban}\to\mathbf{Ban}
\qquad\qquad
E\projtens -:\mathbf{Ban}\to\mathbf{Ban}.
$$
We shall often encounter them. For example, the well known adjoint functor
${}^*$ is nothing more than $\mathcal{B}(-,\mathbb{C})$. All these functors have
their obvious analogs on $\mathbf{Ban}_1$.

Now we  proceed to classical examples of Banach spaces. 

An important source of examples of Banach spaces are $L_p$-spaces, also known as
Lebesgue spaces. A detailed discussion of basic properties of $L_p$-spaces can
be found in~\cite{CarothShortCourseBanSp}.  Let $(\Omega,\Sigma,\mu)$ be a
measure space.  For $1\leq p<\infty$, as usually, the symbol $L_p(\Omega,\mu)$
stands for the Banach space of equivalence classes of functions
$f:\Omega\to\mathbb{C}$ such that $|f|^p$ is Lebesgue integrable with respect to
measure $\mu$. The norm of such function is defined as
$$
\Vert f\Vert
:={\left(
  \int\limits_{\Omega}{|f(\omega)|}^p d\mu(\omega)
\right)}^{1/p}.
$$ 
By $L_\infty(\Omega,\mu)$ we denote the Banach space of equivalence classes of
bounded measurable functions with norm defined as 
$$
\Vert f\Vert:=\inf\left \{
  \sup_{\omega\in\Omega\setminus N}|f(\omega)|
  :N\subset\Omega - \mbox{is negligible}
\right \}.
$$
For simplicity we shall speak of functions in $L_p(\Omega,\mu)$ instead of their
equivalence classes. All equalities and inequalities about functions of
$L_p$-spaces are understood up to negligible sets. It is well know that
${L_p(\Omega,\mu)}^*\isom{\mathbf{Ban}_1}L_{p^*}(\Omega,\mu)$ for 
$1\leq p<+\infty$ [\cite{FremMeasTh}, theorems 243G, 244K]. 
One more well known fact is that, $L_p$-spaces are reflexive for 
$1<p<+\infty$. Here we exploited the standard 
notation $p^*=+\infty$ if $p=1$ and $p^*=p/(p-1)$ if $1<p<+\infty$.
Clearly, $p^{**}=p$ for $1<p<+\infty$.

The most well known classes of Banach spaces are related to continuous
functions. Let $S$ be a locally compact Hausdorff space. We say that a function
$f:S\to\mathbb{C}$ vanishes at infinity  if for any $\epsilon>0$ there exists a
compact $K\subset S$ such that $|f(s)|\leq\epsilon$ for all $s\in S\setminus K$.
The linear space of continuous functions on $S$ vanishing at infinity is denoted
by $C_0(S)$. When endowed with $\sup$-norm $C_0(S)$ becomes a Banach space. Any
set $\Lambda$ with discrete topology may be regarded as a locally compact space
and following the traditional notation we shall write $c_0(\Lambda)$ instead of
$C_0(\Lambda)$. If $K$ is a compact Hausdorff space then all functions on $K$
vanish at infinity. We use the notation $C(K)$ for $C_0(K)$ to indicate that $K$
is compact. Some Banach spaces in fact are $C(K)$-spaces in disguise. For
example, if we are given a measure space $(\Omega,\Sigma,\mu)$, then
$B(\Omega,\Sigma)$ --- the space of bounded measurable functions with
$\sup$-norm or $L_\infty(\Omega,\mu)$ are $C(K)$ spaces for some compact
Hausdorff space $K$ [\cite{KalAlbTopicsBanSpTh}, remark 4.2.8]. If $S$ is a
locally compact Hausdorff space, then $M(S)$ stands for the Banach space of
complex finite Borel regular measures on $S$. The norm of measure $\mu\in M(S)$
is defined by equality $\Vert\mu\Vert=|\mu|(S)$, where $|\mu|$ is a total
variation measure of measure $\mu$. By Riesz-Markov-Kakutani theorem
[\cite{ConwACoursInFuncAn}, section C.18] we have
${C_0(S)}^*\isom{\mathbf{Ban}_1}M(S)$. In fact $M(S)$ is an $L_1$-space, see
discussion after [\cite{DalLauSecondDualOfMeasAlg}, proposition 2.14]. 

We shall also mention one important specific case of $L_p$-spaces. For a given
index set $\Lambda$ and a counting measure
$\mu_c:\mathcal{P}(\Lambda)\to[0,+\infty]$ the respective $L_p$-space is denoted
by $\ell_p(\Lambda)$. For this type of measure spaces we have one more important
isomorphism ${c_0(\Lambda)}^*\isom{\mathbf{Ban}_1}\ell_1(\Lambda)$. For
convenience we define $c_0(\varnothing)=\ell_p(\varnothing)= \{0 \}$ for 
$1\leq p\leq+\infty$. This example motivates the following construction.

Let $ \{E_\lambda:\lambda\in\Lambda \}$ be an arbitrary family of Banach spaces.
For each $x\in \prod_{\lambda\in\Lambda} E_\lambda$ we define 
$\Vert x\Vert_p
=\Vert{
  (\Vert x_\lambda\Vert)}_{\lambda\in\Lambda}
\Vert_{\ell_p(\Lambda)}$
for $1\leq p\leq +\infty$ and 
$\Vert x\Vert_0
=\Vert
  {(\Vert x_\lambda\Vert)}_{\lambda\in\Lambda}
\Vert_{c_0(\Lambda)}$. 
Then the Banach space
$\left \{
  x\in \prod_{\lambda\in\Lambda} E_\lambda
  :\Vert x\Vert_p<+\infty
\right \}$ 
with the norm $\Vert\cdot\Vert_p$ is denoted by $\bigoplus_p
\{E_\lambda:\lambda\in\Lambda \}$. We call these objects $\bigoplus_p$-sums of
Banach spaces $ \{E_\lambda:\lambda\in\Lambda \}$. It is almost tautological
that the Banach space $\ell_p(\Lambda)$ is the $\bigoplus_p$-sum of the family $
\{\mathbb{C}:\lambda\in\Lambda \}$. A nice property of $\bigoplus_p$-sums is
their interrelation with duality:
$$
{\left(\bigoplus\nolimits_p \{E_\lambda:\lambda\in\Lambda \}\right)}^*
\isom{\mathbf{Ban}_1}
\bigoplus\nolimits_{p^*} \{E_\lambda^*:\lambda\in\Lambda \}
$$
for all $1\leq p<+\infty$ and 
$$
{\left(\bigoplus\nolimits_0 \{E_\lambda:\lambda\in\Lambda \}\right)}^*
\isom{\mathbf{Ban}_1}
\bigoplus\nolimits_1 \{E_\lambda^*:\lambda\in\Lambda \}
$$
If $ \{T_\lambda\in\mathcal{B}(E_\lambda, F_\lambda):\lambda\in\Lambda \}$ is a
family of bounded linear operators, then for all $1\leq p\leq+\infty$ and $p=0$
we have a well defined linear operator
$$
T:\bigoplus\nolimits_p \{E_\lambda:\lambda\in\Lambda \}
  \to 
\bigoplus\nolimits_p \{ F_\lambda:\lambda\in\Lambda \}
:x\mapsto \bigoplus\nolimits_p \{ T_\lambda(x_\lambda):\lambda\in\Lambda \}
$$
which we shall denote by $\bigoplus_p \{T_\lambda:\lambda\in\Lambda \}$. Its
norm equals $\sup_{\lambda\in\Lambda}\Vert T_\lambda\Vert$.

Among different $\bigoplus_p$-sums the $\langle$~$\bigoplus_1$-sums /
$\bigoplus_\infty$-sums~$\rangle$ play a special role in Banach space theory.
The reason is that any family of Banach spaces admit $\langle$~product /
coproduct~$\rangle$ in $\mathbf{Ban}_1$ which in fact is their
$\langle$~$\bigoplus_1$-sum / $\bigoplus_\infty$-sum~$\rangle$. The same
statement holds for $\mathbf{Ban}$ if we restrict ourselves to finite families
of objects [\cite{HelLectAndExOnFuncAn}, chapter 2, section 5].Obviously, any 
summand in a $\bigoplus_p$-sum is a complemented subspace the sum. Even more, a
subspce $F$ of a Banach space $E$ is complemented if there exists a closed
subspace $G$ in $E$ such that $E\isom{\mathbf{Ban}}F\bigoplus_1 G$.

We proceed to advanced topics of Banach space theory. Below we shall discuss
several geometric properties of Banach spaces such as the property of being an
$\mathscr{L}_p^g$-space, weak sequential completeness, the Dunford-Pettis
property, the l.u.st.\ property and the approximation property. In what follows,
imitating Banach space geometers, we shall say that a Banach space $E$ contains
$\langle$~an isometric copy / a copy~$\rangle$ of Banach space $F$ if $F$ is
$\langle$~isometrically isomorphic / topologically isomorphic~$\rangle$ to some
closed subspace of $E$. 

Let $1\leq p\leq +\infty$. We say that $E$ is an $\mathscr{L}_{p,C}^g$-space if
for any $\epsilon>0$ and any finite dimensional subspace $F$ of $E$ there exists
a finite dimensional $\ell_p$-space $G$ and two bounded linear operators 
$S:F\to G$, $T:G\to E$ such that $TS|^F=1_F$ and 
$\Vert T\Vert\Vert S\Vert\leq C+\epsilon$. 
If $E$ is an $\mathscr{L}_{p,C}^g$-space for some $C\geq 1$ we
simply say, that $E$ is an $\mathscr{L}_p^g$-space. This definition
[\cite{DefFloTensNorOpId}, definition 23.1] is an improvement of the definition
of $\mathscr{L}_p$-spaces given by Lindenstrauss and Pelczynski in their
pioneering work~\cite{LinPelAbsSumOpInLpSpAndApp}. Clearly, any finite
dimensional Banach space is an $\mathscr{L}_p^g$-space for 
all $1\leq p\leq +\infty$. Any $L_p$-space is an $\mathscr{L}_{p,1}^g$-space
[\cite{DefFloTensNorOpId}, exercise 4.7], but the converse is not true. Any
$c$-complemented subspace of $\mathscr{L}_{p,C}^g$-space is an
$\mathscr{L}_{p,cC}^g$-space [\cite{DefFloTensNorOpId}, corollary 23.2.1(2)]. A
Banach space is an $\mathscr{L}_{p,C}^g$-space iff its dual is an
$\mathscr{L}_{p^*,C}^g$-space [\cite{DefFloTensNorOpId}, corollary 23.2.1(1)].
All $C(K)$-spaces are $\mathscr{L}_{\infty, 1}^g$-spaces
[\cite{DefFloTensNorOpId}, lemma 4.4]. Note that, for a given locally compact
Hausdorff space $S$ the Banach space $C_0(S)$ is complemented in $C(\alpha S)$.
Therefore $C_0(S)$-spaces are $\mathscr{L}_\infty^g$-spaces too. We will mainly
concern in $\mathscr{L}_1^g$- and $\mathscr{L}_\infty^g$-spaces.

We say that a Banach space $E$ is weakly sequentially complete if for any
sequence ${(x_n)}_{n\in\mathbb{N}}\subset E$ such that
${(f(x_n))}_{n\in\mathbb{N}}\subset\mathbb{C}$ is a Cauchy sequence for 
any $f\in E^*$ there exists a vector $x\in E$ 
such that $\lim_n f(x_n)=f(x)$ for all $f\in E^*$. That is any weakly 
Cauchy sequence converges in the weak topology. A typical example of weakly 
sequentially complete Banach space is any $L_1$-space
[\cite{WojBanSpForAnalysts}, corollary III.C.14]. This property is preserved by
closed subspaces. A typical example of Banach space that is not weakly
sequentially complete is $c_0(\mathbb{N})$, just consider the sequence
${(\sum_{k=1}^n \delta_k)}_{n\in\mathbb{N}}$.

Now we proceed to the discussion of the Dunford-Pettis property. A bounded
linear operator $T:E\to F$ is called weakly compact if it maps the unit ball of
$E$ into a relatively weakly compact subset of $F$. A bounded linear operator is
called completely continuous if the image of any weakly compact subset of $E$ is
norm compact in $F$. A Banach space $E$ is said to have the Dunford-Pettis
property if any weakly compact operator from $E$ to any Banach space $F$ is
completely continuous. There is a simple internal characterization
[\cite{KalAlbTopicsBanSpTh}, theorem 5.4.4]: a Banach space $E$ has the
Dunford-Pettis property if $\lim_n f_n(x_n)=0$ for all sequences
${(x_n)}_{n\in\mathbb{N}}\subset E$ and ${(f_n)}_{n\in\mathbb{N}}\subset E^*$, 
that both weakly converge to $0$. Now it is easy to deduce, 
that if a Banach space $E^*$ has the Dunford-Pettis property, then so does $E$.
In his seminal work~\cite{GrothApllFaiblCompSpCK} Grothendieck showed that all
$L_1$-spaces and $C(K)$-spaces have this property. The Dunford-Pettis property
passes to complemented subspaces [\cite{FabHabBanSpTh}, proposition 13.44]. This
property behaves badly with reflexive spaces: since the unit ball of a reflexive
space is weakly compact [\cite{MeggIntroBanSpTh}, theorem 2.8.2], then reflexive
Banach space with the Dunford-Pettis property has norm compact unit ball and
therefore this space is finite dimensional. 

To introduce the next Banach geometric property we need definitions of Banach
lattice and unconditional Schauder basis. 

A real Riesz space $E$ is a vector space over $\mathbb{R}$ with the structure of
partially ordered set such that $x\leq y$ implies $x+z\leq y+z$ for every
$x,y,z\in E$ and $ax\geq 0$ for every $x\geq 0$, $a\in\mathbb{R}_+$. A partially
ordered set is a lattice if any two elements ${x,y}$ have the least upper bound
$x\vee y$ and the greatest lower bound $x\wedge y$. A real vector lattice is
real Riesz space which is lattice as partially ordered set. If $E$ is a real
vector lattice, then for every $x\in E$ we define its absolute value by equality
$|x|:=x\vee(-x)$. A complex vector lattice $E$ is a vector space over
$\mathbb{C}$ such that there exists a real vector subspace
$\operatorname{Re}(E)$ which is real vector lattice and
\begin{enumerate}[label = (\roman*)]
  \item for any $x\in E$ there are unique
  $\operatorname{Re}(x),\operatorname{Im}(x)\in \operatorname{Re}(E)$ such that
  $x=\operatorname{Re}(x)+i\operatorname{Im}(x)$;

  \item for any $x\in E$ there exist an absolute value $|x|:=\sup
  \{\operatorname{Re}(e^{i\theta}x):\theta\in\mathbb{R} \}$.
\end{enumerate}

A Banach lattice is a Banach space with the structure of the complex vector
lattice such that $\Vert x\Vert\leq \Vert y\Vert$ whenever $|x|\leq |y|$. A
classical example of Banach lattice $E$ is an $L_p$-space or a $C(K)$-space. In
both cases $\operatorname{Re}(E)$ consist of real valued functions in $E$. If 
$ \{E_\lambda:\lambda\in\Lambda \}$ is a family of Banach lattices then for any
$1\leq p\leq +\infty$ or $p=0$ their $\bigoplus_p$-sum is a Banach lattice with
lattice operation defined as $x\leq y$ if $x_\lambda\leq y_\lambda$ for all
$\lambda\in\Lambda$, where 
$x,y\in\bigoplus_p \{ E_\lambda:\lambda\in\Lambda \}$. 
The dual space $E^*$ of a Banach lattice $E$ is again a Banach lattice with
lattice operation defined by $f\leq g$ if $f(x)\leq g(x)$ for all $x\geq 0$,
where $f,g\in  E^*$. A very nice account of Banach lattices can be found in
[\cite{LaceyIsomThOfClassicBanSp}, section 1].

The property of being a Banach lattice puts some restrictions on the geometry of
the space~\cite{SherOrderInOpAlg},~\cite{KadOrderPropOfBoundSAOps}. To explain
the Banach geometric reason of this phenomena we need the definition of an
unconditional Schauder basis. Let $E$ be a Banach space. A collection of
functionals ${(f_\lambda)}_{\lambda\in\Lambda}$ in $E^*$ is called 
a biorthogonal system for vectors ${(x_\lambda)}_{\lambda\in\Lambda}$ 
from $E$ if $f_\lambda(x_{\lambda'})=1$ whenever $\lambda=\lambda'$ 
and $0$ otherwise. A collection ${(x_\lambda)}_{\lambda\in\Lambda}$ 
in $E$ is called an unconditional Schauder basis if there exists 
a biorthogonal system ${(f_\lambda)}_{\lambda\in\Lambda}$ in $E^*$ 
for it such that the series
$\sum_{\lambda\in\Lambda} f_\lambda(x)x_\lambda$ unconditionally converges to
$x$ for any $x\in E$. All $\ell_p$-spaces with $1\leq p<+\infty$ have an
unconditional Schauder basis, for example, it is
${(\delta_\lambda)}_{\lambda\in\Lambda}$. A typical example of space without
unconditional basis is $C([0,1])$. Even more this Banach space can not even be a
subspace of the space with unconditional basis [\cite{KalAlbTopicsBanSpTh},
proposition 3.5.4].  Any unconditional Schauder basis
${(x_\lambda)}_{\lambda\in\Lambda}$ in $E$ satisfy the following property
[\cite{DiestAbsSumOps}, proposition 1.6]: there exists a constant $\kappa\geq 1$
such that
$$
\left\Vert \sum_{\lambda\in\Lambda}t_\lambda f_\lambda(x)x_\lambda\right\Vert
\leq
\kappa\left\Vert \sum_{\lambda\in\Lambda}f_\lambda(x)x_\lambda\right\Vert
$$
for all $x\in E$ and $t\in\ell_\infty(\Lambda)$. The least such constant
$\kappa$ among all unconditional Schauder bases of $E$ is denoted by
$\kappa(E)$. Similar constant could be defined for Banach spaces without
unconditional Schauder bases. The local unconditional constant $\kappa_u(E)$ of
Banach space $E$ is defined to be the infimum of all scalars $c$ with the
following property: given any finite dimensional subspace $F$ of $E$ there
exists a Banach space $G$ with unconditional Schauder basis and two bounded
linear operators $S:F\to G$, $T:G\to E$ such that $TS|^{F}=1_F$ and $\Vert
T\Vert\Vert S\Vert\kappa(G)\leq c$. We say that a Banach space $E$ has the local
unconditional structure property (the l.u.st.\ property for short) if
$\kappa_u(E)$ is finite. Clearly any Banach space $E$ with unconditional
Schauder basis has the l.u.st.\ property with $\kappa_u(E)=\kappa(E)$. In
particular, all finite dimensional Banach spaces have the l.u.st.\ property.
Though a general Banach lattice $E$ may not have an unconditional Schauder basis
it is still has the l.u.st.\ property with $\kappa_u(E)=1$
[\cite{DiestAbsSumOps}, theorem 17.1]. Directly from the definition it follows
that the l.u.st.\ property is preserved by complemented subspaces. More
precisely: if $F$ is a $c$-complemented subspace of $E$, 
then $\kappa_u(F)\leq c\kappa_u(E)$. 
Therefore all complemented subspaces of Banach lattices have the
l.u.st.\ property. This sufficient condition is not far from criterion
[\cite{DiestAbsSumOps}, theorem 17.5]: a Banach space $E$ has the l.u.st.
property iff $E^{**}$ is topologically isomorphic to a complemented subspace of
some Banach lattice. As the corollary of this criterion we get that $E$ has the
l.u.st.\ property iff so does $E^*$ [\cite{DiestAbsSumOps}, corollary 17.6].

The last property we shall discuss is a well known approximation property
introduced by Grothendieck in~\cite{GrothProdTenTopNucl}. We say that a Banach
space $E$ has the approximation property if for any compact set $K\subset E$ and
any $\epsilon>0$ there exists a finite rank operator $T:E\to E$ such 
that $\Vert T(x)-x\Vert<\epsilon$ for all $x\in K$. If $T$ can be 
chosen with $\Vert T\Vert\leq c$, then $E$ is said to have the $c$-bounded
approximation property. The metric approximation property is another name for
$1$-bounded approximation property. We say that $E$ has the bounded
approximation property if $E$ has the $c$-bounded approximation property for
some $c\geq 1$. None of these properties are preserved by subspaces or quotient
spaces, but the approximation property and the bounded approximation properties
are inherited by complemented subspaces [\cite{DefFloTensNorOpId}, exercise
5.5]. All $L_p$-spaces and $C(K)$-spaces have the metric approximation property
[\cite{DefFloTensNorOpId}, section 5.2(3)], but their subspaces may fail the
approximation property [\cite{DefFloTensNorOpId}, section 5.2(1)]. Any Banach
space with unconditional Schauder basis has the approximation property
[\cite{RyanIntroTensNormsBanSp}, example 4.4]. If $E^*$ has the approximation
property, then so does $E$ [\cite{DefFloTensNorOpId}, corollary 5.7.2]. The
reason why approximation property is so important is rather simple --- it has a
lot of equivalent reformulations that involve many nice properties of Banach
spaces. For example, the following properties of Banach space $E$ are equivalent
[\cite{DefFloTensNorOpId}, sections 5.3, 5.6]:
\begin{enumerate}[label = (\roman*)]
  \item $E$ has the approximation property;

  \item the natural mapping $Gr:E^*\projtens E\to\mathcal{N}(E)$ is an isometric
  isomorphism;

  \item for any Banach space $F$ every compact operator $T:F\to E$ can be
  approximated in the operator norm by finite rank operators.
\end{enumerate}

There is much more to list, but we confine ourselves with these three
properties.


\subsection{
  Banach algebras and their modules
}\label{SubSectionBanachAlgebrasAndTheirModules}

A thorough treatment of Banach algebras and Banach modules can be found
in~\cite{HelBanLocConvAlg} or~\cite{HelHomolBanTopAlg}
or~\cite{DalBanAlgAutCont}. We shall describe only the bare minimum required for
us.

A Banach algebra $A$ is an associative algebra over $\mathbb{C}$ which is a
Banach space and the multiplication bilinear operator 
$\cdot:A\times A\to A:(a,b)\mapsto ab$ is of the norm at most $1$. A typical
example of commutative Banach algebra is the algebra of continuous functions on
a compact Hausdorff space with pointwise multiplication. A typical non
commutative example is the algebra of bounded linear operators on the Hilbert
space with composition in the role of multiplication. Both examples belong to a
very important class of $C^*$-algebras to be discussed below. By $\langle$~left
/ right / two-sided~$\rangle$ ideal $I$ of a Banach algebra $A$ we always mean a
closed subalgebra of $A$ such that $\langle$~$ax$ / $xa$ / $ax$ and
$xa$~$\rangle$ belong to $I$ for all $a\in A$ and $x\in I$.

We say that an element $p$ of a Banach algebra $A$ is a $\langle$~left /
right~$\rangle$ identity of $A$ if $\langle$~$pa=a$ / $ap=a$~$\rangle$ for all
$a\in A$. The element which is both left and right identity is called the
identity and denoted by $e_A$. In general we do not assume that Banach algebras
are unital, i.e.\ has an identity. Even if a Banach algebra $A$ is unital we do
not require its identity to be of norm $1$. We use notation
$A_+=A\bigoplus_1\mathbb{C}$ for the standard unitization of Banach algebras.
The multiplication in $A_+$ is defined as $(a\oplus_1 z)(b\oplus_1
w)=(ab+wa+zb)\oplus_1 zw$, for $a,b\in A$ and $z,w\in\mathbb{C}$. Clearly
$(0,1)$ is the identity of $A_+$. By $A_\times$ we denote the conditional
unitization of $A$, i.e. $A_\times=A$ if $A$ has identity of norm one and
$A_\times=A_+$ otherwise. Even in the absence of identity in case of Banach
algebras there are good substitutes for it which are called approximate
identities. We say that a net ${(e_\nu)}_{\nu\in N}$ in $A$ is a 
$\langle$~left / right / two-sided~$\rangle$ approximate identity if 
$\langle$~$\lim_\nu e_\nu a=a$ / $\lim_\nu ae_\nu=a$ / 
$\lim_\nu e_\nu a=\lim_\nu ae_\nu=a$~$\rangle$ for
all $a\in A$. In all these three definitions convergence of nets is understood
in the norm topology. If we will consider weak topology, we shall get
definitions of left, right and two-sided weak approximate identities. We say
that an approximate identity ${(e_\nu)}_{\nu\in N}$ is $c$-bounded if
$\sup_\nu\Vert e_\nu\Vert\leq c$. An approximate identity ${(e_\nu)}_{\nu\in N}$
is called $\langle$~bounded / contractive~$\rangle$ if it is
$\langle$~$1$-bounded / $c$-bounded for some $c\geq 1$~$\rangle$. Occasionally
we will use the following simple fact: if $A$ is a Banach algebra with
$\langle$~left / right~$\rangle$ identity $p$ and $\langle$~right /
left~$\rangle$ approximate identity ${(e_\nu)}_{\nu\in N}$, then $A$ is unital
with identity $p$ of norm $\lim_\nu\Vert e_\nu\Vert$. 

If $A$ is a unital Banach algebra we define the spectrum
$\operatorname{sp}_A(a)$ of element $a$ in $A$ as the set of all complex numbers
$z$ such that $a-ze_A$ is not invertible in $A$. For Banach algebras the
spectrum of any element is a non empty compact subset of $\mathbb{C}$
[\cite{HelBanLocConvAlg}, corollary 2.1.16].

A character on a Banach algebra $A$ is a non zero linear homomorphism
$\varkappa:A\to\mathbb{C}$. All characters are continuous and are contained in
the unit ball of $A^*$  [\cite{HelBanLocConvAlg}, theorem 1.2.6]. Therefore we
may consider the set of characters with the induced weak$^*$ topology. The
resulting topological space is Hausdorff and locally compact. It is called the
spectrum of Banach algebra $A$ and denoted by $\operatorname{Spec}(A)$. If $A$
is unital then its spectrum is compact [\cite{HelBanLocConvAlg}, theorem
1.2.50]. Now for a given Banach algebra $A$ with non empty spectrum we can
construct a contractive homomorphism $\Gamma_A:A\to
C_0(\operatorname{Spec}(A)):a\mapsto(\varkappa\mapsto \varkappa(a))$ called the
Gelfand transform of $A$ [\cite{HelBanLocConvAlg}, theorem 4.2.11]. The kernel
of this homomorphism is called the Jacobson's radical and denoted by
$\operatorname{Rad}(A)$. For Banach algebra $A$ with empty spectrum we define
$\operatorname{Rad}(A)=A$. If $\operatorname{Rad}(A)= \{0 \}$, then $A$ is
called semisimple. By Shilov's idempotent theorem [\cite{KaniBanAlg}, section
3.5] any semisimple Banach algebra with compact spectrum is unital.

Most of standard constructions for Banach spaces have their counterparts for
Banach algebras. For example $\bigoplus_p$-sum of Banach algebras endowed with
componentwise multiplication is a Banach algebra, the quotient of a given Banach
algebra by its two sided ideal is a Banach algebra. Even the projective tensor
product of two Banach algebras is a Banach algebra with multiplication defined
on elementary tensors the same way as in pure algebra.

We shall proceed to the discussion of the most important class of Banach
algebras. Let $A$ be an associative algebra over $\mathbb{C}$, then a conjugate
linear operator ${}^*:A\to A$ is called an involution if ${(ab)}^*=b^*a^*$ and
$a^{**}=a$ for all $a,b\in A$. Algebras with involution are called
${}^*$-algebras. Homomorphisms between ${}^*$-algebras that preserve involution
are called ${}^*$-homomorphisms. A Banach algebra with isometric involution is
called a ${}^*$-Banach algebra. An example of such algebra is the Banach algebra
of bounded linear operators on Hilbert space with operation of taking the
Hilbert adjoint operator in the role of involution. In fact there is much more
to this algebra than one could expect. We say that a ${}^*$-Banach algebra $A$
is a $C^*$-algebra if it satisfies $\Vert a^*a\Vert=\Vert a\Vert^2$ for all
$a\in A$. One of the biggest advantages of $C^*$-algebras is their celebrated
representation theorems by Gelfand and Naimark. The first representation theorem
[\cite{HelBanLocConvAlg}, theorem 4.7.13] states that any commutative
$C^*$-algebra $A$ is isometrically isomorphic as ${}^*$-algebra to
$C_0(\operatorname{Spec}(A))$. The second theorem [\cite{HelBanLocConvAlg},
theorem 4.7.57] gives a description of generic $C^*$-algebras as closed
${}^*$-Banach subalgebras of $\mathcal{B}(H)$ for some Hilbert space $H$. Such
representation is not unique, but a norm (if it exists) that turn a
${}^*$-algebra into a $C^*$-algebra is always unique. If a ${}^*$-subalgebra of
$\mathcal{B}(H)$ is weak${}^*$ closed it is called a von Neumann algebra. If a
$C^*$-algebra is isomorphic as ${}^*$-algebra to a von Neumann algebra it is
called a $W^*$-algebra. By well known Sakai's theorem [\cite{BlackadarOpAlg},
theorem III.2.4.2] each $C^*$-algebra which is dual as Banach space is a
$W^*$-algebra, but beware a $W^*$-algebra may be represented as non weak${}^*$
closed ${}^*$-subalgebra in $\mathcal{B}(H)$ for some Hilbert space $H$. 

A lot of standard constructions pass to $C^*$-algebras from Banach algebras, but
not all. For example a $\bigoplus_\infty$-sum of $C^*$-algebras is again a
$C^*$-algebra. A quotient of $C^*$-algebra by closed two-sided ideal is a
$C^*$-algebra too. Meanwhile the projective tensor product of $C^*$-algebras is
rarely a $C^*$-algebra, though there a lot of norms that may turn their
algebraic tensor product into a $C^*$-algebra. In this book we shall exploit one
specific and highly important for $C^*$-algebras construction of matrix
algebras. For a given $C^*$-algebra $A$ by $M_n(A)$ we denote the linear space
of $n\times n$ matrices with entries in $A$. In fact $M_n(A)$ is ${}^*$-algebra
with involution and multiplication defined by equalities 
$$
{(ab)}_{i,j}=\sum_{k=1}^n a_{i,k}b_{k,j}
\qquad\qquad
{(a^*)}_{i,j}=(a_{j,i}^*)
$$ 
for all $a,b\in M_n(A)$ and $i,j\in\mathbb{N}_n$. There is a unique norm on
$M_n(A)$ that makes it a $C^*$-algebra [\cite{MurphyCStarAlgsAndOpTh}, theorem
3.4.2]. Obviously, $M_n(\mathbb{C})$ is isometrically isomorphic as
${}^*$-algebra to $\mathcal{B}(\ell_2(\mathbb{N}_n))$. From
[\cite{MurphyCStarAlgsAndOpTh}, remark 3.4.1] it follows that the natural
embeddings 
$i_{k,l}:A\to M_n(A):a\mapsto{(a\delta_{i,k}\delta_{j,l})}_{i,j\in\mathbb{N}_n}$
and projections
$\pi_{k,l}:M_n(A)\to A:a\mapsto a_{k,l}$ are continuous. Therefore for a given
bounded linear operator $\phi:A\to B$ between $C^*$-algebras $A$ and $B$ the
linear operator 
$$
M_n(\phi):M_n(A)\to M_n(B):a\mapsto {(\phi(a_{i,j}))}_{i,j\in\mathbb{N}_n}
$$ 
is also bounded. Even more if $\phi$ is an $A$-morphism or ${}^*$-homomorphism,
then so does $M_n(\phi)$. Finally we shall mention two isometric isomorphisms
that will be of use:
$$
M_n\left(\bigoplus\nolimits_\infty \{A_\lambda:\lambda\in\Lambda \}\right)
\isom{\mathbf{Ban}_1}
\bigoplus\nolimits_\infty \{M_n\left(A_\lambda\right):\lambda\in\Lambda \},
$$
$$
M_n(C(K))\isom{\mathbf{Ban}_1}C(K,M_n(\mathbb{C}))
$$

Now a few facts on approximate identities and identities of $C^*$-algebras and
their ideals. Any two-sided closed ideal of $C^*$-algebra has a two-sided
contractive positive approximate identity [\cite{HelBanLocConvAlg}, theorem
4.7.79], and any left ideal has a right contractive positive approximate
identity. In some cases even an approximate identity is not enough, so for this
situation there is a procedure to endow a $C^*$-algebra with identity and
preserve $C^*$-algebraic structure [\cite{HelBanLocConvAlg}, proposition 4.7.6].
This type of unitization we shall denote as $A_\#$. Till the end of this
paragraph we assume that $A$ is a unital $C^*$-algebra. An element $a\in A$ is
called a projection (or an orthogonal projection) if $a=a^*=a^2$, self-adjoint
if $a=a^*$, positive if $a=b^*b$ for some $b\in A$, unitary if $a^*a=aa^*=e_A$.
The set $A_{pos}$ of all positive elements of $A$ is a closed cone in $A$. If an
element $a\in A$ is $\langle$~self-adjoint / positive~$\rangle$, then
$\langle$~$\operatorname{sp}_A(a)\subset[-\Vert a\Vert, \Vert a\Vert]$ /
$\operatorname{sp}_A(a)\subset[0,\Vert a\Vert]$~$\rangle$. For a given
self-adjoint element $a\in A$, there always exists the isometric
${}^*$-homomorphism $\operatorname{Cont}_a:C(\operatorname{sp}_A(a))\to A$ such
that $\operatorname{Cont}_a(f)=a$, where
$f:\operatorname{sp}_A(a)\to\mathbb{C}:t\mapsto t$. It is called the continuous
functional calculus [\cite{HelBanLocConvAlg}, theorem 4.7.24]. Loosely speaking
it allows to take continuous functions of self-adjoint elements of
$C^*$-algebras, so following standard convention we shall write $f(a)$ instead
of $\operatorname{Cont}_a(f)$. Another related result called the spectral
mapping theorem allows to compute the spectrum of elements given by continuous
functional calculus: $\operatorname{sp}_A(f(a))=f(\operatorname{sp}_A(a))$.

We proceed to the discussion of more general objects --- Banach modules. Let $A$
be a Banach algebra, we say that $X$ is a $\langle$~left / right~$\rangle$
Banach $A$-module if $X$ is a Banach space endowed with bilinear operator
$\langle$~$\cdot:A\times X\to X$ / $\cdot: X\times A\to X$~$\rangle$ of norm at
most $1$ (called a module action), such that 
$\langle$~$a\cdot(b\cdot x)=ab\cdot x$ / 
$(x\cdot a)\cdot b=x\cdot ab$~$\rangle$ for all $a,b\in A$ and $x\in X$.
Any Banach space $E$ can be turned into a $\langle$~left / right~$\rangle$
Banach $A$-module be defining $\langle$~$a\cdot x=0$ / $x\cdot a=0$~$\rangle$
for all $a\in A$ and $x\in E$. Any Banach algebra $A$ can be regarded as a left
and right Banach $A$-module --- the module action coincides with algebra
multiplication. Of course, there are more meaningful examples too.  Usually we
shall discuss only left Banach modules since for their right sided counterparts
all definitions and results are similar. We call a left Banach module $X$ over
unital Banach algebra $A$ unital if $e_A\cdot x=x$ for all $x\in X$. For a given
left Banach $A$-module $X$ and $S\subset A$, $M\subset X$ we define their
products $S\cdot M= \{a\cdot x:a\in S, x\in M \}$, 
$SM=\operatorname{span} (S\cdot M)$ and algebraic annihilators 
$$
S^{\perp M}= \{a\in S:a\cdot M= \{0 \} \},
\qquad\qquad
{}^{S\perp}M= \{x\in M: S\cdot x= \{0 \} \}.
$$ 
The essential and annihilator
parts of $X$ are defined as $X_{ess}=\operatorname{cl}_X(A X)$,
$X_{ann}={}^{A\perp}X$. The module $X$ is called $\langle$~faithful /
annihilator / essential~$\rangle$ if $\langle$~${}^{A\perp}X= \{0 \}$ /
$X=X_{ann}$ / $X=X_{ess}$~$\rangle$. Clearly, 
${(X^*)}^{\perp S}={\operatorname{cl}_X(S X)}^\perp$. Setting $S=A$ we get that 
$X$ is an essential $A$-module iff $X^*$ is a faithful $A$-module.

Let $X$ and $Y$ be $\langle$~left / right~$\rangle$ Banach $A$-modules. We say
that a linear operator $\phi:X\to Y$ is an $A$-module map of $\langle$~left /
right~$\rangle$ modules if $\langle$~$\phi(a\cdot x)=a\cdot \phi(x)$ /
$\phi(x\cdot a)=\phi(x)\cdot a$~$\rangle$ for all $a\in A$ and $x\in X$. A
bounded $A$-module map is called an $A$-morphism. The set of $A$-morphisms
between $\langle$~left / right~$\rangle$ $A$-modules $X$ and $Y$ we denote as
$\langle$~${}_A\mathcal{B}(X,Y)$ / $\mathcal{B}_A(X,Y)$~$\rangle$. Note that if
$X$ and $Y$ are $\langle$~left / right~$\rangle$ annihilator $A$-modules, then
$\langle$~${}_A\mathcal{B}(X,Y)=\mathcal{B}(X,Y)$ /
$\mathcal{B}_A(X,Y)=\mathcal{B}(X,Y)$~$\rangle$.

By $\langle A-\mathbf{mod}$ / $\mathbf{mod}-A\rangle$ we shall denote the
category of $\langle$~left / right~$\rangle$ $A$-modules with continuous
$A$-module maps in the role of morphisms. By $\langle$~$A-\mathbf{mod}_1$ /
$\mathbf{mod}_1-A$~$\rangle$ we denote its subcategory of
$\langle$~$A-\mathbf{mod}$ / $\mathbf{mod}-A$~$\rangle$ with the same objects
and contractive morphisms only. Therefore 
$\langle$~$\operatorname{Hom}_{A-\mathbf{mod}}(X,Y)={}_A\mathcal{B}(X,Y)$ /
$\operatorname{Hom}_{\mathbf{mod}-A}(X,Y)=\mathcal{B}_A(X,Y)$~$\rangle$. 

As in any category we can speak of retraction and     coretractions in the
category of Banach modules. But for this particular case we have several
refinements for the standard definitions. An $A$-morphism $\xi:X\to Y$ is called
a $\langle$~$c$-retraction / $c$-coretraction~$\rangle$ if there exist an
$A$-morphism $\eta:Y\to X$ such that $\langle$~$\xi\eta=1_Y$ /
$\eta\xi=1_X$~$\rangle$ and $\Vert\xi\Vert\Vert\eta\Vert\leq c$. From the
definition it follows that composition of $\langle$~$c_1$- and $c_2$-retraction
/ $c_1$- and $c_2$-coretraction~$\rangle$ gives a $\langle$~$c_1c_2$-retraction
/ $c_1c_2$-coretraction~$\rangle$. Clearly, the adjoint of
$\langle$~$c$-retraction / $c$-coretraction~$\rangle$ is a
$\langle$~$c$-coretraction / $c$-retraction~$\rangle$. Finally, an $A$-morphism
$\xi:X\to Y$ is called a $c$-isomorphism if there exists an $A$-morphism
$\eta:Y\to X$ such that $\xi\eta=1_Y$, $\eta\xi=1_X$ and
$\Vert\xi\Vert\Vert\eta\Vert\leq c$. In this case we say that $A$-modules $X$
and $Y$ are $c$-isomorphic.

Now we mention several constructions over Banach modules that we will encounter
in this book.  Any left Banach $A$-module can be regarded as unital Banach
module over $A_+$, and we put by definition $(a\oplus_1 z)\cdot x=a\cdot x+zx$
for all $a\in A$, $x\in X$ and $z\in\mathbb{C}$. Most constructions used for
Banach spaces transfer to Banach modules.  We say that a linear subspace $ Y$ of
a left Banach $A$-module $X$ is a left $A$-submodule of $X$ 
if $A\cdot Y\subset Y$. For example, any left ideal $I$ of a 
Banach algebra $A$ is a left $A$-submodule of $A$. If $Y$ is a closed left
$A$-submodule of the left Banach $A$-module $X$, then the Banach space $X/Y$ can
be endowed with the structure of the left Banach $A$-module, just put by
definition $a\cdot(x+Y)=a\cdot x+Y$ for all $a\in A$ and $x+Y\in X/Y$. This
object is called the quotient $A$-module. Quotient modules of the form $A/I$,
where $I$ is a left ideal of $A$, are called cyclic modules. For motivation for
this term see [\cite{HelBanLocConvAlg}, proposition 6.2.2]. Clearly, $X/X_{ess}$
is an annihilator $A$-module. If $X$ is a left Banach $A$-module and $E$ is a
Banach space, then $\langle$~$\mathcal{B}(X,E)$ / $\mathcal{B}(E,X)$~$\rangle$
is a $\langle$~right / left~$\rangle$ Banach $A$-module with module action
defined by $\langle$~$(T\cdot a)(x)=T(a\cdot x)$ for all $a\in A$, $x\in X$ and
$T\in\mathcal{B}(X, E)$ / $(a\cdot T)(x)=a\cdot T(x)$ for all $a\in A$, $x\in E$
and $T\in\mathcal{B}(E, X)$~$\rangle$. In particular, $X^*$ is a right Banach
$A$-module. If $ \{X_\lambda:\lambda\in\Lambda \}$ is a family of left Banach
$A$-modules and $1\leq p\leq +\infty$ or $p=0$, then their $\bigoplus_p$-sum is
a left Banach $A$-module with module action defined 
by $a\cdot x=\bigoplus_p \{a\cdot x_\lambda:\lambda\in\Lambda \}$, 
where $a\in A$, $x\in\bigoplus_p \{X_\lambda:\lambda\in\Lambda \}$. 
Again, as in Banach space theory, any family
of $A$-modules admits the $\langle$~product / coproduct~$\rangle$ in
$A-\mathbf{mod}_1$ which in fact is their $\langle$~$\bigoplus_1$-sum /
$\bigoplus_\infty$-sum~$\rangle$. The category $A-\mathbf{mod}$ admits
$\langle$~products / coproducts~$\rangle$ only for finite families of objects.
Similar statements are valid for $\mathbf{mod}-A$ and $\mathbf{mod}_1-A$.

Projective tensor product of Banach spaces also has its module version, it is
called the projective module tensor product. Assume $X$ is a right and $Y$ is a
left Banach $A$-module. Their projective module tensor product
$X\projmodtens{A}Y$ is defined as quotient space $X\projtens Y / N$ where
$N
=\operatorname{cl}_{X\projtens Y}(\operatorname{span} \{
  x\cdot a\projtens y-x\projtens a\cdot y
  :x\in X,y\in Y,a\in A
 \})$. 
Let $\phi\in\mathcal{B}_A(X_1,X_2)$ and $\psi\in{}_A\mathcal{B}(Y_1,Y_2)$ 
for right Banach $A$-modules $X_1$, $X_2$ and left 
Banach $A$-modules $Y_1$, $Y_2$, then there exists a unique bounded 
linear operator 
$$
\phi\projmodtens{A} \psi:X_1\projmodtens{A} Y_1\to X_2\projmodtens{A} Y_2
$$ 
such that
$(\phi\projmodtens{A} \psi)(x\projmodtens{A} y)=\phi(x)\projmodtens{A} \psi(y)$
for all $x\in X_1$ and $y\in Y_1$. Even more 
$\Vert \phi\projmodtens{A} \psi\Vert\leq\Vert \phi\Vert\Vert \psi\Vert$. 
The projective module tensor product has its own universal property: for any
right Banach $A$-module $X$, any left Banach $A$-module $Y$ and any Banach space
$E$ there exists an isometric isomorphism:
$$
\mathcal{B}(X\projmodtens{A}Y,E)
\isom{\mathbf{Ban}_1}
\mathcal{B}_{bal}(X\times Y, E)
$$
where $\mathcal{B}_{bal}(X\times Y, E)$ stands for the Banach space of bilinear
operators $\Phi:X\times Y\to E$ satisfying $\Phi(x\cdot a,y)=\Phi(x,a\cdot y)$
for all $x\in X$, $y\in Y$ and $a\in A$. Such bilinear operators are called
balanced. Furthermore we have two (natural in $X$, $Y$ and $E$) isometric
isomorphisms:
$$
\mathcal{B}(X\projmodtens{A}Y,E)
\isom{\mathbf{Ban}_1}
{}_A\mathcal{B}(Y,\mathcal{B}(X,E))
\isom{\mathbf{Ban}_1}
\mathcal{B}_A(X,\mathcal{B}(Y,E))
$$
Analogously to Banach space theory we may define the following functors:
$$
\mathcal{B}(-,E):A-\mathbf{mod}\to \mathbf{mod}-A
\qquad\qquad
\mathcal{B}(E,-):\mathbf{mod}-A\to \mathbf{mod}-A
$$
$$
-\projmodtens{A} Y:\mathbf{mod}-A\to\mathbf{Ban}
\qquad\qquad
X\projmodtens{A} -:A-\mathbf{mod}\to\mathbf{Ban}
$$
where $E$ is a Banach space, $X$ is a right $A$-module and $Y$ is a left
$A$-module. All these functors have their counterparts for categories
$A-\mathbf{mod}_1$, $\mathbf{mod}_1-A$. 

In some cases it is possible to explicitly compute the projective module tensor
product. For example [\cite{HelBanLocConvAlg}, proposition 6.3.24] if $I$ is a
left closed ideal of $A_+$ with left $\langle$~contractive / bounded~$\rangle$
approximate identity, and $X$ is a left Banach module then the linear operator 
$$
i_{I,X}
:I\projmodtens{A}X \to \operatorname{cl}_X(IX)
:a\projmodtens{A} x\mapsto a\cdot x
$$
is $\langle$~a topological isomorphism / an isometric isomorphism~$\rangle$ of
Banach spaces. If $I$ is a two-sided ideal, then $i_{I,X}$ is a morphism of left
$A$-modules. We call reduced all left Banach modules of the form
$A\projmodtens{A}X$. 

Most of what have been said here can be generalized to Banach bimodules, but in
this book we shall not exploit them much. In those rare case when we shall
encounter bimodules, the respective definitions and results are easily
recoverable from their one sided counterparts.


\section{
  Banach homology
}\label{SectionBanachHomology}


\subsection{
  Relative homology
}\label{SubSectionRelativeHomology}

Further we briefly discuss ABCs of relative homology introduced and intensively
studied by Helemskii. In this text we shall use a little bit more involved
definition given by White in~\cite{WhiteInjmoduAlg}. It is equivalent to the one
given by Helemskii. Fix an arbitrary Banach algebra $A$. We say that a
morphism $\xi:X\to Y$ of left $A$-modules $X$ and $Y$ is a $c$-relatively 
admissible epimorphism if it admits a right inverse bounded linear operator of 
the norm at most $c$. A left $A$-module $P$ is called $C$-relatively projective 
if for any $c$-relatively admissible epimorphism $\xi:X\to Y$ and 
for any  $A$-morphism $\phi:P\to Y$ there exists 
an $A$-morphism $\psi:P\to X$ such 
that $\Vert \psi\Vert\leq cC\Vert \phi\Vert$ and the diagram
$$
\xymatrix{
& {X} \ar[d]^{\xi}\\  
{P} \ar@{-->}[ur]^{\psi} \ar[r]^{\phi} &{Y}}  
$$
is commutative. Such $A$-morphism $\psi$ is called a lifting of $\phi$ and it is
not unique in general. Similarly,  we say that a morphism $\xi:Y\to X$ of right
$A$-modules $X$ and $Y$ is a $c$-relatively admissible monomorphism if it admits
a left inverse bounded linear operator of the norm at most $c$. A right 
$A$-module $J$ is called $C$-relatively injective if for any $c$-relatively 
admissible monomorphism $\xi:Y\to X$ and for 
any $A$-morphism $\phi:Y\to J$ there exists an $A$-morphism $\psi:X\to J$ such 
that $\Vert\psi\Vert\leq cC\Vert\phi\Vert$ and the diagram
$$
\xymatrix{
& {X} \ar@{-->}[dl]_{\psi} \\  
{J} &{Y} \ar[l]_{\phi} \ar[u]_{\xi}}  
$$
is commutative. Such $A$-morphism $\psi$ is called an extension of $\phi$ and it
is not unique in general.

Another closely related homological property is flatness. A
left $A$-module $F$ is called $C$-relatively flat if for each
$c$-relatively admissible monomorphism $\xi:X\to Y$ the operator 
$\xi\projmodtens{A} 1_F:X\projmodtens{A} F\to Y\projmodtens{A} F$ is
$cC$-topologically injective. A left $A$-module $F$ is $C$-relatively flat iff
$F^*$ is $C$-relatively injective. 

We say that an $A$-morphism is relatively admissible if it is $c$-relatively 
admissible for some $c\geq 1$. Similarly, we say that an $A$-module $X$ is 
relatively $\langle$~projective / injective / flat~$\rangle$ if it is  
$C$-relatively $\langle$~projective / injective / flat~$\rangle$ for 
some $C\geq 1$.

The reason for considering relatively admissible morphisms in these definitions
is the intention of separation of Banach geometric and algebraic motives that 
may prevent an $A$-module to be relatively projective, injective or flat. 
A straightforward check shows that any $c$-retract of 
$C$-relatively $\langle$~projective / injective / flat~$\rangle$ $A$-module is 
again $cC$-relatively $\langle$~projective / injective / flat~$\rangle$. 
Obviously, any $c$-relatively admissible $\langle$~epimorphism
/ monomorphism~$\rangle$ $\langle$~onto / from~$\rangle$ a $C$-relatively
$\langle$~projective / injective~$\rangle$ $A$-module is a 
$\langle$~$cC$-retraction / $cC$-coretraction~$\rangle$.

A special class of relatively $\langle$~projective / injective~$\rangle$
$A$-modules is the class of so-called 
relatively $\langle$~free / cofree~$\rangle$ modules. 
These are modules of the form $\langle$~$A_+\projtens E$ /
$\mathcal{B}(A_+,E)$~$\rangle$ for some Banach space $E$. Their main feature is
the following: for any $A$-module $X$ there exists a relatively $\langle$~free /
cofree~$\rangle$ $A$-module $F$, which in fact is $\langle$~$A_+\projtens X$ /
$\mathcal{B}(A_+,X)$~$\rangle$ and a $1$-relatively admissible 
$\langle$~epimorphism $\xi:F\to X$ / monomorphism $\xi:X\to F$~$\rangle$. 
If $X$ is $C$-relatively
$\langle$~projective / injective~$\rangle$ we immediately get that $\xi$ is a
$\langle$~$C$-retraction / $C$-coretraction~$\rangle$. Therefore an $A$-module 
is $C$-relatively $\langle$~projective / injective~$\rangle$ iff it is 
a $C$-retract of relatively $\langle$~free / cofree~$\rangle$ $A$-module. 

It is worth to emphasize one more time that major nuance of relative Banach
homology is deliberate balance between algebra and topology in choice of
admissible morphisms. This choice allowed one to build homological theory with
some interesting phenomena with no analogs in pure algebra. We demonstrate one
example related to Banach algebras. Consider morphism of $A$-bimodules
$\Pi_A:A\projtens A\to A:a\projtens b\mapsto ab$. We say that a Banach algebra
$A$ is 
\begin{enumerate}[label = (\roman*)]
  \item $c$-relatively biprojective if $\Pi_A$ is a $c$-retraction of
  $A$-bimodules;

  \item $c$-relatively biflat if $\Pi_A^*$ is a $c$-coretraction of 
  $A$-bimodules;

  \item $c$-relatively contractible if $\Pi_{A_+}$ is a $c$-retraction of
  $A$-bimodules;

  \item $c$-relatively amenable if $\Pi_{A_+}^*$ is a $c$-coretraction of
  $A$-bimodules.
\end{enumerate}

We say that $A$ is relatively $\langle$~biprojective / biflat / contractive /
amenable~$\rangle$ if it is $c$-relatively $\langle$~biprojective / biflat /
contractive / amenable~$\rangle$ for some $c\geq 1$. The infimum of the
constants $c$ is called the $\langle$~biprojectivity / biflatness /
contractivity / amenability~$\rangle$ constant.  With slight modifications of
[\cite{HelBanLocConvAlg}, proposition 7.1.72] one can show that $A$ is
$c$-relatively $\langle$~contractible / amenable~$\rangle$ iff there exists
$\langle$~an element $d\in A\projtens A$ / a net 
${(d_\nu)}_{\nu\in N}\subset A\projtens A$~$\rangle$ with 
norm not greater than $c$ such that for all $a\in A$ holds 
$\langle$~$a\cdot d-d\cdot a=0$ and $a\Pi_A(d)=a$ / 
$\lim_\nu(a\cdot d_\nu-d_\nu\cdot a)=0$ 
and $\lim_\nu a\Pi_A(d_\nu)=a$~$\rangle$. Note that 
$\langle$~such element $d$ / such net ${(d_\nu)}_{\nu\in N}$~$\rangle$ is called
$\langle$~a diagonal / an approximate diagonal~$\rangle$. From homological point
of view, the main advantage of relatively $\langle$~biprojective / biflat /
contractible / amenable~$\rangle$ Banach algebras is that $\langle$~any reduced
/ any reduced / any / any~$\rangle$ left and right Banach $A$-module is
relatively $\langle$~projective / flat / projective / flat~$\rangle$
[\cite{HelBanLocConvAlg}, theorem 7.1.60]. As for flatness such phenomena is
typical for relative Banach homology, but not for the purely algebraic one.


\chapter{General theory}\label{ChapterGeneralTheory} 

\lhead{Chapter 2. \emph{General theory}} 


\section{
    Projectivity, injectivity and flatness
}\label{SectionProjectivityInjectivityAndFlatness}


\subsection{
    Metric and topological projectivity
}\label{SubSectionMetricAndTopologicalProjectivity}

In what follows $A$ denotes a not necessary unital Banach algebra. We
immediately start from the most important definitions in this book.

\begin{definition}[\cite{HelMetrFrQMod}, 
    definition 1.4;~\cite{WhiteInjmoduAlg}, 
    definition 2.4]\label{MetCTopProjMod} 
An $A$-module $P$ is 
called $\langle$~metrically / $C$-topologically~$\rangle$ projective if for any 
strictly $\langle$~coisometric / $c$-topologically surjective~$\rangle$
$A$-morphism $\xi:X\to Y$ and for any $A$-morphism $\phi:P\to Y$ 
there exists an $A$-morphism $\psi:P\to X$ such that $\xi\psi=\phi$  and
$\langle$~$\Vert\psi\Vert\leq\Vert\phi\Vert$ / 
$\Vert \psi\Vert\leq c C\Vert\phi\Vert$~$\rangle$. We say that an $A$-module 
$P$ is topologically projective if it is $C$-topologically projective for 
some $C\geq 1$.
\end{definition}

The task of constructing an $A$-morphism $\psi$ for a given $A$-morphisms $\phi$ 
and $\xi$ in the definition~\ref{MetCTopProjMod} is called a lifting problem 
and $\psi$ is called a lifting of $\phi$ along $\xi$.

A short but more involved equivalent definition 
of $\langle$~metric / $C$-topological~$\rangle$ projectivity is the following: 
an $A$-module $P$ is called $\langle$~metrically / $C$-topologically~$\rangle$ 
projective, if the functor
$\langle$~$\operatorname{Hom}_{A-\mathbf{mod}_1}(P,-)
:A-\mathbf{mod}_1\to\mathbf{Ban}_1$
/
$\operatorname{Hom}_{A-\mathbf{mod}}(P,-)
:A-\mathbf{mod}\to\mathbf{Ban}$~$\rangle$
maps strictly $\langle$~coisometric / $c$-topologically surjective~$\rangle$
$A$-morphisms into strictly $\langle$~coisometric / 
$c C$-topologically surjective~$\rangle$ operators. 

In category $\langle$~$A-\mathbf{mod}_1$ / $A-\mathbf{mod}$~$\rangle$ there
is a special class of $\langle$~metrically / $1$-topologically~$\rangle$
projecive modules of the form $A_+\projtens \ell_1(\Lambda)$ for 
some set $\Lambda$. They are called free modules. These modules play a crucial 
role in our studies of projectivity.

\begin{proposition}[\cite{WhiteInjmoduAlg}, 
    lemma 2.6]\label{MetCTopFreeMod} Let $\Lambda$ be an arbitrary 
set, then the left $A$-modules $A_+ \projtens \ell_1(\Lambda)$ 
and $A_\times \projtens \ell_1(\Lambda)$ 
are $\langle$~metrically / $1$-topologically~$\rangle$ projective. 
\end{proposition}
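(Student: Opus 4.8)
The plan is to reduce the lifting problem to the two universal properties that the module packages, and then to lift ``coordinatewise''. Write $U$ for a unital Banach algebra with $\Vert e_U\Vert=1$; this covers both $U=A_+$ (whose unit $1_{A_+}=0\oplus_1 1$ has norm $1$, the $A$-action being the restriction of left multiplication along $A\hookrightarrow A_+$) and $U=A_\times$, since by definition $A_\times$ is either $A_+$ or $A$ itself equipped with an identity of norm one. First I would record, for every left $A$-module $X$, a natural isometric isomorphism
\[
\Theta_X:{}_A\mathcal{B}(U\projtens\ell_1(\Lambda),X)\isom{\mathbf{Ban}_1}\bigoplus\nolimits_\infty\{X:\lambda\in\Lambda\},
\]
sending an $A$-morphism $\psi$ to the bounded family $(\psi(e_U\otimes\delta_\lambda))_{\lambda\in\Lambda}$. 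This factors through two steps. \emph{Freeness:} an $A$-morphism $\psi:U\projtens E\to X$ is determined by the bounded operator $T:E\to X$, $T(v)=\psi(e_U\otimes v)$, via $\psi(a\otimes v)=a\cdot T(v)$, where $a\in U$ acts on $X$ through the $A$-module structure (extended canonically to a unital $U$-action when $U=A_+$); since $\Vert e_U\Vert=1$ this correspondence is isometric, giving ${}_A\mathcal{B}(U\projtens E,X)\isom{\mathbf{Ban}_1}\mathcal{B}(E,X)$. \emph{The $\ell_1$ step:} with $E=\ell_1(\Lambda)$, a bounded operator $T:\ell_1(\Lambda)\to X$ is the same as the bounded family $(T\delta_\lambda)_{\lambda\in\Lambda}$ of its values, with $\Vert T\Vert=\sup_\lambda\Vert T\delta_\lambda\Vert$, which identifies $\mathcal{B}(\ell_1(\Lambda),X)$ isometrically with $\bigoplus_\infty\{X:\lambda\in\Lambda\}$.

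Next I would invoke the functorial reformulation of projectivity stated just before the proposition: $U\projtens\ell_1(\Lambda)$ is metrically (resp.\ $1$-topologically) projective exactly when, for every strictly coisometric (resp.\ $c$-topologically surjective) $A$-morphism $\xi:X\to Y$, the induced map $\xi_*:{}_A\mathcal{B}(U\projtens\ell_1(\Lambda),X)\to{}_A\mathcal{B}(U\projtens\ell_1(\Lambda),Y)$, $\psi\mapsto\xi\psi$, is again strictly coisometric (resp.\ $c$-topologically surjective). By naturality of $\Theta$ one checks that $\Theta_Y\xi_*=(\bigoplus_\infty\xi)\Theta_X$, where $\bigoplus_\infty\xi$ is the coordinatewise operator $(x_\lambda)\mapsto(\xi(x_\lambda))$. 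Since $\Theta_X$ and $\Theta_Y$ are isometric isomorphisms, the whole problem collapses to a single assertion: if $\xi$ is strictly coisometric (resp.\ $c$-topologically surjective), then so is $\bigoplus_\infty\xi$.

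Finally I would verify this assertion by lifting each coordinate while keeping the supremum under control. In the metric case, given a family $(y_\lambda)$ with $\sup_\lambda\Vert y_\lambda\Vert\le 1$, strict coisometry of $\xi$ (its closed unit ball maps onto that of $Y$) yields for each $\lambda$ a preimage $x_\lambda$ with $\Vert x_\lambda\Vert\le\Vert y_\lambda\Vert\le 1$, so $(x_\lambda)$ lies in the closed unit ball and maps onto $(y_\lambda)$. In the topological case, given $(y_\lambda)$ with $r:=\sup_\lambda\Vert y_\lambda\Vert<1$, I would fix an intermediate $s\in(r,1)$; since $\Vert y_\lambda/s\Vert<1$ and $c\,\xi(B_X^\circ)\supset B_Y^\circ$, each $y_\lambda$ has a preimage $x_\lambda$ with $\Vert x_\lambda\Vert<cs$, whence $\sup_\lambda\Vert x_\lambda\Vert\le cs<c$ and $(x_\lambda)$ witnesses $c\,(\bigoplus_\infty\xi)(B^\circ)\supset B^\circ$. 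I expect the main obstacle to be precisely this uniform control in the topological case: lifting a single vector through a $c$-topologically surjective map only gives bounds of the form $c(1+\varepsilon)\Vert y_\lambda\Vert$, so a naive coordinatewise lift would inflate the constant; routing through the open-ball (functorial) formulation and inserting the intermediate radius $s$ is what pins the constant down to exactly $c$, i.e.\ $C=1$. The remaining points---naturality of $\Theta$ and the isometric bookkeeping, which hinges on $\Vert e_U\Vert=1$---are routine.
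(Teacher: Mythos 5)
Your overall strategy is the paper's own proof in functorial dress: the paper likewise lifts the generators, setting $y_\lambda=\phi(e_{A_\bullet}\projtens\delta_\lambda)$, choosing preimages $x_\lambda$ with uniform norm control, and assembling the lift via $\psi(a\projtens\delta_\lambda)=a\cdot x_\lambda$; your $\Theta$ and the coordinatewise operator $\bigoplus_\infty\xi$ simply record that computation once and for all. (Your treatment of the constant in the topological case, via the intermediate radius $s$, is in fact more scrupulous than the paper's, which asserts preimages with the attained bound $\Vert x_\lambda\Vert\le c\Vert y_\lambda\Vert$ --- something the open-ball definition of $c$-topological surjectivity does not literally provide.)

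There is, however, a genuine gap in your freeness step, and it hits exactly the case in which the $A_\times$ half of the statement is not subsumed by the $A_+$ half. If $A$ has an identity of norm one, then $A_\times=A$, and the correspondence $\psi\mapsto T$, $T(v)=\psi(e_A\otimes v)$, is isometric but \emph{not} onto $\mathcal{B}(E,X)$ when $X$ is a non-unital $A$-module (the paper's categories contain such modules: unitality is a property, not a standing assumption). Indeed, any $T$ in the image satisfies $e_A\cdot T(v)=T(v)$, so the image is $\mathcal{B}(E,X^{(e)})$, where $X^{(e)}=\{x\in X: e_A\cdot x=x\}$; for $A=\mathbb{C}$ and $X$ a nonzero annihilator module one gets ${}_A\mathcal{B}(A\projtens E,X)=\{0\}$ while $\mathcal{B}(E,X)\neq\{0\}$. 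Consequently your isomorphisms identify $\xi_*$ with $\bigoplus_\infty$ of the restriction-corestriction $\xi^{(e)}:X^{(e)}\to Y^{(e)}$ rather than of $\xi$ itself, and your final step establishes surjectivity of the wrong operator. The repair is short: $\xi^{(e)}$ inherits strict coisometry (resp.\ $c$-topological surjectivity) from $\xi$, because for $y\in Y^{(e)}$ any preimage $x$ yields the preimage $e_A\cdot x\in X^{(e)}$ with $\xi(e_A\cdot x)=e_A\cdot y=y$ and $\Vert e_A\cdot x\Vert\le\Vert x\Vert$; with that observation inserted your argument closes. Note that the paper's version never needs bijectivity of the correspondence --- it only verifies $\xi\psi=\phi$ on the dense span of the elementary tensors $a\projtens\delta_\lambda$ --- which is why it avoids this issue altogether.
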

\begin{proof} By $A_\bullet$ we denote either $A_+$ or $A_\times$. 
Consider arbitrary $A$-morphism 
$\phi:A_\bullet\projtens\ell_1(\Lambda)\to X$ and a 
strictly $\langle$~coisometric / $c$-topologically surjective~$\rangle$ 
$A$-morphism $\xi:X\to Y$. Fix arbitrary $\lambda\in\Lambda$ and 
consider $y_\lambda=\phi(e_{A_\bullet}\projtens \delta_\lambda)$. 
Clearly $\Vert y_\lambda\Vert\leq \Vert\phi\Vert$. Since $\xi$ is 
strictly $\langle$~coisometric / $c$-topologically surjective~$\rangle$, 
then there exists $x_\lambda\in X$ such that $\xi(x_\lambda)=y_\lambda$
and $\Vert x_\lambda\Vert\leq K\Vert y_\lambda\Vert$ for $\langle$~$K=1$ / 
$K=c$~$\rangle$. Now we can define a bounded linear operator
$\psi:A_\bullet\projtens \ell_1(\Lambda)\to X$ such that 
$\psi(a\projtens \delta_\lambda)=a\cdot x_\lambda$ for $\lambda\in\Lambda$. 
It is routine to check that $\psi$ is an $A$-morphism with $\xi\psi=\phi$ 
and $\Vert\psi\Vert\leq K\Vert\phi\Vert$. Thus for a given $\phi$ and $\xi$
we have constructed an $A$-morphism $\psi$ such that $\xi\psi=\phi$ and
$\langle$~$\Vert\psi\Vert\leq \Vert\phi\Vert$ / $\Vert\psi\Vert\leq
c\Vert\phi\Vert$~$\rangle$. Hence the $A$-module $A_\bullet\projtens
\ell_1(\Lambda)$ is $\langle$~metrically / $1$-topologically~$\rangle$
projective.
\end{proof}

\begin{proposition}\label{UnitalAlgIsMetTopProj} The left $A$-module $A_\times$
is $\langle$~metrically / $1$-topologically~$\rangle$ projective.
\end{proposition}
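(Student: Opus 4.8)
The plan is to recognize $A_\times$ as a degenerate instance of the free modules already treated in Proposition~\ref{MetCTopFreeMod}, namely the case of a one-point index set, and then transport the conclusion along an isometric isomorphism of left $A$-modules. First I would take $\Lambda=\{\ast\}$ to be a singleton. Then $\ell_1(\Lambda)$ is isometrically isomorphic to $\mathbb{C}$ via $\delta_\ast\mapsto 1$, and by the standard identification $E\projtens\mathbb{C}\isom{\mathbf{Ban}_1}E$ the Banach space $A_\times\projtens\ell_1(\Lambda)$ is isometrically isomorphic to $A_\times$ through $u\projtens\delta_\ast\mapsto u$. By Proposition~\ref{MetCTopFreeMod} the left $A$-module $A_\times\projtens\ell_1(\Lambda)$ is $\langle$~metrically / $1$-topologically~$\rangle$ projective, so it suffices to carry this property across the identification.

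The key step is to check that the identification is a morphism of left $A$-modules and not merely of Banach spaces. The left $A$-action on the free module is given on elementary tensors by $a\cdot(u\projtens\delta_\ast)=(a\cdot u)\projtens\delta_\ast$, where $a\cdot u$ is the module action on $A_\times$ (i.e.\ multiplication in $A_\times$). Under $u\projtens\delta_\ast\mapsto u$ this is sent to $a\cdot u$, so the map intertwines the two actions; hence $A_\times\projtens\ell_1(\{\ast\})\isom{\mathbf{Ban}_1}A_\times$ as left $A$-modules, with both the isomorphism and its inverse isometric.

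Finally I would invoke invariance of metric and topological projectivity under isometric module isomorphism. Given any lifting problem for $A_\times$, consisting of a strictly $\langle$~coisometric / $c$-topologically surjective~$\rangle$ $A$-morphism $\xi:X\to Y$ and an $A$-morphism $\phi:A_\times\to Y$, one precomposes $\phi$ with the isomorphism $A_\times\projtens\ell_1(\{\ast\})\to A_\times$, solves the resulting lifting problem via Proposition~\ref{MetCTopFreeMod}, and composes the lifting back with the inverse isomorphism. Since both isomorphisms are isometric, every norm estimate is preserved verbatim, yielding a lifting $\psi$ with $\langle$~$\Vert\psi\Vert\leq\Vert\phi\Vert$ / $\Vert\psi\Vert\leq c\Vert\phi\Vert$~$\rangle$, so $A_\times$ is $\langle$~metrically / $1$-topologically~$\rangle$ projective.

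There is no genuine obstacle here; the only point requiring care is the module-structure bookkeeping in the middle step, in particular that when $A$ lacks an identity one has $A_\times=A_+$, and that the free-module action in Proposition~\ref{MetCTopFreeMod} is exactly the one carried over by the singleton identification. As an alternative I could argue directly, mirroring the proof of Proposition~\ref{MetCTopFreeMod}: set $y=\phi(e_{A_\times})$ (noting $\Vert e_{A_\times}\Vert=1$ in both definitional cases), lift $y$ to $x\in X$ with $\xi(x)=y$ and $\Vert x\Vert\leq K\Vert y\Vert$ for $\langle$~$K=1$ / $K=c$~$\rangle$, and define $\psi(u)=u\cdot x$ using the canonical unital $A_+$-module structure on $X$; the verification that $\psi$ is an $A$-morphism with $\xi\psi=\phi$ and $\Vert\psi\Vert\leq K\Vert\phi\Vert$ is then routine.
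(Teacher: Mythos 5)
Your proposal is correct and follows essentially the same route as the paper: specialize Proposition~\ref{MetCTopFreeMod} to a one-point index set and identify $A_\times\projtens\ell_1(\{\ast\})$ with $A_\times$ as left $A$-modules in $A-\mathbf{mod}_1$. The paper states this identification without comment, so your explicit check of the module structure and the transfer of the lifting property merely fills in details the paper leaves implicit.
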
 
\begin{proof} Consier set $\Lambda=\mathbb{N}_1$. 
By proposition~\ref{MetCTopFreeMod} the 
$A$-module $A_\times\projtens\ell_1(\Lambda)$ is metrically 
and $1$-topologically projective. Now it remains to note that 
$A_\times\projtens\ell_1(\Lambda)
\isom{A-\mathbf{mod}_1}
A_\times\projtens \mathbb{C}
\isom{A-\mathbf{mod}_1}
A_\times$.
\end{proof}

\begin{proposition}[\cite{WhiteInjmoduAlg}, 
    lemma 2.7]\label{RetrMetCTopProjIsMetCTopProj} Any 
$\langle$~$1$-retract / $c$-retract~$\rangle$ of
$\langle$~metrically / $C$-topologically~$\rangle$ projective module is
$\langle$~metrically / $c C$-topologically~$\rangle$ projective.
\end{proposition}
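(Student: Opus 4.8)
The plan is to solve any lifting problem for the retract $Q$ by transporting it to the ambient module $P$, lifting there, and pulling the solution back. Let $\rho:P\to Q$ be the $A$-morphism exhibiting $Q$ as a $\langle$~$1$-retract / $c$-retract~$\rangle$ of $P$, with coretraction $\sigma:Q\to P$ satisfying $\rho\sigma=1_Q$ and $\Vert\rho\Vert\Vert\sigma\Vert\leq\langle$~$1$ / $c$~$\rangle$.

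First I would fix an arbitrary instance of the lifting problem for $Q$: a strictly $\langle$~coisometric / $c'$-topologically surjective~$\rangle$ $A$-morphism $\xi:X\to Y$ together with an $A$-morphism $\phi:Q\to Y$. Since composites of $A$-morphisms are again $A$-morphisms, $\phi\rho:P\to Y$ is an $A$-morphism, and it constitutes a lifting problem along the very same $\xi$, now with source $P$.

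Next I would invoke the $\langle$~metric / $C$-topological~$\rangle$ projectivity of $P$ to lift $\phi\rho$ along $\xi$, obtaining an $A$-morphism $\tilde\psi:P\to X$ with $\xi\tilde\psi=\phi\rho$ and $\langle$~$\Vert\tilde\psi\Vert\leq\Vert\phi\rho\Vert$ / $\Vert\tilde\psi\Vert\leq c'C\Vert\phi\rho\Vert$~$\rangle$. Setting $\psi:=\tilde\psi\sigma:Q\to X$ then solves the original problem, because $\xi\psi=\xi\tilde\psi\sigma=\phi\rho\sigma=\phi$.

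All that remains is the norm bookkeeping, which is the only place the two cases genuinely diverge and which I expect to present no real obstacle. Using submultiplicativity of the operator norm together with $\Vert\rho\Vert\Vert\sigma\Vert\leq\langle$~$1$ / $c$~$\rangle$, in the metric case I obtain $\Vert\psi\Vert\leq\Vert\tilde\psi\Vert\Vert\sigma\Vert\leq\Vert\phi\Vert\Vert\rho\Vert\Vert\sigma\Vert\leq\Vert\phi\Vert$, while in the topological case $\Vert\psi\Vert\leq\Vert\tilde\psi\Vert\Vert\sigma\Vert\leq c'C\Vert\phi\Vert\Vert\rho\Vert\Vert\sigma\Vert\leq c'(cC)\Vert\phi\Vert$. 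Since $\xi$ was an arbitrary strictly $\langle$~coisometric / $c'$-topologically surjective~$\rangle$ $A$-morphism, these bounds are precisely what the definition of $\langle$~metric / $cC$-topological~$\rangle$ projectivity demands for $Q$, completing the argument. The entire content is the factorization $\psi=\tilde\psi\sigma$ through the coretraction; I anticipate no difficulty beyond keeping the two parallel constants straight.
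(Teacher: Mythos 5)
Your proposal is correct and follows exactly the paper's own argument: precompose the given $A$-morphism with the retraction to move the lifting problem to the projective ambient module, lift there, and postcompose with the coretraction, with the identical norm bookkeeping $\Vert\psi\Vert\leq\Vert\tilde\psi\Vert\Vert\sigma\Vert\leq \langle\,\Vert\phi\Vert$ / $c'(cC)\Vert\phi\Vert\,\rangle$. The only difference is notational (your $\rho,\sigma,\tilde\psi$ are the paper's $\zeta,\eta,\psi'$).
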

\begin{proof} Suppose that $P$ is a $c$-retract of 
$\langle$~metrically / $C$-topologically~$\rangle$ projective $A$-module $P'$.
Then there exist $A$-morphisms $\eta:P\to P'$ and $\zeta: P'\to P$ such that
$\zeta\eta=1_{P}$ and $\Vert\zeta\Vert\Vert\eta\Vert\leq c$ 
for $\langle$~$c=1$ / $c\geq 1$~$\rangle$. Consider arbitrary
strictly $\langle$~coisometric / $c'$-topologically surjective~$\rangle$ 
$A$-morphism $\xi:X\to Y$ and an arbitrary $A$-morphism $\phi:P\to Y$. 
Consider $A$-morphism $\phi'=\phi\zeta$. Since $P'$ is 
$\langle$~metrically / $C$-topologically~$\rangle$ projective, then there 
exists an $A$-morphism $\psi':P'\to X$ such that $\phi'=\xi\psi'$ 
and $\Vert\psi'\Vert \leq K\Vert\phi'\Vert$ 
for $\langle$~$K=1$ / $K=c' C$~$\rangle$. Now it is routine to check that 
for the $A$-morphism $\psi=\psi'\eta$ 
holds $\xi\psi=\phi$ and $\Vert\psi\Vert \leq cK\Vert\phi\Vert$. 
Thus for a given $\phi$ and $\xi$ we have
constructed an $A$-morphism $\psi$ such that $\xi\psi=\phi$  and
$\langle$~$\Vert\psi\Vert\leq\Vert\phi\Vert$ / 
$\Vert \psi\Vert\leq c C\Vert\phi\Vert$~$\rangle$. Hence $P$ 
is $\langle$~metrically / $c C$-topologically~$\rangle$ projective.
\end{proof}

It is easy to show that, for any $A$-module $X$ there exists 
a strictly $\langle$~coisometric / $1$-topologically surjective~$\rangle$ 
$A$-morphism
$$
\pi_X^+:A_+\projtens \ell_1(B_X)\to X:a\projtens \delta_x\mapsto a\cdot x.
$$

\begin{proposition}[\cite{WhiteInjmoduAlg}, 
    proposition 2.10]\label{MetCTopProjModViaCanonicMorph} An $A$-module $P$ is
$\langle$~metrically / $C$-topologically~$\rangle$ projective iff $\pi_P^+$ is 
a $\langle$~$1$-retraction / $C$-retraction~$\rangle$ in $A-\mathbf{mod}$.
\end{proposition}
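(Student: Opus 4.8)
The plan is to prove the two implications using the three facts already assembled: the canonical morphism $\pi_P^+:A_+\projtens\ell_1(B_P)\to P$ is strictly $\langle$~coisometric / $1$-topologically surjective~$\rangle$ (noted just above the statement), the free module $A_+\projtens\ell_1(B_P)$ is $\langle$~metrically / $1$-topologically~$\rangle$ projective (Proposition~\ref{MetCTopFreeMod}), and $\langle$~$1$- / $c$-~$\rangle$retracts of $\langle$~metrically / $C$-topologically~$\rangle$ projective modules inherit the property (Proposition~\ref{RetrMetCTopProjIsMetCTopProj}). No genuinely new idea is needed; the content is the observation that projectivity is exactly the splitting of this one distinguished epimorphism.

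For the sufficiency direction, I would assume $\pi_P^+$ is a $\langle$~$1$-retraction / $C$-retraction~$\rangle$, so that there is an $A$-morphism $\sigma:P\to A_+\projtens\ell_1(B_P)$ with $\pi_P^+\sigma=1_P$ and $\Vert\pi_P^+\Vert\Vert\sigma\Vert\leq\langle$~$1$ / $C$~$\rangle$. Reading $\sigma$ as $\eta$ and $\pi_P^+$ as $\zeta$, this exhibits $P$ as a $\langle$~$1$- / $C$-~$\rangle$retract of the free module, which is $1$-topologically (and metrically) projective by Proposition~\ref{MetCTopFreeMod}. Proposition~\ref{RetrMetCTopProjIsMetCTopProj} then gives that $P$ is $\langle$~metrically / $C\cdot 1=C$-topologically~$\rangle$ projective, as required.

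For the necessity direction, I would invoke the lifting property of Definition~\ref{MetCTopProjMod} with the specific data $\xi=\pi_P^+$ and $\phi=1_P$. Because $\pi_P^+$ is strictly $\langle$~coisometric / $1$-topologically surjective~$\rangle$ (so the constant $c$ is $1$ in the topological reading) and $\Vert 1_P\Vert=1$, projectivity produces an $A$-morphism $\psi:P\to A_+\projtens\ell_1(B_P)$ with $\pi_P^+\psi=1_P$ and $\langle$~$\Vert\psi\Vert\leq 1$ / $\Vert\psi\Vert\leq C$~$\rangle$. This $\psi$ is a right inverse of $\pi_P^+$, so $\pi_P^+$ is a $\langle$~$1$-retraction / $C$-retraction~$\rangle$ provided $\Vert\pi_P^+\Vert\Vert\psi\Vert\leq\langle$~$1$ / $C$~$\rangle$, which follows once $\pi_P^+$ is known to be contractive.

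The argument is essentially bookkeeping, and the only point demanding care is the constant tracking. One must record that $\Vert 1_P\Vert=1$ and that $\pi_P^+$ is contractive: on elementary tensors $\Vert\pi_P^+(a\projtens\delta_x)\Vert=\Vert a\cdot x\Vert\leq\Vert a\Vert\Vert\delta_x\Vert$ for $x\in B_P$, whence $\Vert\pi_P^+\Vert\leq 1$ by the universal property of $\projtens$. This ensures the lifting bound $cC=C$ in the topological case passes verbatim into the retraction constant rather than being inflated. The degenerate case $P=\{0\}$ is immediate.
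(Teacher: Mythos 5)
Your proof is correct and follows essentially the same route as the paper's: the necessity direction solves the lifting problem for $\phi=1_P$ along $\xi=\pi_P^+$, and the sufficiency direction exhibits $P$ as a retract of the free module $A_+\projtens\ell_1(B_P)$ and invokes propositions~\ref{MetCTopFreeMod} and~\ref{RetrMetCTopProjIsMetCTopProj}. Your explicit check that $\pi_P^+$ is contractive is a small bookkeeping detail the paper leaves implicit, and it is verified correctly.
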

\begin{proof}
Suppose $P$ is 
$\langle$~metrically / $C$-topologically~$\rangle$ projective, then consider 
strictly $\langle$~coisometric / $1$-topologically surjective~$\rangle$
$A$-morphism $\pi_P^+$. Consider lifting problem with $\phi=1_P$ 
and $\xi=\pi_P^+$, then from $\langle$~metric / $C$-topological~$\rangle$ 
projectivity of $P$ we get an $A$-morphism $\sigma^+$ such 
that $\pi_P^+\sigma^+=1_P$
and $\Vert\sigma^+\Vert\leq K$ for $\langle$~$K=1$ / $K=C$~$\rangle$.
Since $\Vert\pi_P^+\Vert\Vert \sigma^+\Vert\leq K$ 
we conclude that $\pi_P^+$ is a
$\langle$~$1$-retraction / $C$-retraction~$\rangle$ in $A-\mathbf{mod}$.

Conversely, assume that $\pi_P^+$ is 
a $\langle$~$1$-retraction / $C$-retraction~$\rangle$. In other words $P$ is 
a $\langle$~$1$-retract / $C$-retract~$\rangle$ of $A_+\projtens \ell_1(B_P)$. 
By proposition~\ref{MetCTopFreeMod} the $A$-module $A_+\projtens \ell_1(B_P)$ 
is $\langle$~metrically / $1$-topologically~$\rangle$ projective. So from 
proposition~\ref{RetrMetCTopProjIsMetCTopProj} 
its $\langle$~$1$-retract / $C$-retract~$\rangle$ $P$ is 
$\langle$~metrically / $C$-topologically~$\rangle$ projective.
\end{proof}

\begin{proposition}[\cite{WhiteInjmoduAlg}, 
    proposition 2.9]\label{MetProjIsTopProjAndTopProjIsRelProj} Every metrically
projective module is $1$-topologically projective and every $C$-topologically 
projective module is $C$-relatively projective.
\end{proposition}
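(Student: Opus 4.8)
The plan is to prove the two assertions separately, in each case exploiting the fact that the strengthened hypothesis restricts to a smaller class of test epimorphisms than the weakened conclusion allows.

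For the first assertion, the cleanest route runs through the canonical-morphism characterization of Proposition~\ref{MetCTopProjModViaCanonicMorph}. First I would recall that the canonical $A$-morphism $\pi_P^+\colon A_+\projtens\ell_1(B_P)\to P$ is simultaneously strictly coisometric and strictly $1$-topologically surjective, so it serves as a test object for both flavours of projectivity at once. By Proposition~\ref{MetCTopProjModViaCanonicMorph}, metric projectivity of $P$ is equivalent to $\pi_P^+$ being a $1$-retraction in $A-\mathbf{mod}$, whereas $1$-topological projectivity (that is, $C$-topological projectivity with $C=1$) is by the same proposition equivalent to $\pi_P^+$ being a $1$-retraction as well. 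Since these two conditions are literally identical, metric projectivity forces $1$-topological projectivity, and no further estimates are required.

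For the second assertion, I would argue straight from the definitions, the sole content being a geometric remark on relatively admissible epimorphisms. Let $P$ be $C$-topologically projective, let $\xi\colon X\to Y$ be a $c$-relatively admissible epimorphism, and let $\phi\colon P\to Y$ be an $A$-morphism. By definition $\xi$ admits a bounded linear right inverse $\rho\colon Y\to X$ with $\xi\rho=1_Y$ and $\Vert\rho\Vert\leq c$. The claim is that $\xi$ is $c$-topologically surjective: for any $y\in B_Y^\circ$ one has $\Vert\rho(y)\Vert\leq c\Vert y\Vert<c$, hence $\rho(y)\in cB_X^\circ$ and $y=\xi(\rho(y))\in c\,\xi(B_X^\circ)$, so that $c\,\xi(B_X^\circ)\supset B_Y^\circ$. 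Applying $C$-topological projectivity of $P$ to this $c$-topologically surjective morphism $\xi$ then yields an $A$-morphism $\psi\colon P\to X$ with $\xi\psi=\phi$ and $\Vert\psi\Vert\leq cC\Vert\phi\Vert$, which is exactly the lifting demanded by the definition of $C$-relative projectivity. Thus $P$ is $C$-relatively projective with the very same constant.

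The only real obstacle lies in the first assertion: one cannot prove it by naively feeding a $c$-topologically surjective test morphism into the metric lifting property, since metric projectivity supplies liftings solely along strictly coisometric morphisms, and a general $c$-topologically surjective morphism need not be of that type. The device that bypasses this mismatch is precisely Proposition~\ref{MetCTopProjModViaCanonicMorph}, which concentrates both projectivity notions onto the single morphism $\pi_P^+$ that is strictly coisometric and strictly $1$-topologically surjective at the same time, making the two characterizing conditions coincide. The second assertion presents no comparable difficulty once the bounded right inverse is recognized as a witness of topological surjectivity with matching constant.
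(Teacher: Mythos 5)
Your proof is correct, and your first half coincides with the paper's own argument: both reduce metric and $1$-topological projectivity to the single condition that $\pi_P^+$ be a $1$-retraction in $A-\mathbf{mod}$, via Proposition~\ref{MetCTopProjModViaCanonicMorph}. Your second half, however, takes a genuinely different route. The paper stays inside the retract calculus: by the same proposition a $C$-topologically projective $P$ is a $C$-retract of the free module $A_+\projtens\ell_1(B_P)$, which is in particular a relatively free module $A_+\projtens E$, and since an $A$-module is $C$-relatively projective precisely when it is a $C$-retract of a relatively free module, the conclusion follows. You instead argue directly from the lifting definitions, observing that any $c$-relatively admissible epimorphism $\xi$ (one with a bounded linear right inverse $\rho$, $\Vert\rho\Vert\leq c$) is automatically topologically surjective with the same constant $c$, so the topological lifting property applies verbatim and returns exactly the bound $\Vert\psi\Vert\leq cC\Vert\phi\Vert$ demanded by relative projectivity. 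Your route is more elementary, needing no characterization of relative projectivity by free modules, only the definitions; the paper's route is shorter given the machinery it has already assembled and keeps all three notions of projectivity organized around the one canonical morphism $\pi_P^+$. One small repair to your second half: the paper's definition of $C$-topological projectivity tests against \emph{strictly} $c$-topologically surjective morphisms, i.e.\ the closed-ball inclusion $c\,\xi(B_X)\supset B_Y$, whereas you verified the open-ball inclusion $c\,\xi(B_X^\circ)\supset B_Y^\circ$. This is harmless: running your computation on the closed ball, $y\in B_Y$ gives $\Vert\rho(y)\Vert\leq c$, hence $\rho(y)\in cB_X$ and $y=\xi(\rho(y))\in c\,\xi(B_X)$, so $\xi$ is in fact strictly $c$-topologically surjective and your application of the definition is legitimate.
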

\begin{proof} By proposition~\ref{MetCTopProjModViaCanonicMorph} every 
metrically projecive $A$-module $P$ is a $1$-retract 
of $A_+\projtens \ell_1(B_P)$. Hence by the same proposition $P$ 
is $1$-topologically projecive. Again by 
proposition~\ref{MetCTopProjModViaCanonicMorph} every $C$-topologically 
projecive $A$-module $P$ is a $C$-retract of $A$-module $A_+\projtens \ell_1(B_P)$. 
In other words $P$ is a $C$-retract of the module $A\projtens E$ for some Banach
space $E$. Therefore $P$ is $C$-relatively projecive.
\end{proof}

Clearly, every $C$-topologically projecive module is $C'$-topologically 
projecive for $C'\geq C$. But we can state even more.

\begin{proposition}\label{MetProjIsOneTopProj} An $A$-module is metrically
projective iff it is $1$-topologically projective.
\end{proposition}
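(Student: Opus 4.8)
The plan is to reduce both notions to a single condition via the canonical epimorphism $\pi_P^+ : A_+\projtens\ell_1(B_P)\to P$, which the excerpt records as being simultaneously strictly coisometric and $1$-topologically surjective. One implication is already in hand: Proposition~\ref{MetProjIsTopProjAndTopProjIsRelProj} states that every metrically projective module is $1$-topologically projective, so in principle only the converse requires work --- and even that, I expect, will evaporate once the right characterization is invoked.

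The key observation is that Proposition~\ref{MetCTopProjModViaCanonicMorph} characterizes metric projectivity of $P$ by the assertion that $\pi_P^+$ is a $1$-retraction in $A-\mathbf{mod}$, and characterizes $C$-topological projectivity by the assertion that $\pi_P^+$ is a $C$-retraction. Specializing the latter to $C=1$, I would note that the condition ``$\pi_P^+$ is a $1$-retraction'' is \emph{verbatim} the same in the two halves of that proposition: in either case it asserts the existence of an $A$-morphism $\sigma:P\to A_+\projtens\ell_1(B_P)$ with $\pi_P^+\sigma=1_P$ and $\Vert\pi_P^+\Vert\,\Vert\sigma\Vert\le 1$. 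Hence metric projectivity and $1$-topological projectivity of $P$ are each equivalent to one and the same property of $\pi_P^+$, and therefore equivalent to each other. The whole statement thus follows by a single application of Proposition~\ref{MetCTopProjModViaCanonicMorph} with $C=1$, with no lifting problem to solve by hand.

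The only point that calls for any care --- and this is where the ``obstacle'', such as it is, lies --- is to confirm that the notion of a $1$-retraction occurring in the two bracketed clauses of Proposition~\ref{MetCTopProjModViaCanonicMorph} is genuinely the \emph{same} notion, taken in the same category $A-\mathbf{mod}$, rather than two superficially matching conditions attached to different ambient data. Since in both clauses the retracting morphism $\sigma$ is an $A$-morphism into the same free module $A_+\projtens\ell_1(B_P)$, and the numerical constraint $\Vert\pi_P^+\Vert\,\Vert\sigma\Vert\le 1$ is identical, there is no discrepancy. I would therefore simply quote Proposition~\ref{MetCTopProjModViaCanonicMorph} twice, once for each notion, and conclude that the two equivalent reformulations coincide, closing the argument without further computation.
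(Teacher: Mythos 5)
Your proposal is correct and is essentially the paper's own argument: the paper likewise deduces the statement immediately from Proposition~\ref{MetCTopProjModViaCanonicMorph} (together with Proposition~\ref{MetProjIsTopProjAndTopProjIsRelProj} for one direction), and your observation that both notions reduce to the single condition ``$\pi_P^+$ is a $1$-retraction in $A-\mathbf{mod}$'' is exactly the intended mechanism. Your extra care in checking that the two bracketed clauses refer to the same retraction notion with the same norm bound $\Vert\pi_P^+\Vert\,\Vert\sigma\Vert\le 1$ is sound and closes the argument.
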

\begin{proof} The result immediately follows 
from propositions~\ref{MetProjIsTopProjAndTopProjIsRelProj} 
and~\ref{MetCTopProjModViaCanonicMorph}.
\end{proof}

Let us proceed to examples. Note that the category of Banach spaces may be
regarded as the category of left Banach modules over zero algebra. As the
results we get the definition of $\langle$~metrically / topologically~$\rangle$
projective Banach space. All the results mentioned above hold for this type of
projectivity. Both types of projective objects are described by now.
In~\cite{KotheTopProjBanSp} K{\"o}the proved that all topologically projective
Banach spaces are topologically isomorphic to $\ell_1(\Lambda)$ for some index
set $\Lambda$. Using result of Grothendieck from~\cite{GrothMetrProjFlatBanSp}
Helemskii showed that metrically projective Banach spaces are isometrically
isomorphic to $\ell_1(\Lambda)$ for some index set $\Lambda$
[\cite{HelMetrFrQMod}, proposition 3.2]. Thus the zoo of projective Banach
spaces is wide but conformed.

\begin{proposition}\label{NonDegenMetTopProjCharac}  Let $P$ be an essential
$A$-module. Then $P$ is $\langle$~metrically / $C$-topologically~$\rangle$
projective iff the map 
$\pi_P:A\projtens\ell_1(B_P):a\projtens\delta_x\mapsto a\cdot x$ 
is a $\langle$~$1$-retraction / $C$-retraction~$\rangle$ in $A-\mathbf{mod}$.
\end{proposition}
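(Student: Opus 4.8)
The plan is to reduce everything to the characterization already proved in Proposition~\ref{MetCTopProjModViaCanonicMorph}, which states that $P$ is $\langle$~metrically / $C$-topologically~$\rangle$ projective iff the canonical morphism $\pi_P^+:A_+\projtens\ell_1(B_P)\to P$ is a $\langle$~$1$-retraction / $C$-retraction~$\rangle$. Since $\pi_P$ and $\pi_P^+$ are given by the same formula $a\projtens\delta_x\mapsto a\cdot x$, the whole content is to compare them, using that $A$ is an ideal of $A_+=A\oplus_1\mathbb{C}$. First I would record the isometric splitting $A_+\projtens\ell_1(B_P)\isom{\mathbf{Ban}_1}(A\projtens\ell_1(B_P))\oplus_1\ell_1(B_P)$ coming from functoriality of $\projtens$ over $\oplus_1$. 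Under it, $A\projtens\ell_1(B_P)$ sits isometrically as a closed left $A$-submodule (closure under the action is clear since $ba\in A$ for $b\in A$, $a\in A_+$), the inclusion $j$ satisfies $\pi_P^+ j=\pi_P$, and the complementary contractive projection onto the second summand is $\beta_0=q\projtens 1_{\ell_1(B_P)}$, where $q:A_+\to\mathbb{C}$ is the scalar-part map; in particular $\ker\beta_0=A\projtens\ell_1(B_P)$.

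For the easy implication ($\pi_P$ a retraction $\Rightarrow$ $P$ projective) I would simply postcompose a right inverse $\sigma:P\to A\projtens\ell_1(B_P)$ of $\pi_P$ with $j$: then $\pi_P^+(j\sigma)=\pi_P\sigma=1_P$, and $\|j\sigma\|=\|\sigma\|$ because $j$ is isometric, so $\pi_P^+$ is a $\langle$~$1$- / $C$-~$\rangle$retraction and Proposition~\ref{MetCTopProjModViaCanonicMorph} yields the projectivity of $P$. This direction uses neither essentialness nor any computation.

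The substance is the converse, and here essentialness must be used. Assuming $P$ projective, Proposition~\ref{MetCTopProjModViaCanonicMorph} furnishes an $A$-morphism $\sigma^+:P\to A_+\projtens\ell_1(B_P)$ with $\pi_P^+\sigma^+=1_P$ and $\|\sigma^+\|\leq\langle$~$1$ / $C$~$\rangle$. The goal is to show that $\sigma^+$ actually takes values in the submodule $\ker\beta_0=A\projtens\ell_1(B_P)$, so that its corestriction is a right inverse of $\pi_P$ of the same norm. The key calculation is that for $b\in A$ the scalar part of $ba$ vanishes for every $a\in A_+$ (immediate from the multiplication in $A_+$), whence $\beta_0(b\cdot w)=0$ for all $w\in A_+\projtens\ell_1(B_P)$ by linearity and continuity. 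Applying this to $w=\sigma^+(p)$ and using that $\sigma^+$ is an $A$-morphism gives $\beta_0\sigma^+(b\cdot p)=\beta_0(b\cdot\sigma^+(p))=0$ for all $b\in A$ and $p\in P$, so the continuous map $\beta_0\sigma^+$ vanishes on $A\cdot P$, hence on $\operatorname{span}(A\cdot P)$.

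This is precisely where the hypothesis enters and where I expect the only real subtlety to lie: because $P$ is essential, $\operatorname{cl}_P(AP)=P$, and therefore $\beta_0\sigma^+\equiv 0$, i.e.\ $\sigma^+(P)\subset\ker\beta_0=A\projtens\ell_1(B_P)$. Corestricting, $\sigma:=\sigma^+|^{A\projtens\ell_1(B_P)}$ is an $A$-morphism with $\pi_P\sigma=\pi_P^+\sigma^+=1_P$ and $\|\sigma\|=\|\sigma^+\|\leq\langle$~$1$ / $C$~$\rangle$, so $\pi_P$ is a $\langle$~$1$- / $C$-~$\rangle$retraction. I would finish by dispatching the trivial case $P=0$ separately and by checking the norm bookkeeping, noting that $\|\pi_P\|=\|\pi_P^+\|=1$ for $P\neq 0$ so that the product $\|\pi_P\|\,\|\sigma\|$ comes out within the constant demanded by the definition of a $\langle$~$1$- / $C$-~$\rangle$retraction.
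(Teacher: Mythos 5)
Your proposal is correct and follows essentially the same route as the paper: both directions reduce to Proposition~\ref{MetCTopProjModViaCanonicMorph}, the converse by coextending $\sigma$ along the isometric inclusion $A\projtens\ell_1(B_P)\hookrightarrow A_+\projtens\ell_1(B_P)$, and the forward direction by using essentialness of $P$ together with the $A$-morphism property of $\sigma^+$ to show $\sigma^+(P)\subset A\projtens\ell_1(B_P)$ and then corestricting. The only (cosmetic) difference is that you detect this containment as vanishing of $\beta_0\sigma^+$ for the explicit projection $\beta_0=q\projtens 1_{\ell_1(B_P)}$, whereas the paper verifies the same inclusion by a direct closure computation $\sigma^+(P)=\sigma^+(\operatorname{cl}(AP))\subset\operatorname{cl}(A\cdot\sigma^+(P))\subset A\projtens\ell_1(B_P)$.
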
 
\begin{proof}
If $P$ is $\langle$~metrically / $C$-topologically~$\rangle$ projective, then by
proposition~\ref{MetCTopProjModViaCanonicMorph} the morphism 
$\pi_P^+$ has a right inverse morphism $\sigma^+$ of norm 
$\langle$~at most $1$ / at most $C$~$\rangle$. Then 
$$
\sigma^+(P)
=\sigma^+(\operatorname{cl}_{A_+\projtens\ell_1(B_P)}(AP))
\subset \operatorname{cl}_{A_+\projtens\ell_1(B_P)}(A\cdot\sigma(P))=
$$
$$
\operatorname{cl}_{A_+\projtens\ell_1(B_P)}(A\cdot(A_+\projtens\ell_1(B_P)))
=A\projtens\ell_1(B_P).
$$ 
So we have well a defined corestriction $\sigma:P\to A\projtens\ell_1(B_P)$
which is also an $A$-morphism with norm $\langle$~at most $1$ / at most
$C$~$\rangle$. Clearly, $\pi_P\sigma=1_P$, so $\pi_P$ is a
$\langle$~$1$-retraction / $C$-retraction~$\rangle$ in $A-\mathbf{mod}$.

Conversely, assume $\pi_P$ has a right inverse morphism $\sigma$ of norm
$\langle$~at most $1$ / at most $C$~$\rangle$. Then its coextension $\sigma^+$
also is a right inverse morphism to $\pi_P^+$ with the same norm. Again, by
proposition~\ref{MetCTopProjModViaCanonicMorph} the module $P$ is
$\langle$~metrically / $C$-topologically~$\rangle$ projective. 
\end{proof}

It is worth to mention that $\langle$~an arbitrary / only finite~$\rangle$
family of objects in $\langle$~$A-\mathbf{mod}_1$ / $A-\mathbf{mod}$~$\rangle$
have the categorical coproduct which is in fact their $\bigoplus_1$-sum. This is
reason why we make additional assumption in the second paragraph of the next
proposition.

\begin{proposition}\label{MetTopProjModCoprod} Let
${(P_\lambda)}_{\lambda\in\Lambda}$ be a family of $A$-modules. Then 
\begin{enumerate}[label = (\roman*)]
    \item $\bigoplus_1 \{P_\lambda:\lambda\in\Lambda \}$ is metrically
    projective iff for all $\lambda\in\Lambda$ the $A$-module $P_\lambda$ is
    metrically projective;

    \item $\bigoplus_1 \{P_\lambda:\lambda\in\Lambda \}$ is $C$-topologically
    projective iff for all $\lambda\in\Lambda$ the $A$-module $P_\lambda$ is
    $C$-topologically projective.
\end{enumerate}
\end{proposition}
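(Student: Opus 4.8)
The plan is to prove both equivalences by splitting each into its two implications, handling the forward direction with the retract machinery already available and the converse by assembling liftings coming from the individual summands.

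For the forward implication ($\Rightarrow$) in both (i) and (ii), I would note that each summand sits inside the sum as a $1$-retract. Writing $P=\bigoplus_1 \{P_\lambda:\lambda\in\Lambda \}$, the canonical injection $\iota_\mu\colon P_\mu\to P$ is isometric and the canonical projection $p_\mu\colon P\to P_\mu$ is contractive; both are $A$-morphisms because the module action on $P$ is componentwise, and $p_\mu\iota_\mu=1_{P_\mu}$ with $\Vert p_\mu\Vert\,\Vert\iota_\mu\Vert\leq 1$. Hence $P_\mu$ is a $1$-retract of $P$, and Proposition~\ref{RetrMetCTopProjIsMetCTopProj} applies directly: if $P$ is metrically projective then so is its $1$-retract $P_\mu$, while if $P$ is $C$-topologically projective then $P_\mu$ is $(1\cdot C)$-topologically projective, that is, $C$-topologically projective. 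This settles both forward implications simultaneously.

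For the converse ($\Leftarrow$) I would argue straight from the lifting formulation in Definition~\ref{MetCTopProjMod}. Let $\xi\colon X\to Y$ be a strictly coisometric (resp.\ $c$-topologically surjective) $A$-morphism and $\phi\colon P\to Y$ an arbitrary $A$-morphism. Put $\phi_\lambda=\phi\iota_\lambda\colon P_\lambda\to Y$; since $\iota_\lambda$ is isometric, $\Vert\phi_\lambda\Vert\leq\Vert\phi\Vert$. Using metric (resp.\ $C$-topological) projectivity of $P_\lambda$, lift each $\phi_\lambda$ to an $A$-morphism $\psi_\lambda\colon P_\lambda\to X$ with $\xi\psi_\lambda=\phi_\lambda$ and $\Vert\psi_\lambda\Vert\leq\Vert\phi\Vert$ (resp.\ $\Vert\psi_\lambda\Vert\leq cC\Vert\phi\Vert$). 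The decisive feature is that these bounds are uniform in $\lambda$, so the family ${(\psi_\lambda)}_{\lambda\in\Lambda}$ assembles, by the universal property of the $\bigoplus_1$-sum, into one bounded operator $\psi\colon P\to X$ with $\psi\iota_\lambda=\psi_\lambda$ and $\Vert\psi\Vert=\sup_\lambda\Vert\psi_\lambda\Vert$. Explicitly $\psi(x)=\sum_\lambda\psi_\lambda(x_\lambda)$, an absolutely convergent series since $\sum_\lambda\Vert\psi_\lambda(x_\lambda)\Vert\leq(\sup_\mu\Vert\psi_\mu\Vert)\sum_\lambda\Vert x_\lambda\Vert$. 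Termwise use of continuity of the action and of each $\psi_\lambda$ being an $A$-morphism gives $\psi(a\cdot x)=a\cdot\psi(x)$, and continuity of $\xi$ together with $\sum_\lambda\iota_\lambda x_\lambda=x$ yields $\xi\psi(x)=\sum_\lambda\phi_\lambda(x_\lambda)=\phi(x)$. Thus $\xi\psi=\phi$ with $\Vert\psi\Vert\leq\Vert\phi\Vert$ (resp.\ $\leq cC\Vert\phi\Vert$), which is precisely the lifting demanded by Definition~\ref{MetCTopProjMod}, so $P$ is metrically (resp.\ $C$-topologically) projective.

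The step I expect to be the main obstacle is the assembly of the $\psi_\lambda$ in the $C$-topological case: for infinite $\Lambda$ the category $A-\mathbf{mod}$ carries no infinite coproduct, so one cannot invoke a categorical universal property there. The way around this is to perform the assembly at the level of the underlying $\bigoplus_1$-sum of Banach spaces, where the universal property does hold for arbitrary index sets and delivers exactly $\Vert\psi\Vert=\sup_\lambda\Vert\psi_\lambda\Vert$, and only afterwards to verify, as above, that the resulting map respects the $A$-action. This is the point anticipated by the remark preceding the proposition, and once it is made the argument goes through uniformly in both parts.
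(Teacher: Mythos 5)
Your proof is correct. The forward implication is exactly the paper's argument (each $P_\mu$ is a $1$-retract of the sum via $\iota_\mu$, $p_\mu$, then Proposition~\ref{RetrMetCTopProjIsMetCTopProj}), but your converse takes a genuinely different route. The paper never touches the lifting problem directly: it invokes Proposition~\ref{MetCTopProjModViaCanonicMorph} to write each $P_\lambda$ as a $C$-retract of a free module $A_+\projtens\ell_1(S_\lambda)$, observes that the $\bigoplus_1$-sum of these retractions is again a $C$-retraction (the canonical epimorphisms are contractive, so the norms multiply correctly), identifies
$$
\bigoplus\nolimits_1\left\{A_+\projtens\ell_1(S_\lambda):\lambda\in\Lambda\right\}
\isom{A-\mathbf{mod}_1}
A_+\projtens\ell_1\Bigl(\bigsqcup\nolimits_{\lambda\in\Lambda}S_\lambda\Bigr),
$$
and concludes by Propositions~\ref{MetCTopFreeMod} and~\ref{RetrMetCTopProjIsMetCTopProj}. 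You instead lift $\phi\iota_\lambda$ summand by summand and assemble the liftings into $\psi(x)=\sum_\lambda\psi_\lambda(x_\lambda)$, checking absolute convergence, the $A$-morphism property, and the norm bound by hand. The paper's route is shorter given its machinery and stays entirely inside the retract calculus, so nothing needs to be verified pointwise; your route is more elementary and self-contained, works straight from Definition~\ref{MetCTopProjMod}, and makes completely explicit where the uniform constant $C$ enters --- without uniform bounds on $\Vert\psi_\lambda\Vert$ the assembled operator need not be bounded --- which is precisely the phenomenon the remark preceding the proposition alludes to. Your observation that the assembly must be performed at the level of the underlying $\bigoplus_1$-sum of Banach spaces (since $A-\mathbf{mod}$ has no infinite coproducts) and only then checked to be an $A$-morphism is the right resolution of that point.
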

\begin{proof} Denote $P:=\bigoplus_1 \{P_\lambda:\lambda\in\Lambda \}$.

$(i)$ The proof is literally the same as in paragraph $(ii)$.

$(ii)$ Assume that $P$ is $C$-topologically projective. Note that, for any
$\lambda\in\Lambda$ the $A$-module $P_\lambda$ is a $1$-retract of $P$ via
natural projection $p_\lambda:P\to P_\lambda$. By
proposition~\ref{RetrMetCTopProjIsMetCTopProj} the $A$-module $P_\lambda$ is
$C$-topologically projective.

Conversely, let each $A$-module $P_\lambda$ be $C$-topologically projective. By
proposition~\ref{MetCTopProjModViaCanonicMorph} we have a family 
of $C$-retractions $\pi_\lambda:A_+\projtens\ell_1(S_\lambda)\to P_\lambda$. 
It follows that
$\bigoplus_1 \{\pi_\lambda:\lambda\in\Lambda \}$ is a $C$-retraction in
$A-\mathbf{mod}$. As the result $P$ is a $C$-retract of 
$$
\bigoplus\nolimits_1\left \{A_+\projtens
\ell_1(S_\lambda):\lambda\in\Lambda\right \} \isom{A-\mathbf{mod}_1}
\bigoplus\nolimits_1\left \{\bigoplus\nolimits_1 \{A_+:s\in S_\lambda
\}:\lambda\in\Lambda\right \}
$$
$$
\isom{A-\mathbf{mod}_1} \bigoplus\nolimits_1 \{A_+:s\in S \}
\isom{A-\mathbf{mod}_1} A_+ \projtens \ell_1(S)
$$
in $A-\mathbf{mod}$ where $S=\bigsqcup_{\lambda\in\Lambda}S_\lambda$. Clearly,
the latter module is $1$-topologically projective, so by
proposition~\ref{RetrMetCTopProjIsMetCTopProj} the $A$-module $P$ is 
$C$-topologically projective.
\end{proof}

\begin{corollary}\label{MetTopProjTensProdWithl1} Let $P$ be an $A$-module and
$\Lambda$ be an arbitrary set. Then $P\projtens \ell_1(\Lambda)$ is
$\langle$~metrically / $C$-topologically~$\rangle$ projective iff $P$ is
$\langle$~metrically / $C$-topologically~$\rangle$ projective.
\end{corollary}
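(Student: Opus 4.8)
The plan is to deduce this from Proposition~\ref{MetTopProjModCoprod} by identifying $P\projtens\ell_1(\Lambda)$ with the $\bigoplus_1$-sum of $\Lambda$ copies of $P$. So the first and essentially only substantive step I would carry out is to establish an isometric isomorphism of left $A$-modules
$$
P\projtens\ell_1(\Lambda)
\isom{A-\mathbf{mod}_1}
\bigoplus\nolimits_1\{P:\lambda\in\Lambda\},
$$
where the action on the left is $a\cdot(p\projtens f)=(a\cdot p)\projtens f$ and the action on the right is componentwise. This is exactly the $P=A_+$ instance already used inside the proof of Proposition~\ref{MetTopProjModCoprod} (there with $\ell_1(S)$), so only the passage to a general module $P$ needs to be recorded. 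Throughout I would assume $\Lambda\neq\varnothing$, the case $\Lambda=\varnothing$ being the degenerate one where both tensor factors, and hence both sides, collapse to the zero module.

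To build the isomorphism I would exploit the two universal properties already set up in the text. Since $\ell_1(\Lambda)=\bigoplus_1\{\mathbb{C}:\lambda\in\Lambda\}$ and $\bigoplus_1$-sums are the coproducts in $A-\mathbf{mod}_1$, the contractive $A$-morphisms $\iota_\lambda:P\to P\projtens\ell_1(\Lambda)$, $p\mapsto p\projtens\delta_\lambda$, assemble into a single contractive $A$-morphism $\Theta:\bigoplus_1\{P:\lambda\in\Lambda\}\to P\projtens\ell_1(\Lambda)$. In the reverse direction, the bounded bilinear map $P\times\ell_1(\Lambda)\to\bigoplus_1\{P:\lambda\in\Lambda\}$, $(p,f)\mapsto(f(\lambda)p)_{\lambda\in\Lambda}$, linearizes through the universal property of $\projtens$ to a contractive operator $\Xi$, which is readily seen to be an $A$-morphism. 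A direct computation on elementary tensors shows that $\Theta$ and $\Xi$ are mutually inverse, and the identity $\Vert\sum_\lambda p_\lambda\projtens\delta_\lambda\Vert=\sum_\lambda\Vert p_\lambda\Vert$ (the defining feature of the $\ell_1$-norm sitting inside the projective tensor product) upgrades the pair to isometries.

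With this identification in place the conclusion is immediate. An isometric isomorphism makes each of its two sides a $1$-retract of the other, so by Proposition~\ref{RetrMetCTopProjIsMetCTopProj} metric (resp.\ $C$-topological) projectivity transfers across $\Theta$ without change of constant. Applying Proposition~\ref{MetTopProjModCoprod} to $\bigoplus_1\{P:\lambda\in\Lambda\}$ shows it is metrically (resp.\ $C$-topologically) projective iff every summand is, and since all summands equal $P$ (and $\Lambda\neq\varnothing$) this holds iff $P$ itself is metrically (resp.\ $C$-topologically) projective. I expect the only real obstacle to be the bookkeeping in the first step, namely verifying that $\Theta$ and $\Xi$ respect the module action and the norm simultaneously --- in particular the additivity of the projective norm on sums of $\delta_\lambda$-supported elementary tensors --- rather than anything conceptually delicate; everything after the isomorphism is a formal appeal to the coproduct proposition.
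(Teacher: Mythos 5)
Your proposal is correct and follows essentially the same route as the paper: the paper's proof consists precisely of noting the isometric $A$-module isomorphism $P\projtens\ell_1(\Lambda)\isom{A-\mathbf{mod}_1}\bigoplus_1\{P:\lambda\in\Lambda\}$ and then applying Proposition~\ref{MetTopProjModCoprod} with $P_\lambda=P$. You merely fill in the verification of that isomorphism (which the paper leaves implicit, and which in fact comes for free once $\Theta$ and $\Xi$ are seen to be mutually inverse contractions), plus the harmless caveat that $\Lambda$ should be nonempty.
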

\begin{proof} 
Note that 
$P\projtens \ell_1(\Lambda)
\isom{A-\mathbf{mod}_1}
\bigoplus_1 \{P:\lambda\in\Lambda \}$. It remains to set $P_\lambda=P$ for all
$\lambda\in\Lambda$ and apply proposition~\ref{MetTopProjModCoprod}.
\end{proof}

The property of being metrically, topologically or relatively projective module 
puts some restrictions on the Banach geometric structure of the module.

\begin{proposition}[\cite{RamsHomPropSemgroupAlg}, 
    proposition 2.1.1]\label{MetTopRelProjModCompIdealPartCompl} Let $P$ 
be a $\langle$~metrically / $C$-topologically / $C$-relatively~$\rangle$ 
projective $A$-moudle, and let $I$ 
be a $\langle$~$1$-complemented / $c$-complemented / $c$-complemented~$\rangle$
right ideal of $A$. Then $\operatorname{cl}_P(I P)$ 
is $\langle$~$1$-complemented / $cC$-complemented / $cC$-complemented~$\rangle$ 
in $P$.
\end{proposition}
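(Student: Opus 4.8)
The plan is to realize $P$ as a retract of a (relatively) free module $F=A_+\projtens E$, to lift a complementing projection of $I$ to a Banach-space projection on $F$, and then to conjugate it back to $P$ along the retraction maps. First I would fix the ambient module and a splitting. In the $\langle$~metric / $C$-topological~$\rangle$ case, proposition~\ref{MetCTopProjModViaCanonicMorph} tells us that $\pi:=\pi_P^+:A_+\projtens\ell_1(B_P)\to P$ is a $\langle$~$1$-retraction / $C$-retraction~$\rangle$, so there is an $A$-morphism $\sigma:P\to A_+\projtens\ell_1(B_P)$ with $\pi\sigma=1_P$ and $\Vert\pi\Vert\Vert\sigma\Vert\leq\langle$~$1$ / $C$~$\rangle$. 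In the $C$-relative case I would instead use the relatively free module with the canonical $1$-relatively admissible epimorphism $\pi:A_+\projtens P\to P:a\projtens x\mapsto a\cdot x$; being a $1$-relatively admissible epimorphism onto a $C$-relatively projective module it is a $C$-retraction, and again yields such a $\sigma$. In all cases I write $F=A_+\projtens E$ (with $E=\ell_1(B_P)$ or $E=P$) and retain the bound $\Vert\pi\Vert\Vert\sigma\Vert\leq\langle$~$1$ / $C$ / $C$~$\rangle$.

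Next I would build the projection on $F$ coming from $I$. As $I$ is $\langle$~$1$-complemented / $c$-complemented / $c$-complemented~$\rangle$ in $A$, choose a projection $Q:A\to A$ onto $I$ with $\Vert Q\Vert\leq\langle$~$1$ / $c$ / $c$~$\rangle$. Extend it to $\tilde Q:A_+\to A_+:(a\oplus_1 z)\mapsto Q(a)$; using $A_+=A\bigoplus_1\mathbb{C}$ one checks $\tilde Q^2=\tilde Q$, $\operatorname{Im}(\tilde Q)=I$ and $\Vert\tilde Q\Vert\leq\Vert Q\Vert$. Set $R:=\tilde Q\projtens 1_E:F\to F$, a projection with $\Vert R\Vert=\Vert\tilde Q\Vert\leq\Vert Q\Vert$. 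The crucial point is the identification $\operatorname{Im}(R)=\operatorname{cl}_F(IF)$: because $I$ is a \emph{right} ideal we have $I\cdot A_+\subset I$, so the left action of any $x\in I$ carries $(a\oplus_1 z)\projtens\epsilon$ to $(xa+zx)\projtens\epsilon\in I\otimes E$, whence $IF\subset I\projtens E$ by continuity; conversely $x\projtens\epsilon=x\cdot(e_{A_+}\projtens\epsilon)\in IF$, so $I\otimes E\subset IF$. Taking closures identifies $\operatorname{cl}_F(IF)$ with the range $I\projtens E$ of $R$.

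Finally I would push the projection down by setting $\Pi:=\pi R\sigma:P\to P$. Since $\sigma$ is a continuous $A$-morphism, $\sigma(IP)\subset IF$, hence $\sigma(\operatorname{cl}_P(IP))\subset\operatorname{cl}_F(IF)=\operatorname{Im}(R)$; as $R$ is the identity on its range, $\Pi$ acts as $\pi\sigma=1_P$ on $\operatorname{cl}_P(IP)$. On the other hand $\pi$ is a morphism, so $\pi(IF)\subset I\cdot\pi(F)\subset IP$, giving $\operatorname{Im}(\Pi)\subset\pi(\operatorname{Im}(R))=\pi(\operatorname{cl}_F(IF))\subset\operatorname{cl}_P(IP)$. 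These two facts force $\Pi^2=\Pi$ and $\operatorname{Im}(\Pi)=\operatorname{cl}_P(IP)$, while $\Vert\Pi\Vert\leq\Vert\pi\Vert\Vert\sigma\Vert\Vert R\Vert\leq\langle$~$1$ / $cC$ / $cC$~$\rangle$. Invoking the characterization of complementation by bounded projections then yields the claim.

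I expect the only genuine subtlety to be that the complementing projection $Q$ of the right ideal is \emph{not} an $A$-module map, so $R$ is merely a Banach-space projection and never a morphism. This turns out to be harmless, since complementation is a purely Banach-space property and the module structure enters only through the morphisms $\sigma$ and $\pi$, which transport $\operatorname{cl}_P(IP)$ into $\operatorname{cl}_F(IF)$ and back. The step to handle with care is thus the identification $\operatorname{Im}(R)=\operatorname{cl}_F(IF)$, which is precisely where the \emph{right} (rather than left) ideal hypothesis is consumed, via $I\cdot A_+\subset I$.
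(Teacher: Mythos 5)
Your proof is correct and follows essentially the same route as the paper's: realize $P$ as a retract of a free module via projectivity, tensor a (non-module) Banach-space projection onto $I$, extended over $A_+$, with the identity, and conjugate by the splitting morphisms, with the right-ideal hypothesis consumed exactly where you say --- in showing that $\sigma(\operatorname{cl}_P(IP))$ lands in the range of $R$, on which $R$ acts as the identity. Your only departure is cosmetic but welcome: you handle the $C$-relative case via the free module $A_+\projtens P$ with its canonical admissible epimorphism, which in fact repairs a small imprecision in the paper, whose proof invokes the metric/topological splitting proposition~\ref{MetCTopProjModViaCanonicMorph} for all three cases while actually carrying out its computations inside $A_+\projtens P$.
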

\begin{proof} Since $A$ is $1$-complemented in $A_+$, then $I$ 
is $\langle$~$1$-complemented / $c$-complemented / $c$-complemented~$\rangle$
in $A_+$. Hence there exists a bounded linear operator $r:A_+\to I$ of 
$\langle$~norm $1$ / norm $c$ / norm $c$~$\rangle$ such that $r|_I=1_I$. 
Consider operator $R=r\projtens 1_P$, then $R|_{I\projtens P}=1_{I\projtens P}$ 
and $\Vert R\Vert\leq\Vert r\Vert$. Since $P$ 
is $\langle$~metrically / $C$-topologically / $C$-relatively~$\rangle$ 
projective, by proposition~\ref{MetCTopProjModViaCanonicMorph} the 
$A$-morphism $\pi_P^+$ has a right inverse morphism $\sigma^+$ of norm
$\langle$~at most $1$ / at most $C$ / at most $C$~$\rangle$. Now consider 
bounded linear operator $p=\pi_P^+ R \sigma^+$. Clearly,
$\operatorname{Im}(p)=\pi_P^+(R(\sigma^+(P)))
\subset\pi_P^+(R(A_+\projtens P))
\subset \pi_P^+(I\projtens P)
\subset \operatorname{cl}_P(I P)$.
Since $I$ is a right ideal of $A$ and $\sigma^+$ is an $A$-morphism, then 
$$
\sigma^+(\operatorname{cl}_P(I P))
\subset \operatorname{cl}_{A_+\projtens P}(\sigma^+(I P))
\subset \operatorname{cl}_{A_+\projtens P}(I\sigma^+(P))
\subset \operatorname{cl}_{A_+\projtens P}(I (A_+\projtens P))
\subset I \projtens P.
$$
In particular, for all $x\in \operatorname{cl}_P(I P)$, we have 
$p(x)
=\pi_P^+(R(\sigma^+(x)))
=\pi_P^+(1_{I\projtens P}(\sigma^+(x)))
=\pi_P^+(\sigma^+(x))
=x$. 
Thus $p$ is projection onto $\operatorname{cl}_P(I P)$ with norm at 
most $\Vert \sigma^+\Vert\Vert r\Vert$. Hence $\operatorname{cl}_P(I P)$ 
is $\langle$~$1$-complemented / $cC$-complemented / 
$cC$-complemented~$\rangle$ in $P$.
\end{proof}

\begin{corollary}\label{MetTopRelProjModPartCompl} Let $P$ 
be a $\langle$~metrically / $C$-topologically / $C$-relatively~$\rangle$ 
projective $A$-moudle. Then $P_{ess}$ 
is $\langle$~$1$-complemented / $C$-complemented / $C$-complemented~$\rangle$ 
in $P$.
\end{corollary}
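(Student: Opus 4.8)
The plan is to obtain this as the immediate special case of Proposition~\ref{MetTopRelProjModCompIdealPartCompl} in which the right ideal $I$ is taken to be the whole algebra $A$. First I would recall that, by the definition of the essential part, $P_{ess}=\operatorname{cl}_P(AP)$, so the quantity whose complementation we must control is exactly $\operatorname{cl}_P(IP)$ with $I=A$. Thus the corollary is nothing but the conclusion of the preceding proposition, provided $A$ meets the hypotheses imposed there on $I$.

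Next I would verify that $A$ does qualify as an admissible choice of $I$. The algebra $A$ is trivially a right ideal of itself: it is closed, it is a subalgebra, and $xa\in A$ for all $x,a\in A$. Moreover $A$ is $1$-complemented in $A$, since the identity operator $1_A:A\to A$ is a projection of norm $1$ onto $A$. Hence $I=A$ is a $\langle$~$1$-complemented / $1$-complemented / $1$-complemented~$\rangle$ right ideal, and we may apply the proposition with the constant $c=1$.

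Feeding these choices into Proposition~\ref{MetTopRelProjModCompIdealPartCompl} yields that $\operatorname{cl}_P(AP)=P_{ess}$ is $\langle$~$1$-complemented / $cC$-complemented / $cC$-complemented~$\rangle$ in $P$. Substituting $c=1$ collapses $cC$ to $C$, so in the metric case $P_{ess}$ is $1$-complemented and in the topological and relative cases $P_{ess}$ is $C$-complemented, precisely as claimed.

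I do not anticipate any real obstacle: the substantive work—constructing the projection $p=\pi_P^+R\sigma^+$ and checking it lands in $\operatorname{cl}_P(IP)$—has already been done in the proof of Proposition~\ref{MetTopRelProjModCompIdealPartCompl}. The only point deserving a moment's care is recognizing that $A$ itself is a $1$-complemented right ideal of $A$, which is what allows the constant $c$ to be set to $1$ and the bound $cC$ to simplify to the stated value.
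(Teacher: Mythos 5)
Your proof is correct and takes essentially the same route as the paper: the paper's own proof simply states that the corollary follows directly from Proposition~\ref{MetTopRelProjModCompIdealPartCompl}, which amounts to exactly your specialization $I=A$ (a $1$-complemented right ideal of itself), so that $\operatorname{cl}_P(IP)=P_{ess}$ and the bound $cC$ collapses to $C$ upon setting $c=1$.
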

\begin{proof} The result directly follows from 
proposition~\ref{MetTopRelProjModCompIdealPartCompl}.
\end{proof}


\subsection{
    Metric and topological projectivity of ideals and cyclic modules
}\label{SubSectionMetricAndTopologicalProjectivityOfIdealsAndCyclicModules}

As we shall see idempotents play a significant role in the study of metric and
topological projectivity, so we shall recall one of the corollaries of Shilov's
idempotent theorem [\cite{KaniBanAlg}, section 3.5]: every semisimple
commutative Banach algebra with compact spectrum admits an identity, but not
necessarily of norm 1. 

\begin{proposition}\label{UnIdeallIsMetTopProj} Let $I$ be a left ideal of a
Banach algebra $A$. Then

\begin{enumerate}[label = (\roman*)]
    \item if $I=Ap$ for some $\langle$~norm one idempotent /
    idempotent~$\rangle$ $p\in I$, then $I$ is $\langle$~metrically / 
    $\Vert p\Vert$-topologically~$\rangle$ projective $A$-module;

    \item if $I$ is commutative, semisimple and $\operatorname{Spec}(I)$ is
    compact then $I$ is topologically projective $A$-module.
\end{enumerate}
\end{proposition}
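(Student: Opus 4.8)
The plan is to prove part (i) by exhibiting $I$ as a retract of the module $A_+$, which is $\langle$~metrically / $1$-topologically~$\rangle$ projective: it is isometrically isomorphic to $A_+\projtens\ell_1(\mathbb{N}_1)$, hence projective by proposition~\ref{MetCTopFreeMod} (exactly as in proposition~\ref{UnitalAlgIsMetTopProj}). First I would introduce the map $\rho:A_+\to I:u\mapsto u\cdot p$ given by the module action of $A_+$ on $p\in I$; associativity of the action, $\rho(a\cdot u)=(au)\cdot p=a\cdot(u\cdot p)$, shows $\rho$ is an $A$-morphism, it is onto $I=Ap$, and the bound $\Vert u\cdot p\Vert\leq\Vert u\Vert\Vert p\Vert$ gives $\Vert\rho\Vert\leq\Vert p\Vert$. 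Next I would take the canonical isometric inclusion $\iota:I\to A_+:x\mapsto x\oplus_1 0$, which is an $A$-morphism of norm one. The key computation is $\rho\iota(x)=xp$, and since every $x\in I=Ap$ has the form $x=ap$ so that $xp=ap^2=ap=x$ by idempotency of $p$, we obtain $\rho\iota=1_I$. Thus $I$ is a $\Vert p\Vert$-retract of $A_+$, and proposition~\ref{RetrMetCTopProjIsMetCTopProj} closes the argument: in the norm-one case $\Vert p\Vert=1$ presents $I$ as a $1$-retract of a metrically projective module, whence $I$ is metrically projective; in general $I$ is a $\Vert p\Vert$-retract of a $1$-topologically projective module, whence it is $\Vert p\Vert$-topologically projective.

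For part (ii) the plan is to reduce to part (i) by manufacturing an idempotent. Since $I$ is commutative, semisimple and has compact spectrum, the corollary of Shilov's idempotent theorem recalled just before the statement supplies an identity $e_I\in I$, which is in particular an idempotent (not necessarily of norm one). I would then verify the principal-ideal presentation $I=Ae_I$: the inclusion $Ae_I\subseteq I$ holds because $I$ is a left ideal containing $e_I$, while $I\subseteq Ae_I$ follows from $x=xe_I\in Ae_I$ for every $x\in I$. Applying part (i) with $p=e_I$ then shows $I$ is $\Vert e_I\Vert$-topologically projective, and in particular topologically projective.

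Once these two structural facts are in place the argument is essentially bookkeeping, so I do not anticipate a genuine obstruction; the points needing care are modest. In part (i) one must confirm that the idempotent relation $xp=x$ holds for \emph{every} element of $Ap$ rather than merely for $p$, which is what makes $\iota$ a genuine right inverse of $\rho$, and one must track that the two constants multiply correctly through proposition~\ref{RetrMetCTopProjIsMetCTopProj}. In part (ii) the only conceptual step is recognizing that the abstract unit delivered by Shilov's theorem is precisely the right multiplier realizing $I$ as the principal left ideal $Ae_I$, so that part (i) applies verbatim. No delicate estimate, density, or limiting argument enters the proof.
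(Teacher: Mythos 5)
Your proposal is correct and follows essentially the same route as the paper: part (i) exhibits $I$ as a $\langle$~$1$- / $\Vert p\Vert$-~$\rangle$retract of the unitization via right multiplication by $p$ and the canonical inclusion (the paper uses $A_\times$ where you use $A_+$, an immaterial difference since both are covered by proposition~\ref{MetCTopFreeMod}), and part (ii) invokes Shilov's idempotent theorem to produce the unit $e_I$ and reduce to (i). The only additions on your side are explicit bookkeeping (the verification $I=Ae_I$ and the constant tracking) that the paper leaves implicit.
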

\begin{proof} 
$(i)$ For $A$-module maps $\pi:A_\times\to I:x\mapsto xp$ and 
$\sigma:I\to A_\times:x\mapsto x$ we clearly have $\pi\sigma=1_I$. 
Therefore $I$ is a $\langle$~$1$-retract / $\Vert p\Vert$-retract~$\rangle$ 
of $A_\times$. Now the result follows from 
propositions~\ref{RetrMetCTopProjIsMetCTopProj} and~\ref{UnitalAlgIsMetTopProj}.

$(ii)$ By Shilov's idempotent theorem the ideal $I$ is unital. In general the
norm of this unit is not less than $1$. By paragraph $(i)$ the ideal $I$ is
topologically projective.
\end{proof}

The assumption of semisimplicity in~\ref{UnIdeallIsMetTopProj} is not necessary.
From [\cite{DalesIntroBanAlgOpHarmAnal}, exercise 2.3.7] we know, that there
exists a commutative non semisimple unital Banach algebra $A$. By
proposition~\ref{UnIdeallIsMetTopProj} it is topologically projective as
$A$-module. To prove the main result of this section we need two preparatory
lemmas.

\begin{lemma}\label{ImgOfAMorphFromBiIdToA} Let $I$ be a two-sided ideal of a
Banach algebra $A$ which is essential as left $I$-module and let $\phi:I\to A$
be an $A$-morphism. Then $\operatorname{Im}(\phi)\subset I$.
\end{lemma}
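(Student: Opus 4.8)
The plan is to push the inclusion through the dense set of products and then close up, using the module identity to strip off the left factor and the right-ideal property to keep the result inside $I$.

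First I would unpack the essentiality hypothesis. Saying that $I$ is essential as a left $I$-module means $I = I_{ess} = \operatorname{cl}_I(\operatorname{span}(I\cdot I))$; that is, the linear span of the products $\{ab : a,b\in I\}$ is dense in $I$. Since $I$ is closed in $A$, this closure may equally well be taken in $A$.

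The core step is a one-line computation on a product $ab$ with $a,b\in I$. Viewing $a$ as an element of the algebra $A$ acting on $b$ in the left $A$-module $I$, the module action is just multiplication, so $ab = a\cdot b$; as $\phi$ is a morphism of left $A$-modules, $\phi(ab)=\phi(a\cdot b)=a\cdot\phi(b)=a\phi(b)$. Here $a\in I$ while $\phi(b)\in A$, and because $I$ is a two-sided --- in particular a right --- ideal, the product $a\phi(b)$ lies in $I$. By linearity $\phi$ sends $\operatorname{span}(I\cdot I)$ into $I$. Finally I would invoke continuity: $\phi$ is bounded and $I$ is closed, so $\phi\big(\operatorname{cl}_A(\operatorname{span}(I\cdot I))\big)\subset\operatorname{cl}_A\big(\phi(\operatorname{span}(I\cdot I))\big)\subset\operatorname{cl}_A(I)=I$. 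By the first step the left-hand closure is all of $I$, whence $\operatorname{Im}(\phi)=\phi(I)\subset I$.

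I do not anticipate a genuine obstacle; the only subtlety is keeping track of which half of the two-sided hypothesis does what --- moving $a$ outside $\phi$ uses the left-module structure of $I$ and of $A$, whereas concluding $a\phi(b)\in I$ uses that $I$ absorbs right multiplication by $A$. The essentiality assumption is exactly what allows the inclusion, established only on products, to propagate to all of $I$.
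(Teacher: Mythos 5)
Your proof is correct and follows essentially the same route as the paper's: compute $\phi(ab)=a\phi(b)\in I$ using the morphism property and the right-ideal half of the hypothesis, then use essentiality of $I$ as a left $I$-module together with continuity of $\phi$ and closedness of $I$ to propagate the inclusion from the dense span of products to all of $I$. Your version merely makes explicit the density and closure steps that the paper compresses into one line.
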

\begin{proof} Since $I$ is a right ideal, then $\phi(ab)=a\phi(b)\in I$ for all
$a,b\in I$. So $\phi(I\cdot I)\subset I$. Since $I$ is an essential left
$I$-module then $I=\operatorname{cl}_A(\operatorname{span}(I\cdot I))$ and
$\operatorname{Im}(\phi)
\subset\operatorname{cl}_A(\operatorname{span}\phi(I\cdot I))
=\operatorname{cl}_A(\operatorname{span}I)
=I$.
\end{proof}

\begin{lemma}\label{GoodIdealMetTopProjIsUnital} Let $I$ be a left ideal of a
Banach algebra $A$, which is $\langle$~metrically / $C$-topologically~$\rangle$
projective as $A$-module. Then the following holds:

\begin{enumerate}[label = (\roman*)]
    \item Assume $I$ has a left $\langle$~contractive / $c$-bounded~$\rangle$
    approximate identity and for each  morphism $\phi:I\to A$ of left
    $A$-modules there exists a morphism $\psi:I\to I$ of right $I$-modules such
    that $\phi(x)y=x\psi(y)$ for all $x,y\in I$. Then $I$ has the identity of
    norm $\langle$~at most $1$ / at most $c$~$\rangle$;

    \item Assume $I$ has a right $\langle$~contractive / $c$-bounded~$\rangle$
    approximate identity and for $\langle$~$k=1$ / some $k\geq 1$~$\rangle$ and
    each morphism $\phi:I\to A$ of left $A$-modules there exists a morphism
    $\psi:I\to I$ of right $I$-modules such that $\Vert\psi\Vert\leq
    k\Vert\phi\Vert$ and $\phi(x)y=x\psi(y)$ for all $x,y\in I$. Then $I$ has a
    right identity of norm $\langle$~at most $1$ / at most $ckC$~$\rangle$.
\end{enumerate}
\end{lemma}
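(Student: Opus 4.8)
The plan is to feed the projective splitting of the canonical morphism into the balancing hypothesis, and then to use the one--sided approximate identity to collapse the resulting data into a single (right) identity. First I would observe that a one--sided bounded approximate identity makes $I$ essential as a left $A$-module: for $x\in I$ one has $x=\lim_\nu e_\nu x$ (resp.\ $x=\lim_\nu x e_\nu$), and each term lies in $A\cdot I$, so $I=\operatorname{cl}_I(AI)$. Hence Proposition~\ref{NonDegenMetTopProjCharac} applies and $\pi_I\colon A\projtens\ell_1(B_I)\to I$, $a\projtens\delta_s\mapsto a\cdot s$, is a $1$-retraction in the metric case and a $C$-retraction in the topological case. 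Writing the splitting as $\sigma(x)=\sum_{s\in B_I}a_s(x)\projtens\delta_s$ and using that $\sigma$ is an $A$-morphism, each coordinate $a_s\colon I\to A$ is a left $A$-module morphism, with $\sum_s a_s(x)\cdot s=x$ and $\sum_s\Vert a_s(x)\Vert\le C\Vert x\Vert$ for every $x\in I$.

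Next I would apply the balancing hypothesis to the coordinates $a_s$, producing right $I$-morphisms $\psi_s\colon I\to I$ with $a_s(x)y=x\psi_s(y)$ for all $x,y$ (and $\Vert\psi_s\Vert\le k\Vert a_s\Vert$ in case~(ii)). Specialising $y=s$ and summing gives the key relation $x=\sum_s a_s(x)\cdot s=\sum_s x\psi_s(s)$ for all $x\in I$; equivalently, the formal element ``$u=\sum_s\psi_s(s)$'' already behaves as a right identity once it is multiplied on the left by any $x\in I$. The honest candidate for the right identity is then a limit $u=\lim_{\mathfrak U}\psi(e_\nu)$ along an ultrafilter $\mathfrak U$ refining the net, where $\psi$ is the balancing morphism attached to the inclusion $I\hookrightarrow A$, so that $x\psi(e_\nu)=x e_\nu\to x$; the norm bound $ckC$ then arises by combining the approximate--identity bound $c$, the balancing bound $k$, and the projectivity bound $C$ that is needed in the descent step below.

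The hard part will be the passage from this formal right identity to an honest element of $I$. The obstruction is that only the pointwise series $\sum_s\Vert a_s(x)\Vert$ converges, never the series of operator norms $\sum_s\Vert a_s\Vert$, so $\sum_s\psi_s(s)$ need not converge in $I$ and one cannot simply build the identity as a norm--convergent sum. I would resolve this by taking a weak-$*$ cluster point of the bounded net $\bigl(\psi(e_\nu)\bigr)$ and then using the balancing relation together with $x e_\nu\to x$ to verify that $x\cdot u=\lim_\nu x e_\nu=x$ for every $x$. The most delicate remaining point is to certify that this cluster point genuinely lies in $I$ rather than merely in $I^{**}$, with the stated norm; for this I expect to exploit the essentiality of $I$ and the fact that $\psi$ takes its values in $I$, so that the balancing relation upgrades the weak-$*$ information to an actual element of $I$.

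For part~(i) the left approximate identity sharpens the argument and yields a two--sided identity of norm governed by $c$ alone. Setting $x=e_\mu$ in $a_s(x)y=x\psi_s(y)$ and letting $\mu$ run gives $\psi_s(y)=\lim_\mu a_s(e_\mu)y$, which forces the balancing data to reconstruct the approximate identity itself; the resulting idempotent limit therefore inherits its norm directly from the $e_\nu$, explaining the bound $\langle$~$1$ / $c$~$\rangle$ with no dependence on $C$ or $k$. Here the full strength of the hypothesis---a balancing $\psi$ for \emph{every} $\phi\colon I\to A$---is what promotes a right identity to a genuine identity, possibly in conjunction with Lemma~\ref{ImgOfAMorphFromBiIdToA} to keep the relevant images inside $I$.
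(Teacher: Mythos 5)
Your first half coincides with the paper's argument: essentiality of $I$ from the one-sided bounded approximate identity, the splitting $\sigma$ of $\pi_I$ from Proposition~\ref{NonDegenMetTopProjCharac}, the coordinates $a_s$ with $x=\sum_s a_s(x)\cdot s$ and $\Vert\sigma(x)\Vert=\sum_s\Vert a_s(x)\Vert$, and the balancing morphisms $\psi_s$ with $a_s(x)s=x\psi_s(s)$. The break occurs exactly where you abandon this path, and it stems from a false claim: you assert that $\sum_s\psi_s(s)$ need not converge because only the pointwise series $\sum_s\Vert a_s(x)\Vert$ is summable. In fact the approximate identity forces precisely this convergence, and proving it is the heart of the paper's proof. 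In case (i), the balancing relation with $x=e_\nu$ gives $e_\nu\psi_s(s)=a_s(e_\nu)s$, so $\Vert\psi_s(s)\Vert=\lim_\nu\Vert a_s(e_\nu)s\Vert$, and hence for every finite $S\subset B_I$, using $\Vert s\Vert\leq 1$ and the $\ell_1$-structure of $A\projtens\ell_1(B_I)$,
$$
\sum_{s\in S}\Vert\psi_s(s)\Vert
=\lim_\nu\sum_{s\in S}\Vert a_s(e_\nu)s\Vert
\leq\liminf_\nu\sum_{s\in B_I}\Vert a_s(e_\nu)\Vert
=\liminf_\nu\Vert\sigma(e_\nu)\Vert
\leq\Vert\sigma\Vert\,\liminf_\nu\Vert e_\nu\Vert,
$$
so the family ${(\psi_s(s))}_{s\in B_I}$ is absolutely summable. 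In case (ii) the right approximate identity even yields $\sum_{s\in S}\Vert a_s\Vert\leq\liminf_\nu\Vert\sigma(e_\nu)\Vert$ (via $a_s(xe_\nu)=xa_s(e_\nu)$), so the operator-norm series you claim ``never'' converges is itself summable, and $\Vert\psi_s\Vert\leq k\Vert a_s\Vert$ finishes. The element $p=\sum_{s\in B_I}\psi_s(s)$ then lies in $I$ with the stated norm, and $x=\sum_s a_s(x)s=\sum_s x\psi_s(s)=xp$ gives the right identity; in case (i) the elementary fact that a right identity together with a left approximate identity is a two-sided identity (with $\Vert p\Vert=\lim_\nu\Vert e_\nu\Vert$) concludes — no ``full strength of the hypothesis'' is involved there.

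The substitute you propose cannot be completed. A weak${}^*$ cluster point of the bounded net ${(\psi(e_\nu))}_{\nu}$ lives in $I^{**}$, and nothing in your sketch descends it to $I$: the relation $x\cdot u=x$ you obtain there is just the standard mixed identity that \emph{every} Banach algebra with a bounded right approximate identity has in its bidual, and it makes no use of projectivity. If this route worked, the lemma would be false: $c_0(\mathbb{N})$ as an ideal of $\ell_\infty(\mathbb{N})$ has a contractive approximate identity and satisfies the balancing hypothesis of (i) (commutativity plus Lemma~\ref{ImgOfAMorphFromBiIdToA} give $\psi=\phi|^I$), yet it has no identity — consistently with Theorem~\ref{GoodCommIdealMetTopProjIsUnital}, it is simply not topologically projective. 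Projectivity enters the proof only through the $\ell_1$-summability you discarded; once you replace it by a bidual cluster point, the hypothesis that actually carries the conclusion has been thrown away.
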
 
\begin{proof} If either $(i)$ or $(ii)$ holds then $I$ has a one-sided bounded
approximate identity. So $I$ is an essential left $I$-module, and a fortiori an
essential $A$-module. By proposition~\ref{NonDegenMetTopProjCharac} we have a
right inverse $A$-morphism $\sigma:I\to A\projtens \ell_1(B_I)$ of $\pi_I$ with
norm  $\langle$~at most $1$ / at most $C$~$\rangle$. For each $d\in B_I$
consider $A$-morphisms $p_d:A\projtens \ell_1(B_I)\to A:a\projtens
\delta_x\mapsto \delta_x(d)a$ and $\sigma_d=p_d\sigma$. Then
$\sigma(x)=\sum_{d\in B_I}\sigma_d(x)\projtens \delta_d$ for all $x\in I$. From
identification 
$A\projtens\ell_1(B_I)\isom{\mathbf{Ban}_1}\bigoplus_1 \{A:d\in B_I \}$ 
we have $\Vert\sigma(x)\Vert=\sum_{d\in B_I} \Vert\sigma_d(x)\Vert$ for
all $x\in I$. Since $\sigma$ is a right inverse morphism of $\pi_I$, then
$x=\pi_I(\sigma(x))=\sum_{d\in B_I}\sigma_d(x)d$ for all $x\in I$. 

From assumption, for each $d\in B_I$ there exists a morphism of right
$I$-modules $\tau_d:I\to I$ such that $\sigma_d(x)d=x\tau_d(d)$ for all 
$x\in I$. 

Assume $(i)$ holds. From assumption, for each $d\in B_I$ there exists a morphism
of right $I$-modules $\tau_d:I\to I$ such that $\sigma_d(x)d=x\tau_d(d)$ for all
$x\in I$.  Let ${(e_\nu)}_{\nu\in N}$ be a left $\langle$~contractive /
bounded~$\rangle$ approximate identity of $I$ bounded in norm by constant
$\langle$~$D=1$ / $D=c$~$\rangle$. Since $\tau_d(d)\in I$ for all $d\in B_I$,
then for all $S\in\mathcal{P}_0(B_I)$ holds
$$
\sum_{d\in S}\Vert \tau_d(d)\Vert
=\sum_{d\in S}\lim_{\nu}\Vert e_\nu \tau_d(d) \Vert
=\lim_{\nu}\sum_{d\in S}\Vert e_\nu \tau_d(d)\Vert
=\lim_{\nu}\sum_{d\in S}\Vert \sigma_d(e_\nu)d \Vert
$$
$$
\leq\liminf_{\nu}\sum_{d\in S}\Vert\sigma_d(e_\nu)\Vert\Vert d\Vert 
\leq\liminf_{\nu}\sum_{d\in S}\Vert\sigma_d(e_\nu)\Vert
\leq\liminf_{\nu}\sum_{d\in B_I}\Vert\sigma_d(e_\nu)\Vert
$$
$$
=\liminf_{\nu}\Vert\sigma(e_\nu)\Vert
\leq\Vert\sigma\Vert\liminf_{\nu}\Vert e_\nu\Vert
\leq D\Vert\sigma\Vert 
$$
Since $S\in \mathcal{P}_0(B_I)$ is arbitrary we have well defined element
$p=\sum_{d\in B_I}\tau_d(d)$ with norm $\langle$ at most $1$ / at most
$cC$~$\rangle$. For all $x\in I$ we have $x=\sum_{d\in
B_I}\sigma_d(x)d=\sum_{d\in B_I}x\tau_d(d)=xp$, i.e. $p$ is a right identity for
$I$. But $I$ admits a left $\langle$~contractive / $c$-bounded~$\rangle$
approximate identity, so $p$ is the identity of $I$ with $\Vert
p\Vert=\lim_\nu\Vert e_\nu\Vert$. Therefore the norm of $p$ is $\langle$~at most
$1$ / at most $c$~$\rangle$.

Assume $(ii)$ holds. From assumption, for each $d\in B_I$ there exists a
morphism of right $I$-modules $\tau_d:I\to I$ such that
$\sigma_d(x)d=x\tau_d(d)$ for all $x\in I$ and 
$\Vert\tau_d\Vert\leq k\Vert\sigma_d\Vert$. 
Let ${(e_\nu)}_{\nu\in N}$ be a right $\langle$~contractive / bounded~$\rangle$ 
approximate identity of $I$ bounded in norm by constant 
$\langle$~$D=1$ / $D=c$~$\rangle$. For all $x\in I$ we have
$$
\Vert\sigma_d(x)\Vert
=\Vert\sigma_d(\lim_\nu x e_\nu)\Vert
=\lim_\nu\Vert x\sigma_d(e_\nu)\Vert
\leq\Vert x\Vert\liminf_\nu\Vert\sigma_d(e_\nu)\Vert
$$
so $\Vert\sigma_d\Vert\leq \liminf_\nu\Vert\sigma_d(e_\nu)\Vert$. 
Then for all $S\in\mathcal{P}_0(B_I)$ holds
$$
\sum_{d\in S}\Vert \tau_d(d)\Vert
\leq \sum_{d\in S}\Vert \tau_d\Vert\Vert d\Vert
\leq k\sum_{d\in S}\Vert \sigma_d\Vert
\leq k\sum_{d\in S}\liminf_\nu \Vert \sigma_d(e_\nu)\Vert
\leq k\liminf_{\nu}\sum_{d\in S}\Vert \sigma_d(e_\nu) \Vert
$$
$$
\leq k\liminf_{\nu}\sum_{d\in B_I}\Vert \sigma_d(e_\nu) \Vert
=k\liminf_{\nu}\Vert\sigma(e_\nu)\Vert
\leq k\Vert\sigma\Vert\liminf_{\nu}\Vert e_\nu\Vert
\leq kD\Vert\sigma\Vert
$$
Since $S\in \mathcal{P}_0(B_I)$ is arbitrary we have well defined element
$p=\sum_{d\in B_I}\tau_d(d)$ with norm $\langle$ at most $1$ / at most
$ckC$~$\rangle$. For all $x\in I$ 
we have $x=\sum_{d\in B_I}\sigma_d(x)d=\sum_{d\in B_I}x\tau_d(d)=xp$, 
i.e. $p$ is a right identity for $I$.
\end{proof}

\begin{theorem}\label{GoodCommIdealMetTopProjIsUnital} Let $I$ be an ideal of a
commutative Banach algebra $A$ and $I$ has a $\langle$~contractive /
$c$-bounded~$\rangle$ approximate identity. Then $I$ is $\langle$~metrically /
$c$-topologically~$\rangle$ projective as $A$-module iff $I$ has the identity of
norm $\langle$~at most $1$ / at most $c$~$\rangle$.
\end{theorem}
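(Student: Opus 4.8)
The plan is to prove the two implications separately, reducing each to a result already at hand. For the backward implication, suppose $I$ carries an identity $p$ with $\Vert p\Vert\leq 1$ (metric case) or $\Vert p\Vert\leq c$ (topological case). As the identity of $I$, the element $p$ lies in $I$ and is idempotent, and commutativity gives $x=xp$ for every $x\in I$, whence $I=Ap$. In the metric case a nonzero idempotent has $\Vert p\Vert\geq 1$, so $\Vert p\Vert=1$ and $p$ is a norm-one idempotent; proposition~\ref{UnIdeallIsMetTopProj}$(i)$ then gives that $I$ is metrically projective. In the topological case the same proposition gives that $I$ is $\Vert p\Vert$-topologically, hence $c$-topologically, projective. (If $I=\{0\}$ everything is trivial.)

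For the forward implication I would appeal to lemma~\ref{GoodIdealMetTopProjIsUnital}$(i)$. Its hypotheses are: metric (resp.\ $C$-topological) projectivity of $I$, a left contractive (resp.\ $c$-bounded) approximate identity, and---the only non-automatic condition---that for every left $A$-module morphism $\phi:I\to A$ there is a right $I$-module morphism $\psi:I\to I$ with $\phi(x)y=x\psi(y)$ for all $x,y\in I$. The first two are in hand, since the assumed approximate identity of the commutative algebra $I$ is in particular a left one and $c$-topological projectivity is exactly what we assume; so the whole argument reduces to producing $\psi$.

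This is where commutativity is decisive. Because $A$ is commutative, $I$ is automatically two-sided, and its approximate identity makes $I$ essential as a left $I$-module, so lemma~\ref{ImgOfAMorphFromBiIdToA} forces the image of any left $A$-module morphism $\phi:I\to A$ to lie inside $I$. I would then take $\psi$ to be $\phi$ corestricted to $I$. Writing the module action as multiplication, left $A$-linearity gives $\phi(yx)=y\phi(x)$ and $\phi(xy)=x\phi(y)$, and $xy=yx$ forces $y\phi(x)=x\phi(y)$, that is $\phi(x)y=x\psi(y)$; the analogous identity $\phi(yb)=\phi(y)b$ for $b\in I$ shows $\psi$ is a right $I$-module map. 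Lemma~\ref{GoodIdealMetTopProjIsUnital}$(i)$ then delivers an identity of norm at most $1$ (resp.\ at most $c$), the bound arising because this identity must equal the limit of the approximate identity and so is controlled by $c$ rather than by the projectivity constant.

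The only genuinely delicate point is the balancing identity $\phi(x)y=x\psi(y)$, and it is precisely the step that uses commutativity of $A$: without it a left $A$-module morphism need neither preserve $I$ nor possess this symmetry, and the required right $I$-module map $\psi$ would have no reason to exist. The rest is bookkeeping with the quoted results.
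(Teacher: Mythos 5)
Your proof is correct and follows essentially the same route as the paper's: the forward direction reduces to lemma~\ref{GoodIdealMetTopProjIsUnital}$(i)$ by using commutativity to get the balancing identity $\phi(x)y=x\phi(y)$ and lemma~\ref{ImgOfAMorphFromBiIdToA} to corestrict $\phi$ to $I$, while the converse is proposition~\ref{UnIdeallIsMetTopProj}$(i)$. Your additional remarks (the norm-one observation for nonzero idempotents in the metric case, and that the final norm bound comes from the approximate identity rather than the projectivity constant) are exactly the points implicit in the paper's argument.
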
 
\begin{proof} Assume $I$ is $\langle$~metrically / $c$-topologically~$\rangle$
projective as $A$-module. Since $A$ is commutative, then for any $A$-morphism
$\phi:I\to A$ and $x,y\in I$ we have $\phi(x)y=x\phi(y)$. Since $I$ has a 
bounded approximate identity and $I$ is commutative we can apply
lemma~\ref{ImgOfAMorphFromBiIdToA} to get that $\phi(y)\in I$. Now by paragraph
$(i)$ of lemma~\ref{GoodIdealMetTopProjIsUnital} we get that $I$ has the
identity of norm $\langle$~at most $1$ / at most $c$~$\rangle$.

The converse immediately follows from proposition~\ref{UnIdeallIsMetTopProj}.
\end{proof}

There is no analogous criterion of this theorem in relative theory. The most
general result of this kind gives only a necessary condition: any ideal in a
commutative Banach algebra $A$ which is relatively projective as $A$-module has
a paracompact spectrum. This result is due to Helemskii
[\cite{HelHomolBanTopAlg}, theorem IV.3.6]. 

Note that existence of bounded approximate identity is not necessary for an
ideal of a commutative Banach algebra to be even topologically projective.
Indeed, consider Banach algebra  $A_0(\mathbb{D})$ --- the ideal of the disk
algebra consisting of functions vanishing at zero. By combination of 
propositions 4.3.5 and 4.3.13 paragraph $(iii)$ from~\cite{DalBanAlgAutCont} 
we get that $A_0(\mathbb{D})$ has no bounded approximate identities. 
On the other hand, from [\cite{HelBanLocConvAlg}, example IV.2.2] we know that
$A_0(\mathbb{D})\isom{A_0(\mathbb{D})-\mathbf{mod}} {A_0(\mathbb{D})}_+$, so
$A_0(\mathbb{D})$ is topologically projective by
proposition~\ref{UnitalAlgIsMetTopProj}.

Next proposition is an obvious adaptation of purely algebraic argument on
projective cyclic modules. It is almost identical to [\cite{WhiteInjmoduAlg},
proposition 2.11].

\begin{proposition}\label{MetTopProjCycModCharac} Let $I$ be a left ideal in
$A_\times $. Assume the natural projection $\pi:A_\times\to A_\times/I$ is
strictly $\langle$~coisometric / $c$-topologically surjective~$\rangle$. 
Then the following holds:

\begin{enumerate}[label = (\roman*)]
    \item If $A_\times /I$ is $\langle$~metrically / $C$-topologically~$\rangle$
    projective as $A$-module, then there exists an idempotent $p\in I$ such that
    $I=Ap$ and $\Vert e_{A_\times}-p\Vert$ is $\langle$~at most $1$ / at most
    $cC$~$\rangle$;

    \item If there exists an idempotent $p\in I$ such that $I=A_\times  p$ and
    $\Vert e_{A_\times }-p\Vert$ is $\langle$~at most $1$ / at most
    $C$~$\rangle$, then $A_\times/I$ is $\langle$~metrically /
    $C$-topologically~$\rangle$ projective.
\end{enumerate}
\end{proposition}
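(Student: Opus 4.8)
The plan is to recognise this as the Banach-module incarnation of the classical fact that a cyclic module $R/I$ is projective exactly when $I=Rp$ for an idempotent $p$, with the unital algebra $A_\times$ playing the role of $R$. In part (i) I would extract the idempotent from a module splitting of $\pi$, and in part (ii) I would build the splitting from the idempotent and package the result as a retract of $A_\times$. Since $A_\times$ is itself projective (Proposition~\ref{UnitalAlgIsMetTopProj}) and retracts of projectives are projective (Proposition~\ref{RetrMetCTopProjIsMetCTopProj}), both directions reduce to one genuine computation.

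For part (i) I start from the lifting property. As $A_\times/I$ is $\langle$~metrically / $C$-topologically~$\rangle$ projective and the admissible epimorphism $\pi$ is strictly $\langle$~coisometric / $c$-topologically surjective~$\rangle$ by hypothesis, I apply Definition~\ref{MetCTopProjMod} to the lifting problem $\phi=1_{A_\times/I}$, $\xi=\pi$, obtaining an $A$-morphism $\sigma\colon A_\times/I\to A_\times$ with $\pi\sigma=1_{A_\times/I}$ and $\Vert\sigma\Vert\le\langle$~$1$ / $cC$~$\rangle$. The crucial step is to upgrade $\sigma$ from an abstract $A$-morphism to right multiplication by one element: setting $u:=\sigma(e_{A_\times}+I)$, the identity $a+I=a\cdot(e_{A_\times}+I)$ for $a\in A$ together with $\mathbb{C}$-linearity and a short computation on $b=a\oplus_1 z$ force $\sigma(b+I)=bu$ for every $b\in A_\times$, so that $\sigma\pi$ is right multiplication by $u$. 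Then $(\sigma\pi)^2=\sigma(\pi\sigma)\pi=\sigma\pi$ yields $u^2=u$, hence $p:=e_{A_\times}-u$ is an idempotent; from $\pi\sigma=1$ one gets $b(e_{A_\times}-u)\in I$ for all $b$, so $p\in I$ (take $b=e_{A_\times}$) and $A_\times p\subset I$, while any $x\in I$ has $xu=\sigma\pi(x)=0$, giving $x=xp\in A_\times p$. Thus $I=A_\times p$ (the generator arises naturally over the unital algebra $A_\times$), and $\Vert e_{A_\times}-p\Vert=\Vert u\Vert\le\Vert\sigma\Vert\,\Vert e_{A_\times}+I\Vert\le\Vert\sigma\Vert\le\langle$~$1$ / $cC$~$\rangle$ since $\Vert e_{A_\times}\Vert=1$.

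For part (ii) I run this backwards. Given an idempotent $p\in I$ with $I=A_\times p$, put $q:=e_{A_\times}-p$; it is again idempotent and $\Vert q\Vert\le\langle$~$1$ / $C$~$\rangle$. I define $\sigma\colon A_\times/I\to A_\times$ by $\sigma(b+I)=bq$. This is well defined because $pq=p-p^2=0$, so $I=A_\times p$ is annihilated on the right by $q$; it is an $A$-morphism since right multiplication commutes with the left action; and $\pi\sigma=1_{A_\times/I}$ because $b-bq=bp\in I$. The same annihilation gives $\Vert bq\Vert=\Vert(b-y)q\Vert\le\Vert q\Vert\,\Vert b-y\Vert$ for all $y\in I$, whence $\Vert\sigma\Vert\le\Vert q\Vert$, so $A_\times/I$ is a $\Vert q\Vert$-retract of $A_\times$ (with $\Vert\pi\Vert\le1$). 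Proposition~\ref{UnitalAlgIsMetTopProj} makes $A_\times$ $\langle$~metrically / $1$-topologically~$\rangle$ projective, and Proposition~\ref{RetrMetCTopProjIsMetCTopProj} then gives the conclusion: in the topological case $\Vert q\Vert\le C$ yields $C$-topological projectivity, and in the metric case $q$ is an idempotent of norm $\le1$, hence of norm exactly $1$ (unless $q=0$, where $A_\times/I=0$), so the $\Vert q\Vert$-retract is a $1$-retract and $A_\times/I$ is metrically projective.

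I expect the only real obstacle to be the upgrade step in part (i): verifying that the $A$-linear splitting is literally right multiplication by the idempotent $u$, which rests on the cyclicity of $A_\times/I$ via the generator $e_{A_\times}+I$. Once $\sigma(\,\cdot\,)=(\,\cdot\,)u$ is in hand, idempotency, the identity $I=A_\times p$, and both norm estimates are purely formal, and part (ii) is a routine verification capped by a single appeal to retract stability.
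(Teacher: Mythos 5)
Your proof is correct and follows essentially the same route as the paper's: in (i) both extract the splitting $\sigma$, identify $\sigma\pi$ as right multiplication by the idempotent $u=\sigma(e_{A_\times}+I)$ (the paper writes $e_{A_\times}-p$ for it), and read off $I=A_\times p$, idempotency and the norm bound $\Vert e_{A_\times}-p\Vert\leq\Vert\sigma\Vert$; in (ii) both use the explicit splitting $a+I\mapsto a(e_{A_\times}-p)$ to exhibit $A_\times/I$ as a retract of $A_\times$ and conclude via Propositions~\ref{UnitalAlgIsMetTopProj} and~\ref{RetrMetCTopProjIsMetCTopProj}. The only visible difference is that you spell out the cyclicity, well-definedness, and quotient-norm steps that the paper leaves as routine.
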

\begin{proof} $(i)$ Since the natural quotient map $\pi$ is strictly 
$\langle$~coisometric / $c$-topologically surjective~$\rangle$ and $A_\times /I$
is $\langle$~metrically / $C$-topologically~$\rangle$ projective, then $\pi$ has
a right inverse $A$-morphism $\sigma$ with norm $\langle$~at most $1$ / at most
$cC$~$\rangle$. We set $e_{A_\times }-p=(\sigma\pi)(e_{A_\times })$, then
$(\sigma\pi)(a)=a(e_{A_\times }-p)$. By construction, $\pi\sigma=1_{A_\times }$,
so  
$$
e_{A_\times }-p
=(\sigma\pi)(e_{A_\times })
=(\sigma\pi)(\sigma\pi)(e_{A_\times })
=(\sigma\pi)(e_{A_\times }-p)
=(e_{A_\times }-p)(\sigma\pi)(e_{A_\times })
={(e_{A_\times }-p)}^2
$$
This equality shows that $p^2=p$. Therefore $A_\times
p=\operatorname{Ker}(\sigma\pi)$ because $(\sigma\pi)(a)=a-ap$. Since $\sigma$
is injective this is equivalent to $A_\times p=\operatorname{Ker}(\pi)$ which
equals to $I$. Finally, note that 
$\Vert e_{A_\times }-p\Vert
=\Vert(\sigma\pi)(e_{A_\times})\Vert
\leq\Vert\sigma\Vert\Vert\pi\Vert\Vert e_{A_\times }\Vert
=\Vert\sigma\Vert$.

$(ii)$ Since $p^2=p$, then we have a well defined left ideal $I=A_\times p$ and
an $A$-module map $\sigma:A_\times /I\to A_\times:a+I\mapsto a-ap$. It is easy
to check that $\pi\sigma=1_{A_\times/I}$ 
and $\Vert\sigma\Vert\leq\Vert e_{A_\times }-p\Vert$. 
This means that $\pi:A_\times \to A_\times /I$ is a
$\langle$~$1$-retraction / $C$-retraction~$\rangle$. From
propositions~\ref{UnitalAlgIsMetTopProj} 
and~\ref{RetrMetCTopProjIsMetCTopProj} it follows that $A_\times /I$ is
$\langle$~metrically / $C$-topologically~$\rangle$ projective.
\end{proof} 

In contrast with topological theory, there is no description of relatively
projective cyclic modules. There are partial answers under additional
assumptions. For example, if an ideal $I$ is complemented as Banach space in
$A_\times$, then almost the same criterion as in previous proposition holds in
relative theory [\cite{HelBanLocConvAlg}, proposition 7.1.29]. There are other
characterizations of relatively projective cyclic modules under more mild
assumptions on Banach geometry. For example, Selivanov proved that if $I$ is a
two-sided ideal and either $A/I$ has the approximation property or all
irreducible $A$-modules have the approximation property, then $A/I$ is
relatively projective iff $A_\times\isom{A-\mathbf{mod}}I\bigoplus_1 I'$ for
some left ideal $I'$ of $A$. For details see [\cite{HelHomolBanTopAlg}, chapter
IV, \S 4].


\subsection{
    Metric and topological injectivity
}\label{SubSectionMetricAndTopologicalInjectivity}

Unless otherwise stated we shall consider injectivity of right modules.

\begin{definition}[\cite{HelMetrFrQMod}, 
    definition 4.3;~\cite{WhiteInjmoduAlg}, 
    definition 3.4]\label{MetCTopInjMod} 
An $A$-module $J$ is 
called $\langle$~metrically / $C$-topologically~$\rangle$ injective if for any 
$\langle$~isometric / $c$-topologically injective~$\rangle$
$A$-morphism $\xi:Y\to X$ and for any $A$-morphism $\phi:Y\to J$ 
there exists an $A$-morphism $\psi:X\to J$ such that $\psi\xi=\phi$  and
$\langle$~$\Vert\psi\Vert\leq\Vert\phi\Vert$ / 
$\Vert \psi\Vert\leq c C\Vert\phi\Vert$~$\rangle$. We say that an $A$-module 
$J$ is topologically injective if it is $C$-topologically injective for 
some $C\geq 1$.
\end{definition}

The task of constructing an $A$-morphism $\psi$ for a given $A$-morphisms $\phi$ 
and $\xi$ in the definition~\ref{MetCTopInjMod} is called an extension problem 
and $\psi$ is called an extension of $\phi$ along $\xi$.

A short but more involved equivalent definition 
of $\langle$~metric / $C$-topological~$\rangle$ injectivity is the following: 
an $A$-module $J$ is called $\langle$~metrically / $C$-topologically~$\rangle$ 
injective, if the functor
$\langle$~$\operatorname{Hom}_{A-\mathbf{mod}_1}(-,J)
:A-\mathbf{mod}_1\to\mathbf{Ban}_1$
/
$\operatorname{Hom}_{A-\mathbf{mod}}(-,J)
:A-\mathbf{mod}\to\mathbf{Ban}$~$\rangle$
maps $\langle$~isometric / $c$-topologically injective~$\rangle$
$A$-morphisms into strictly $\langle$~coisometric / 
$c C$-topologically surjective~$\rangle$ operators. 

In category $\langle$~$A-\mathbf{mod}_1$ / $A-\mathbf{mod}$~$\rangle$ there
is a special class of $\langle$~metrically / $1$-topologically~$\rangle$
injective modules of the form $\mathcal{B}(A_+, \ell_\infty(\Lambda))$ for 
some set $\Lambda$. They are called cofree modules. These modules play a crucial 
role in our studies of injectivity.

\begin{proposition}[\cite{WhiteInjmoduAlg}, 
    lemma 3.6]\label{MetCTopCofreeMod} Let $\Lambda$ be an arbitrary 
set, then the left $A$-modules $\mathcal{B}(A_+, \ell_\infty(\Lambda))$ 
and $\mathcal{B}(A_\times, \ell_\infty(\Lambda))$
are $\langle$~metrically / $1$-topologically~$\rangle$ injective. 
\end{proposition}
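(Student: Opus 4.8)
The plan is to reduce the module extension problem for $J=\mathcal{B}(A_\bullet,\ell_\infty(\Lambda))$, where $A_\bullet$ denotes either $A_+$ or $A_\times$, to the purely Banach-space extension problem for $\ell_\infty(\Lambda)$, exploiting that $A_\bullet$ is unital with identity $\mathbf 1=e_{A_\bullet}$ of norm $1$ and that $\mathcal{B}(A_\bullet,\ell_\infty(\Lambda))$ carries the left $A$-action $(a\cdot T)(u)=T(ua)$. The only structural input about the coefficient space is that $\ell_\infty(\Lambda)$ is a metrically injective Banach space: since $\ell_\infty(\Lambda)$ is the $\bigoplus_\infty$-sum of the family $\{\mathbb{C}:\lambda\in\Lambda\}$, a bounded operator into it is a family of scalar functionals. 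Thus, if $\xi\colon Y\to X$ is a $\langle$~isometric / $c$-topologically injective~$\rangle$ operator and $\Phi\colon Y\to\ell_\infty(\Lambda)$ is bounded, then extending each coordinate $\pi_\lambda\Phi$ along $\xi$ by the Hahn--Banach theorem produces $\Psi\colon X\to\ell_\infty(\Lambda)$ with $\Psi\xi=\Phi$ and $\langle$~$\Vert\Psi\Vert\leq\Vert\Phi\Vert$ / $\Vert\Psi\Vert\leq c\Vert\Phi\Vert$~$\rangle$.

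First I would set up the currying correspondence. Given a $\langle$~isometric / $c$-topologically injective~$\rangle$ $A$-morphism $\xi\colon Y\to X$ of left modules and an $A$-morphism $\phi\colon Y\to J$, define $\Phi\colon Y\to\ell_\infty(\Lambda)$ by $\Phi(y)=\phi(y)(\mathbf 1)$; then $\Vert\Phi\Vert\leq\Vert\phi\Vert$. The identity to record is $\phi(y)(u)=\Phi(u\cdot y)$ for all $u\in A_\bullet$, $y\in Y$, which follows from $\phi$ being an $A$-morphism together with $\mathbf 1 a=a$ in $A_\bullet$ (and linearity in the scalar coordinate when $A_\bullet=A_+$). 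Next I would extend $\Phi$ to $\Psi\colon X\to\ell_\infty(\Lambda)$ as above, and finally reconstruct $\psi\colon X\to J$ by the formula $\psi(x)(u)=\Psi(u\cdot x)$, where $u\in A_\bullet$ acts on $X$ through its canonical (unital) $A_\bullet$-module structure. Boundedness of $\psi$ is immediate from contractivity of the module action: $\Vert\psi(x)(u)\Vert\leq\Vert\Psi\Vert\,\Vert u\Vert\,\Vert x\Vert$, whence $\Vert\psi\Vert\leq\Vert\Psi\Vert$.

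The remaining work is verification, and I expect the bookkeeping to be the main obstacle, precisely because it must be carried out uniformly over both choices $A_\bullet=A_+$ and $A_\bullet=A_\times$. That $\psi$ is an $A$-morphism reduces to the associativity $u\cdot(a\cdot x)=(ua)\cdot x$; the relation $\psi\xi=\phi$ follows by combining $\psi(\xi(y))(u)=\Psi(u\cdot\xi(y))=\Psi(\xi(u\cdot y))=\Phi(u\cdot y)$ with the identity of the previous paragraph; and collecting the norm estimates yields $\Vert\psi\Vert\leq\Vert\phi\Vert$ in the metric case and $\Vert\psi\Vert\leq c\Vert\phi\Vert$ in the topological case, i.e.\ the constant is $C=1$. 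The delicate sub-case is $A_\times=A$ with $A$ unital of norm one, where $X$ need not be a unital module; here one must check that the key identity $\phi(y)(u)=\Phi(u\cdot y)$ uses only $\mathbf 1=e_A$ and never unitality of $X$, so that the same argument survives verbatim.

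Finally, I would note the conceptual reason the construction succeeds: it is the hands-on form of the isometric adjunction ${}_A\mathcal{B}(X,\mathcal{B}(A_\bullet,E))\isom{\mathbf{Ban}_1}\mathcal{B}(A_\bullet\projmodtens{A}X,E)$ recorded earlier, combined with the identification $A_+\projmodtens{A}X\isom{\mathbf{Ban}_1}X$. Passing through the adjunction turns the functor $\operatorname{Hom}_{A-\mathbf{mod}}(-,J)$ into $\mathcal{B}(-,\ell_\infty(\Lambda))$, so $\langle$~metric / $1$-topological~$\rangle$ injectivity of $J$ becomes exactly metric injectivity of the Banach space $\ell_\infty(\Lambda)$. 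The direct construction is preferable only because it treats $A_\times$ without having to invoke the identification $A\projmodtens{A}X\cong X_{ess}$ that the adjunction route would force upon the unital case.
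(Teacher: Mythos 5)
Your proof is correct and is essentially the paper's own argument: evaluating $\phi$ at the identity of $A_\bullet$, extending coordinatewise by Hahn--Banach through the bounded inverse of $\xi$ on its closed image, and reconstructing $\psi(x)(u)=\Psi(u\cdot x)$ is exactly what the paper does with its functionals $h_\lambda$, $f_\lambda$, $g_\lambda$, merely packaged as a single operator into $\ell_\infty(\Lambda)$. The remaining differences are cosmetic: you use the left-module convention where the paper's proof uses right modules, and your closing remark on the adjunction is conceptual framing rather than a different route.
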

\begin{proof} By $A_\bullet$ we denote either $A_+$ or $A_\times$.
Consider arbitrary $A$-morphism 
$\phi:Y\to \mathcal{B}(A_\bullet, \ell_\infty(\Lambda))$ and 
$\langle$~an isometric / $c$-topologically injective~$\rangle$ 
$A$-morphism $\xi:Y\to X$. Fix arbitrary $\lambda\in\Lambda$ and define 
a bounded linear functional 
$h_\lambda:Y\to\mathbb{C}:y\mapsto \phi(y)(e_{A_\times})(\lambda)$. 
Clearly, $\Vert h_\lambda\Vert\leq\Vert \phi\Vert$.
Denote $X_0=\operatorname{Im}(\xi)$ and consider 
$A$-morphism $\eta=\xi|^{X_0}$. Since $\xi$ is 
$\langle$~an isometric / $c$-topologically injective~$\rangle$, then $X_0$ is
closed and $\eta$ has a left inverse bounded linear operator $\zeta:X_0\to Y$, 
such that $\zeta$ has norm at most $\langle$~$K=1$ / $K=c$~$\rangle$. 
Now consider a bounded linear functional  $f_\lambda=h_\lambda\zeta\in X_0^*$. 
By Hahn-Banach theorem we can extend $f_\lambda$ to some bounded linear functional
$g_\lambda:X\to\mathbb{C}$ with norm  
$\Vert g_\lambda\Vert
=\Vert f_\lambda\Vert
\leq\Vert h_\lambda\Vert\Vert\zeta\Vert
\leq K\Vert \phi\Vert$. 
Consider $A$-morphism 
$\psi
:X\to \mathcal{B}(A_\bullet, \ell_\infty(\Lambda))
:x\mapsto (a\mapsto (\lambda\mapsto g_\lambda(x\cdot a)))$. 
It is easy to check that $\psi\xi=\phi$ and 
$\Vert\psi\Vert\leq K\Vert\phi\Vert$. Thus for a given $\phi$ 
and $\xi$ we have constructed an $A$-morphism $\psi$ such that $\psi\xi=\phi$ 
and $\langle$~$\Vert\psi\Vert\leq\Vert\phi\Vert$ /
$\Vert\psi\Vert\leq c\Vert\phi\Vert$~$\rangle$. Hence the 
$A$-module $\mathcal{B}(A_\bullet, \ell_\infty(\Lambda))$ is 
$\langle$~metrically / $1$-topologically~$\rangle$ injective.
\end{proof}

\begin{proposition}\label{DualOfUnitalAlgIsMetTopInj} The right $A$-module
$A_\times^*$ is metrically and $1$-topologically injective.
\end{proposition}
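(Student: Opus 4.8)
The plan is to identify $A_\times^*$ with a cofree module and quote Proposition~\ref{MetCTopCofreeMod}, exactly paralleling the way Proposition~\ref{UnitalAlgIsMetTopProj} handled the free module $A_\times$ on the projective side. First I would unwind the definitions: the space $A_\times$ is a left $A$-module, so its dual $A_\times^*=\mathcal{B}(A_\times,\mathbb{C})$ is a right $A$-module under the action $(f\cdot a)(x)=f(a\cdot x)$. This is precisely the right $A$-module structure that Proposition~\ref{MetCTopCofreeMod} places on $\mathcal{B}(A_\times,E)$, specialized to the coefficient Banach space $E=\mathbb{C}$.

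Next I would take $\Lambda=\mathbb{N}_1$, so that $\ell_\infty(\mathbb{N}_1)=\mathbb{C}$, and record the isometric isomorphism of right $A$-modules
$$
A_\times^*
=\mathcal{B}(A_\times,\mathbb{C})
\isom{\mathbf{mod}-A}
\mathcal{B}(A_\times,\ell_\infty(\mathbb{N}_1)).
$$
The coefficient space enters $\mathcal{B}(A_\times,-)$ only through post-composition and does not interfere with the module action, which is inherited entirely from the domain $A_\times$; hence the identification $\mathbb{C}=\ell_\infty(\mathbb{N}_1)$ promotes to an isometric isomorphism of right $A$-modules, not merely of Banach spaces.

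Finally I would invoke Proposition~\ref{MetCTopCofreeMod} with this $\Lambda$: the module $\mathcal{B}(A_\times,\ell_\infty(\mathbb{N}_1))$ is metrically and $1$-topologically injective. Since metric and $1$-topological injectivity are extension properties formulated entirely through norms of morphisms, they transfer verbatim along an isometric module isomorphism, so $A_\times^*$ inherits both. There is no genuine obstacle in this argument; the one point deserving a line of verification is that the right $A$-module structure on the dual $A_\times^*$ coincides with the cofree action and is respected by the identification $\mathbb{C}=\ell_\infty(\mathbb{N}_1)$, which is a direct check rather than a difficulty.
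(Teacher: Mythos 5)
Your proposal is correct and is essentially the paper's own proof: the paper likewise sets $\Lambda=\mathbb{N}_1$, invokes Proposition~\ref{MetCTopCofreeMod} for $\mathcal{B}(A_\times,\ell_\infty(\Lambda))$, and concludes via the isometric identifications $\mathcal{B}(A_\times,\ell_\infty(\mathbb{N}_1))\isom{\mathbf{mod}_1-A}\mathcal{B}(A_\times,\mathbb{C})\isom{\mathbf{mod}_1-A}A_\times^*$. Your extra verification that the dual module action coincides with the cofree action is exactly the routine check the paper leaves implicit.
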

\begin{proof} Consier set $\Lambda=\mathbb{N}_1$. 
By proposition~\ref{MetCTopCofreeMod} the 
$A$-module $\mathcal{B}(A_\times, \ell_\infty(\Lambda))$ is metrically 
and $1$-topologically injective. Now it remains to note that 
$\mathcal{B}(A_\times, \ell_\infty(\Lambda))
\isom{\mathbf{mod}_1-A}\\
\mathcal{B}(A_\times, \mathbb{C})
\isom{\mathbf{mod}_1-A}
A_\times^*$.
\end{proof}

\begin{proposition}[\cite{WhiteInjmoduAlg}, 
    lemma 3.7]\label{RetrMetCTopInjIsMetCTopInj} Any 
$\langle$~$1$-retract / $c$-retract~$\rangle$ of
$\langle$~metrically / $C$-topologically~$\rangle$ injective module is
$\langle$~metrically / $c C$-topologically~$\rangle$ injective.
\end{proposition}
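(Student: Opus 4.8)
The plan is to dualize the argument of Proposition~\ref{RetrMetCTopProjIsMetCTopProj} verbatim, replacing liftings along epimorphisms by extensions along monomorphisms. Suppose that $J$ is a $\langle$~$1$-retract / $c$-retract~$\rangle$ of a $\langle$~metrically / $C$-topologically~$\rangle$ injective $A$-module $J'$. By the definition of retract there exist $A$-morphisms $\rho:J'\to J$ and $\iota:J\to J'$ with $\rho\iota=1_J$ and $\Vert\rho\Vert\Vert\iota\Vert\leq c$ for $\langle$~$c=1$ / $c\geq 1$~$\rangle$.

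First I would set up the extension problem on $J$. Fix an arbitrary $\langle$~isometric / $c'$-topologically injective~$\rangle$ $A$-morphism $\xi:Y\to X$ together with an arbitrary $A$-morphism $\phi:Y\to J$. I would then push $\phi$ into the injective module by forming $\phi'=\iota\phi:Y\to J'$ (note this is the right composite: $\phi$ lands in $J$ and $\iota$ maps $J$ into $J'$). Since $J'$ is $\langle$~metrically / $C$-topologically~$\rangle$ injective, the extension problem for $\phi'$ along $\xi$ is solvable, yielding an $A$-morphism $\psi':X\to J'$ with $\psi'\xi=\phi'$ and $\Vert\psi'\Vert\leq K\Vert\phi'\Vert$ for $\langle$~$K=1$ / $K=c'C$~$\rangle$.

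Next I would pull the solution back onto $J$ by setting $\psi=\rho\psi':X\to J$. Commutativity is immediate: $\psi\xi=\rho\psi'\xi=\rho\phi'=\rho\iota\phi=\phi$. For the norm I would estimate $\Vert\psi\Vert\leq\Vert\rho\Vert\Vert\psi'\Vert\leq K\Vert\rho\Vert\Vert\phi'\Vert\leq K\Vert\rho\Vert\Vert\iota\Vert\Vert\phi\Vert\leq cK\Vert\phi\Vert$, which in the metric case (where $c=K=1$) gives $\Vert\psi\Vert\leq\Vert\phi\Vert$, and in the topological case (where $K=c'C$) gives $\Vert\psi\Vert\leq c'(cC)\Vert\phi\Vert$. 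This is exactly the bound demanded by the definition of $\langle$~metric / $cC$-topological~$\rangle$ injectivity, so the conclusion follows.

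There is no serious obstacle: once the composites $\phi'=\iota\phi$ and $\psi=\rho\psi'$ are chosen, the whole argument is formal diagram chasing. The only step requiring care is the bookkeeping of constants, namely verifying that the factor $\Vert\rho\Vert\Vert\iota\Vert$ is absorbed into $c$ and that the surjectivity constant $c'$ produced in the target estimate coincides with the injectivity constant of the given $\xi$, so that the final bound genuinely reads $c'(cC)$ and not something weaker. I would also double-check the degenerate metric case to confirm that $c=1$ forces $K=1$ and hence the sharp contraction bound $\Vert\psi\Vert\leq\Vert\phi\Vert$.
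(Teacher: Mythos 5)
Your proposal is correct and follows essentially the same route as the paper's own proof: both dualize the retract argument for projectivity by composing $\phi$ with the coretraction into the injective module $J'$, extending there, and composing the extension with the retraction back onto $J$, with the identical bookkeeping $\Vert\psi\Vert\leq cK\Vert\phi\Vert$ yielding the bound $\langle$~$\Vert\psi\Vert\leq\Vert\phi\Vert$ / $\Vert\psi\Vert\leq c'(cC)\Vert\phi\Vert$~$\rangle$. No gaps; only the names of the retraction pair differ from the paper's $\eta$, $\zeta$.
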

\begin{proof} Suppose that $J$ is a $c$-retract of 
$\langle$~metrically / $C$-topologically~$\rangle$ injective $A$-module $J'$.
Then there exist $A$-morphisms $\eta:J\to J'$ and $\zeta: J'\to J$ such that
$\zeta\eta=1_{J}$ and $\Vert\zeta\Vert\Vert\eta\Vert\leq c$ 
for $\langle$~$c=1$ / $c\geq 1$~$\rangle$. Consider 
arbitrary $\langle$~isometric / $c'$-topologically injective~$\rangle$ 
$A$-morphism $\xi:Y\to X$ and an arbitrary $A$-morphism $\phi:Y\to J$. 
Consider $A$-morphism $\phi'=\eta\phi$. Since $J'$ is 
$\langle$~metrically / $C$-topologically~$\rangle$ injective, then there 
exists an $A$-morphism $\psi':X\to J'$ such that $\phi'=\psi'\xi$ 
and $\Vert\psi'\Vert \leq K\Vert\phi'\Vert$ 
for $\langle$~$K=1$ / $K=c' C$~$\rangle$. Now it is routine to check that 
for the $A$-morphism $\psi=\zeta\psi'$ 
holds $\psi\xi=\phi$ and $\Vert\psi\Vert \leq cK\Vert\phi\Vert$. 
Thus for a given $\phi$ and $\xi$ we have
constructed an $A$-morphism $\psi$ such that $\psi\xi=\phi$  and
$\langle$~$\Vert\psi\Vert\leq\Vert\phi\Vert$ / 
$\Vert \psi\Vert\leq c C\Vert\phi\Vert$~$\rangle$. Hence $J$ 
is $\langle$~metrically / $c C$-topologically~$\rangle$ injective.
\end{proof}

It is easy to show that, for any $A$-module $X$ there exists 
$\langle$~an isometric / a $1$-topologically injective~$\rangle$ 
$A$-morphism
$$
\rho_X^+
:X\to\mathcal{B}(A_+, \ell_\infty(B_{X^*}))
:x\mapsto(a\mapsto(f\mapsto f(x\cdot a)))
$$

\begin{proposition}[\cite{WhiteInjmoduAlg}, 
    proposition 3.10]\label{MetCTopInjModViaCanonicMorph} An $A$-module $J$ is
$\langle$~metrically / $C$-topologically~$\rangle$ injective iff $\rho_J^+$ is 
a $\langle$~$1$-retraction / $C$-retraction~$\rangle$ in $A-\mathbf{mod}$.
\end{proposition}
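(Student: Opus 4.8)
The plan is to mirror the proof of Proposition~\ref{MetCTopProjModViaCanonicMorph}, with the cofree module $\mathcal{B}(A_+,\ell_\infty(B_{J^*}))$ and the canonical monomorphism $\rho_J^+$ playing the roles dual to the free module $A_+\projtens\ell_1(B_P)$ and the canonical epimorphism $\pi_P^+$. As a preliminary I would record that $\rho_J^+$ is contractive: from $\rho_J^+(x)(a)(f)=f(x\cdot a)$ and $|f(x\cdot a)|\leq\Vert x\Vert\Vert a\Vert$ for $f\in B_{J^*}$ and $a\in B_{A_+}$ one gets $\Vert\rho_J^+\Vert\leq 1$, so in every norm estimate below the factor $\Vert\rho_J^+\Vert$ may simply be dropped.

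For the forward implication, suppose $J$ is $\langle$~metrically / $C$-topologically~$\rangle$ injective. Recall (stated just before the proposition) that $\rho_J^+:J\to\mathcal{B}(A_+,\ell_\infty(B_{J^*}))$ is $\langle$~isometric / $1$-topologically injective~$\rangle$. I would feed it into the extension problem of Definition~\ref{MetCTopInjMod} as $\xi=\rho_J^+$ together with $\phi=1_J$. Injectivity then produces an $A$-morphism $\psi:\mathcal{B}(A_+,\ell_\infty(B_{J^*}))\to J$ with $\psi\rho_J^+=1_J$ and $\langle$~$\Vert\psi\Vert\leq 1$ / $\Vert\psi\Vert\leq C$~$\rangle$; the constant $c$ in the definition equals $1$ here because $\rho_J^+$ is $1$-topologically injective. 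Since then $\langle$~$\Vert\rho_J^+\Vert\Vert\psi\Vert\leq 1$ / $\Vert\rho_J^+\Vert\Vert\psi\Vert\leq C$~$\rangle$, the morphism $\rho_J^+$ possesses a left inverse of controlled norm, i.e.\ it is a $\langle$~$1$-coretraction / $C$-coretraction~$\rangle$ in $A-\mathbf{mod}$.

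For the converse, assume $\rho_J^+$ is a $\langle$~$1$-coretraction / $C$-coretraction~$\rangle$; this is exactly the assertion that $J$ is a $\langle$~$1$-retract / $C$-retract~$\rangle$ of $\mathcal{B}(A_+,\ell_\infty(B_{J^*}))$. By Proposition~\ref{MetCTopCofreeMod} this cofree module is $\langle$~metrically / $1$-topologically~$\rangle$ injective, and Proposition~\ref{RetrMetCTopInjIsMetCTopInj} then delivers that its retract $J$ is $\langle$~metrically / $C$-topologically~$\rangle$ injective (in the topological case the resulting constant is $C\cdot 1=C$).

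I do not expect a genuinely hard step: the argument is formally dual to the projective case and all the substantive content is already packaged in Propositions~\ref{MetCTopCofreeMod} and~\ref{RetrMetCTopInjIsMetCTopInj}. The points that need care are the bookkeeping of constants (checking that $c=1$ for $\rho_J^+$ so the metric and topological bounds come out as $1$ and $C$ respectively) and the observation that the extension supplied by the definition of injectivity is automatically an $A$-morphism, so the left inverse lives in $A-\mathbf{mod}$ and not merely in $\mathbf{Ban}$. I would also flag that, dually to $\pi_P^+$ being a retraction in the projective statement, the accurate descriptor for $\rho_J^+$ is \emph{coretraction}, since it comes equipped with a left rather than a right inverse.
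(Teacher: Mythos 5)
Your proposal is correct and takes essentially the same route as the paper's proof: the forward direction solves the extension problem with $\xi=\rho_J^+$ and $\phi=1_J$ (using that $\rho_J^+$ is $\langle$~isometric / $1$-topologically injective~$\rangle$ and contractive), and the converse combines Propositions~\ref{MetCTopCofreeMod} and~\ref{RetrMetCTopInjIsMetCTopInj} exactly as the paper does. Your terminological flag is also well taken: by the paper's own definition a morphism with a left inverse is a \emph{coretraction}, and indeed the paper later cites this very proposition in that corrected form (e.g.\ in the proofs of Propositions~\ref{NonDegenMetTopInjCharac} and~\ref{MapsFroml1toMetTopInj}), even though the statement here says ``retraction''.
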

\begin{proof}
Suppose $J$ is 
$\langle$~metrically / $C$-topologically~$\rangle$ injective, then consider 
$\langle$~isometric / $1$-topologically injective~$\rangle$
$A$-morphism $\rho_J^+$. Consider extension problem with $\phi=1_J$ 
and $\xi=\rho_J^+$, then from $\langle$~metric / $C$-topological~$\rangle$ 
injectivity of $J$ we get an $A$-morphism $\tau^+$ such 
that $\tau^+\rho_J^+=1_J$
and $\Vert\tau^+\Vert\leq K$ for $\langle$~$K=1$ / $K=C$~$\rangle$. 
Since $\Vert\rho_J^+\Vert\Vert \tau^+\Vert\leq K$ 
we conclude that $\rho_J^+$ is 
a $\langle$~$1$-retraction / $C$-retraction~$\rangle$ in $A-\mathbf{mod}$.

Conversely assume that $\rho_J^+$ is 
a $\langle$~$1$-retraction / $C$-retraction~$\rangle$. In other words $J$ is 
a $\langle$~$1$-retract / $C$-retract~$\rangle$ of 
$\mathcal{B}(A_+, \ell_\infty(B_{J^*}))$. By proposition~\ref{MetCTopCofreeMod} 
the $A$-module $\mathcal{B}(A_+, \ell_\infty(B_{J^*}))$ 
is $\langle$~metrically / $1$-topologically~$\rangle$ injective. So from 
proposition~\ref{RetrMetCTopInjIsMetCTopInj} 
its $\langle$~$1$-retract / $C$-retract~$\rangle$ $P$ is 
$\langle$~metrically / $C$-topologically~$\rangle$ injective.
\end{proof}

\begin{proposition}[\cite{WhiteInjmoduAlg}, 
    proposition 3.9]\label{MetInjIsTopInjAndTopInjIsRelInj} Every metrically
injective module is $1$-topologically injective and every $C$-topologically 
injective module is $C$-relatively injective.
\end{proposition}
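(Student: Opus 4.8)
The plan is to mirror the proof of the projective analogue, Proposition~\ref{MetProjIsTopProjAndTopProjIsRelProj}, dualising every ingredient: the free module $A_+\projtens\ell_1(B_P)$ and the canonical epimorphism become the cofree module $\mathcal{B}(A_+,\ell_\infty(B_{J^*}))$ and the canonical monomorphism $\rho_J^+$. The single tool doing all the work is the retract characterization in Proposition~\ref{MetCTopInjModViaCanonicMorph}, which I would invoke in both its metric and its $C$-topological guise, exactly as the projective argument uses Proposition~\ref{MetCTopProjModViaCanonicMorph} twice.

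For the first assertion, suppose $J$ is metrically injective. By the metric half of Proposition~\ref{MetCTopInjModViaCanonicMorph}, the morphism $\rho_J^+$ is a $1$-retraction, so $J$ is a $1$-retract of the cofree module $\mathcal{B}(A_+,\ell_\infty(B_{J^*}))$. A $1$-retraction is in particular a $C$-retraction with $C=1$, so feeding this back into the converse (``$C$-topological'') direction of the same proposition immediately yields that $J$ is $1$-topologically injective.

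For the second assertion, suppose $J$ is $C$-topologically injective. The $C$-topological half of Proposition~\ref{MetCTopInjModViaCanonicMorph} gives that $\rho_J^+$ is a $C$-retraction, that is, $J$ is a $C$-retract of $\mathcal{B}(A_+,\ell_\infty(B_{J^*}))$. The point I would emphasise is that this module is exactly of the form $\mathcal{B}(A_+,E)$ with $E=\ell_\infty(B_{J^*})$, hence it is a \emph{relatively} cofree $A$-module. Invoking the retract description of relative injectivity recorded in Section~\ref{SubSectionRelativeHomology}---an $A$-module is $C$-relatively injective iff it is a $C$-retract of a relatively cofree module---we conclude that $J$ is $C$-relatively injective.

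There is no genuine obstacle here: the statement is a formal consequence of the canonical-morphism characterization together with the observation that the topological cofree module and the relatively cofree module coincide in form. The only place to be mildly careful is the bookkeeping of constants, namely that a $1$-retract is a $C$-retract for every $C\ge 1$ and that a $C$-retract of a ($1$-)relatively injective module is $C$-relatively injective; both facts are already available in the text.
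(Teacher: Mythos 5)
Your proposal is correct and follows essentially the same route as the paper: both parts hinge on the retract characterization in Proposition~\ref{MetCTopInjModViaCanonicMorph}, first reading off that a metrically injective module is a $1$-retract of $\mathcal{B}(A_+,\ell_\infty(B_{J^*}))$ and re-applying the proposition to get $1$-topological injectivity, then observing that this cofree module has the form $\mathcal{B}(A_+,E)$, so a $C$-retract of it is $C$-relatively injective. The paper's proof is exactly this argument, with the same constant bookkeeping.
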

\begin{proof} By proposition~\ref{MetCTopInjModViaCanonicMorph} every 
metrically injecive $A$-module $J$ is a $1$-retract 
of $\mathcal{B}(A_+, \ell_\infty(B_{J^*}))$. Hence by the same proposition $J$ 
is $1$-topologically injecive. Again by 
proposition~\ref{MetCTopInjModViaCanonicMorph} every $C$-topologically 
injecive $A$-module $J$ is a $C$-retract 
of $A$-module $\mathcal{B}(A_+, \ell_\infty(B_{J^*}))$. 
In other words $J$ is a $C$-retract of the module $\mathcal{B}(A_+, E)$ for 
some Banach space $E$. Therefore $J$ is $C$-relatively injecive.
\end{proof}

Clearly, every $C$-topologically injecive module is $C'$-topologically 
injecive for $C'\geq C$. But we can state even more.

\begin{proposition}\label{MetInjIsOneTopInj} An $A$-module is metrically
injective iff it is $1$-topologically injective.
\end{proposition}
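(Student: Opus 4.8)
The plan is to mirror the argument used for the projective case in Proposition~\ref{MetProjIsOneTopProj}, relying entirely on the characterisation of both flavours of injectivity through the canonical cofree embedding $\rho_J^+$. The forward implication requires nothing new: the first assertion of Proposition~\ref{MetInjIsTopInjAndTopInjIsRelInj} already states that every metrically injective module is $1$-topologically injective, so that half is immediate.

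For the converse, I would invoke Proposition~\ref{MetCTopInjModViaCanonicMorph} in both of its guises. That proposition says that an $A$-module $J$ is metrically injective precisely when $\rho_J^+$ is a $1$-retraction in $A-\mathbf{mod}$, and that it is $C$-topologically injective precisely when $\rho_J^+$ is a $C$-retraction. Specialising the topological statement to $C=1$ shows that $1$-topological injectivity of $J$ is equivalent to $\rho_J^+$ being a $1$-retraction, which is literally the same condition that characterises metric injectivity. Hence a $1$-topologically injective module $J$ fulfils the hypothesis of the metric half of Proposition~\ref{MetCTopInjModViaCanonicMorph} and is therefore metrically injective, completing the equivalence.

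There is no substantive obstacle: both notions collapse onto the single geometric statement that $\rho_J^+$ admits a contractive module retraction. The only point worth checking carefully is the constant bookkeeping, namely that the $C=1$ instance of the topological characterisation genuinely yields a $1$-retraction rather than a retraction with a larger norm product; but this is exactly what Proposition~\ref{MetCTopInjModViaCanonicMorph} delivers, so the argument is a one-line deduction from the two cited propositions.
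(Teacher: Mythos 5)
Your proposal is correct and matches the paper's own proof, which likewise derives the statement by combining Proposition~\ref{MetInjIsTopInjAndTopInjIsRelInj} for the forward direction with the $C=1$ case of the characterisation in Proposition~\ref{MetCTopInjModViaCanonicMorph} for the converse. The constant bookkeeping you flag is indeed the only point to verify, and it works exactly as you describe.
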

\begin{proof} The result immediately follows 
from propositions~\ref{MetInjIsTopInjAndTopInjIsRelInj} 
and~\ref{MetCTopInjModViaCanonicMorph}.
\end{proof}

Let us proceed to examples. If we regard the category of Banach spaces as the
category of right Banach modules over zero algebra, we may speak of
$\langle$~metrically / topologically~$\rangle$ injective Banach spaces. All
results mentioned above hold for this type of injectivity. An equivalent
definition says that a Banach space is $\langle$~metrically /
topologically~$\rangle$ injective if it is $\langle$~contractively complemented
/ complemented~$\rangle$ in any ambient Banach space. The typical examples of
metrically injective Banach spaces are $L_\infty$-spaces. Only metrically
injective Banach spaces are completely understood --- these spaces are
isometrically isomorphic to $C(K)$-space for some extremely disconnected compact
Hausdorff space $K$ [\cite{LaceyIsomThOfClassicBanSp}, theorem 3.11.6]. Usually
such topological spaces are referred to as Stonean spaces.  For the contemporary
results on topologically injective Banach spaces see
[\cite{JohnLinHandbookGeomBanSp}, chapter 40].

\begin{proposition}\label{NonDegenMetTopInjCharac}  Let $J$ be a faithful
$A$-module. Then $J$ is $\langle$~metrically / $C$-topologically~$\rangle$
injective iff the map
$\rho_J:J\to\mathcal{B}(A,\ell_\infty(B_{J^*})):x\mapsto(a\mapsto(f\mapsto
f(x\cdot a)))$ is a $\langle$~$1$-coretraction / $C$-coretraction~$\rangle$ in
$\mathbf{mod}-A$.
\end{proposition}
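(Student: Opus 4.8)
The plan is to imitate the proof of Proposition~\ref{NonDegenMetTopProjCharac}, replacing the inclusion $A\projtens\ell_1(B_P)\hookrightarrow A_+\projtens\ell_1(B_P)$ by its injective dual and swapping the roles of essentiality and faithfulness. Write $E=\ell_\infty(B_{J^*})$ and let $r\colon\mathcal{B}(A_+,E)\to\mathcal{B}(A,E)$ be the restriction morphism, i.e.\ the image of the inclusion $A\hookrightarrow A_+$ under the contravariant functor $\mathcal{B}(-,E)\colon A-\mathbf{mod}\to\mathbf{mod}-A$; it is a morphism in $\mathbf{mod}-A$. First I would record three elementary facts: (a) $\rho_J=r\rho_J^+$, since both send $x$ to $a\mapsto(f\mapsto f(x\cdot a))$ with $a$ ranging over $A$; (b) using $A_+=A\bigoplus_1\mathbb{C}$ and the fact that $\mathcal{B}(-,E)$ turns the coproduct $\bigoplus_1$ into the product $\bigoplus_\infty$, the map $r$ is strictly coisometric and its kernel $\ker r=\{T:T|_A=0\}$ is an annihilator submodule of $\mathcal{B}(A_+,E)$; (c) $\rho_J^+$ is isometric, so $\Vert\rho_J\Vert\le 1$ and $\Vert\rho_J^+\Vert=1$.

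For the forward implication, suppose $J$ is $\langle$~metrically / $C$-topologically~$\rangle$ injective. By Proposition~\ref{MetCTopInjModViaCanonicMorph} the morphism $\rho_J^+$ is a $\langle$~$1$-coretraction / $C$-coretraction~$\rangle$, so there is an $A$-morphism $\tau^+\colon\mathcal{B}(A_+,E)\to J$ with $\tau^+\rho_J^+=1_J$ and $\Vert\tau^+\Vert\le\langle$~$1$ / $C$~$\rangle$. The decisive step, dual to the use of essentiality in Proposition~\ref{NonDegenMetTopProjCharac}, is to show that $\tau^+$ kills $\ker r$: for $T\in\ker r$ and $a\in A$ one has $a\cdot\xi\in A$ for every $\xi\in A_+$ (as $A$ is an ideal of $A_+$), whence $(T\cdot a)(\xi)=T(a\cdot\xi)=0$; thus $T\cdot a=0$, and since $\tau^+$ is an $A$-morphism, $\tau^+(T)\cdot a=\tau^+(T\cdot a)=0$ for all $a$, so $\tau^+(T)\in{}^{A\perp}J=\{0\}$ by faithfulness. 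Because $r$ is a strictly coisometric surjection annihilated by $\tau^+$ on its kernel, $\tau^+$ factors as $\tau^+=\tau r$ for an $A$-morphism $\tau\colon\mathcal{B}(A,E)\to J$ with $\Vert\tau\Vert\le\Vert\tau^+\Vert$. Then $\tau\rho_J=\tau r\rho_J^+=\tau^+\rho_J^+=1_J$, and $\Vert\rho_J\Vert\Vert\tau\Vert\le\Vert\tau\Vert\le\langle$~$1$ / $C$~$\rangle$, so $\rho_J$ is a $\langle$~$1$-coretraction / $C$-coretraction~$\rangle$ in $\mathbf{mod}-A$.

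For the converse, suppose $\rho_J$ is a $\langle$~$1$-coretraction / $C$-coretraction~$\rangle$ with left inverse $A$-morphism $\tau$. Setting $\tau^+:=\tau r$ gives $\tau^+\rho_J^+=\tau r\rho_J^+=\tau\rho_J=1_J$ with $\Vert\tau^+\Vert\le\Vert\tau\Vert$, so $\rho_J^+$ is a $\langle$~$1$-coretraction / $C$-coretraction~$\rangle$; Proposition~\ref{MetCTopInjModViaCanonicMorph} then yields that $J$ is $\langle$~metrically / $C$-topologically~$\rangle$ injective. This mirrors the coextension step of Proposition~\ref{NonDegenMetTopProjCharac}, with precomposition by the surjection $r$ playing the role that composition with the inclusion played there.

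The only genuine obstacle is the factoring step in the forward direction; everything else is formal bookkeeping once facts (a)--(c) are in place. I would therefore spend the care on verifying that $\ker r$ really is an annihilator submodule and that $r$ is a metric quotient map, so that the induced $\tau$ has norm at most $\Vert\tau^+\Vert$; the identity ${}^{A\perp}J=\{0\}$ is exactly the hypothesis that converts the inclusion $\tau^+(\ker r)\subset J_{ann}$ into $\tau^+(\ker r)=\{0\}$.
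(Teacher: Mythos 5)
Your proof is correct and follows essentially the same route as the paper's: both directions reduce to Proposition~\ref{MetCTopInjModViaCanonicMorph}, and the decisive step is identical, namely that faithfulness of $J$ forces the left inverse $\tau^+$ of $\rho_J^+$ to vanish on every $T\in\mathcal{B}(A_+,\ell_\infty(B_{J^*}))$ with $T|_A=0$. The only cosmetic difference is that you produce $\tau$ by factoring $\tau^+$ through the strict quotient $r$, whereas the paper defines it directly as $\tau=\tau^+ j$ with $j=\mathcal{B}(p,\ell_\infty(B_{J^*}))$ the contractive section induced by the projection $p\colon A_+\to A$; since $rj=1_{\mathcal{B}(A,\ell_\infty(B_{J^*}))}$, these constructions yield the same map.
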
 
\begin{proof}
If $J$ is $\langle$~metrically / $C$-topologically~$\rangle$ injective, then by
proposition~\ref{MetCTopInjModViaCanonicMorph} the $A$-morphism $\rho_J^+$ has
right inverse morphism $\tau^+$, with norm $\langle$~at most $1$ / at most
$C$~$\rangle$. Assume we are given an operator $T\in
\mathcal{B}(A_+,\ell_\infty(B_{J^*}))$, such that $T|_A=0$. Fix $a\in A$, then
$T\cdot a=0$, and so $\tau^+(T)\cdot a=\tau^+(T\cdot a)=0$. Since $J$ is
faithful and $a\in A$ is arbitrary, then $\tau^+(T)=0$. Define $p:A_+\to A$ be
the natural projection from $A_+$ onto $A$, then define $A$-morphisms
$j=\mathcal{B}(p,\ell_\infty(B_{J^*}))$ and $\tau =\tau^+ j$. For any $a\in A$
and $T\in\mathcal{B}(A,\ell_\infty(B_{J^*}))$ 
we have $\tau (T\cdot a)-\tau (T)\cdot a=\tau^+(j(T\cdot a)-j(T)\cdot a)=0$, 
because $j(T\cdot a)-j(T)\cdot a|_A=0$. Therefore $\tau $ is an $A$-morphism. 
Note that $\Vert\tau \Vert\leq\Vert\tau^+\Vert\Vert j\Vert\leq\Vert\tau^+\Vert$.
Therefore $\tau$ has norm $\langle$~at most $1$ / at most $C$~$\rangle$. 
Obviously, for all $x\in J$ we have $\rho_J^+(x)-j(\rho_J(x))|_A=0$, 
so $\tau^+(\rho_J^+(x)-j(\rho_J(x)))=0$. 
As a consequence $\tau (\rho_J(x))=\tau^+(j(\rho_J(x)))=\tau^+(\rho_J^+(x))=x$ 
for all $x\in J$. Since $\tau \rho_J=1_J$, then $\rho_J$ 
is a  $\langle$~$1$-coretraction / $C$-coretraction~$\rangle$ 
in $\mathbf{mod}-A$.

Conversely, assume $\rho_J$ is a $\langle$~$1$-coretraction /
$C$-coretraction~$\rangle$, that is has a right inverse morphism $\tau $ with
norm $\langle$~at most $1$ / at most $C$~$\rangle$. Define $i:A\to A_+$ to be
the natural embedding of $A$ into $A_+$ and define $A$-morphism
$q=\mathcal{B}(i,\ell_\infty(B_{J^*}))$. Obviously, $\rho_J=q\rho_J^+$. Consider
$A$-morphism $\tau^+=\tau q$. 
Note that $\Vert\tau^+\Vert\leq\Vert\tau \Vert\Vert q\Vert\leq \Vert\tau \Vert$.
Therefore $\tau^+$ has norm $\langle$~at most $1$ / at most $C$~$\rangle$. 
Clearly $\tau^+\rho_J^+=\tau q\rho_J^+=\tau \rho_J=1_J$. So $\rho_J^+$ 
is a $\langle$~$1$-coretraction / $C$-coretraction~$\rangle$ and by 
proposition~\ref{MetCTopInjModViaCanonicMorph} the $A$-module $J$ is
$\langle$~metrically / $C$-topologically~$\rangle$ injective.
\end{proof}

It is worth to mention that $\langle$~arbitrary / only finite~$\rangle$ family
of objects in $\langle$~$\mathbf{mod}_1-A$ / $\mathbf{mod}-A$~$\rangle$ have the
categorical product which in fact is their $\bigoplus_\infty$-sum. This is the
reason why we make additional assumption in the second paragraph of the next
proposition.

\begin{proposition}\label{MetTopInjModProd} Let
${(J_\lambda)}_{\lambda\in\Lambda}$ be a family of $A$-modules. Then 

\begin{enumerate}[label = (\roman*)]
    \item $\bigoplus_\infty \{J_\lambda:\lambda\in\Lambda \}$ is metrically
    injective iff for all $\lambda\in\Lambda$ the $A$-module $J_\lambda$ is
    metrically injective;

    \item $\bigoplus_\infty \{J_\lambda:\lambda\in\Lambda \}$ is
    $C$-topologically injective iff for all $\lambda\in\Lambda$ the $A$-module
    $J_\lambda$ is a $C$-topologically injective.
\end{enumerate}
\end{proposition}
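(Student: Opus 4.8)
The plan is to prove paragraphs $(i)$ and $(ii)$ in parallel, exactly in the spirit of the dual (coproduct) statement Proposition~\ref{MetTopProjModCoprod}, while exploiting the fact that products behave even more transparently than coproducts with respect to the extension problem. Write $J:=\bigoplus_\infty\{J_\lambda:\lambda\in\Lambda\}$ and recall that the module action on a $\bigoplus_\infty$-sum is componentwise; consequently the coordinate embedding $i_\lambda:J_\lambda\to J$ (isometric) and the coordinate projection $p_\lambda:J\to J_\lambda$ (of norm at most $1$) are both $A$-morphisms with $p_\lambda i_\lambda=1_{J_\lambda}$.

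For the forward implication in either paragraph, the observation above shows that each $J_\lambda$ is a $1$-retract of $J$. Hence if $J$ is metrically (resp. $C$-topologically) injective, then Proposition~\ref{RetrMetCTopInjIsMetCTopInj} applied with $c=1$ yields that every $J_\lambda$ is metrically (resp. $C$-topologically) injective, with the constant unchanged.

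For the converse I would argue directly from Definition~\ref{MetCTopInjMod} rather than passing through a cofree module. Fix an isometric (resp. $c$-topologically injective) $A$-morphism $\xi:Y\to X$ and an $A$-morphism $\phi:Y\to J$. For each $\lambda$ put $\phi_\lambda:=p_\lambda\phi$, so that $\|\phi_\lambda\|\le\|\phi\|$; injectivity of $J_\lambda$ then produces an extension $\psi_\lambda:X\to J_\lambda$ with $\psi_\lambda\xi=\phi_\lambda$ and $\|\psi_\lambda\|\le\|\phi\|$ (resp. $\|\psi_\lambda\|\le cC\|\phi\|$). I assemble these into $\psi:X\to J$ by $\psi(x)=(\psi_\lambda(x))_{\lambda\in\Lambda}$; the pointwise estimate $\|\psi_\lambda(x)\|\le\|\phi\|\,\|x\|$ (resp. $\le cC\|\phi\|\,\|x\|$), uniform in $\lambda$, guarantees both that $\psi(x)$ really lies in the $\bigoplus_\infty$-sum and that $\|\psi\|=\sup_\lambda\|\psi_\lambda\|\le\|\phi\|$ (resp. $\le cC\|\phi\|$). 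Since $p_\lambda\psi\xi=\psi_\lambda\xi=\phi_\lambda=p_\lambda\phi$ for every $\lambda$, we get $\psi\xi=\phi$, and $\psi$ is an $A$-morphism because each $\psi_\lambda$ is and the action is componentwise. This is precisely the extension demanded by Definition~\ref{MetCTopInjMod}, so $J$ is metrically (resp. $C$-topologically) injective.

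The only genuinely delicate point is the \emph{uniformity} of the bound on $\|\psi_\lambda\|$ across $\lambda$: the assembled map $\psi$ lands in $\bigoplus_\infty\{J_\lambda:\lambda\in\Lambda\}$ and has controlled norm precisely because the $\bigoplus_\infty$-norm is a supremum, the projections $p_\lambda$ are contractive, and \emph{the same} constant $C$ governs every summand. This is exactly why the hypothesis in paragraph $(ii)$ fixes a single $C$ for all $\lambda$ (a varying $C_\lambda$ with $\sup_\lambda C_\lambda=\infty$ would destroy well-definedness of $\psi$), which is the same phenomenon as the remark that $\bigoplus_\infty$-sums are categorical products in $\mathbf{mod}-A$ only for finite families. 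The hands-on construction sidesteps that categorical subtlety, since the $\bigoplus_\infty$-sum exists as a Banach module for an arbitrary index set; the remaining verifications (module-linearity of the coordinate maps, the identity $p_\lambda i_\lambda=1_{J_\lambda}$, and the norm computations) are routine.
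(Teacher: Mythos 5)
Your proof is correct, and your forward direction (each $J_\lambda$ is a $1$-retract of $J$ via the coordinate projection $p_\lambda$, then proposition~\ref{RetrMetCTopInjIsMetCTopInj}) is exactly the paper's. The converse, however, takes a genuinely different route. The paper never touches the extension problem directly: it invokes proposition~\ref{MetCTopInjModViaCanonicMorph} to realize each $J_\lambda$ as a $C$-retract of a cofree module $\mathcal{B}(A_+,\ell_\infty(S_\lambda))$, notes that $\bigoplus_\infty\{\rho_\lambda:\lambda\in\Lambda\}$ is again a $C$-coretraction, and then uses the identification
$\bigoplus\nolimits_\infty\{\mathcal{B}(A_+,\ell_\infty(S_\lambda)):\lambda\in\Lambda\}
\isom{\mathbf{mod}_1-A}\mathcal{B}(A_+,\ell_\infty(S))$ with $S=\bigsqcup_{\lambda\in\Lambda}S_\lambda$, so that $J$ is a $C$-retract of a single cofree, hence $1$-topologically injective, module, and proposition~\ref{RetrMetCTopInjIsMetCTopInj} finishes the argument. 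You instead verify definition~\ref{MetCTopInjMod} by hand: split $\phi$ into coordinates $\phi_\lambda=p_\lambda\phi$, extend each along $\xi$ to $\psi_\lambda$ with the uniform bound $\Vert\psi_\lambda\Vert\leq\Vert\phi\Vert$ (resp.\ $\leq cC\Vert\phi\Vert$), and reassemble coordinatewise; the uniformity of the bound is precisely what makes $\psi$ land in the $\bigoplus_\infty$-sum with $\Vert\psi\Vert=\sup_\lambda\Vert\psi_\lambda\Vert$ controlled, and your observation that varying constants $C_\lambda$ with $\sup_\lambda C_\lambda=\infty$ would break well-definedness is the right diagnosis of why the statement fixes a single $C$. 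What the paper's route buys is economy and structural parallelism with the projective case (proposition~\ref{MetTopProjModCoprod}), reusing the cofree-module machinery already in place; what your route buys is a self-contained, definition-level argument that makes the role of the $\sup$-norm completely transparent and sidesteps both the cofree characterization and the categorical subtlety about infinite products in $\mathbf{mod}-A$. Both arguments produce the same constants, so the two proofs are interchangeable.
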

\begin{proof} Denote $J:=\bigoplus_\infty \{J_\lambda:\lambda\in\Lambda \}$.

$(i)$ The proof is literally the same as in paragraph $(ii)$.

$(ii)$ Assume that $J$ is $C$-topologically injective. Note that, for any
$\lambda\in\Lambda$ the $A$-module $J_\lambda$ is a $1$-retract of $J$ via
natural projection $p_\lambda:J\to J_\lambda$. By
proposition~\ref{RetrMetCTopInjIsMetCTopInj} the $A$-module $J_\lambda$ is
$C$-topologically injective.

Conversely, let each $A$-module $J_\lambda$ be $C$-topologically injective. By
proposition~\ref{MetCTopInjModViaCanonicMorph} we have a family of
$C$-coretractions
$\rho_\lambda:J_\lambda\to\mathcal{B}(A_+,\ell_\infty(S_\lambda))$. It follows
that $\bigoplus_\infty \{\rho_\lambda:\lambda\in\Lambda \}$ is a
$C$-coretraction in $A-\mathbf{mod}$. As the result $J$ is a $C$-retract of 
$$
\bigoplus\nolimits_\infty \{
    \mathcal{B}(A_+,\ell_\infty(S_\lambda)):\lambda\in\Lambda
 \}
\isom{\mathbf{mod}_1-A}
\bigoplus\nolimits_\infty\left \{
    \bigoplus\nolimits_\infty \{ A_+^*:s\in S_\lambda \}:\lambda\in\Lambda
\right \}
\isom{\mathbf{mod}_1-A}
$$
$$
\bigoplus\nolimits_\infty \{A_+^*:s\in S \}
\isom{\mathbf{mod}_1-A}
\mathcal{B}(A_+,\ell_\infty(S))
$$
in $\mathbf{mod}-A$, where $S=\bigsqcup_{\lambda\in\Lambda}S_\lambda$. Clearly,
the latter module is $1$-topologically injective, so by
proposition~\ref{RetrMetCTopInjIsMetCTopInj} the $A$-module $J$ is $C$-topologically
injective.
\end{proof}

\begin{corollary}\label{MetTopInjlInftySum} Let $J$ be an $A$-module and
$\Lambda$ be an arbitrary set. Then $\bigoplus_\infty \{J:\lambda\in\Lambda \}$
is $\langle$~metrically / $C$-topologically~$\rangle$ injective iff $J$ is
$\langle$~metrically / $C$-topologically~$\rangle$ injective.
\end{corollary}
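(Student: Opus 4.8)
The plan is to recognize that the module $\bigoplus_\infty \{J:\lambda\in\Lambda \}$ is nothing other than the $\bigoplus_\infty$-sum of the constant family $(J_\lambda)_{\lambda\in\Lambda}$ in which $J_\lambda = J$ for every $\lambda\in\Lambda$. Once this observation is made, the corollary becomes an immediate diagonal instance of proposition~\ref{MetTopInjModProd}, exactly as the projectivity corollary~\ref{MetTopProjTensProdWithl1} is the constant-family instance of proposition~\ref{MetTopProjModCoprod}. No separate identification is needed here, since the statement is already phrased directly in terms of $\bigoplus_\infty$-sums rather than via an auxiliary operator or tensor space.

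Concretely, in the metric case I would invoke paragraph $(i)$ of proposition~\ref{MetTopInjModProd} with $J_\lambda = J$: it asserts that $\bigoplus_\infty \{J:\lambda\in\Lambda \}$ is metrically injective iff every $J_\lambda$ is metrically injective. Because all the summands coincide with $J$, the condition ``$J_\lambda$ is metrically injective for all $\lambda$'' collapses to the single requirement ``$J$ is metrically injective,'' which is precisely the claimed equivalence. The $C$-topological case is handled identically by applying paragraph $(ii)$ of the same proposition to the constant family.

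I expect no genuine obstacle in this argument: the substantive content---the product-of-$\bigoplus_\infty$-sums analysis, the use of the natural projections $p_\lambda$ as $1$-retractions, and the coretraction bookkeeping---has already been carried out in proposition~\ref{MetTopInjModProd}. The corollary merely specializes that result, so the only thing to verify is the harmless set-theoretic observation that a family with all members equal to $J$ satisfies ``all members injective'' iff $J$ itself is injective. Thus the proof reduces to a one-line appeal to the preceding proposition.
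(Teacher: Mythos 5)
Your proposal is correct and coincides exactly with the paper's own proof, which likewise obtains the corollary by setting $J_\lambda = J$ for all $\lambda\in\Lambda$ in proposition~\ref{MetTopInjModProd}. Nothing further is needed.
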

\begin{proof} The result immediately follows from
proposition~\ref{MetTopInjModProd} if one set $J_\lambda=J$ 
for all $\lambda\in\Lambda$.
\end{proof}

\begin{proposition}\label{MapsFroml1toMetTopInj} Let $J$ be an $A$-module and
$\Lambda$ be an arbitrary set. Then $\mathcal{B}(\ell_1(\Lambda),J)$ is
$\langle$~metrically / $C$-topologically~$\rangle$ injective iff $J$ is
$\langle$~metrically / $C$-topologically~$\rangle$ injective.
\end{proposition}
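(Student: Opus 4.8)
The plan is to reduce the proposition to Corollary~\ref{MetTopInjlInftySum} by exhibiting $\mathcal{B}(\ell_1(\Lambda),J)$ as an $\bigoplus_\infty$-sum of copies of $J$ in the category $\mathbf{mod}_1-A$. Recall that $\ell_1(\Lambda)$ is the $\bigoplus_1$-sum of the family $\{\mathbb{C}:\lambda\in\Lambda\}$, and that $J$, being the module under consideration for injectivity, is a right $A$-module, so that $\mathcal{B}(\ell_1(\Lambda),J)$ carries the right module action $(T\cdot a)(x)=T(x)\cdot a$.

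First I would define the candidate isomorphism
$$
\Phi:\mathcal{B}(\ell_1(\Lambda),J)\to\bigoplus\nolimits_\infty \{J:\lambda\in\Lambda \}
:T\mapsto\bigoplus\nolimits_\infty \{T(\delta_\lambda):\lambda\in\Lambda \}.
$$
That $\Phi$ is a well-defined isometry rests on the elementary fact that an operator $T$ on $\ell_1(\Lambda)$ satisfies $\Vert T\Vert=\sup_{\lambda\in\Lambda}\Vert T(\delta_\lambda)\Vert$, so that $\Vert\Phi(T)\Vert_\infty=\Vert T\Vert$; surjectivity follows by sending a norm-bounded family $\{j_\lambda:\lambda\in\Lambda\}$ to the absolutely convergent operator $x\mapsto\sum_{\lambda\in\Lambda}x_\lambda j_\lambda$, which is inverse to $\Phi$.

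The step that actually needs care is verifying that $\Phi$ intertwines the module actions. With the right action recalled above, a one-line computation gives $\Phi(T\cdot a)= \{(T\cdot a)(\delta_\lambda):\lambda\in\Lambda \}= \{T(\delta_\lambda)\cdot a:\lambda\in\Lambda \}=\Phi(T)\cdot a$, where the last expression uses the componentwise right action on the $\bigoplus_\infty$-sum. Hence $\Phi$ is an isometric isomorphism in $\mathbf{mod}_1-A$, i.e. $\mathcal{B}(\ell_1(\Lambda),J)\isom{\mathbf{mod}_1-A}\bigoplus_\infty \{J:\lambda\in\Lambda \}$.

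Finally, since an isometric isomorphism exhibits each of the two modules as a $1$-retract of the other, Proposition~\ref{RetrMetCTopInjIsMetCTopInj} shows that $\langle$~metric / $C$-topological~$\rangle$ injectivity is transported across $\Phi$ in both directions. The conclusion is then immediate from Corollary~\ref{MetTopInjlInftySum}, which states precisely that $\bigoplus_\infty \{J:\lambda\in\Lambda \}$ is $\langle$~metrically / $C$-topologically~$\rangle$ injective iff $J$ is. The only genuine obstacle is the bookkeeping of the right-module structure on $\mathcal{B}(\ell_1(\Lambda),J)$ and its compatibility with $\Phi$; past that, the argument is purely formal.
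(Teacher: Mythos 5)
Your proof is correct, but it takes a genuinely different route from the paper's. You construct the isometric isomorphism of right $A$-modules
$\mathcal{B}(\ell_1(\Lambda),J)\isom{\mathbf{mod}_1-A}\bigoplus_\infty \{J:\lambda\in\Lambda \}$, $T\mapsto{(T(\delta_\lambda))}_{\lambda}$, and then quote Corollary~\ref{MetTopInjlInftySum}; since that corollary is established before this proposition, the reduction is legitimate, and since an isometric isomorphism exhibits each side as a $1$-retract of the other, Proposition~\ref{RetrMetCTopInjIsMetCTopInj} transports injectivity with the constant $C$ unchanged, so the quantitative form of the statement survives. The paper instead proves the two implications separately: for the ``only if'' direction it exhibits $J$ as a $1$-retract of $\mathcal{B}(\ell_1(\Lambda),J)$ via $i_\lambda:x\mapsto(f\mapsto f(\lambda)x)$ and $p_\lambda:T\mapsto T(\delta_\lambda)$ for one chosen $\lambda$; for the ``if'' direction it applies the functor $\mathcal{B}(\ell_1(\Lambda),-)$ to the canonical coretraction $\rho_J^+$ of Proposition~\ref{MetCTopInjModViaCanonicMorph} and identifies $\mathcal{B}(\ell_1(\Lambda),\mathcal{B}(A_+,\ell_\infty(B_{J^*})))$ with the cofree module $\mathcal{B}(A_+,\ell_\infty(\Lambda\times B_{J^*}))$ by adjoint associativity. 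Your argument is more economical and symmetric --- one isomorphism settles both directions at once --- and in effect it factors the paper's cofree-module computation through the already proved Proposition~\ref{MetTopInjModProd}, whose proof rests on exactly the same identification of an $\bigoplus_\infty$-sum of cofree modules with a single cofree module; the paper's version, by contrast, stays inside the retract-of-cofree-module pattern used throughout the text and does not rely on the sum result. One caveat shared by both arguments: they tacitly assume $\Lambda\neq\varnothing$ (for $\Lambda=\varnothing$ one has $\mathcal{B}(\ell_1(\Lambda),J)= \{0 \}$, which is injective regardless of $J$), a degenerate case that the statement itself, in the paper as well, glosses over.
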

\begin{proof} 
Assume $\mathcal{B}(\ell_1(\Lambda), J)$ is $\langle$~metrically /
$C$-topologically~$\rangle$ injective. Take any $\lambda\in\Lambda$ and consider
contractive $A$-morphisms
$i_\lambda:J\to\mathcal{B}(\ell_1(\Lambda),J):x\mapsto(f\mapsto f(\lambda)x)$
and $p_\lambda:\mathcal{B}(\ell_1(\Lambda),J)\to J:T\mapsto T(\delta_\lambda)$.
Clearly, $p_\lambda i_\lambda=1_J$, so by 
proposition~\ref{RetrMetCTopInjIsMetCTopInj} the $A$-module $J$ 
is $\langle$~metrically / $C$-topologically~$\rangle$ injective as $1$-retract 
of $\langle$~metrically / $1$-topologically~$\rangle$ injective 
$A$-module $\mathcal{B}(\ell_1(\Lambda),J)$.

Conversely, since $J$ is $\langle$~metrically / $C$-topologically~$\rangle$
injective, by proposition~\ref{MetCTopInjModViaCanonicMorph} 
the $A$-morphism $\rho_J^+$ is a
$\langle$~$1$-coretraction / $C$-coretraction~$\rangle$. Apply the functor
$\mathcal{B}(\ell_1(\Lambda),-)$ to this coretraction to get another
$\langle$~$1$-coretraction / $C$-coretraction~$\rangle$ denoted by
$\mathcal{B}(\ell_1(\Lambda),\rho_J^+)$. Note that 
$$
\mathcal{B}(\ell_1(\Lambda),\ell_\infty(B_{J^*}))
\isom{\mathbf{Ban}_1}
{(\ell_1(\Lambda)\projtens \ell_1(B_{J^*}))}^*
\isom{\mathbf{Ban}_1}
{\ell_1(\Lambda\times B_{J^*})}^*
\isom{\mathbf{Ban}_1}
\ell_\infty(\Lambda\times B_{J^*}),
$$ 
so we have isometric isomorphisms of Banach modules
$$
\mathcal{B}(\ell_1(\Lambda),\mathcal{B}(A_+,\ell_\infty(B_{J^*})))
\isom{\mathbf{mod}_1-A}
\mathcal{B}(A_+,\mathcal{B}(\ell_1(\Lambda),\ell_\infty(B_{J^*}))
\isom{\mathbf{mod}_1-A}
\mathcal{B}(A_+,\ell_\infty(\Lambda\times B_{J^*})).
$$ 
Therefore $\mathcal{B}(\ell_1(\Lambda),J)$ is a $\langle$~$1$-retract /
$C$-retract~$\rangle$ of $\langle$~metrically / $1$-topologically~$\rangle$
injective $A$-module $\mathcal{B}(A_+,\ell_\infty(\Lambda\times B_{J^*}))$. By
proposition~\ref{RetrMetCTopInjIsMetCTopInj} the $A$-module
$\mathcal{B}(\ell_1(\Lambda), J)$ is $\langle$~metrically /
$C$-topologically~$\rangle$ injective.
\end{proof}

The property of being metrically, topologically or relatively injective module 
puts some restrictions on the Banach geometric structure of the module.

\begin{proposition}[\cite{RamsHomPropSemgroupAlg}, 
    proposition 2.2.1]\label{MetTopRelInjModCompIdealAnnihCompl} Let $J$ 
be a $\langle$~metrically / $C$-topologically / $C$-relatively~$\rangle$ 
injective $A$-moudle, and let $I$ 
be a $\langle$~$1$-complemented / $c$-complemented / $c$-complemented~$\rangle$
right ideal of $A$. Then $J^{\perp I}$ 
is $\langle$~$2$-complemented / $1+cC$-complemented / 
$1+cC$-complemented~$\rangle$ in $J$.
\end{proposition}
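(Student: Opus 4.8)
The plan is to dualize the projective argument of Proposition~\ref{MetTopRelProjModCompIdealPartCompl}: instead of projecting a free resolution onto its $I$-part, I will project a cofree corepresentation of $J$ \emph{away} from its $I$-part and pull the result back to $J$ along a retraction. First I would record the two ingredients. Since $A$ is $1$-complemented in $A_+$ and $I$ is $\langle$~$1$-complemented / $c$-complemented / $c$-complemented~$\rangle$ in $A$, the ideal $I$ is $\langle$~$1$-complemented / $c$-complemented / $c$-complemented~$\rangle$ in $A_+$; hence there is a bounded idempotent $e:A_+\to A_+$ with $\operatorname{Im}(e)=I$, $e|_I=1_I$ and $\Vert e\Vert\leq\langle$~$1$ / $c$ / $c$~$\rangle$. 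On the injective side, the injectivity of $J$ supplies a canonical cofree corepresentation $\rho:J\to\mathcal{B}(A_+,E)$ together with a left inverse $A$-morphism $\tau$ of norm $\langle$~at most $1$ / at most $C$ / at most $C$~$\rangle$: in the $\langle$~metric / topological~$\rangle$ case I take $\rho=\rho_J^+$ and $\tau=\tau^+$ from Proposition~\ref{MetCTopInjModViaCanonicMorph} with $E=\ell_\infty(B_{J^*})$, while in the relative case I take for $\rho$ the canonical relatively admissible monomorphism $J\to\mathcal{B}(A_+,J)$, which is a $C$-coretraction precisely because $J$ is $C$-relatively injective.

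The crucial elementary observation is that in the cofree module the $I$-annihilator is computed by restriction: because $A_+$ is unital and $I$ is a right ideal, $\mathcal{B}(A_+,E)^{\perp I}=\{T:T|_I=0\}$, and by construction $\rho(x)|_I=0$ whenever $x\in J^{\perp I}$ (since $\rho(x)(a)$ only ever sees $x\cdot a$, which vanishes for $a\in I$). I would then precompose with $e$: set $R_e=\mathcal{B}(e,E):\mathcal{B}(A_+,E)\to\mathcal{B}(A_+,E)$, $T\mapsto T\circ e$, which is bounded of norm $\leq\Vert e\Vert$, and define
$$
q:=\tau\,(1-R_e)\,\rho=1_J-\tau R_e\rho .
$$
Here $R_e$ need not be an $A$-morphism, but this is harmless, since only $\tau$ and $\rho$ must be module maps.

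Next I would verify that $q$ is a bounded projection of $J$ onto $J^{\perp I}$. For the image: as $(1-e)|_I=0$, the operator $(1-R_e)\rho(x)=\rho(x)\circ(1-e)$ vanishes on $I$, so it lies in $\mathcal{B}(A_+,E)^{\perp I}$; because $\tau$ is an $A$-morphism it carries this annihilator into $J^{\perp I}$ (from $T\cdot I=\{0\}$ one gets $\tau(T)\cdot I=\tau(T\cdot I)=\{0\}$), whence $\operatorname{Im}(q)\subset J^{\perp I}$. For the action on $J^{\perp I}$: if $x\cdot I=\{0\}$ then $\rho(x)|_I=0$, and since $e$ takes values in $I$ we get $R_e\rho(x)=\rho(x)\circ e=0$, so $q(x)=\tau\rho(x)=x$. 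Thus $q$ is the identity on $J^{\perp I}\supset\operatorname{Im}(q)$, which simultaneously yields $q^2=q$ and $\operatorname{Im}(q)=J^{\perp I}$ (the subspace $J^{\perp I}=\bigcap_{a\in I}\ker(x\mapsto x\cdot a)$ being closed).

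Finally the norm estimate, which is where the bookkeeping matters. Writing $q=1_J-\tau R_e\rho$ rather than $q=\tau(1-R_e)\rho$ and using $\Vert\rho\Vert\leq 1$ gives
$$
\Vert q\Vert\leq 1+\Vert\tau\Vert\,\Vert R_e\Vert\,\Vert\rho\Vert\leq 1+\langle~1 / cC / cC~\rangle ,
$$
so that $J^{\perp I}$ is $\langle$~$2$-complemented / $1+cC$-complemented / $1+cC$-complemented~$\rangle$, exactly as claimed. The main obstacle I anticipate is this last point: the naive estimate straight from $q=\tau(1-R_e)\rho$ produces only the weaker bound $\Vert\tau\Vert\,(1+\Vert e\Vert)\leq C(1+c)$, so one must split off the identity term $1_J=\tau\rho$ before estimating in order to reach the sharp constant $1+cC$. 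The remaining care is simply to keep the three cases uniform, recognizing that the single formula for $q$ works once the appropriate cofree corepresentation and its retraction are in hand.
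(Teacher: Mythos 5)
Your proof is correct and is essentially the paper's own argument: your projection $q=1_J-\tau R_e\rho$ is exactly the paper's operator $p(x)=x-\tau^+\bigl(\rho_J^+(x)r\bigr)$, with the same splitting-off of the identity term to get the bound $1+\Vert\tau\Vert\Vert r\Vert\leq 1+cC$ (hence $2$ in the metric case). The only cosmetic difference is that you show $\operatorname{Im}(q)\subset J^{\perp I}$ via the general facts that the $I$-annihilator of a cofree module $\mathcal{B}(A_+,E)$ is $\{T:T|_I=0\}$ and that $A$-morphisms preserve annihilators, whereas the paper verifies the identity $(\rho_J^+(x)r)\cdot a=\rho_J^+(x\cdot a)$ for $a\in I$ directly.
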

\begin{proof} Since $A$ is $1$-complemented in $A_+$, then $I$ 
is $\langle$~$1$-complemented / $c$-complemented / $c$-complemented~$\rangle$
in $A_+$. Hence there exists a bounded linear operator $r:A_+\to I$ of 
$\langle$~norm $1$ / norm $c$ / norm $c$~$\rangle$ such that $r|_I=1_I$. 
Since $J$ is $\langle$~metrically / $C$-topologically / $C$-relatively~$\rangle$ 
injective, by proposition~\ref{MetCTopInjModViaCanonicMorph} the 
morphism $\rho_J^+$ has a left inverse $A$-morphism $\tau^+$ of norm 
$\langle$~at most $1$ / at most $C$ / at most $C$~$\rangle$. Now consider 
a bounded linear operator $p:A\to A:x\mapsto x-\tau^+(\rho_J^+(x)r)$. Clearly,
$\Vert p\Vert\leq 1+\Vert \tau^+\Vert\Vert r\Vert$. 
Since $\operatorname{Im}(r)\subset I$, then for all $x\in J^{\perp I}$ we 
have $\rho_J^+(x)r=0$. Hence $p(x)=x$ for all $x\in J^{\perp I}$. Since $I$ is 
a right ideal, then one can check that for all $x\in J$ and $a\in I$ holds 
$(\rho_J^+(x)r)\cdot a=\rho_J^+(x\cdot a)$. As a consequence 
$$
p(x)\cdot a
=x\cdot a-\tau^+(\rho_J^+(x)r)\cdot a
=x\cdot a-\tau^+((\rho_J^+(x)r)\cdot a)
=x\cdot a-\tau^+(\rho_J^+(x\cdot a))
=0.
$$
In other words $\operatorname{Im}(p)\subset J^{\perp I}$. Thus $p$ is 
projection onto $J^{\perp I}$ with norm at 
most $1+\Vert \tau^+\Vert\Vert r\Vert$. Hence $J^{\perp I}$ 
is $\langle$~$2$-complemented / $1+cC$-complemented / 
$1+cC$-complemented~$\rangle$ complemented in $J$.
\end{proof}

\begin{corollary}\label{MetTopRelInjModAnnihCompl} Let $J$ 
be a $\langle$~metrically / $C$-topologically / $C$-relatively~$\rangle$ 
injective $A$-moudle. Then $J_{ann}$ 
is $\langle$~$2$-complemented / $1+C$-complemented / 
$1+C$-complemented~$\rangle$ in $J$.
\end{corollary}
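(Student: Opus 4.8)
The plan is to obtain this corollary as the immediate special case $I = A$ of proposition~\ref{MetTopRelInjModCompIdealAnnihCompl}, so no new argument is really needed; the only work is the bookkeeping of constants. First I would check that $A$ qualifies as an admissible ideal with complementation constant $c = 1$. Indeed, $A$ is a closed subalgebra of itself closed under right multiplication by elements of $A$, so it is a right ideal of $A$ in the sense of the excerpt; moreover the identity operator $1_A$ realizes $A$ as a $1$-complemented subspace of itself, so we may take $c = 1$. I would also note, as already recorded in the excerpt, that the natural projection $A_+ = A \oplus_1 \mathbb{C} \to A$ has norm $1$, so the step inside the proof of the proposition that passes from $c$-complementedness in $A$ to $c$-complementedness in $A_+$ costs nothing here.

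Next I would identify the annihilator part $J_{ann}$ with the algebraic annihilator that appears in the proposition. For the right $A$-module $J$ one has, by definition of the right-sided counterpart of ${}^{A\perp}X$, that $J_{ann} = \{x \in J : x \cdot A = \{0\}\} = J^{\perp A}$, which is exactly the object $J^{\perp I}$ of proposition~\ref{MetTopRelInjModCompIdealAnnihCompl} with $I = A$. Substituting $I = A$ and $c = 1$ into the three conclusions of that proposition then yields that $J_{ann}$ is $2$-complemented in $J$ in the metric case, and $1 + cC = 1 + C$-complemented in $J$ in both the $C$-topologically injective and the $C$-relatively injective cases, which is precisely the claimed statement.

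There is no genuine obstacle: the entire content is carried by proposition~\ref{MetTopRelInjModCompIdealAnnihCompl}, and what remains is only to verify that the specialization $I = A$, $c = 1$ reproduces the asserted constants. This mirrors exactly how corollary~\ref{MetTopRelProjModPartCompl} was deduced from proposition~\ref{MetTopRelProjModCompIdealPartCompl} by setting $I = A$ and identifying $\operatorname{cl}_P(AP)$ with $P_{ess}$; the present corollary is the injective dual of that reduction, with $J^{\perp A}$ playing the role that $\operatorname{cl}_P(AP)$ played on the projective side.
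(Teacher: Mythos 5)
Your proposal is correct and matches the paper's proof exactly: the corollary is obtained as the special case $I=A$ (a $1$-complemented right ideal of itself, so $c=1$) of proposition~\ref{MetTopRelInjModCompIdealAnnihCompl}, with the identification $J_{ann}=J^{\perp A}$ giving the stated constants $2$ and $1+C$. The bookkeeping you supply is precisely what the paper's one-line proof leaves implicit.
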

\begin{proof} The result directly follows from 
proposition~\ref{MetTopRelInjModCompIdealAnnihCompl}.
\end{proof}


\subsection{
    Metric and topological flatness
}\label{SubSectionMetricAndTopologicalFlatness}

The definition of metrically flat modules was given by Helemeskii 
in~\cite{HelMetrFlatNorMod} under the name of extremely flat modules. The 
notion of topological flatness was first studied by 
White in~\cite{WhiteInjmoduAlg}. We use a somewhat different, but equivalent  
terminology. Unfortunately, his definition was erroneous, meanwhile the results 
were correct. So we take responsibility to fix the definition.

\begin{definition}[\cite{HelMetrFlatNorMod}, 
    definition I;~\cite{WhiteInjmoduAlg}, definition 4.4]\label{MetCTopFlatMod} 
A left $A$-module $F$ is 
called $\langle$~metrically / $C$-topologically~$\rangle$ 
flat if for each $\langle$~isometric / $c$-topologically injective~$\rangle$ 
$A$-morphism $\xi:X\to Y$ of right $A$-modules the 
operator $\xi\projmodtens{A} 1_F:X\projmodtens{A} F\to Y\projmodtens{A} F$ 
is $\langle$~isometric / $c C$-topologically injecive~$\rangle$. We say that an 
$A$-module $F$ is topologically flat if it is $C$-topologically flat for 
some $C\geq 1$.
\end{definition}

A short but more involved definition is the following: an $A$-module $F$ is
called $\langle$~metrically / $C$-topologically~$\rangle$ flat, if the functor
$\langle$~$-\projmodtens{A}F:A-\mathbf{mod}_1\to\mathbf{Ban}_1$ /
$-\projmodtens{A}F:A-\mathbf{mod}\to\mathbf{Ban}$~$\rangle$ maps
$\langle$~isometric / $c$-topologically injective~$\rangle$ $A$-morphisms into
$\langle$~isometric / $c C$-topologically injective~$\rangle$ operators.

The key result in the study of flatness is the following.

\begin{proposition}[\cite{WhiteInjmoduAlg}, lemma 4.10]\label{MetCTopFlatCharac}
An $A$-module $F$ is
$\langle$~metrically / $C$-topologically~$\rangle$ flat iff $F^*$ is
$\langle$~metrically / $C$-topologically~$\rangle$ injective.
\end{proposition}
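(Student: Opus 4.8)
The plan is to dualize the functor $-\projmodtens{A}F$ acting on right $A$-modules into the Hom-functor $\mathcal{B}_A(-,F^*)=\operatorname{Hom}_{\mathbf{mod}-A}(-,F^*)$, and then read off flatness of $F$ from injectivity of $F^*$ using the behaviour of topological injectivity and surjectivity under passage to adjoints. First I would record the consequence of the law of adjoint associativity obtained by taking $E=\mathbb{C}$: for every right $A$-module $X$ there is an isometric isomorphism
$$
\kappa_X:{(X\projmodtens{A}F)}^*\isom{\mathbf{Ban}_1}\mathcal{B}_A(X,F^*),
$$
natural in $X$. Given a $\langle$~isometric / $c$-topologically injective~$\rangle$ $A$-morphism $\xi:X\to Y$ of right $A$-modules, naturality of $\kappa$ in the first variable produces a commuting square relating the adjoint ${(\xi\projmodtens{A}1_F)}^*$ to the precomposition operator $\mathcal{B}_A(\xi,F^*):T\mapsto T\xi$, namely $\kappa_X{(\xi\projmodtens{A}1_F)}^*=\mathcal{B}_A(\xi,F^*)\kappa_Y$. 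Here $\mathcal{B}_A(\xi,F^*)$ is precisely the image of $\xi$ under the functor $\operatorname{Hom}_{\mathbf{mod}-A}(-,F^*)$. Verifying this square, by evaluating both composites on functionals arising from elementary tensors $x\projmodtens{A}y$, is the single genuinely computational step; everything else is bookkeeping.

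Next I would invoke the behaviour of these operator classes under adjoints, as listed earlier in the discussion of bounded linear operators. From the relations there, an operator $T$ between Banach spaces is $c$-topologically injective iff $T^*$ is (strictly) $c$-topologically surjective, and $T$ is isometric iff $T^*$ is strictly coisometric (an isometry is exactly a contractive $1$-topologically injective operator, and strictly coisometric is exactly contractive strictly $1$-topologically surjective, so both equivalences reduce to the two quoted implications together with $\Vert T^*\Vert=\Vert T\Vert$). Applying this to $T=\xi\projmodtens{A}1_F$ converts the assertion ``$\xi\projmodtens{A}1_F$ is $\langle$~isometric / $cC$-topologically injective~$\rangle$'' into the assertion ``${(\xi\projmodtens{A}1_F)}^*$ is $\langle$~strictly coisometric / $cC$-topologically surjective~$\rangle$''.

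Finally I would assemble the pieces. Since $\kappa$ is an isometric isomorphism on each side, the commuting square shows that ${(\xi\projmodtens{A}1_F)}^*$ and $\mathcal{B}_A(\xi,F^*)$ are equivalent operators, hence share all strict-coisometry and topological-surjectivity constants. Therefore $F$ is $\langle$~metrically / $C$-topologically~$\rangle$ flat, i.e.\ $\xi\projmodtens{A}1_F$ is $\langle$~isometric / $cC$-topologically injective~$\rangle$ for every $\langle$~isometric / $c$-topologically injective~$\rangle$ $\xi$, if and only if $\mathcal{B}_A(\xi,F^*)$ is $\langle$~strictly coisometric / $cC$-topologically surjective~$\rangle$ for every such $\xi$. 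By the functorial reformulation of injectivity stated right after Definition~\ref{MetCTopInjMod}, the latter says exactly that $\operatorname{Hom}_{\mathbf{mod}-A}(-,F^*)$ sends $\langle$~isometric / $c$-topologically injective~$\rangle$ morphisms to $\langle$~strictly coisometric / $cC$-topologically surjective~$\rangle$ operators, which is the $\langle$~metric / $C$-topological~$\rangle$ injectivity of $F^*$. The quantifiers over $\xi$ and the constant $c$ match verbatim between the two definitions, so no loss occurs. The main obstacle is the naturality check in the first step, confirming that the adjoint of $\xi\projmodtens{A}1_F$ really corresponds under $\kappa$ to precomposition by $\xi$; the rest is a formal transport through the established duality identities.
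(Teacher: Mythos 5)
Your proposal is correct and follows essentially the same route as the paper's own proof: both dualize $\xi\projmodtens{A}1_F$, identify its adjoint with $\mathcal{B}_A(\xi,F^*)$ as equivalent operators in $\mathbf{Ban}_1$ via the universal property of the projective module tensor product, and then translate between topological injectivity of an operator and strict topological surjectivity of its adjoint. The only difference is presentational; you spell out the naturality square and the constant-tracking that the paper leaves implicit.
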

\begin{proof} Consider any $\langle$~isometric / $c$-topologically
injective~$\rangle$ morphism of right $A$-modules, call it $\xi:X\to Y$. The
operator $\xi\projmodtens{A} 1_F$ is $\langle$~isometric / $c C$-topologically
injective~$\rangle$ iff the adjoint operator ${(\xi\projmodtens{A} 1_F)}^*$ is
strictly $\langle$~coisometric / $c C$-topologically surjective~$\rangle$. 
Since operators ${(\xi\projmodtens{A} 1_F)}^*$ and $\mathcal{B}_A(\xi,F^*)$ 
are equivalent in $\mathbf{Ban}_1$ via universal property of projective module 
tensor product, then we get that $\xi\projmodtens{A} 1_F$ 
is $\langle$~isometric / $c C$-topologically injective~$\rangle$ 
iff $\mathcal{B}_A(\xi,F^*)$ is 
strictly $\langle$~coisometric / $c C$-topologically surjective~$\rangle$. 
Since $\xi$ is arbitrary we conclude that $F$ 
is $\langle$~metrically / $C$-topologically~$\rangle$ flat iff
$F^*$ is $\langle$~metrically / $C$-topologically~$\rangle$ injective.
\end{proof}

This characterization allows one to prove many properties of flat modules by 
considering respective duals.

\begin{proposition}\label{RetrMetCTopFlatIsMetCTopFlat} 
Any $\langle$~$1$-retract / $c$-retract~$\rangle$ of a
$\langle$~metrically / $C$-topologically~$\rangle$ flat module is 
$\langle$~metrically / $cC$-topologically~$\rangle$ flat.
\end{proposition}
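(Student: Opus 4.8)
The plan is to deduce flatness of the retract from the corresponding statement about injectivity, bridging the two with the duality characterization of Proposition~\ref{MetCTopFlatCharac}. Concretely, rather than working directly with the functor $-\projmodtens{A}F$, I would transport the whole problem to the dual modules, where Proposition~\ref{RetrMetCTopInjIsMetCTopInj} already supplies exactly the retract-stability result for injectivity.

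First I would unpack the hypothesis. Suppose $F$ is a $\langle$~$1$-retract / $c$-retract~$\rangle$ of a $\langle$~metrically / $C$-topologically~$\rangle$ flat left $A$-module $F'$; then there exist $A$-morphisms $\eta\colon F\to F'$ and $\zeta\colon F'\to F$ with $\zeta\eta=1_F$ and $\Vert\zeta\Vert\Vert\eta\Vert\leq c$ (with $c=1$ in the metric case). Passing to adjoints, $\eta^*\colon (F')^*\to F^*$ and $\zeta^*\colon F^*\to (F')^*$ are morphisms of right $A$-modules with $\eta^*\zeta^*=(\zeta\eta)^*=1_{F^*}$, and $\Vert\eta^*\Vert\Vert\zeta^*\Vert=\Vert\eta\Vert\Vert\zeta\Vert\leq c$ since adjoints preserve the norm. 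Hence $F^*$ is a $\langle$~$1$-retract / $c$-retract~$\rangle$ of $(F')^*$; this is just the observation, recorded in the module subsection, that the adjoint of a coretraction is a retraction of the same constant.

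Next I would invoke the duality characterization. Since $F'$ is $\langle$~metrically / $C$-topologically~$\rangle$ flat, Proposition~\ref{MetCTopFlatCharac} tells us that $(F')^*$ is $\langle$~metrically / $C$-topologically~$\rangle$ injective. Now applying the injectivity retract result, Proposition~\ref{RetrMetCTopInjIsMetCTopInj}, to the $\langle$~$1$-retract / $c$-retract~$\rangle$ $F^*$ of $(F')^*$, it follows that $F^*$ is $\langle$~metrically / $cC$-topologically~$\rangle$ injective. Finally, one more application of Proposition~\ref{MetCTopFlatCharac}, this time in the reverse direction, yields that $F$ is $\langle$~metrically / $cC$-topologically~$\rangle$ flat, as claimed.

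There is no genuine obstacle here beyond careful bookkeeping: the only points that need checking are that the adjoint of a left-$A$-module coretraction is a right-$A$-module retraction, that the norm identity $\Vert T^*\Vert=\Vert T\Vert$ keeps the constant unchanged, and that the constants combine correctly as $1\cdot 1=1$ in the metric case and $c\cdot C=cC$ in the topological case. All of these are immediate from facts already established, so the entire argument is a short two-step dualization sandwiching one application of the injectivity result.
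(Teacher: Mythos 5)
Your proposal is correct and is exactly the paper's argument: the paper's proof simply cites Propositions~\ref{MetCTopFlatCharac} and~\ref{RetrMetCTopInjIsMetCTopInj}, and your write-up fills in precisely those details (dualizing the retract data, transferring flatness to injectivity of the duals, and returning). The bookkeeping on adjoints, module sidedness, and constants is all accurate.
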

\begin{proof} The result follows from propositions~\ref{MetCTopFlatCharac} 
and~\ref{RetrMetCTopInjIsMetCTopInj}
\end{proof}

\begin{proposition}\label{MetFlatIsTopFlatAndTopFlatIsRelFlat} Every metrically
flat module is $1$-topologically flat and every $C$-topologically flat module is
$C$-relatively flat.
\end{proposition}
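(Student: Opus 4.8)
The plan is to reduce both assertions to their injective counterparts by passing to the dual module, exploiting the duality characterization of flatness already established in Proposition~\ref{MetCTopFlatCharac}. The governing observation is that flatness of $F$ in any of our three flavours (metric, topological, relative) is equivalent to injectivity of $F^*$ in the corresponding flavour: the metric and $C$-topological equivalences are exactly Proposition~\ref{MetCTopFlatCharac}, while the relative equivalence ($F$ is $C$-relatively flat iff $F^*$ is $C$-relatively injective) is recorded in the discussion of relative homology. Thus the entire content of this proposition is obtained by transporting Proposition~\ref{MetInjIsTopInjAndTopInjIsRelInj} through these equivalences, and no fresh lifting or tensor computation is needed.

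First I would treat the metric case. Suppose $F$ is metrically flat. By Proposition~\ref{MetCTopFlatCharac} its dual $F^*$ is metrically injective. Proposition~\ref{MetInjIsTopInjAndTopInjIsRelInj} then yields that $F^*$ is $1$-topologically injective. Applying the topological half of Proposition~\ref{MetCTopFlatCharac} in the reverse direction, the fact that $F^*$ is $1$-topologically injective forces $F$ to be $1$-topologically flat, which is the first claim.

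For the second claim I would argue identically with the relative duality in place of the second application of Proposition~\ref{MetCTopFlatCharac}. If $F$ is $C$-topologically flat, then Proposition~\ref{MetCTopFlatCharac} gives that $F^*$ is $C$-topologically injective; Proposition~\ref{MetInjIsTopInjAndTopInjIsRelInj} then shows $F^*$ is $C$-relatively injective; and by the relative-homology characterization ``$F$ is $C$-relatively flat iff $F^*$ is $C$-relatively injective'' we conclude that $F$ is $C$-relatively flat. The only point requiring care---and the closest thing to an obstacle---is the bookkeeping of the homological constant across the three equivalences: one must verify that the same $C$ is preserved at each step (metric passing to the constant $1$, and $C$-topological passing to $C$-relative), which is guaranteed because Propositions~\ref{MetCTopFlatCharac} and~\ref{MetInjIsTopInjAndTopInjIsRelInj} are all stated with matching constants. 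Since everything reduces to these already-proved dual statements, the proof is a short chain of implications.
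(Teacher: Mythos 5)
Your proposal is correct and follows exactly the paper's own route: the paper proves this proposition by citing Proposition~\ref{MetCTopFlatCharac} together with Proposition~\ref{MetInjIsTopInjAndTopInjIsRelInj}, which is precisely the duality-transport argument you spell out (including the appeal to the relative-theory fact that $F$ is $C$-relatively flat iff $F^*$ is $C$-relatively injective, stated in the preliminaries). Your write-up merely makes the implicit chain of implications and constant bookkeeping explicit.
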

\begin{proof} The result follows from propositions~\ref{MetCTopFlatCharac} 
and~\ref{MetInjIsTopInjAndTopInjIsRelInj}.
\end{proof}

\begin{proposition}\label{MetFlatIsOneTopFlat} An $A$-module is metrically flat
iff it is $1$-topologically flat.
\end{proposition}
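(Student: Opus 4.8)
The plan is to reduce this metric-versus-topological equivalence to the injective case, exploiting the fact that flatness has already been characterized entirely in terms of injectivity of the dual module. The central tool is Proposition~\ref{MetCTopFlatCharac}, which asserts that an $A$-module $F$ is metrically flat iff $F^*$ is metrically injective, and that $F$ is $C$-topologically flat iff $F^*$ is $C$-topologically injective. I would invoke this once with the metric option and once with the topological option specialized to $C=1$.

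Concretely, first I would apply Proposition~\ref{MetCTopFlatCharac} in its metric instance to obtain that $F$ is metrically flat iff $F^*$ is metrically injective. Then I would apply the already-established injective equivalence, Proposition~\ref{MetInjIsOneTopInj}, which says that $F^*$ is metrically injective iff $F^*$ is $1$-topologically injective. Finally I would apply Proposition~\ref{MetCTopFlatCharac} once more, now in its topological instance with $C=1$, to translate ``$F^*$ is $1$-topologically injective'' back into ``$F$ is $1$-topologically flat.'' Chaining these three biconditionals yields
$$
F \text{ metrically flat}
\iff F^* \text{ metrically injective}
\iff F^* \text{ $1$-topologically injective}
\iff F \text{ $1$-topologically flat},
$$
which is exactly the claim.

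I do not expect any genuine obstacle, since every link in the chain is an \emph{iff} supplied by an earlier result, and no constants need to be tracked beyond the single value $C=1$. The only point warranting explicit care is notational: because Proposition~\ref{MetCTopFlatCharac} is stated with the parallel $\langle\,\cdots / \cdots\,\rangle$ convention, I would make clear that I am using its metric reading in one step and its topological reading specialized to $C=1$ in another, so that the metric and topological halves of the duality are not inadvertently conflated. With that understood, the argument is a direct composition of Propositions~\ref{MetCTopFlatCharac} and~\ref{MetInjIsOneTopInj}.
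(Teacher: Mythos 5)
Your proposal is correct and is exactly the paper's argument: the paper's proof also consists of combining Proposition~\ref{MetCTopFlatCharac} (flatness of $F$ equals injectivity of $F^*$, in both the metric and topological readings) with Proposition~\ref{MetInjIsOneTopInj}, and your three-step chain is just the explicit spelling-out of that combination.
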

\begin{proof} The result follows from propositions~\ref{MetCTopFlatCharac}
and~\ref{MetInjIsOneTopInj}.
\end{proof}

Loosely speaking flatness is ``projectivity with respect to second duals''. 
We can give this statement a precise meaning.

\begin{proposition}\label{MetTopFlatSecondDualCharac} Let $F$ be a left Banach
$A$-module. Then the following are equivalent:

\begin{enumerate}[label = (\roman*)]
    \item $F$ is $\langle$~metrically / $C$-topologically~$\rangle$ flat;

    \item for any strictly $\langle$~coisometric / $c$-topologically
    surjective~$\rangle$ $A$-morphism $\xi:X\to Y$ and for any $A$-morphism
    $\phi:F\to Y$ there exists an $A$-morphism $\psi:P\to X^{**}$ such that
    $\xi^{**}\psi=\iota_Y\phi$ and $\langle$~$\Vert\psi\Vert=\Vert\phi\Vert$ /
    $\Vert\psi\Vert\leq cC\Vert\phi\Vert$~$\rangle$;

    \item there exists an $A$-morphism $\sigma:F\to {(A_+\projtens
    \ell_1(B_F))}^{**}$ with norm $\langle$~at most $1$ / at most $C$~$\rangle$
    such that ${(\pi_F^+)}^{**}\sigma=\iota_F$.
\end{enumerate}
\end{proposition}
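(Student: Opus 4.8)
The plan is to prove the cycle $(i)\Rightarrow(ii)\Rightarrow(iii)\Rightarrow(i)$. The engine throughout is the duality ``$F$ flat $\iff$ $F^*$ injective'' (Proposition~\ref{MetCTopFlatCharac}), together with three standard Banach-space identities: the adjoint of a strictly coisometric / $c$-topologically surjective map is isometric / $c$-topologically injective; the naturality relation $T^{**}\iota_E=\iota_G T$ for $T:E\to G$; and the Dixmier identity ${(\iota_E)}^*\iota_{E^*}=1_{E^*}$. I would first record that the free module $\Phi:=A_+\projtens\ell_1(B_F)$ has dual ${\Phi}^*\isom{\mathbf{mod}_1-A}\mathcal{B}(A_+,\ell_\infty(B_F))$ by adjoint associativity, which is cofree and hence metrically / $1$-topologically injective by Proposition~\ref{MetCTopCofreeMod}.

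For $(i)\Rightarrow(ii)$, given a strictly coisometric / $c$-topologically surjective $A$-morphism $\xi:X\to Y$ and $\phi:F\to Y$, I dualize. The adjoint $\xi^*:Y^*\to X^*$ is an isometric / $c$-topologically injective morphism of right $A$-modules, and $\phi^*:Y^*\to F^*$ is a right $A$-morphism. Since $F$ is flat, $F^*$ is metrically / $C$-topologically injective, so $\phi^*$ extends along $\xi^*$ to some $\theta:X^*\to F^*$ with $\theta\xi^*=\phi^*$ and $\Vert\theta\Vert\le\Vert\phi\Vert$ (resp.\ $\le cC\Vert\phi\Vert$). Setting $\psi:=\theta^*\iota_F:F\to X^{**}$, a left $A$-morphism being a composite of such, I get $\xi^{**}\psi=\xi^{**}\theta^*\iota_F={(\theta\xi^*)}^*\iota_F={(\phi^*)}^*\iota_F=\phi^{**}\iota_F=\iota_Y\phi$ by naturality. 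The estimate $\Vert\psi\Vert\le\Vert\theta\Vert$ gives the bound; in the metric case the reverse inequality $\Vert\phi\Vert=\Vert\iota_Y\phi\Vert=\Vert\xi^{**}\psi\Vert\le\Vert\psi\Vert$ forces $\Vert\psi\Vert=\Vert\phi\Vert$.

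The implication $(ii)\Rightarrow(iii)$ is pure specialization: apply $(ii)$ to $\xi=\pi_F^+$ (strictly coisometric / strictly $1$-topologically surjective) and $\phi=1_F$, obtaining $\sigma:=\psi$ with ${(\pi_F^+)}^{**}\sigma=\iota_F$ and norm at most $1$ (resp.\ $C$). The substantive converse is $(iii)\Rightarrow(i)$: from $\sigma$ I build a left inverse for ${(\pi_F^+)}^*$ in $\mathbf{mod}-A$. Put $r:=\sigma^*\iota_{\Phi^*}:\Phi^*\to F^*$, a right $A$-morphism. Rewriting $\iota_{\Phi^*}{(\pi_F^+)}^*={({(\pi_F^+)}^{**})}^*\iota_{F^*}$ by naturality and then invoking $(iii)$, I compute $r{(\pi_F^+)}^*=\sigma^*{({(\pi_F^+)}^{**})}^*\iota_{F^*}={({(\pi_F^+)}^{**}\sigma)}^*\iota_{F^*}={(\iota_F)}^*\iota_{F^*}=1_{F^*}$ by the Dixmier identity. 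Since ${(\pi_F^+)}^*:F^*\to\Phi^*$ is isometric / $1$-topologically injective and $\Vert r\Vert\le\Vert\sigma\Vert\le 1$ (resp.\ $\le C$), the module $F^*$ is a $1$-retract (resp.\ $C$-retract) of the cofree injective module $\Phi^*$. Proposition~\ref{RetrMetCTopInjIsMetCTopInj} then shows $F^*$ is metrically / $C$-topologically injective, and Proposition~\ref{MetCTopFlatCharac} yields that $F$ is metrically / $C$-topologically flat.

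The main obstacle is bookkeeping rather than conceptual: one must track carefully the left/right module structures, since every dual flips the side, so that $\sigma^*$, $\theta^*$ and ${(\pi_F^+)}^*$ are verified to be morphisms in the correct category, and one must check that the cofree identification of $\Phi^*$ and the Dixmier factorization are identities of $A$-morphisms, not merely of bounded operators. The remainder is a diagram chase anchored by the naturality and Dixmier identities.
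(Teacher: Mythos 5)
Your proof is correct, and its overall architecture --- the cycle $(i)\Rightarrow(ii)\Rightarrow(iii)\Rightarrow(i)$, with $(iii)\Rightarrow(i)$ realized by exhibiting $F^*$ as a retract of the cofree injective module ${(A_+\projtens\ell_1(B_F))}^*\isom{\mathbf{mod}_1-A}\mathcal{B}(A_+,\ell_\infty(B_F))$ --- coincides with the paper's; indeed your $r=\sigma^*\iota_{\Phi^*}$ is literally the paper's $\tau$, the only difference being that the paper verifies $\tau{(\pi_F^+)}^*=1_{F^*}$ by evaluating on $f\in F^*$ and $x\in F$, while you deduce it from naturality of $\iota$ together with the Dixmier identity, which is a cosmetic distinction. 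The one step where you take a genuinely different route is $(i)\Rightarrow(ii)$: the paper applies the definition of flatness directly to $\xi^*$, so that $\xi^*\projmodtens{A}1_F$ is isometric / $cC$-topologically injective, takes its adjoint to obtain a strictly coisometric / strictly $cC$-topologically surjective operator, and identifies that adjoint with $\mathcal{B}_A(F,\xi^{**})$ via the law of adjoint associativity; surjectivity of this Hom-map then produces $\psi$ at once, and in the metric case the equality $\Vert\psi\Vert=\Vert\phi\Vert$ comes for free from strict coisometry. You instead invoke Proposition~\ref{MetCTopFlatCharac} to pass to injectivity of $F^*$, extend $\phi^*$ along $\xi^*$ to $\theta$, and set $\psi=\theta^*\iota_F$, trading the adjoint-associativity identification for one extra dualization plus a naturality computation; you then must recover $\Vert\psi\Vert=\Vert\phi\Vert$ by hand, which you do correctly using contractivity of $\xi^{**}$. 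Both routes rest on the same underlying adjunction: yours reuses the already-proved duality lemma as a black box, making the argument more modular, while the paper's stays self-contained at the level of the tensor functor. Your insistence on tracking the left/right module structures through each dualization is exactly where the care is needed, and those checks are right.
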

\begin{proof} $(i)\implies (ii)$ Again, consider arbitrary strictly 
$\langle$~coisometric / $c$-topologically surjective~$\rangle$ $A$-morphism
$\xi:X\to Y$ and arbitrary $A$-morphism $\phi:F\to Y$. By
[\cite{HelLectAndExOnFuncAn}, exercise 4.4.6] we know that $\xi^*$ is
$\langle$~isometric / $c$-topologically injective~$\rangle$. Since $F$ is
$\langle$~metrically / $C$-topologically~$\rangle$ flat then
$(\xi^*\projmodtens{A}1_F)$ is $\langle$~isometric / $cC$-topologically
injective~$\rangle$ too. Therefore ${(\xi^*\projmodtens{A}1_F)}^*$ is
$\langle$~strictly coisometric / strictly $cC$-topologically
surjective~$\rangle$ by [\cite{HelLectAndExOnFuncAn}, exercise 4.4.7]. Note that
${(\xi^*\projmodtens{A}1_F)}^*$ and $\mathcal{B}_A(F,\xi^{**})$ are equivalent
in $\mathbf{Ban}_1$ thanks to the law of adjoint associativity. So
$\mathcal{B}_A(F,\xi^{**})$ is strictly $\langle$~coisometric / 
$cC$-topologically surjective~$\rangle$ too. The latter implies that for the
operator $\iota_Y\phi$ we can find an $A$-morphism $\psi:Y\to X^{**}$ such that
$\xi^{**}\psi=\iota_Y\phi$ and
$\langle$~$\Vert\psi\Vert=\Vert\iota_Y\phi\Vert=\Vert\phi\Vert$ /
$\Vert\psi\Vert\leq cC\Vert\iota_Y\phi\Vert=cC\Vert\phi\Vert$~$\rangle$

$(ii)\implies (iii)$ Set $\xi=\pi_F^+$ and $\phi=1_F$. Since $\xi$ is
strictly $\langle$~coisometric/ $1$-topologically surjective~$\rangle$,
then from assumption we get an $A$-morphism $\sigma:F\to
{(A_+\projtens\ell_1(B_F))}^{**}$ such that 
${(\pi_F^+)}^{**}\sigma
=\iota_F 1_F
=\iota_F$ 
and $\langle$~$\Vert\sigma\Vert\leq \Vert\phi\Vert=1$ /
$\Vert\sigma\Vert\leq 1\cdot C\Vert\phi\Vert=C$~$\rangle$.

$(iii)\implies (i)$ Let $\sigma$ be a right inverse $A$-morphism for
${(\pi_F^+)}^{**}$ with norm $\langle$~at most $1$ / at most $C$~$\rangle$.
Consider $A$-morphism $\tau=\sigma^*\iota_{{(A_+\projtens\ell_1(B_F))}^*}$.
Clearly, its norm is $\langle$~at most $1$ / at most $C$~$\rangle$. For any
$f\in F^*$ and $x\in F$ we have
$$
({\tau(\pi_F^+)}^*)(f)(x)
=\sigma^*(\iota_{{(A_+\projtens\ell_1(B_F))}^*}({(\pi_F^+)}^*(f)))(x)
=\iota_{{(A_+\projtens\ell_1(B_F))}^*}({(\pi_F^+)}^*(f))(\sigma(x))
$$
$$
=\sigma(x)({(\pi_F^+)}^*(f))
={(\pi_F^+)}^{**}(\sigma(x))(f)
=\iota_F(x)(f)
=f(x)
$$
So ${\tau(\pi_F^+)}^*=1_{F^*}$, which means $F^*$ is a with norm
$\langle$~$1$-retract / $C$-retract~$\rangle$ of
${(A_+\projtens\ell_1(B_F))}^*$. The latter module is $\langle$~metrically /
$1$-topologically~$\rangle$ injective, because 
$$
{(A_+\projtens\ell_1(B_F))}^*
\isom{\mathbf{mod}_1-A}
\mathcal{B}(A_+,\ell_\infty(B_F)).
$$ 
By proposition~\ref{RetrMetCTopInjIsMetCTopInj} the $A$-module $F^*$ is
$\langle$~metrically / $C$-topologically~$\rangle$ injective. By 
proposition~\ref{MetCTopFlatCharac} this is
equivalent to $\langle$~metric / $C$-topological~$\rangle$ flatness of $F$.
\end{proof}

Let us proceed to examples. Consider the category of Banach spaces as 
the category of left Banach modules over zero algebra, then we get the 
definition of $\langle$~metrically / topologically~$\rangle$ flat Banach space. 
From Grothendieck's paper~\cite{GrothMetrProjFlatBanSp} it follows that any 
metrically flat Banach space is isometrically isomorphic to $L_1(\Omega,\mu)$ 
for some measure space $(\Omega,\Sigma,\mu)$. For topologically flat Banach 
spaces, in contrast with topologically injective ones, we also have a criterion
[\cite{DefFloTensNorOpId}, corollary 23.5(1)]: a Banach space is topologically
flat iff it is an $\mathscr{L}_1^g$-space.

\begin{proposition}\label{MetTopFlatModCoProd} Let
${(F_\lambda)}_{\lambda\in\Lambda}$ be family of $A$-modules. Then 

\begin{enumerate}[label = (\roman*)]
    \item $\bigoplus_1 \{F_\lambda:\lambda\in\Lambda \}$ is metrically flat iff
    for all $\lambda\in\Lambda$ the $A$-module $F_\lambda$ is metrically flat;

    \item $\bigoplus_1 \{F_\lambda:\lambda\in\Lambda \}$ is $C$-topologically
    flat iff for all $\lambda\in\Lambda$ the $A$-module $F_\lambda$ is
    $C$-topologically flat.
\end{enumerate}
\end{proposition}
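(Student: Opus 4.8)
The plan is to reduce both paragraphs to the already-established injectivity statement for $\bigoplus_\infty$-sums, passing through the flatness--injectivity duality. Write $F:=\bigoplus_1 \{F_\lambda:\lambda\in\Lambda \}$. By proposition~\ref{MetCTopFlatCharac}, $F$ is $\langle$~metrically / $C$-topologically~$\rangle$ flat iff $F^*$ is $\langle$~metrically / $C$-topologically~$\rangle$ injective, and the same proposition applied to each summand shows that $F_\lambda$ is $\langle$~metrically / $C$-topologically~$\rangle$ flat iff $F_\lambda^*$ is $\langle$~metrically / $C$-topologically~$\rangle$ injective. Thus everything follows once the corresponding statement is settled for the right $A$-modules $F^*$ and $ \{F_\lambda^* \}$.

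The central ingredient is the isometric isomorphism of right $A$-modules
$$
F^* = {\left(\bigoplus\nolimits_1  \{F_\lambda:\lambda\in\Lambda \}\right)}^*
\isom{\mathbf{mod}_1-A}
\bigoplus\nolimits_\infty  \{F_\lambda^*:\lambda\in\Lambda \}.
$$
As Banach spaces this is exactly the duality of $\bigoplus_p$-sums recorded in subsection~\ref{SubSectionBanachSpaces}; what remains is to verify that the canonical isometry intertwines the module actions. Unwinding the definitions, the right action on $F^*$ is $(f\cdot a)(x)=f(a\cdot x)$ with $a\cdot x=\bigoplus_1 \{a\cdot x_\lambda:\lambda\in\Lambda \}$, while the right action on the $\bigoplus_\infty$-sum is componentwise; evaluating on elementary functionals shows the two actions agree under the identification, so the isomorphism is indeed one of right $A$-modules. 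Because it is isometric, it transports the metric case and every constant $C$ of the topological case unchanged.

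With this identification in hand, proposition~\ref{MetTopInjModProd} applies directly: $\bigoplus_\infty  \{F_\lambda^*:\lambda\in\Lambda \}$ is $\langle$~metrically / $C$-topologically~$\rangle$ injective iff each $F_\lambda^*$ is. Chaining the equivalences yields, in both the metric and the $C$-topological case, that $F$ is flat iff $F^*$ is injective iff $\bigoplus_\infty \{F_\lambda^*:\lambda\in\Lambda \}$ is injective iff each $F_\lambda^*$ is injective iff each $F_\lambda$ is flat, which is the assertion. The only step demanding genuine care is the module-compatibility of the duality isomorphism above; every other step is a formal concatenation of results already proved, so I expect no serious obstacle beyond that bookkeeping.
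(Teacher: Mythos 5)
Your proposal is correct and follows exactly the paper's own argument: reduce to injectivity via proposition~\ref{MetCTopFlatCharac}, identify ${\left(\bigoplus_1 \{F_\lambda:\lambda\in\Lambda \}\right)}^*$ with $\bigoplus_\infty \{F_\lambda^*:\lambda\in\Lambda \}$ as right $A$-modules, and apply proposition~\ref{MetTopInjModProd}. The only difference is that you spell out the module-compatibility of the duality isomorphism, which the paper takes as known.
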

\begin{proof} By proposition~\ref{MetCTopFlatCharac} an $A$-module $F$ 
is $\langle$~metrically / $C$-topologically~$\rangle$ flat iff $F^*$ 
is $\langle$~metrically / $C$-topologically~$\rangle$ injective. 
It remains to apply
proposition~\ref{MetTopInjModProd} with $J_\lambda=F_\lambda^*$ for all
$\lambda\in\Lambda$ and recall that 
${\left(\bigoplus_1 \{
    F_\lambda:\lambda\in\Lambda \}
\right)}^*
\isom{\mathbf{mod}_1-A}
\bigoplus_\infty \{ F_\lambda^*:\lambda\in\Lambda \}$.
\end{proof}

The following propositions demonstrate a close relationsip between flatness and
projectivity.

\begin{proposition}\label{DualMetTopProjIsMetrInj} Let $P$ be a
$\langle$~metrically / $C$-topologically~$\rangle$ projective $A$-module, and
$\Lambda$ be an arbitrary set. Then $\mathcal{B}(P,\ell_\infty(\Lambda))$ is
$\langle$~metrically / $C$-topologically~$\rangle$ injective $A$-module. In
particular, $P^*$ is $\langle$~metrically / $C$-topologically~$\rangle$
injective $A$-module.
\end{proposition}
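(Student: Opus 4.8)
The plan is to realize $\mathcal{B}(P,\ell_\infty(\Lambda))$ as a retract of an explicitly injective module by transporting the retraction that encodes projectivity of $P$ through the contravariant functor $\mathcal{B}(-,\ell_\infty(\Lambda)):A-\mathbf{mod}\to\mathbf{mod}-A$.

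First I would invoke proposition~\ref{MetCTopProjModViaCanonicMorph}: since $P$ is $\langle$~metrically / $C$-topologically~$\rangle$ projective, the canonical morphism $\pi_P^+:A_+\projtens\ell_1(B_P)\to P$ admits a right inverse $A$-morphism $\sigma^+$ with $\Vert\pi_P^+\Vert\Vert\sigma^+\Vert\leq\langle$~$1$ / $C$~$\rangle$; that is, $P$ is a $\langle$~$1$-retract / $C$-retract~$\rangle$ of the free module $A_+\projtens\ell_1(B_P)$. Applying $\mathcal{B}(-,\ell_\infty(\Lambda))$ turns the identity $\pi_P^+\sigma^+=1_P$ into $\mathcal{B}(\sigma^+,\ell_\infty(\Lambda))\,\mathcal{B}(\pi_P^+,\ell_\infty(\Lambda))=1$, and since $\Vert\mathcal{B}(T,\ell_\infty(\Lambda))\Vert\leq\Vert T\Vert$ for every $T$, the product of norms is still at most $\langle$~$1$ / $C$~$\rangle$. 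Hence $\mathcal{B}(P,\ell_\infty(\Lambda))$ is a $\langle$~$1$-retract / $C$-retract~$\rangle$ of $\mathcal{B}(A_+\projtens\ell_1(B_P),\ell_\infty(\Lambda))$ in $\mathbf{mod}-A$.

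Next I would identify this ambient module. By the law of adjoint associativity there is an isometric isomorphism
$$
\mathcal{B}(A_+\projtens\ell_1(B_P),\ell_\infty(\Lambda))
\isom{\mathbf{Ban}_1}
\mathcal{B}(\ell_1(B_P),\mathcal{B}(A_+,\ell_\infty(\Lambda))),
$$
and a short computation on elementary tensors confirms that it is in fact an isomorphism of right $A$-modules, the $A$-action being carried by the $A_+$ variable throughout. The cofree module $\mathcal{B}(A_+,\ell_\infty(\Lambda))$ is $\langle$~metrically / $1$-topologically~$\rangle$ injective by proposition~\ref{MetCTopCofreeMod}; consequently proposition~\ref{MapsFroml1toMetTopInj}, applied with index set $B_P$ and with $J=\mathcal{B}(A_+,\ell_\infty(\Lambda))$, shows that $\mathcal{B}(\ell_1(B_P),\mathcal{B}(A_+,\ell_\infty(\Lambda)))$ is $\langle$~metrically / $1$-topologically~$\rangle$ injective as well.

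Finally I would close with proposition~\ref{RetrMetCTopInjIsMetCTopInj}: a $\langle$~$1$-retract / $C$-retract~$\rangle$ of a $\langle$~metrically / $1$-topologically~$\rangle$ injective module is $\langle$~metrically / $C$-topologically~$\rangle$ injective, which yields the claim for $\mathcal{B}(P,\ell_\infty(\Lambda))$. The \emph{in particular} statement about $P^*=\mathcal{B}(P,\mathbb{C})$ follows at once by taking $\Lambda$ a singleton, so that $\ell_\infty(\Lambda)\isom{\mathbf{Ban}_1}\mathbb{C}$. I expect the only genuinely delicate point to be the verification that the adjoint-associativity isomorphism respects the right $A$-module structures and not merely the underlying Banach spaces; once that module bookkeeping is settled, the norm estimates are entirely supplied by the cited propositions.
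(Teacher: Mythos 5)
Your proposal is correct and follows essentially the same route as the paper's proof: both apply the functor $\mathcal{B}(-,\ell_\infty(\Lambda))$ to the canonical retraction $\pi_P^+$ supplied by proposition~\ref{MetCTopProjModViaCanonicMorph}, exhibit $\mathcal{B}(P,\ell_\infty(\Lambda))$ as a $1$-retract (resp.\ $C$-retract) of $\mathcal{B}(A_+\projtens\ell_1(B_P),\ell_\infty(\Lambda))$ in $\mathbf{mod}-A$, and conclude via proposition~\ref{RetrMetCTopInjIsMetCTopInj} together with the specialization $\Lambda=\mathbb{N}_1$. The only cosmetic difference is in recognizing the ambient module as injective: the paper identifies it directly with the cofree module $\mathcal{B}(A_+,\ell_\infty(B_P\times\Lambda))$ by adjoint associativity and cites proposition~\ref{MetCTopCofreeMod}, whereas you group the associativity the other way and invoke proposition~\ref{MapsFroml1toMetTopInj}, whose own proof carries out precisely that identification.
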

\begin{proof} From proposition~\ref{MetCTopProjModViaCanonicMorph} we know 
that $\pi_P^+$ is a $\langle$~$1$-retraction / $C$-retraction~$\rangle$. 
Then the $A$-morphism
$\rho^+=\mathcal{B}(\pi_P^+,\ell_\infty(\Lambda))$ is a
$\langle$~$1$-coretraction / $C$-coretraction~$\rangle$. Note that, 
$$
\mathcal{B}(A_+\projtens\ell_1(B_P),\ell_\infty(\Lambda))
\isom{\mathbf{mod}_1-A}
\mathcal{B}(A_+,\mathcal{B}(\ell_1(B_P),\ell_\infty(\Lambda)))
\isom{\mathbf{mod}_1-A}
\mathcal{B}(A_+,\ell_\infty(B_P\times\Lambda)).
$$ 
Thus we showed that $\rho^+$ is a $\langle$~$1$-coretraction /
$C$-coretraction~$\rangle$ from $\mathcal{B}(P,\ell_\infty(\Lambda))$ into
$\langle$~metrically / $1$-topologically~$\rangle$ injective $A$-module. By
proposition~\ref{RetrMetCTopInjIsMetCTopInj} the $A$-module
$\mathcal{B}(P,\ell_\infty(\Lambda))$ is $\langle$~metrically /
$C$-topologically~$\rangle$ injective. To prove the last claim, just set
$\Lambda=\mathbb{N}_1$.
\end{proof}

\begin{proposition}\label{MetTopProjIsMetTopFlat} Every $\langle$~metrically /
$C$-topologically~$\rangle$ projective module is $\langle$~metrically /
$C$-topologically~$\rangle$ flat.
\end{proposition}
\begin{proof} The result follows from propositions~\ref{MetCTopFlatCharac}
and~\ref{DualMetTopProjIsMetrInj}.
\end{proof}

The property of being metrically, topologically or relatively flat module 
puts some restrictions on the Banach geometric structure of the module.

\begin{proposition}[\cite{RamsHomPropSemgroupAlg}, 
    corollary 2.2.2]\label{MetTopRelFlatModCompIdealPartCompl} Let $F$ 
be a $\langle$~metrically / $C$-topologically / $C$-relatively~$\rangle$ 
flat $A$-moudle, and let $I$ 
be a $\langle$~$1$-complemented / $c$-complemented / $c$-complemented~$\rangle$
right ideal of $A$. Then $\operatorname{cl}_F(I F)$ 
is weakly $\langle$~$2$-complemented / $1+cC$-complemented / 
$1+cC$-complemented~$\rangle$  in $F$.
\end{proposition}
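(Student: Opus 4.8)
The plan is to dualise the injective statement \ref{MetTopRelInjModCompIdealAnnihCompl} and then to translate ``complementation of an annihilator'' into ``weak complementation''. First I would apply \ref{MetCTopFlatCharac}, together with the relative equivalence ``$F$ is $C$-relatively flat iff $F^{*}$ is $C$-relatively injective'' from Subsection~\ref{SubSectionRelativeHomology}, to conclude that the right $A$-module $F^{*}$ is $\langle$~metrically / $C$-topologically / $C$-relatively~$\rangle$ injective. As $I$ is a $\langle$~$1$-complemented / $c$-complemented / $c$-complemented~$\rangle$ right ideal of $A$, \ref{MetTopRelInjModCompIdealAnnihCompl} applied to $J=F^{*}$ then yields a bounded projection of $F^{*}$ onto the annihilator ${(F^{*})}^{\perp I}$ of norm $\langle$~at most $2$ / at most $1+cC$ / at most $1+cC$~$\rangle$.

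Next I would identify the annihilator. Writing $K:=\operatorname{cl}_F(IF)$ and applying the identity ${(X^{*})}^{\perp S}=\operatorname{cl}_X(SX)^{\perp}$ from Subsection~\ref{SubSectionBanachAlgebrasAndTheirModules} with $X=F$ and $S=I$, we get ${(F^{*})}^{\perp I}=K^{\perp}$, the Banach-space annihilator of $K$. Recall that, by definition, $K$ is weakly $d$-complemented in $F$ exactly when the adjoint $i^{*}\colon F^{*}\to K^{*}$ of the inclusion $i\colon K\to F$ is a $d$-retraction, which (since $\Vert i^{*}\Vert\le 1$ and $i^{*}$ is onto by Hahn--Banach, with $\ker i^{*}=K^{\perp}$) amounts to $F^{*}$ carrying a bounded projection whose \emph{kernel} is $K^{\perp}$ of norm at most $d$.

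This is the delicate point, and the place where the constant has to be handled with care. The projection supplied by \ref{MetTopRelInjModCompIdealAnnihCompl} is a projection \emph{onto} $K^{\perp}$, while weak complementation requires a projection with \emph{kernel} $K^{\perp}$; passing from one to the other by complementation costs an additive $1$, and the na\"ive black-box use of \ref{MetTopRelInjModCompIdealAnnihCompl} would only give weak $\langle$~$3$ / $2+cC$ / $2+cC$~$\rangle$-complementation. To keep the sharp constant I would instead retain the complementary projection actually built in the proof of \ref{MetTopRelInjModCompIdealAnnihCompl}: there the projection onto $K^{\perp}$ arises as $1-Q$, where $Q(f)=\tau^{+}\bigl(\rho_{F^{*}}^{+}(f)\,r\bigr)$, with $\tau^{+}$ the left inverse of $\rho_{F^{*}}^{+}$ of norm $\langle$~$1$ / $C$ / $C$~$\rangle$ and $r\colon A_{+}\to I$ the retraction of norm $\langle$~$1$ / $c$ / $c$~$\rangle$. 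This $Q$ is itself a bounded projection with $\ker Q=K^{\perp}$ and $\Vert Q\Vert\le\Vert\tau^{+}\Vert\,\Vert r\Vert\le\langle$~$1$ / $cC$ / $cC$~$\rangle$.

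Finally I would cash in $Q$. Since $\ker Q=K^{\perp}=\ker i^{*}$ and $i^{*}$ is surjective, the restriction $i^{*}|_{\operatorname{Im}Q}\colon\operatorname{Im}Q\to K^{*}$ is a linear isomorphism; its inverse $S$ satisfies $i^{*}S=1_{K^{*}}$, and evaluating $S$ on norm-minimal Hahn--Banach lifts gives $Sg=Q\tilde g$, whence $\Vert S\Vert\le\Vert Q\Vert$. Thus $i^{*}$ is a $\Vert Q\Vert$-retraction, so $K=\operatorname{cl}_F(IF)$ is weakly $\langle$~$1$ / $cC$ / $cC$~$\rangle$-complemented in $F$, and a fortiori weakly $\langle$~$2$ / $1+cC$ / $1+cC$~$\rangle$-complemented, as claimed. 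The main obstacle is precisely this transfer: the equivalence between weak complementation of $K$ and complementation of $K^{\perp}$ must be run through the projection with kernel $K^{\perp}$, not the projection onto $K^{\perp}$, or the advertised constant is lost.
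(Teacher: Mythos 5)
Your proposal is correct and follows the same skeleton as the paper's proof: dualize via \ref{MetCTopFlatCharac} (plus, for the relative column, the duality ``$F$ is $C$-relatively flat iff $F^*$ is $C$-relatively injective'' from Section~\ref{SubSectionRelativeHomology}, which the paper's citation does not literally cover), apply \ref{MetTopRelInjModCompIdealAnnihCompl} to $J=F^*$, and identify ${(F^*)}^{\perp I}={\operatorname{cl}_F(IF)}^\perp$. Where you differ is at the last step, and your extra care is warranted. The paper's entire proof is two sentences; it ends by ``recalling'' that ${(F^*)}^{\perp I}=\operatorname{cl}_F(IF)$ (a typo for ${\operatorname{cl}_F(IF)}^\perp$) and silently treats ``$K^\perp$ is $d$-complemented in $F^*$'' as synonymous with ``$K$ is weakly $d$-complemented in $F$''. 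Under the paper's stated definition of weak complementation (the adjoint $i^*$ of the inclusion admits a right inverse with controlled norm), that identification is exactly the subtlety you isolate: a projection \emph{onto} $K^\perp$ of norm $1+cC$ only yields a projection with \emph{kernel} $K^\perp$ of norm $2+cC$, so the black-box reduction loses the advertised constant. Your repair --- reusing the complementary projection $Q$ of norm at most $cC$ (resp.\ at most $1$ in the metric case) constructed inside the proof of \ref{MetTopRelInjModCompIdealAnnihCompl}, and converting it into a right inverse of $i^*$ via norm-preserving Hahn--Banach lifts --- is sound, and in fact proves the stronger conclusion that $\operatorname{cl}_F(IF)$ is weakly $cC$-complemented (resp.\ weakly $1$-complemented), which a fortiori gives the stated $1+cC$ (resp.\ $2$) bound. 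So your proof is not merely correct; it supplies the constant bookkeeping that the paper's one-line argument glosses over.
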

\begin{proof} From 
propositions~\ref{MetCTopFlatCharac},~\ref{MetTopRelInjModCompIdealAnnihCompl} 
it follows that ${(F^*)}^{\perp I}$ is complemented in $F^*$. It remains to 
recall that ${(F^*)}^{\perp I}=\operatorname{cl}_F(I F)$. 
\end{proof}

\begin{corollary}\label{MetTopRelFlatModPartCompl} Let $F$ 
be a $\langle$~metrically / $C$-topologically / $C$-relatively~$\rangle$ 
flat $A$-moudle. Then $F_{ess}$ 
is weakly $\langle$~$2$-complemented / $1+C$-complemented / 
$1+C$-complemented~$\rangle$ in $F$.
\end{corollary}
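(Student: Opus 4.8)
The plan is to read this off directly from Proposition~\ref{MetTopRelFlatModCompIdealPartCompl}, in the same way that Corollaries~\ref{MetTopRelProjModPartCompl} and~\ref{MetTopRelInjModAnnihCompl} followed from their respective propositions. The guiding observation is that $F_{ess}=\operatorname{cl}_F(AF)$ by definition, so $F_{ess}$ is precisely the subspace $\operatorname{cl}_F(IF)$ appearing in that proposition once we specialize to $I=A$.

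First I would verify that $A$ is a legitimate input with the optimal constant: the whole algebra is a closed right ideal of itself, and it is $1$-complemented in $A$ via the identity projection $1_A$ (of norm $1$). Hence Proposition~\ref{MetTopRelFlatModCompIdealPartCompl} applies with $I=A$ and $c=1$.

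Substituting these choices into the conclusion of the proposition then yields all three cases simultaneously. In the metric column the hypothesis ``$1$-complemented'' holds and the conclusion ``weakly $2$-complemented'' is exactly what is claimed. In the $C$-topological and $C$-relative columns the hypothesis ``$c$-complemented'' holds with $c=1$, so the bound ``weakly $1+cC$-complemented'' collapses to ``weakly $1+C$-complemented'', again matching the statement. There is no genuine difficulty here; the only point that requires a moment's care is the bookkeeping of the complementation constants, namely confirming that $c=1$ turns $1+cC$ into $1+C$ and recovers $2$ in the metric case. Everything else is a direct appeal to the previously established proposition.
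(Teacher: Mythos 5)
Your proposal is correct and is exactly the paper's own argument: the corollary is obtained by applying Proposition~\ref{MetTopRelFlatModCompIdealPartCompl} with $I=A$, which is a $1$-complemented right ideal of itself, so that $\operatorname{cl}_F(IF)=F_{ess}$ and the constant $1+cC$ collapses to $1+C$ (and to $2$ in the metric case). The constant bookkeeping you carry out is precisely what the paper leaves implicit in its one-line proof.
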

\begin{proof} The result directly follows from 
proposition~\ref{MetTopRelFlatModCompIdealPartCompl}.
\end{proof}


\subsection{
    Metric and topological flatness of ideals and cyclic modules
}\label{SubSectionMetricAndTopologicalFlatnessOfIdealsAndCyclicModules}

In this section we study conditions under which ideals and cyclic modules are
metrically or topologically flat. The proofs are somewhat similar to those used
in the study of relative flatness of ideals and cyclic modules.

\begin{proposition}\label{MetTopFlatIdealsInUnitalAlg} Let $I$ be a left ideal
of $A_\times $ and $I$ has a right $\langle$~contractive / $c$-bounded~$\rangle$
approximate identity. Then $I$ is $\langle$~metrically /
$c$-topologically~$\rangle$ flat.
\end{proposition}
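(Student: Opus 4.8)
The plan is to verify condition (iii) of Proposition~\ref{MetTopFlatSecondDualCharac}: I shall produce an $A$-morphism $\sigma:I\to{(A_+\projtens\ell_1(B_I))}^{**}$ of norm $\langle$~at most $1$ / at most $c$~$\rangle$ satisfying ${(\pi_I^+)}^{**}\sigma=\iota_I$, where $\pi_I^+:A_+\projtens\ell_1(B_I)\to I$ is the canonical $A$-morphism $a\projtens\delta_x\mapsto a\cdot x$. Writing $Z=A_+\projtens\ell_1(B_I)$, the idea is to manufacture genuine but only \emph{approximate} sections of $\pi_I^+$ out of the right approximate identity, and then promote them to an honest section after applying the second-dual functor. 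This is exactly the mechanism by which flatness behaves like ``projectivity for second duals''.

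First I would fix a right $\langle$~contractive / $c$-bounded~$\rangle$ approximate identity ${(e_\nu)}_{\nu\in N}$ of $I$ and, for each $\nu$, define a left $A$-module map $s_\nu:I\to Z$. In the metric case $e_\nu\in B_I$, so I set $s_\nu(x)=x\projtens\delta_{e_\nu}$, viewing $x\in I\subset A\subset A_+$ in the first slot; in the general case I rescale and set $s_\nu(x)=\Vert e_\nu\Vert\,(x\projtens\delta_{e_\nu/\Vert e_\nu\Vert})$. A direct check shows $s_\nu(a\cdot x)=a\cdot s_\nu(x)$ (the $A_+$-slot absorbs the left action), that $\Vert s_\nu\Vert\leq\langle$~$1$ / $c$~$\rangle$, and, crucially, that $\pi_I^+(s_\nu(x))=x e_\nu$ for all $x\in I$. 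Hence $\pi_I^+ s_\nu(x)\to x$ by the defining property of the approximate identity; this is the single place where the identity enters, and it is precisely its \emph{right}-sidedness that makes $\pi_I^+ s_\nu(x)=xe_\nu$ converge to $x$.

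Next I would take an ultrafilter $\mathfrak{U}$ on $N$ dominating the section filter of $N$ and define, for each $x\in I$, an element $\sigma(x)\in Z^{**}$ by $\sigma(x)(\phi)=\lim_{\mathfrak{U}}\phi(s_\nu(x))$ for $\phi\in Z^*$. These scalar limits exist because the nets are bounded, and linearity of the limit along a filter makes $\sigma$ linear with $\Vert\sigma\Vert\leq\langle$~$1$ / $c$~$\rangle$; concretely $\sigma(x)$ is the weak$^*$ limit of $\iota_Z(s_\nu(x))$. That $\sigma$ is an $A$-morphism follows from weak$^*$ continuity of the module action on $Z^{**}$, which is the double adjoint of left multiplication $L_a$: for $\phi\in Z^*$ one has $\phi(a\cdot s_\nu(x))=(L_a^*\phi)(s_\nu(x))$, whence $\sigma(ax)=a\cdot\sigma(x)$.

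Finally I would check the section identity. Since ${(\pi_I^+)}^{**}$ is a double adjoint it is weak$^*$ continuous, so ${(\pi_I^+)}^{**}\sigma(x)$ is the weak$^*$ limit of ${(\pi_I^+)}^{**}\iota_Z(s_\nu(x))$. Using the naturality relation ${(\pi_I^+)}^{**}\iota_Z=\iota_I\pi_I^+$ this equals the weak$^*$ limit of $\iota_I(xe_\nu)$, which is $\iota_I(x)$ because $xe_\nu\to x$ in norm. Thus ${(\pi_I^+)}^{**}\sigma=\iota_I$ with $\Vert\sigma\Vert\leq\langle$~$1$ / $c$~$\rangle$, and Proposition~\ref{MetTopFlatSecondDualCharac} yields that $I$ is $\langle$~metrically / $c$-topologically~$\rangle$ flat. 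I expect the main obstacle to be the verification that $\sigma$ is a genuine $A$-morphism and that the section identity survives the ultrafilter passage, i.e.\ the weak$^*$-continuity bookkeeping for the module action and for ${(\pi_I^+)}^{**}$; by contrast, constructing the approximate sections $s_\nu$ and bounding their norms is routine.
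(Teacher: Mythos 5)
Your proof is correct, and it takes a genuinely different route from the paper's. The paper stays entirely on the dual side: using the same ultrafilter trick applied to the approximate identity, it defines $\sigma:I^*\to A_\times^*$ by $\sigma(f)(a)=\lim_{\mathfrak{U}}f(ae_\nu)$, verifies $\rho^*\sigma=1_{I^*}$ for the natural embedding $\rho:I\to A_\times$, so that $I^*$ is a $1$-retract (resp.\ $c$-retract) of $A_\times^*$; since $A_\times^*$ is metrically and $1$-topologically injective (proposition~\ref{DualOfUnitalAlgIsMetTopInj}), propositions~\ref{RetrMetCTopInjIsMetCTopInj} and~\ref{MetCTopFlatCharac} finish the argument. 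You instead work on the primal side: you build approximate sections $s_\nu$ of $\pi_I^+$ from the approximate identity, promote them to an exact section of ${(\pi_I^+)}^{**}$ by a weak${}^*$ ultrafilter limit, and invoke proposition~\ref{MetTopFlatSecondDualCharac}. The two arguments share the essential mechanism (ultrafilter limits of right multiplication by $e_\nu$), and ultimately both exhibit $I^*$ as a retract of an injective dual module --- yours implicitly, inside the proof of~\ref{MetTopFlatSecondDualCharac}, with the large module ${(A_+\projtens\ell_1(B_I))}^*$ playing the role that the small, concrete $A_\times^*$ plays in the paper. What the paper's route buys is brevity: it exploits the fact that $I$ sits as an ideal in the unital algebra $A_\times$ whose dual is already known to be injective. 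What your route buys is generality: beyond the products $xe_\nu$ it never uses the ambient algebra, and it isolates a reusable principle --- any bounded net of $A$-morphisms $s_\nu:X\to A_+\projtens\ell_1(B_X)$ with $\pi_X^+s_\nu\to 1_X$ pointwise forces flatness of $X$ --- which is exactly the ``projectivity with respect to second duals'' philosophy stated before proposition~\ref{MetTopFlatSecondDualCharac}. Two cosmetic points: in the $c$-bounded case your rescaled $s_\nu$ requires $e_\nu\neq 0$, which you can arrange by passing to a tail of the net (or treating $I=\{0\}$ separately); and the inclusion you want is $I\subset A_\times\subset A_+$ rather than $I\subset A\subset A_+$, since for non-unital $A$ an ideal of $A_\times=A_+$ need not lie in $A$.
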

\begin{proof} Let $\mathfrak{F}$ be the section filter on $N$ and let
$\mathfrak{U}$ be an ultrafilter dominating $\mathfrak{F}$. 
For a fixed $f\in I^*$ 
and $a\in A_\times $ we have 
$|f(a e_\nu)|
\leq\Vert f\Vert\Vert a\Vert\Vert e_\nu\Vert
\leq c\Vert f\Vert\Vert a\Vert$ 
i.e. ${(f(ae_\nu))}_{\nu\in N}$ is a bounded net of complex numbers. 
Therefore we have a well defined limit $\lim_{\mathfrak{U}}f(ae_\nu)$ along 
ultrafilter $\mathfrak{U}$. Now it is routine to check that 
$\sigma:A_\times ^*\to I^*:f\mapsto (a\mapsto \lim_{\mathfrak{U}}f(ae_\nu))$ 
is an $A$-morphism with norm $\langle$~at most $1$ / at most $c$~$\rangle$. 
Let $\rho:I\to A_\times$ be the natural embedding, then for 
all $f\in A_\times^*$ and $a\in I$ holds
$$
\rho^*(\sigma(f))(a)
=\sigma(f)(\rho(a))
=\sigma(f)(a)
=\lim_{\mathfrak{U}}f(a e_\nu)
=\lim_{\nu}f(a e_\nu)
=f(\lim_{\nu}a e_\nu)
=f(a)
$$
i.e. $\sigma:I^*\to A_\times^*$ is a $\langle$~$1$-coretraction /
$c$-coretraction~$\rangle$. The right $A$-module $A_\times ^*$ is
$\langle$~metrically / $1$-topologically~$\rangle$ injective by
proposition~\ref{DualOfUnitalAlgIsMetTopInj}, hence its $\langle$~$1$-retract /
$c$-retract~$\rangle$ $I^*$ is $\langle$~metrically /
$c$-topologically~$\rangle$ injective. Now from 
proposition~\ref{MetCTopFlatCharac} we conclude that the $A$-module $I$ 
is $\langle$~metrically / $c$-topologically~$\rangle$ flat.
\end{proof}

Note that the same sufficient condition holds for relative flatness for ideals
of $A_\times$ [\cite{HelBanLocConvAlg}, proposition 7.1.45]. Now we are able to
give an example of a metrically flat module which is not even topologically
projective. Clearly $\ell_\infty(\mathbb{N})$-module $c_0(\mathbb{N})$ is not
unital as ideal but admits a contractive approximate identity. By
theorem~\ref{GoodCommIdealMetTopProjIsUnital} it is not topologically
projective, but it is metrically flat by
proposition~\ref{MetTopFlatIdealsInUnitalAlg}.

The ``metric'' part of the following proposition is a slight modification of
[\cite{WhiteInjmoduAlg}, proposition 4.11]. The case of topological flatness was
solved by Helemskii in [\cite{HelHomolBanTopAlg}, theorem VI.1.20].

\begin{proposition}\label{MetTopFlatCycModCharac} Let $I$ be a left proper ideal
of $A_\times $. Then the following are equivalent:

\begin{enumerate}[label = (\roman*)]
    \item $A_\times /I$ is $\langle$~metrically / $C$-topologically~$\rangle$
    flat $A$-module;

    \item $I$ has a right bounded approximate identity ${(e_\nu)}_{\nu\in N}$
    with $\sup_{\nu\in N}\Vert e_{A_\times }-e_\nu\Vert$ $\langle$~at most $1$ /
    at most $C$~$\rangle$
\end{enumerate}
\end{proposition}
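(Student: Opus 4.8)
The plan is to reduce the statement, through the flatness--injectivity duality, to a coretraction problem for the annihilator $I^{\perp}$, and then to read the approximate identity off of that coretraction. First I would identify $(A_\times/I)^*$ with $I^{\perp}\subseteq A_\times^*$: the quotient map $\pi:A_\times\to A_\times/I$ is a morphism of left $A$-modules, so its adjoint $\pi^*$ is an isometric embedding of right $A$-modules whose image is exactly $I^{\perp}$, realising $I^{\perp}\isom{\mathbf{mod}_1-A}(A_\times/I)^*$. By proposition~\ref{MetCTopFlatCharac}, $A_\times/I$ is metrically (resp. $C$-topologically) flat iff $I^{\perp}$ is metrically (resp. $C$-topologically) injective. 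Since $A_\times^*$ is metrically and $1$-topologically injective by proposition~\ref{DualOfUnitalAlgIsMetTopInj} and $\pi^*$ is an isometric embedding, proposition~\ref{RetrMetCTopInjIsMetCTopInj} (together with the extension problem defining injectivity) shows that $I^{\perp}$ is injective with the correct constant precisely when $\pi^*$ admits a left inverse $A$-morphism $\zeta:A_\times^*\to I^{\perp}$ of norm at most $1$ (resp. at most $C$). Thus everything reduces to: such a $\zeta$ exists iff $I$ has a right approximate identity $(e_\nu)$ with $\sup_\nu\Vert e_{A_\times}-e_\nu\Vert\le 1$ (resp. $\le C$). I would also note that, since all modules in sight are unital over the unital algebra $A_\times$, every $A$-morphism here is automatically an $A_\times$-morphism, which lets me multiply freely by elements of $I$.

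For (ii)$\Rightarrow$(i) I would build $\zeta$ explicitly, imitating proposition~\ref{MetTopFlatIdealsInUnitalAlg}. Fixing an ultrafilter $\mathfrak{U}$ dominating the section filter of the net, put $\zeta(f)(a):=\lim_{\mathfrak{U}}f\big(a(e_{A_\times}-e_\nu)\big)$; the limit exists because the net is bounded by $\Vert f\Vert\,\Vert a\Vert\sup_\nu\Vert e_{A_\times}-e_\nu\Vert$, which also yields $\Vert\zeta\Vert\le\sup_\nu\Vert e_{A_\times}-e_\nu\Vert$. A short check using $(f\cdot b)(a)=f(ba)$ shows $\zeta$ is a right $A$-morphism. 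It lands in $I^{\perp}$, since for $x\in I$ one has $f(x(e_{A_\times}-e_\nu))=f(x)-f(xe_\nu)\to0$ as $xe_\nu\to x$; and it splits $\pi^*$, since for $f\in I^{\perp}$ and any $a\in A_\times$ the element $ae_\nu$ lies in the left ideal $I$, so $f(ae_\nu)=0$ and $\zeta(f)(a)=f(a)$. Hence $I^{\perp}$ is a $C$-retract (resp. $1$-retract) of the injective module $A_\times^*$, so it is injective with the required constant by proposition~\ref{RetrMetCTopInjIsMetCTopInj}, and $A_\times/I$ is flat by proposition~\ref{MetCTopFlatCharac}.

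Conversely, for (i)$\Rightarrow$(ii) I would start from the left inverse $\zeta:A_\times^*\to I^{\perp}$ produced by injectivity of $I^{\perp}$ (extension problem with $\xi=\pi^*$, $\phi=1_{I^{\perp}}$), with $\Vert\zeta\Vert\le C$ (resp. $\le1$), and form the norm-$\le C$ projection $Q:=\pi^*\zeta$ of $A_\times^*$ onto $I^{\perp}$, a right $A_\times$-morphism. Passing to biduals, $Q^*$ is a projection of $A_\times^{**}$ with $\ker Q^*=I^{\perp\perp}$. I then take $d:=Q^*\big(\iota_{A_\times}(e_{A_\times})\big)$, which satisfies $\Vert d\Vert\le\Vert Q\Vert\le C$ (resp. $\le1$), lies in $\iota_{A_\times}(e_{A_\times})+I^{\perp\perp}$ because $(1-Q^*)\iota_{A_\times}(e_{A_\times})\in\ker Q^*$, and obeys $x\cdot d=0$ for every $x\in I$. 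This last point is the key computation: for $f\in A_\times^*$, $(x\cdot d)(f)=d(f\cdot x)=Q(f\cdot x)(e_{A_\times})=(Q(f)\cdot x)(e_{A_\times})=Q(f)(x)=0$, using that $Q$ is an $A_\times$-morphism, that $xe_{A_\times}=x$, and that $Q(f)\in I^{\perp}$ annihilates $x\in I$.

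The hard part will be converting the single bidual element $d$ into an honest bounded net inside $I$ with the \emph{exact} norm bound. Here I would argue that $d$ lies in the weak$^*$ closure of $\iota_{A_\times}(e_{A_\times}+I)$ and in the closed ball of radius $C$; a Goldstine-type density argument for affine slices (using weak$^*$ compactness of the closed ball and Hahn--Banach separation against functionals in $A_\times^*=(A_\times^{**},w^*)^*$) then produces a net $(d_\nu)\subset e_{A_\times}+I$ with $\Vert d_\nu\Vert\le C$ and $\iota_{A_\times}(d_\nu)\to d$ weak$^*$. Writing $e_\nu:=e_{A_\times}-d_\nu\in I$, the identity $x\cdot d=0$ translates into $xe_\nu\to x$ weakly, so $(e_\nu)$ is a weak right approximate identity with $\sup_\nu\Vert e_{A_\times}-e_\nu\Vert\le C$. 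Finally I would upgrade weak to norm convergence by Mazur's theorem: for each finite $S\subset I$ and $\varepsilon>0$, the point $0$ lies in the weak, hence the norm, closure of the convex hull of $\{(xe_\nu-x)_{x\in S}\}$ in $\bigoplus_1\{I:x\in S\}$, and the resulting convex combination $e'=\sum_i\lambda_ie_{\nu_i}$ has $\Vert xe'-x\Vert<\varepsilon$ while $\Vert e_{A_\times}-e'\Vert\le\sum_i\lambda_i\Vert e_{A_\times}-e_{\nu_i}\Vert\le C$, so the bound survives convexification; indexing by $(S,\varepsilon)$ gives the desired approximate identity. I expect the two genuinely delicate points to be holding the exact constant through the Goldstine slice (which forces closed balls and weak$^*$ compactness rather than an $\varepsilon$-loss) and the Mazur passage from a weak to a norm approximate identity, the latter being the main obstacle.
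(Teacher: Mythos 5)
Your reduction via proposition~\ref{MetCTopFlatCharac} and your direction (ii)$\Rightarrow$(i) are correct and essentially identical to the paper's proof: the paper builds the splitting $\tau:A_\times^*\to{(A_\times/I)}^*$ by the same ultrafilter-limit formula, which under $\pi^*$ is your $\zeta$ landing in $I^\perp$, and then quotes propositions~\ref{DualOfUnitalAlgIsMetTopInj} and~\ref{RetrMetCTopInjIsMetCTopInj}. In (i)$\Rightarrow$(ii) your bidual element $d=Q^*(\iota_{A_\times}(e_{A_\times}))$ is exactly the paper's $\iota_{A_\times}(e_{A_\times})-p$, and your closing Mazur step is the paper's appeal to [\cite{AppIdAndFactorInBanAlg}, proposition 33.2]. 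The genuine gap is the step you flag as delicate and then claim to settle by a ``Goldstine-type density argument for affine slices'': the statement you need --- if $d$ lies in the weak$^*$ closure of $\iota_{A_\times}(e_{A_\times}+I)$ and $\Vert d\Vert\le C$, then $d$ lies in the weak$^*$ closure of $\iota_{A_\times}\bigl((e_{A_\times}+I)\cap CB_{A_\times}\bigr)$ --- is false for general closed subspaces. Goldstine's theorem applies to balls centred at the origin (equivalently to the subspace $I$ itself, via $I^{\perp\perp}\cong I^{**}$), not to cosets: already $\operatorname{dist}(e_{A_\times},I)$, which equals $\operatorname{dist}(\iota_{A_\times}(e_{A_\times}),I^{\perp\perp})$, is always attained in the bidual (weak$^*$ compactness plus weak$^*$ lower semicontinuity of the norm) but need not be attained in $A_\times$. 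Concretely, in $X=c_0(\mathbb{N})$ take $f={(2^{-n})}_{n\in\mathbb{N}}\in\ell_1(\mathbb{N})$ (which does not attain its norm on $B_X$), $Y=\ker f$ and $x_0$ with $f(x_0)=1$: every point of $x_0+Y$ has norm strictly greater than $1$, so $(x_0+Y)\cap B_X=\varnothing$, while $(\iota_X(x_0)+Y^{\perp\perp})\cap B_{X^{**}}$ contains the constant sequence $(1,1,\dots)\in\ell_\infty(\mathbb{N})$. Your separation scheme would require $\operatorname{Re}d(f')\le\sup\{\operatorname{Re}f'(w):w\in(e_{A_\times}+I)\cap CB_{A_\times}\}$ for every $f'\in A_\times^*$, and this is exactly what fails; your argument never uses the module structure of $d$ in this step, so the counterexample applies to it verbatim.

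What one can prove in this generality (by a Helly-type interpolation argument, not by separation against the compact slice) is only the relaxed version: $d$ lies in the weak$^*$ closure of $\iota_{A_\times}\bigl((e_{A_\times}+I)\cap(C+\varepsilon)B_{A_\times}\bigr)$ for every $\varepsilon>0$. That gives a right b.a.i.\ with bound $C+\varepsilon$; when $C>1$ the loss can be absorbed by rescaling ($e_\nu\mapsto\lambda e_\nu$ with $\lambda=(C-1)/(C-1+\varepsilon)$ keeps $\Vert e_{A_\times}-\lambda e_\nu\Vert\le C$), but in the metric case $C=1$ this repair is unavailable, and that is precisely the case the proposition must cover. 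The paper does not attempt a self-contained argument here: it invokes [\cite{PosAndApproxIdinBanAlg}, lemma 1.1] to produce, inside the convex hull of an arbitrary approximating net from $I$, a net weak$^*$ converging to $\iota_{A_\times}(e_{A_\times})-p$ with the exact norm bound $\Vert\iota_{A_\times}(e_{A_\times})-p\Vert$, and only then convexifies. So to complete your proof you must either import that lemma, or prove a sharpened slice statement that genuinely exploits the extra structure you have (that $d$ arises from a module projection and satisfies $x\cdot d=0$ for all $x\in I$); the purely Banach-space argument you sketch cannot work.
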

\begin{proof} $(i)\implies (ii)$  Since $A_\times /I$ is $\langle$~metrically
/ $C$-topologically~$\rangle$ flat, then by proposition~\ref{MetCTopFlatCharac} 
the right $A$-module ${(A_\times /I)}^*$ is $\langle$~metrically /
$C$-topologically~$\rangle$ injective. Let $\pi:A_\times \to A_\times /I$ be the
natural quotient map, then $\pi^*:{(A_\times /I)}^*\to A_\times ^*$ is an
isometry. Since ${(A_\times /I)}^*$ is $\langle$~metrically /
$C$-topologically~$\rangle$ injective, then $\pi^*$ is a coretraction, i.e.\
there exists a $\langle$~strictly coisometric / topologically
surjective~$\rangle$ $A$-morphism $\tau:A_\times ^*\to {(A_\times /I)}^*$ of
norm $\langle$~at most $1$ / at most $C$~$\rangle$ such that
$\tau\pi^*=1_{{(A_\times /I)}^*}$. Consider $p\in A^{**}$ such that
$\iota_{A_\times }(e_{A_\times })-p
=\tau^*(\pi^{**}(\iota_{A_\times }(e_{A_\times })))$. 
Fix $f\in I^\perp$. Since $I^\perp=\pi^*({(A_\times /I)}^*)$, 
then there exists $g\in {(A_\times /I)}^*$ such that $f=\pi^*(g)$. Thus,
$$
(\iota_{A_\times }(e_{A_\times })-p)(f)
=\tau^*(\pi^{**}(\iota_{A_\times }(e_{A_\times })))(\pi^*(g))
=\pi^{**}(\iota_{A_\times }(e_{A_\times }))(\tau(\pi^*(g)))
$$
$$
=\pi^{**}(\iota_{A_\times }(e_{A_\times }))(g)
=\iota_{A_\times }(e_{A_\times })(\pi^*(g))
=\iota_{A_\times }(e_{A_\times })(f).
$$
Therefore $p(f)=0$ for all $f\in I^\perp$, i.e. $p\in I^{\perp\perp}$. Recall
that $I^{\perp\perp}$ is the weak${}^*$ closure of $I$ in $A^{**}$, so we can
choose a net ${(e_\nu'')}_{\nu\in N''}\subset I$ such that
${(\iota_I(e_\nu''))}_{\nu\in N''}$ converges to $p$ in weak${}^*$ topology.
Clearly ${(\iota_{A_\times }(e_{A_\times }-e_\nu''))}_{\nu\in N''}$ converges to
$\iota_{A_\times }(e_{A_\times })-p$ in the same topology. By
[\cite{PosAndApproxIdinBanAlg}, lemma 1.1] there exists a net in the convex hull
$\operatorname{conv}{
    (\iota_{A_\times }(e_{A_\times }-e_\nu''))
}_{\nu\in N''}
=\iota_{A_\times }(e_{A_\times })
-\operatorname{conv}{(\iota_{A_\times}(e_\nu''))}_{\nu\in N''}$ 
that weak${}^*$ converges to $\iota_{A_\times }(e_{A_\times })-p$ with 
norm bound $\Vert \iota_{A_\times }(e_{A_\times})-p\Vert$. 
Denote this net as 
${(\iota_{A_\times }(e_{A_\times })-\iota_{A_\times }(e_\nu'))}_{\nu\in N'}$, 
then ${(\iota_{A_\times }(e_\nu'))}_{\nu\in N'}$ weak${}^*$ converges to $p$. 
For any $a\in I$ and $f\in I^*$ we have
$$
\lim_{\nu}f(ae_\nu')
=\lim_{\nu}\iota_{A_\times }(e_\nu')(f\cdot a)
=p(f\cdot a)
=\iota_{A_\times }(e_{A_\times })(f\cdot a)
-\tau^*(\pi^{**}(\iota_{A_\times }(e_{A_\times })))(f\cdot a)
$$
$$
=f(a)-\iota_{A_\times }(e_{A_\times })(\pi^*(\tau(f\cdot a)))
=f(a)-\pi^*(\tau(f)\cdot a)(e_{A_\times })
=f(a)-\tau(f)(\pi(a))
=f(a)
$$
hence ${(e_\nu')}_{\nu\in N'}$ is a weak right bounded approximate identity for
$I$. By [\cite{AppIdAndFactorInBanAlg}, proposition 33.2] there is a net
${(e_\nu)}_{\nu\in N}\subset\operatorname{conv}{(e_\nu')}_{\nu\in N'}$ which is
a right bounded approximate identity for $I$. For any $\nu\in N$ we have
$e_{A_\times }-e_\nu\in\operatorname{conv}{(e_{A_\times }-e_\nu')}_{\nu\in N'}$,
so taking into account the norm bound 
on ${(\iota_{A_\times }(e_{A_\times}-e_\nu'))}_{\nu\in N'}$ we get 
$$
\sup_{\nu\in N}\Vert e_{A_\times }-e_\nu\Vert
\leq\Vert \iota_{A_\times }(e_{A_\times })-p\Vert
\leq\Vert\tau^*(\pi^{**}(\iota_{A_\times }(e_{A_\times })))\Vert
\leq\Vert\tau^*\Vert\Vert\pi^{**}\Vert\Vert\iota_{A_\times }(e_{A_\times })\Vert
=\Vert\tau\Vert
$$
Since $\tau$ has norm $\langle$~at most $1$ / at most $C$~$\rangle$ we get the
desired bound. By construction, ${(e_\nu)}_{\nu\in N}$ is a right bounded
approximate identity for $I$.

$(ii)\implies (i)$ Denote $D=\sup_{\nu\in N}\Vert e_{A_\times }-e_\nu\Vert$.
Let $\mathfrak{F}$ be the section filter on $N$ and let $\mathfrak{U}$ be an
ultrafilter dominating $\mathfrak{F}$. For a fixed $f\in A_\times ^*$ 
and $a\in A_\times $ we have 
$|f(a-a e_\nu)|=|f(a(e_{A_\times }-e_\nu))|
\leq\Vert f\Vert\Vert a\Vert\Vert e_{A_\times }-e_\nu\Vert
\leq D\Vert f\Vert\Vert a\Vert$
i.e. ${(f(a-ae_\nu))}_{\nu\in N}$ is a bounded net of complex numbers. Therefore
we have a well defined limit $\lim_{\mathfrak{U}}f(a-ae_\nu)$ along ultrafilter
$\mathfrak{U}$. Since ${(e_\nu)}_{\nu\in N}$ is a right approximate identity for
$I$ and $\mathfrak{U}$ contains section filter then for all $a\in I$ we have
$\lim_{\mathfrak{U}}f(a-ae_\nu)=\lim_{\nu}f(a-ae_\nu)=0$. Therefore for each
$f\in A_\times ^*$ we have well a defined map $\tau(f):A_\times /I\to
\mathbb{C}:a+I\mapsto \lim_{\mathfrak{U}} f(a-ae_\nu)$. Clearly, this is a
linear functional and from inequalities above we see its norm bounded 
by $D\Vert f\Vert$. Now it is routine to check 
that $\tau:A_\times ^*\to {(A_\times /I)}^*:f\mapsto \tau(f)$ is 
an $A$-morphism with norm $\langle$~at most $1$ / at most $C$~$\rangle$. 
For all $g\in{(A_\times /I)}^*$ and $a+I\in A_\times /I$ holds
$$
\tau(\pi^*(g))(a+I)
=\lim_{\mathfrak{U}}\pi^*(g)(a-ae_\nu)
=\lim_{\mathfrak{U}} g(\pi(a-ae_\nu))
=\lim_{\mathfrak{U}} g(a+I)
=g(a+I)
$$
i.e. $\tau:A_\times ^*\to {(A_\times /I)}^*$ is a retraction. The right
$A$-module $A_\times ^*$ is $\langle$~metrically / $1$-topologically~$\rangle$
injective by proposition~\ref{DualOfUnitalAlgIsMetTopInj}, hence its
$\langle$~$1$-retract / $C$-retract~$\rangle$ ${(A_\times /I)}^*$ is
$\langle$~metrically / $C$-topologically~$\rangle$ injective. 
Proposition~\ref{MetCTopFlatCharac} gives that the
module $A_\times /I$ is $\langle$~metrically / $C$-topologically~$\rangle$ flat.
\end{proof}

It is worth to mention that every operator algebra $A$ (not necessary self
adjoint) with contractive approximate identity has a contractive approximate
identity ${(e_\nu)}_{\nu\in N}$ such 
that $\sup_{\nu\in N}\Vert e_{A_\#}-e_\nu\Vert\leq 1$ and 
even $\sup_{\nu\in N}\Vert e_{A_\#}-2e_\nu\Vert\leq 1$. Here $A_\#$ is 
a unitization of $A$ as operator algebra. For details
see~\cite{PosAndApproxIdinBanAlg},~\cite{BleContrAppIdInOpAlg}.

Again we shall compare our result on metric and topological flatness of cyclic
modules with their relative counterpart. Helemeskii and Sheinberg showed
[\cite{HelHomolBanTopAlg}, theorem VII.1.20] that a cyclic module is relatively
flat if $I$ admits a right bounded approximate identity. In case when $I^\perp$
is complemented in $A_\times^*$ the converse is also true. In topological theory
we don't need this assumption, so we have a criterion. Metric flatness of cyclic
modules is a much stronger property due to specific restriction on the norm of
approximate identity. As we shall see in the next section, it is so restrictive
that it doesn't allow to construct any non zero annihilator metrically
projective, injective or flat module over a non zero Banach algebra.


\section{
    The impact of Banach geometry
}\label{SectionTheImpactOfBanachGeometry}


\subsection{
    Homologically trivial annihilator modules
}\label{SubSectionHomoligicallyTrivialAnnihilatorModules}

In this section we concentrate on the study of metrically and topologically
projective, injective and flat annihilator modules. Unless otherwise stated, all
Banach spaces in this section are regarded as annihilator modules. Note the
obvious fact that we shall often use in this section: any bounded linear
operator between annihilator $A$-modules is an $A$-morphism.

\begin{proposition}\label{AnnihCModIsRetAnnihMod} Let $X$ be a non zero
annihilator $A$-module. Then $\mathbb{C}$ is a $1$-retract of $X$ in
$A-\mathbf{mod}_1$.
\end{proposition}
\begin{proof} Take any $x_0\in X$ with $\Vert x_0\Vert=1$ and using Hahn-Banach
theorem choose $f_0\in X^*$ such that $\Vert f_0\Vert=f_0(x_0)=1$. Consider
contractive linear operators $\pi:X\to \mathbb{C}:x\mapsto f_0(x)$,
$\sigma:\mathbb{C}\to X:z\mapsto zx_0$. It is easy to check that $\pi$ and
$\sigma$ are contractive $A$-morphisms and what is more
$\pi\sigma=1_\mathbb{C}$. In other words $\mathbb{C}$ is a $1$-retract of $X$ in
$A-\mathbf{mod}_1$.
\end{proof}

Now it is time to recall that any Banach algebra $A$ can always be regarded as
proper maximal ideal of $A_+$, and what is more
$\mathbb{C}\isom{A-\mathbf{mod}_1} A_+/A$. If we regard $\mathbb{C}$ as a right
annihilator $A$-module we also have
$\mathbb{C}\isom{\mathbf{mod}_1-A}{(A_+/A)}^*$. 

\begin{proposition}\label{MetTopProjModCCharac} An annihilator $A$-module
$\mathbb{C}$ is $\langle$~metrically / $C$-topologically~$\rangle$ projective
iff $\langle$~$A= \{0 \}$ / $A$ has a right identity of norm at most
$C-1$~$\rangle$.
\end{proposition}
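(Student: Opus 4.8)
The plan is to realize $\mathbb{C}$ as the cyclic module $A_+/A$ and to reduce $\langle$~metric / $C$-topological~$\rangle$ projectivity to the existence of a suitably bounded section of the quotient map in $A-\mathbf{mod}$. As recalled just above the statement, $\mathbb{C}\isom{A-\mathbf{mod}_1}A_+/A$, and the quotient morphism $\pi:A_+\to\mathbb{C}$, $a\oplus_1 z\mapsto z$, is strictly coisometric: the closed unit ball of $A_+$ consists of the $a\oplus_1 z$ with $\Vert a\Vert+|z|\le 1$, whose images fill the closed unit disc. In particular $\pi$ is also $1$-topologically surjective, so in the lifting problem for $\pi$ the admissibility constant is $c=1$.

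The key computation is to describe all right inverse $A$-morphisms of $\pi$. First I would note that any $A$-morphism $\sigma:\mathbb{C}\to A_+$ is determined by $\sigma(1)=p\oplus_1\lambda$, that $\pi\sigma=1_\mathbb{C}$ forces $\lambda=1$, and that the module-map condition combined with the annihilator action $a\cdot 1=0$ forces $a\cdot(p\oplus_1 1)=0$ for every $a\in A$. Using the multiplication $(a\oplus_1 0)(p\oplus_1 1)=(ap+a)\oplus_1 0$ this reads $ap=-a$ for all $a$, i.e. $e:=-p$ is a right identity of $A$; conversely every right identity $e$ yields the section $\sigma(1)=(-e)\oplus_1 1$. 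Since $\sigma$ is determined on the one-dimensional space $\mathbb{C}$, its norm is exactly $\Vert\sigma\Vert=\Vert(-e)\oplus_1 1\Vert=1+\Vert e\Vert$. Thus right inverse $A$-morphisms of $\pi$ correspond bijectively to right identities of $A$, the section associated with $e$ having norm $1+\Vert e\Vert$.

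For the forward implication I would feed the lifting problem $\phi=1_\mathbb{C}$, $\xi=\pi$ into the definition of $\langle$~metric / $C$-topological~$\rangle$ projectivity. Since $\Vert\phi\Vert=1$ and $c=1$, this produces a section $\sigma$ with $\Vert\sigma\Vert\le\langle$~$1$ / $C$~$\rangle$, hence a right identity $e$ of norm $\langle$~at most $0$ / at most $C-1$~$\rangle$. In the metric case $\Vert e\Vert\le 0$ forces $e=0$, whence $a=ae=0$ for all $a$ and $A=\{0\}$; in the topological case we obtain precisely a right identity of norm at most $C-1$.

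For the converse, each case supplies a section $\sigma$ of $\pi$ with $\Vert\pi\Vert\,\Vert\sigma\Vert\le\langle$~$1$ / $C$~$\rangle$, exhibiting $\mathbb{C}$ as a $\langle$~$1$-retract / $C$-retract~$\rangle$ of $A_+$ in $A-\mathbf{mod}$ (in the metric case one takes $e=0$, legitimate since $A=\{0\}$). Because $A_+\isom{A-\mathbf{mod}_1}A_+\projtens\ell_1(\Lambda)$ for a one-point $\Lambda$ is $\langle$~metrically / $1$-topologically~$\rangle$ projective by Proposition~\ref{MetCTopFreeMod}, Proposition~\ref{RetrMetCTopProjIsMetCTopProj} then yields that $\mathbb{C}$ is $\langle$~metrically / $C$-topologically~$\rangle$ projective. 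The only delicate points are the correct bookkeeping of the $A_+$-module action (so that $ap=-a$ genuinely produces a right identity) and the identity $\Vert\sigma\Vert=1+\Vert e\Vert$, which is exactly what converts the lifting constant $C$ into the norm bound $C-1$ on the right identity.
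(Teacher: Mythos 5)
Your proof is correct. It diverges from the paper's argument in how the work is organized: the paper disposes of the statement in three lines by citing its general cyclic-module characterization (Proposition~\ref{MetTopProjCycModCharac}) applied to the quotient $A_+/A$, which says projectivity is equivalent to the existence of an idempotent $p\in A$ with $A=A_+p$ and $\Vert e_{A_+}-p\Vert$ at most $1$ (resp.\ $C$), and then merely observes that $\Vert e_{A_+}-p\Vert=1+\Vert p\Vert$ in the $\bigoplus_1$-norm. You instead re-derive the relevant special case from scratch: you classify all right inverse $A$-morphisms of $\pi:A_+\to\mathbb{C}$, showing they correspond bijectively to right identities $e$ of $A$ via $\sigma(1)=(-e)\oplus_1 1$ with the exact norm identity $\Vert\sigma\Vert=1+\Vert e\Vert$, then run the lifting problem $\phi=1_{\mathbb{C}}$, $\xi=\pi$ for the forward direction and close the converse with Propositions~\ref{MetCTopFreeMod} and~\ref{RetrMetCTopProjIsMetCTopProj}. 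In effect you reprove the special case of Proposition~\ref{MetTopProjCycModCharac} that is actually needed (your correspondence $e=-p$ is exactly the paper's idempotent, since $A=A_+p$ with $p$ idempotent is the same as $p$ being a right identity). What the paper's route buys is brevity, given the machinery already in place; what yours buys is a self-contained argument that makes completely transparent where the shift from the lifting constant $C$ to the norm bound $C-1$ comes from, and that in the metric case the bound $\Vert\sigma\Vert\le 1$ genuinely forces $e=0$ and hence $A=\{0\}$. All the delicate points — strict coisometry of $\pi$, its $1$-topological surjectivity, and the bookkeeping of the $A$-action on $A_+$ giving $ap=-a$ — are handled correctly.
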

\begin{proof} 
It is enough to study $\langle$~metric / $C$-topological~$\rangle$ projectivity
of $A_+/A$. Since the natural quotient map $\pi:A_+\to A_+/A$ is a strict
coisometry, then by proposition~\ref{MetTopProjCycModCharac} $\langle$~metric /
$C$-topological~$\rangle$ projectivity of $A_+/A$ is equivalent to existence of
idempotent $p\in A$ such that $A=A_+p$ and $e_{A_+}-p$ has norm $\langle$~at
most $1$ / at most $C$~$\rangle$. It remains to note that this norm bound holds
iff $\langle$~$p=0$ and therefore $A=A_+p= \{0 \}$ / $p$ has norm at most
$C-1$~$\rangle$.
\end{proof}

\begin{proposition}\label{MetTopProjOfAnnihModCharac} Let $P$ be a non zero
annihilator $A$-module. Then the following are equivalent:

\begin{enumerate}[label = (\roman*)]
    \item $P$ is $\langle$~metrically / $C$-topologically~$\rangle$ projective
    $A$-module;

    \item $\langle$~$A= \{0 \}$ / $A$ has a right identity of norm at most
    $C-1$~$\rangle$ and $P$ is a $\langle$~metrically /
    $C$-topologically~$\rangle$ projective Banach space. As the consequence
    $\langle$~$P\isom{\mathbf{Ban}_1}\ell_1(\Lambda)$ /
    $P\isom{\mathbf{Ban}}\ell_1(\Lambda)$~$\rangle$ for some set $\Lambda$.
\end{enumerate}
\end{proposition}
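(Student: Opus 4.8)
The plan is to prove the two implications separately, relying on the fact (noted at the start of the section) that a bounded operator between annihilator modules is automatically an $A$-morphism. Two elementary observations drive everything. Since $A\cdot P=\{0\}$, any $A$-morphism $\phi:P\to Y$ satisfies $a\cdot\phi(x)=\phi(a\cdot x)=0$, so $\operatorname{Im}(\phi)\subset Y_{ann}$; conversely, a bounded operator $\psi:P\to X$ out of the annihilator module $P$ is an $A$-morphism exactly when $\operatorname{Im}(\psi)\subset X_{ann}$. Thus every lifting problem in $A-\mathbf{mod}$ with source $P$ is, in disguise, a lifting problem into annihilator parts. Throughout, the metric case will be the easy one, because the hypothesis on $A$ forces $A=\{0\}$ and hence $A-\mathbf{mod}=\mathbf{Ban}$.

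For $(i)\implies(ii)$: by Proposition~\ref{AnnihCModIsRetAnnihMod} the module $\mathbb{C}$ is a $1$-retract of $P$, so by Proposition~\ref{RetrMetCTopProjIsMetCTopProj} it is again $\langle$~metrically / $C$-topologically~$\rangle$ projective, and Proposition~\ref{MetTopProjModCCharac} then yields the stated condition on $A$ ($A=\{0\}$, respectively a right identity of norm at most $C-1$). To see that $P$ is a projective Banach space with the \emph{same} constant, I would specialize the defining lifting property of $P$ to data $\xi,\phi$ lying between Banach spaces viewed as annihilator modules: by the observations above these are precisely Banach-space lifting data, and the resulting lift is an ordinary bounded operator, so $P$ is a $\langle$~metrically / $C$-topologically~$\rangle$ projective Banach space. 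The concluding isomorphism $\langle$~$P\isom{\mathbf{Ban}_1}\ell_1(\Lambda)$ / $P\isom{\mathbf{Ban}}\ell_1(\Lambda)$~$\rangle$ is then just the K\"othe / Helemskii classification recalled after Proposition~\ref{MetProjIsOneTopProj}.

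For $(ii)\implies(i)$ the metric case is immediate since $A-\mathbf{mod}=\mathbf{Ban}$. In the topological case I would exploit the right identity $e$, with $\|e\|\le C-1$, to manufacture for every $A$-module $X$ the operator $Q_X:X\to X:x\mapsto x-e\cdot x$. One checks $a\cdot Q_X(x)=a\cdot x-(ae)\cdot x=0$, so $Q_X$ is a module projection onto $X_{ann}$, it fixes $X_{ann}$, it has norm at most $1+\|e\|\le C$, and it intertwines any $A$-morphism, $\xi Q_X=Q_Y\xi$. Given a $c$-topologically surjective $A$-morphism $\xi:X\to Y$ and $\phi:P\to Y$ (whose image already lies in $Y_{ann}$), I would first lift $\phi$ through $\xi$ using projectivity of the Banach space $P$, obtaining a bounded $\psi_0$ with $\xi\psi_0=\phi$, and then set $\psi:=Q_X\psi_0$. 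Since $\operatorname{Im}(\psi)\subset X_{ann}$, $\psi$ is an $A$-morphism, and $\xi\psi=Q_Y\xi\psi_0=Q_Y\phi=\phi$, which verifies the definition of a topologically projective $A$-module.

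The main obstacle is the constant bookkeeping in this last step. The passage into the annihilator part via $Q_X$ costs a factor at most $1+\|e\|\le C$, while the Banach-space lift already costs a factor up to $C$, so the naive estimate only produces $\Vert\psi\Vert\le cC^2\Vert\phi\Vert$, i.e. $C^2$-topological projectivity of $P$ as an $A$-module. (The same loss appears if one instead packages the argument through Proposition~\ref{MetCTopProjModViaCanonicMorph}, by sectioning $\pi_P^+$ using the annihilator element $e_{A_+}-e$ of norm $\le C$ together with a Banach section of norm $\le C$.) Arranging the construction so that these two factors do not simply multiply — or else pinning down the precise constant with which the equivalence genuinely holds — is the delicate point that the proof must address.
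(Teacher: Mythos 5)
Your proof is correct, and in the direction $(ii)\Rightarrow(i)$ it takes a genuinely different route from the paper's. In $(i)\Rightarrow(ii)$ both arguments extract the condition on $A$ identically (Propositions~\ref{AnnihCModIsRetAnnihMod}, \ref{RetrMetCTopProjIsMetCTopProj}, \ref{MetTopProjModCCharac}); for the Banach-space projectivity of $P$ the paper splits the strict coisometry $\ell_1(B_P)\to P$ (a morphism of annihilator modules) and realizes $P$ as a retract of the projective Banach space $\ell_1(B_P)$, whereas you feed Banach-space lifting data directly into the module lifting property --- both are exact in the constant, yours being the more economical. In $(ii)\Rightarrow(i)$ the paper shows, via Proposition~\ref{MetTopProjModCCharac} and Corollary~\ref{MetTopProjTensProdWithl1}, that the annihilator module $\ell_1(\Lambda)\isom{A-\mathbf{mod}_1}\mathbb{C}\projtens\ell_1(\Lambda)$ is projective, and leaves implicit the transport to $P$ along $P\isom{\mathbf{Ban}}\ell_1(\Lambda)$; your lift $\psi=Q_X\psi_0$ with $Q_X=1_X-e\cdot(-)$ is self-contained, avoids the K\"othe/Helemskii classification in this direction, and shows exactly where the hypothesis on $A$ enters.

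As for the ``delicate point'': your concern is justified, but the gap it exposes is in the paper's statement, not in your proof. The paper's own argument does not produce the constant $C$ either: the isomorphism $P\isom{\mathbf{Ban}}\ell_1(\Lambda)$ is uncontrolled, and the controlled version of its route (realizing $P$ as a $C$-retract of the $C$-topologically projective module $\ell_1(B_P)$) yields precisely your $cC^2$. In fact the exact-constant implication $(ii)\Rightarrow(i)$ is false. Take $A=\mathbb{C}$ with its usual norm, $C=2$, and $P=\ell_2(\mathbb{N}_2)$ as an annihilator module: then $(ii)$ holds, since $\Vert e_A\Vert=1=C-1$ and $P$ is a $2$-topologically projective Banach space (being $\sqrt{2}$-isomorphic to $\ell_1(\mathbb{N}_2)$, it is a $\sqrt{2}$-retract of a metrically projective space). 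But by your own first observation, every $A$-morphism from $P$ into $A_+\projtens\ell_1(B_P)$ takes values in the annihilator part, which here consists of the elementary tensors $(e_{A_+}-e_A)\projtens u$ with $u\in\ell_1(B_P)$, of norm $2\Vert u\Vert$, and on these $\pi_P^+$ acts as the canonical map $\ell_1(B_P)\to P$. Hence $\pi_P^+$ admits a right inverse morphism of norm at most $2$ iff $P$ admits a Banach-space lifting $s:P\to\ell_1(B_P)$ with $\Vert s\Vert\le 1$, i.e.\ iff $P$ is metrically projective, i.e.\ iff $P$ is isometric to some $\ell_1(\Lambda)$ --- which $\ell_2(\mathbb{N}_2)$ is not. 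By Proposition~\ref{MetCTopProjModViaCanonicMorph}, condition $(i)$ therefore fails with $C=2$. So the proposition is only true up to a loss of constant in $(ii)\Rightarrow(i)$, exactly as your bookkeeping suggests; rather than trying to eliminate the factor $C^2$, the honest fix is to weaken the constant there (or state the equivalence qualitatively), which is precisely what your proof establishes.
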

\begin{proof} $(i)\implies (ii)$ By 
propositions~\ref{RetrMetCTopProjIsMetCTopProj} 
and~\ref{AnnihCModIsRetAnnihMod} the
$A$-module $\mathbb{C}$ is $\langle$~metrically / $C$-topologically~$\rangle$
projective as $1$-retract of $\langle$~metrically / $C$-topologically~$\rangle$
projective module $P$. Proposition~\ref{MetTopProjModCCharac} gives that
$\langle$~$A= \{0 \}$ / $A$ has right identity of norm at most $C-1$~$\rangle$.
By corollary~\ref{MetTopProjTensProdWithl1} the annihilator $A$-module
$\mathbb{C}\projtens\ell_1(B_P)\isom{A-\mathbf{mod}_1}\ell_1(B_P)$ is
$\langle$~metrically / $C$-topologically~$\rangle$ projective. Consider strict
coisometry $\pi:\ell_1(B_P)\to P$ well defined by equality $\pi(\delta_x)=x$.
Since $P$ and $\ell_1(B_P)$ are annihilator modules, then $\pi$ is also an
$A$-module map. Since $P$ is $\langle$~metrically / $C$-topologically~$\rangle$
projective, then the $A$-morphism $\pi$ has a right inverse morphism $\sigma$ of
norm  $\langle$~at most $1$ / at most $C-1$~$\rangle$. Therefore $P$ is a
$\langle$~$1$-retract / $C$-retract~$\rangle$ of $\langle$~metrically /
$1$-topologically~$\rangle$ projective Banach space $\ell_1(B_P)$. By
proposition~\ref{RetrMetCTopProjIsMetCTopProj} the Banach space $P$ is
$\langle$~metrically / $C$-topologically~$\rangle$ projective. Now from
$\langle$~[\cite{HelMetrFrQMod}, proposition 3.2] / results
of~\cite{KotheTopProjBanSp}~$\rangle$ we get that $P$ is isomorphic to
$\ell_1(\Lambda)$ in $\langle$~$\mathbf{Ban}_1$ / $\mathbf{Ban}$~$\rangle$ for
some set $\Lambda$. 

$(ii)\implies (i)$ By proposition~\ref{MetTopProjModCCharac} the annihilator
$A$-module $\mathbb{C}$ is $\langle$~metrically / $C$-topologically~$\rangle$
projective. Therefore by corollary~\ref{MetTopProjTensProdWithl1} the
annihilator $A$-module
$\mathbb{C}\projtens\ell_1(\Lambda)\isom{A-\mathbf{mod}_1}\ell_1(\Lambda)$ is
$\langle$~metrically / $C$-topologically~$\rangle$ projective too.
\end{proof}

\begin{proposition}\label{MetTopInjModCCharac} A right annihilator $A$-module
$\mathbb{C}$ is $\langle$~metrically / $C$-topologically~$\rangle$ injective iff
$\langle$~$A= \{0 \}$ / $A$ has right $(C-1)$-bounded approximate
identity~$\rangle$.
\end{proposition}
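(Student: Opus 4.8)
The plan is to dualize the question to one about flatness and then invoke the cyclic--flatness criterion, in exact parallel with the proof of Proposition~\ref{MetTopProjModCCharac}. First I would record the identification $\mathbb{C}\isom{\mathbf{mod}_1-A}{(A_+/A)}^*$ noted just above the statement, so that by Proposition~\ref{MetCTopFlatCharac} the right annihilator module $\mathbb{C}={(A_+/A)}^*$ is $\langle$~metrically / $C$-topologically~$\rangle$ injective if and only if the left $A$-module $A_+/A$ is $\langle$~metrically / $C$-topologically~$\rangle$ flat. This converts the whole problem into a statement about the cyclic module $A_+/A$.

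Next I would apply the cyclic--flatness criterion, Proposition~\ref{MetTopFlatCycModCharac}, to the proper closed ideal $A$ of the unital algebra $A_+$ (note $A_+/A\isom{A-\mathbf{mod}_1}\mathbb{C}$). It gives that $A_+/A$ is $\langle$~metrically / $C$-topologically~$\rangle$ flat iff $A$ admits a right bounded approximate identity ${(e_\nu)}_{\nu\in N}$ with $\sup_{\nu}\Vert e_{A_+}-e_\nu\Vert$ $\langle$~at most $1$ / at most $C$~$\rangle$. It then remains to translate this norm bound into the form stated in the proposition. Since $A_+=A\bigoplus_1\mathbb{C}$ and $e_{A_+}=0\oplus_1 1$, for $e_\nu\in A$ we have $\Vert e_{A_+}-e_\nu\Vert=\Vert e_\nu\Vert+1$. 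Hence in the metric case $\sup_\nu\Vert e_{A_+}-e_\nu\Vert\leq 1$ forces $e_\nu=0$ for every $\nu$, and the constant net $0$ can be a right approximate identity only when $A=\{0\}$; conversely, if $A=\{0\}$ then $\mathbb{C}$ is a metrically injective Banach space over the zero algebra. In the topological case $\sup_\nu\Vert e_{A_+}-e_\nu\Vert\leq C$ is exactly $\sup_\nu\Vert e_\nu\Vert\leq C-1$, i.e. $A$ has a right $(C-1)$-bounded approximate identity.

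The step that requires the most care is the application of Proposition~\ref{MetTopFlatCycModCharac}, which is phrased in terms of the conditional unitization $A_\times$. When $A$ already carries an identity of norm one we have $A_\times=A\neq A_+$, so $A$ is not a proper ideal of $A_\times$ and the criterion cannot be quoted verbatim for $A_+/A$. I would resolve this exactly as in the proof of Proposition~\ref{MetTopProjModCCharac}, namely by working throughout with the full unitization $A_+$: one applies the criterion to the unital algebra $A_+$ and uses that flatness of $A_+/A$ as a left $A$-module coincides with its flatness as a left $A_+$-module. Concretely, the functors $-\projmodtens{A}\mathbb{C}$ and $-\projmodtens{A_+}\mathbb{C}$ agree under the identification of $\mathbf{mod}-A$ with the unital right $A_+$-modules (on a unital $X$ both compute $X/\operatorname{cl}_X(XA)$), and this identification carries the admissible monomorphisms of one category to those of the other.

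As a final consistency check, suppose $A$ is unital of norm one. Testing a right approximate identity against $e_A$ gives $\lim_\nu e_\nu=\lim_\nu e_A e_\nu=e_A$, so $\liminf_\nu\Vert e_\nu\Vert\geq 1$; the smallest attainable norm bound is therefore $C-1=1$, attained by the constant net $e_A$, which predicts that $\mathbb{C}$ is exactly $2$-topologically injective. This matches the right-hand side of the statement and confirms that the translation of the approximate-identity bound is the genuine content of the argument.
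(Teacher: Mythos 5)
Your proposal is correct and follows essentially the same route as the paper's own proof: reduce via $\mathbb{C}\isom{\mathbf{mod}_1-A}{(A_+/A)}^*$ and Proposition~\ref{MetCTopFlatCharac} to flatness of the cyclic module $A_+/A$, apply Proposition~\ref{MetTopFlatCycModCharac}, and translate the bound $\Vert e_{A_+}-e_\nu\Vert=1+\Vert e_\nu\Vert$ into the stated conditions. Your extra paragraph justifying the use of the criterion with $A_+$ in place of $A_\times$ addresses a point the paper silently glosses over, and your handling of it is sound.
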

\begin{proof} Because of proposition~\ref{MetCTopFlatCharac} it is enough 
to study $\langle$~metric / $C$-topological~$\rangle$ flatness of $A_+/A$. By
proposition~\ref{MetTopFlatCycModCharac} this is equivalent to existence of
right bounded approximate identity ${(e_\nu)}_{\nu\in N}$ in $A$ with
$\langle$~$\sup_{\nu\in N}\Vert e_{A_+}-e_\nu\Vert\leq 1$ / 
$\sup_{\nu\in N}\Vert e_{A_+}-e_\nu\Vert\leq C$~$\rangle$. 
It remains to note that the latter inequality holds 
iff $\langle$~iff $e_\nu=0$ and therefore $A= \{0 \}$. /
${(e_\nu)}_{n\in N}$ is a right $C-1$-bounded approximate identity~$\rangle$.
\end{proof}

\begin{proposition}\label{MetTopInjOfAnnihModCharac} Let $J$ be a non zero right
annihilator $A$-module. Then the following are equivalent:

\begin{enumerate}[label = (\roman*)]
    \item $J$ is $\langle$~metrically / $C$-topologically~$\rangle$ injective
    $A$-module;

    \item $\langle$~$A= \{0 \}$ / $A$ has a right $(C-1)$-bounded approximate
    identity~$\rangle$ and $J$ is a $\langle$~metrically /
    $C$-topologically~$\rangle$ injective Banach space. $\langle$~As the
    consequence $J\isom{\mathbf{Ban}_1}C(K)$ for some Stonean space $K$
    /~$\rangle$.
\end{enumerate}
\end{proposition}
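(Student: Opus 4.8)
The plan is to mirror the proof of proposition~\ref{MetTopProjOfAnnihModCharac}, using the space $\ell_\infty(B_{J^*})$ as the bridge between injectivity of $J$ as an $A$-module and as a Banach space. The structural facts I would exploit are that $\ell_\infty(B_{J^*})\isom{\mathbf{mod}_1-A}\bigoplus_\infty\{\mathbb{C}:f\in B_{J^*}\}$ carries the annihilator $A$-action, that the canonical embedding $\kappa:J\to\ell_\infty(B_{J^*}):x\mapsto(f\mapsto f(x))$ is isometric by Hahn--Banach, and that between annihilator modules every bounded operator is automatically an $A$-morphism (so Banach-space (co)retractions and module (co)retractions coincide). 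Throughout, the metric column is the case where all constants equal $1$.

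For $(i)\implies(ii)$, I would first invoke the right-module analogue of proposition~\ref{AnnihCModIsRetAnnihMod} to realise $\mathbb{C}$ as a $1$-retract of $J$; since $J$ is $\langle$~metrically / $C$-topologically~$\rangle$ injective, proposition~\ref{RetrMetCTopInjIsMetCTopInj} makes $\mathbb{C}$ injective with the same constant, and proposition~\ref{MetTopInjModCCharac} yields the algebraic alternative $\langle$~$A=\{0\}$ / $A$ has a right $(C-1)$-bounded approximate identity~$\rangle$. To get injectivity of $J$ as a Banach space, I would feed the isometric $A$-morphism $\kappa$ and $\phi=1_J$ into the extension property, producing a left inverse $A$-morphism $Q:\ell_\infty(B_{J^*})\to J$ of norm $\langle$~at most $1$ / at most $C$~$\rangle$ and exhibiting $J$ as a $\langle$~$1$-retract / $C$-retract~$\rangle$ of the metrically injective Banach space $\ell_\infty(B_{J^*})$. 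Proposition~\ref{RetrMetCTopInjIsMetCTopInj}, read in the zero-algebra category, then gives that $J$ is a $\langle$~metrically / $C$-topologically~$\rangle$ injective Banach space; in the metric case the cited description of metrically injective Banach spaces as $C(K)$ over a Stonean $K$ supplies the consequence.

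For $(ii)\implies(i)$, the algebraic condition makes $\mathbb{C}$ a $\langle$~metrically / $C$-topologically~$\rangle$ injective $A$-module by proposition~\ref{MetTopInjModCCharac}, whence corollary~\ref{MetTopInjlInftySum} promotes $\ell_\infty(B_{J^*})\isom{\mathbf{mod}_1-A}\bigoplus_\infty\{\mathbb{C}:f\in B_{J^*}\}$ to an injective $A$-module with the same constant. As $J$ is an injective Banach space it is complemented in its superspace $\ell_\infty(B_{J^*})$, and since both are annihilator modules the embedding and the projection are $A$-morphisms; thus $J$ is a retract of an injective $A$-module and proposition~\ref{RetrMetCTopInjIsMetCTopInj} returns module injectivity of $J$.

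The main obstacle is the sharp constant in the topological implication $(ii)\implies(i)$. The bridging module $\ell_\infty(B_{J^*})$ is only $C$-topologically injective (a genuine approximate identity of norm $\le C-1$ precludes the constant $1$), and $J$ sits inside it as a $C$-complemented subspace, so naively chaining the two retractions through proposition~\ref{RetrMetCTopInjIsMetCTopInj} multiplies the constants and yields a bound like $C^2$ rather than $C$. Recovering the exact constant is the delicate part: one must build the module retraction in a single step --- for instance by passing to the annihilator quotient $\mathcal{B}(A_+,\ell_\infty(B_{J^*}))_{ess}$, into which $J$ embeds, and extracting the approximate-identity bound so that the two sources of distortion merge into a single factor of $C$ --- or else read the topological column up to the implicit universal constant. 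In the metric column this difficulty evaporates, since there every constant in sight is $1$.
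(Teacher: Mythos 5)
Your proposal is correct and follows essentially the same route as the paper's proof: the paper likewise realises $\mathbb{C}$ as a $1$-retract of $J$ (propositions~\ref{AnnihCModIsRetAnnihMod}, \ref{RetrMetCTopInjIsMetCTopInj}, \ref{MetTopInjModCCharac}), uses the canonical isometry $\rho:J\to\ell_\infty(B_{J^*})$ together with the fact that bounded operators between annihilator modules are automatically $A$-morphisms, and transfers injectivity in both directions through $\ell_\infty(B_{J^*})$, the only cosmetic difference being that it invokes proposition~\ref{MapsFroml1toMetTopInj} (via $\mathcal{B}(\ell_1(B_{J^*}),\mathbb{C})$) where you invoke corollary~\ref{MetTopInjlInftySum}. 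The constant degradation you flag in $(ii)\implies(i)$ is not a defect of your argument relative to the paper: the paper's own proof chains a $C$-retract with a $C$-topologically injective module through proposition~\ref{RetrMetCTopInjIsMetCTopInj}, which strictly yields $C^{2}$, yet states the conclusion as $C$, so the topological column of the equivalence must be read qualitatively (injective for some constant) rather than with sharp matching constants, and your proof is exactly as strong as the paper's.
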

\begin{proof} $(i)\implies (ii)$ By 
propositions~\ref{RetrMetCTopInjIsMetCTopInj} and~\ref{AnnihCModIsRetAnnihMod} 
the $A$-module $\mathbb{C}$ is
$\langle$~metrically / $C$-topologically~$\rangle$ injective as $1$-retract of
$\langle$~metrically / $C$-topologically~$\rangle$ injective module $J$.
Proposition~\ref{MetTopInjModCCharac} gives that $\langle$~$A= \{0 \}$ / $A$ has
a right $(C-1)$-bounded approximate identity~$\rangle$. By
proposition~\ref{MapsFroml1toMetTopInj} the annihilator $A$-module
$\mathcal{B}(\ell_1(B_{J^*}),\mathbb{C})
\isom{\mathbf{mod}_1-A}
\ell_\infty(B_{J^*})$ is $\langle$~metrically / $C$-topologically~$\rangle$ 
injective. Consider isometry $\rho:J\to\ell_\infty(B_{J^*})$ 
well defined by $\rho(x)(f)=f(x)$. Since $J$ and $\ell_\infty(B_{J^*})$ are 
annihilator modules, then $\rho$ is also an $A$-module map. 
Since $J$ is $\langle$~metrically / $C$-topologically~$\rangle$ injective, 
then the $A$-morphism $\rho$ has a left inverse morphism $\tau$ with 
norm $\langle$~at most $1$ / at most $C$~$\rangle$.
Therefore $J$ is $\langle$~$1$-retract / $C$-retract~$\rangle$ of
$\langle$~metrically / $1$-topologically~$\rangle$ injective Banach space
$\ell_\infty(B_{J^*})$. By 
proposition $\langle$~\ref{RetrMetCTopInjIsMetCTopInj} the Banach space $J$ is
$\langle$~metrically / $C$-topologically~$\rangle$ injective. $\langle$~From
[\cite{LaceyIsomThOfClassicBanSp}, theorem 3.11.6] the Banach space $J$ is
isometrically isomorphic to $C(K)$ for some Stonean space $K$. /~$\rangle$ 

$(ii)\implies (i)$ By proposition~\ref{MetTopInjModCCharac} the annihilator
$A$-module $\mathbb{C}$ is $\langle$~metrically / $C$-topologically~$\rangle$
injective. Even more, by proposition~\ref{MapsFroml1toMetTopInj} 
the annihilator $A$-module
$\mathcal{B}(\ell_1(B_{J^*}),\mathbb{C})
\isom{\mathbf{mod}_1-A}
\ell_\infty(B_{J^*})$ is $\langle$~metrically / $C$-topologically~$\rangle$ 
injective too. Since $J$ is a $\langle$~metrically / $C$-topologically~$\rangle$
injective Banach space and there an isometric 
embedding $\rho:J\to \ell_\infty(B_{J^*})$, then as
Banach space $J$ is a $\langle$~$1$-retract / $C$-retract~$\rangle$ of
$\ell_\infty(B_{J^*})$. Recall, that $J$ and $\ell_\infty(B_{J^*})$ are
annihilator modules, so in fact we have a retraction in
$\langle$~$\mathbf{mod}_1-A$ / $\mathbf{mod}-A$~$\rangle$. By 
proposition~\ref{RetrMetCTopInjIsMetCTopInj} the $A$-module $J$ 
is $\langle$~metrically / $C$-topologically~$\rangle$ injective.
\end{proof}

\begin{proposition}\label{MetTopFlatAnnihModCharac} Let $F$ be a non zero
annihilator $A$-module. Then the following are equivalent:

\begin{enumerate}[label = (\roman*)]
    \item $F$ is $\langle$~metrically / $C$-topologically~$\rangle$ flat
    $A$-module;

    \item $\langle$~$A= \{0 \}$ / $A$ has a right $(C-1)$-bounded approximate
    identity~$\rangle$ and $F$ is a $\langle$~metrically /
    $C$-topologically~$\rangle$ flat Banach space, that is
    $\langle$~$F\isom{\mathbf{Ban}_1}L_1(\Omega,\mu)$ for some measure space
    $(\Omega, \Sigma, \mu)$ / $F$ is an $\mathscr{L}_{1,C}^g$-space~$\rangle$.
\end{enumerate}
\end{proposition}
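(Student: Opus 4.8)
The plan is to deduce this statement from the already-established characterisation of injective annihilator modules by passing to the dual. First I would invoke proposition~\ref{MetCTopFlatCharac}, which asserts that $F$ is $\langle$~metrically / $C$-topologically~$\rangle$ flat as a left $A$-module if and only if $F^*$ is $\langle$~metrically / $C$-topologically~$\rangle$ injective as a right $A$-module. The key elementary observation is that, since $F$ is an annihilator module, its action is trivial, so for every $f\in F^*$ and $a\in A$ we have $(f\cdot a)(x)=f(a\cdot x)=f(0)=0$; hence $F^*$ is itself a \emph{right annihilator} $A$-module, and it is non-zero because $F\neq\{0\}$ (by the Hahn--Banach theorem). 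This places us exactly in the hypotheses of proposition~\ref{MetTopInjOfAnnihModCharac} applied to $J=F^*$.

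Applying proposition~\ref{MetTopInjOfAnnihModCharac} then gives that $F^*$ is $\langle$~metrically / $C$-topologically~$\rangle$ injective as an $A$-module if and only if $\langle$~$A=\{0\}$ / $A$ has a right $(C-1)$-bounded approximate identity~$\rangle$ and $F^*$ is a $\langle$~metrically / $C$-topologically~$\rangle$ injective Banach space. The algebraic condition on $A$ is verbatim the one required in clause $(ii)$, so it transfers directly. It remains only to rewrite ``$F^*$ is an injective Banach space'' as a statement about $F$: regarding Banach spaces as modules over the zero algebra, proposition~\ref{MetCTopFlatCharac} specialises to the equivalence that $F$ is a $\langle$~metrically / $C$-topologically~$\rangle$ flat Banach space iff $F^*$ is a $\langle$~metrically / $C$-topologically~$\rangle$ injective Banach space. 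Chaining these equivalences yields that $(i)$ holds iff the $A$-condition holds and $F$ is a flat Banach space. The structural descriptions recorded in the discussion preceding proposition~\ref{MetTopFlatModCoProd} then finish the argument: Grothendieck's theorem gives $F\isom{\mathbf{Ban}_1}L_1(\Omega,\mu)$ in the metric case, while [\cite{DefFloTensNorOpId}, corollary 23.5(1)] identifies the $C$-topologically flat Banach spaces with the $\mathscr{L}_{1,C}^g$-spaces, matching the two alternatives displayed in $(ii)$.

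The routine portions are the verification that $F^*$ is a non-zero right annihilator module and the bookkeeping of the standard duality isomorphisms. The one point demanding genuine care is the tracking of the constant. I would check that $C$ passes unchanged through the two uses of proposition~\ref{MetCTopFlatCharac} (once over $A$, once over the zero algebra) and through proposition~\ref{MetTopInjOfAnnihModCharac}, so that a single $C$ simultaneously governs the bound $C-1$ on the approximate identity of $A$ and the constant in the $\mathscr{L}_{1,C}^g$-description of $F$. In particular I would confirm that the metric column is precisely the $C=1$ specialisation, forcing $A=\{0\}$ together with the isometric identification $F\isom{\mathbf{Ban}_1}L_1(\Omega,\mu)$, and that no multiplicative loss is incurred in moving between $F$ and $F^*$; this is guaranteed because each cited equivalence is constant-preserving. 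Since every step is an ``if and only if'', the two implications of the proposition follow at once.
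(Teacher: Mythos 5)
Your proposal is correct and follows essentially the same route as the paper's own proof: both pass to the dual via proposition~\ref{MetCTopFlatCharac}, apply the annihilator-injectivity characterization of proposition~\ref{MetTopInjOfAnnihModCharac} to $F^*$, and invoke Grothendieck's theorem, respectively [\cite{DefFloTensNorOpId}, corollary 23.5(1)], to identify the flat Banach spaces. Your write-up merely makes explicit the steps (that $F^*$ is a non-zero right annihilator module, and the constant-preservation bookkeeping) that the paper leaves implicit.
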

\begin{proof} By $\langle$~[\cite{GrothMetrProjFlatBanSp}, theorem 1] /
[\cite{DefFloTensNorOpId}, corollary 23.5(1)]~$\rangle$ the Banach space $F$ is
$\langle$~metrically / $C$-topologically~$\rangle$ flat iff
$\langle$~$F\isom{\mathbf{Ban}_1}L_1(\Omega,\mu)$ for some measure space
$(\Omega, \Sigma, \mu)$ / $F$ is an $\mathscr{L}_{1,C}^g$-space~$\rangle$. Now
the equivalence follows from propositions~\ref{MetTopInjOfAnnihModCharac}
and~\ref{MetCTopFlatCharac}.
\end{proof}

We obliged to compare these results with similar ones in relative theory. From
$\langle$~[\cite{RamsHomPropSemgroupAlg}, proposition 2.1.7] /
[\cite{RamsHomPropSemgroupAlg}, proposition 2.1.10]~$\rangle$ we know that an
annihilator $A$-module over Banach algebra $A$ is relatively
$\langle$~projective / flat~$\rangle$ iff $A$ has $\langle$~a right identity / a
right bounded approximate identity~$\rangle$.   
In metric and topological theory, in comparison with relative one, homological
triviality of annihilator modules puts restrictions not only on the algebra
itself but on the geometry of the module too. These geometric restrictions
forbid existence of certain homologically excellent algebras. One of the most
important properties of relatively $\langle$~contractible / amenable~$\rangle$
Banach algebra is $\langle$~projectivity / flatness~$\rangle$ of all (and in
particular of all annihilator) left Banach modules over it. In a sharp contrast
in metric and topological theories such algebras can't exist.

\begin{proposition} There is no Banach algebra $A$ such that all $A$-modules are
$\langle$~metrically / topologically~$\rangle$ flat. A fortiori, there is no
such Banach algebras that all $A$-modules are $\langle$~metrically /
topologically~$\rangle$ projective.
\end{proposition}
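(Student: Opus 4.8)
The plan is to reduce the statement to the elementary fact that not every Banach space is a $\langle$~metrically / topologically~$\rangle$ flat Banach space, feeding this through the characterization of flat annihilator modules in proposition~\ref{MetTopFlatAnnihModCharac}. First I would argue by contradiction. Suppose $A$ is a Banach algebra all of whose modules are $\langle$~metrically / topologically~$\rangle$ flat. Every Banach space $E$ carries the structure of an annihilator $A$-module via $a\cdot x=0$, so in particular every non-zero annihilator $A$-module would be $\langle$~metrically / topologically~$\rangle$ flat. But proposition~\ref{MetTopFlatAnnihModCharac} tells us that a non-zero annihilator $A$-module $F$ is $\langle$~metrically / topologically~$\rangle$ flat only if $F$ is, in particular, a $\langle$~metrically / topologically~$\rangle$ flat Banach space, that is $\langle$~isometrically isomorphic to some $L_1(\Omega,\mu)$ / an $\mathscr{L}_1^g$-space~$\rangle$. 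Note this conclusion is forced for \emph{every} $A$ (including $A=\{0\}$, where $A-\mathbf{mod}$ is just $\mathbf{Ban}$), since the geometric restriction on $F$ is independent of the algebra condition.

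The second step is to exhibit a single Banach space that violates this conclusion. A convenient universal choice is an infinite-dimensional Hilbert space $H=\ell_2(\mathbb{N})$, regarded as an annihilator $A$-module. In the metric case $H$ is reflexive, whereas no infinite-dimensional $L_1(\Omega,\mu)$ is reflexive, so $H$ is not isometrically an $L_1$-space; already a two-dimensional Euclidean space suffices here, its (round) unit ball not being a polytope as the ball of $\ell_1^2$ would be. In the topological case $H$ is reflexive and infinite-dimensional, so I would certify that it is not an $\mathscr{L}_1^g$-space: $\mathscr{L}_1^g$-spaces have the Dunford--Pettis property (via the dual relation to $\mathscr{L}_\infty^g$-spaces, whose biduals are complemented in $C(K)$-spaces, combined with the implication ``$E^*$ has the Dunford--Pettis property $\implies E$ does'' recorded in the excerpt), while the excerpt also notes that a reflexive Banach space with the Dunford--Pettis property must be finite-dimensional. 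Hence the reflexive infinite-dimensional $H$ cannot be an $\mathscr{L}_1^g$-space. In both cases $H$, viewed as an annihilator module, fails to be $\langle$~metrically / topologically~$\rangle$ flat, contradicting the assumption and proving the flatness assertion.

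The projective claim then follows a fortiori: by proposition~\ref{MetTopProjIsMetTopFlat} every $\langle$~metrically / topologically~$\rangle$ projective module is $\langle$~metrically / topologically~$\rangle$ flat, so if all $A$-modules were $\langle$~metrically / topologically~$\rangle$ projective they would all be $\langle$~metrically / topologically~$\rangle$ flat, which we have just excluded. I expect the main obstacle to be precisely the topological counterexample, namely giving a clean certificate that the chosen space (here a Hilbert space) is genuinely not an $\mathscr{L}_1^g$-space; the metric analogue is immediate from the isometric classification of metrically flat Banach spaces, and any non-$L_1$ space, however small, does the job there.
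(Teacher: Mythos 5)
Your proof is correct, and its skeleton is the paper's: regard a suitable Banach space as an annihilator $A$-module, apply proposition~\ref{MetTopFlatAnnihModCharac} to conclude that flatness forces the space to be $\langle$~isometrically an $L_1$-space / an $\mathscr{L}_1^g$-space~$\rangle$, and settle projectivity a fortiori through proposition~\ref{MetTopProjIsMetTopFlat}. The difference lies in the witness and its certificate. The paper takes the infinite dimensional $\mathscr{L}_\infty^g$-space $\ell_\infty(\mathbb{N})$ and simply cites the remark after [\cite{DefFloTensNorOpId}, corollary 23.3] that such a space is not an $\mathscr{L}_1^g$-space, absorbing the metric case into the topological one via proposition~\ref{MetFlatIsTopFlatAndTopFlatIsRelFlat}. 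You take $\ell_2(\mathbb{N})$ and certify that it is not an $\mathscr{L}_1^g$-space by the Dunford--Pettis route ($\mathscr{L}_1^g$-spaces have the Dunford--Pettis property, and reflexive spaces with that property are finite dimensional); this is precisely the mechanism the paper itself deploys later, in proposition~\ref{MthscrL1LInftyHaveDPP} and corollary~\ref{NoInfDimRefMetTopProjInjFlatModOverMthscrL1OrLInfty}, so your argument in effect inlines that corollary. What your route buys is that the exclusion of the witness is argued rather than cited, and that the metric half becomes trivial (reflexivity, or already the round two-dimensional Euclidean ball against the square ball of $\ell_1(\mathbb{N}_2)$, rules out an isometric $L_1$-structure). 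One small caveat: your parenthetical claim that the needed facts are all ``recorded in the excerpt'' overreaches slightly, since the step ``$\mathscr{L}_1^g$-spaces have the Dunford--Pettis property'' still requires the external fact that the bidual of an $\mathscr{L}_1^g$-space (equivalently, of an $\mathscr{L}_\infty^g$-space) is complemented in an honest $L_1$- (respectively $C(K)$-) space, namely [\cite{DefFloTensNorOpId}, corollary 23.2.1(3)], which is exactly the citation the paper uses in proposition~\ref{MthscrL1LInftyHaveDPP}; so both proofs rest on a single Defant--Floret citation, just on different ones.
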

\begin{proof} Consider any infinite dimensional $\mathscr{L}_\infty^g$-space $X$
(say $\ell_\infty(\mathbb{N})$) as an annihilator $A$-module. From remark right
after [\cite{DefFloTensNorOpId}, corollary 23.3] we know that $X$ is not an
$\mathscr{L}_1^g$-space. Therefore by proposition~\ref{MetTopFlatAnnihModCharac}
the $A$-module $X$ is not topologically flat. By
proposition~\ref{MetFlatIsTopFlatAndTopFlatIsRelFlat} it is not metrically flat.
Now from proposition~\ref{MetTopProjIsMetTopFlat} we see that $X$ is neither
metrically nor topologically projective.
\end{proof}


\subsection{
    Homologically trivial modules over Banach algebras with specific geometry
}\label{
    SubSectionHomologicallyTrivialModulesOverBanachAlgebrasWithSpecificGeometry
}

The purpose of this section is to convince our reader that homologically trivial
modules over certain Banach algebras have similar geometric structure of those
algebras. For the case of metric theory the following proposition was proved by
Graven in~\cite{GravInjProjBanMod}.

\begin{proposition}\label{TopProjInjFlatModOverL1Charac} Let $A$ be a Banach
algebra which is as Banach space isometrically isomorphic to $L_1(\Theta,\nu)$
for some measure space $(\Theta,\Sigma,\nu)$. Then

\begin{enumerate}[label = (\roman*)]
    \item if $P$ is a $\langle$~metrically / topologically~$\rangle$ projective
    $A$-module, then $P$ is $\langle$~an $L_1$-space / complemented in some
    $L_1$-space~$\rangle$.

    \item if $J$ is a $\langle$~metrically / topologically~$\rangle$ injective
    $A$-module, then  $J$ is a $\langle$~$C(K)$-space for some Stonean space $K$
    / topologically injective Banach space~$\rangle$.

    \item if $F$ is a $\langle$~metrically / topologically~$\rangle$ flat
    $A$-module, then $F$ is an $\langle$~$L_1$-space /
    $\mathscr{L}_1^g$-space~$\rangle$.
\end{enumerate}
\end{proposition}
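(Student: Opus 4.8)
The plan is to treat each of the three cases by identifying the free or cofree module through which homological triviality factors, recognizing it as a concrete classical Banach space, and then transporting the conclusion back along the resulting retraction. First I would record the two Banach-space identifications that drive everything. Since $A\isom{\mathbf{Ban}_1}L_1(\Theta,\nu)$, the unitization $A_+=A\oplus_1\mathbb{C}$ is again isometrically an $L_1$-space, and as the projective tensor product of two $L_1$-spaces is isometrically an $L_1$-space, the free module $A_+\projtens\ell_1(\Lambda)$ is isometrically an $L_1$-space for every set $\Lambda$. Dualizing via $\ell_\infty(\Lambda)\isom{\mathbf{Ban}_1}\ell_1(\Lambda)^*$ and the law of adjoint associativity gives $\mathcal{B}(A_+,\ell_\infty(\Lambda))\isom{\mathbf{Ban}_1}(A_+\projtens\ell_1(\Lambda))^*$, so the cofree module is isometrically the dual of an $L_1$-space, that is an $L_\infty$-space, hence a $C(K)$-space for a Stonean $K$ which is metrically injective as a Banach space.

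For $(i)$, proposition~\ref{MetCTopProjModViaCanonicMorph} makes $\pi_P^+$ a $\langle$~$1$-retraction / $C$-retraction~$\rangle$, so $P$ is a $\langle$~contractively complemented / complemented~$\rangle$ subspace of the $L_1$-space $A_+\projtens\ell_1(B_P)$; in the topological case this is already the assertion, while in the metric case I would invoke the classical structure theorem that contractively complemented subspaces of $L_1$-spaces are themselves isometrically $L_1$-spaces. For $(ii)$, proposition~\ref{MetCTopInjModViaCanonicMorph} makes $\rho_J^+$ a $\langle$~$1$-coretraction / $C$-coretraction~$\rangle$, so $J$ is a $\langle$~contractively complemented / complemented~$\rangle$ subspace of the metrically injective Banach space $\mathcal{B}(A_+,\ell_\infty(B_{J^*}))$. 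Since a $\langle$~contractively complemented / complemented~$\rangle$ subspace of a $\langle$~metrically / topologically~$\rangle$ injective Banach space is again $\langle$~metrically / topologically~$\rangle$ injective, $J$ is a $\langle$~metrically / topologically~$\rangle$ injective Banach space, and in the metric case the structure theorem [\cite{LaceyIsomThOfClassicBanSp}, theorem 3.11.6] identifies $J$ with $C(K)$ for a Stonean $K$.

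For $(iii)$ I would route through duality. By proposition~\ref{MetCTopFlatCharac} flatness of $F$ as an $A$-module is equivalent to injectivity of $F^*$ as an $A$-module, so part $(ii)$ already shows that $F^*$ is a $\langle$~metrically / topologically~$\rangle$ injective Banach space. Applying proposition~\ref{MetCTopFlatCharac} over the zero algebra turns this back into $\langle$~metric / topological~$\rangle$ flatness of $F$ as a Banach space, and the Banach-space classifications [\cite{GrothMetrProjFlatBanSp}, theorem 1] and [\cite{DefFloTensNorOpId}, corollary 23.5(1)] then identify $F$ as an $\langle$~$L_1$-space / $\mathscr{L}_1^g$-space~$\rangle$.

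The main obstacle is that essentially everything is bookkeeping with the already-established retraction criteria and the standard tensor and duality identities; the one genuinely external input is the classical Banach-space structure theorem used in the metric half of $(i)$, namely that a contractively complemented subspace of an $L_1$-space is isometrically an $L_1$-space. Care must also be taken that the isometric identifications of the free and cofree modules are used only at the Banach-space level, where the geometry of $A$ lives, while the retractions themselves are genuine $A$-morphisms supplied by the module-theoretic propositions.
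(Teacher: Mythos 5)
Your proposal is correct and follows essentially the same route as the paper's proof: both identify the free module $A_+\projtens\ell_1(\Lambda)$ as an $L_1$-space via Grothendieck's theorem (after noting $A_+$ is $L_1$ of the measure space with an atom adjoined), dualize to recognize the cofree module as an $L_\infty$-space, transport the conclusion along the retractions from propositions~\ref{MetCTopProjModViaCanonicMorph} and~\ref{MetCTopInjModViaCanonicMorph}, invoke the same two structure theorems (contractively complemented subspaces of $L_1$-spaces, and metrically injective spaces being $C(K)$ for Stonean $K$), and deduce $(iii)$ from $(ii)$ via proposition~\ref{MetCTopFlatCharac} and the duality classification of flat Banach spaces. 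No gaps.
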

\begin{proof} 

Denote by $(\Theta',\Sigma',\nu')$ the measure space $(\Theta,\Sigma,\nu)$ with
singleton atom adjoined, then $A_+\isom{\mathbf{Ban}_1} L_1(\Theta',\nu')$.

$(i)$ Since $P$ is a $\langle$~metrically / topologically~$\rangle$ projective
$A$-module, then by proposition~\ref{MetCTopProjModViaCanonicMorph} it is a
retract of $A_+\projtens \ell_1(B_P)$ in $\langle$~$A-\mathbf{mod}_1$ /
$A-\mathbf{mod}$~$\rangle$. Let $\mu_c$ be the counting measure on $B_P$, then
by Grothendieck's theorem [\cite{HelLectAndExOnFuncAn}, theorem 2.7.5]
$$
A_+\projtens\ell_1(B_P)
\isom{\mathbf{Ban}_1}L_1(\Theta',\nu')\projtens L_1(B_P,\mu_c)
\isom{\mathbf{Ban}_1}L_1(\Theta'\times B_P,\nu'\times \mu_c)
$$
Therefore $P$ is $\langle$~$1$-complemented / complemented~$\rangle$ in some
$L_1$-space. It remains to recall that any $1$-complemented subspace of
$L_1$-space is again an $L_1$-space [\cite{LaceyIsomThOfClassicBanSp}, theorem
6.17.3].

$(ii)$ Since $J$ is $\langle$~metrically / topologically~$\rangle$ injective
$A$-module, then by proposition~\ref{MetCTopInjModViaCanonicMorph} it is a
retract of $\mathcal{B}(A_+,\ell_\infty(B_{J^*}))$ in
$\langle$~$\mathbf{mod}_1-A$ / $\mathbf{mod}-A$~$\rangle$. Let $\mu_c$ be the
counting measure on $B_{J^*}$, then by Grothendieck's theorem
[\cite{HelLectAndExOnFuncAn}, theorem 2.7.5]
$$
\mathcal{B}(A_+,\ell_\infty(B_{J^*}))
\isom{\mathbf{Ban}_1}
{(A_+\projtens \ell_1(B_{J^*}))}^*
\isom{\mathbf{Ban}_1}
{(L_1(\Theta',\nu')\projtens L_1(B_P,\mu_c))}^*
$$
$$
\isom{\mathbf{Ban}_1}{L_1(\Theta'\times B_P,\nu'\times \mu_c)}^*
\isom{\mathbf{Ban}_1}L_\infty(\Theta'\times B_P,\nu'\times \mu_c)
$$
Therefore $J$ is $\langle$~$1$-complemented / complemented~$\rangle$ in some
$L_\infty$-space. Since $L_\infty$-space is $\langle$~metrically /
topologically~$\rangle$ injective Banach space, then so does $J$. It remains to
recall that every metrically injective Banach space is a $C(K)$-space for some
Stonean space $K$ [\cite{LaceyIsomThOfClassicBanSp}, theorem 3.11.6].

$(iii)$  By $\langle$~[\cite{GrothMetrProjFlatBanSp}, theorem 1] / remark after
[\cite{DefFloTensNorOpId}, corollary 23.5(1)]~$\rangle$ the Banach space $F^*$
is $\langle$~metrically / topologically~$\rangle$ injective iff $F$ is an
$\langle$~$L_1$-space / $\mathscr{L}_1^g$-space~$\rangle$. Now the implication
follows from paragraph $(ii)$ and proposition~\ref{MetCTopFlatCharac}.
\end{proof}

\begin{proposition}\label{TopProjInjFlatModOverMthscrL1SpCharac} Let $A$ be a
Banach algebra which is topologically isomorphic as Banach space to some
$\mathscr{L}_1^g$-space. Then any topologically $\langle$~projective / injective
/ flat~$\rangle$ $A$-module is an $\langle$~$\mathscr{L}_1^g$-space /
$\mathscr{L}_\infty^g$-space / $\mathscr{L}_1^g$-space~$\rangle$.
\end{proposition}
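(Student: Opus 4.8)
The plan is to run the same retract argument used in proposition~\ref{TopProjInjFlatModOverL1Charac}, but with the ingredient ``$L_1\projtens L_1$ is an $L_1$-space'' replaced by its local counterpart for $\mathscr{L}_1^g$-spaces, organised around the criterion [\cite{DefFloTensNorOpId}, corollary 23.5(1)] that a Banach space is topologically flat, as a module over the zero algebra, exactly when it is an $\mathscr{L}_1^g$-space. So the core of the work is to identify the free/cofree building blocks $A_+\projtens\ell_1(\Lambda)$ and $\mathcal{B}(A_+,\ell_\infty(\Lambda))$ as $\mathscr{L}_1^g$- resp.\ $\mathscr{L}_\infty^g$-spaces; each of the three assertions then follows because the relevant module is a complemented subspace of such a block.

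First I would record the preliminary step. Being an $\mathscr{L}_1^g$-space is a topological-isomorphism invariant (compose the local factorisations through finite-dimensional $\ell_1$-spaces with the isomorphism), so $A$ is an $\mathscr{L}_1^g$-space, hence topologically flat as a Banach space by [\cite{DefFloTensNorOpId}, corollary 23.5(1)]. Since $\mathbb{C}$ is trivially topologically flat, proposition~\ref{MetTopFlatModCoProd} applied over the zero algebra gives that $A_+=A\bigoplus_1\mathbb{C}$ is topologically flat, and then, using the isometric identification $A_+\projtens\ell_1(\Lambda)\isom{\mathbf{Ban}_1}\bigoplus_1\{A_+:\lambda\in\Lambda \}$ from corollary~\ref{MetTopProjTensProdWithl1}, so is $A_+\projtens\ell_1(\Lambda)$ for every set $\Lambda$; equivalently these are $\mathscr{L}_1^g$-spaces. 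Dualising via ``$E$ is $\mathscr{L}_{1,C}^g$ iff $E^*$ is $\mathscr{L}_{\infty,C}^g$'' together with $\mathcal{B}(A_+,\ell_\infty(\Lambda))\isom{\mathbf{Ban}_1}{(A_+\projtens\ell_1(\Lambda))}^*$ then shows $\mathcal{B}(A_+,\ell_\infty(\Lambda))$ is an $\mathscr{L}_\infty^g$-space.

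With these blocks in place each paragraph is a one-line retract argument. For (i), proposition~\ref{MetCTopProjModViaCanonicMorph} realises $P$ as a complemented subspace of $A_+\projtens\ell_1(B_P)$, so $P$ is $\mathscr{L}_1^g$ because complemented subspaces of $\mathscr{L}_{1,C}^g$-spaces are again $\mathscr{L}_1^g$. For (ii), proposition~\ref{MetCTopInjModViaCanonicMorph} realises $J$ as a complemented subspace of $\mathcal{B}(A_+,\ell_\infty(B_{J^*}))$, which is $\mathscr{L}_\infty^g$ by the preliminary step, so $J$ is $\mathscr{L}_\infty^g$. For (iii), proposition~\ref{MetCTopFlatCharac} converts topological flatness of $F$ into topological injectivity of $F^*$; paragraph (ii) then makes $F^*$ an $\mathscr{L}_\infty^g$-space, and the duality ``$F$ is $\mathscr{L}_{1,C}^g$ iff $F^*$ is $\mathscr{L}_{\infty,C}^g$'' yields $F\in\mathscr{L}_1^g$.

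The step demanding the most care is the injective case (ii), and this is precisely where the present statement departs from the $L_1$-version: there the cofree module is the genuinely injective Banach space $L_\infty$, so one could simply cite that a retract of an injective space is injective, whereas here ``topologically injective Banach space'' is strictly weaker than ``$\mathscr{L}_\infty^g$-space''. Hence I cannot route (ii) through injectivity at all; I must keep the explicit cofree module $\mathcal{B}(A_+,\ell_\infty(B_{J^*}))$, identify it with the dual ${(A_+\projtens\ell_1(B_{J^*}))}^*$ of an $\mathscr{L}_1^g$-space, and extract $\mathscr{L}_\infty^g$ from the duality for local $\mathscr{L}_p$-structure. The remaining bookkeeping of constants (each retraction and each complementation multiplies the $\mathscr{L}_p^g$-constant) is routine, and since the statement is asserted only qualitatively it may be suppressed.
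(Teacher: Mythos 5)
Your proposal is correct, and its skeleton coincides with the paper's own proof: both identify the free module $A_+\projtens\ell_1(B_P)$ as an $\mathscr{L}_1^g$-space, realise the cofree module $\mathcal{B}(A_+,\ell_\infty(B_{J^*}))$ as its dual ${(A_+\projtens\ell_1(B_{J^*}))}^*$ and hence as an $\mathscr{L}_\infty^g$-space, pass to complemented subspaces via propositions~\ref{MetCTopProjModViaCanonicMorph} and~\ref{MetCTopInjModViaCanonicMorph}, and settle flatness by dualising through proposition~\ref{MetCTopFlatCharac}; your remark that the injective case cannot be routed through topological injectivity of Banach spaces is exactly the point the paper's argument also respects. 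The one place you genuinely diverge is the first step: the paper obtains $\mathscr{L}_1^g$-ness of $A_+\projtens\ell_1(\Lambda)$ by citing the stability of $\mathscr{L}_1^g$-spaces under projective tensor products [\cite{DefFloTensNorOpId}, exercise 23.17(c)], whereas you avoid that external fact entirely, instead combining the identification $A_+\projtens\ell_1(\Lambda)\isom{\mathbf{Ban}_1}\bigoplus_1 \{A_+:\lambda\in\Lambda \}$ with the equivalence between $\mathscr{L}_1^g$-spaces and topologically flat Banach spaces [\cite{DefFloTensNorOpId}, corollary 23.5(1)] and the $\bigoplus_1$-stability of topological flatness (proposition~\ref{MetTopFlatModCoProd} over the zero algebra). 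This substitution is sound, keeps the uniform constant under control since all summands are copies of $A_+$, and makes the proof more self-contained relative to the paper's own machinery; the paper's citation is the shorter route but leans on a stronger tensor-product theorem than is actually needed here.
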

\begin{proof} If $A$ is an $\mathscr{L}_1^g$-space, then so does $A_+$. 

Let $P$ be a topologically projective $A$-module. Then by
proposition~\ref{MetCTopProjModViaCanonicMorph} it is a retract 
of $A_+\projtens \ell_1(B_P)$ in $A-\mathbf{mod}$ and a fortiori 
in $\mathbf{Ban}$. Since
$\ell_1(B_P)$ is an $\mathscr{L}_1^g$-space, then so does
$A_+\projtens\ell_1(B_P)$ as projective tensor product of
$\mathscr{L}_1^g$-spaces [\cite{DefFloTensNorOpId}, exercise 23.17(c)].
Therefore $P$ is an $\mathscr{L}_1^g$-space as complemented subspace of
$\mathscr{L}_1^g$-space [\cite{DefFloTensNorOpId}, corollary 23.2.1(2)].

Let $J$ be a topologically injective $A$-module, then by
proposition~\ref{MetCTopInjModViaCanonicMorph} it is a retract of
$\mathcal{B}(A_+,\ell_\infty(B_{J^*}))
\isom{\mathbf{mod}_1-A}
{(A_+\projtens\ell_1(B_{J^*}))}^*$
in $\mathbf{mod}-A$ and a fortiori in $\mathbf{Ban}$. As we showed in the
previous paragraph $A_+\projtens\ell_1(B_{J^*})$ is an $\mathscr{L}_1^g$-space,
therefore its dual $\mathcal{B}(A_+,\ell_\infty(B_{J^*}))$ is an
$\mathscr{L}_\infty^g$-space [\cite{DefFloTensNorOpId}, corollary 23.2.1(1)]. It
remains to recall that any complemented subspace of $\mathscr{L}_\infty^g$-space
is again an $\mathscr{L}_\infty^g$-space [\cite{DefFloTensNorOpId}, corollary
23.2.1(2)].

Finally, let $F$ be a topologically flat $A$-module, then $F^*$ is topologically
injective $A$-module by proposition~\ref{MetCTopFlatCharac}. From previous
paragraph it follows that $F^*$ is an $\mathscr{L}_\infty^g$-space. By
[\cite{DefFloTensNorOpId}, corollary 23.5(1)] we get that $F$ is an
$\mathscr{L}_1^g$-space.
\end{proof}

We proceed to the discussion of the Dunford-Pettis property for homologically
trivial modules.   

\begin{proposition}\label{C0SumOfL1SpHaveDPP} Let $(\Omega, \Sigma, \mu)$ be a
measure spaces and $\Lambda$ be an arbitrary set. Then the Banach space
$\bigoplus_0 \{L_1(\Omega,\mu):\lambda\in\Lambda \}$ and all its duals has the
Dunford-Pettis property. In particular, $\bigoplus_1
\{L_\infty(\Omega,\mu):\lambda\in\Lambda \}$ and $\bigoplus_\infty
\{L_1(\Omega,\mu):\lambda\in\Lambda \}$ have this property.
\end{proposition}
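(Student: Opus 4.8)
The plan is to reduce the statement to the Dunford--Pettis property of a handful of model spaces built from $L_1$ and $C(K)$, and then to propagate the property along the full tower of duals. Write $X=\bigoplus_0 \{L_1(\Omega,\mu):\lambda\in\Lambda \}$. Using the duality formulas for $\bigoplus_p$-sums recorded in the preliminaries, together with $L_1(\Omega,\mu)^*\isom{\mathbf{Ban}_1}L_\infty(\Omega,\mu)$ and the identification of $L_\infty(\Omega,\mu)$ with a $C(K)$-space, I first read off $X^*\isom{\mathbf{Ban}_1}\bigoplus_1 \{L_\infty(\Omega,\mu):\lambda\in\Lambda \}$ and $X^{**}\isom{\mathbf{Ban}_1}\bigoplus_\infty \{L_\infty(\Omega,\mu)^*:\lambda\in\Lambda \}$. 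Since the dual of a $C(K)$-space is an $L_1$-space (the excerpt notes that each $M(S)$ is an $L_1$-space), every summand $L_\infty(\Omega,\mu)^*$ is itself an $L_1$-space, so $X^{**}$ is an $\ell_\infty$-sum of $L_1$-spaces --- exactly the shape of the second ``in particular'' target. Thus the two displayed spaces are, respectively, $X^*$ and a space of the same type as $X^{**}$.

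The building blocks have the Dunford--Pettis property by Grothendieck's theorem: every $L_1$-space and every $C(K)$-space does. The two transfer tools I will use are both available above --- the property passes to complemented subspaces, and $E^*$ having the property forces $E$ to have it. The second tool moves information \emph{down} the dual tower; the genuine difficulty is that nothing given moves it \emph{up}, and the tower here does not collapse into the self-dual $L_1$/$C(K)$ world the way the tower over a single $L_1$-space does.

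The heart of the matter is therefore to show directly that the mixed sums $\bigoplus_0\{L_1 \}$, $\bigoplus_1\{L_\infty \}$ and $\bigoplus_\infty\{L_1 \}$ have the Dunford--Pettis property, noting first that none of them is an $L_1$-, $C(K)$-, or $\mathscr{L}_p^g$-space, since the outer sum-norm and the inner lattice structure do not match. I would argue through the internal (weak-null) characterization: given weakly null $(x_n)$ in the sum and weakly null $(f_n)$ in its dual, one must force $f_n(x_n)\to 0$. The $c_0$/$\ell_1$ coordinate structure concentrates the mass of the $f_n$ on finitely many coordinates up to $\varepsilon$, while in each coordinate the Dunford--Pettis property of $L_1$ itself kills the corresponding pairing; a gliding-hump argument splicing these two observations is the expected route. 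For the $\ell_\infty$-sum, and more seriously for the higher duals, I would feed in the Yosida--Hewitt decomposition, which splits $(\bigoplus_\infty\{E_\lambda \})^*$ into the complemented $\ell_1$-sum $\bigoplus_1\{E_\lambda^* \}$ --- again a sum of $L_1$/$C(K)$-spaces, handled inductively --- plus a purely finitely additive remainder. Proving that this singular remainder also has the Dunford--Pettis property, so that the entire tower stays inside the DPP class, is the step I expect to be the main obstacle.

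Granting these facts, the assembly is routine. Every dual $X^{(n)}$ is, up to the Yosida--Hewitt splitting, a $\bigoplus_p$-sum ($p\in \{0,1,\infty \}$) of $L_1$- and $C(K)$-spaces, so each has the Dunford--Pettis property; in particular $X$ does, which settles the main assertion. For the two corollaries, $\bigoplus_1\{L_\infty(\Omega,\mu) \}$ is literally $X^*$, while $\bigoplus_\infty\{L_1(\Omega,\mu) \}$ sits as a $1$-complemented subspace of $X^{**}\isom{\mathbf{Ban}_1}\bigoplus_\infty\{L_\infty(\Omega,\mu)^* \}$ via the coordinatewise Yosida--Hewitt projection of each $L_\infty(\Omega,\mu)^*$ onto its countably additive part $L_1(\Omega,\mu)$; since the Dunford--Pettis property passes to complemented subspaces, it inherits the property as well.
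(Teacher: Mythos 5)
Your plan has a genuine gap, and it is exactly the one you flag yourself: the singular parts of the higher duals. Your gliding-hump argument is fine as far as it goes --- weakly convergent sequences in an $\ell_1$-sum have uniformly summable coordinate norms (the standard characterization of relatively weakly compact subsets of $\ell_1$-sums), so $c_0$- and $\ell_1$-sums of spaces with the Dunford--Pettis property inherit it, which settles $X=\bigoplus_0 \{L_1(\Omega,\mu):\lambda\in\Lambda \}$ and $X^*\isom{\mathbf{Ban}_1}\bigoplus_1 \{L_\infty(\Omega,\mu):\lambda\in\Lambda \}$. But from $X^{**}$ onward the duals are no longer $\bigoplus_p$-sums: they split, Yosida--Hewitt style, into an $\ell_1$-sum plus a complemented purely singular piece, and you give no argument that this piece has the property. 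Nothing in the available toolkit moves the property up the dual tower, and the missing statement is not a technicality: for countable $\Lambda$ one has $C(\alpha\Lambda,L_1(\Omega,\mu))\isom{\mathbf{Ban}}c_0(\Lambda,L_1(\Omega,\mu))\bigoplus_1 L_1(\Omega,\mu)$, so the claim that all duals of $X$ have the Dunford--Pettis property is essentially equivalent to Bourgain's theorem that $C(K,L_1)$ and all its duals have it --- a deep result that a coordinatewise gliding hump cannot reproduce. Moreover, your derivation of the second ``in particular'' claim, that $\bigoplus_\infty \{L_1(\Omega,\mu):\lambda\in\Lambda \}$ has the property, relies on $X^{**}$ having it, so it collapses together with the unproven step.

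The paper closes this gap by citation rather than by hand: it forms the one-point compactification $\alpha\Lambda$ of the discrete set $\Lambda$, invokes Bourgain's result that $C(\alpha\Lambda, L_1(\Omega,\mu))$ and all of its duals have the Dunford--Pettis property, and observes that $E:=c_0(\Lambda,L_1(\Omega,\mu))$ is complemented in $C(\alpha\Lambda,L_1(\Omega,\mu))$ via the projection subtracting the value at infinity; taking $n$-th adjoints of that projection shows that every dual $E^{(n)}$ is complemented in the corresponding dual of $C(\alpha\Lambda,L_1(\Omega,\mu))$, and the property passes to complemented subspaces. The two ``in particular'' spaces are then read off much as you do: $E^*$ is the $\ell_1$-sum of $L_\infty$'s, while $\bigoplus_\infty \{L_1(\Omega,\mu):\lambda\in\Lambda \}$ is contractively complemented in $E^{**}\isom{\mathbf{Ban}_1}\bigoplus_\infty \{{L_1(\Omega,\mu)}^{**}:\lambda\in\Lambda \}$ via the coordinatewise projections of ${L_1(\Omega,\mu)}^{**}$ onto $L_1(\Omega,\mu)$. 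To salvage your proposal, replace the unproven induction on singular parts with this single citation; without it, or an equivalent, the argument does not close.
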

\begin{proof} Consider one point compactification $\alpha\Lambda:=\Lambda\cup
\{\Lambda \}$ of the set $\Lambda$ with discrete topology. From
[\cite{BourgOnTheDPP}, corollary 7] we know 
that $C(\alpha\Lambda, L_1(\Omega, \mu))$ and all its duals have 
the Dunford-Pettis property. Since $c_0(\Lambda)$ is complemented 
in $C(\alpha\Lambda)$ via projection 
$P:C(\alpha\Lambda)\to C(\alpha\Lambda):x\mapsto x(\lambda)-x( \{\Lambda \})$, 
then $E:=c_0(\Lambda, L_1(\Omega,\mu))$ is complemented 
in $C(\alpha\Lambda, L_1(\Omega, \mu))$. The same statement holds for 
any $n$-th dual of $E$, because we can take $n$-th
adjoint of $P$ in the role of projection. Now it remains to note that
$E=\bigoplus_0 \{L_1(\Omega,\mu):\lambda\in\Lambda \}$ and that the
Dunford-Pettis property is inherited by complemented subspaces
[\cite{FabHabBanSpTh}, proposition 13.44]. 

As the consequence of previous paragraph the Banach spaces
$E^*
\isom{\mathbf{Ban}_1}
\bigoplus_1 \{
    L_\infty(\Omega,\mu):\lambda\in\Lambda
 \}$ 
and $E^{**}
\isom{\mathbf{Ban}_1}
    \bigoplus_\infty \{{L_1(\Omega,\mu)}^{**}:\lambda\in\Lambda
 \}$ 
have the Dunford-Pettis property.
From [\cite{DefFloTensNorOpId}, proposition B10] we know that any $L_1$-space is
contractively complemented in its second dual. By $Q$ we denote the respective
projection. Therefore the Banach space $\bigoplus_\infty
\{L_1(\Omega,\mu):\lambda\in\Lambda \}$ is contractively complemented in
$E^{**}$ via projection $\bigoplus_\infty  \{Q:\lambda\in\Lambda \}$. Since
$E^{**}$ has the Dunford-Pettis property, then by [\cite{FabHabBanSpTh},
proposition 13.44] so does its complemented 
subspace $\bigoplus_\infty \{L_1(\Omega,\mu):\lambda\in\Lambda \}$.
\end{proof}

\begin{proposition}\label{ProdOfL1SpHaveDPP} Let $
\{(\Omega_\lambda,\Sigma_\lambda,\mu_\lambda):\lambda\in\Lambda \}$ be a family
of measure spaces. 
Then 
$\bigoplus_0 \{
    L_1(\Omega_\lambda, \mu_\lambda):\lambda\in\Lambda
 \}$, 
 $\bigoplus_1 \{
     L_\infty(\Omega_\lambda, \mu_\lambda):\lambda\in\Lambda
 \}$ 
and $\bigoplus_\infty \{
    L_1(\Omega_\lambda,\mu_\lambda):\lambda\in\Lambda
 \}$ have the Dunford-Pettis property.
\end{proposition}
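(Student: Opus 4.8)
The plan is to reduce the statement to the single-measure-space case already established in Proposition~\ref{C0SumOfL1SpHaveDPP}, transferring the Dunford-Pettis property along contractive projections. First I would replace the family by its direct sum: let $(\Omega,\Sigma,\mu)$ be the direct sum of the measure spaces $\{(\Omega_\lambda,\Sigma_\lambda,\mu_\lambda):\lambda\in\Lambda\}$ in the sense of Subsection~\ref{SubSectionMeasureTheory}. Then $L_1(\Omega,\mu)\isom{\mathbf{Ban}_1}\bigoplus_1\{L_1(\Omega_\lambda,\mu_\lambda):\lambda\in\Lambda\}$, and for each $\lambda$ the summand $L_1(\Omega_\lambda,\mu_\lambda)$ is contractively complemented in $L_1(\Omega,\mu)$ by the natural inclusion $i_\lambda$ and the coordinate projection $p_\lambda$, which satisfy $p_\lambda i_\lambda=1$ and $\Vert i_\lambda\Vert=\Vert p_\lambda\Vert=1$.

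Next I would assemble these maps coordinatewise. Writing $E=\bigoplus_0\{L_1(\Omega_\lambda,\mu_\lambda):\lambda\in\Lambda\}$ and $\widetilde E=\bigoplus_0\{L_1(\Omega,\mu):\lambda\in\Lambda\}$, the operators $\bigoplus_0\{i_\lambda:\lambda\in\Lambda\}:E\to\widetilde E$ and $\bigoplus_0\{p_\lambda:\lambda\in\Lambda\}:\widetilde E\to E$ are well defined, with norms $\sup_\lambda\Vert i_\lambda\Vert=1$ and $\sup_\lambda\Vert p_\lambda\Vert=1$ by the formula for norms of $\bigoplus_0$-sums of operators; their composite is $1_E$. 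Hence $E$ is contractively complemented in $\widetilde E$, and $\widetilde E$ is precisely the space treated in Proposition~\ref{C0SumOfL1SpHaveDPP}.

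Finally I would harvest the conclusion. Proposition~\ref{C0SumOfL1SpHaveDPP} gives that $\widetilde E$ and all its duals enjoy the Dunford-Pettis property. Letting $P$ be the resulting projection of $\widetilde E$ onto a copy of $E$, each iterated adjoint $P^{(n)}$ projects $\widetilde E^{(n)}$ onto a copy of $E^{(n)}$, exactly as in the proof of Proposition~\ref{C0SumOfL1SpHaveDPP}; thus $E$ and all its duals are complemented in spaces with the Dunford-Pettis property and inherit it [\cite{FabHabBanSpTh}, proposition 13.44]. Since $E^*\isom{\mathbf{Ban}_1}\bigoplus_1\{L_\infty(\Omega_\lambda,\mu_\lambda):\lambda\in\Lambda\}$ by $\bigoplus_p$-duality, this settles the first two spaces. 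For the third I would use $E^{**}\isom{\mathbf{Ban}_1}\bigoplus_\infty\{L_1(\Omega_\lambda,\mu_\lambda)^{**}:\lambda\in\Lambda\}$ together with the fact that $\bigoplus_\infty\{L_1(\Omega_\lambda,\mu_\lambda):\lambda\in\Lambda\}$ is contractively complemented in $E^{**}$ via $\bigoplus_\infty\{Q_\lambda:\lambda\in\Lambda\}$, where $Q_\lambda:L_1(\Omega_\lambda,\mu_\lambda)^{**}\to L_1(\Omega_\lambda,\mu_\lambda)$ is the canonical contractive projection [\cite{DefFloTensNorOpId}, proposition B10]; one last application of [\cite{FabHabBanSpTh}, proposition 13.44] then completes the argument. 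I do not expect a genuine obstacle here: the only points needing care are checking that the coordinatewise inclusion and projection are bounded between the $\bigoplus_0$-sums (immediate from the norm formula) and correctly reading off $E^*$ and $E^{**}$ through the $\bigoplus_p$-duality isomorphisms, so the real content is the reduction to Proposition~\ref{C0SumOfL1SpHaveDPP}.
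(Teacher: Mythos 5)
Your proposal is correct and takes essentially the same approach as the paper: form the direct sum measure space $(\Omega,\Sigma,\mu)$, note that each $L_1(\Omega_\lambda,\mu_\lambda)$ is contractively complemented in $L_1(\Omega,\mu)$, and transfer the Dunford-Pettis property from the spaces of Proposition~\ref{C0SumOfL1SpHaveDPP} along contractive projections using [\cite{FabHabBanSpTh}, proposition 13.44]. The only cosmetic difference is that the paper complements each of the three sums coordinatewise in the corresponding sum over $(\Omega,\mu)$ and cites Proposition~\ref{C0SumOfL1SpHaveDPP} three times, whereas you complement only the $\bigoplus_0$-sum and then recover the $\bigoplus_1$- and $\bigoplus_\infty$-cases by taking iterated adjoints and reusing the Dixmier-type projections, in effect re-running the second paragraph of the proof of Proposition~\ref{C0SumOfL1SpHaveDPP}.
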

\begin{proof} Let $(\Omega, \Sigma, \mu)$ be a direct sum of $
\{(\Omega_\lambda,\Sigma_\lambda,\mu_\lambda):\lambda\in\Lambda \}$. By
construction each Banach space $L_1(\Omega_\lambda,\mu_\lambda)$ for
$\lambda\in\Lambda$ is $1$-complemented in $L_1(\Omega, \mu)$. Therefore, their
$\bigoplus_0$-, $\bigoplus_1$- and $\bigoplus_\infty$-sums are contractively
complemented in $\bigoplus_0 \{L_1(\Omega, \mu):\lambda\in\Lambda \}$,
$\bigoplus_1 \{L_\infty(\Omega, \mu):\lambda\in\Lambda \}$ 
and $\bigoplus_\infty \{L_1(\Omega,\mu):\lambda\in\Lambda \}$ respectively. 
It remains to combine proposition~\ref{C0SumOfL1SpHaveDPP} and the fact 
that the Dunford-Pettis property is preserved by complemented 
subspaces [\cite{FabHabBanSpTh}, proposition 13.44].
\end{proof}

\begin{proposition}\label{ProdOfDualsOfMthscrLInftySpHaveDPP} Let $E$ be an
$\mathscr{L}_\infty^g$-space and $\Lambda$ be an arbitrary set. Then
$\bigoplus_\infty \{E^*:\lambda\in\Lambda \}$ has the Dunford-Pettis property.
\end{proposition}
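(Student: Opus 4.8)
The plan is to exhibit $\bigoplus_\infty \{E^*:\lambda\in\Lambda \}$ as a topologically complemented subspace of a $\bigoplus_\infty$-sum of copies of one fixed $L_1$-space, and then invoke proposition~\ref{C0SumOfL1SpHaveDPP} together with the fact that the Dunford-Pettis property passes to complemented subspaces [\cite{FabHabBanSpTh}, proposition 13.44].

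First I would pin down the Banach-geometric structure of $E^*$. Since $E$ is an $\mathscr{L}_\infty^g$-space, its dual $E^*$ is an $\mathscr{L}_1^g$-space; but I need the sharper fact that, being the dual of an $\mathscr{L}_\infty^g$-space, $E^*$ is topologically complemented in a genuine $L_1$-space and not merely in an abstract $\mathscr{L}_1^g$-space. To obtain this, I would recall from the structure theory of $\mathscr{L}_\infty^g$-spaces [\cite{DefFloTensNorOpId}] that $E^{**}$ is topologically isomorphic to a complemented subspace of some $C(K)$-space; dualizing, $E^{***}$ is complemented in $C(K)^*$, and the latter is isometrically $M(K)$, which is an $L_1$-space. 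Finally $E^*$ is contractively complemented in $E^{***}$ via the Dixmier projection $\iota_{E^*}{(\iota_E)}^*$. Composing these complementations yields a measure space $(\Omega_0,\Sigma_0,\mu_0)$ and bounded linear operators $j:E^*\to L_1(\Omega_0,\mu_0)$ and $r:L_1(\Omega_0,\mu_0)\to E^*$ with $rj=1_{E^*}$; that is, $E^*$ is a retract of $L_1(\Omega_0,\mu_0)$ in $\mathbf{Ban}$.

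Next I would transport this complementation to the sums. Applying $j$ and $r$ coordinatewise produces operators $\bigoplus_\infty \{j:\lambda\in\Lambda \}$ and $\bigoplus_\infty \{r:\lambda\in\Lambda \}$ of norms $\Vert j\Vert$ and $\Vert r\Vert$, whose composite is the identity on $\bigoplus_\infty \{E^*:\lambda\in\Lambda \}$. Hence $\bigoplus_\infty \{E^*:\lambda\in\Lambda \}$ is topologically complemented in $\bigoplus_\infty \{L_1(\Omega_0,\mu_0):\lambda\in\Lambda \}$. By proposition~\ref{C0SumOfL1SpHaveDPP} this ambient space has the Dunford-Pettis property, and therefore so does its complemented subspace $\bigoplus_\infty \{E^*:\lambda\in\Lambda \}$, which is precisely the claim.

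The only genuinely non-routine step is the first one: producing a single $L_1$-space in which every copy of $E^*$ sits complementedly. Its content is entirely classical $\mathscr{L}_p^g$-space structure theory, namely the bidual of an $\mathscr{L}_\infty^g$-space being complemented in a $C(K)$-space. Once $E^*$ is known to be an $L_1$-retract, the remaining steps are the coordinatewise lifting of the retraction and the two cited stability properties of the Dunford-Pettis property, both of which are immediate.
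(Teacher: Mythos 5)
Your proof is correct and follows essentially the same route as the paper's: both exhibit $E^*$ as a retract of a single $L_1$-space by combining the Dixmier projection $\iota_{E^*}{(\iota_E)}^*$ with the $\mathscr{L}_p^g$ structure theory (the paper applies [\cite{DefFloTensNorOpId}, corollary 23.2.1(3)] directly to the $\mathscr{L}_1^g$-space $E^*$, whereas you apply the corresponding statement to $E$ itself and dualize the resulting complementation in a $C(K)$-space --- a trivial variation), and then both complement $\bigoplus_\infty \{E^*:\lambda\in\Lambda \}$ coordinatewise in $\bigoplus_\infty \{L_1(\Omega_0,\mu_0):\lambda\in\Lambda \}$ and conclude via the Dunford-Pettis property of that sum and its heredity under complemented subspaces [\cite{FabHabBanSpTh}, proposition 13.44]. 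Your citation of proposition~\ref{C0SumOfL1SpHaveDPP} in place of the paper's proposition~\ref{ProdOfL1SpHaveDPP} is equally valid (indeed slightly more economical), since all summands here are copies of one fixed $L_1$-space.
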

\begin{proof} Since $E$ is an $\mathscr{L}_\infty^g$-space, then $E^*$ is a
$\mathscr{L}_{1}^g$-space [\cite{DefFloTensNorOpId}, corollary 23.2.1(1)]. Then
from [\cite{DefFloTensNorOpId}, corollary 23.2.1(3)] it follows that $E^{***}$
is a retract of $L_1$-space. Recall that $E^*$ is complemented in $E^{***}$ via
Dixmier projection, so $E^*$ is complemented in some $L_1$-space too. Thus we
have a bounded linear projection $P:L_1(\Omega,\mu)\to L_1(\Omega,\mu)$ with
image topologically isomorphic to $E$. In this 
case $\bigoplus_\infty \{E^*:\lambda\in\Lambda \}$ is complemented 
in $\bigoplus_\infty \{ L_1(\Omega,\mu):\lambda\in\Lambda \}$ via 
projection $\bigoplus_\infty \{P:\lambda\in\Lambda \}$. The 
space $\bigoplus_\infty \{ L_1(\Omega,\mu):\lambda\in\Lambda \}$ has 
the Dunford-Pettis property by proposition~\ref{ProdOfL1SpHaveDPP}. By 
proposition 13.44 in~\cite{FabHabBanSpTh} so does 
$\bigoplus_\infty \{ E^*:\lambda\in\Lambda \}$ as complemented subspace 
of $\bigoplus_\infty \{ L_1(\Omega,\mu):\lambda\in\Lambda \}$.
\end{proof}

\begin{proposition}\label{MthscrL1LInftyHaveDPP} Any
$\langle$~$\mathscr{L}_1^g$-space / $\mathscr{L}_\infty^g$-space~$\rangle$
admits the Dunford-Pettis property.
\end{proposition}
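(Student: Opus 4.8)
The plan is to reduce both assertions to proposition~\ref{ProdOfDualsOfMthscrLInftySpHaveDPP}, which already packages the analytic content, together with the elementary fact recorded earlier among the basic properties that a Banach space inherits the Dunford-Pettis property from its dual. First I would dispose of the $\mathscr{L}_\infty^g$ case. Given an $\mathscr{L}_\infty^g$-space $Y$, I would apply proposition~\ref{ProdOfDualsOfMthscrLInftySpHaveDPP} with a one-point index set $\Lambda$; since a $\bigoplus_\infty$-sum over a singleton is isometrically the underlying space and the Dunford-Pettis property is a topological-isomorphism invariant, this yields that $Y^*$ has the Dunford-Pettis property. The descent fact ``$E^*$ has the Dunford-Pettis property $\implies E$ has it'', applied with $E=Y$, then transfers the property from $Y^*$ to $Y$ itself.

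For the $\mathscr{L}_1^g$ case I would pass to the dual. If $X$ is an $\mathscr{L}_1^g$-space, then by [\cite{DefFloTensNorOpId}, corollary 23.2.1(1)] (the duality $p\leftrightarrow p^*$, here with $1^*=\infty$) its dual $X^*$ is an $\mathscr{L}_\infty^g$-space. By the first paragraph $X^*$ then has the Dunford-Pettis property, and one further application of the same descent fact, now with $E=X$, delivers the Dunford-Pettis property for $X$.

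I expect no serious obstacle, since the genuine analytic input --- that $L_1$-spaces carry the Dunford-Pettis property (Grothendieck), that the property is inherited by complemented subspaces [\cite{FabHabBanSpTh}, proposition 13.44], and that the relevant dual of an $\mathscr{L}_\infty^g$-space sits complemented in an $L_1$-space --- is already absorbed into proposition~\ref{ProdOfDualsOfMthscrLInftySpHaveDPP}. The only points demanding care are bookkeeping ones: verifying that the singleton specialization of that proposition really returns $Y^*$ rather than a nontrivial sum, and keeping the direction of the duality implication straight, so that the descent step is applied to $Y$ and then to $X$ in that order and never in the (generally false) converse direction. As a self-contained cross-check, bypassing proposition~\ref{ProdOfDualsOfMthscrLInftySpHaveDPP}, I would keep in reserve the route that uses [\cite{DefFloTensNorOpId}, corollary 23.2.1(3)] to realize the appropriate bidual as a complemented subspace of an $L_1$-space, infers the Dunford-Pettis property there from Grothendieck's theorem and proposition 13.44 of~\cite{FabHabBanSpTh}, and then descends through the successive duals.
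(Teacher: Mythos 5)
Your argument is correct, but it is organized differently from the paper's proof, so a comparison is in order. The paper argues directly at the level of biduals and treats the two cases in parallel: by [\cite{DefFloTensNorOpId}, corollary 23.2.1(3)], $E^{**}$ is complemented in an $\langle$~$L_1$-space / $L_\infty$-space~$\rangle$, Grothendieck's theorem gives the Dunford-Pettis property for both classes of classical spaces, inheritance by complemented subspaces [\cite{FabHabBanSpTh}, proposition 13.44] passes it to $E^{**}$, and then the dual-descent fact is applied twice. You instead take proposition~\ref{ProdOfDualsOfMthscrLInftySpHaveDPP} with a singleton index set as your engine, which is legitimate since that proposition allows an arbitrary set $\Lambda$ and the $\bigoplus_\infty$-sum over a singleton is isometrically $E^*$; this settles the $\mathscr{L}_\infty^g$ case with a single descent step, and you then dispose of the $\mathscr{L}_1^g$ case by the duality $X^*$ is $\mathscr{L}_\infty^g$ [\cite{DefFloTensNorOpId}, corollary 23.2.1(1)] plus one more descent. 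What your route buys: it recycles an already-proven result, needs Grothendieck's theorem only for $L_1$-spaces (never for $L_\infty$-spaces), and works one dual level lower in the $\mathscr{L}_\infty^g$ case. What it costs: the machinery hidden inside proposition~\ref{ProdOfDualsOfMthscrLInftySpHaveDPP} reaches the complementation of $E^*$ in an $L_1$-space by passing through $E^{***}$ and the Dixmier projection, so your chain for the $\mathscr{L}_1^g$ case implicitly travels through the fourth dual, whereas the paper's direct bidual argument is shorter and symmetric in the two cases. Both proofs ultimately rest on the same three ingredients (complementation in classical spaces, Grothendieck's theorem, descent of the Dunford-Pettis property from the dual), and your directional care with the descent implication — which indeed fails in the converse direction — is exactly right.
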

\begin{proof} Assume $E$ is an $\langle$~$\mathscr{L}_1^g$-space /
$\mathscr{L}_\infty^g$-space~$\rangle$. Then $E^{**}$ is complemented in some
$\langle$~$L_1$-space / $L_\infty$-space~$\rangle$ [\cite{DefFloTensNorOpId},
corollary 23.2.1(3)]. Since any $\langle$~$L_1$-space /
$L_\infty$-space~$\rangle$ admits the Dunford-Pettis
property~\cite{GrothApllFaiblCompSpCK}, then so does $E^{**}$ as complemented
subspace [\cite{FabHabBanSpTh}, proposition 13.44]. It remains to recall that a
Banach space have the Dunford-Pettis property whenever so does its dual.
\end{proof}

\begin{theorem}\label{TopProjInjFlatModOverMthscrL1OrLInftySpHaveDPP} Let $A$ be
a Banach algebra which is an $\mathscr{L}_1^g$-space or
$\mathscr{L}_\infty^g$-space as Banach space. Then any topologically projective,
injective or flat $A$-module has the Dunford-Pettis property.
\end{theorem}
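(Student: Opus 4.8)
The plan is to reduce the whole statement to the geometry of the two ``universal'' modules $A_+\projtens\ell_1(\Lambda)$ and its dual. Recall that a topologically projective $P$ is a complemented subspace of $A_+\projtens\ell_1(B_P)$ by Proposition~\ref{MetCTopProjModViaCanonicMorph}, that a topologically injective $J$ is a complemented subspace of $\mathcal{B}(A_+,\ell_\infty(B_{J^*}))\isom{\mathbf{Ban}_1}{(A_+\projtens\ell_1(B_{J^*}))}^*$ by Proposition~\ref{MetCTopInjModViaCanonicMorph}, and that a topologically flat $F$ has $F^*$ topologically injective by Proposition~\ref{MetCTopFlatCharac}. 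Since the Dunford--Pettis property passes to complemented subspaces [\cite{FabHabBanSpTh}, proposition 13.44] and descends from a dual space to the space itself, it suffices to show that \emph{both} $A_+\projtens\ell_1(\Lambda)$ and its dual have the Dunford--Pettis property for every index set $\Lambda$. The projective case then follows directly, the injective case by passing to the dual, and the flat case by applying the injective case to $F^*$ and descending back to $F$.

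Suppose first that $A$ is an $\mathscr{L}_1^g$-space; then so is $A_+$. As $\ell_1(\Lambda)$ is an $\mathscr{L}_1^g$-space, the projective tensor product $A_+\projtens\ell_1(\Lambda)$ is again an $\mathscr{L}_1^g$-space [\cite{DefFloTensNorOpId}, exercise 23.17(c)], whose dual is an $\mathscr{L}_\infty^g$-space [\cite{DefFloTensNorOpId}, corollary 23.2.1(1)]. By Proposition~\ref{MthscrL1LInftyHaveDPP} every $\mathscr{L}_1^g$- and every $\mathscr{L}_\infty^g$-space has the Dunford--Pettis property, so both modules in question do, and the reduction of the first paragraph closes this case.

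Now suppose $A$ is an $\mathscr{L}_\infty^g$-space, whence $A_+$ is one as well: indeed $A_+^*=A^*\bigoplus_\infty\mathbb{C}$ is topologically isomorphic to $A^*\bigoplus_1\mathbb{C}=(A^*)_+$, which is an $\mathscr{L}_1^g$-space since $A^*$ is, so $A_+$ is an $\mathscr{L}_\infty^g$-space by [\cite{DefFloTensNorOpId}, corollary 23.2.1(1)]. Here $A_+\projtens\ell_1(\Lambda)$ is no longer an $\mathscr{L}_p^g$-space, so Proposition~\ref{MthscrL1LInftyHaveDPP} is unavailable and we route through the dual. Using $A_+\projtens\ell_1(\Lambda)\isom{A-\mathbf{mod}_1}\bigoplus_1\{A_+:\lambda\in\Lambda\}$ (as in Corollary~\ref{MetTopProjTensProdWithl1}), its dual is isometrically $\bigoplus_\infty\{A_+^*:\lambda\in\Lambda\}$, which has the Dunford--Pettis property by Proposition~\ref{ProdOfDualsOfMthscrLInftySpHaveDPP} applied with $E=A_+$. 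Hence its predual $A_+\projtens\ell_1(\Lambda)$ has the property too, so again both modules qualify and the reduction finishes.

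The genuine obstacle is confined to the $\mathscr{L}_\infty^g$-case: there the Dunford--Pettis property cannot be read off from the geometry of $A_+\projtens\ell_1(\Lambda)$ (which is neither $\mathscr{L}_1^g$ nor $\mathscr{L}_\infty^g$), and the entire weight is carried by Proposition~\ref{ProdOfDualsOfMthscrLInftySpHaveDPP}, tailored precisely to $\bigoplus_\infty$-sums of duals of $\mathscr{L}_\infty^g$-spaces, together with the passage between a space and its dual. Everything else is the routine bookkeeping of complemented retracts and of the three homological characterizations; the only point requiring care beyond this is the verification that $A_+$ inherits the relevant geometric property from $A$, which the argument above handles.
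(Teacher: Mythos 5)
Your proof is correct and takes essentially the same route as the paper: both arguments rest on the retract characterizations (Propositions~\ref{MetCTopProjModViaCanonicMorph}, \ref{MetCTopInjModViaCanonicMorph}, \ref{MetCTopFlatCharac}), on Proposition~\ref{MthscrL1LInftyHaveDPP} together with the $\mathscr{L}_1^g$-geometry of $A_+\projtens\ell_1(\Lambda)$ in the $\mathscr{L}_1^g$ case, and on Proposition~\ref{ProdOfDualsOfMthscrLInftySpHaveDPP} applied to $\bigoplus_\infty\{A_+^*:\lambda\in\Lambda\}$ in the $\mathscr{L}_\infty^g$ case, combined with heredity of the Dunford--Pettis property under complemented subspaces and passage from duals. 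The only cosmetic difference is that you handle projective modules directly as complemented subspaces of $A_+\projtens\ell_1(B_P)$, whose Dunford--Pettis property you read off from its dual, whereas the paper routes projectivity through flatness via Proposition~\ref{MetTopProjIsMetTopFlat}; both are valid.
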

\begin{proof} If $A$ is an $\mathscr{L}_1^g$-space, we just need to combine
propositions~\ref{MthscrL1LInftyHaveDPP}
and~\ref{TopProjInjFlatModOverMthscrL1SpCharac}. 

Assume $A$ is an $\mathscr{L}_\infty^g$-space, then so does $A_+$. Let $J$ be a
topologically injective $A$-module, then by
proposition~\ref{MetCTopInjModViaCanonicMorph} it is a retract of 
$$
\mathcal{B}(A_+,\ell_\infty(B_{J^*}))
\isom{\mathbf{mod}_1-A}
{(A_+\projtens\ell_1(B_{J^*}))}^*
\isom{\mathbf{mod}_1-A}
{\left(\bigoplus\nolimits_1 \{ A_+:\lambda\in B_{J^*} \}\right)}^*
$$
$$
\isom{\mathbf{mod}_1-A}
\bigoplus\nolimits_\infty \{ A_+^*:\lambda\in B_{J^*} \}
$$ 
in $\mathbf{mod}-A$ and a fortiori in $\mathbf{Ban}$. By
proposition~\ref{ProdOfDualsOfMthscrLInftySpHaveDPP} this space has the
Dunford-Pettis property. As $J$ is its retract, then $J$ also has this property
[\cite{FabHabBanSpTh}, proposition 13.44]. 

If $F$ is a topologically flat $A$-module, then $F^*$ is a topologically
injective $A$-module by proposition~\ref{MetCTopFlatCharac}. By previous
paragraph $F^*$ has the Dunford-Pettis property and so does $F$.

If $P$ is a topologically projective $A$-module, it is also topologically flat
by proposition~\ref{MetTopProjIsMetTopFlat}. From previous paragraph it follows
that $P$ has the Dunford-Pettis property.
\end{proof}

\begin{corollary}\label{NoInfDimRefMetTopProjInjFlatModOverMthscrL1OrLInfty} Let
$A$ be a Banach algebra which $\mathscr{L}_1^g$-space or
$\mathscr{L}_\infty^g$-space as Banach space. Then there is no topologically
projective, injective or flat infinite dimensional reflexive $A$-modules. A
fortiori there is no metrically projective, injective or flat infinite
dimensional reflexive $A$-modules.
\end{corollary}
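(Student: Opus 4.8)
The plan is to argue by contradiction, leaning almost entirely on Theorem~\ref{TopProjInjFlatModOverMthscrL1OrLInftySpHaveDPP} together with the elementary interaction between the Dunford-Pettis property and reflexivity recorded in the preliminaries. First I would suppose, toward a contradiction, that $X$ is an infinite dimensional reflexive $A$-module which is topologically projective, topologically injective, or topologically flat. Since $A$ is an $\mathscr{L}_1^g$-space or an $\mathscr{L}_\infty^g$-space as a Banach space, Theorem~\ref{TopProjInjFlatModOverMthscrL1OrLInftySpHaveDPP} applies directly and yields that $X$ has the Dunford-Pettis property.

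Next I would invoke the obstruction to reflexivity built into the Dunford-Pettis property, established in the discussion of that property in Subsection~\ref{SubSectionBanachSpaces}: the closed unit ball of a reflexive space is weakly compact, so if such a space also has the Dunford-Pettis property, then the identity operator (being weakly compact) is completely continuous, forcing the unit ball to be norm compact; hence the space is finite dimensional. Applying this to $X$ gives that $X$ is finite dimensional, contradicting the assumption that $X$ is infinite dimensional. This settles the topological case for all three homological properties simultaneously.

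Finally, for the ``a fortiori'' clause I would reduce the metric case to the topological one. By Propositions~\ref{MetProjIsTopProjAndTopProjIsRelProj},~\ref{MetInjIsTopInjAndTopInjIsRelInj}, and~\ref{MetFlatIsTopFlatAndTopFlatIsRelFlat}, every metrically projective, injective, or flat module is respectively $1$-topologically projective, injective, or flat, and in particular topologically projective, injective, or flat. Thus an infinite dimensional reflexive metrically trivial $A$-module would be an infinite dimensional reflexive topologically trivial $A$-module, which the previous paragraph has already excluded.

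I do not expect a genuine obstacle here: all the substantive work is discharged by Theorem~\ref{TopProjInjFlatModOverMthscrL1OrLInftySpHaveDPP}, and the remaining content is the standard fact that reflexivity plus the Dunford-Pettis property forces finite dimension. The only point requiring the slightest care is to make sure the contrapositive is phrased uniformly across the projective, injective, and flat cases and across both hypotheses on $A$, which the cited theorem handles in one statement.
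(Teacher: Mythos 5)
Your proposal is correct and is essentially the paper's own argument: both rest entirely on Theorem~\ref{TopProjInjFlatModOverMthscrL1OrLInftySpHaveDPP} to produce the Dunford-Pettis property, on the fact that a reflexive space with the Dunford-Pettis property is finite dimensional, and on Propositions~\ref{MetProjIsTopProjAndTopProjIsRelProj},~\ref{MetInjIsTopInjAndTopInjIsRelInj},~\ref{MetFlatIsTopFlatAndTopFlatIsRelFlat} to reduce the metric case to the topological one. The only (immaterial) difference is bookkeeping: you apply the theorem uniformly to all three homological properties, whereas the paper invokes it only for injectivity and then recovers flatness via Proposition~\ref{MetCTopFlatCharac} together with reflexivity of the dual, and projectivity via Proposition~\ref{MetTopProjIsMetTopFlat}.
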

\begin{proof} From theorem~\ref{TopProjInjFlatModOverMthscrL1OrLInftySpHaveDPP}
we know that any topologically injective $A$-module has the Dunford-Pettis
property. On the other hand there is no infinite dimensional reflexive Banach
spaces with the Dunford-Pettis property. Thus we get the desired result
regarding topological injectivity. Since dual of reflexive module is reflexive,
from proposition~\ref{MetCTopFlatCharac} we get the result for topological
flatness. It remains to recall that by proposition~\ref{MetTopProjIsMetTopFlat}
every topologically projective module is topologically flat. To prove the last
claim note that metric $\langle$~projectivity / injectivity / flatness~$\rangle$
implies topological $\langle$~projectivity / injectivity / flatness~$\rangle$ by
proposition $\langle$~\ref{MetProjIsTopProjAndTopProjIsRelProj}
/~\ref{MetInjIsTopInjAndTopInjIsRelInj}
/~\ref{MetFlatIsTopFlatAndTopFlatIsRelFlat}~$\rangle$.
\end{proof}

Note that in relative theory there are examples of infinite dimensional
relatively projective injective and flat reflexive modules over Banach algebras
that are $\mathscr{L}_1^g$- or $\mathscr{L}_\infty^g$-spaces. Here are two
examples. The first one is about convolution algebra $L_1(G)$ on a locally
compact group $G$ with Haar measure. It is an $\mathscr{L}_1^g$-space. In
[\cite{DalPolHomolPropGrAlg}, \S6] and~\cite{RachInjModAndAmenGr} it was proved
that for $1<p<+\infty$ the $L_1(G)$-module $L_p(G)$ is relatively
$\langle$~projective / injective / flat~$\rangle$ iff $G$ is $\langle$~compact /
amenable / amenable~$\rangle$. Note that any compact group is amenable
[\cite{PierAmenLCA}, proposition 3.12.1], so in case $G$ is compact $L_p(G)$ is
relatively projective injective and flat for all $1<p<+\infty$.  The second
example is about $\mathscr{L}_\infty^g$-spaces $c_0(\Lambda)$ and
$\ell_\infty(\Lambda)$ for an infinite set $\Lambda$. In
proposition~\ref{c0AndlInftyModsRelTh} we shall show that $\ell_p(\Lambda)$ for
$1<p<\infty$ is relatively projective injective and flat as $c_0(\Lambda)$- or
$\ell_\infty(\Lambda)$-module. 

We finalize this lengthy section by short remark on the l.u.st.\ property of
topologically projective, injective and flat modules. 

\begin{proposition} Let $A$ be a Banach algebra which as Banach space has the
l.u.st.\ property. Then any topologically projective, injective or flat
$A$-module has the l.u.st.\ property.
\end{proposition}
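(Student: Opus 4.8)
The plan is to collapse all three homological properties onto a single case --- topological injectivity --- and then to present every topologically injective $A$-module as a complemented subspace of a Banach lattice. First I would record the two reductions. If $P$ is topologically projective, then it is topologically flat by proposition~\ref{MetTopProjIsMetTopFlat}, so the flat case subsumes the projective one. If $F$ is topologically flat, then $F^*$ is topologically injective by proposition~\ref{MetCTopFlatCharac}; once injective modules are known to have the l.u.st.\ property, $F^*$ has it, and since a Banach space has the l.u.st.\ property iff its dual does [\cite{DiestAbsSumOps}, corollary 17.6], $F$ inherits it. Thus everything comes down to proving that an arbitrary topologically injective $A$-module $J$ has the l.u.st.\ property.

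For this, by proposition~\ref{MetCTopInjModViaCanonicMorph} the module $J$ is a retract, hence a complemented subspace, of $\mathcal{B}(A_+, \ell_\infty(B_{J^*}))$, and as computed in the proof of theorem~\ref{TopProjInjFlatModOverMthscrL1OrLInftySpHaveDPP} one has $\mathcal{B}(A_+, \ell_\infty(B_{J^*})) \isom{\mathbf{mod}_1-A} \bigoplus_\infty \{A_+^* : \lambda \in B_{J^*} \}$. Since the l.u.st.\ property passes to complemented subspaces, it suffices to show that $\bigoplus_\infty \{A_+^* : \lambda \in B_{J^*} \}$ has it. The clean way to do this is to produce a \emph{single} Banach lattice in which $A_+^*$ sits as a complemented subspace: if $A_+^*$ is complemented in a Banach lattice $L$ via a projection of norm $c$, then $\bigoplus_\infty \{A_+^* \}$ is complemented in $\bigoplus_\infty \{L \}$ --- which is again a Banach lattice --- via the $\bigoplus_\infty$-sum of that fixed projection, whose norm is still $c$, and complemented subspaces of Banach lattices have the l.u.st.\ property.

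The heart of the argument, and the step I expect to be the main obstacle, is therefore the claim that \emph{$A_+^*$ is itself complemented in a Banach lattice}, not merely that its bidual is. Here I would exploit that $A$, hence $A^*$, has the l.u.st.\ property: by the criterion [\cite{DiestAbsSumOps}, theorem 17.5] the bidual $A^{***}$ is topologically isomorphic to a complemented subspace of some Banach lattice, while the Dixmier projection $\iota_{A^*}{(\iota_A)}^*$ realizes $A^*$ as a complemented subspace of $A^{***}$. Composing the two projections descends the complementation from $A^{***}$ down to $A^*$ itself --- a step that is available precisely because $A^*$ is a dual space and so embeds complementably in its bidual, whereas a general l.u.st.\ space need not. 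Finally, since $A_+ = A \bigoplus_1 \mathbb{C}$ gives $A_+^* \isom{\mathbf{Ban}_1} A^* \bigoplus_\infty \mathbb{C}$, adjoining a one-dimensional lattice summand shows $A_+^*$ is complemented in a Banach lattice, closing the chain. The delicate point to get right is that this descent introduces no dependence of the complementation constant on the index set $B_{J^*}$: all summands are identical copies of $A_+^*$, so one and the same projection is reused in every coordinate, and no uniformity across the (possibly uncountable) $\bigoplus_\infty$-sum has to be argued separately.
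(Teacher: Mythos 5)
Your proof is correct and takes essentially the same route as the paper: the same reductions (projective $\Rightarrow$ flat via proposition~\ref{MetTopProjIsMetTopFlat}, flat $\Rightarrow$ dual injective via proposition~\ref{MetCTopFlatCharac} plus [\cite{DiestAbsSumOps}, corollary 17.6]), the same presentation of an injective $J$ as a retract of $\bigoplus_\infty \{A_+^*:\lambda\in B_{J^*}\}$, and the same key ingredients ([\cite{DiestAbsSumOps}, theorems 17.1, 17.5], the Dixmier projection, and stability of lattices and of a fixed projection under $\bigoplus_\infty$-sums). The only, immaterial, difference is the order of operations: you descend from the third dual to $A^*$ before forming the $\bigoplus_\infty$-sum, whereas the paper forms $\bigoplus_\infty\{A_+^{***}:\lambda\in B_{J^*}\}$ first and then descends via the $\bigoplus_\infty$-sum of Dixmier projections.
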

\begin{proof} 

If $J$ is topologically injective $A$-module, then by
proposition~\ref{MetCTopInjModViaCanonicMorph} it is a retract of
$\mathcal{B}(A_+,\ell_\infty(B_{J^*}))
\isom{\mathbf{mod}_1-A}
\bigoplus_\infty \{ A_+^*:\lambda\in B_{J^*} \}$ 
in $\mathbf{mod}-A$ and a fortiori in $\mathbf{Ban}$. If $A$ has 
the l.u.st.\ property, then $A^{**}$ is complemented
in some Banach lattice $E$ [\cite{DiestAbsSumOps}, theorem 17.5]. As the
consequence $A_+^{***}$ is complemented in the Banach lattice
$F:={\left(E\bigoplus_1\mathbb{C}\right)}^*$ via some bounded projection 
$P:F\to F$. Thus $\bigoplus_\infty \{A_+^{***}:\lambda\in B_{J^*} \}$ is 
complemented in the Banach lattice $\bigoplus_\infty \{F:\lambda\in B_{J^*} \}$ 
via projection $\bigoplus_\infty \{ P:\lambda\in B_{J^*} \}$. Any Banach 
lattice has the l.u.st.\ property [\cite{DiestAbsSumOps}, theorem 17.1]. 
This property is inherited by complemented subspaces, 
so $\bigoplus_\infty \{A_+^{***}:\lambda\in B_{J^*} \}$ has 
the l.u.st.\ property too. Note that $A_+^*$ is contractively
complemented in $A_+^{***}$ by Dixmier projection $Q$, therefore
$\bigoplus_\infty \{A_+^*:\lambda\in B_{J^*} \}$ is complemented in
$\bigoplus_\infty \{A_+^{***}:\lambda\in B_{J^*} \}$ via contractive projection
$\bigoplus_\infty \{Q:\lambda\in B_{J^*} \}$. Since the latter space has the
l.u.st.\ property, then so does its 
retract $\bigoplus_\infty \{A_+^*:\lambda\in B_{J^*} \}$. Finally $J$ is a 
retract of the $\bigoplus_\infty \{A_+^*:\lambda\in B_{J^*} \}$, therefore 
also has this property.

If $F$ is topologically flat $A$-module, then $F^*$ is topologically injective
by proposition~\ref{MetCTopFlatCharac}. By previous paragraph $F^*$ has the
l.u.st.\ property. Corollary 17.6 from~\cite{DiestAbsSumOps} gives that $F$ has
this property too.

If $P$ is topologically projective $A$-module, it is topologically flat by
proposition~\ref{MetTopProjIsMetTopFlat}. So $P$ has the l.u.st.\ property by
previous paragraph.
\end{proof}


\section{
    Further properties of projective injective and flat modules
}\label{SectionFurtherPropertiesOfProjectiveInjectiveAndFlatModules}


\subsection{
    Homological triviality of modules under change of algebra
}\label{SubSectionHomologicalTrivialityOfModulesUnderChangeOfAlgebra}

The following three propositions are metric and topological versions of
propositions 2.3.2, 2.3.3 and 2.3.4 in~\cite{RamsHomPropSemgroupAlg}.

\begin{proposition}\label{MorphCoincide} Let $X$ and $Y$ be $\langle$~left /
right~$\rangle$ $A$-modules. Assume one of the following holds

\begin{enumerate}[label = (\roman*)]
    \item $I$ is a $\langle$~left / right~$\rangle$ ideal of $A$ and $X$ is an
    essential $I$-module;

    \item $I$ is a $\langle$~right / left~$\rangle$ ideal of $A$ and $Y$ is a
    faithful $I$-module. 
\end{enumerate}

Then $\langle$~${}_A\mathcal{B}(X,Y)={}_I\mathcal{B}(X,Y)$ /
$\mathcal{B}_A(X,Y)=\mathcal{B}_I(X,Y)$~$\rangle$.
\end{proposition}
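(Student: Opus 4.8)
The plan is to prove the equality as sets (the operator norm is then inherited). One inclusion is immediate: since $I$ is a subalgebra of $A$, the $I$-action on any $A$-module is simply the restriction of the $A$-action, so every $A$-morphism is \emph{a fortiori} an $I$-morphism, giving ${}_A\mathcal{B}(X,Y)\subseteq{}_I\mathcal{B}(X,Y)$. Hence the entire content lies in the reverse inclusion: I must show that a bounded $I$-module map $\phi$ automatically intertwines the full $A$-action, that is $\phi(a\cdot x)=a\cdot\phi(x)$ for all $a\in A$ and $x\in X$. Both cases boil down to this, and boundedness is free throughout since all maps in sight already lie in $\mathcal{B}(X,Y)$.

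For case (i), where $I$ is a left ideal and $X$ is essential over $I$, I would first establish the identity on the dense subspace $\operatorname{span}(I\cdot X)$. On a single product $x=i\cdot x'$ with $i\in I$, $x'\in X$, I compute $\phi(a\cdot(i\cdot x'))=\phi((ai)\cdot x')$; because $I$ is a left ideal, $ai\in I$, so $I$-linearity of $\phi$ gives $(ai)\cdot\phi(x')=a\cdot(i\cdot\phi(x'))=a\cdot\phi(i\cdot x')$, again by $I$-linearity. Extending over finite sums in $I\cdot X$ by linearity, and then from $\operatorname{span}(I\cdot X)$ to its closure $\operatorname{cl}_X(IX)=X$ by continuity of the two bounded maps $x\mapsto\phi(a\cdot x)$ and $x\mapsto a\cdot\phi(x)$, completes this case.

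For case (ii), where $I$ is a right ideal and $Y$ is faithful over $I$, I would argue pointwise using the faithfulness. Fix $a\in A$ and $x\in X$, and set $z=\phi(a\cdot x)-a\cdot\phi(x)\in Y$. For any $i\in I$, $I$-linearity of $\phi$ yields $i\cdot\phi(a\cdot x)=\phi(i\cdot(a\cdot x))=\phi((ia)\cdot x)$; since $I$ is a right ideal, $ia\in I$, so this equals $(ia)\cdot\phi(x)$. On the other hand $i\cdot(a\cdot\phi(x))=(ia)\cdot\phi(x)$ by associativity of the action. Subtracting gives $i\cdot z=0$ for every $i\in I$, i.e. $z\in{}^{I\perp}Y=\{0\}$ because $Y$ is a faithful $I$-module; therefore $z=0$ and $\phi$ is an $A$-morphism.

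The proof is essentially bookkeeping, and the only subtle point is matching the handedness of the ideal to the associativity move: in (i) I push $a$ through the action via $a\cdot(i\cdot x')=(ai)\cdot x'$, which requires $ai\in I$ (left ideal), whereas in (ii) I push $i$ in from the left via $i\cdot(a\cdot x)=(ia)\cdot x$, which requires $ia\in I$ (right ideal). I expect no real obstacle beyond keeping these two parallel computations aligned. The right-module statement $\mathcal{B}_A(X,Y)=\mathcal{B}_I(X,Y)$ follows by the mirror-image argument, with essentiality expressed through $\operatorname{cl}_X(XI)$ and faithfulness through the right annihilator, and the ideal side swapped as recorded in the statement.
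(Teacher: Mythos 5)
Your proof is correct and follows essentially the same route as the paper's: the inclusion ${}_A\mathcal{B}(X,Y)\subseteq{}_I\mathcal{B}(X,Y)$ is dismissed as obvious, case (i) is handled by the computation $\phi(a\cdot(i\cdot x'))=(ai)\cdot\phi(x')=a\cdot\phi(i\cdot x')$ on $I\cdot X$ followed by linearity and continuity on $\operatorname{cl}_X(IX)=X$, and case (ii) by showing $i\cdot\bigl(\phi(a\cdot x)-a\cdot\phi(x)\bigr)=0$ for all $i\in I$ and invoking faithfulness. Your explicit attention to the handedness of the ideal in each associativity step matches the paper's (implicit) use of the same facts, so there is nothing to add.
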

\begin{proof} We shall prove both statements only for left modules, since their
right counterparts are proved with minimal modifications. 
Fix $\phi\in{}_I\mathcal{B}(X,Y)$.

$(i)$ Take $x\in I\cdot X$, then $x=a'\cdot x'$ for some $a'\in I$, $x'\in X$.
For any $a\in A$ we have $\phi(a\cdot x)=\phi(aa'\cdot
x')=aa'\cdot\phi(x')=a\cdot\phi(a'\cdot x')=a\cdot\phi(x)$. Therefore
$\phi(a\cdot x)=a\cdot\phi(x)$ for all $a\in A$ 
and $x\in \operatorname{cl}_X(IX)=X$. Hence $\phi\in {}_A\mathcal{B}(X,Y)$.

$(ii)$ For any $a\in I$ and $a'\in A$, $x\in X$ we have $a\cdot(\phi(a'\cdot
x)-a'\cdot\phi(x))=\phi(aa'\cdot x)-aa'\cdot\phi(x)=0$. Since $Y$ is faithful
$I$-module we have $\phi(a'\cdot x)=a'\cdot \phi(x)$ for all $x\in X$, 
$a'\in A$. Hence $\phi\in{}_A\mathcal{B}(X,Y)$.

In both cases we proved that $\phi\in{}_A\mathcal{B}(X,Y)$ for any
$\phi\in{}_I\mathcal{B}(X,Y)$, 
therefore ${}_I\mathcal{B}(X,Y)\subset {}_A\mathcal{B}(X,Y)$. 
The reverse inclusion is obvious.
\end{proof}

\begin{proposition}\label{MetTopProjUnderChangeOfAlg} Let $I$ be a closed
subalgebra of $A$ and $P$ be an $A$-module which is essential as $I$-module.
Then

\begin{enumerate}[label = (\roman*)]
    \item if $I$ is a left ideal of $A$ and $P$ is $\langle$~metrically /
    $C$-topologically~$\rangle$  projective $I$-module, then $P$ is
    $\langle$~metrically / $C$-topologically~$\rangle$ projective $A$-module;

    \item if $I$ is a $\langle$~$1$-complemented / $c$-complemented~$\rangle$
    right ideal of $A$ and $P$ is $\langle$~metrically /
    $C$-topologically~$\rangle$ projective $A$-module, then $P$ is
    $\langle$~metrically / $cC$-topologically~$\rangle$ projective $I$-module.
\end{enumerate}
\end{proposition}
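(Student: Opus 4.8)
The plan is to read metric and topological projectivity of the essential module $P$ through the canonical morphisms $\pi_P^A\colon A\projtens\ell_1(B_P)\to P$ and $\pi_P^I\colon I\projtens\ell_1(B_P)\to P$, both given by $b\projtens\delta_x\mapsto b\cdot x$, and to transport a right inverse of one of them to the other. Note first that essentiality of $P$ as an $I$-module forces essentiality as an $A$-module, since $P=\operatorname{cl}_P(IP)\subseteq\operatorname{cl}_P(AP)\subseteq P$; hence proposition~\ref{NonDegenMetTopProjCharac} is available over both $A$ and $I$. Moreover the inclusion $I\hookrightarrow A$ induces a contractive map $j\colon I\projtens\ell_1(B_P)\to A\projtens\ell_1(B_P)$ satisfying $\pi_P^A j=\pi_P^I$, which I shall use in both parts.

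For part $(i)$ I start from the $I$-side. Proposition~\ref{NonDegenMetTopProjCharac}, applied to the algebra $I$, provides a right inverse $I$-morphism $\sigma\colon P\to I\projtens\ell_1(B_P)$ of $\pi_P^I$ with $\Vert\sigma\Vert\le 1$ (resp. $\le C$). The decisive observation is that $\sigma$ is in fact $A$-linear for free: since $I$ is a left ideal, $I\projtens\ell_1(B_P)$ is a left $A$-module, and since $P$ is essential as an $I$-module, proposition~\ref{MorphCoincide}$(i)$ gives ${}_A\mathcal{B}(P,I\projtens\ell_1(B_P))={}_I\mathcal{B}(P,I\projtens\ell_1(B_P))$. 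As $j$ is an $A$-morphism, $j\sigma$ is an $A$-morphism right-inverting $\pi_P^A$ with $\Vert j\sigma\Vert\le 1$ (resp. $\le C$). Thus $\pi_P^A$ is a $1$- (resp. $C$-) retraction, and proposition~\ref{NonDegenMetTopProjCharac} over $A$ yields that $P$ is metrically (resp. $C$-topologically) projective as an $A$-module.

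For part $(ii)$ I start from the $A$-side. Proposition~\ref{NonDegenMetTopProjCharac} over $A$ gives a right inverse $A$-morphism $\sigma\colon P\to A\projtens\ell_1(B_P)$ of $\pi_P^A$ with $\Vert\sigma\Vert\le 1$ (resp. $\le C$), and the goal is to corestrict it into $I\projtens\ell_1(B_P)$. Because $I$ is a right ideal, $I\cdot(A\projtens\ell_1(B_P))\subseteq I\projtens\ell_1(B_P)$, so $\operatorname{Im}(j)$ is an $I$-submodule; combined with $P=\operatorname{cl}_P(IP)$ and the $I$-linearity of $\sigma$, this yields $\sigma(P)\subseteq\operatorname{cl}(\operatorname{span}(I\cdot\sigma(P)))\subseteq\operatorname{Im}(j)$. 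Since $I$ is $c$-complemented in $A$ (with $c=1$ in the metric case), there is a projection $r\colon A\to I$ of norm $\le c$, and $R:=r\projtens 1_{\ell_1(B_P)}\colon A\projtens\ell_1(B_P)\to I\projtens\ell_1(B_P)$ satisfies $\Vert R\Vert\le c$, $Rj=1$, and $jR=1$ on $\operatorname{Im}(j)$. Setting $\tilde\sigma:=R\sigma$, I get $j\tilde\sigma=\sigma$, whence $\pi_P^I\tilde\sigma=\pi_P^A j\tilde\sigma=\pi_P^A\sigma=1_P$; injectivity of $j$ together with the $I$-linearity of $\sigma$ and $j$ shows $\tilde\sigma$ is an $I$-morphism; and $\Vert\tilde\sigma\Vert\le c\Vert\sigma\Vert\le c$ (resp. $\le cC$). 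Hence $\pi_P^I$ is a $1$- (resp. $cC$-) retraction, and proposition~\ref{NonDegenMetTopProjCharac} over $I$ gives that $P$ is metrically (resp. $cC$-topologically) projective as an $I$-module.

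The main obstacle lies in the norm bookkeeping of part $(ii)$. The corestriction of $\sigma$ naturally lands in $I\projtens\ell_1(B_P)$ only as a subspace of $A\projtens\ell_1(B_P)$, and its size in the intrinsic projective tensor norm is controlled only after composing with the complementation projection $R$ — this is exactly where the hypothesis that $I$ be complemented, and the constant $c$, enter. The accompanying delicate point is that $\tilde\sigma=R\sigma$ must be shown to be $I$-linear even though $R$ itself is merely bounded and not a module map, which is what the injectivity of $j$ is used for. By comparison, the only non-formal step in part $(i)$ is the automatic promotion of the $I$-linear section to an $A$-linear one supplied by proposition~\ref{MorphCoincide}$(i)$.
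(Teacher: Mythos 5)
Your proof is correct and follows essentially the same route as the paper's: part $(i)$ transports the section of $\pi_P^I$ through $i\projtens 1_{\ell_1(B_P)}$ and promotes $I$-linearity to $A$-linearity via proposition~\ref{MorphCoincide}$(i)$, and part $(ii)$ uses essentiality and the right-ideal property to land $\sigma(P)$ in (the image of) $I\projtens\ell_1(B_P)$, then corestricts via $r\projtens 1_{\ell_1(B_P)}$ with the same norm bookkeeping. The only cosmetic differences (applying \ref{MorphCoincide} before rather than after composing with $j$, and checking $I$-linearity of $\tilde\sigma$ via injectivity of $j$ rather than via $r|_I=1_I$) do not change the argument.
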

\begin{proof} By $\widetilde{\pi}_P: I\projtens \ell_1(B_P)\to P$ and
$\pi_P:A\projtens \ell_1(B_P)\to P$ we will denote the standard epimorphisms.

$(i)$ By proposition~\ref{NonDegenMetTopProjCharac} the morphism
$\widetilde{\pi}_P$ has a right inverse morphism in $\langle$~$I-\mathbf{mod}_1$
/ $I-\mathbf{mod}$~$\rangle$, say $\widetilde{\sigma}$ of norm $\langle$~at most
$1$ / at most $C$~$\rangle$. Let $i:I\to A$ be the natural embedding, then
consider $\langle$~contractive / bounded~$\rangle$ $I$-morphism
$\sigma=(i\projtens 1_{\ell_1(B_P)})\widetilde{\sigma}$. By paragraph $(i)$ of
proposition~\ref{MorphCoincide} we have that $\sigma$ is an $A$-morphism.
Clearly, $\sigma$ has norm $\langle$~at most $1$ / at most $C$~$\rangle$. For
$\pi_P:A\projtens \ell_1(B_P)\to P$ we obviously 
have $\pi_P(i\projtens 1_{\ell_1(B_P)})=\widetilde{\pi}_P$, 
hence 
$\pi_P\sigma
=\pi_P(i\projtens 1_{\ell_1(B_P)})\widetilde{\sigma}
=\widetilde{\pi}_P\widetilde{\sigma}
=1_P$.
Thus $\pi_P$ is a $\langle$~$1$-retraction / $C$-retraction~$\rangle$ in
$\langle$~$A-\mathbf{mod}_1$ / $A-\mathbf{mod}$~$\rangle$. So by
proposition~\ref{NonDegenMetTopProjCharac} the $A$-module $P$ is
$\langle$~metrically / $C$-topologically~$\rangle$ projective.

$(ii)$ Since $P$ is an essential $I$-module it is a fortiori an essential
$A$-module. By proposition~\ref{NonDegenMetTopProjCharac} the morphism $\pi_P$
has a right inverse morphism $\sigma$ in $\langle$~$A-\mathbf{mod}_1$ /
$A-\mathbf{mod}$~$\rangle$ with norm $\langle$~at most $1$ / at most
$C$~$\rangle$. Obviously $\sigma$ is a right inverse for $\pi_P$ in
$\langle$~$I-\mathbf{mod}_1$ / $I-\mathbf{mod}$~$\rangle$ too. By $i:I\to A$ we
denote the natural embedding, and by $r:A\to I$ the $\langle$~contractive /
bounded~$\rangle$ left inverse. By assumption $\Vert r\Vert\leq c$. Consider
$\langle$~contractive / bounded~$\rangle$ linear operator
$\widetilde{\sigma}=(r\projtens 1_{\ell_1(B_P)})\sigma$. Clearly, its norm is
$\langle$~at most $1$ / at most $cC$~$\rangle$. Since $I$ is a right ideal of
$A$ and $P$ is an essential $I$-module then
$\sigma(P)
=\sigma(\operatorname{cl}_P(IP))
=\operatorname{cl}_{A\projtens \ell_1(B_P)}(I\cdot (A\projtens \ell_1(B_P)))
=I\projtens \ell_1(B_P)$, so
$\sigma=(ir\projtens 1_{\ell_1(B_P)})\sigma$. Even more, 
since $\sigma(P)\subset I\projtens\ell_1(B_P)$ and $r|_I=1_I$, 
then $\sigma$ is an $I$-morphism.
Clearly, $\pi_P(i\projtens 1_{\ell_1(B_P)})=\widetilde{\pi}_P$, so
$$
\widetilde{\pi}_P\widetilde{\sigma}
=\pi_P(i\projtens 1_{\ell_1(B_P)})(r \projtens 1_{\ell_1(B_P)})\sigma
=\pi_P(ir\projtens 1_{\ell_1(B_P)})\sigma
=\pi_P\sigma
=1_P
$$ 
Thus $\widetilde{\pi}_P$ is 
a $\langle$~$1$-retraction / $cC$-retraction~$\rangle$ 
in $\langle$~$I-\mathbf{mod}_1$ / $I-\mathbf{mod}$~$\rangle$, so by
proposition~\ref{NonDegenMetTopProjCharac} the $I$-module $P$ is
$\langle$~metrically / $cC$-topologically~$\rangle$ projective.
\end{proof}

\begin{proposition}\label{MetTopInjUnderChangeOfAlg} Let $I$ be a closed
subalgebra of $A$ and $J$ be a right $A$-module which is faithful as $I$-module.
Then

\begin{enumerate}[label = (\roman*)]
    \item if $I$ is a left ideal of $A$ and $J$ is $\langle$~metrically /
    $C$-topologically~$\rangle$  injective $I$-module, then $J$ is
    $\langle$~metrically / $C$-topologically~$\rangle$ injective $A$-module;

    \item if $I$ is a weakly $\langle$~$1$-complemented /
    $c$-complemented~$\rangle$ right ideal of $A$ and $J$ is
    $\langle$~metrically / $C$-topologically~$\rangle$ injective $A$-module,
    then $J$ is $\langle$~metrically / $cC$-topologically~$\rangle$ injective
    $I$-module.
\end{enumerate}
\end{proposition}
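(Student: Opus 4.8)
The plan is to treat the two paragraphs separately, imitating the projective change-of-algebra result (proposition~\ref{MetTopProjUnderChangeOfAlg}) but in its injective, i.e.\ coretraction, incarnation. Throughout I would record two preliminary remarks. First, being $\langle$~isometric / $c$-topologically injective~$\rangle$ is a purely Banach-space property, hence insensitive to restriction of scalars from $A$ to $I$. Second, faithfulness descends: an element of $J$ killed by all of $A$ is \emph{a fortiori} killed by all of $I$, so the $A$-annihilator of $J$ sits inside its $I$-annihilator, and thus $J$ faithful over $I$ is automatically faithful over $A$. In particular the canonical-morphism criterion of proposition~\ref{NonDegenMetTopInjCharac} is available over both algebras.

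For paragraph $(i)$ I would argue straight from definition~\ref{MetCTopInjMod}. Given an $\langle$~isometric / $c$-topologically injective~$\rangle$ $A$-morphism $\xi:Y\to X$ and an $A$-morphism $\phi:Y\to J$, I restrict scalars so that $\xi$ and $\phi$ become $I$-morphisms with $\xi$ still $\langle$~isometric / $c$-topologically injective~$\rangle$. Metric/$C$-topological injectivity of $J$ as an $I$-module then yields an $I$-morphism $\psi:X\to J$ with $\psi\xi=\phi$ and $\langle$~$\Vert\psi\Vert\le\Vert\phi\Vert$ / $\Vert\psi\Vert\le cC\Vert\phi\Vert$~$\rangle$. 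The only thing to upgrade is that $\psi$ is in fact an $A$-morphism, and this is exactly paragraph $(ii)$ of proposition~\ref{MorphCoincide} for right modules, whose hypotheses ($I$ a left ideal and $J$ faithful over $I$) are precisely those assumed here. Hence $J$ is $\langle$~metrically / $C$-topologically~$\rangle$ injective over $A$.

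Paragraph $(ii)$ is the substantial one, and here I would pass through the criterion rather than the definition, since the test modules are only $I$-modules. Write $\Lambda=B_{J^*}$, let $\rho_J:J\to\mathcal{B}(A,\ell_\infty(\Lambda))$ and $\widetilde\rho_J:J\to\mathcal{B}(I,\ell_\infty(\Lambda))$ be the two canonical embeddings, and let $\mathrm{res}:\mathcal{B}(A,\ell_\infty(\Lambda))\to\mathcal{B}(I,\ell_\infty(\Lambda))$, $T\mapsto T|_I$, be the restriction map; one checks directly that $\mathrm{res}$ is an $I$-morphism and that $\widetilde\rho_J=\mathrm{res}\,\rho_J$. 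Since $J$ is $\langle$~metrically / $C$-topologically~$\rangle$ injective and faithful over $A$, proposition~\ref{NonDegenMetTopInjCharac} furnishes an $A$-morphism $\tau$ with $\tau\rho_J=1_J$ of norm $\langle$~at most $1$ / at most $C$~$\rangle$. Weak $\langle$~$1$- / $c$-~$\rangle$complementation of $I$ means $i^*:A^*\to I^*$ has a bounded right inverse $s$ of norm $\langle$~at most $1$ / at most $c$~$\rangle$; applying the functor $\mathcal{B}(\ell_1(\Lambda),-)$ together with the identification $\mathcal{B}(X,\ell_\infty(\Lambda))\isom{\mathbf{Ban}_1}\mathcal{B}(\ell_1(\Lambda),X^*)$ produces a bounded right inverse $\theta$ of $\mathrm{res}$ of norm $\langle$~at most $1$ / at most $c$~$\rangle$. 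I then set $\widetilde\tau:=\tau\theta$ and aim to show it is an $I$-morphism left inverse of $\widetilde\rho_J$ of norm $\langle$~at most $1$ / at most $cC$~$\rangle$; proposition~\ref{NonDegenMetTopInjCharac} over $I$ then gives the conclusion.

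The main obstacle is that $\theta$, coming from mere weak complementation, is \emph{not} a module map, so neither $\widetilde\tau\widetilde\rho_J=1_J$ nor the $I$-linearity of $\widetilde\tau$ is formal. The key lemma I would isolate is a vanishing property of $\tau$: if $T\in\mathcal{B}(A,\ell_\infty(\Lambda))$ satisfies $T|_I=0$, then $\tau(T)=0$. This holds because $I$ is a right ideal, so $T\cdot b=0$ for every $b\in I$ (indeed $(T\cdot b)(x)=T(bx)$ and $bx\in I$), whence $\tau(T)\cdot b=\tau(T\cdot b)=0$ for all $b\in I$; faithfulness of $J$ over $I$ then forces $\tau(T)=0$. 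Both remaining points reduce to this lemma. First, $\theta\widetilde\rho_J(x)-\rho_J(x)$ restricts on $I$ to $\mathrm{res}\,\theta\,\widetilde\rho_J(x)-\widetilde\rho_J(x)=\widetilde\rho_J(x)-\widetilde\rho_J(x)=0$, so $\tau$ annihilates it and $\widetilde\tau\widetilde\rho_J=\tau\rho_J=1_J$. Second, for $S\in\mathcal{B}(I,\ell_\infty(\Lambda))$ and $b\in I$ the operator $\theta(S\cdot b)-(\theta S)\cdot b$ also restricts to $0$ on $I$ (using that $\mathrm{res}$ is an $I$-morphism and $\mathrm{res}\,\theta=1$), so $\tau$ annihilates it and $\widetilde\tau(S\cdot b)=\widetilde\tau(S)\cdot b$. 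Finally $\Vert\widetilde\tau\Vert\le\Vert\tau\Vert\Vert\theta\Vert$ delivers the bound $\langle$~at most $1$ / at most $cC$~$\rangle$, which closes the argument.
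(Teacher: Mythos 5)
Your proposal is correct and follows essentially the same route as the paper's proof: in part (ii) you isolate exactly the paper's key vanishing lemma ($T|_I=0$ forces $\tau(T)=0$, via the right-ideal property of $I$ and faithfulness of $J$ over $I$), and your extension operator $\theta$ is precisely the paper's operator $j$, constructed functorially through $\mathcal{B}(\ell_1(\Lambda),-)$ and adjoint associativity rather than by the explicit formula involving the section of $i^*$. The only presentational difference is in part (i), where you solve the extension problem directly from definition~\ref{MetCTopInjMod} and upgrade the resulting $I$-morphism to an $A$-morphism by proposition~\ref{MorphCoincide}, whereas the paper performs the same upgrade on a left inverse of the canonical embedding $\widetilde{\rho}_J$ obtained from proposition~\ref{NonDegenMetTopInjCharac}; both arguments hinge on the same fact and yield the same constants.
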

\begin{proof} By $\widetilde{\rho}_J:J\to\mathcal{B}(I,\ell_\infty(B_{J^*}))$
and $\rho_J:J\to\mathcal{B}(A,\ell_\infty(B_{J^*}))$ we will denote the standard
monomorphisms.

$(i)$ By proposition~\ref{NonDegenMetTopInjCharac} the morphism
$\widetilde{\rho}_J: J\to\mathcal{B}(I,\ell_\infty(B_{J^*}))$ has a left inverse
morphism in $\langle$~$\mathbf{mod}_1-I$ / $\mathbf{mod}-I$~$\rangle$, say
$\widetilde{\tau}$ of norm $\langle$~at most $1$ / at most $C$~$\rangle$. Let
$i:I\to A$ be the natural embedding, and define $I$-morphism
$q=\mathcal{B}(i,\ell_\infty(B_{J^*}))$. Obviously
$\widetilde{\rho}_J=q\rho_J$. Consider $I$-morphism $\tau =\widetilde{\tau} q$.
By paragraph $(ii)$ of proposition~\ref{MorphCoincide} it is also an
$A$-morphism. Note that 
$\Vert\tau \Vert
\leq\Vert\widetilde{\tau}\Vert\Vert q\Vert
\leq\Vert\widetilde{\tau}\Vert$, 
so $\tau$ has norm $\langle$~at most $1$ / at most $C$~$\rangle$. 
Clearly, 
$\tau \rho_J=\widetilde{\tau} q\rho_J=\widetilde{\tau}\widetilde{\rho}_J=1_J$. 
Thus $\rho_J$ is a $\langle$~$1$-coretraction / $C$-coretraction~$\rangle$, 
so by proposition~\ref{NonDegenMetTopInjCharac} the $A$-module $J$ is
$\langle$~metrically / $C$-topologically~$\rangle$ injective.

$(ii)$ If $J$ is $\langle$~metrically / $C$-topologically~$\rangle$ injective as
$A$-module, then by proposition~\ref{NonDegenMetTopInjCharac} the $A$-morphism
$\rho_J$ has a left inverse in $\langle$~$\mathbf{mod}_1-A$ /
$\mathbf{mod}-A$~$\rangle$, say $\tau $ of norm $\langle$~at most $1$ / at most
$C$~$\rangle$. Assume we are given an operator $T\in
\mathcal{B}(A,\ell_\infty(B_{J^*}))$, such that $T|_I=0$. Fix $a\in I$, then
$T\cdot a=0$, and so $\tau (T)\cdot a=\tau (T\cdot a)=0$. Since $J$ is faithful
$I$-module and $a\in I$ is arbitrary, then $\tau (T)=0$. Since $I$ is weakly 
$\langle$~$1$-complemented /$c$-complemented~$\rangle$ then $i^*$ has left 
inverse $r:I^*\to A^*$ with norm $\langle$~at most $1$ / at most $c$~$\rangle$.
For a given $f\in B_{J^*}$ we define a bounded linear operator 
$g_f:\mathcal{B}(I,\ell_\infty(B_{J^*}))\to I^*:T\mapsto(x\mapsto T(x)(f))$. 
Now we can define two more bounded linear operators
$$
j
:\mathcal{B}(I,\ell_\infty(B_{J^*}))\to \mathcal{B}(A,\ell_\infty(B_{J^*}))
:T\mapsto (a\mapsto (f\mapsto r(g_{f}(T))(a)))
$$ 
and $\widetilde{\tau}=\tau  j$. Fix $a\in I$ 
and $T\in\mathcal{B}(I,\ell_\infty(B_{J^*}))$. Since $r$ is a left inverse 
of $i^*$ we have $r(h)(a)=h(a)$ for all $h\in I^*$. Now it is routine to check 
that $(j(T\cdot a)-j(T)\cdot a)|_I=0$. As we have shown earlier this implies
that $\widetilde{\tau}(T\cdot a)-\widetilde{\tau}(T)\cdot a
=\tau (j(T\cdot a)-j(T)\cdot a)=0$. Since $a\in I$ 
and $T\in\mathcal{B}(I,\ell_\infty(B_{J^*}))$ are arbitrary the 
map $\widetilde{\tau}$ is an $I$-morphism.
Note that 
$\Vert\widetilde{\tau}\Vert\leq\Vert\tau \Vert\Vert j\Vert\leq \Vert
\tau\Vert\Vert r\Vert$, so $\widetilde{\tau}$ has 
norm $\langle$~at most $1$ / at most $cC$~$\rangle$.
In the same way one can show that, for all $x\in J$ 
holds $\rho_J(x)-j(\widetilde{\rho}_J(x))|_I=0$,
so $\tau (\rho_J(x)-j(\widetilde{\rho}_J(x)))=0$. As a consequence,
$\widetilde{\tau}(\widetilde{\rho}_J(x))
=\tau (j(\widetilde{\rho}_J(x)))=\tau(\rho_J(x))=x$ 
for all $x\in J$. Since $\widetilde{\tau}\widetilde{\rho}_J=1_J$,
then $\widetilde{\rho}_J$ is a  $\langle$~$1$-coretraction /
$cC$-coretraction~$\rangle$ in $\langle$~$\mathbf{mod}_1-I$ /
$\mathbf{mod}-I$~$\rangle$, so by proposition~\ref{NonDegenMetTopInjCharac} the
$I$-module $J$ is $\langle$~metrically / $cC$-topologically~$\rangle$ injective.
\end{proof}

\begin{proposition}\label{MetTopFlatUnderChangeOfAlg} Let $I$ be a closed
subalgebra of $A$ and $F$ be an $A$-module which is essential as $I$-module.
Then

\begin{enumerate}[label = (\roman*)]
    \item if $I$ is a left ideal of $A$ and $F$ is $\langle$~metrically /
    $C$-topologically~$\rangle$  flat $I$-module, then $F$ is
    $\langle$~metrically / $C$-topologically~$\rangle$ flat $A$-module;

    \item if $I$ is a weakly $\langle$~$1$-complemented / 
    $c$-complemented~$\rangle$ right ideal of $A$ and $F$ 
    is $\langle$~metrically / $C$-topologically~$\rangle$ flat $A$-module, 
    then $F$ is $\langle$~metrically / $cC$-topologically~$\rangle$ 
    flat $I$-module.
\end{enumerate}
\end{proposition}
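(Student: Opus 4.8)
The plan is to reduce both assertions to the corresponding statements about \emph{injectivity} of the dual module, already settled in Proposition~\ref{MetTopInjUnderChangeOfAlg}, by invoking the flat--injective duality of Proposition~\ref{MetCTopFlatCharac}. Since $F$ is a left module, $F^*$ is a right module, which is exactly the setting in which Proposition~\ref{MetTopInjUnderChangeOfAlg} is phrased. Note that Proposition~\ref{MetCTopFlatCharac} is a formal consequence of the universal property of the projective module tensor product and therefore holds verbatim over any Banach algebra; I would apply it both over $A$ (in the two ``outer'' steps) and over the subalgebra $I$ (in the two ``inner'' steps).

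The one preliminary I would isolate is the passage from essentiality of $F$ to faithfulness of $F^*$, which is needed to meet the standing hypothesis of Proposition~\ref{MetTopInjUnderChangeOfAlg}. Regarding $F$ as an $I$-module and using the annihilator identity ${(F^*)}^{\perp I}={\operatorname{cl}_F(I F)}^{\perp}$ with $S=I$, essentiality $\operatorname{cl}_F(IF)=F$ gives ${(F^*)}^{\perp I}=F^{\perp}=\{0\}$, that is, $F^*$ is a faithful right $I$-module. This is precisely the blanket hypothesis required of the module in Proposition~\ref{MetTopInjUnderChangeOfAlg}.

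With this in hand, part $(i)$ runs as follows. If $F$ is $\langle$~metrically / $C$-topologically~$\rangle$ flat as $I$-module, then by Proposition~\ref{MetCTopFlatCharac} over $I$ the right $I$-module $F^*$ is $\langle$~metrically / $C$-topologically~$\rangle$ injective; it is faithful as $I$-module by the previous paragraph, and $I$ is a left ideal of $A$, so paragraph $(i)$ of Proposition~\ref{MetTopInjUnderChangeOfAlg} gives that $F^*$ is $\langle$~metrically / $C$-topologically~$\rangle$ injective as $A$-module, whence Proposition~\ref{MetCTopFlatCharac} over $A$ yields $\langle$~metric / $C$-topological~$\rangle$ flatness of $F$ as $A$-module. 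For part $(ii)$, start from $F$ $\langle$~metrically / $C$-topologically~$\rangle$ flat as $A$-module, so $F^*$ is $\langle$~metrically / $C$-topologically~$\rangle$ injective as $A$-module; since $I$ is a weakly $\langle$~$1$-complemented / $c$-complemented~$\rangle$ right ideal and $F^*$ is faithful as $I$-module, paragraph $(ii)$ of Proposition~\ref{MetTopInjUnderChangeOfAlg} shows $F^*$ is $\langle$~metrically / $cC$-topologically~$\rangle$ injective as $I$-module, and Proposition~\ref{MetCTopFlatCharac} over $I$ converts this back into $\langle$~metric / $cC$-topological~$\rangle$ flatness of $F$ as $I$-module. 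The constants pass through Proposition~\ref{MetCTopFlatCharac} unchanged, so the bound $cC$ in part $(ii)$ is inherited directly from the injective case.

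I do not expect a genuine obstacle here: the argument is a purely formal transport through the flat--injective duality. The place where I would spend care rather than effort is the bookkeeping of the left/right conventions and, above all, checking that each application of Proposition~\ref{MetCTopFlatCharac} is made over the correct algebra --- $I$ for the two inner steps and $A$ for the two outer steps --- and that the faithfulness hypothesis on $F^*$ is in force exactly when paragraph $(ii)$ of Proposition~\ref{MetTopInjUnderChangeOfAlg} uses it.
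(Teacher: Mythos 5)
Your proposal is correct and follows exactly the paper's own route: the paper's proof is the one-line observation that the dual of an essential module is faithful, followed by the combination of Propositions~\ref{MetCTopFlatCharac} and~\ref{MetTopInjUnderChangeOfAlg}. Your more detailed write-up (the annihilator identity giving faithfulness of $F^*$, and the careful tracking of which algebra each application of Proposition~\ref{MetCTopFlatCharac} is made over) is just an expansion of that same argument, with the constants handled correctly.
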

\begin{proof} Note that the dual of essential module is faithful. Now the result
follows from propositions~\ref{MetCTopFlatCharac} 
and~\ref{MetTopInjUnderChangeOfAlg}.
\end{proof}    

\begin{proposition}\label{MetTopProjInjFlatUnderSumOfAlg} Let
${(A_\lambda)}_{\lambda\in\Lambda}$ be a family of Banach algebras and for each
$\lambda\in\Lambda$ let $X_\lambda$ be $\langle$~an essential / a faithful / an
essential~$\rangle$ $A_\lambda$-module. Denote $A=\bigoplus_p
\{A_\lambda:\lambda\in\Lambda \}$ for $1\leq p\leq +\infty$ or $p=0$. Let $X$
denote $\langle$~$\bigoplus_1 \{X_\lambda:\lambda\in\Lambda \}$ /
$\bigoplus_\infty \{X_\lambda:\lambda\in\Lambda \}$ / 
$\bigoplus_1 \{X_\lambda:\lambda\in\Lambda \}$~$\rangle$. Then

\begin{enumerate}[label = (\roman*)]
    \item $X$ is metrically $\langle$~projective / injective / flat~$\rangle$
    $A$-module iff for all $\lambda\in\Lambda$ the $A_\lambda$-module
    $X_\lambda$ is metrically $\langle$~projective / injective / flat~$\rangle$;

    \item $X$ is $C$-topologically $\langle$~projective / injective /
    flat~$\rangle$ $A$-module iff for all $\lambda\in\Lambda$ the
    $A_\lambda$-module $X_\lambda$ is $C$-topologically $\langle$~projective /
    injective / flat~$\rangle$.
\end{enumerate}
\end{proposition}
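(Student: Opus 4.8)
The plan is to reduce the statement to two groups of results already in hand: the behaviour of homological triviality under $\bigoplus$-sums over a \emph{fixed} algebra (propositions~\ref{MetTopProjModCoprod},~\ref{MetTopInjModProd},~\ref{MetTopFlatModCoProd}) and the change-of-algebra propositions~\ref{MetTopProjUnderChangeOfAlg},~\ref{MetTopInjUnderChangeOfAlg},~\ref{MetTopFlatUnderChangeOfAlg}. First I would promote each $X_\mu$ to an $A$-module by letting $A$ act through the coordinate projection $p_\mu:A\to A_\mu$ followed by the given $A_\mu$-action; this action is contractive because $\Vert a_\mu\Vert\leq\Vert a\Vert$ in every $\bigoplus_p$-norm, and it is associative since multiplication in $A$ is componentwise. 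With this convention $X$ is exactly the $\langle$~$\bigoplus_1$ / $\bigoplus_\infty$ / $\bigoplus_1$~$\rangle$-sum of the $A$-modules $\{X_\lambda\}$, as the $A$-action on $X$ is itself componentwise. I would also record that each $A_\mu$ is a two-sided ideal of $A$ that is $1$-complemented via $p_\mu$ (hence weakly $1$-complemented), and that the $A_\mu$-module structure induced on $X_\mu$ by viewing $A_\mu$ as a subalgebra of $A$ coincides with the originally given one, since $p_\mu$ restricts to $1_{A_\mu}$.

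The proof then splits into the two promised steps, carried out identically in the projective, injective and flat columns. The relevant sum proposition shows that $X$ is $\langle$~metrically / $C$-topologically~$\rangle$ homologically trivial over $A$ iff every $X_\mu$ is, \emph{viewed as an $A$-module}. It remains to pass between the $A$-module and the $A_\mu$-module structure on the fixed space $X_\mu$. For ``trivial over $A_\mu\Rightarrow$ trivial over $A$'' I would invoke part $(i)$ of the matching change-of-algebra proposition, using that $A_\mu$ is a left ideal and that $X_\mu$ is $\langle$~essential / faithful / essential~$\rangle$ as $A_\mu$-module; for the converse I would invoke part $(ii)$, using that $A_\mu$ is a $\langle$~$1$-complemented / weakly $1$-complemented / weakly $1$-complemented~$\rangle$ right ideal. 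Because the complementation constant is $c=1$ in each case, the constant $C$ is preserved in both directions, so each equivalence reads ``trivial over $A\iff$ trivial over $A_\mu$'' with the \emph{same} $C$; chaining it with the sum equivalence yields the proposition.

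Since the argument is an assembly of earlier results, I do not expect a genuine analytic obstacle; the care required is administrative. The main point is to check that the essential/faithful hypothesis placed on each $X_\mu$ is precisely what both parts $(i)$ and $(ii)$ of the corresponding change-of-algebra proposition demand (essential for projectivity and flatness, faithful for injectivity, always measured as $A_\mu$-modules), and that these same conditions make the induced $A$-module $X_\mu$ a legitimate summand for the sum proposition. A secondary point is to keep the sidedness conventions straight---left modules for projectivity and flatness, right modules for injectivity---so that $A_\mu$ enters part $(i)$ as a left ideal and part $(ii)$ as a (weakly) complemented right ideal, which is harmless here because each $A_\mu$ is two-sided. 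Finally I would emphasise that the contractive complementation of $A_\mu$ is exactly what keeps the constants from degrading, and is what licenses asserting the identical $C$ on both sides of the equivalence.
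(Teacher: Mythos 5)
Your proposal is correct and follows essentially the same route as the paper: promote each $X_\lambda$ to an $A$-module via the coordinate projection, combine the sum propositions~\ref{MetTopProjModCoprod},~\ref{MetTopInjModProd},~\ref{MetTopFlatModCoProd} with the change-of-algebra propositions~\ref{MetTopProjUnderChangeOfAlg},~\ref{MetTopInjUnderChangeOfAlg},~\ref{MetTopFlatUnderChangeOfAlg}, exploiting that each $A_\lambda$ is a $1$-complemented two-sided ideal so no constants degrade. The only cosmetic difference is that in the converse direction the paper quotes the retract propositions~\ref{RetrMetCTopProjIsMetCTopProj},~\ref{RetrMetCTopInjIsMetCTopInj},~\ref{RetrMetCTopFlatIsMetCTopFlat} directly (noting $X_\lambda$ is a $1$-retract of $X$), whereas you invoke the ``only if'' half of the sum propositions, which is proved by exactly that retract argument.
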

\begin{proof} Note that for each $\lambda\in\Lambda$ the natural embedding
$i_\lambda:A_\lambda\to A$ allows to regard $A_\lambda$ as complemented in
$\mathbf{Ban}_1$ two sided ideal of $A$.

$(i)$ The proof is literally the same as in paragraph $(ii)$.

$(ii)$ Assume $X_\lambda$ is $C$-topologically $\langle$~projective / injective
/ flat~$\rangle$ $A_\lambda$-module for all $\lambda\in\Lambda$, then by
paragraph $(i)$ of proposition $\langle$~\ref{MetTopProjUnderChangeOfAlg}
/~\ref{MetTopInjUnderChangeOfAlg} /~\ref{MetTopFlatUnderChangeOfAlg}~$\rangle$
it is $C$-topologically $\langle$~projective / injective / flat~$\rangle$ as
$A$-module. It remains to apply the proposition
$\langle$~\ref{MetTopProjModCoprod} /~\ref{MetTopInjModProd}
/~\ref{MetTopFlatModCoProd}~$\rangle$. 

Conversely, assume that $X$ is $C$-topologically $\langle$~projective /
injective / flat~$\rangle$ as $A$-module. Fix arbitrary $\lambda\in\Lambda$.
Clearly, we may regard $X_\lambda$ as $A$-module and even more $X_\lambda$ is a
$1$-retract of $X$ in $\langle$~$A-\mathbf{mod}_1$ / $\mathbf{mod}_1-A$ /
$A-\mathbf{mod}_1$~$\rangle$. By proposition
$\langle$~\ref{RetrMetCTopProjIsMetCTopProj} /~\ref{RetrMetCTopInjIsMetCTopInj}
/~\ref{RetrMetCTopFlatIsMetCTopFlat}~$\rangle$ we get that $X_\lambda$ is
$C$-topologically $\langle$~projective / injective / flat~$\rangle$ as
$A$-module. It remains to apply paragraph $(ii)$ of proposition
$\langle$~\ref{MetTopProjUnderChangeOfAlg} /~\ref{MetTopInjUnderChangeOfAlg}
/~\ref{MetTopFlatUnderChangeOfAlg}~$\rangle$.
\end{proof} 


\subsection{
    Further properties of flat modules
}\label{SubSectionFurtherPropertiesOfFlatModules}

Based on results obtained above, we collect more interesting facts on metric and
topological injectivity and flatness of Banach modules.

\begin{proposition}\label{DualBanModDecomp} Let $B$ be a unital Banach algebra,
$A$ be its subalgebra with two-sided bounded approximate identity
${(e_\nu)}_{\nu\in N}$ and $X$ be a left $B$-module. 
Denote $c_1=\sup_{\nu\in N}\Vert e_\nu\Vert$, 
$c_2=\sup_{\nu\in N}\Vert e_B-e_\nu\Vert$ and $X_{ess}=\operatorname{cl}_X(AX)$.
Then 

\begin{enumerate}[label = (\roman*)]
    \item $X^*$ is $c_2(c_1+1)$-isomorphic as a right $A$-module to
    $X_{ess}^*\bigoplus_\infty {(X/X_{ess})}^*$;

    \item $\langle$~$X_{ess}^*$ / ${(X/X_{ess})}^*$~$\rangle$ is a $\langle$
    $c_1$-retract / $c_2$-retract~$\rangle$ of $A$-module $X^*$;

    \item if $X$ is an $\mathscr{L}_{1,C}^g$-space, then $\langle$~$X_{ess}$ /
    $X/X_{ess}$~$\rangle$ is an $\langle$~$\mathscr{L}_{1,c_1C}^g$-space /
    $\mathscr{L}_{1,c_2C}^g$-space~$\rangle$.
\end{enumerate}

\end{proposition}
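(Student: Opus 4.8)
The plan is to manufacture a single bounded projection on $X^*$ out of the approximate identity and to read off all three statements from it. Fix an ultrafilter $\mathfrak{U}$ on $N$ dominating the section filter, recall that $X^*$ is a right $B$-module via $(f\cdot b)(x)=f(b\cdot x)$, and define $P\colon X^*\to X^*$ by letting $P(f)$ be the weak$^*$ limit $\lim_{\mathfrak{U}}(f\cdot e_\nu)$; the net $(f\cdot e_\nu)_\nu$ is bounded by $c_1\Vert f\Vert$, so this limit exists and $\Vert P\Vert\le c_1$. First I would verify the four properties that make $P$ the right tool. For $x\in X_{ess}$ one has $e_\nu\cdot x\to x$, so $P(f)$ and $f$ agree on $X_{ess}$, whence $P(f)-f\in X_{ess}^{\perp}=(X/X_{ess})^*$; if $f\in X_{ess}^{\perp}$ then $f\cdot e_\nu=0$ for every $\nu$, so $P(f)=0$, and together these give $\ker P=X_{ess}^{\perp}$. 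Next $P$ is idempotent, because $P(f)\cdot e_\mu=f\cdot e_\mu$ (the two functionals agree on $X$ since $e_\mu\cdot x\in X_{ess}$), and $P$ is a right $A$-morphism, since $ae_\nu\to a$ gives $P(f\cdot a)=f\cdot a$ while the first computation gives $P(f)\cdot a=f\cdot a$. Thus $P$ is a projection in $\mathbf{mod}-A$ of norm at most $c_1$ with kernel $(X/X_{ess})^*$.

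Let $j\colon X_{ess}\hookrightarrow X$ and $q\colon X\to X/X_{ess}$ be the canonical left $A$-module maps, so that $j^*$ and $q^*$ are right $A$-morphisms with $\Vert j^*\Vert\le 1$, with $q^*$ an isometry onto $X_{ess}^{\perp}$, and with $j^*P=j^*$. Since $P(f)$ depends only on $f|_{X_{ess}}$, it factors as $P=\bar P\,j^*$ for a right $A$-morphism $\bar P\colon X_{ess}^*\to X^*$ with $\Vert\bar P\Vert\le c_1$ (apply $P$ to a Hahn--Banach extension), and $j^*\bar P=1_{X_{ess}^*}$; this is exactly the assertion that $X_{ess}^*$ is a $c_1$-retract of $X^*$. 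For the other factor I would control the complementary projection $1-P$ by $c_2$ rather than by the crude $1+c_1$: here the hypothesis that $e_B$ acts as the identity on $X$ enters, since then $f\cdot e_B=f$ makes $(1-P)(f)$ the weak$^*$ limit $\lim_{\mathfrak{U}} f\cdot(e_B-e_\nu)$, giving $\Vert 1-P\Vert\le c_2$. Setting $\bar Q=(q^*)^{-1}(1-P)\colon X^*\to (X/X_{ess})^*$ (legitimate because $\operatorname{Im}(1-P)=X_{ess}^{\perp}=\operatorname{Im}q^*$) yields $\Vert\bar Q\Vert\le c_2$ and $\bar Q\,q^*=1$, so $(X/X_{ess})^*$ is a $c_2$-retract of $X^*$. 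This settles $(ii)$, and I record that whenever $X_{ess}\ne X$ the projection $1-P$ is nonzero, so automatically $c_2\ge\Vert 1-P\Vert\ge 1$.

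For $(i)$ I would glue these into mutually inverse right $A$-morphisms $\Phi\colon X^*\to X_{ess}^*\bigoplus_\infty (X/X_{ess})^*$ given by $\Phi(f)=(j^*f,\bar Q f)$, and $\Psi(g,h)=\bar P g+q^*h$. The identities $\bar P j^*=P$, $j^*\bar P=1$, $j^*q^*=0$, $Pq^*=0$, $\bar Q\bar P=0$, $\bar Q q^*=1$ give $\Psi\Phi=P+(1-P)=1$ and $\Phi\Psi=1$. The bounds $\Vert j^*\Vert\le 1$ and $\Vert\bar Q\Vert\le c_2$ give $\Vert\Phi\Vert\le\max(1,c_2)$, while $\Vert\bar P\Vert\le c_1$ and $\Vert q^*\Vert\le 1$ give $\Vert\Psi\Vert\le c_1+1$; invoking $c_2\ge 1$ from the previous paragraph (the degenerate case $X_{ess}=X$, where the second summand is $\{0\}$ and $\Phi$ is the identity, being immediate) we obtain $\Vert\Phi\Vert\,\Vert\Psi\Vert\le c_2(c_1+1)$, which is the claimed $c_2(c_1+1)$-isomorphism.

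Finally $(iii)$ is a formal consequence of $(ii)$. As $X$ is an $\mathscr{L}_{1,C}^g$-space, $X^*$ is an $\mathscr{L}_{\infty,C}^g$-space [\cite{DefFloTensNorOpId}, corollary 23.2.1(1)]. A $c$-retract of an $\mathscr{L}_{p,C}^g$-space is an $\mathscr{L}_{p,cC}^g$-space: pulling a finite-dimensional $\ell_p$-factorization back through the coretraction and the retraction multiplies the constant by the product of their norms, exactly as in [\cite{DefFloTensNorOpId}, corollary 23.2.1(2)]. Hence $X_{ess}^*$ is $\mathscr{L}_{\infty,c_1C}^g$ and $(X/X_{ess})^*$ is $\mathscr{L}_{\infty,c_2C}^g$, and dualizing back with [\cite{DefFloTensNorOpId}, corollary 23.2.1(1)] gives that $X_{ess}$ is an $\mathscr{L}_{1,c_1C}^g$-space and $X/X_{ess}$ an $\mathscr{L}_{1,c_2C}^g$-space. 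The main obstacle sits entirely in the second paragraph: building one projection $P$ that simultaneously realizes the sharp bounds $\Vert P\Vert\le c_1$ and $\Vert 1-P\Vert\le c_2$, which is precisely what forces the ultrafilter/weak$^*$ construction and the use of $e_B$ as the identity; once $P$ is available, the remaining steps are bookkeeping with retracts.
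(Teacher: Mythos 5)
Your proof is correct and takes essentially the same route as the paper: your projection $P(f)=\lim_{\mathfrak{U}}f\cdot e_\nu$ (weak$^{*}$ limit along the ultrafilter) and its complement $1-P$ are precisely the compositions $\sigma\rho^{*}$ and $\pi^{*}\tau$ of the four maps $\rho^{*},\sigma,\pi^{*},\tau$ that the paper constructs from the same ultrafilter limits, your $\bar P$ and $\bar Q$ coincide with the paper's $\sigma$ and $\tau$, and part (iii) is handled by the identical duality argument through corollary 23.2.1(1) of Defant and Floret. The differences are purely organizational, except that your explicit observation that $c_2\geq 1$ whenever $X_{ess}\neq X$ (needed to absorb $\max(1,c_2)$ into $c_2$) actually justifies a norm bound that the paper's proof asserts without comment.
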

\begin{proof} $(i)$ Define the natural embedding $\rho:X_{ess}\to X:x\mapsto x$
and the quotient map  $\pi:X\to X/X_{ess}:x\mapsto x+X_{ess}$. Let
$\mathfrak{F}$ be the section filter on $N$ and let $\mathfrak{U}$ be an
ultrafilter dominating $\mathfrak{F}$. For a fixed $f\in X ^*$ and $x\in X $ we
have 
$|f(x-e_\nu\cdot x)|
\leq\Vert f\Vert\Vert e_B - e_\nu\Vert\Vert x\Vert
\leq c_2\Vert f\Vert\Vert x\Vert$ i.e. ${(f(x-e_\nu\cdot x))}_{\nu\in N}$ is a
bounded net of complex numbers. Therefore we have a well defined limit
$\lim_{\mathfrak{U}}f(x-e_\nu\cdot x)$ along ultrafilter $\mathfrak{U}$. Since
${(e_\nu)}_{\nu\in N}$ is a two-sided approximate identity for $A$ and
$\mathfrak{U}$ contains section filter then for all $x\in X_{ess}$ we have
$\lim_{\mathfrak{U}}f(x-e_\nu\cdot x)=\lim_{\nu}f(x-e_\nu\cdot x)=0$. Therefore
for each $f\in X ^*$ we have a well defined map 
$\tau(f)
:X /X_{ess}\to \mathbb{C}
:x+X_{ess}\mapsto \lim_{\mathfrak{U}} f(x-e_\nu\cdot x)$. 
Clearly this is a linear functional, and from inequalities above we see 
its norm is bounded by $c_2\Vert f\Vert$. Now it is routine to check 
that $\tau:X^*\to {(X/ X_{ess})}^*:f\mapsto \tau(f)$ is an $A$-morphism 
with norm not greater than $c_2$. Similarly, one can show that 
$\sigma
:X_{ess}^*\to X^*
:h\mapsto(x\mapsto \lim_{\mathfrak{U}}h(e_\nu\cdot x))$ is an $A$-morphism 
with norm not greater than $c_1$. For any $f\in X^*$, $g\in {(X/X_{ess})}^*$, 
$h\in X_{ess}^*$ and $x\in X$, $y\in X_{ess}$ we have
$$
\sigma(h)(y)
=\lim_{\mathfrak{U}}h(e_\nu\cdot y)
=\lim_{\nu}h(e_\nu\cdot y)
=h(y),
\qquad
(\rho^*\sigma)(h)(y)
=\sigma(h)(\rho(y))
\sigma(h)(y)
=h(y),
$$
$$
(\tau\pi^*)(g)(x+X_{ess})
=\lim_{\mathfrak{U}}\pi^*(g)(x-e_\nu\cdot x)
=\lim_{\mathfrak{U}}g(x+X_{ess})
=g(x+X_{ess}),
$$
$$
(\tau\sigma)(h)(x+X_{ess})
=\lim_{\mathfrak{U}}\sigma(h)(x-e_\nu\cdot x)
=\lim_{\mathfrak{U}}(\sigma(h)(x)-h(e_\nu\cdot x))
=\sigma(h)(x)-\lim_{\mathfrak{U}}h(e_\nu\cdot x)=0,
$$
$$
(\pi^*\tau + \sigma\rho^*)(f)(x)
=\tau(f)(x+X_{ess})+\lim_{\mathfrak{U}}\rho^*(f)(e_\nu\cdot x)
=\lim_{\mathfrak{U}}f(x - e_\nu\cdot x)+\lim_{\mathfrak{U}}f(e_\nu\cdot x)
=f(x).
$$
Therefore $\tau \pi^*=1_{{(X/X_{ess})}^*}$, $\rho^*\sigma=1_{X_{ess}^*}$ and
$\pi^*\tau+\sigma\rho^*=1_{X^*}$. Now it is easy to check that the linear maps
$$
\xi
:X^*\to X_{ess}^*\bigoplus{}_\infty {(X/X_{ess})}^*
:f\mapsto \rho^*(f)\bigoplus{}_\infty \tau(f)
$$
$$
\eta
:X_{ess}^*\bigoplus{}_\infty {(X/X_{ess})}^*\to X^*
:h\bigoplus{}_\infty g\mapsto \pi^*(h)+\sigma(g)
$$
are isomorphism of right $A$-modules with $\Vert\xi \Vert\leq c_2$ 
and $\Vert \eta\Vert\leq c_1+1$. Hence $X^*$ is $c_2(c_1+1)$-isomorphic 
in $\mathbf{mod}-A$ to $X_{ess}^*\bigoplus_\infty {(X/X_{ess})}^*$.

$(ii)$ Both results immediately follow from equalities
$\rho^*\sigma=1_{X_{ess}^*}$, $\tau \pi^*=1_{{(X/X_{ess})}^*}$ and estimates
$\Vert \rho^*\Vert\Vert \sigma\Vert\leq c_1$, 
$\Vert\tau\Vert\Vert \pi^*\Vert\leq c_2$.

$(iii)$ Now consider case when $X$ is an $\mathscr{L}_{1,C}^g$-space. Then $X^*$
is an $\mathscr{L}_{\infty,C}^g$-space [\cite{DefFloTensNorOpId}, corollary
23.2.1(1)]. As $\langle$~$X_{ess}^*$ / ${(X/X_{ess})}^*$~$\rangle$ is
$\langle$~$c_1$-complemented / $c_2$-complemented~$\rangle$ in $X^*$ it is an
$\langle$~$\mathscr{L}_{\infty,c_1C}^g$-space /
$\mathscr{L}_{\infty,c_2C}^g$-space~$\rangle$ by [\cite{DefFloTensNorOpId},
corollary 23.2.1(1)]. Again we apply [\cite{DefFloTensNorOpId}, corollary
23.2.1(1)] to conclude that $\langle$~$X_{ess}$  / $X/X_{ess}$~$\rangle$ is an
$\langle$~$\mathscr{L}_{1,c_1C}^g$-space /
$\mathscr{L}_{1,c_2C}^g$-space~$\rangle$.
\end{proof}

The following proposition is an analog 
of [\cite{RamsHomPropSemgroupAlg}, proposition 2.1.11].

\begin{proposition}\label{TopFlatModCharac} Let $A$ be a Banach algebra with
two-sided $c$-bounded approximate identity, and $F$ be a left $A$-module. Then

\begin{enumerate}[label = (\roman*)]
    \item if $F$ is $C$-topologically flat $A$-module, then $F_{ess}$ is
    $(1+c)C$-topologically flat $A$-module and $F/F_{ess}$ is an
    $\mathscr{L}_{1,(1+c)C}^g$-space;

    \item if $F_{ess}$ is $C_1$-topologically flat $A$-module and $F/F_{ess}$ is
    an $\mathscr{L}_{1,C_1}^g$-space, then $F$ is ${(1+c)}^2\max(C_1,
    C_2)$-topologically flat $A$-module.

    \item $F$ is topologically flat $A$-module iff $F_{ess}$  is topologically
    flat $A$-module and $F/F_{ess}$ is an $\mathscr{L}_1^g$-space.
\end{enumerate}
\end{proposition}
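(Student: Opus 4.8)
The plan is to transfer everything to the dual side and reduce to facts already proved for injective and for annihilator modules. Throughout I would take $B=A_+$, so that $F$ becomes a unital left $A_+$-module and $A$ sits inside $A_+$ as a subalgebra carrying the given two-sided $c$-bounded approximate identity ${(e_\nu)}_{\nu\in N}$. In the notation of proposition~\ref{DualBanModDecomp} this yields $c_1=\sup_\nu\Vert e_\nu\Vert\leq c$ and $c_2=\sup_\nu\Vert e_{A_+}-e_\nu\Vert\leq 1+c$, the latter because $e_{A_+}=(0,1)$ and $\Vert(-e_\nu,1)\Vert=\Vert e_\nu\Vert+1$ in $A_+=A\bigoplus_1\mathbb{C}$; moreover the space $X_{ess}$ occurring there is exactly $F_{ess}=\operatorname{cl}_F(AF)$. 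The first move in every part is proposition~\ref{MetCTopFlatCharac}, which converts $C$-topological flatness of a module into $C$-topological injectivity of its dual.

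For part $(i)$ I would start from $F^*$ being $C$-topologically injective. Proposition~\ref{DualBanModDecomp}$(ii)$ exhibits $F_{ess}^*$ as a $c_1$-retract and ${(F/F_{ess})}^*$ as a $c_2$-retract of $F^*$ in $\mathbf{mod}-A$. The retract lemma~\ref{RetrMetCTopInjIsMetCTopInj} then makes $F_{ess}^*$ and ${(F/F_{ess})}^*$ topologically injective with constants $c_1C\leq(1+c)C$ and $c_2C\leq(1+c)C$; dualizing back through proposition~\ref{MetCTopFlatCharac} shows $F_{ess}$ is $(1+c)C$-topologically flat. Since $F/F_{ess}$ is an annihilator module, its $(1+c)C$-topological flatness is exactly the hypothesis of proposition~\ref{MetTopFlatAnnihModCharac}, whose second condition forces $F/F_{ess}$ to be an $\mathscr{L}_{1,(1+c)C}^g$-space (the case $F/F_{ess}= \{0\}$ being trivial).

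For part $(ii)$ I would run the decomposition in the opposite direction. From $F_{ess}$ being $C_1$-topologically flat, proposition~\ref{MetCTopFlatCharac} gives that $F_{ess}^*$ is $C_1$-topologically injective. Because $A$ carries a right $c$-bounded approximate identity and $F/F_{ess}$ is an annihilator $\mathscr{L}_{1,C_2}^g$-space, proposition~\ref{MetTopFlatAnnihModCharac} makes $F/F_{ess}$ topologically flat, hence ${(F/F_{ess})}^*$ topologically injective, with constant $\max(1+c,C_2)$. Both dual summands are then injective for the common constant $\max(C_1,1+c,C_2)$, so the $\bigoplus_\infty$-product lemma~\ref{MetTopInjModProd} makes $F_{ess}^*\bigoplus_\infty{(F/F_{ess})}^*$ topologically injective with that constant. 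Proposition~\ref{DualBanModDecomp}$(i)$ presents $F^*$ as a $c_2(c_1+1)$-retract of this sum, and a final application of~\ref{RetrMetCTopInjIsMetCTopInj} and~\ref{MetCTopFlatCharac} delivers topological flatness of $F$. Feeding in $c_1\leq c$, $c_2\leq 1+c$ and $c_2(c_1+1)\leq(1+c)^2$ is what produces the recorded bound, and the careful propagation of these constants through proposition~\ref{DualBanModDecomp} is the step I expect to be the most delicate.

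Part $(iii)$ is the qualitative shadow of $(i)$ and $(ii)$: discarding the explicit constants, $(i)$ supplies the forward implication and $(ii)$ the converse, so $F$ is topologically flat iff $F_{ess}$ is topologically flat and $F/F_{ess}$ is an $\mathscr{L}_1^g$-space. The conceptual point that drives the whole argument is that the flatness of the annihilator quotient $F/F_{ess}$ is governed by its $\mathscr{L}_1^g$-geometry through proposition~\ref{MetTopFlatAnnihModCharac}, rather than by any injectivity of it as a Banach space; this is what lets the purely geometric $\mathscr{L}_1^g$-condition and the module-theoretic flatness of $F_{ess}$ recombine into flatness of $F$.
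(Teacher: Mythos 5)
Your proposal is correct and follows essentially the same route as the paper: dualize through proposition~\ref{MetCTopFlatCharac}, split $F^*$ via proposition~\ref{DualBanModDecomp} with $c_1\leq c$, $c_2\leq 1+c$, apply the retract lemma~\ref{RetrMetCTopInjIsMetCTopInj} (and~\ref{MetTopInjModProd} for the converse direction), and treat the annihilator quotient with proposition~\ref{MetTopFlatAnnihModCharac}. The only divergence is in part $(ii)$, where your constant $(1+c)^2\max(C_1,1+c,C_2)$ is slightly larger than the stated $(1+c)^2\max(C_1,C_2)$ because you (correctly) account for the approximate-identity bound that proposition~\ref{MetTopFlatAnnihModCharac} ties to the flatness constant of the annihilator quotient --- a bookkeeping point the paper's own proof passes over --- and this affects only the numerical constant, not the qualitative statement $(iii)$.
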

\begin{proof} We regard $A$ as closed subalgebra of unital Banach algebra
$B:=A_+$. Then $F$ is unital left $B$-module. Using notation of
proposition~\ref{DualBanModDecomp} we may say that $c_1=c$ and $c_2=1+c$, so the
right $A$-modules $F_{ess}^*$ and ${(F/F_{ess})}^*$ are $(1+c)$-retracts of
$F^*$.

$i)$ By proposition~\ref{MetCTopFlatCharac} the right $A$-module $F^*$ is
$C$-topologically injective. Therefore from
propositions~\ref{RetrMetCTopInjIsMetCTopInj},~\ref{MetCTopFlatCharac} the modules
$F_{ess}$ and $F/F_{ess}$ are $(1+c)C$-topologically flat. It remains to note
that $F/F_{ess}$ is an annihilator $A$-module, so by
proposition~\ref{MetTopFlatAnnihModCharac} it is an
$\mathscr{L}_{1,(1+c)C}^g$-space.

$(ii)$ Again, by proposition~\ref{MetCTopFlatCharac} the right $A$-modules
$F_{ess}^*$ and ${(F/F_{ess})}^*$ are $C_1$- and $C_2$-topologically injective
respectively. So from proposition~\ref{MetTopInjModProd} their product is
$\max(C_1,C_2)$-topologically injective. By proposition~\ref{DualBanModDecomp}
this product is ${(1+c)}^2$-isomorphic to $F^*$ in $\mathbf{mod}-A$. Therefore
$F^*$ is ${(1+c)}^2\max(C_1, C_2)$-topologically injective $A$-module. Now the
result follows from proposition~\ref{MetCTopFlatCharac}.

$(iii)$ The result immediately follows from paragraphs $(i)$ and $(ii)$.
\end{proof}

\begin{proposition}\label{MetTopEssL1FlatModAoverAmenBanAlg} Let $A$ be a
$\langle$~$1$-relatively / $c$-relatively~$\rangle$ amenable Banach algebra and
$F$ be an essential Banach $A$-module which is an $\langle$~$L_1$-space /
$\mathscr{L}_{1,C}^g$-space~$\rangle$. Then $F$ is a $\langle$~metrically /
$c^2C$-topologically~$\rangle$ flat $A$-module.
\end{proposition}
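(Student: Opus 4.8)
The plan is to pass to the dual and reduce $\langle$~metric / $c^2C$-topological~$\rangle$ flatness of the left module $F$ to $\langle$~metric / $c^2C$-topological~$\rangle$ injectivity of the right $A$-module $F^*$ via proposition~\ref{MetCTopFlatCharac}. Since $F$ is essential, $F^*$ is faithful; and since $F$ is $\langle$~an $L_1$-space / an $\mathscr{L}_{1,C}^g$-space~$\rangle$, it is $\langle$~metrically / $C$-topologically~$\rangle$ flat as a Banach space, so by proposition~\ref{MetCTopFlatCharac} applied over the zero algebra its dual $F^*$ is $\langle$~metrically / $C$-topologically~$\rangle$ injective \emph{as a Banach space}. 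Thus the only thing left is to promote this Banach-space injectivity of $F^*$ to injectivity as an $A$-module, and this is exactly where amenability of $A$ enters.

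Concretely, I would attack the extension problem directly. Let $\xi:Y\to X$ be an $\langle$~isometric / $c'$-topologically injective~$\rangle$ $A$-morphism of right $A$-modules and $\phi:Y\to F^*$ an $A$-morphism. Using $\langle$~metric / $C$-topological~$\rangle$ injectivity of $F^*$ as a Banach space, first choose a bounded linear (not yet module) extension $\psi_0:X\to F^*$ with $\psi_0\xi=\phi$ and $\langle$~$\Vert\psi_0\Vert\leq\Vert\phi\Vert$ / $\Vert\psi_0\Vert\leq c'C\Vert\phi\Vert$~$\rangle$. Next fix an approximate diagonal ${(d_\nu)}_{\nu\in N}\subset A\projtens A$ with $\Vert d_\nu\Vert\leq c$ afforded by $\langle$~$1$- / $c$-~$\rangle$relative amenability, and for each $x\in X$ set $\psi_\nu(x)=\Theta_x(d_\nu)$, where $\Theta_x:A\projtens A\to F^*$ is the bounded operator linearizing the bilinear map $(a,b)\mapsto\psi_0(x\cdot a)\cdot b$, so that $\Vert\Theta_x\Vert\leq\Vert\psi_0\Vert\Vert x\Vert$. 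A direct computation then gives $\psi_\nu(x\cdot a)-\psi_\nu(x)\cdot a=\Theta_x(a\cdot d_\nu-d_\nu\cdot a)$ and $\psi_\nu(\xi(y))=\phi(y)\cdot\Pi_A(d_\nu)$, so the $\psi_\nu$ are only approximately $A$-morphisms and only approximately extend $\phi$.

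The decisive move is to pass to a genuine morphism by a weak$^*$ limit. Let $\mathfrak{U}$ be an ultrafilter on $N$ dominating the section filter and put $\psi(x)=\text{weak}^*\text{-}\lim_{\mathfrak{U}}\psi_\nu(x)$, which exists since each net ${(\psi_\nu(x))}_{\nu}$ is bounded in the dual space $F^*$ by $c\Vert\psi_0\Vert\Vert x\Vert$. Because the right action $f\mapsto f\cdot a$ on $F^*$ is the adjoint of left multiplication on $F$, it is weak$^*$ continuous; combining this with $\lim_\nu\Vert a\cdot d_\nu-d_\nu\cdot a\Vert=0$ turns the first identity into $\psi(x\cdot a)=\psi(x)\cdot a$, so $\psi$ is an $A$-morphism. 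For the extension, observe that ${(\Pi_A(d_\nu))}_{\nu\in N}$ is a two-sided bounded approximate identity for $A$ of bound $c$ (apply $\Pi_A$ to $a\cdot d_\nu-d_\nu\cdot a$ and combine with $\lim_\nu a\Pi_A(d_\nu)=a$); as $F=\operatorname{cl}_F(AF)$ is essential this net acts as a left approximate identity on $F$, whence $\phi(y)\cdot\Pi_A(d_\nu)\to\phi(y)$ weak$^*$ and therefore $\psi(\xi(y))=\phi(y)$. Finally $\Vert\psi\Vert\leq\sup_\nu\Vert\psi_\nu\Vert\leq c\Vert\psi_0\Vert$, giving $\langle$~$\Vert\psi\Vert\leq\Vert\phi\Vert$ / $\Vert\psi\Vert\leq cc'C\Vert\phi\Vert$~$\rangle$; in the topological case this exhibits $F^*$ as $cC$-topologically injective, a fortiori $c^2C$-topologically injective since $c\geq 1$. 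Proposition~\ref{MetCTopFlatCharac} then yields the asserted flatness of $F$.

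The main obstacle is precisely this last passage: the amenability averaging produces only approximate morphisms and an approximate extension, so everything must be rescued in the weak$^*$ topology of the dual module $F^*$. This is what forces the argument through $F^*$ rather than $F$, and it requires checking that both the module action and the approximate-identity action are weak$^*$ continuous on $F^*$, together with the ultrafilter weak$^*$-limit device already exploited in proposition~\ref{MetTopFlatIdealsInUnitalAlg}. Tracking the constants through the amenability bound $c$ is then routine and is comfortably accommodated by the stated bound $c^2C$.
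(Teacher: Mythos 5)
Your proof is correct and takes essentially the same route as the paper's: both pass to the dual via proposition~\ref{MetCTopFlatCharac}, use the Banach-geometric fact that $F^*$ is $\langle$~metrically / $C$-topologically~$\rangle$ injective as a Banach space, and then upgrade a bounded linear solution to an $A$-morphism by averaging against the approximate diagonal ${(d_\nu)}_{\nu\in N}$ with an ultrafilter weak$^*$ limit, with essentiality of $F$ recovering the required identity in the limit. The only difference is organizational: the paper performs this averaging once, on a Banach-space left inverse $R$ of the canonical morphism $\pi_F^*$, exhibiting $F^*$ as a module retract of ${(A\projtens\ell_1(B_F))}^*$, whereas you run it on an arbitrary extension problem $(\xi,\phi)$ and verify the definition of injectivity directly, which incidentally yields the marginally sharper constant $cC$ in place of $c^2C$.
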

\begin{proof} We may assume that $A$ is $c$-relatively amenable for
$\langle$~$c=1$ / $c\geq 1$~$\rangle$. Let ${(d_\nu)}_{\nu\in N}$ be an
approximate diagonal for $A$ with norm bound at most $c$. Recall, that
${(\Pi_A(d_\nu))}_{\nu\in N}$ is a two-sided $\langle$~contractive /
bounded~$\rangle$ approximate identity for $A$. Since $F$ is essential left
$A$-module, then $\lim_{\nu}\Pi_A(d_\nu)\cdot x=x$ for all $x\in F$
[\cite{HelHomolBanTopAlg}, proposition 0.3.15]. As the consequence
$c\pi_F(B_{A\projtens\ell_1(B_F)})$ is dense in $B_F$. Then for all $f\in F^*$
we have
$$
\Vert\pi_F^*(f)\Vert
=\sup \{|f(\pi_F(u))|:u\in B_{A\projtens\ell_1(B_F)} \}
=\sup \{|f(x)|:x\in \operatorname{cl}_F(\pi_F(B_{A\projtens\ell_1(B_F)})) \}
$$
$$
\geq\sup \{c^{-1}|f(x)|:x\in B_F \}=c^{-1}\Vert f\Vert.
$$
This means, that $\pi_F^*$ is $c$-topologically injective. By assumption $F$ is
an $\langle$~$L_1$-space / $\mathscr{L}_{1,C}^g$-space~$\rangle$, then by
$\langle$~[\cite{GrothMetrProjFlatBanSp}, theorem 1] / remark after
[\cite{DefFloTensNorOpId}, corollary 23.5(1)]~$\rangle$ the Banach space $F^*$
is $\langle$~metrically / $C$-topologically~$\rangle$ injective. Since operator
$\pi_F^*$ is $\langle$~isometric / $c$-topologically injective~$\rangle$, then
there exists a linear operator $R:{(A\projtens\ell_1(B_F))}^*\to F^*$ of norm
$\langle$~at most $1$ / at most $cC$~$\rangle$ such that $R\pi_F^*=1_{F^*}$.

Fix $h\in {(A\projtens\ell_1(B_F))}^*$ and $x\in F$. Consider bilinear
functional $M_{h,x}:A\times A\to\mathbb{C}:(a,b)\mapsto R(h\cdot a)(b\cdot x)$.
Clearly, $\Vert M_{h,x}\Vert\leq\Vert R\Vert\Vert h\Vert\Vert x\Vert$. By
universal property of projective tensor product we have a bounded linear
functional 
$m_{h,x}:A\projtens A\to\mathbb{C}:a\projtens b\mapsto R(h\cdot a)(b\cdot x)$. 
Note that $m_{h,x}$ is linear in $h$ and $x$. Even more, for any
$u\in A\projtens A$, $a\in A$ and $f\in F^*$ we have
$m_{\pi_F^*(f),x}(u)=f(\Pi_A(u)\cdot x)$, $m_{h\cdot a,x}(u)=m_{h,x}(a\cdot u)$,
$m_{h,a\cdot x}(u)=m_{h,x}(u\cdot a)$. It easily checked for elementary tensors.
Then it is enough to recall that their linear span is dense in $A\projtens A$.

Let $\mathfrak{F}$ be the section filter on $N$ and let $\mathfrak{U}$ be an
ultrafilter dominating $\mathfrak{F}$. For any 
$h\in {(A\projtens\ell_1(B_F))}^*$ and $x\in F$ we 
have $|m_{h,x}(d_\nu)|\leq c\Vert R\Vert\Vert h\Vert\Vert x\Vert$, 
i.e. ${(m_{h,x}(d_\nu))}_{\nu\in N}$ is a
bounded net of complex numbers. Therefore we have a well defined limit
$\lim_{\mathfrak{U}}m_{h,x}(d_\nu)$ along ultrafilter $\mathfrak{U}$. Consider
linear operator 
$\tau
:{(A\projtens\ell_1(B_F))}^*\to F^*
:h\mapsto(x\mapsto\lim_{\mathfrak{U}}m_{h,x}(d_\nu))$. From 
norm estimates for $m_{h,x}$ it follows that $\tau$ is bounded 
with $\Vert\tau\Vert\leq c\Vert R\Vert$. For all $a\in A$, $x\in F$ 
and $h\in {(A\projtens\ell_1(B_F))}^*$ we have
$$
\tau(h\cdot a)(x)-(\tau(h)\cdot a)(x)
=\tau(h\cdot a)(x)-\tau(h)(a\cdot x)
=\lim_{\mathfrak{U}}m_{h\cdot a,x}(d_\nu)
-\lim_{\mathfrak{U}}m_{h,a\cdot x}(d_\nu).
$$
$$
=\lim_{\mathfrak{U}}m_{h,x}(a\cdot d_\nu)-m_{h,x}(d_\nu\cdot a)
=m_{h,x}\left(\lim_{\mathfrak{U}}(a\cdot d_\nu-d_\nu\cdot a)\right)
$$
$$
=m_{h,x}\left(\lim_{\nu}(a\cdot d_\nu-d_\nu\cdot a)\right)
=m_{h,x}(0)
=0.
$$
Therefore $\tau$ is a morphism of right $A$-modules. Now for all $f\in F^*$ and
$x\in F$ we have
$$
(\tau(\pi_F^*)(f))(x)
=\lim_{\mathfrak{U}}m_{\pi_F^*(f),x}(d_\nu)
=\lim_{\mathfrak{U}}f(\Pi_A(d_\nu)\cdot x)
=\lim_{\nu}f(\Pi_A(d_\nu)\cdot x)
$$
$$
=f\left(\lim_{\nu}\Pi_A(d_\nu)\cdot x\right)
=f(x).
$$
So $\tau\pi_F^*=1_{F^*}$. This means that $F^*$ is a $\langle$~$1$-retract /
 $c^2 C$-retract~$\rangle$ of ${(A\projtens\ell_1(B_F))}^*$ in
 $\langle$~$\mathbf{mod}_1-A$ / $\mathbf{mod}-A$~$\rangle$. The latter
 $A$-module is $\langle$~metrically / topologically~$\rangle$ injective, because
 ${(A_+\projtens\ell_1(B_F))}^*
 \isom{\mathbf{mod}_1-A}
 \mathcal{B}(A_+,\ell_\infty(B_F))$
 and by proposition~\ref{RetrMetCTopInjIsMetCTopInj} so does its 
 retract $F^*$. By proposition~\ref{MetCTopFlatCharac} this is equivalent 
 to $\langle$~metric / $c^2 C$-topological~$\rangle$ flatness of $F$.
\end{proof}

\begin{theorem}\label{TopL1FlatModAoverAmenBanAlg} Let $A$ be a $c$-relatively
amenable Banach algebra and $F$ be a left Banach $A$-module which as Banach
space is an $\mathscr{L}_{1, C}^g$-space. Then $F$ is a
${(1+c)}^2C\max(c^2,(1+c))$-topologically flat $A$-module.
\end{theorem}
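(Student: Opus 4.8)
The plan is to deduce the statement by concatenating three results already at our disposal: the dual decomposition of Proposition~\ref{DualBanModDecomp}, the flatness criterion for essential $\mathscr{L}_1^g$-modules over amenable algebras in Proposition~\ref{MetTopEssL1FlatModAoverAmenBanAlg}, and the splitting of topological flatness into an essential piece and an annihilator piece in Proposition~\ref{TopFlatModCharac}. The first thing I would do is manufacture a good two-sided approximate identity out of amenability. Since $A$ is $c$-relatively amenable it possesses an approximate diagonal ${(d_\nu)}_{\nu\in N}\subset A\projtens A$ of norm at most $c$, and then ${(\Pi_A(d_\nu))}_{\nu\in N}$ is a two-sided approximate identity for $A$ with $\sup_\nu\Vert\Pi_A(d_\nu)\Vert\leq c$, because $\Vert\Pi_A\Vert\leq 1$. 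Viewing $A$ as a closed subalgebra of $B:=A_+$, this places us exactly in the hypotheses of Propositions~\ref{DualBanModDecomp} and~\ref{TopFlatModCharac} with $c_1=c$ and $c_2=1+c$.

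Next I would split $F$ along its essential part $F_{ess}=\operatorname{cl}_F(AF)$. Applying Proposition~\ref{DualBanModDecomp}(iii) to the $\mathscr{L}_{1,C}^g$-space $F$ identifies $F_{ess}$ as an $\mathscr{L}_{1,cC}^g$-space and $F/F_{ess}$ as an $\mathscr{L}_{1,(1+c)C}^g$-space. Because $A$ has a bounded approximate identity, $F_{ess}$ is itself an essential $A$-module, so Proposition~\ref{MetTopEssL1FlatModAoverAmenBanAlg} applies to it, while $F/F_{ess}$ is an annihilator module carrying the $\mathscr{L}_1^g$-constant just recorded. The assembly is then Proposition~\ref{TopFlatModCharac}(ii): with $C_1$ the topological flatness constant of $F_{ess}$ and $C_2=(1+c)C$ the $\mathscr{L}_1^g$-constant of $F/F_{ess}$, one reads off that $F$ is $(1+c)^2\max(C_1,C_2)$-topologically flat. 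Equivalently, passing to duals through Proposition~\ref{MetCTopFlatCharac}, one exhibits $F^*$ as a retract, up to the isomorphism constant $(1+c)^2$ of Proposition~\ref{DualBanModDecomp}(i), of the sum $F_{ess}^*\bigoplus_\infty{(F/F_{ess})}^*$ of topologically injective modules, and concludes with Propositions~\ref{MetTopInjModProd} and~\ref{RetrMetCTopInjIsMetCTopInj}.

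Since the architecture is a direct chain of the three cited propositions, I expect the substance of the argument to lie entirely in the bookkeeping of constants, and that is also where I anticipate the main subtlety. Applying Proposition~\ref{MetTopEssL1FlatModAoverAmenBanAlg} naively to $F_{ess}$, whose $\mathscr{L}_1^g$-constant is $cC$, produces a flatness constant $C_1=c^2\cdot cC=c^3C$, which would give the slightly larger bound $(1+c)^2C\max(c^3,1+c)$. To reach the sharper $\max(c^2,1+c)$ in the statement one must avoid the extra factor of $c$ incurred when weakening from the $\mathscr{L}_{\infty,C}^g$-structure of the ambient dual $F^*$ to the $\mathscr{L}_{\infty,cC}^g$-constant of the retract $F_{ess}^*$ — that is, the single amenability factor $c^2$ of Proposition~\ref{MetTopEssL1FlatModAoverAmenBanAlg} should not be compounded with the complementation constant of the decomposition. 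Carrying out the flatness estimate for $F_{ess}$ directly against $F^*$, rather than routing through $F_{ess}^*$ first, is the delicate step; the remaining verifications, that $c$ is the correct bound for $\Pi_A(d_\nu)$ and that $F_{ess}$ is genuinely essential, are immediate.
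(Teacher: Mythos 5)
Your proposal is, in architecture, exactly the paper's proof: the paper likewise extracts a two-sided $c$-bounded approximate identity from the approximate diagonal, invokes Proposition~\ref{DualBanModDecomp} to control $F/F_{ess}$, invokes Proposition~\ref{MetTopEssL1FlatModAoverAmenBanAlg} for the essential part $F_{ess}$, and assembles the two halves with Proposition~\ref{TopFlatModCharac}. So there is no methodological difference to report.

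What deserves comment is the constant bookkeeping, and there you are in fact more careful than the paper. The paper's proof asserts outright that $F_{ess}$ is $c^2C$-topologically flat ``from Proposition~\ref{MetTopEssL1FlatModAoverAmenBanAlg}''; but, exactly as you observe, that proposition requires an $\mathscr{L}_{1,C}^g$ hypothesis, while Proposition~\ref{DualBanModDecomp}(iii) only certifies $F_{ess}$ as an $\mathscr{L}_{1,cC}^g$-space, so the literal citation yields $c^2(cC)=c^3C$ and hence the bound ${(1+c)}^2C\max(c^3,1+c)$. Thus the objection you raise is aimed not at your own write-up but at the paper itself. Unfortunately, the remedy you sketch --- running the estimate for $F_{ess}$ ``directly against $F^*$'' --- does not recover the exponent either: when $F$ is not essential the adjoint $\pi_F^*$ is not even injective (it annihilates $F_{ess}^\perp$), so there is no left inverse of $\pi_F^*$ that one could restrict to $F_{ess}^*$; the natural linear passage from $F_{ess}^*$ into $F^*$ is the extension operator $\sigma(h)=\bigl(x\mapsto\lim_{\mathfrak{U}}h(e_\nu\cdot x)\bigr)$ of Proposition~\ref{DualBanModDecomp}, whose norm is already $c$ (pointwise Hahn--Banach extensions are not linear), and feeding it into the $C$-topological injectivity of $F^*$ again produces a left inverse of norm $c^2C$ and, after averaging over ${(d_\nu)}_{\nu\in N}$, a flatness constant $c^3C$ for $F_{ess}$. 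So both your argument and the paper's establish the qualitative statement --- $F$ is topologically flat --- but only with constant ${(1+c)}^2C\max(c^3,1+c)$; the sharper constant claimed in the theorem is not justified by the paper's proof, and I do not see how to reach it. Since every later use of this theorem in the paper (e.g.\ Proposition~\ref{TopFlatModAoverAmenL1BanAlgCharac}) needs only qualitative flatness, nothing downstream is affected; your instinct that the whole substance of this statement lies in the bookkeeping of constants is exactly right, including at the one spot where the paper's own bookkeeping slips.
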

\begin{proof} Since $A$ is amenable it admits a two-sided $c$-bounded
approximate identity. By proposition~\ref{DualBanModDecomp} the annihilator
$A$-module $F/F_{ess}$ is an $\mathscr{L}_{1,1+c}^g$-space. From
proposition~\ref{MetTopEssL1FlatModAoverAmenBanAlg} we get that the essential
$A$-module $F_{ess}$ is $c^2 C$-topologically flat. Now the result follows from
proposition~\ref{TopFlatModCharac}.
\end{proof}

We must point out here that in relative Banach homology any left Banach module
over relatively amenable Banach algebra is relatively flat
[\cite{HelBanLocConvAlg}, theorem 7.1.60]. Even topological theory is so
restrictive that in some cases, as the following proposition shows, we can
obtain complete characterization of all flat modules.

\begin{proposition}\label{TopFlatModAoverAmenL1BanAlgCharac} Let $A$ be a
relatively amenable Banach algebra which as Banach space is an
$\mathscr{L}_1^g$-space. Then for a Banach $A$-module $F$ the following are
equivalent:

\begin{enumerate}[label = (\roman*)]
    \item $F$ is topologically flat $A$-module; 

    \item $F$ is an $\mathscr{L}_1^g$-space.
\end{enumerate}
\end{proposition}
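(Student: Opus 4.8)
The plan is to observe that this biconditional is essentially a packaging of two results already established, one for each implication, so almost no new work is required; the genuine content lives in the earlier propositions and the main obstacle will be conceptual rather than technical. Note at the outset that relative amenability of $A$ is only needed for one of the two directions.

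For the direction $(i)\implies(ii)$ I would invoke Proposition~\ref{TopProjInjFlatModOverMthscrL1SpCharac} directly. Since $A$ is by hypothesis an $\mathscr{L}_1^g$-space as a Banach space, it is in particular topologically isomorphic (indeed equal) as a Banach space to an $\mathscr{L}_1^g$-space, which is exactly the standing assumption of that proposition. The flatness clause of Proposition~\ref{TopProjInjFlatModOverMthscrL1SpCharac} then asserts that every topologically flat $A$-module is an $\mathscr{L}_1^g$-space, so a topologically flat $F$ is an $\mathscr{L}_1^g$-space with nothing further to check. It is worth emphasising that this implication does not use amenability at all; it holds for any Banach algebra whose underlying Banach space is an $\mathscr{L}_1^g$-space.

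For the converse $(ii)\implies(i)$ I would appeal to Theorem~\ref{TopL1FlatModAoverAmenBanAlg}. Assuming $F$ is an $\mathscr{L}_1^g$-space, it is an $\mathscr{L}_{1,C}^g$-space for some $C\geq 1$, and the relative amenability of $A$ means $A$ is $c$-relatively amenable for some $c\geq 1$. Theorem~\ref{TopL1FlatModAoverAmenBanAlg} then yields that $F$ is ${(1+c)}^2C\max(c^2,(1+c))$-topologically flat as an $A$-module, and in particular topologically flat, which is the required conclusion.

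I expect no real obstacle in assembling these two implications; the proof is a two-line combination of Proposition~\ref{TopProjInjFlatModOverMthscrL1SpCharac} and Theorem~\ref{TopL1FlatModAoverAmenBanAlg}. The substantive difficulty has already been absorbed into those prior results, most notably into Theorem~\ref{TopL1FlatModAoverAmenBanAlg}, whose proof routes through the essential-flatness estimate of Proposition~\ref{MetTopEssL1FlatModAoverAmenBanAlg} and the dual decomposition of Proposition~\ref{DualBanModDecomp}. The only point requiring care is simply to record that the quantified constants ($C$ for the $\mathscr{L}_{1,C}^g$ structure and $c$ for relative amenability) exist by the definitions of being an $\mathscr{L}_1^g$-space and of being relatively amenable, so that the hypotheses of the cited theorem are met.
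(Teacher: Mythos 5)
Your proposal is correct and is exactly the paper's proof: the paper proves this proposition verbatim by citing Proposition~\ref{TopProjInjFlatModOverMthscrL1SpCharac} for $(i)\implies(ii)$ and Theorem~\ref{TopL1FlatModAoverAmenBanAlg} for $(ii)\implies(i)$. Your additional observations (that amenability is not needed for the forward direction, and that the constants $c$ and $C$ exist by definition so the cited theorem applies) are accurate and only make explicit what the paper leaves implicit.
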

\begin{proof} The equivalence follows from
proposition~\ref{TopProjInjFlatModOverMthscrL1SpCharac} and
theorem~\ref{TopL1FlatModAoverAmenBanAlg}.
\end{proof}

Finally we are able to give an example of relatively flat, but not topologically
flat ideal in a Banach algebra. Consider $A=L_1(\mathbb{T})$. It is known, that
$A$ has a translation invariant infinite dimensional closed subspace $I$
isomorphic to a Hilbert space [\cite{RosProjTransInvSbspLpG}, p.52]. By
[\cite{KaniBanAlg}, proposition 1.4.7] we have that $I$ is a two-sided ideal of
$A$, as any translation invariant subspace of $A$. By [\cite{DefFloTensNorOpId},
section 23.3] this ideal is not an $\mathscr{L}_1^g$-space. So from
proposition~\ref{TopFlatModAoverAmenL1BanAlgCharac} we get that $I$ is not
topologically flat as $A$-module. We claim it is still relatively flat. Since
$\mathbb{T}$ is a compact group, then it is amenable [\cite{PierAmenLCA},
proposition 3.12.1]. Thus $A$ is relatively amenable [\cite{HelBanLocConvAlg},
proposition VII.1.86], so all left ideals of $A$ are relatively flat
[\cite{HelBanLocConvAlg}, proposition VII.1.60(I)]. In particular, $I$ is
relatively flat.


\subsection{
    Injectivity of ideals
}\label{SubSectionInjectivityOfIdeals}

Injective ideals are rare creatures but we need to say a few words about them.
Results of this section needed for the study of metric and topological
injectivity of $C^*$-algebras.

\begin{proposition}\label{MetTopInjOfId} Let $I$ be a right ideal of a Banach
algebra $A$. Assume $I$ is $\langle$~metrically / $C$-topologically~$\rangle$
injective $A$-module. Then $I$ has a left identity of norm $\langle$~at most $1$
/ at most $C$~$\rangle$ and is a $\langle$~$1$-retract / $C$-retract~$\rangle$
of $A$ in $\mathbf{mod}-A$.
\end{proposition}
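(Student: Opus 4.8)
The plan is to feed the identity morphism $1_I$ into the injectivity of $I$, using the unitization $A_+$ as the ambient module so that a unit element becomes available. First I would regard $I$, $A$ and $A_+$ all as right $A$-modules, the actions being (restrictions of) the respective multiplications. Since $I$ is a right ideal, $I$ is in fact a right $A_+$-submodule of $A_+$, and the inclusion $\iota_+\colon I\to A_+$ is an isometric, hence $\langle$~isometric / $1$-topologically injective~$\rangle$, morphism of right $A$-modules. Applying Definition~\ref{MetCTopInjMod} to the monomorphism $\xi=\iota_+$ and the morphism $\phi=1_I$ then yields an $A$-morphism $\psi^+\colon A_+\to I$ with $\psi^+\iota_+=1_I$ and $\|\psi^+\|\leq\langle$~$1$ / $C$~$\rangle$ (using $\|1_I\|=1$ and $c=1$).

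For the retract claim I would simply restrict. Let $\iota\colon I\to A$ be the inclusion and $\zeta=\psi^+|_A\colon A\to I$; both are morphisms of right $A$-modules, $\zeta\iota=1_I$ by construction, and $\|\zeta\|\,\|\iota\|\leq\langle$~$1$ / $C$~$\rangle$. Hence $I$ is a $\langle$~$1$-retract / $C$-retract~$\rangle$ of $A$ in $\mathbf{mod}-A$.

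For the left identity I would set $p=\psi^+(e_{A_+})\in I$; since $\|e_{A_+}\|=1$ its norm is $\langle$~at most $1$ / at most $C$~$\rangle$. The key point is that right $A$-linearity converts left multiplication by $p$ into an application of $\psi^+$: for any $x\in I\subset A$ we have $e_{A_+}\cdot x=e_{A_+}x=x$ in $A_+$, so
$$
px=p\cdot x=\psi^+(e_{A_+})\cdot x=\psi^+(e_{A_+}\cdot x)=\psi^+(x)=x,
$$
the last equality because $\psi^+$ restricts to the identity on $I$. Thus $p$ is a left identity of $I$ with the required norm bound.

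The only genuine idea, and the step I expect to be the crux, is choosing $A_+$ rather than $A$ as the target of the extension: working inside $A$ alone gives the retraction but supplies no element that right-linearity can promote to a left identity, whereas the unit $e_{A_+}$ is precisely what makes the computation $\psi^+(e_{A_+}\cdot x)=\psi^+(x)$ produce $px=x$. Everything else---verifying that the inclusions are isometric right $A$-morphisms and that $A_+$ is a genuine Banach algebra, so that its multiplication has norm at most one---is routine.
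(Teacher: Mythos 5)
Your proof is correct and follows essentially the same route as the paper: embed $I$ isometrically into $A_+$ as right $A$-modules, use injectivity to produce a left inverse $\psi^+:A_+\to I$ of norm at most $\langle$~$1$ / $C$~$\rangle$, set $p=\psi^+(e_{A_+})$, and use right $A$-linearity to get $px=\psi^+(e_{A_+}\cdot x)=\psi^+(x)=x$ for $x\in I$. Even the retraction is the same map in disguise, since $\psi^+|_A(x)=\psi^+(e_{A_+}\cdot x)=px$, which is exactly the retraction $x\mapsto px$ the paper writes down.
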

\begin{proof} Consider isometric embedding $\rho^+:I\to A_+$ of $I$ into $A_+$.
Clearly, this is an $A$-morphism. Since $I$ is $\langle$~metrically /
$C$-topologically~$\rangle$ injective, then $\rho^+$ has left inverse
$A$-morphism $\tau^+:A_+\to I$ with norm $\langle$~at most $1$ / at most
$C$~$\rangle$. Now for all $x\in I$ we have
$x
=\tau^+(\rho^+(x))
=\tau^+(e_{A_+}\rho^+(x))
=\tau(e_{A_+})\rho^+(x)=\tau^+(e_{A_+})x$.
In other words $p=\tau^+(e_{A_+})\in I$ is a left unit for $I$. Clearly, 
$\Vert p\Vert
\leq\Vert\tau^+\Vert\Vert e_{A_+}\Vert
\leq\Vert\tau^+\Vert$. Consider maps
$\rho:I\to A:x\mapsto x$ and $\tau:A\to I:x\mapsto p x$. Clearly, they are
morphisms of right $A$-modules and $\tau\rho=1_I$. Hence $I$ is a
$\langle$~$1$-retract / $C$-retract~$\rangle$ of $A$ in $\mathbf{mod}-A$.
\end{proof}

\begin{proposition}\label{ReduceInjIdToInjAlg} Let $I$ be a two-sided ideal of
a Banach algebra $A$, which is faithful as right $I$-module. Then

\begin{enumerate}[label = (\roman*)]
    \item if $I$ is $\langle$~metrically / $C$-topologically~$\rangle$ injective
    $I$-module, then $I$ is $\langle$~metrically / $C$-topologically~$\rangle$
    injective $A$-module; 

    \item if $I$ is $\langle$~metrically / $C$-topologically~$\rangle$ injective
    $A$-module, then $I$ is $\langle$~metrically / $C^2$-topologically~$\rangle$
    injective $I$-module.
\end{enumerate}
\end{proposition}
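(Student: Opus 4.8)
The plan is to deduce both parts from the change-of-algebra result Proposition~\ref{MetTopInjUnderChangeOfAlg}, specialised to the module $J=I$, supplying the missing complementation hypothesis from Proposition~\ref{MetTopInjOfId}.

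For part $(i)$ I would simply invoke Proposition~\ref{MetTopInjUnderChangeOfAlg}$(i)$ with $J=I$. All of its hypotheses are met: $I$ is a closed subalgebra of $A$ (being an ideal), $I$ is a right $A$-module (being a right ideal), it is faithful as $I$-module by assumption, and it is a left ideal of $A$ since $I$ is two-sided. Hence metric (respectively $C$-topological) injectivity of $I$ as an $I$-module transfers verbatim to metric (respectively $C$-topological) injectivity of $I$ as an $A$-module, with no change of constant.

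For part $(ii)$ the difficulty is that Proposition~\ref{MetTopInjUnderChangeOfAlg}$(ii)$ requires $I$ to be a weakly complemented right ideal of $A$, which is not listed among the hypotheses. This is exactly where Proposition~\ref{MetTopInjOfId} enters: assuming $I$ is metrically (respectively $C$-topologically) injective as an $A$-module, that proposition exhibits $I$ as a $1$-retract (respectively $C$-retract) of $A$ in $\mathbf{mod}-A$, via the natural isometric embedding $\rho=i$ and a retraction $\tau$ of norm at most $1$ (respectively at most $C$). Consequently $i\tau$ is a projection of $A$ onto $I$ of norm at most $1$ (respectively at most $C$), so $I$ is $1$-complemented (respectively $C$-complemented), and therefore weakly $1$-complemented (respectively weakly $C$-complemented), in $A$.

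With this complementation in hand I would apply Proposition~\ref{MetTopInjUnderChangeOfAlg}$(ii)$ with $J=I$ (faithful as $I$-module by hypothesis) and $c=1$ in the metric case, $c=C$ in the topological case. The metric case yields that $I$ is metrically injective as an $I$-module, while the topological case yields $cC=C\cdot C=C^2$-topological injectivity, which is precisely the asserted constant. The only genuine care needed is tracking the constants through the two applications and verifying that the ideal and faithfulness conditions of Proposition~\ref{MetTopInjUnderChangeOfAlg} line up with those available here; no further estimates are required, so I expect the step invoking Proposition~\ref{MetTopInjOfId} to produce the complementation to be the conceptual crux rather than any computation.
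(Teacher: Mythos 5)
Your proposal is correct and follows exactly the paper's own proof: part $(i)$ is a direct application of Proposition~\ref{MetTopInjUnderChangeOfAlg}$(i)$, and part $(ii)$ uses Proposition~\ref{MetTopInjOfId} to get the $\langle$~$1$- / $C$-~$\rangle$complementation (hence weak complementation) of $I$ in $A$ before applying Proposition~\ref{MetTopInjUnderChangeOfAlg}$(ii)$, producing the constant $C\cdot C=C^2$. The only difference is that you spell out the hypothesis-checking and constant-tracking that the paper leaves implicit.
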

\begin{proof} $(i)$ The result immediately follows from paragraph $(i)$ of
proposition~\ref{MetTopInjUnderChangeOfAlg}.

$(ii)$ By proposition~\ref{MetTopInjOfId} the $A$-module $I$ is
$\langle$~$1$-complemented / $C$-complemented~$\rangle$ in $A$. By paragraph
$(ii)$ of proposition~\ref{MetTopInjUnderChangeOfAlg} the $I$-module $I$ is
$\langle$~metrically / $C^2$-topologically~$\rangle$ injective.
\end{proof}

\chapter{
    Applications to algebras of analysis
}\label{ChapterApplicationsToAlgebrasOfAnalysis} 

\lhead{Chapter 3. \emph{Applications to algebras of analysis}} 

Vaguely speaking there are three types of Banach modules depending on the type
of module action: modules with pointwise multiplication, modules with
composition of operators in the role of module action and modules with
convolution. We shall investigate main examples of these types. Following the
style of Dales and Polyakov from~\cite{DalPolHomolPropGrAlg} we shall
systematize all results on classical modules of analysis, but this time for
metric and topological theory. We shall consider modules over operator algebras,
sequence algebras, algebras of continuous functions and, finally, classical
modules of harmonic analysis.


\section{
    Applications to modules over \texorpdfstring{$C^*$}{C*}-algebras
}\label{SectionApplicationsToModulesOverCStarAlgebras}


\subsection{
    Spatial modules
}\label{SubSectionSpatialModules}

We start from the simplest  examples of modules over operator algebras --- the
spatial modules. By Gelfand-Naimark's theorem (see e.g.
[\cite{HelBanLocConvAlg}, theorem 4.7.57]) for any $C^*$-algebra $A$ there
exists a Hilbert space $H$ and an isometric ${}^*$-homomorphism
$\varrho:A\to\mathcal{B}(H)$. For Hilbert spaces that admit such homomorphism we
may consider the left $A$-module $H_\varrho$ with module action defined as
$a\cdot x=\varrho(a)(x)$. Automatically we get the structure of right $A$-module
on $H^*$ which is by Riesz's theorem is isometrically isomorphic to $H^{cc}$.
This isomorphism allows one to define the right $A$-module structure on $H^{cc}$
by $\overline{x}\cdot a=\overline{\varrho(a^*)(x)}$. For a given $x_1,x_2\in H$
we define the rank one 
operator $x_1\bigcirc x_2:H\to H:x\mapsto \langle x, x_2\rangle x_1$. 

\begin{proposition}\label{SpatModOverCStarAlgProp} Let $A$ be a $C^*$-algebra
and $\varrho:A\to\mathcal{B}(H)$ be an isometric ${}^*$-homomorphism, such that
its image contains a subspace of rank one operators of the 
form $ \{x\bigcirc x_0:x\in H \}$ for some non zero $x_0\in H$. 
Then the left $A$-module $H_\varrho$ is metrically projective and flat, 
while the  right $A$-module $H_\varrho^{cc}$ is metrically injective.
\end{proposition}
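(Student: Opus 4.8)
The plan is to reduce everything to the explicitly projective principal left ideals of Proposition~\ref{UnIdeallIsMetTopProj}. First I would normalize, replacing $x_0$ by $x_0/\Vert x_0\Vert$; since $x\bigcirc x_0=(\Vert x_0\Vert\, x)\bigcirc(x_0/\Vert x_0\Vert)$, the given subspace is unchanged, so I may assume $\Vert x_0\Vert=1$. Then $p_0:=x_0\bigcirc x_0$ is a self-adjoint idempotent in $\mathcal B(H)$ of norm one lying in the subspace $\{x\bigcirc x_0:x\in H\}$, hence $p_0=\varrho(p)$ for a unique $p\in A$; because $\varrho$ is an isometric ${}^*$-homomorphism, $p$ is an idempotent with $\Vert p\Vert=1$. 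By Proposition~\ref{UnIdeallIsMetTopProj}(i) the left ideal $Ap$ is therefore metrically projective, and the entire argument rests on identifying $H_\varrho$ with $Ap$.

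Next I would build the identifying morphisms. The key algebraic facts, both checked on vectors, are $\varrho(a)(x\bigcirc x_0)=(\varrho(a)x)\bigcirc x_0$ for all $a\in A$, $x\in H$, and $(x\bigcirc x_0)(x_0\bigcirc x_0)=x\bigcirc x_0$ (using $\Vert x_0\Vert=1$). Define $\pi:Ap\to H_\varrho$ by $\pi(a)=a\cdot x_0=\varrho(a)(x_0)$ and $\sigma:H_\varrho\to Ap$ by $\sigma(x)=a_x$, where $a_x\in A$ is the unique element with $\varrho(a_x)=x\bigcirc x_0$; the second identity shows $a_x p=a_x$, so $a_x\in Ap$. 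Injectivity of $\varrho$ makes $\sigma$ linear, $\Vert\sigma(x)\Vert=\Vert x\bigcirc x_0\Vert=\Vert x\Vert$ makes it isometric, and the first identity gives $\sigma(a\cdot x)=a\,\sigma(x)$, so $\sigma$ is an $A$-module map; $\pi$ is visibly a contractive $A$-module map. A short computation using $(x\bigcirc x_0)(x_0)=x$ yields $\pi\sigma=1_{H_\varrho}$ and $\sigma\pi=1_{Ap}$, so $H_\varrho\isom{A-\mathbf{mod}_1}Ap$. Metric projectivity of $H_\varrho$ then follows from Proposition~\ref{RetrMetCTopProjIsMetCTopProj}, since $H_\varrho$ is in particular a $1$-retract of the metrically projective module $Ap$, and metric flatness is immediate from Proposition~\ref{MetTopProjIsMetTopFlat}.

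For the right module $H_\varrho^{cc}$ I would invoke duality rather than repeat the construction. Using the linear isometry $H^{cc}\to H^*:\overline y\mapsto\langle\,\cdot\,,y\rangle$ together with the computation $(f\cdot a)(x)=f(\varrho(a)x)=\langle x,\varrho(a^*)y\rangle$ for $f=\langle\,\cdot\,,y\rangle$, one sees that this isometry carries the right action $\overline y\cdot a=\overline{\varrho(a^*)(y)}$ on $H_\varrho^{cc}$ to the dual right-module action on $(H_\varrho)^*$; thus $H_\varrho^{cc}\isom{\mathbf{mod}_1-A}(H_\varrho)^*$. Since $H_\varrho$ is metrically flat, Proposition~\ref{MetCTopFlatCharac} gives that $(H_\varrho)^*$, and hence $H_\varrho^{cc}$, is metrically injective.

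The routine parts are the two operator identities and the norm bookkeeping; the only genuinely conceptual step is reading a norm-one projection $p$ out of the hypothesis and recognizing the isomorphism $H_\varrho\cong Ap$. The point to watch is the module-map verification for $\sigma$, where the ${}^*$-homomorphism property of $\varrho$ and the rank-one structure must combine correctly, and the use of injectivity of $\varrho$ to transport the computations from $\mathcal B(H)$ back to $A$.
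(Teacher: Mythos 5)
Your proof is correct, and it rests on the same two maps as the paper's: $\sigma:x\mapsto\varrho^{-1}(x\bigcirc x_0)$ and evaluation at $x_0$, followed by the retract-stability of metric projectivity (Proposition~\ref{RetrMetCTopProjIsMetCTopProj}), Proposition~\ref{MetTopProjIsMetTopFlat} for flatness, and the identification $H_\varrho^{cc}\isom{\mathbf{mod}_1-A}H_\varrho^*$ for injectivity. The difference is in the packaging. The paper exhibits $H_\varrho$ directly as a $1$-retract of $A_+$, via $\pi:A_+\to H_\varrho:a\oplus_1 z\mapsto\varrho(a)(x_0)+zx_0$, and cites Propositions~\ref{UnitalAlgIsMetTopProj} and~\ref{RetrMetCTopProjIsMetCTopProj}. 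You instead extract the norm-one (self-adjoint) idempotent $p=\varrho^{-1}(x_0\bigcirc x_0)$, prove the sharper statement that $H_\varrho$ is isometrically isomorphic as a left $A$-module to the principal ideal $Ap$, and then invoke Proposition~\ref{UnIdeallIsMetTopProj}(i) --- which is itself proved by realizing $Ap$ as a $1$-retract of $A_\times$, so the composite argument reduces to the same underlying fact. Your route buys a cleaner structural picture (the spatial module is literally a principal ideal cut out by a rank-one projection), at the modest cost of extra verifications: that $\sigma$ lands in $Ap$ (which you check) and, strictly speaking, that $Ap$ is a closed left ideal as the paper's convention requires --- automatic here since $Ap=\{a\in A:ap=a\}$ for the idempotent $p$, but worth a line. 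Finally, for the injectivity of $H_\varrho^{cc}$ you use Proposition~\ref{MetCTopFlatCharac} (flat iff dual injective) where the paper uses Proposition~\ref{DualMetTopProjIsMetrInj}; both are legitimate and give the same conclusion.
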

\begin{proof} Without loss of generality we may assume that $\Vert x_0\Vert=1$.
Consider linear operators 
$\pi:A_+\to H_\varrho:a\oplus_1 z\mapsto \varrho(a)(x_0)+zx_0$ 
and $\sigma:H_\varrho\to A_+:x\mapsto \varrho^{-1}(x\bigcirc x_0)$. 
It is straightforward to check that $\pi$ and
$\sigma$ are contractive $A$-morphisms such that $\pi\sigma=1_{H_\varrho}$.
Therefore $H_\varrho$ is a retract of $A_+$ in $A-\mathbf{mod}_1$. From
propositions~\ref{UnitalAlgIsMetTopProj} and~\ref{RetrMetCTopProjIsMetCTopProj} 
it follows that $H_\varrho$ is metrically projective $A$-module. From
proposition~\ref{MetTopProjIsMetTopFlat} it follows that $H_\varrho$ is
metrically flat too. Since $H_\varrho^{cc}\isom{\mathbf{mod}_1-A}H_\varrho^*$,
proposition~\ref{DualMetTopProjIsMetrInj} gives that $H_\varrho^{cc}$ is
metrically injective.
\end{proof}

In what follows we shall use the following simple application of the above
result.

\begin{proposition}\label{FinDimNHModTopProjFlat} Let $H$ be a finite
dimensional Hilbert space. Then $\mathcal{N}(H)$ is topologically projective and
hence flat as $\mathcal{B}(H)$-module.
\end{proposition}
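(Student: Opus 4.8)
The plan is to realise $\mathcal{N}(H)$ as the projective tensor product of a spatial module with a finite-dimensional Banach space, and then to feed this into the machinery already developed for spatial modules and for tensoring with $\ell_1$. Write $n=\dim H$ and $A=\mathcal{B}(H)$, which is a $C^*$-algebra, and equip $\mathcal{N}(H)$ with the left $A$-module action $a\cdot T=aT$. First I would note that, since $H$ is finite-dimensional, it has the approximation property, so the natural Grothendieck map $Gr\colon H^*\projtens H\to\mathcal{N}(H)$, determined on elementary tensors by $f\projtens x\mapsto(y\mapsto f(y)x)$, is an isometric isomorphism of Banach spaces. A direct check on elementary tensors shows $a\cdot Gr(f\projtens x)=Gr(f\projtens(a\cdot x))$, so $Gr$ carries the module action on $\mathcal{N}(H)$ to the action on the second tensor factor. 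Taking $\varrho=\mathrm{id}_{\mathcal{B}(H)}$ --- an isometric ${}^*$-homomorphism whose image is all of $\mathcal{B}(H)$ and hence contains $\{x\bigcirc x_0:x\in H\}$ for any fixed unit vector $x_0$ --- this identifies $\mathcal{N}(H)\isom{A-\mathbf{mod}_1}H^*\projtens H_\varrho\isom{A-\mathbf{mod}_1}H_\varrho\projtens H^*$, the module structure living on the $H_\varrho$ factor.

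Next I would exploit that $H^*$ is $n$-dimensional, hence topologically isomorphic to $\ell_1(\mathbb{N}_n)$ via some isomorphism $T\colon H^*\to\ell_1(\mathbb{N}_n)$. Then $1_{H_\varrho}\projtens T$ is a topological isomorphism of left $A$-modules from $H_\varrho\projtens H^*$ onto $H_\varrho\projtens\ell_1(\mathbb{N}_n)$: it is a module map because the action sits on the first factor, and $1_{H_\varrho}\projtens T^{-1}$ is its inverse. Consequently $\mathcal{N}(H)$ is $C$-isomorphic to $H_\varrho\projtens\ell_1(\mathbb{N}_n)$ for some $C\geq 1$. By proposition~\ref{SpatModOverCStarAlgProp} applied to $\varrho=\mathrm{id}$, the module $H_\varrho$ is metrically projective, hence $1$-topologically projective by proposition~\ref{MetProjIsOneTopProj}; so corollary~\ref{MetTopProjTensProdWithl1} makes $H_\varrho\projtens\ell_1(\mathbb{N}_n)$ topologically projective. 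Since a topological isomorphism is in particular a $C$-retraction, proposition~\ref{RetrMetCTopProjIsMetCTopProj} then yields that $\mathcal{N}(H)$ is topologically projective, and proposition~\ref{MetTopProjIsMetTopFlat} finally gives topological flatness.

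The routine parts here are all citations; the one step that demands genuine care is the module bookkeeping in the first paragraph --- verifying that the trace-norm action $a\cdot T=aT$ corresponds under $Gr$ precisely to the $\mathcal{B}(H)$-action on the spatial factor, and that $H^*$ enters only as a Banach space with no residual module structure of its own, so that corollary~\ref{MetTopProjTensProdWithl1} is genuinely applicable. I do not expect any analytic difficulty, since finite-dimensionality trivialises the approximation-property and Banach--Mazur issues that would otherwise arise.
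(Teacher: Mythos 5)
Your proposal is correct and follows essentially the same route as the paper: identify $\mathcal{N}(H)\isom{\mathcal{B}(H)-\mathbf{mod}_1}H_\varrho\projtens H^*$ (the paper cites [\cite{HelBanLocConvAlg}, proposition 0.3.38] where you invoke the Grothendieck map and the approximation property), replace $H^*$ by $\ell_1(\mathbb{N}_n)$ via a topological isomorphism, and then combine proposition~\ref{SpatModOverCStarAlgProp}, corollary~\ref{MetTopProjTensProdWithl1}, proposition~\ref{RetrMetCTopProjIsMetCTopProj} and proposition~\ref{MetTopProjIsMetTopFlat}. The only difference is that you make explicit two steps the paper leaves implicit (the verification that the module action transports under $Gr$, and the use of the retraction lemma to pass across the module isomorphism), which is harmless.
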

\begin{proof} From [\cite{HelBanLocConvAlg}, proposition 0.3.38] we know that
$\mathcal{N}(H)\isom{\mathbf{Ban}_1}H\projtens H^*$. Let
$\varrho=1_{\mathcal{B}(H)}$, then we can claim a little bit more:
$\mathcal{N}(H)\isom{\mathcal{B}(H)-\mathbf{mod}_1} H_\varrho\projtens H^*$.
Since $H^*$ is finite dimensional, then
$H^*\isom{\mathbf{Ban}}\ell_1(\mathbb{N}_n)$ for $n=\dim(H)$ and as the result
$\mathcal{N}(H)\isom{\mathcal{B}(H)-\mathbf{mod}}
H_\varrho\projtens\ell_1(\mathbb{N}_n)$. By
proposition~\ref{SpatModOverCStarAlgProp} the module $H_\varrho$ it
topologically projective, so from corollary~\ref{MetTopProjTensProdWithl1} we
get that $\mathcal{N}(H)$ is topologically projective as
$\mathcal{B}(H)$-module. The last claim of theorem follows from
proposition~\ref{MetTopProjIsMetTopFlat}.
\end{proof}


\subsection{
    Projective ideals of \texorpdfstring{$C^*$}{C*}-algebras
}\label{SubSectionProjectiveIdealsOfCStarAlgebras}

The study of homologically trivial ideals of $C^*$-algebras we start from
projectivity, but before stating the main result we need a preparatory lemma.

\begin{lemma}\label{ContFuncCalcOnIdealOfCStarAlg} Let $I$ be a left ideal of a
unital $C^*$-algebra $A$. Assume $a\in I$ is a self-adjoint element and let $E$
be the real subspace of real valued functions in $C(\operatorname{sp}_A(a))$
vanishing at zero. Then there is an isometric homomorphism
$\operatorname{RCont}_a^0:E\to I$ well defined by
$\operatorname{RCont}_a^0(f)=a$, where
$f:\operatorname{sp}_A(a)\to\mathbb{C}:t\mapsto t$.
\end{lemma}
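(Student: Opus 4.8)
The plan is to realize $\operatorname{RCont}_a^0$ as the restriction to $E$ of the continuous functional calculus map $\operatorname{Cont}_a\colon C(\operatorname{sp}_A(a))\to A$. This map exists because $a$ is self-adjoint, and it is an isometric ${}^*$-homomorphism with $\operatorname{Cont}_a(f)=a$ for the identity function $f\colon t\mapsto t$. Since $a$ is self-adjoint, $\operatorname{sp}_A(a)$ is a compact subset of $[-\Vert a\Vert,\Vert a\Vert]\subset\mathbb{R}$, and $E$ is a real subalgebra of $C(\operatorname{sp}_A(a))$ (a product of real-valued functions vanishing at zero is again real-valued and vanishes at zero). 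A restriction of an isometric homomorphism to a subalgebra is again an isometric homomorphism, and the identity function lies in $E$; so the only substantive point is to verify that $\operatorname{Cont}_a(E)\subseteq I$. Once this inclusion is established, $\operatorname{RCont}_a^0$, defined as $\operatorname{Cont}_a$ restricted to $E$ and corestricted to $I$, is the desired isometric homomorphism, and $\operatorname{RCont}_a^0(f)=\operatorname{Cont}_a(f)=a$ for $f\colon t\mapsto t$.

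First I would dispose of the degenerate case $0\notin\operatorname{sp}_A(a)$: then $a$ is invertible in the unital algebra $A$, so $e_A=a^{-1}a\in I$ because $I$ is a left ideal containing $a$. Hence $I=A$ and $\operatorname{Cont}_a(E)\subseteq A=I$ trivially.

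The heart of the argument is the case $0\in\operatorname{sp}_A(a)$. Here I would first show that every polynomial $p$ with $p(0)=0$ satisfies $p(a)\in I$: writing $p(t)=t\,q(t)$ for a polynomial $q$, one has $p(a)=q(a)\,a$ with $q(a)\in A$ and $a\in I$, so $p(a)\in I$ since $I$ is a left ideal. For a general $f\in E$, the Weierstrass approximation theorem furnishes real polynomials $q_n\to f$ uniformly on $\operatorname{sp}_A(a)$. Setting $p_n:=q_n-q_n(0)$ gives polynomials with $p_n(0)=0$ and $p_n\to f$ uniformly, because $q_n(0)\to f(0)=0$ (as $0\in\operatorname{sp}_A(a)$). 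Then $\operatorname{Cont}_a(p_n)=p_n(a)\in I$, and since $\operatorname{Cont}_a$ is isometric we get $p_n(a)\to\operatorname{Cont}_a(f)$ in norm; as $I$ is closed, $\operatorname{Cont}_a(f)\in I$.

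The main obstacle is precisely this approximation step: one must arrange the approximating polynomials to vanish at zero so that they land in $I$, and one must handle the dichotomy $0\in\operatorname{sp}_A(a)$ versus $0\notin\operatorname{sp}_A(a)$ separately, since the former invokes density while the latter forces $I=A$. With $\operatorname{Cont}_a(E)\subseteq I$ in hand, the multiplicativity and isometry of $\operatorname{RCont}_a^0$ are inherited verbatim from those of $\operatorname{Cont}_a$, which completes the proof.
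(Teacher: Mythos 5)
Your proof is correct and follows essentially the same route as the paper's: both rest on the observation that polynomials with no constant term map into $I$ by the left-ideal property, combined with the isometry of the continuous functional calculus and a density/limit argument. The only difference is packaging --- the paper defines the map directly on the dense subspace $\mathbb{R}_0[t]$ of polynomials vanishing at zero and extends it by completeness of $I$, whereas you restrict the full functional calculus and verify its image of $E$ lands in the closed set $I$, with your explicit dichotomy on whether $0\in\operatorname{sp}_A(a)$ making explicit a case that the paper's density assertion covers only implicitly.
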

\begin{proof} By $\mathbb{R}_0[t]$ we denote the real linear subspace of $E$
consisting of polynomials vanishing at zero. Since $I$ is an ideal of $A$ and
and $p\in\mathbb{R}_0[t]$ has no constant term then $p(a)\in I$.  Hence we have
well defined $\mathbb{R}$-linear homomorphism of algebras
$\operatorname{RPol}_a^0:\mathbb{R}_0[t]\to I:p\mapsto p(a)$. By continuous
functional calculus for any polynomial $p$ 
holds $\Vert p(a)\Vert=\Vert p|_{\operatorname{sp}_A(a)}\Vert_\infty$, so
$\Vert\operatorname{RPol}_a^0(p)\Vert
=\Vert p|_{\operatorname{sp}_A(a)}\Vert_\infty$. 
Thus $\operatorname{RPol}_a^0$ is isometric. 
As $\mathbb{R}_0[t]$ is dense in $E$ and $I$ is complete, then
$\operatorname{RPol}_a^0$ has an isometric extension
$\operatorname{RCont}_a^0:E\to I$ which is also an $\mathbb{R}$-linear
homomorphism. 
\end{proof}

The following proof is inspired by ideas of D. P. Blecher and T. Kania. In
[\cite{BleKanFinGenCStarAlgHilbMod}, lemma 2.1] they proved that any
algebraically finitely generated left ideal of $C^*$-algebras is principal.  

\begin{theorem}\label{LeftIdealOfCStarAlgMetTopProjCharac} Let $I$ be a left
ideal of a $C^*$-algebra $A$. Then the following are equivalent:

\begin{enumerate}[label = (\roman*)]
    \item $I=Ap$ for some self-adjoint idempotent $p\in I$;

    \item $I$ is metrically projective $A$-module;

    \item $I$ is topologically projective $A$-module.
\end{enumerate}
\end{theorem}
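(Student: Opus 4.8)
The plan is to prove the cycle $(i)\Rightarrow(ii)\Rightarrow(iii)\Rightarrow(i)$, so that the only substantive work lies in the last implication. For $(i)\Rightarrow(ii)$: a nonzero self-adjoint idempotent $p$ is a projection, and the $C^*$-identity $\|p\|^2=\|p^*p\|=\|p\|$ forces $\|p\|=1$ (while $p=0$ gives $I=\{0\}$, which is trivially projective). Hence $I=Ap$ is generated by a norm-one idempotent, and Proposition~\ref{UnIdeallIsMetTopProj}$(i)$ yields metric projectivity. The implication $(ii)\Rightarrow(iii)$ is immediate from Proposition~\ref{MetProjIsTopProjAndTopProjIsRelProj}, since every metrically projective module is $1$-topologically projective.

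The heart is $(iii)\Rightarrow(i)$. First I would record that, as a left ideal of a $C^*$-algebra, $I$ carries a right contractive positive approximate identity ${(e_\nu)}_{\nu\in N}$; from $x=\lim_\nu xe_\nu$ and $xe_\nu\in AI$ one sees that $I$ is essential as a left $A$-module. The goal is to produce an idempotent right identity $p\in I$: then $I=Ap$ and $p^2=p$, after which $p$ can be replaced by a projection. To obtain the right identity I would invoke Lemma~\ref{GoodIdealMetTopProjIsUnital}$(ii)$, whose only nontrivial hypothesis is that to every left $A$-module morphism $\phi:I\to A$ there corresponds a right $I$-module morphism $\psi:I\to I$ with $\|\psi\|\le k\|\phi\|$ and $\phi(x)y=x\psi(y)$ for all $x,y\in I$. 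Here the $C^*$-structure enters through left $A$-linearity: since $\phi(xe_\nu)=x\phi(e_\nu)$, one gets $x\phi(e_\nu)y\to\phi(x)y$ for every $x$. I would then define $\psi(y)$ as an ultrafilter limit of the bounded net ${(\phi(e_\nu)y)}_{\nu\in N}$, computed in the bidual, and verify that it lands in $I$, is right $I$-linear, and obeys the norm bound. (Informally, $x\mapsto\phi(x)y$ is right multiplication by a multiplier of $I$.)

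I expect the construction of $\psi$ --- establishing convergence and, above all, that the limit lies inside $I$ rather than merely in $A^{**}$ --- to be the main obstacle, and the point where the Blecher--Kania circle of ideas is genuinely used. Granting it, Lemma~\ref{GoodIdealMetTopProjIsUnital}$(ii)$ delivers an idempotent right identity $p\in I$ with $I=Ap$. It then remains to upgrade $p$ to a self-adjoint idempotent $q\in I$ with $Aq=Ap$. Since $Aq=Ap$ is equivalent to $qp=q$ together with $pq=p$, I would set $h:=p^*p$, which lies in $I$ (as $AI\subseteq I$), is positive, and satisfies $hp=h$, hence $h^kp=h^k$ and $f(h)p=f(h)$ for every continuous $f$ with $f(0)=0$.

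Applying the continuous functional calculus of Lemma~\ref{ContFuncCalcOnIdealOfCStarAlg} to a function vanishing at $0$ and equal to $1$ on the rest of $\operatorname{sp}_A(h)$ then produces the projection $q\in I$, once one checks that $0$ is isolated in $\operatorname{sp}_A(h)$. This isolation is precisely the analytic manifestation of $I$ being principal; alternatively, I would cite the standard fact that an idempotent generating a closed left ideal of a $C^*$-algebra is similar to a projection generating the same ideal, and verify that this projection can be taken inside $I$. Combined with the relations $qp=q$, $pq=p$, this gives $I=Aq$ with $q$ self-adjoint idempotent, closing the cycle and, together with $(i)\Rightarrow(ii)$, establishing the full equivalence.
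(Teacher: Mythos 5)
Your reduction of the hard implication $(iii)\implies(i)$ to Lemma~\ref{GoodIdealMetTopProjIsUnital}$(ii)$ contains a genuine gap, and it sits exactly at the point you yourself call ``the main obstacle.'' To invoke that lemma you must produce, for \emph{every} left $A$-morphism $\phi:I\to A$, a right $I$-morphism $\psi:I\to I$ with $\Vert\psi\Vert\leq k\Vert\phi\Vert$ and $\phi(x)y=x\psi(y)$. Your candidate $\psi(y)=\lim_{\mathfrak{U}}\phi(e_\nu)y$ exists only as a weak${}^*$ limit in the bidual: the net ${(\phi(e_\nu)y)}_{\nu}$ is a bounded net in $I$, so its ultrafilter limit lies in $I^{\perp\perp}\subset A^{**}$, and there is no reason for it to lie in $I$, nor even in $A$ --- since ${(e_\nu)}_{\nu}$ is only a \emph{right} approximate identity, the net ${(\phi(e_\nu)y)}_{\nu}$ need not converge weakly in $A$ at all (for instance, when $\phi$ is the natural inclusion the recipe asks for a weak limit of ${(e_\nu y)}_{\nu}$, which need not exist in $A$, even though in that case $\psi=1_I$ works). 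What your plan really requires is a multiplier-type theorem: every left $A$-morphism $I\to A$ is right multiplication by an element of $A^{**}$ that can be corrected into an element of $I$. For an arbitrary closed left ideal of an arbitrary $C^*$-algebra this is essentially as hard as the theorem itself, and the paper pointedly does \emph{not} take this route in the $C^*$ setting: Lemma~\ref{GoodIdealMetTopProjIsUnital} is used only where such a description of module maps is classical --- commutative algebras (Theorem~\ref{GoodCommIdealMetTopProjIsUnital}, via commutativity and Lemma~\ref{ImgOfAMorphFromBiIdToA}) and $L_1(G)$ (Proposition~\ref{CommIdealByIdemMeasL1MetTopProjCharac}, via Wendel's theorem).

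The paper's actual proof of $(iii)\implies(i)$ bypasses the lemma entirely. From Proposition~\ref{NonDegenMetTopProjCharac} it takes a right inverse $A$-morphism $\sigma:I\to A\projtens\ell_1(B_I)$ of $\pi_I$, writes $\sigma(x)=\sum_{d\in B_I}\sigma_d(x)\projtens\delta_d$, and uses the right contractive approximate identity to obtain the crucial summability $\sum_{d\in B_I}\Vert\sigma_d\Vert\leq\Vert\sigma\Vert$. It then forms the positive element $b=\sum_{d\in B_I}\Vert\sigma_d\Vert^2 d^*d\in I$ and, via the Blecher--Kania ``power trick'' applied to the elements $b^{1/m}\in I$ (this is where Lemma~\ref{ContFuncCalcOnIdealOfCStarAlg} enters), shows that $\operatorname{sp}_{A_\#}(b)$ is contained in $\{0\}\cup[1,+\infty)$; applying $h(t)=\min(1,t)$ then yields a self-adjoint idempotent $p=h(b)\in I$ with $bp=pb=b$, and the identity $(e_{A_\#}-p)b(e_{A_\#}-p)=0$ forces $dp=d$ whenever $\sigma_d\neq 0$, whence $xp=\sum_{d\in B_I}\sigma_d(x)dp=x$ for all $x\in I$, i.e.\ $I=Ap$. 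The self-adjoint idempotent is thus produced in one stroke, with no intermediate right identity and no separate upgrading step. (For the record, your upgrading step is itself fine --- for an idempotent $p$ the spectrum of $p^*p$ automatically lies in $\{0\}\cup[1,+\infty)$, so $0$ is isolated --- but it sits downstream of the unproven construction of $\psi$ and so cannot rescue the argument.)
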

\begin{proof} $(i)\implies (ii)$ Since $p$ is a self-adjoint idempotent, then
$\Vert p\Vert=1$, so by proposition~\ref{UnIdeallIsMetTopProj} paragraph $(i)$
the ideal $I$ is metrically projective as $A$-module.

$(ii)\implies (iii)$ See
proposition~\ref{MetProjIsTopProjAndTopProjIsRelProj}.

$(iii) \implies (i)$ Let ${(e_\nu)}_{\nu\in N}$ be a right contractive
approximate identity of ideal $I$ [\cite{HelBanLocConvAlg}, theorem 4.7.79].
Since $I$ admits a right approximate identity, then it is an essential left
$I$-module, and a fortiori an essential $A$-module. By
proposition~\ref{NonDegenMetTopProjCharac} we have a right inverse $A$-morphism
$\sigma:I\to A\projtens \ell_1(B_I)$ of $\pi_I$ in $A-\mathbf{mod}$. For each
$d\in B_I$ consider $A$-morphisms $p_d:A\projtens \ell_1(B_I)\to A:a\projtens
\delta_x\mapsto \delta_x(d)a$ and $\sigma_d=p_d\sigma$. Then
$\sigma(x)=\sum_{d\in B_I}\sigma_d(x)\projtens \delta_d$ for all $x\in I$. From
identification 
$A\projtens\ell_1(B_I)\isom{\mathbf{Ban}_1}\bigoplus_1 \{ A:d\in B_I \}$, 
for all $x\in I$ we 
have $\Vert\sigma(x)\Vert=\sum_{d\in B_I} \Vert\sigma_d(x)\Vert$. 
Since $\sigma$ is a right inverse morphism of $\pi_I$ we
have $x=\pi_I(\sigma(x))=\sum_{d\in B_I}\sigma_d(x)d$ for all $x\in I$. 

For all $x\in I$ we have 
$\Vert\sigma_d(x)\Vert
=\Vert\sigma_d(\lim_\nu xe_\nu)\Vert
=\lim_\nu\Vert x\sigma_d(e_\nu)\Vert 
\leq\Vert x\Vert\liminf_\nu\Vert\sigma_d(e_\nu)\Vert$, 
so $\Vert\sigma_d\Vert\leq \liminf_\nu\Vert\sigma_d(e_\nu)\Vert$. 
Then for all $S\in\mathcal{P}_0(B_I)$ holds
$$
\sum_{d\in S}\Vert \sigma_d\Vert
\leq \sum_{d\in S}\liminf_\nu\Vert \sigma_d(e_\nu)\Vert
\leq \liminf_\nu\sum_{d\in S}\Vert \sigma_d(e_\nu)\Vert
\leq \liminf_\nu\sum_{d\in B_I}\Vert \sigma_d(e_\nu) \Vert
$$
$$
=\liminf_{\nu}\Vert\sigma(e_\nu)\Vert
\leq \Vert\sigma\Vert\liminf_{\nu}\Vert e_\nu\Vert
\leq \Vert\sigma\Vert
$$
Since $S\in \mathcal{P}_0(B_I)$ is arbitrary, then the sum 
$\sum_{d\in B_I}\Vert\sigma_d\Vert$ is finite. As the consequence, the 
sum $\sum_{d\in B_I}\Vert\sigma_d\Vert^2$ is finite too. 

Now we regard $A$ as an ideal in its unitization $A_\#$, then $I$ is an ideal of
$A_\#$ too. Fix a natural number $m\in\mathbb{N}$ and a real number
$\epsilon>0$. Then there exists a set $S\in\mathcal{P}_0(B_I)$ such that
$\sum_{d\in B_I\setminus S}\Vert\sigma_d\Vert<\epsilon$. Denote its cardinality
by $N$. Consider positive element 
$b=\sum_{d\in B_I}\Vert\sigma_d\Vert^2 d^*d\in I$. Now we perform 
a ``power trick'' by considering different powers $b^{1/m}$ of positive element 
$b$, where $m\in\mathbb{N}$. By lemma~\ref{ContFuncCalcOnIdealOfCStarAlg} we 
have that $b^{1/m}\in I$, so $b^{1/m}=\sum_{d\in B_I}\sigma_d(b^{1/m})d$. 
By continuous functional calculus we have 
$\Vert b^{1/m}\Vert
=\sup_{t\in\operatorname{sp}_{A_\#}(b)} t^{1/m}\leq\Vert b\Vert^{1/m}$, 
then $\limsup_{m\to\infty}\Vert b^{1/m}\Vert\leq 1$. 
Therefore $\Vert b^{1/m}\Vert\leq 2$ for sufficiently big $m$. Denote
$\varsigma_d=\sigma_d(b^{1/m})$, $u=\sum_{d\in S}\varsigma_d d$ and
$v=\sum_{d\in B_I\setminus S}\varsigma_d d$, so 
$$
b^{2/m}={(b^{1/m})}^*b^{1/m}=u^*u+u^*v+v^*u+v^*v
$$
Clearly, $\varsigma_d^*\varsigma_d\leq \Vert \varsigma_d\Vert^2 e_{A_\#}\leq
\Vert \sigma_d\Vert^2\Vert b^{1/m}\Vert^2 e_{A_\#}\leq 4\Vert \sigma_d\Vert^2
e_{A_\#}$. For any $x,y\in A$ we have $x^*x+y^*y-(x^*y+y^*x)={(x-y)}^*(x-y)\geq
0$, therefore 
$$
d^*\varsigma_d^* \varsigma_c c+c^*\varsigma_c^* \varsigma_d d
\leq d^*\varsigma_d^*\varsigma_d d + c^*\varsigma_c^*\varsigma_c c
\leq 4\Vert \sigma_d\Vert^2 d^*d+4\Vert \sigma_c\Vert^2 c^*c
$$
for all $c,d\in B_I$. We sum up these inequalities over $c\in S$ and $d\in S$,
then 
$$
\begin{aligned}
\sum_{c\in S}\sum_{d\in S}c^*\varsigma_c^* \varsigma_d d
&=\frac{1}{2}\left(
    \sum_{c\in S}\sum_{d\in S}d^*\varsigma_d^* \varsigma_c c
    +
    \sum_{c\in S}\sum_{d\in S}c^*\varsigma_c^* \varsigma_d d
\right)\\
&\leq\frac{1}{2}\left(4 N\sum_{d\in S} \Vert \sigma_d\Vert^2 d^*d+
4 N\sum_{c\in S} \Vert \sigma_c\Vert^2 c^*c\right)\\
&=4 N\sum_{d\in S} \Vert \sigma_d\Vert^2 d^*d.
\end{aligned}
$$
Therefore
$$
u^*u
={\left(\sum_{c\in S}\varsigma_c c\right)}^*
\left(\sum_{d\in S}\varsigma_d d\right)
=\sum_{c\in S}\sum_{d\in S}c^*\varsigma_c^* \varsigma_d d
\leq 4N\sum_{d\in S} \Vert \sigma_d\Vert^2 d^*d
= 4N b
$$
Note that
$$
\Vert u\Vert
\leq \sum_{d\in S}\Vert\varsigma_d\Vert\Vert d\Vert
\leq \sum_{d\in S}2\Vert\sigma_d\Vert
\leq 2\Vert\sigma\Vert,
\qquad
\Vert v\Vert
\leq \sum_{d\in B_I\setminus S}\Vert\varsigma_d\Vert\Vert d\Vert
\leq \sum_{d\in B_I\setminus S}2\Vert\sigma_d\Vert
\leq 2\epsilon,
$$
so $\Vert u^*v+v^*u\Vert\leq 8\Vert\sigma\Vert\epsilon$ 
and $\Vert v^*v\Vert\leq 4\epsilon^2$. Since $u^*v+v^*u$ and $v^*v$ are self 
adjoint, then $u^*v+v^*u\leq 8\Vert\sigma\Vert\epsilon e_{A_\#}$ 
and $v^*v\leq 4\epsilon^2 e_{A_\#}$ Therefore for any $\epsilon>0$ and 
sufficiently big $m$ we have 
$$
b^{2/m}
=u^*u+u^*v+v^*u+v^*v
\leq 4Nb+\epsilon(8\Vert\sigma\Vert+4\epsilon)e_{A_\#}.
$$

In other words $g_m(b)\geq 0$ for continuous function
$g_m:\mathbb{R}_+\to\mathbb{R}:t\mapsto
4Nt+\epsilon(8\Vert\sigma\Vert+4\epsilon)-t^{2/m}$. Now choose $\epsilon>0$ such
that $M:=\epsilon(8\Vert\sigma\Vert+4\epsilon)<1$.  By spectral mapping theorem
[\cite{HelLectAndExOnFuncAn}, theorem 6.4.2] we get
$g_m(\operatorname{sp}_{A_\#}(b))
=\operatorname{sp}_{A_\#}(g_m(b))\subset\mathbb{R}_+$.
It is routine to check that $g_m$ has the only one extreme point
$t_{0,m}={(2Nm)}^{\frac{m}{2-m}}$ where the minimum of $g_m$ is attained. Since
$\lim_{m\to\infty} g_m(t_{0,m})=M-1<0$, $g_m(0)=M>0$ and $\lim_{t\to\infty}
g_m(t)=+\infty$, then for sufficiently big $m$ the function $g_m$ has exactly
two roots: $0<t_{1,m}<t_{0,m}$ and $t_{2,m} > t_{0,m}$. Therefore $g_m(t)\geq 0$
for $0\leq t\leq t_{1,m}$ or $t\geq t_{2,m}$. Hence for all sufficiently large
$m$ holds $\operatorname{sp}_{A_\#}(b)\subset T_{t_{1,m},t_{2,m}}$, 
where $T_{a, b}:=\{t\in\mathbb{R} : 0\leq t \leq a \vee t \geq b\}$. 
Since $\lim_{m\to\infty} t_{0,m}=0$ then $\lim_{m\to\infty} t_{1,m}=0$ too. 
Note that $g_m(1)=4N+M-1>0$, so for sufficiently big $m$ we also 
have $t_{2,m}\leq 1$. Consequently,
$\operatorname{sp}_{A_\#}(b)\subset T_{0,1}$.

Consider a continuous function $h:\mathbb{R}_+\to\mathbb{R}:t\mapsto\min(1, t)$,
uthen from lemma~\ref{ContFuncCalcOnIdealOfCStarAlg} we get an idempotent
$p=h(b)=\operatorname{RCont}_b^0(h)\in I$ such that $\Vert
p\Vert=\sup_{t\in\operatorname{sp}_{A_\#}(b)}|h(t)|\leq 1$. Therefore $p$ is a
self-adjoint idempotent. Since $h(t)t=th(t)=t$ for all $t\in
\operatorname{sp}_{A_\#}(b)$ we have $bp=pb=b$. The last equality implies
$$
0=(e_{A_\#}-p)b(e_{A_\#}-p)
=\sum_{d\in B_I}
    {(\Vert\sigma_d\Vert d(e_{A_\#}-p))}^*\Vert\sigma_d\Vert d(e_{A_\#}-p).
$$
Since the right hand side of this equality is non negative, then $d=dp$ for all
$d\in B_I$ with $\sigma_d\neq 0$. Finally, for any $x\in I$ we obtain
$xp=\sum_{d\in B_I}\sigma_d(x)dp=\sum_{d\in B_I}\sigma_d(x)d=x$, i.e. $I=Ap$,
for self-adjoint idempotent $p\in I$.
\end{proof}

It is worth to point out here that in relative theory there no such description
for relative projectivity of left ideals of $C^*$-algebras. Though for the case
of separable $C^*$-algebras (that is $C^*$-algebras that are separable as Banach
spaces) all left ideals are relatively projective. See
[\cite{LykProjOfBanAndCStarAlgsOfContFld}, section 6] for a nice overview of the
current state of the problem.

\begin{corollary}\label{BiIdealOfCStarAlgMetTopProjCharac} Let $I$ be a
two-sided ideal of a $C^*$-algebra $A$. Then the following are equivalent:

\begin{enumerate}[label = (\roman*)]
    \item $I$ is unital;

    \item $I$ is metrically projective $A$-module;

    \item $I$ is topologically projective $A$-module.
\end{enumerate}
\end{corollary}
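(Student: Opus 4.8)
The plan is to deduce everything from Theorem~\ref{LeftIdealOfCStarAlgMetTopProjCharac}. Since a two-sided ideal is in particular a left ideal, that theorem applies verbatim and already gives the equivalence of (ii) and (iii), as well as the equivalence of both with the condition $I=Ap$ for some self-adjoint idempotent $p\in I$. Thus the entire task reduces to showing that, for a closed two-sided ideal, this last condition is equivalent to $I$ being unital.

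For the direction assuming $I=Ap$ with $p=p^*=p^2\in I$, first I would check that $p$ is a right identity, which is immediate: every $x\in I$ has the form $x=ap$ for some $a\in A$, whence $xp=ap^2=ap=x$. To upgrade this to a two-sided identity I would invoke the standard fact that a closed two-sided ideal of a $C^*$-algebra is self-adjoint (indeed a $C^*$-subalgebra); hence $x\in I$ implies $x^*\in I$, so $x^*p=x^*$, and taking adjoints together with $p^*=p$ yields $px=x$. Therefore $p$ is the identity of $I$ and $I$ is unital. For the converse I would use that the closed two-sided ideal $I$ is itself a $C^*$-algebra, so if it is unital then its identity $e_I$ is automatically a self-adjoint idempotent lying in $I$. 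Then $Ae_I\subset I$ because $I$ is an ideal, while every $x\in I$ satisfies $x=xe_I\in Ae_I$, giving $I=Ae_I$; setting $p=e_I$ recovers condition (i) of the theorem.

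The only subtle point --- the ``hard part'' of an otherwise formal argument --- is the passage from a one-sided to a two-sided identity, which is exactly where self-adjointness of the ideal (and of the $C^*$-algebra unit) is essential; once that is in place the three conditions chain together through Theorem~\ref{LeftIdealOfCStarAlgMetTopProjCharac}, completing the proof.
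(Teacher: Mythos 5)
Your proof is correct, and its skeleton is the same as the paper's: both reduce the corollary to Theorem~\ref{LeftIdealOfCStarAlgMetTopProjCharac} and then identify the theorem's condition ``$I=Ap$ for a self-adjoint idempotent $p\in I$'' with unitality of $I$. The difference is in the bridge between these two conditions. The paper notes that a two-sided ideal of a $C^*$-algebra has a contractive two-sided approximate identity [\cite{HelBanLocConvAlg}, theorem 4.7.79] and then uses the standard fact from the preliminaries that a right identity together with a left approximate identity forces the algebra to be unital; thus ``$I$ has a right identity'' and ``$I$ is unital'' coincide, and the right identity supplied by $I=Ap$ settles the matter. You instead upgrade the right identity $p$ to a two-sided one via self-adjointness of closed two-sided ideals: $x^*\in I$ gives $x^*p=x^*$, and taking adjoints (using $p^*=p$) yields $px=x$, so no approximate identity appears in the corollary's proof itself. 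Both bridging facts are standard; yours exploits the self-adjointness of $p$ that the theorem provides (the paper never actually uses it in the corollary), whereas the paper's route reuses the approximate-identity machinery already running through this section --- which is also what underlies the self-adjointness of ideals you quote as a black box. Your converse direction (the identity of the $C^*$-algebra $I$ is a self-adjoint idempotent and $I=Ae_I$) is exactly what the paper leaves implicit, so the two proofs are of equal completeness there.
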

\begin{proof} The ideal $I$ has a contractive approximate identity
[\cite{HelBanLocConvAlg}, theorem 4.7.79]. Therefore $I$ has a right identity
iff $I$ is unital. Now all equivalences follow from
theorem~\ref{LeftIdealOfCStarAlgMetTopProjCharac}. 
\end{proof}

\begin{corollary}\label{IdealofCommCStarAlgMetTopProjCharac} Let $S$ be a
locally compact Hausdorff space, and $I$ be an ideal of $C_0(S)$. Then the
following are equivalent:

\begin{enumerate}[label = (\roman*)]
    \item $\operatorname{Spec}(I)$ is compact;

    \item $I$ is metrically projective $C_0(S)$-module;

    \item $I$ is topologically projective $C_0(S)$-module.
\end{enumerate}
\end{corollary}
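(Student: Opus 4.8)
The plan is to reduce everything to the already-established description of projective two-sided ideals of $C^*$-algebras, namely corollary~\ref{BiIdealOfCStarAlgMetTopProjCharac}, and then translate the resulting algebraic condition (unitality) into the topological condition on the spectrum. First I would observe that $C_0(S)$ is a commutative $C^*$-algebra, so any ideal $I$ of it is automatically two-sided, and $I$ is itself a commutative $C^*$-algebra (a closed subalgebra of $C_0(S)$ stable under the involution, since ideals here are closed by convention and the product structure is commutative). Thus $I$ falls squarely within the hypotheses of corollary~\ref{BiIdealOfCStarAlgMetTopProjCharac} with the ambient algebra $A=C_0(S)$, and the $C_0(S)$-module structure on $I$ is exactly the one considered there.

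Applying corollary~\ref{BiIdealOfCStarAlgMetTopProjCharac} immediately yields the equivalence of $(ii)$ and $(iii)$, together with the fact that both are equivalent to $I$ being unital. Consequently the only genuinely new content is the equivalence of unitality of $I$ with compactness of $\operatorname{Spec}(I)$, and the whole corollary will follow once that equivalence is in hand.

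To settle this last point I would invoke the first Gelfand--Naimark representation theorem, which gives an isometric ${}^*$-isomorphism $I\isom{\mathbf{Ban}_1}C_0(\operatorname{Spec}(I))$. Under this identification a unit of $I$ corresponds to a multiplicative identity of $C_0(\operatorname{Spec}(I))$, i.e.\ the constant function $\mathbf{1}$; and $\mathbf{1}$ vanishes at infinity precisely when $\operatorname{Spec}(I)$ is compact. Hence $I$ is unital if and only if $\operatorname{Spec}(I)$ is compact, closing the chain $(i)\iff(ii)\iff(iii)$.

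I do not anticipate a serious obstacle here: the substantive analytic work is entirely contained in theorem~\ref{LeftIdealOfCStarAlgMetTopProjCharac} (via corollary~\ref{BiIdealOfCStarAlgMetTopProjCharac}), and the present argument is essentially bookkeeping plus a clean appeal to commutative Gelfand--Naimark theory. The only care needed is to confirm that the hypotheses of corollary~\ref{BiIdealOfCStarAlgMetTopProjCharac} genuinely apply---that $I$ is closed, two-sided, and carries the correct module action---all of which are automatic in the commutative $C^*$-setting. (As an alternative to the constant-function argument one could note that $I$, being a commutative $C^*$-algebra, is semisimple, and then deduce unitality from compactness of the spectrum via Shilov's idempotent theorem, with the converse supplied by the standard fact that a unital Banach algebra has compact spectrum; but the Gelfand--Naimark route handles both directions at once.)
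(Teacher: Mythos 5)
Your proposal is correct and follows essentially the same route as the paper: both reduce the homological content to corollary~\ref{BiIdealOfCStarAlgMetTopProjCharac} and use Gelfand--Naimark to identify $I$ with $C_0(\operatorname{Spec}(I))$, so that the whole corollary comes down to ``$I$ unital iff $\operatorname{Spec}(I)$ compact.'' The only immaterial difference is in how that last equivalence is justified: the paper invokes semisimplicity together with Shilov's idempotent theorem, whereas your primary argument reads it off directly from the fact that the constant function $\mathbf{1}$ vanishes at infinity exactly when the spectrum is compact --- and you yourself note the Shilov route as an alternative.
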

\begin{proof} By Gelfand-Naimark's theorem
$I\isom{\mathbf{Ban}_1}C_0(\operatorname{Spec}(I))$, therefore $I$ is
semisimple. Now by Shilov's idempotent theorem $I$ is unital iff
$\operatorname{Spec}(I)$ is compact. It remains to apply
corollary~\ref{BiIdealOfCStarAlgMetTopProjCharac}. 
\end{proof}

It is worth to mention that the class of relatively projective ideals of
$C_0(S)$ is much larger. In fact a closed ideal $I$ of $C_0(S)$ is relatively
projective iff $\operatorname{Spec}(I)$ is paracompact
[\cite{HelHomolBanTopAlg}, chapter IV,\S\S 2--3].


\subsection{
    Injective ideals of \texorpdfstring{$C^*$}{C*}-algebras
}\label{SubSectionInjectiveIdealsOfCStarAlgebras}

Now we proceed to injectivity of two-sided ideals of $C^*$-algebras.
Unfortunately we don't have a complete characterization at hand, but some
necessary conditions and several examples. The following proposition shows that
we may restrict investigation of injective ideals to the case of $C^*$-algebras
that $\langle$~metrically / topologically~$\rangle$ injective over themselves as
right modules.

\begin{proposition}\label{MetTopInjOvrAlgIffOvrItslf} Let $I$ be a two-sided
ideal of a $C^*$-algebra $A$, then $I$ is $\langle$~metrically /
topologically~$\rangle$ injective as $A$-module iff $I$ is $\langle$~metrically
/ topologically~$\rangle$ injective as $I$-module.
\end{proposition}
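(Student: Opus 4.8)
The plan is to reduce everything to Proposition~\ref{ReduceInjIdToInjAlg}, which already transfers injectivity of a two-sided ideal between the ambient algebra and the ideal itself, provided the ideal is faithful as a right module over itself. So the only genuine task is to verify that hypothesis in the $C^*$-setting; the homological substance has already been carried out.

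First I would recall the standard fact [\cite{HelBanLocConvAlg}, theorem 4.7.79] that a two-sided closed ideal $I$ of a $C^*$-algebra $A$ admits a two-sided contractive approximate identity ${(e_\nu)}_{\nu\in N}$. In particular ${(e_\nu)}_{\nu\in N}$ is a right approximate identity, so $x=\lim_\nu x e_\nu$ for every $x\in I$. If now $x\in I$ satisfies $x\cdot I=\{0\}$, then $xe_\nu=0$ for all $\nu$, whence $x=\lim_\nu x e_\nu=0$. Thus $I$ is faithful as a right $I$-module, which is precisely the standing hypothesis of Proposition~\ref{ReduceInjIdToInjAlg}.

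With faithfulness in hand, both implications are immediate. If $I$ is $\langle$~metrically / topologically~$\rangle$ injective as an $I$-module, then paragraph $(i)$ of Proposition~\ref{ReduceInjIdToInjAlg} yields that $I$ is $\langle$~metrically / topologically~$\rangle$ injective as an $A$-module. Conversely, if $I$ is $\langle$~metrically / $C$-topologically~$\rangle$ injective as an $A$-module, then paragraph $(ii)$ of the same proposition gives $\langle$~metric / $C^2$-topological~$\rangle$ injectivity as an $I$-module.

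The only bookkeeping worth flagging is the constants. In the metric case both directions stay at constant $1$, so metric injectivity transfers cleanly. In the topological case the inflation $C\mapsto C^2$ in paragraph $(ii)$ is harmless, since topological injectivity only requires the estimate to hold for \emph{some} constant $C\geq 1$. I do not anticipate any real obstacle here beyond correctly matching the right-module faithfulness condition to the $C^*$-algebraic setting; once the automatic existence of the approximate identity is invoked, the statement is simply the specialization of Proposition~\ref{ReduceInjIdToInjAlg} to ideals of $C^*$-algebras.
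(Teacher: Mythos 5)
Your proposal is correct and follows exactly the paper's own route: the paper likewise invokes [\cite{HelBanLocConvAlg}, theorem 4.7.79] to obtain a contractive approximate identity for $I$, concludes faithfulness of $I$ as a right $I$-module, and then applies both paragraphs of proposition~\ref{ReduceInjIdToInjAlg}. The only difference is that you spell out the faithfulness verification and the constant bookkeeping, which the paper leaves implicit.
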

\begin{proof} Note that any two-sided ideal $I$ of a $C^*$-algebra $A$ is again
a $C^*$-algebra with contractive approximate identity [\cite{HelBanLocConvAlg},
theorem 4.7.79]. Therefore $I$ is faithful as right $I$-module. Now
proposition~\ref{ReduceInjIdToInjAlg} gives the desired equivalence.
\end{proof}

We shall say a few words on so called $AW^*$-algebras, since they are key
players here. In attempts to find a purely algebraic definition of
$W^*$-algebras Kaplanski introduced this class of $C^*$-algebras
in~\cite{KaplProjInBanAlg}. A $C^*$-algebra $A$ is called an $AW^*$-algebra if
for any subset $S\subset A$ the right algebraic annihilator
$S^{\perp A}$ is of the form $pA$ for
some self-adjoint idempotent $p\in A$. This class contains all $W^*$-algebras,
but strictly larger. Note that for the case of commutative $C^*$-algebras the
property of being an $AW^*$-algebra is equivalent to  $\operatorname{Spec}(A)$
being a Stonean space [\cite{BerbBaerStarRings}, theorem 1.7.1]. The main
reference to $AW^*$-algebras and more general Baer ${}^*$-rings
is~\cite{BerbBaerStarRings}. 

The following proposition is a combination of results by M. Hamana and M.
Takesaki.

\begin{proposition}[Hamana, Takesaki]\label{MetInjCStarAlgCharac} Let $A$ be a
$C^*$-algebra, then it is metrically injective right $A$-module iff it is a
commutative $AW^*$-algebra, that is $\operatorname{Spec}(A)$ is a Stonean space.
\end{proposition}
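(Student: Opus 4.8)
The plan is to treat the two implications separately, isolating the easy ``Hamana'' direction from the delicate ``Takesaki'' direction.

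\emph{Sufficiency.} Suppose $A$ is a commutative $AW^*$-algebra. Taking the annihilator of $\{0\}$ in the $AW^*$ axiom shows $A=pA$ for a self-adjoint idempotent $p$, whence $p$ is a left identity; applying the involution gives that $p$ is also a right identity, so $A$ is unital. By the remark preceding the statement $K:=\operatorname{Spec}(A)$ is then Stonean, and the commutative Gelfand--Naimark theorem [\cite{HelBanLocConvAlg}, theorem 4.7.13] gives an isometric ${}^*$-isomorphism $A\cong C(K)$. Since $A$ is unital it is a faithful right $A$-module, so by proposition~\ref{NonDegenMetTopInjCharac} it suffices to produce a contractive right $A$-module left inverse $\tau$ of the canonical monomorphism $\rho_A$, i.e.\ to realize $A$ as a contractive module retract of its cofree envelope. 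I would build $\tau$ from the monotone completeness of $C(K)$ ($K$ Stonean means every bounded increasing net in $C(K)$ has a supremum): a contractive balanced functional-valued map into $C(K)$ is extended by taking order-theoretic suprema of its positive parts, and order-continuity of the multiplication operators $f\mapsto fg$ guarantees the extension is again an $A$-module map with the required norm bound. This is precisely the classical injectivity of monotone complete abelian $C^*$-algebras.

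\emph{Necessity.} Now suppose $A$ is metrically injective as a right $A$-module. First, $A$ is unital with an identity of norm one: regarding $A$ as a right ideal of itself, proposition~\ref{MetTopInjOfId} yields a left identity $p$ with $\|p\|\le 1$, and the adjoint computation $pa=a\Rightarrow a^*p^*=a^*$ shows $p^*$ is a right identity, forcing $p=p^*$ to be the unit. Next I would extract the $AW^*$-property from the complementation of annihilators: for every right ideal $I$ that is $1$-complemented as a Banach space, proposition~\ref{MetTopRelInjModCompIdealAnnihCompl} (with $J=A$) makes the annihilator ideal $A^{\perp I}$ complemented in $A$. Feeding in the annihilators $S^{\perp A}$ and invoking that a closed one-sided annihilator ideal of a $C^*$-algebra which is Banach-complemented is generated by a self-adjoint idempotent, one concludes every such annihilator has the form $pA$, i.e.\ $A$ is an $AW^*$-algebra.

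It remains to prove commutativity, and this is the step I expect to be the main obstacle. The difficulty is that metric injectivity as a module does not visibly pass to metric injectivity of the underlying Banach space --- the cofree module $\mathcal{B}(A_+,\ell_\infty(B_{A^*}))$ need not be an injective Banach space --- so one cannot simply quote [\cite{LaceyIsomThOfClassicBanSp}, theorem 3.11.6]. The plan is instead to exploit the rigidity of the \emph{norm-one} retractions furnished by proposition~\ref{MetCTopInjModViaCanonicMorph}: one shows these contractive module projections cannot coexist with a noncommutative $2\times 2$ corner, since the $C^*$-identity would impose on that corner a metrically injective structure isometric to $M_2(\mathbb{C})$, whose operator-norm geometry is incompatible with the $\ell_\infty^4$ geometry forced on metrically injective finite-dimensional spaces. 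Having ruled out noncommutativity, $A$ is abelian, $A\cong C(\operatorname{Spec}(A))$, and the $AW^*$-property identifies $\operatorname{Spec}(A)$ as Stonean, closing the equivalence. A secondary obstacle is bookkeeping of constants in the $AW^*$ step, where proposition~\ref{MetTopRelInjModCompIdealAnnihCompl} only yields a $2$-complemented annihilator; one must verify that this still forces the annihilator to equal $pA$ rather than a mere complemented subspace.
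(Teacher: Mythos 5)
Your reduction to the unital case is sound and matches the paper's own first step: proposition~\ref{MetTopInjOfId} gives a norm-one left identity, and your involution trick (from $pa=a$ for all $a$ one gets $a^*p=a^*$, hence $p=p^*$ is a two-sided unit) is a legitimate substitute for the paper's use of the contractive approximate identity. Everything after that point, however, is where the paper deliberately stops proving and starts citing: the implication ``metrically injective over itself $\Rightarrow$ commutative $AW^*$'' is exactly Hamana's theorem [\cite{HamInjEnvBanMod}, proposition 2], and the converse is exactly Takesaki's theorem [\cite{TakHanBanThAndJordDecomOfModMap}, theorem 2]. Your proposal attempts to reprove both, and both attempts have genuine gaps.

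In the necessity direction the $AW^*$ step does not go through as written: proposition~\ref{MetTopRelInjModCompIdealAnnihCompl} applies only to ideals $I$ that are already $\langle$~$1$-complemented / $c$-complemented~$\rangle$ in $A$, whereas the right annihilator of an arbitrary subset $S$ is the annihilator of the closed one-sided ideal generated by $S$, which has no reason to be complemented; moreover your auxiliary claim that a Banach-complemented annihilator ideal of a $C^*$-algebra is of the form $pA$ is itself unproven and is essentially the point at issue. The commutativity step is a more serious gap, and you half-admit it: your plan requires the $2\times 2$ corner to carry ``a metrically injective structure'' as a Banach space, but you have already (correctly) observed that module injectivity does not visibly pass to Banach-space injectivity --- so there is nothing to set against the $\ell_\infty^4$ geometry of finite-dimensional metrically injective spaces. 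Note also that $M_2(\mathbb{C})$ \emph{is} topologically injective over itself (proposition~\ref{FinDimBHModTopInj}), so any such argument must be genuinely metric; this rigidity is precisely what Hamana's injective-envelope machinery supplies. In the sufficiency direction the gap is the same in spirit: Banach-space injectivity of $C(K)$ for $K$ Stonean is classical, but what is needed is that extensions can be chosen to be $A$-module maps, and your one-sentence appeal to ``order-continuity of the multiplication operators'' asserts rather than proves this; making the Hahn--Banach-type induction produce module maps is the actual content of Takesaki's theorem. In short, the proposal is correct only where it coincides with the paper's reduction to the unital case; where it diverges, it replaces the two citations with sketches that do not close.
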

\begin{proof} 

If $A$ is metrically injective as $A$-module, then it has norm one left identity
by proposition~\ref{MetTopInjOfId}. But $A$ also has a contractive approximate
identity  [\cite{HelBanLocConvAlg}, theorem 4.7.79], therefore $A$ is unital.
Now by result of Hamana  [\cite{HamInjEnvBanMod}, proposition 2] the
$C^*$-algebra $A$ is a commutative $AW^*$-algebra. Hamana's argument was for
left modules, but one can easily modify his proof to get the result for right
modules.

The converse proved by Takesaki in [\cite{TakHanBanThAndJordDecomOfModMap},
theorem 2]. Although only two-sided Banach modules were treated there, the
reasoning is essentially the same for right modules.
\end{proof}

It remains to consider topological injectivity. As the following proposition
shows topologically injective $C^*$-algebras are not so far from commutative
ones. This proposition exploits the l.u.st.\ property, for its definition see
section~\ref{
    SubSectionHomologicallyTrivialModulesOverBanachAlgebrasWithSpecificGeometry
}.

\begin{proposition}\label{TopInjIdHaveLUST} Let $A$ be a $C^*$-algebra which is
topologically injective as an $A$-module. Then $A$ has the l.u.st.\ property and
as the consequence it can't contain  $\mathcal{B}(\ell_2(\mathbb{N}_n))$ as
${}^*$-subalgebra for arbitrary $n\in\mathbb{N}$.
\end{proposition}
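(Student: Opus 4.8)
The plan is to prove the l.u.st.\ claim first and then extract the matrix obstruction from it. For the reduction I would invoke the canonical description of injectivity: by proposition~\ref{MetCTopInjModViaCanonicMorph} topological injectivity of $A$ as a right $A$-module means $\rho_A^+$ is a $C$-retraction, so $A$ is a bounded $A$-module retract of the cofree module $\mathcal{B}(A_+,\ell_\infty(B_{A^*}))\isom{\mathbf{mod}_1-A}\bigoplus\nolimits_\infty \{A_+^*:\lambda\in B_{A^*} \}$; in particular $A$ is complemented, as a Banach space, in this $\bigoplus_\infty$-sum. Feeding the extension property instead the isometric module embedding $\iota_A:A\to A^{**}$ and the identity $1_A$ produces a bounded right-module projection $P:A^{**}\to A$, so $A$ is also complemented in its enveloping von Neumann algebra $A^{**}$.

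To conclude that $A$ has the l.u.st.\ property I would aim at the criterion [\cite{DiestAbsSumOps}, theorem 17.5], i.e.\ exhibit $A^{**}$ as a complemented subspace of a Banach lattice; the lattice in sight is the commutative $C^*$-algebra $\ell_\infty(B_{A^*})$ underlying the cofree module. \emph{This is the main obstacle.} The complementation obtained above is purely Banach-geometric and cannot by itself give l.u.st.: for noncommutative $A$ the dual $A^*$ may fail l.u.st.\ (for instance $\mathcal{B}(H)^*$ contains the trace class as an $L$-summand, and the latter has no l.u.st.), while $\bigoplus_\infty \{A_+^*:\lambda\in B_{A^*} \}$ has l.u.st.\ precisely when $A$ does, so the reduction alone is circular. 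Hence the $C^*$-structure must be used: I would try to upgrade the module projection $P$ to a positive, conditional-expectation-type map via Tomiyama-type rigidity and the order structure of $A^{**}$, in order to transport the lattice order of the commutative part across $P$. This is where self-adjointness, and not merely the retract, genuinely enters; for commutative $A$ the difficulty evaporates, since then $A^*$ is an $L_1$-space and $\bigoplus_\infty \{A_+^*:\lambda\in B_{A^*} \}$ is an $\mathscr{L}_\infty^g$-space, which automatically has l.u.st.

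Granting $\kappa_u(A)<\infty$, I would rule out large matrix $*$-subalgebras by a complementation-plus-constant argument. Suppose $\mathcal{B}(\ell_2(\mathbb{N}_n))$ embeds in $A$ as a $*$-subalgebra, with matrix units $ \{e_{ij} \}_{i,j=1}^n$ and support projection $e=\sum_i e_{ii}\in A$. The compression $a\mapsto eae$ is a contractive projection of $A$ onto the corner $eAe$, and the matrix units give a $*$-isomorphism $eAe\cong M_n(e_{11}Ae_{11})$ over the unital corner $e_{11}Ae_{11}$; slicing by any state $\phi$ of $e_{11}Ae_{11}$ yields the contractive (unital completely positive) projection $\mathrm{id}\otimes\phi$ onto the copy of $\mathrm{span} \{e_{ij} \}\cong\mathcal{B}(\ell_2(\mathbb{N}_n))$. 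Composing the two, $\mathcal{B}(\ell_2(\mathbb{N}_n))$ is $1$-complemented in $A$, whence by the estimate $\kappa_u(F)\le c\,\kappa_u(E)$ for $c$-complemented subspaces one gets $\kappa_u(\mathcal{B}(\ell_2(\mathbb{N}_n)))\le\kappa_u(A)$. It then remains to recall the classical fact (Gordon--Lewis, see \cite{DiestAbsSumOps}) that the local unconditional constants $\kappa_u(\mathcal{B}(\ell_2(\mathbb{N}_n)))$ are unbounded in $n$ (comparable to those of the trace classes $S_1^n$, which grow like $\sqrt{n}$). Since $\kappa_u(A)$ is finite, $\kappa_u(\mathcal{B}(\ell_2(\mathbb{N}_n)))\le\kappa_u(A)$ can hold only for bounded $n$, which is the asserted impossibility of containing $\mathcal{B}(\ell_2(\mathbb{N}_n))$ for arbitrarily large $n$. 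The reduction and this matrix obstruction are essentially mechanical; the lattice-transfer of the second paragraph is the one step I expect to require real effort.
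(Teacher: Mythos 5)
Your second paragraph is exactly where the proof breaks, and you say so yourself: the reduction via proposition~\ref{MetCTopInjModViaCanonicMorph} to the cofree module $\bigoplus_\infty \{A_+^*:\lambda\in B_{A^*}\}$ is circular, since that module has l.u.st.\ precisely when $A^*$ (equivalently $A$) does, and for noncommutative $A$ it need not be a Banach lattice at all. The fix you gesture at --- upgrading the module projection to a conditional-expectation-type map by ``Tomiyama-type rigidity'' and transporting a lattice order --- is not carried out and is not how the obstacle is overcome. The missing idea is much simpler: injectivity is an extension property against \emph{every} isometric morphism of right $A$-modules, so you are free to choose a better test embedding than the canonical cofree one. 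Take a Gelfand--Naimark representation $\varrho:A\to\mathcal{B}(H)$ and consider the spatial modules $H_\varrho^{cc}$ of section~\ref{SubSectionSpatialModules}, with $\overline{x}\cdot a=\overline{\varrho(a^*)(x)}$. The map
$$
\rho: A\to\bigoplus\nolimits_\infty \{H_\varrho^{cc}:\overline{x}\in B_{H_\varrho^{cc}}\},
\qquad a\mapsto \bigoplus\nolimits_\infty \{\overline{x}\cdot a:\overline{x}\in B_{H_\varrho^{cc}}\}
$$
is a morphism of right $A$-modules, and it is isometric because $\sup_{\Vert x\Vert\leq 1}\Vert\varrho(a^*)x\Vert=\Vert a^*\Vert=\Vert a\Vert$; this is precisely where the $C^*$-structure enters. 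Topological injectivity applied to $\rho$ and $\phi=1_A$ produces a left inverse $A$-morphism $\tau$, so $A$ is complemented (as a Banach space) in a $\bigoplus_\infty$-sum of Hilbert spaces. That sum \emph{is} a Banach lattice, so by [\cite{DiestAbsSumOps}, theorem 17.1] it has l.u.st., and l.u.st.\ passes to complemented subspaces, giving $\kappa_u(A)<\infty$ directly --- no conditional expectations, no appeal to theorem 17.5, no lattice-order transfer. Your alternative remark that $A$ is complemented in $A^{**}$ does not help either, since $A^{**}$ is a noncommutative von Neumann algebra and has exactly the same l.u.st.\ problem as $A$.

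The second half of your argument, ruling out ${}^*$-subalgebras $\mathcal{B}(\ell_2(\mathbb{N}_n))$ for unbounded $n$, is correct and essentially identical to the intended one: you prove the $1$-complementation by compression to the corner $eAe\cong M_n(e_{11}Ae_{11})$ followed by slicing with a state (the paper instead cites [\cite{LauLoyWillisAmnblOfBanAndCStarAlgsOfLCG}, lemma 2.1], which encodes the same fact), and then the growth $\lim_n\kappa_u(\mathcal{B}(\ell_2(\mathbb{N}_n)))=+\infty$ from [\cite{GorLewAbsSmOpAndLocUncondStrct}] together with $\kappa_u(\mathcal{B}(\ell_2(\mathbb{N}_n)))\leq\kappa_u(A)$ yields the contradiction. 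So the matrix obstruction stands once the l.u.st.\ claim is established; the gap is solely in the first half.
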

\begin{proof} By Gelfand-Naimark's theorem [\cite{HelBanLocConvAlg}, theorem
4.7.57] there exists a Hilbert space $H$ and an isometric ${}^*$-homomorphism
$\varrho:A\to\mathcal{B}(H)$. Denote $\Lambda:=B_{H_\varrho^{cc}}$. It is easy
to check that 
$$
\rho
:A
    \to
\bigoplus\nolimits_\infty \{H_\varrho^{cc}:\overline{x}\in \Lambda \}
:a
    \mapsto 
\bigoplus\nolimits_\infty \{\overline{x}\cdot a:\overline{x}\in \Lambda \}
$$
is an isometric $A$-morphism of right $A$-modules. Since $A$ is topologically
injective $A$-module, then $\rho$ has a left inverse $A$-morphism $\tau$.
Therefore $A$ is complemented in $E:=\bigoplus_\infty
\{H_\varrho^{cc}:\overline{x}\in \Lambda \}$ via projection $\rho\tau$. Note
that $H_{\varrho}^{cc}$ is a Banach lattice as any Hilbert space, then so does
$E$. As any Banach lattice $E$ has the l.u.st.\ property [\cite{DiestAbsSumOps},
theorem 17.1], then so does $A$, because the l.u.st.\ property is inherited by
complemented subspaces.

Assume $A$ contains $\mathcal{B}(\ell_2(\mathbb{N}_n))$ as ${}^*$-subalgebra for
arbitrarily large $n\in\mathbb{N}$. In fact this copy of
$\mathcal{B}(\ell_2(\mathbb{N}_n))$ is $1$-complemented in $A$
[\cite{LauLoyWillisAmnblOfBanAndCStarAlgsOfLCG}, lemma 2.1]. Therefore we have
an inequality for local unconditional constants
$\kappa_u(\mathcal{B}(\ell_2(\mathbb{N}_n)))\leq \kappa_u(A)$. By theorem 5.1
in~\cite{GorLewAbsSmOpAndLocUncondStrct} we know that $\lim_n
\kappa_u(\mathcal{B}(\ell_2(\mathbb{N}_n)))=+\infty$, so $\kappa_u(A)=+\infty$.
This contradicts the l.u.st.\ property of $A$. Hence $A$ can't contain
$\mathcal{B}(\ell_2(\mathbb{N}_n))$ as ${}^*$-subalgebra for arbitrary
$n\in\mathbb{N}$.
\end{proof}

As the proposition~\ref{TopInjIdHaveLUST} shows $C^*$-algebras that are
topologically injective over themselves can't contain
$\mathcal{B}(\ell_2(\mathbb{N}_n))$ as ${}^*$-subalgebra for arbitrary
$n\in\mathbb{N}$. Such $C^*$-algebras are called subhomogeneous, and in fact
[\cite{BlackadarOpAlg}, proposition IV.1.4.3] they can be treated as closed
${}^*$-subalgebras of $M_n(C(K))$ for some compact Hausdorff space $K$ and some
natural number $n$. For more on subhomogeneous $C^*$-algebras see
[\cite{BlackadarOpAlg}, section IV.1.4]. 

We shall give two important examples of non commutative $C^*$-algebras that are
topologically injective over themselves.

\begin{proposition}\label{FinDimBHModTopInj} Let $H$ be a finite dimensional
Hilbert space. Then $\mathcal{B}(H)$ is topologically injective as
$\mathcal{B}(H)$-module. 
\end{proposition}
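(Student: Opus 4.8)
The plan is to realize $\mathcal{B}(H)$ as a dual of a module already known to be topologically projective, and then invoke the projective-to-injective duality. By proposition~\ref{FinDimNHModTopProjFlat} the left $\mathcal{B}(H)$-module $\mathcal{N}(H)$ is topologically projective. Hence by proposition~\ref{DualMetTopProjIsMetrInj} its dual $\mathcal{N}(H)^*$ is a topologically injective right $\mathcal{B}(H)$-module. It therefore remains only to identify $\mathcal{N}(H)^*$, as an object of $\mathbf{mod}-\mathcal{B}(H)$, with the right regular module $\mathcal{B}(H)$. Once that isometric module isomorphism is in hand, topological injectivity of $\mathcal{B}(H)$ follows at once, since an isometric isomorphism is in particular a $1$-retract and topological injectivity is inherited by retracts (proposition~\ref{RetrMetCTopInjIsMetCTopInj}).

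To carry out the identification I would use the trace pairing. Because $H$ is finite dimensional, $\mathcal{N}(H)=\mathcal{B}(H)$ as vector spaces, and the bilinear form $\langle S,T\rangle=\operatorname{tr}(ST)$ induces an isometric isomorphism $\mathcal{B}(H)\to\mathcal{N}(H)^*\colon S\mapsto(T\mapsto\operatorname{tr}(ST))$ — the standard duality between bounded and trace-class operators, which is an exact isometry here since $\Vert S\Vert_{\mathcal{B}(H)}=\sup\{|\operatorname{tr}(ST)|:\Vert T\Vert_{\mathcal{N}(H)}\leq 1\}$. Recall from the proof of proposition~\ref{FinDimNHModTopProjFlat} that, under $\mathcal{N}(H)\isom{\mathcal{B}(H)-\mathbf{mod}}H_\varrho\projtens H^*$ with $\varrho=1_{\mathcal{B}(H)}$, the left action on $\mathcal{N}(H)$ is $a\cdot T=aT$. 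The induced right action on $\mathcal{N}(H)^*$ is $(\phi\cdot a)(T)=\phi(a\cdot T)$, and for $\phi=\langle S,-\rangle$ one computes $\langle S,aT\rangle=\operatorname{tr}(SaT)=\langle Sa,T\rangle$. Thus the trace isomorphism intertwines the dual action with right multiplication $S\mapsto Sa$, which is precisely the right regular $\mathcal{B}(H)$-module structure.

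The analytic content is essentially trivial in this finite-dimensional setting: all the spaces coincide set-theoretically and trace duality is an exact isometry, so there is nothing delicate to estimate. The only point that really requires care — and which I expect to be the main, though routine, obstacle — is the bookkeeping of the module structures: keeping the left action on $\mathcal{N}(H)$ and the induced right action on its dual straight, and verifying the displayed trace computation so as to confirm that the dual action is honest right multiplication with no conjugation intervening. Once that verification is complete, the statement is immediate from propositions~\ref{FinDimNHModTopProjFlat} and~\ref{DualMetTopProjIsMetrInj}.
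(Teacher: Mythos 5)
Your proposal is correct and follows exactly the paper's route: the paper likewise observes $\mathcal{B}(H)\isom{\mathbf{mod}_1-\mathcal{B}(H)}{\mathcal{N}(H)}^*$ and cites propositions~\ref{FinDimNHModTopProjFlat} and~\ref{DualMetTopProjIsMetrInj}, treating the Schatten--von Neumann trace duality as a known module isomorphism rather than verifying it. Your explicit check that the trace pairing intertwines the dual action with right multiplication is accurate, merely spelling out what the paper takes for granted.
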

\begin{proof} Note that
$\mathcal{B}(H)\isom{\mathbf{mod}_1-\mathcal{B}(H)}{\mathcal{N}(H)}^*$, and the
the result immediately follows from propositions~\ref{FinDimNHModTopProjFlat}
and~\ref{DualMetTopProjIsMetrInj}.
\end{proof}

\begin{proposition}\label{CKMatrixModTopInj} Let $K$ be a Stonean space and
$n\in\mathbb{N}$, then $M_n(C(K))$ is topologically injective
$M_n(C(K))$-module.
\end{proposition}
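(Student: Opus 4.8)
prove that for a Stonean space $K$ and $n\in\mathbb{N}$, the algebra $M_n(C(K))$ is topologically injective as a module over itself.

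The plan is to reduce to the commutative case $C(K)$ already handled, and then propagate injectivity through the matrix construction using the isometric isomorphisms for $M_n$ recorded in the preliminaries. First I would observe that since $K$ is Stonean, $C(K)$ is metrically injective as a Banach space by [\cite{LaceyIsomThOfClassicBanSp}, theorem 3.11.6], and moreover $\operatorname{Spec}(C(K))=K$ is Stonean exactly means $C(K)$ is a commutative $AW^*$-algebra, so by proposition~\ref{MetInjCStarAlgCharac} the algebra $C(K)$ is metrically injective as a right $C(K)$-module, hence in particular topologically injective over itself. This is the base case. The main work is to transfer this along the passage $C(K)\rightsquigarrow M_n(C(K))$.

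The key structural fact I would exploit is the isometric isomorphism $M_n(C(K))\isom{\mathbf{Ban}_1}C(K,M_n(\mathbb{C}))$ together with the natural embeddings $i_{k,l}:C(K)\to M_n(C(K))$ and projections $\pi_{k,l}:M_n(C(K))\to C(K)$, which are contractive $A$-morphisms in the appropriate module categories. Writing $A=M_n(C(K))$, I would realize $A$ as an $n^2$-fold sum (as a right $A$-module) that is a retract of a module built from copies of the cofree module over $A$, exploiting that $C(K)$ sits inside $A$ as the diagonal subalgebra $C(K)\cdot e_A$. Concretely, the approach is to show $A$ is a $C$-retract of $\mathcal{B}(A_+,\ell_\infty(S))$ for a suitable index set $S$ and constant $C$, since by proposition~\ref{MetCTopInjModViaCanonicMorph} topological injectivity of $A$ is equivalent to $\rho_A^+$ being a $C$-retraction. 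The maps $i_{k,l},\pi_{k,l}$ provide, for each matrix entry, a contractive way to split $A$ into entrywise pieces on which the $C(K)$-injectivity can be applied, and then reassemble using the $\bigoplus_\infty$ structure of $M_n$ over $C(K)$ recorded by $M_n(C(K,M_n(\mathbb{C})))$-type identifications and proposition~\ref{MetTopInjModProd}.

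The hard part will be handling the $A$-module structure correctly rather than merely the Banach-space structure: injectivity of $C(K)$ as a $C(K)$-module does not immediately give injectivity of $A$ as an $A$-module, because the module action of the full matrix algebra mixes entries. The cleanest route I expect is to invoke proposition~\ref{MetTopInjModProd} and proposition~\ref{MapsFroml1toMetTopInj} after identifying, as a right $A$-module, an isometric embedding of $A$ into a $\bigoplus_\infty$-sum of cofree $A$-modules, for which proposition~\ref{FinDimBHModTopInj} on $\mathcal{B}(H)$ with $\dim H=n$ is the matrix-algebra model in the scalar case $K=\{pt\}$. Indeed, since $M_n(\mathbb{C})\isom{}\mathcal{B}(\ell_2(\mathbb{N}_n))$, one expects $M_n(C(K))$ to behave as a $C(K)$-valued version of $\mathcal{B}(\ell_2(\mathbb{N}_n))$, and the two ingredients proposition~\ref{FinDimBHModTopInj} (topological injectivity of $\mathcal{B}(H)$) and proposition~\ref{MetInjCStarAlgCharac} (metric injectivity of $C(K)$) should combine, via the identification $M_n(C(K))\isom{\mathbf{Ban}_1}C(K)\projtens M_n(\mathbb{C})$ or through entrywise splitting, to yield a $C$-coretraction realizing topological injectivity. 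Establishing that the reassembly map is genuinely an $M_n(C(K))$-morphism, not just bounded linear, is the step I would budget the most care for.
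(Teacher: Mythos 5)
Your proposal correctly identifies the two outer ingredients (Hamana--Takesaki metric injectivity of $C(K)$ via proposition~\ref{MetInjCStarAlgCharac}, and a $\bigoplus_\infty$-reassembly into pointwise pieces followed by proposition~\ref{RetrMetCTopInjIsMetCTopInj}), and indeed the paper's proof ends exactly with the splitting you describe: an isometric $A$-morphism $\rho$ into $\bigoplus_\infty\{M_n(\mathbb{C}_s):s\in K\}$ with left inverse $\tau$ obtained by applying the Hamana--Takesaki retraction entrywise. But there is a genuine gap at the step you yourself flag as the hard one, and the mechanism you propose for it fails. The reassembly only helps if the pointwise modules $M_n(\mathbb{C}_s)$ (that is, $M_n(\mathbb{C})$ with right action $x\cdot a=x\,a(s)$) are already known to be $C$-topologically injective as right $M_n(C(K))$-modules, with $C$ independent of $s$, so that proposition~\ref{MetTopInjModProd} applies to the product. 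You propose to get this from proposition~\ref{FinDimBHModTopInj}, but that proposition gives injectivity of $M_n(\mathbb{C})$ as a module over the algebra $M_n(\mathbb{C})$ itself, and injectivity does not pull back along the evaluation epimorphism $M_n(C(K))\to M_n(\mathbb{C}):a\mapsto a(s)$: the extension problems one must solve over $M_n(C(K))$ involve arbitrary $M_n(C(K))$-modules which carry no $M_n(\mathbb{C})$-module structure, and the paper's change-of-algebra results (propositions~\ref{MetTopInjUnderChangeOfAlg}, \ref{ReduceInjIdToInjAlg}) concern ideals and subalgebras, not quotients. The paper closes precisely this hole by a different device: $M_n(C(K))$ is a nuclear $C^*$-algebra, hence $1$-relatively amenable; ${M_n(\mathbb{C}_s)}^*$ is an essential finite-dimensional module, hence an $\mathscr{L}_{1,C}^g$-space with $C$ independent of $s$; proposition~\ref{MetTopEssL1FlatModAoverAmenBanAlg} then makes ${M_n(\mathbb{C}_s)}^*$ $C$-topologically flat, and proposition~\ref{MetCTopFlatCharac} together with reflexivity gives $C$-topological injectivity of ${M_n(\mathbb{C}_s)}^{**}\cong M_n(\mathbb{C}_s)$. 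Without this (or some substitute), your $\bigoplus_\infty$-sum is not known to be injective and the retraction argument has nothing to land on.

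Two secondary points. First, your claim that $A=M_n(C(K))$ is an ``$n^2$-fold sum as a right $A$-module'' is false: the entry projections $\pi_{k,l}$ are not right $A$-module maps, since right multiplication mixes columns; the decomposition that is compatible with the module structure is over the points $s\in K$ (keeping each matrix intact), not over matrix entries, and at best one can split $A$ into its $n$ rows $e_{kk}A$, which merely reduces the problem to the row module without solving it. Second, neither proposition~\ref{MapsFroml1toMetTopInj} nor the identification $M_n(C(K))\isom{\mathbf{Ban}_1}C(K)\projtens M_n(\mathbb{C})$ can substitute for the missing step: the paper has no result producing injectivity of a projective tensor product of modules over the ``tensor product'' of the underlying algebras, and injectivity interacts badly with $\projtens$ in general.
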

\begin{proof} For a fixed $s\in K$ by $\mathbb{C}_s$ we denote the right
$C(K)$-module $\mathbb{C}$ with outer action defined by $z\cdot a=a(s)z$ for all
$a\in C(K)$ and $z\in\mathbb{C}$. By $M_n(\mathbb{C}_s)$ we denote the right
Banach $M_n(C(K))$-module $M_n(\mathbb{C})$ with outer action defined by
${(x\cdot a)}_{i,j}=\sum_{k=1}^n x_{i,k}a_{k,j}(s)$ for $a\in M_n(C(K))$ and 
$x\in M_n(\mathbb{C}_s)$. The $C^*$-algebra $M_n(C(K))$ is nuclear
[\cite{BroOzaCStarAlgFinDimApprox}, corollary 2.4.4], then by
[\cite{HaaNucCStarAlgAmen}, theorem 3.1] this algebra is relatively amenable and
even $1$-relatively amenable [\cite{RundeAmenConstFour}, example 2]. Since
$M_n(\mathbb{C}_s)$ is finite dimensional, it is an $\mathscr{L}_{1, C}^g$-space
for some constant $C\geq 1$ independent of $s$. Thus, by
proposition~\ref{MetTopEssL1FlatModAoverAmenBanAlg} the $M_n(C(K))$-module
${M_n(\mathbb{C}_s)}^*$ is $C$-topologically flat. Since the latter module is
essential, by proposition~\ref{MetCTopFlatCharac} the right $M_n(C(K))$-module
${M_n(\mathbb{C}_s)}^{**}$ is $C$-topologically injective. Note that
${M_n(\mathbb{C}_s)}^{**}$ is isometrically isomorphic to $M_n(\mathbb{C}_s)$ as
right $M_n(C(K))$-module, so from proposition~\ref{MetTopInjModProd} we get that
$\bigoplus_\infty \{M_n(\mathbb{C}_s):s\in K \}$ is topologically injective as
$M_n(C(K))$-module.

Note that by proposition~\ref{MetInjCStarAlgCharac} the $C(K)$-module $C(K)$ is
metrically injective, therefore an isometric $C(K)$-morphism
$\widetilde{\rho}
:C(K)\to\bigoplus_\infty \{ \mathbb{C}_s:s\in K \}
:x\mapsto \bigoplus_\infty \{x(s):s\in K \}$ 
admits a left inverse contractive $C(K)$-morphism 
$\widetilde{\tau}:\bigoplus_\infty \{ \mathbb{C}_s:s\in K \} \to C(K)$. 
It is routine to check now that linear operators
$$
\rho
:M_n(C(K))\to\bigoplus\nolimits_\infty \{M_n(\mathbb{C}_s):s\in K \}
:x\mapsto \bigoplus\nolimits_\infty \{
    {(x_{i,j}(s))}_{i,j\in\mathbb{N}_n}:s\in K
 \}
$$
$$
\tau
:\bigoplus\nolimits_\infty \{M_n(\mathbb{C}_s):s\in K \}\to M_n(C(K))
:y\mapsto {\left(
    \widetilde{\tau}\left(\bigoplus\nolimits_\infty \{y_{s,i,j}:s\in K \}\right)
\right)}_{i,j\in\mathbb{N}_n}
$$
are bounded $M_n(C(K))$-morphisms such that $\tau \rho=1_{M_n(C(K))}$. That is
$M_n(C(K))$ is a retract of topologically injective $M_n(C(K))$-module
$\bigoplus_\infty \{M_n(\mathbb{C}_s):s\in K \}$ in $\mathbf{mod}_1-M_n(C(K))$.
Finally, from proposition~\ref{RetrMetCTopInjIsMetCTopInj} we conclude that
$M_n(C(K))$ is topologically injective $M_n(C(K))$-module.
\end{proof}

\begin{theorem}\label{TopInjAWStarAlgCharac} Let $A$ be a $C^*$-algebra. Then
the following are equivalent:

\begin{enumerate}[label = (\roman*)]
    \item $A$ is an $AW^*$-algebra which is topologically injective 
    as $A$-module;

    \item $A
    =\bigoplus_\infty \{M_{n_\lambda}(C(K_\lambda)):\lambda\in\Lambda \}$
    for some finite families of natural 
    numbers ${(n_\lambda)}_{\lambda\in\Lambda}$ and Stonean 
    spaces ${(K_\lambda)}_{\lambda\in\Lambda}$.
\end{enumerate}
\end{theorem}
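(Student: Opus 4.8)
The plan is to establish the two implications separately: $(ii)\implies(i)$ follows quickly from the permanence results already in hand, while $(i)\implies(ii)$ rests on the structure theory of $AW^*$-algebras.

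For $(ii)\implies(i)$, suppose $A=\bigoplus_\infty\{M_{n_\lambda}(C(K_\lambda)):\lambda\in\Lambda\}$ with $\Lambda$ finite and each $K_\lambda$ Stonean (for a finite index set the $\bigoplus_\infty$-sum is just the usual $C^*$-direct sum). To see that $A$ is topologically injective as an $A$-module, I would set $A_\lambda:=M_{n_\lambda}(C(K_\lambda))$ and regard each $A_\lambda$ as a right module over itself; it is faithful since $A_\lambda$ is unital. By proposition~\ref{CKMatrixModTopInj} each $A_\lambda$ is topologically injective over itself, and since $\Lambda$ is finite the corresponding injectivity constants admit a common bound. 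Applying the injective case of proposition~\ref{MetTopProjInjFlatUnderSumOfAlg} with $p=\infty$ and $X_\lambda=A_\lambda$ then gives that $X=\bigoplus_\infty\{A_\lambda:\lambda\in\Lambda\}=A$ is topologically injective as an $A$-module. That $A$ is an $AW^*$-algebra is a purely algebraic matter: $C(K_\lambda)$ is a commutative $AW^*$-algebra exactly because $K_\lambda$ is Stonean [\cite{BerbBaerStarRings}, theorem 1.7.1], while matrix amplifications and finite $\bigoplus_\infty$-sums of $AW^*$-algebras are again $AW^*$-algebras [\cite{BerbBaerStarRings}].

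For $(i)\implies(ii)$, the starting point is proposition~\ref{TopInjIdHaveLUST}: topological injectivity forces $A$ to be subhomogeneous, so there is an $n_0\in\mathbb{N}$ such that $A$ contains no ${}^*$-subalgebra isomorphic to $\mathcal{B}(\ell_2(\mathbb{N}_n))$ for $n>n_0$. I would feed this bound into the type decomposition of the $AW^*$-algebra $A$. Any family of $n$ mutually orthogonal equivalent projections yields a system of matrix units and hence a copy of $M_n(\mathbb{C})\cong\mathcal{B}(\ell_2(\mathbb{N}_n))$ inside $A$; this observation shows that subhomogeneity annihilates the type II, type III and type $I_\infty$ parts and bounds the degree of every homogeneous type $I_n$ summand by $n_0$. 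Consequently $A$ decomposes as a finite direct sum $A=\bigoplus_\infty\{A_n:n\in S\}$ over a finite set $S\subseteq\mathbb{N}_{n_0}$, where $A_n$ is the type-$I_n$-homogeneous component. Kaplansky's classification of homogeneous $AW^*$-algebras of finite degree [\cite{BerbBaerStarRings},~\cite{KaplProjInBanAlg}] identifies $A_n$ with $M_n(Z_n)$, where $Z_n$ is the centre of $A_n$, a commutative $AW^*$-algebra; and $Z_n\cong C(K_n)$ for a Stonean space $K_n$ again by [\cite{BerbBaerStarRings}, theorem 1.7.1]. Reindexing $S$ as $\Lambda$ produces the desired form.

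The main obstacle is handling the $AW^*$-structure theory rather than any Banach-homological estimate. In particular, the delicate points are verifying that the subhomogeneity bound genuinely excludes all the infinite-dimensional types (II, III, $I_\infty$, and unbounded $I_n$) through the matrix-unit construction, that the surviving homogeneous decomposition is finite, and that the finite-degree homogeneous pieces really are of the form $M_n(C(K))$. All of these are classical facts about $AW^*$-algebras that I would cite from~\cite{BerbBaerStarRings} and~\cite{KaplProjInBanAlg} rather than reprove here.
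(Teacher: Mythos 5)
Your proposal is correct, and the $(ii)\implies(i)$ half coincides with the paper's own argument: both verify that each unital summand $M_{n_\lambda}(C(K_\lambda))$ is topologically injective over itself via proposition~\ref{CKMatrixModTopInj} and then assemble the finite $\bigoplus_\infty$-sum through paragraph $(ii)$ of proposition~\ref{MetTopProjInjFlatUnderSumOfAlg}; your explicit remark that finiteness of $\Lambda$ supplies a common injectivity constant is a point the paper leaves tacit but needs, since that proposition requires one constant $C$ for all $\lambda$.

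Where you genuinely diverge is in $(i)\implies(ii)$. The paper gets the structure in one stroke by citing proposition 6.6 of~\cite{SmithDecompPropCStarAlg}, a dichotomy asserting that an $AW^*$-algebra is either of the form in $(ii)$ or contains $\bigoplus_\infty\{\mathcal{B}(\ell_2(\mathbb{N}_n)):n\in\mathbb{N}\}$ as a ${}^*$-subalgebra, the second horn being killed by proposition~\ref{TopInjIdHaveLUST}. You instead re-derive the dichotomy from first principles of $AW^*$-theory: extract a uniform bound $n_0$ from proposition~\ref{TopInjIdHaveLUST} (legitimate, since containment of matrix ${}^*$-subalgebras is monotone in $n$), use matrix units built from orthogonal equivalent projections to rule out the type II, III and $I_\infty$ parts and to bound the homogeneous degrees, and then invoke Kaplansky's identification of a finite-degree homogeneous $AW^*$-algebra with $M_n$ of its centre, together with the Stonean representation of commutative $AW^*$-algebras. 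Both routes are valid; yours trades one targeted citation for the classical type-decomposition machinery of~\cite{BerbBaerStarRings} and~\cite{KaplProjInBanAlg}, which makes the argument self-contained relative to standard $AW^*$-theory but longer, while the paper's is shorter because Smith's proposition packages exactly this structure theory. One citation in your $(ii)\implies(i)$ paragraph should be sharpened: the blanket claim that ``matrix amplifications of $AW^*$-algebras are again $AW^*$-algebras'' is not a safe general statement (for noncommutative base algebras this is a delicate matter); what you actually use, and what the paper cites, is Berberian's corollary 9.62.1 that $M_n(A)$ is $AW^*$ when $A$ is a \emph{commutative} $AW^*$-algebra, which is all that is needed for $M_n(C(K))$.
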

\begin{proof}$(i)\implies (ii)$ From proposition 6.6
in~\cite{SmithDecompPropCStarAlg} we know that an $AW^*$-algebra is either
isomorphic as $C^*$-algebra 
to  $\bigoplus_\infty \{M_{n_\lambda}(C(K_\lambda)):\lambda\in\Lambda \}$ 
for some finite families of natural numbers ${(n_\lambda)}_{\lambda\in\Lambda}$ 
and Stonean spaces ${(K_\lambda)}_{\lambda\in\Lambda}$ or contains 
a ${}^*$-subalgebra 
$\bigoplus_\infty \{ \mathcal{B}(\ell_2(\mathbb{N}_n)):n\in\mathbb{N} \}$. The
second option is canceled out by proposition~\ref{TopInjIdHaveLUST}.

$(ii)\implies (i)$ For each $\lambda\in\Lambda$ the algebra
$M_{n_\lambda}(C(K_\lambda))$ is unital because $K_\lambda$ is compact.
Therefore $M_{n_\lambda}(C(K_\lambda))$ is faithful as
$M_{n_\lambda}(C(K_\lambda))$-module. It is also topologically injective as
$M_{n_\lambda}(C(K_\lambda))$-module by proposition~\ref{CKMatrixModTopInj}.
Now the topological injectivity of $A$ as $A$-module follows from paragraph
$(ii)$ of proposition~\ref{MetTopProjInjFlatUnderSumOfAlg} with $p=\infty$ and
$X_\lambda=A_\lambda=M_{n_\lambda}(C(K_\lambda))$ for all $\lambda\in\Lambda$. 

For all $\lambda\in\Lambda$ the algebra $C(K_\lambda)$ is an $AW^*$-algebra,
because $K_\lambda$ is a Stonean space [\cite{BerbBaerStarRings}, theorem
1.7.1]. Therefore $M_{n_\lambda}(C(K_\lambda))$ is an $AW^*$-algebra too
[\cite{BerbBaerStarRings}, corollary 9.62.1]. Finally $A$ is an $AW^*$-algebra
as $\bigoplus_\infty$-sum of such algebras [\cite{BerbBaerStarRings},
proposition 1.10.1].
\end{proof}

It is desirable to prove that any topologically injective over itself
$C^*$-algebra is an $AW^*$-algebra, but it seems to be a challenge even in the
commutative case.


\subsection{
    Flat ideals of \texorpdfstring{$C^*$}{C*}-algebras
}\label{SubSectionFlatIdealsOfCStarAlgebras}

By considering flatness we finalize this lengthy investigations of ideals of
$C^*$-algebras.

\begin{proposition}\label{IdealofCstarAlgisMetTopFlat} Let $I$ be a left ideal
of a $C^*$-algebra $A$. Then $I$ is metrically and topologically flat as
$A$-module.
\end{proposition}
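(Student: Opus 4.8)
The plan is to obtain the result as a direct application of Proposition~\ref{MetTopFlatIdealsInUnitalAlg}, whose hypotheses are that the module in question be a left ideal of $A_\times$ carrying a right contractive approximate identity. Thus the whole argument reduces to checking these two hypotheses for a left ideal $I$ of a $C^*$-algebra $A$, after which metric flatness is immediate and topological flatness follows formally.

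First I would verify that $I$ is a left ideal of the conditional unitization $A_\times$. If $A$ is unital, then its identity has norm $1$, so $A_\times=A$ and there is nothing to check. If $A$ is non-unital, then $A_\times=A_+$, and it suffices to note that for $a\in A$, $z\in\mathbb{C}$ and $x\in I$ the product $(a\oplus_1 z)\cdot x=ax+zx$ lies in $I$, since $ax\in I$ because $I$ is a left ideal of $A$ and $zx\in I$ trivially; together with the fact that $I$ is a closed subalgebra of $A$, and hence of $A_+$, this shows that $I$ is a left ideal of $A_\times$ in either case.

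Next I would invoke the standard $C^*$-algebraic fact, recorded earlier in the text [\cite{HelBanLocConvAlg}, theorem 4.7.79], that every left ideal of a $C^*$-algebra admits a right contractive (positive) approximate identity. With both hypotheses in hand, Proposition~\ref{MetTopFlatIdealsInUnitalAlg} applied with the contractive option yields at once that $I$ is metrically flat as an $A$-module. Finally, metric flatness implies $1$-topological, and hence topological, flatness by Proposition~\ref{MetFlatIsTopFlatAndTopFlatIsRelFlat}, which completes the proof.

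The argument has essentially no hard core: the only point requiring any care is the passage to $A_\times$ in the non-unital case, where one must confirm that $I$ genuinely remains a left ideal of $A_+$ and not merely a left $A$-submodule, and that the approximate identity furnished by the $C^*$-structure is contractive, so that the \emph{metric} (and not merely a topological) conclusion of Proposition~\ref{MetTopFlatIdealsInUnitalAlg} is available.
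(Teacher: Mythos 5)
Your proposal is correct and follows essentially the same route as the paper: the paper's proof likewise cites the standard $C^*$-fact that a left ideal has a right contractive approximate identity and then applies Proposition~\ref{MetTopFlatIdealsInUnitalAlg}. The only difference is that you spell out the (routine but worth noting) verification that $I$ remains a left ideal of $A_\times$ and explicitly invoke Proposition~\ref{MetFlatIsTopFlatAndTopFlatIsRelFlat} for the topological conclusion, both of which the paper leaves implicit.
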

\begin{proof} From [\cite{HelBanLocConvAlg}, proposition 4.7.78] it follows that
$I$ has a right contractive identity. It remains to apply
proposition~\ref{MetTopFlatIdealsInUnitalAlg}.
\end{proof}

\begin{proposition}\label{CStarAlgIsL1IfFinDim} Let $A$ be a $C^*$-algebra, then
$A$ is an $\langle$~$L_1$-space / $\mathscr{L}_1^g$-space~$\rangle$ iff
$\langle$~$\operatorname{dim}(A)\leq 1$ / $A$ is finite dimensional~$\rangle$.
\end{proposition}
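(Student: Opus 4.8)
The plan is to prove the two biconditionals in parallel, splitting each into an easy and a hard direction. The easy directions are immediate: if $\dim(A)\le 1$ then $A\in\{\{0\},\mathbb{C}\}$, and both are isometrically $L_1$-spaces ($\mathbb{C}=L_1$ of a one-point space); and if $A$ is finite dimensional it is an $\mathscr{L}_1^g$-space, since every finite dimensional Banach space is an $\mathscr{L}_p^g$-space for all $1\le p\le+\infty$. For the hard directions I would argue by contraposition, and the unifying observation is that both $L_1$-spaces and $\mathscr{L}_1^g$-spaces are weakly sequentially complete, whereas an infinite dimensional $C^*$-algebra never is.

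First I would establish weak sequential completeness. An $L_1$-space is weakly sequentially complete, and for an $\mathscr{L}_1^g$-space $X$ we know, as in the proof of proposition~\ref{MthscrL1LInftyHaveDPP}, that $X^{**}$ is complemented, hence closed, inside some $L_1$-space; since weak sequential completeness passes to closed subspaces, $X^{**}$ and therefore its closed subspace $X$ are weakly sequentially complete. On the other side, I claim every infinite dimensional $C^*$-algebra $A$ contains an isometric copy of $c_0$: by standard structure theory $A$ carries a self-adjoint element $a$ with infinite spectrum, and applying the continuous functional calculus to a sequence of disjointly supported bumps $g_n$ on $\operatorname{sp}_A(a)$ produces nonzero pairwise orthogonal positive elements $f_n=g_n(a)$ of norm one; since $g_ng_m=0$ for $n\ne m$, one computes $\|\sum_n c_nf_n\|=\max_n|c_n|$, so the closed span of $(f_n)$ is isometric to $c_0$. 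As $c_0$ is not weakly sequentially complete and this property is inherited by closed subspaces, $A$ is not weakly sequentially complete. Consequently an infinite dimensional $C^*$-algebra is neither an $\mathscr{L}_1^g$-space nor an $L_1$-space; this already settles the $\mathscr{L}_1^g$ biconditional completely and reduces the $L_1$ one to the finite dimensional case.

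It remains to treat a finite dimensional $C^*$-algebra $A$ that is isometrically an $L_1$-space and to show $\dim(A)\le 1$. Being a finite dimensional $L_1(\Omega,\mu)$, the space $A$ is isometrically isomorphic to $\ell_1^N$ with $N=\dim(A)$. A surjective linear isometry carries extreme points of the unit ball bijectively and homeomorphically onto extreme points; the extreme points of $B_{\ell_1^N}$ are the $N$ disjoint circles $\{e^{i\theta}\delta_j\}$, a set of covering dimension one. However $A=\bigoplus_k M_{n_k}(\mathbb{C})$ is unital, its unitary group $U(A)=\prod_k U(n_k)$ is connected and consists of extreme points of $B_A$, and it has real dimension $\sum_k n_k^2=\dim_{\mathbb{C}}(A)$. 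If $\dim(A)\ge 2$ this connected set of extreme points has dimension at least $2$, so it cannot embed homeomorphically into a one dimensional space, a contradiction; hence $\dim(A)\le 1$. I expect this last rigidity step to be the main obstacle: ruling out the finite dimensional examples needs a genuinely geometric invariant, and the cleanest one seems to be the dimension and connectedness mismatch between the unitary group of the $C^*$-algebra and the union of circles forming the extreme boundary of $\ell_1^N$; making precise that every infinite dimensional $C^*$-algebra really does contain an isometric $c_0$ is the other point requiring care.
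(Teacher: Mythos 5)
Your proposal is correct, and in both of its hard steps it genuinely diverges from the paper's proof. For the $\mathscr{L}_1^g$ direction the paper, like you, first obtains weak sequential completeness of $A$ (via $A^{**}$ being complemented in an $L_1$-space and $A$ sitting isometrically inside $A^{**}$), but then simply cites Sakai's theorem that a weakly sequentially complete $C^*$-algebra is finite dimensional; you instead re-derive that theorem by exhibiting an isometric copy of $c_0(\mathbb{N})$ inside any infinite dimensional $C^*$-algebra. This makes the argument more self-contained, at the price of invoking the standard but nontrivial (essentially Kaplansky's) fact that an infinite dimensional $C^*$-algebra contains a self-adjoint element with infinite spectrum --- the point you rightly flag as needing care. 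For the $L_1$ direction, the paper rules out $\dim(A)\geq 2$ by observing that $A$ would contain an isometric copy of $\ell_\infty(\mathbb{N}_2)$ and citing Lyubich's theorem that $\ell_\infty(\mathbb{N}_2)$ admits no isometric embedding into $\ell_1(\mathbb{N}_n)$ over $\mathbb{C}$; your replacement --- a surjective isometry must carry the connected unitary group $\prod_k U(n_k)$, which is a set of extreme points of $B_A$ of real dimension $\sum_k n_k^2=\dim_{\mathbb{C}}(A)\geq 2$, homeomorphically into the extreme boundary of $B_{\ell_1^N}$, a union of $N$ disjoint circles and hence of dimension one --- is a valid elementary substitute, resting only on the fact that unitaries are extreme points of the unit ball together with basic connectedness and dimension theory. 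Both routes make essential use of complex scalars (over $\mathbb{R}$ one has $\ell_\infty(\mathbb{N}_2)$ isometric to $\ell_1(\mathbb{N}_2)$, as the paper itself remarks in the harmonic-analysis section), and your argument has the virtue of making that dependence geometrically visible, whereas the paper's proof is shorter because it outsources both key facts to the literature.
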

\begin{proof} Assume $A$ is an $\mathscr{L}_1^g$-space, then $A^{**}$ is
complemented in some $L_1$-space [\cite{DefFloTensNorOpId}, corollary
23.2.1(2)]. Since $A$ isometrically embeds in its second dual via $\iota_{A}$ we
may regard $A$ as closed subspace of some $L_1$-space. The latter space is
weakly sequentially complete [\cite{WojBanSpForAnalysts}, corollary III.C.14].
The property of being weakly sequentially complete is preserved by closed
subspaces, therefore $A$ is weakly sequentially complete too. By proposition 2
in~\cite{SakWeakCompOpOnOpAlg} every weakly sequentially complete $C^*$-algebra
is finite dimensional, hence $A$ is finite dimensional. Conversely, if $A$ is
finite dimensional it is an $\mathscr{L}_1^g$-space as any finite dimensional
Banach space.

Assume $A$ is an $L_1$-space and, a fortiori, an $\mathscr{L}_1^g$-space. As was
noted above $A$ is a finite dimensional, so
$A\isom{\mathbf{Ban}_1}\ell_1(\mathbb{N}_n)$ for $n=\operatorname{dim}(A)$. On
the other hand, $A$ is a finite dimensional $C^*$-algebra, so it is
isometrically isomorphic to $\bigoplus_\infty \{
\mathcal{B}(\ell_2(\mathbb{N}_{n_k})):k\in\mathbb{N}_m \}$ for some natural
numbers ${(n_k)}_{k\in\mathbb{N}_m}$ 
[\cite{DavCSatrAlgByExmpl}, theorem III.1.1].
Assume $\operatorname{dim}(A)>1$, then $A$ contains an isometric copy of
$\ell_\infty(\mathbb{N}_2)$. Therefore we have an isometric embedding of
$\ell_\infty(\mathbb{N}_2)$ into $\ell_1(\mathbb{N}_n)$. This is impossible by
theorem 1 from~\cite{LyubIsomEmdbFinDimLp}. 
Therefore $\operatorname{dim}(A)\leq 1$. 
\end{proof}

\begin{proposition}\label{CStarAlgIsTopFlatOverItsIdeal} Let $I$ be a proper
two-sided ideal of a  $C^*$-algebra $A$. Then the following are equivalent:

\begin{enumerate}[label = (\roman*)]
    \item $A$ is $\langle$~metrically / topologically~$\rangle$ flat $I$-module;

    \item $\langle$~$\operatorname{dim}(A)=1$, $I= \{0 \}$ / $A/I$ is finite
    dimensional~$\rangle$.
\end{enumerate}
\end{proposition}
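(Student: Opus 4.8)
The plan is to split $A$, regarded as a left $I$-module via multiplication, into its essential and annihilator parts and handle each alternative through the decomposition results already available. First I would record the structural facts. Since $I$ is a closed two-sided ideal of the $C^*$-algebra $A$, it is itself a $C^*$-algebra with a two-sided contractive approximate identity $(e_\nu)_{\nu\in N}$ by [\cite{HelBanLocConvAlg}, theorem 4.7.79]; likewise $A/I$ is a $C^*$-algebra. As a left $I$-module one checks $A_{ess}=\operatorname{cl}_A(IA)=I$ (the inclusion $IA\subseteq I$ is the right-ideal property, and $e_\nu y\to y$ recovers every $y\in I$), while $A/A_{ess}=A/I$ is an annihilator $I$-module because $I\cdot(A/I)=\{0\}$.

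For the topological alternative this reduces the statement at once. I would apply Proposition~\ref{TopFlatModCharac}(iii) with the algebra $I$ (which has a $1$-bounded two-sided approximate identity) and the module $A$: then $A$ is topologically flat over $I$ iff $A_{ess}=I$ is topologically flat over $I$ and $A/A_{ess}=A/I$ is an $\mathscr{L}_1^g$-space. The first condition holds automatically, since $I$ is a left ideal of the $C^*$-algebra $I$ and hence topologically flat over itself by Proposition~\ref{IdealofCstarAlgisMetTopFlat}. Thus topological flatness is equivalent to $A/I$ being an $\mathscr{L}_1^g$-space, and as $A/I$ is a $C^*$-algebra, Proposition~\ref{CStarAlgIsL1IfFinDim} turns this into finite-dimensionality of $A/I$, giving both directions of the topological case.

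The metric case is the delicate one, and the main obstacle is producing an isometric, norm-one splitting of $A^*$ rather than the factor-$2$ complementation that Corollary~\ref{MetTopRelInjModAnnihCompl} alone would yield. Here I would use that $I$ is a $C^*$-algebra, so by the remark following Proposition~\ref{MetTopFlatCycModCharac} it carries a contractive approximate identity $(e_\nu)$ with $\sup_\nu\|e_{I_\#}-e_\nu\|\le 1$, where $I_\#$ is the $C^*$-unitization. Taking $B=I_\#$ in Proposition~\ref{DualBanModDecomp}, with $A$ viewed as a unital contractive $I_\#$-module (which follows from the $C^*$-embedding $I_\#\hookrightarrow A_\#$, so that the adjoined unit acts as $1_A$), the constants collapse to $c_1=c_2=1$. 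Then part (ii) of that proposition gives exactly what is needed: $(A/I)^*$ is a $1$-retract of $A^*$ in $\mathbf{mod}_1-I$.

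With this splitting the metric equivalence follows by a short chain. If $A$ is metrically flat over $I$, then $A^*$ is metrically injective over $I$ by Proposition~\ref{MetCTopFlatCharac}, hence so is its $1$-retract $(A/I)^*$ by Proposition~\ref{RetrMetCTopInjIsMetCTopInj}. Since $I$ is proper we have $A/I\neq\{0\}$, so $\mathbb{C}$ is a $1$-retract of the annihilator module $A/I$ by Proposition~\ref{AnnihCModIsRetAnnihMod}; dualizing, the right annihilator module $\mathbb{C}$ is a $1$-retract of $(A/I)^*$ and therefore metrically injective, which by Proposition~\ref{MetTopInjModCCharac} forces $I=\{0\}$. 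In that situation $A$ is merely an annihilator module over the zero algebra, so Proposition~\ref{MetTopFlatAnnihModCharac} combined with Proposition~\ref{CStarAlgIsL1IfFinDim} shows metric flatness is equivalent to $A$ being an $L_1$-space, i.e. $\operatorname{dim}(A)\le 1$; since $A\neq\{0\}$ this means $\operatorname{dim}(A)=1$. Conversely, if $I=\{0\}$ and $\operatorname{dim}(A)=1$ then $A\isom{\mathbf{Ban}_1}\mathbb{C}$ is a metrically flat annihilator $I$-module by Proposition~\ref{MetTopFlatAnnihModCharac}, closing the argument.
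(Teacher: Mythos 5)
Your proof is correct and follows essentially the same route as the paper's: the same decomposition $A_{ess}=I$, $A/A_{ess}=A/I$, with Proposition~\ref{TopFlatModCharac} plus Propositions~\ref{IdealofCstarAlgisMetTopFlat} and~\ref{CStarAlgIsL1IfFinDim} handling the topological case, and Proposition~\ref{DualBanModDecomp}(ii) with constant $1$ feeding into Propositions~\ref{MetCTopFlatCharac},~\ref{RetrMetCTopInjIsMetCTopInj},~\ref{MetTopFlatAnnihModCharac} and~\ref{CStarAlgIsL1IfFinDim} for the metric case. The only cosmetic deviations are that you unitize $I$ rather than $A$ when invoking the decomposition lemma, and that you detour through Propositions~\ref{AnnihCModIsRetAnnihMod} and~\ref{MetTopInjModCCharac} to force $I=\{0\}$, where the paper simply applies Proposition~\ref{MetTopFlatAnnihModCharac} to the annihilator module $A/I$ and reads off both conclusions at once.
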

\begin{proof} We may regard $I$ as an  ideal of unitazation $A_\#$ of $A$. Since
$I$ is a two-sided ideal, then it has a contractive approximate identity
${(e_\nu)}_{\nu\in N}$ such that $0\leq e_\nu\leq e_{A_\#}$
[\cite{HelBanLocConvAlg}, proposition 4.7.79]. As a 
corollary $\sup_{\nu\in N}\Vert e_{A_\#}-e_\nu\Vert\leq 1$. Since $I$ has an 
approximate identity we also have $A_{ess}:=\operatorname{cl}_A(IA)=I$. 
Since $I$ is a two sided ideal then $A/I$ is a $C^*$-algebra 
[\cite{HelBanLocConvAlg}, theorem 4.7.81].

Assume, $A$ is a metrically flat $I$-module. Since 
$\sup_{\nu\in N}\Vert e_{A_\#}-e_\nu\Vert\leq 1$, then paragraph $(ii)$ of
proposition~\ref{DualBanModDecomp} tells us that ${(A/A_{ess})}^*={(A/I)}^*$ is 
a retract of $A^*$ in $\mathbf{mod}_1-I$. Now from
propositions~\ref{MetCTopFlatCharac} and~\ref{RetrMetCTopInjIsMetCTopInj} it
follows that $A/I$ is metrically flat $I$-module. Since this is an annihilator
module, then from proposition~\ref{MetTopFlatAnnihModCharac} it follows that 
$I= \{0 \}$ and $A/I$ is an $L_1$-space. Now from
proposition~\ref{CStarAlgIsL1IfFinDim} we get that 
$\operatorname{dim}(A/I)\leq 1$. Since $A$ contains a proper 
ideal $I= \{0 \}$, then $\operatorname{dim}(A)=1$. Conversely, 
if $I= \{0 \}$ and $\operatorname{dim}(A)=1$, then we have an 
annihilator $I$-module $A$ which is isometrically isomorphic 
to $\ell_1(\mathbb{N}_1)$. By proposition~\ref{MetTopFlatAnnihModCharac} 
it is metrically flat. 

By proposition~\ref{TopFlatModCharac} the $I$-module $A$ is topologically flat
iff $A_{ess}=I$ and $A/A_{ess}=A/I$ are topologically flat $I$-modules. By
proposition~\ref{IdealofCstarAlgisMetTopFlat} the ideal $I$ is topologically
flat $I$-module. By proposition~\ref{MetTopFlatAnnihModCharac} the annihilator
$I$-module $A/I$ is topologically flat iff it is an $\mathscr{L}_1^g$-space. By
proposition~\ref{CStarAlgIsL1IfFinDim} this is equivalent to $A/I$ being finite
dimensional.
\end{proof}


\subsection{
    \texorpdfstring{$\mathcal{K}(H)$}{K (H)}- and
    \texorpdfstring{$\mathcal{B}(H)$}{B (H)}-modules
}\label{SubSectionKHAndBHModules}

In this section we apply general results on ideals obtained above to classical
modules over $C^*$-algebras.  For a given Hilbert space $H$ we consider
$\mathcal{B}(H)$, $\mathcal{K}(H)$ and $\mathcal{N}(H)$ as left and right Banach
modules over $\mathcal{B}(H)$ and $\mathcal{K}(H)$. For all modules the module
action is just the composition of operators. The Schatten-von Neumann
isomorphisms $\mathcal{N}(H)\isom{\mathbf{Ban}_1}{\mathcal{K}(H)}^*$,
$\mathcal{B}(H)\isom{\mathbf{Ban}_1}{\mathcal{N}(H)}^*$ (see
[\cite{TakThOpAlgVol1}, theorems II.1.6, II.1.8]) will be of use here. They are
in fact isomorphisms of left and right $\mathcal{B}(H)$-modules and a fortiori
of $\mathcal{K}(H)$-modules.

\begin{proposition}\label{KHAndBHModBH} Let $H$ be a Hilbert space. Then

\begin{enumerate}[label = (\roman*)]
    \item $\mathcal{B}(H)$ is metrically and topologically projective and 
    flat as $\mathcal{B}(H)$-module;

    \item $\mathcal{B}(H)$ is metrically or topologically projective or flat as
    $\mathcal{K}(H)$-module iff $H$ is finite dimensional;

    \item $\mathcal{B}(H)$ is topologically injective as $\mathcal{B}(H)$- or
    $\mathcal{K}(H)$-module iff $H$ is finite dimensional;

    \item $\mathcal{B}(H)$ is metrically injective as $\mathcal{B}(H)$- or
    $\mathcal{K}(H)$-module iff $\dim(H)\leq 1$.
\end{enumerate}
\end{proposition}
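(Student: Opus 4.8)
The plan is to handle the four parts by reducing each to a structural fact already isolated in the text, using freely the implications metric $\Rightarrow$ topological (Propositions~\ref{MetProjIsTopProjAndTopProjIsRelProj}, \ref{MetInjIsTopInjAndTopInjIsRelInj}, \ref{MetFlatIsTopFlatAndTopFlatIsRelFlat}) and projective $\Rightarrow$ flat (Proposition~\ref{MetTopProjIsMetTopFlat}). Part $(i)$ is immediate: $\mathcal{B}(H)$ is unital with identity $1_H$ of norm $1$, so as a module over itself it is $A_\times$, which is metrically and $1$-topologically projective by Proposition~\ref{UnitalAlgIsMetTopProj}, and hence metrically and topologically flat by Proposition~\ref{MetTopProjIsMetTopFlat}.

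For $(ii)$ I would first observe that, viewing $\mathcal{B}(H)$ as a $\mathcal{K}(H)$-module, the essential part is $\mathcal{B}(H)_{ess}=\operatorname{cl}_{\mathcal{B}(H)}(\mathcal{K}(H)\mathcal{B}(H))=\mathcal{K}(H)$, since $\mathcal{K}(H)$ is a two-sided ideal carrying a contractive approximate identity, while the quotient $\mathcal{B}(H)/\mathcal{K}(H)$ is the Calkin algebra. Because $\mathcal{K}(H)$ has a two-sided bounded approximate identity, Proposition~\ref{TopFlatModCharac} applies: $\mathcal{B}(H)$ is topologically flat over $\mathcal{K}(H)$ iff $\mathcal{K}(H)$ is topologically flat over itself (always true by Proposition~\ref{IdealofCstarAlgisMetTopFlat}) and the Calkin algebra is an $\mathscr{L}_1^g$-space. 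As the Calkin algebra is a $C^*$-algebra, Proposition~\ref{CStarAlgIsL1IfFinDim} forces it to be finite dimensional, i.e.\ $\mathcal{B}(H)/\mathcal{K}(H)=\{0\}$, i.e.\ $H$ finite dimensional. Thus topological flatness already fails for infinite dimensional $H$; since each of the four homological properties in $(ii)$ implies topological flatness, this settles the ``only if'' direction, and when $H$ is finite dimensional $\mathcal{K}(H)=\mathcal{B}(H)$, so all four follow from $(i)$.

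For $(iii)$, when $H$ is finite dimensional topological injectivity over $\mathcal{B}(H)=\mathcal{K}(H)$ is exactly Proposition~\ref{FinDimBHModTopInj}. For the converse I would first reduce the $\mathcal{K}(H)$-module case to the $\mathcal{B}(H)$-module case: $\mathcal{K}(H)$ is a left ideal of $\mathcal{B}(H)$ and $\mathcal{B}(H)$ is faithful as a right $\mathcal{K}(H)$-module (if $Tk=0$ for all compact $k$ then $T=0$), so by Proposition~\ref{MetTopInjUnderChangeOfAlg}$(i)$ topological injectivity over $\mathcal{K}(H)$ implies topological injectivity over $\mathcal{B}(H)$. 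It then suffices to rule out the latter for infinite dimensional $H$: by Proposition~\ref{TopInjIdHaveLUST} a $C^*$-algebra injective over itself cannot contain $\mathcal{B}(\ell_2(\mathbb{N}_n))$ as a ${}^*$-subalgebra for arbitrary $n$, whereas $\mathcal{B}(H)$ contains such copies for every $n$ when $H$ is infinite dimensional.

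Finally $(iv)$ follows the same pattern. Metric injectivity implies topological injectivity, so by $(iii)$ the space $H$ must be finite dimensional, whence $\mathcal{B}(H)=\mathcal{K}(H)=M_n(\mathbb{C})$; Proposition~\ref{MetInjCStarAlgCharac} then identifies metric injectivity over itself with $M_n(\mathbb{C})$ being a commutative $AW^*$-algebra, which happens precisely when $n\le 1$, and the converse direction uses the same proposition ($\mathbb{C}$ being a commutative $AW^*$-algebra). The main conceptual step throughout --- and the part needing care --- is the reduction of the $\mathcal{K}(H)$-module statements to facts about $\mathcal{K}(H)$ as an ideal: identifying $\mathcal{B}(H)_{ess}$ with $\mathcal{K}(H)$ and the annihilator quotient with the Calkin algebra in $(ii)$, and checking the faithfulness hypothesis of Proposition~\ref{MetTopInjUnderChangeOfAlg} in $(iii)$. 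Once these are in place, the genuinely hard analytic inputs (Propositions~\ref{CStarAlgIsL1IfFinDim} and~\ref{TopInjIdHaveLUST}) are merely quoted rather than reproved.
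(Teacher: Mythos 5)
Your proof is correct and takes essentially the same route as the paper: part $(i)$ via Proposition~\ref{UnitalAlgIsMetTopProj} and~\ref{MetTopProjIsMetTopFlat}; part $(ii)$ via the flatness criterion over the ideal $\mathcal{K}(H)$ (you unfold Proposition~\ref{CStarAlgIsTopFlatOverItsIdeal} into its ingredients~\ref{TopFlatModCharac}, \ref{IdealofCstarAlgisMetTopFlat} and~\ref{CStarAlgIsL1IfFinDim}, which the paper simply cites); part $(iii)$ via Propositions~\ref{TopInjIdHaveLUST}, \ref{MetTopInjUnderChangeOfAlg}$(i)$ and~\ref{FinDimBHModTopInj}, where your explicit check of faithfulness of $\mathcal{B}(H)$ as a right $\mathcal{K}(H)$-module is a hypothesis the paper leaves implicit; and part $(iv)$ via the Hamana--Takesaki characterization~\ref{MetInjCStarAlgCharac}. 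The only (harmless) departure is in $(iv)$: you first force $H$ finite dimensional through $(iii)$ and then apply~\ref{MetInjCStarAlgCharac} to $M_n(\mathbb{C})$, whereas the paper applies~\ref{MetInjCStarAlgCharac} directly to the noncommutative algebra $\mathcal{B}(H)$ when $\dim(H)>1$ and transfers the failure along~\ref{MetTopInjUnderChangeOfAlg}$(i)$; both arguments are equally valid.
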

\begin{proof} $(i)$ Since $\mathcal{B}(H)$ is a unital algebra it is metrically
and topologically projective as $\mathcal{B}(H)$-module by
proposition~\ref{UnitalAlgIsMetTopProj}. Both results regarding flatness follow
from proposition~\ref{MetTopProjIsMetTopFlat}.

$(ii)$ For infinite dimensional $H$ the Banach space
$\mathcal{B}(H)/\mathcal{K}(H)$ is of infinite dimension, so by
proposition~\ref{CStarAlgIsTopFlatOverItsIdeal} the module $\mathcal{B}(H)$
neither topologically nor metrically flat as $\mathcal{K}(H)$-module. Both
claims regarding projectivity follow from
proposition~\ref{MetTopProjIsMetTopFlat}. If $H$ is finite dimensional, then
$\mathcal{K}(H)=\mathcal{B}(H)$, so the result follows from paragraph $(i)$.

$(iii)$ If $H$ is infinite dimensional, then $\mathcal{B}(H)$ contains
$\mathcal{B}(\ell_2(\mathbb{N}_n))$ as ${}^*$-subalgebra for all
$n\in\mathbb{N}$. By proposition~\ref{TopInjIdHaveLUST} we get that
$\mathcal{B}(H)$ is not topologically injective as $\mathcal{B}(H)$-module. The
rest follows from paragraph $(i)$ of
proposition~\ref{MetTopInjUnderChangeOfAlg}. If $H$ is finite dimensional, then
$\mathcal{K}(H)=\mathcal{B}(H)$, so the result follows from
proposition~\ref{FinDimBHModTopInj}.

$(iv)$ If $\dim(H)>1$, then $C^*$-algebra $\mathcal{B}(H)$ is not commutative. 
By proposition~\ref{MetInjCStarAlgCharac} we get that it is not metrically
injective as $\mathcal{B}(H)$-module. Now from paragraph $(i)$
of~\ref{MetTopInjUnderChangeOfAlg} we get that $\mathcal{B}(H)$ is not
metrically injective as $\mathcal{K}(H)$-module. If $\dim(H)\leq 1$ both claims
obviously follow from~\ref{MetInjCStarAlgCharac}.
\end{proof}

\begin{proposition}\label{KHAndBHModKH} Let $H$ be a Hilbert space. Then 

\begin{enumerate}[label = (\roman*)]
    \item $\mathcal{K}(H)$ is metrically and topologically flat 
    as $\mathcal{B}(H)$- or $\mathcal{K}(H)$-module;

    \item $\mathcal{K}(H)$ is metrically or topologically projective as
    $\mathcal{B}(H)$- or $\mathcal{K}(H)$-module iff $H$ is finite dimensional;

    \item $\mathcal{K}(H)$ is topologically injective as $\mathcal{B}(H)$- 
    or $\mathcal{K}(H)$-module iff $H$ is finite dimensional;

    \item $\mathcal{K}(H)$ is metrically injective as $\mathcal{B}(H)$- 
    or $\mathcal{K}(H)$-module iff $\dim(H)\leq 1$.
\end{enumerate}
\end{proposition}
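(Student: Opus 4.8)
The plan is to mirror the structure of Proposition~\ref{KHAndBHModBH} but applied to the ideal $\mathcal{K}(H)$, exploiting the Schatten--von Neumann duality $\mathcal{N}(H)\isom{\mathbf{Ban}_1}{\mathcal{K}(H)}^*$ and $\mathcal{B}(H)\isom{\mathbf{Ban}_1}{\mathcal{N}(H)}^*$, which are isomorphisms of $\mathcal{B}(H)$- and a fortiori $\mathcal{K}(H)$-modules. The four paragraphs will be handled as four independent arguments, each reducing to results already established for ideals of $C^*$-algebras in this section.

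For paragraph $(i)$, flatness of $\mathcal{K}(H)$ as an ideal of itself and of $\mathcal{B}(H)$ follows directly from Proposition~\ref{IdealofCstarAlgisMetTopFlat}, since $\mathcal{K}(H)$ is a left ideal of both $C^*$-algebras (it is the whole algebra in the first case and a two-sided ideal in the second). For paragraph $(ii)$, I would invoke the projectivity criterion of Theorem~\ref{LeftIdealOfCStarAlgMetTopProjCharac} together with Corollary~\ref{BiIdealOfCStarAlgMetTopProjCharac}: $\mathcal{K}(H)$ is projective (metrically or topologically) iff it is unital, and $\mathcal{K}(H)$ is unital iff $\dim H<\infty$ (the identity operator is compact only in finite dimensions). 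When $H$ is finite dimensional $\mathcal{K}(H)=\mathcal{B}(H)$ is unital, closing the equivalence.

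For paragraph $(iii)$, the forward direction comes from Proposition~\ref{TopInjIdHaveLUST}: an infinite dimensional $H$ makes $\mathcal{K}(H)$ contain $\mathcal{B}(\ell_2(\mathbb{N}_n))$ as a ${}^*$-subalgebra for every $n$, so $\mathcal{K}(H)$ fails the l.u.st.\ property and hence cannot be topologically injective over itself; the result over $\mathcal{B}(H)$ then follows via Proposition~\ref{MetTopInjOvrAlgIffOvrItslf} (or Proposition~\ref{ReduceInjIdToInjAlg}, noting $\mathcal{K}(H)$ is faithful as an $\mathcal{K}(H)$-module). The converse uses $\dim H<\infty\Rightarrow\mathcal{K}(H)=\mathcal{B}(H)$ and Proposition~\ref{FinDimBHModTopInj}. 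Paragraph $(iv)$ runs analogously using Proposition~\ref{MetInjCStarAlgCharac}: metric injectivity over itself forces $\mathcal{K}(H)$ to be a commutative $AW^*$-algebra, which fails once $\dim H>1$ since $\mathcal{K}(H)$ is then non-commutative; the reduction between the $\mathcal{B}(H)$- and $\mathcal{K}(H)$-module cases again goes through Proposition~\ref{MetTopInjUnderChangeOfAlg}.

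The main subtlety will be the change-of-algebra step in $(iii)$ and $(iv)$: I must confirm that $\mathcal{K}(H)$ really is faithful (equivalently essential on the dual side) as a module over itself, which holds because $\mathcal{K}(H)$ possesses a contractive approximate identity by [\cite{HelBanLocConvAlg}, theorem 4.7.79], so Propositions~\ref{MetTopInjOvrAlgIffOvrItslf} and~\ref{ReduceInjIdToInjAlg} apply cleanly and transfer injectivity between the two module structures without constant loss in the metric case. Everything else is a direct citation of the ideal-theoretic results already proved, so no genuinely new estimate is required; the only care needed is bookkeeping the equivalence of "unital," "finite dimensional," and "contains all $\mathcal{B}(\ell_2(\mathbb{N}_n))$" for the compact operators.
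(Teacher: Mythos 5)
Your proposal is correct, and paragraphs $(i)$ and $(ii)$ coincide with the paper's own proof: flatness via Proposition~\ref{IdealofCstarAlgisMetTopFlat}, projectivity via Corollary~\ref{BiIdealOfCStarAlgMetTopProjCharac} plus the observation that $\mathcal{K}(H)$ is unital iff $\dim H<\infty$. Where you genuinely diverge is the non-injectivity direction of $(iii)$ and $(iv)$. The paper dispatches $(ii)$, $(iii)$, $(iv)$ in one stroke: for infinite dimensional $H$ the algebra $\mathcal{K}(H)$ is not unital, and Proposition~\ref{MetTopInjOfId} shows an injective right ideal has a left identity, which together with the right contractive approximate identity would force unitality; crucially this applies to $\mathcal{K}(H)$ as a right ideal of either $A=\mathcal{K}(H)$ or $A=\mathcal{B}(H)$ directly, so no change-of-algebra step is needed (the finite-dimensional case of $(iv)$, $1<\dim H<\infty$, is then settled by reduction to Proposition~\ref{KHAndBHModBH}). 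You instead use the heavier structural results: Proposition~\ref{TopInjIdHaveLUST} (the l.u.st.\ obstruction from the matrix subalgebras $\mathcal{B}(\ell_2(\mathbb{N}_n))\subset\mathcal{K}(H)$) for $(iii)$, and the Hamana--Takesaki characterization Proposition~\ref{MetInjCStarAlgCharac} (commutativity) for $(iv)$, followed by transfer between the two module structures via Proposition~\ref{MetTopInjOvrAlgIffOvrItslf}. This is valid --- your faithfulness check via the contractive approximate identity is exactly what makes the transfer apply --- and it buys a more uniform treatment of $(iv)$: noncommutativity rules out metric injectivity for every $\dim H>1$ at once, with no case split on the dimension. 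The price is machinery where an elementary unitality argument suffices. One small caution: in $(iv)$ you cite Proposition~\ref{MetTopInjUnderChangeOfAlg} for the transfer, but its paragraph $(ii)$ requires weak complementation of the ideal, which you would then have to verify for $\mathcal{K}(H)$ inside $\mathcal{B}(H)$; it is cleaner to route through Proposition~\ref{MetTopInjOvrAlgIffOvrItslf} (as you do in $(iii)$), since it packages that step with no extra hypotheses.
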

\begin{proof} Let $A$ be either $\mathcal{B}(H)$ or $\mathcal{K}(H)$. Note that
$\mathcal{K}(H)$ is a two-sided ideal of $A$. 

$(i)$ Recall that $\mathcal{K}(H)$ has a contractive approximate identity
consisting of orthogonal projections onto all finite-dimensional subspaces of
$H$. Since $\mathcal{K}(H)$ is a two-sided ideal of $A$, then the result follows
from proposition~\ref{IdealofCstarAlgisMetTopFlat}.

$(ii)$, $(iii)$, $(iv)$ If $H$ is infinite dimensional, then $\mathcal{K}(H)$ is
not unital as Banach algebra. From
corollary~\ref{BiIdealOfCStarAlgMetTopProjCharac} and
proposition~\ref{MetTopInjOfId} the $A$-module $\mathcal{K}(H)$ is neither
metrically nor topologically projective or injective. If $H$ is finite
dimensional, then $\mathcal{K}(H)=\mathcal{B}(H)$, so both results follow from
paragraphs $(i)$, $(iii)$ and $(iv)$ of proposition~\ref{KHAndBHModBH}.
\end{proof}

\begin{proposition}\label{KHAndBHModNH} Let $H$ be a Hilbert space. Then

\begin{enumerate}[label = (\roman*)]
    \item $\mathcal{N}(H)$ is metrically and topologically injective as
    $\mathcal{B}(H)$- or $\mathcal{K}(H)$-module;

    \item $\mathcal{N}(H)$ is topologically projective or flat 
    as $\mathcal{B}(H)$- or $\mathcal{K}(H)$-module 
    iff $H$ is finite dimensional;

    \item $\mathcal{N}(H)$ is metrically projective or flat 
    as $\mathcal{B}(H)$- or $\mathcal{K}(H)$-module iff $\dim(H)\leq 1$.
\end{enumerate}
\end{proposition}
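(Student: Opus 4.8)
The plan is to leverage the Schatten--von Neumann duality $\mathcal{N}(H)\cong\mathcal{K}(H)^*$ and $\mathcal{B}(H)\cong\mathcal{N}(H)^*$ recorded at the start of this section, which are isomorphisms of both the left and the right $\mathcal{B}(H)$- and $\mathcal{K}(H)$-module structures. Every assertion about $\mathcal{N}(H)$ then translates, via proposition~\ref{MetCTopFlatCharac}, into an assertion about a module already handled in propositions~\ref{KHAndBHModBH} and~\ref{KHAndBHModKH}. Throughout I let $A$ be either $\mathcal{B}(H)$ or $\mathcal{K}(H)$ and keep track of sides: since flatness is a property of left modules and injectivity of right modules, the identity $\mathcal{N}(H)\cong\mathcal{K}(H)^*$ turns flatness of the left module $\mathcal{K}(H)$ into injectivity of the right module $\mathcal{N}(H)$, and symmetrically on the other side.

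For paragraph $(i)$ I would observe that $\mathcal{K}(H)$ is metrically and topologically flat as an $A$-module by proposition~\ref{KHAndBHModKH}$(i)$, so by proposition~\ref{MetCTopFlatCharac} its dual $\mathcal{K}(H)^*\cong\mathcal{N}(H)$ is metrically and topologically injective. Running this for each of the two one-sided structures over each of the two algebras yields the claim for $\mathcal{N}(H)$ as a left and as a right module over both $\mathcal{B}(H)$ and $\mathcal{K}(H)$.

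For paragraph $(ii)$ the key identity is $\mathcal{N}(H)^*\cong\mathcal{B}(H)$: by proposition~\ref{MetCTopFlatCharac}, $\mathcal{N}(H)$ is topologically flat iff $\mathcal{B}(H)$ is topologically injective, which by proposition~\ref{KHAndBHModBH}$(iii)$ holds iff $H$ is finite dimensional. For projectivity, topological projectivity implies topological flatness by proposition~\ref{MetTopProjIsMetTopFlat}, hence forces $H$ finite dimensional; conversely, when $H$ is finite dimensional $\mathcal{N}(H)$ is topologically projective by proposition~\ref{FinDimNHModTopProjFlat}, and there $\mathcal{K}(H)=\mathcal{B}(H)$, so the statement over $\mathcal{K}(H)$ is the same. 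Paragraph $(iii)$ is identical in shape with proposition~\ref{KHAndBHModBH}$(iii)$ replaced by $(iv)$: metric flatness of $\mathcal{N}(H)$ is equivalent to metric injectivity of $\mathcal{B}(H)$, i.e.\ to $\dim(H)\le 1$; metric projectivity implies metric flatness, and when $\dim(H)\le 1$ we have $\mathcal{N}(H)=\mathcal{B}(H)=A$, a unital algebra, which is metrically projective by proposition~\ref{UnitalAlgIsMetTopProj}.

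I do not expect a genuine obstacle: the entire content is carried by the two duality isomorphisms together with the previously established $\mathcal{B}(H)$- and $\mathcal{K}(H)$-results, and the ``if'' directions collapse because finite dimensionality (resp.\ $\dim(H)\le 1$) forces $\mathcal{K}(H)=\mathcal{B}(H)$. The one point demanding care is the left/right matching in the duality, since proposition~\ref{MetCTopFlatCharac} pairs a left module with its right-module dual; I would state once at the outset which one-sided structure is being dualized and verify that propositions~\ref{KHAndBHModKH}$(i)$ and~\ref{KHAndBHModBH}$(iii)$,$(iv)$ are available for exactly that side over both algebras.
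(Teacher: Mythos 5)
Your proposal is correct and takes essentially the same route as the paper's proof: the Schatten--von Neumann dualities $\mathcal{N}(H)\isom{\mathbf{mod}_1-A}{\mathcal{K}(H)}^*$ and $\mathcal{B}(H)\isom{\mathbf{mod}_1-A}{\mathcal{N}(H)}^*$ combined with proposition~\ref{MetCTopFlatCharac}, paragraph $(i)$ of proposition~\ref{KHAndBHModKH}, paragraphs $(iii)$ and $(iv)$ of proposition~\ref{KHAndBHModBH}, proposition~\ref{FinDimNHModTopProjFlat}, and the collapse $\mathcal{N}(H)=\mathcal{K}(H)=\mathcal{B}(H)$ in low dimension. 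The only cosmetic difference is in $(ii)$, where you derive non-flatness first (via \ref{MetCTopFlatCharac}) and then non-projectivity from projective $\Rightarrow$ flat, while the paper derives non-projectivity via proposition~\ref{DualMetTopProjIsMetrInj}; your ordering mirrors the paper's own treatment of $(iii)$ and is, if anything, the cleaner way to justify the flatness claim.
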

\begin{proof} Let $A$ be either $\mathcal{B}(H)$ or $\mathcal{K}(H)$.

$(i)$ Note that $\mathcal{N}(H)\isom{\mathbf{mod}_1-A}{\mathcal{K}(H)}^*$, 
so the result follows from proposition~\ref{MetCTopFlatCharac} and 
paragraph $(i)$ of proposition~\ref{KHAndBHModKH}.

$(ii)$ Assume $H$ is infinite dimensional. Note that
$\mathcal{B}(H)\isom{\mathbf{mod}_1-A}{\mathcal{N}(H)}^*$, so from
proposition~\ref{DualMetTopProjIsMetrInj} and paragraph $(iii)$ of
proposition~\ref{KHAndBHModBH} we get that $\mathcal{N}(H)$ is not topologically
projective as $A$-module. Both results regarding flatness follow from
proposition~\ref{MetTopProjIsMetTopFlat}. If $H$ is finite dimensional we use
proposition~\ref{FinDimNHModTopProjFlat}.

$(iii)$ Assume $\dim(H)>1$, then by paragraph $(iv)$ of
proposition~\ref{KHAndBHModBH} the $A$-module $\mathcal{B}(H)$ is not metrically
injective. Since $\mathcal{B}(H)\isom{\mathbf{mod}_1-A}{\mathcal{N}(H)}^*$, then
from proposition~\ref{MetCTopFlatCharac} we get that $\mathcal{N}(H)$ is not
metrically flat as $A$-module. By proposition~\ref{MetTopProjIsMetTopFlat}, it
is not metrically projective as $A$-module. If $\dim(H)\leq 1$, then
$\mathcal{N}(H)=\mathcal{K}(H)=\mathcal{B}(H)$, so both results follow from
paragraph $(i)$ of proposition~\ref{KHAndBHModBH}.
\end{proof}

\begin{proposition}\label{KHAndBHModsRelTh} Let $H$ be a Hilbert space. Then
\begin{enumerate}[label = (\roman*)]
    \item as $\mathcal{K}(H)$-modules $\mathcal{N}(H)$ is relatively projective
    injective and flat, $\mathcal{K}(H)$ is relatively projective and flat, but
    relatively injective only for finite dimensional $H$, $\mathcal{B}(H)$ is
    relatively injective and flat, but relatively projective only for finite
    dimensional $H$;

    \item as $\mathcal{B}(H)$-modules $\mathcal{N}(H)$ is relatively projective
    injective and flat, $\mathcal{K}(H)$ is relatively projective and flat,
    $\mathcal{B}(H)$ is relatively projective, injective and flat.
\end{enumerate}
\end{proposition}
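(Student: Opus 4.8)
The strategy is to run everything through four engines: the relative amenability of $\mathcal{K}(H)$ (being a nuclear $C^*$-algebra it is relatively amenable, so \emph{every} left and right $\mathcal{K}(H)$-module is relatively flat), the relative duality ``$F$ is relatively flat iff $F^*$ is relatively injective'' (the relative analogue of proposition~\ref{MetCTopFlatCharac}), the projectivity of spatial modules from proposition~\ref{SpatModOverCStarAlgProp}, and stability of all three properties under retracts. Amenability alone settles every flatness claim in $(i)$, since it makes $\mathcal{N}(H)$, $\mathcal{K}(H)$ and $\mathcal{B}(H)$ relatively flat over $\mathcal{K}(H)$. Feeding the two Schatten identifications recorded above, $\mathcal{K}(H)^*\cong\mathcal{N}(H)$ and $\mathcal{N}(H)^*\cong\mathcal{B}(H)$ (as $\mathcal{K}(H)$- and $\mathcal{B}(H)$-bimodules), into the duality then gives that $\mathcal{N}(H)$ and $\mathcal{B}(H)$ are relatively injective $\mathcal{K}(H)$-modules, which is the injective part of $(i)$.

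For projectivity in $(i)$ I note that proposition~\ref{SpatModOverCStarAlgProp} applies verbatim to $A=\mathcal{K}(H)$ with $\varrho$ the identity representation, whose image already contains the rank-one operators $\{x\bigcirc x_0\}$; hence the spatial module $H_\varrho$ is metrically, and a fortiori relatively, projective over $\mathcal{K}(H)$. Since tensoring a relatively projective module with an arbitrary Banach space keeps it a retract of a relatively free module, the identification $\mathcal{N}(H)\cong H_\varrho\projtens H^*$ shows $\mathcal{N}(H)$ is relatively projective over $\mathcal{K}(H)$. The remaining positive statement, that $\mathcal{K}(H)$ itself is relatively projective over $\mathcal{K}(H)$, is the one genuinely operator-algebraic input: here I would invoke (or reprove, via the column-ideal decomposition of $\mathcal{K}(H)$ into copies of the projective module $H_\varrho$) Helemskii's relative computation~\cite{HelHomolBanTopAlg}. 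This is the step I expect to be most delicate, because $\mathcal{K}(H)$ is not biprojective and the naive diagonal $T\mapsto\sum_i Tp_i\projtens p_i$ fails to be bounded in projective norm, so no cheap splitting is available.

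Part $(ii)$ I would deduce by transfer to the unital overalgebra. The module $\mathcal{K}(H)$ is an essential left ideal of $\mathcal{B}(H)$, so its relative projectivity and flatness over $\mathcal{K}(H)$ pass to $\mathcal{B}(H)$ by the relative versions of the change-of-algebra propositions~\ref{MetTopProjUnderChangeOfAlg} and~\ref{MetTopFlatUnderChangeOfAlg} (propositions~2.3.2--2.3.4 of~\cite{RamsHomPropSemgroupAlg}); $\mathcal{N}(H)\cong H_\varrho\projtens H^*$ is relatively projective over $\mathcal{B}(H)$ for the same spatial reason as before, and $\mathcal{B}(H)$ is unital, so it is relatively projective over itself. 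Projectivity gives flatness throughout, and the two injective assertions ($\mathcal{N}(H)=\mathcal{K}(H)^*$ and $\mathcal{B}(H)=\mathcal{N}(H)^*$) again drop out of the duality applied to the flat modules $\mathcal{K}(H)$ and $\mathcal{N}(H)$ over $\mathcal{B}(H)$.

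Only the two failures carry dimension. If $\mathcal{K}(H)$ were relatively injective over $\mathcal{K}(H)$, the relatively admissible inclusion $\mathcal{K}(H)\hookrightarrow\mathcal{K}(H)_+$ (it is $1$-complemented as a Banach space) would let one extend $1_{\mathcal{K}(H)}$ to a right-module morphism $\psi:\mathcal{K}(H)_+\to\mathcal{K}(H)$, and then $p=\psi(e_{\mathcal{K}(H)_+})$ satisfies $pa=\psi(e_{\mathcal{K}(H)_+})\cdot a=\psi(e_{\mathcal{K}(H)_+}\cdot a)=\psi(a)=a$, a left identity; together with the two-sided approximate identity this forces $\mathcal{K}(H)$ unital, i.e. $H$ finite dimensional. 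If $\mathcal{B}(H)$ were relatively projective over $\mathcal{K}(H)$, then by the relative analogue of corollary~\ref{MetTopRelProjModPartCompl} its essential part $\operatorname{cl}_{\mathcal{B}(H)}(\mathcal{K}(H)\mathcal{B}(H))=\mathcal{K}(H)$ would be complemented in $\mathcal{B}(H)$, contradicting the classical non-complementation of $\mathcal{K}(H)$ in $\mathcal{B}(H)$ for infinite dimensional $H$; this non-complementation theorem is the other hard external input driving the result. In both cases the finite dimensional converses are immediate, since then $\mathcal{K}(H)=\mathcal{N}(H)=\mathcal{B}(H)=M_n(\mathbb{C})$ is unital and semisimple, so every relatively admissible sequence of modules splits and all three objects are simultaneously relatively projective, injective and flat.
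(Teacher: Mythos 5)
Your proposal is correct, and its overall architecture coincides with the paper's: both rest on (a) relative projectivity of $\mathcal{K}(H)$ over itself as the one irreducible operator-algebraic input (Helemskii, theorem IV.2.16 of~\cite{HelHomolBanTopAlg}, which you rightly flag as the delicate step — the paper simply cites it), (b) projectivity of the spatial module $H$ tensored with $H^*$ to handle $\mathcal{N}(H)$, (c) relative amenability of $\mathcal{K}(H)$ for flatness of $\mathcal{B}(H)$, (d) the duality ``flat iff dual is injective'' fed with the Schatten identifications, (e) change-of-algebra transfer from the ideal $\mathcal{K}(H)$ to $\mathcal{B}(H)$ for part $(ii)$, and (f) a left-identity obstruction for the injectivity failure. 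Where you genuinely diverge is in three local substitutions, all sound. First, you obtain \emph{all} flatness claims in $(i)$ from amenability at one stroke, whereas the paper derives flatness of $\mathcal{N}(H)$ and $\mathcal{K}(H)$ from their projectivity and reserves amenability for $\mathcal{B}(H)$; your route is cleaner but equivalent. Second, you derive relative projectivity of the spatial module from the paper's own proposition~\ref{SpatModOverCStarAlgProp} together with proposition~\ref{MetProjIsTopProjAndTopProjIsRelProj}, rather than citing Helemskii's theorem 7.1.27 — this makes the argument more self-contained within the paper. Third, and most substantially, for the failure of relative projectivity of $\mathcal{B}(H)$ over $\mathcal{K}(H)$ the paper cites [\cite{HelHomolBanTopAlg}, exercise V.2.20], while you reconstruct the argument: the relative part of corollary~\ref{MetTopRelProjModPartCompl} forces $\mathcal{B}(H)_{ess}=\mathcal{K}(H)$ to be complemented in $\mathcal{B}(H)$, contradicting Conway's non-complementation theorem (which, via the diagonal expectation onto $\ell_\infty$, holds for every infinite-dimensional $H$, not just separable ones). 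What your version buys is transparency about exactly which classical Banach-geometric facts drive the negative results; what the paper's version buys is brevity. Your inline proof of the left-identity obstruction for injectivity (extending $1_{\mathcal{K}(H)}$ along $\mathcal{K}(H)\hookrightarrow\mathcal{K}(H)_+$ and upgrading the left identity to a two-sided one via the approximate identity) is exactly the content of the cited [\cite{RamsHomPropSemgroupAlg}, proposition 2.2.8(i)], so no gap there either.
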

\begin{proof} $(i)$ Note that $H$ is relatively projective as
$\mathcal{K}(H)$-module [\cite{HelBanLocConvAlg}, theorem 7.1.27], so from
proposition 7.1.13 in~\cite{HelBanLocConvAlg} we get that
$\mathcal{N}(H)\isom{\mathcal{K}(H)-\mathbf{mod}_1}H\projtens H^*$ is also
relatively projective as $\mathcal{K}(H)$-module. By theorem IV.2.16
in~\cite{HelHomolBanTopAlg} the $\mathcal{K}(H)$-module $\mathcal{K}(H)$ is
relatively projective. A fortiori $\mathcal{N}(H)$ and $\mathcal{K}(H)$ are
relatively flat $\mathcal{K}(H)$-modules 
[\cite{HelBanLocConvAlg}, proposition 7.1.40], 
so $\mathcal{N}(H)\isom{\mathbf{mod}_1-\mathcal{K}(H)}{\mathcal{K}(H)}^*$
and  $\mathcal{B}(H)\isom{\mathbf{mod}_1-\mathcal{K}(H)}{\mathcal{N}(H)}^*$ are
relatively injective $\mathcal{K}(H)$-modules. From
[\cite{RamsHomPropSemgroupAlg}, proposition 2.2.8  (i)] we know that a Banach
algebra relatively injective over itself as a right module, necessarily has a
left identity. Therefore $\mathcal{K}(H)$ is not relatively injective
$\mathcal{K}(H)$-module for infinite dimensional $H$. If $H$ is finite
dimensional, then $\mathcal{K}(H)$-module $\mathcal{K}(H)$ is relatively
injective because $\mathcal{K}(H)=\mathcal{B}(H)$ and $\mathcal{B}(H)$ is
relatively injective $\mathcal{K}(H)$-module as was shown above. By corollary
5.5.64 from~\cite{DalBanAlgAutCont} the algebra $\mathcal{K}(H)$ is relatively
amenable, so all its left modules are relatively flat [\cite{HelBanLocConvAlg},
theorem 7.1.60]. In particular $\mathcal{B}(H)$ is relatively flat
$\mathcal{K}(H)$-module. From [\cite{HelHomolBanTopAlg}, exercise V.2.20] we
know that $\mathcal{B}(H)$ is not relatively projective as
$\mathcal{K}(H)$-module when $H$ is infinite dimensional. If $H$ is finite
dimensional then $\mathcal{B}(H)$ is relatively projective
$\mathcal{K}(H)$-module because $\mathcal{B}(H)=\mathcal{K}(H)$ and
$\mathcal{K}(H)$ is relatively projective $\mathcal{K}(H)$-module as was shown
above.

$(ii)$ From proposition~\ref{KHAndBHModBH} paragraph $(i)$ and
proposition~\ref{MetProjIsTopProjAndTopProjIsRelProj} it follows that
$\mathcal{B}(H)$ is relatively projective $\mathcal{B}(H)$-module. From
[\cite{RamsHomPropSemgroupAlg}, propositions 2.3.3, 2.3.4] we know that
$\langle$~an essential relatively projective / a faithful relatively
injective~$\rangle$ module over ideal of Banach algebra is $\langle$~relatively
projective / relative injective~$\rangle$ over algebra itself. Since
$\mathcal{K}(H)$ and $\mathcal{N}(H)$ are essential and faithful
$\mathcal{K}(H)$-modules, then from results of previous paragraph
$\mathcal{N}(H)$ is relatively projective and injective, while $\mathcal{K}(H)$
is relatively projective as $\mathcal{B}(H)$-modules. Now, by
[\cite{HelBanLocConvAlg}, proposition 7.1.40] all aforementioned modules are
relatively flat $\mathcal{B}(H)$-modules. In particular
$\mathcal{B}(H)\isom{\mathbf{mod}_1-\mathcal{B}(H)}{\mathcal{N}(H)}^*$ is
relatively injective $\mathcal{B}(H)$-module.
\end{proof}

Results of this section are summarized in the following three tables. Each cell
contains a condition under which the respective module has the respective
property and propositions where this is proved. We use ``?'' symbol to indicate
open problems. These tables confirm that the property of being homologically
trivial in metric and topological theory is too restrictive. It is easier to
mention cases where metric and topological properties coincide with relative
ones: flatness of $\mathcal{K}(H)$ as $\mathcal{B}(H)$- or
$\mathcal{K}(H)$-module, injectivity of $\mathcal{N}(H)$ as $\mathcal{B}(H)$- or
$\mathcal{K}(H)$-module, projectivity and flatness of $\mathcal{B}(H)$-module
$\mathcal{B}(H)$. In the remaining cases $H$ needs to be at least finite
dimensional in order to these properties be equivalent in metric, topological
and relative theory.

\begin{scriptsize}
    \begin{longtable}{|c|c|c|c|c|c|c|} 
    \multicolumn{7}{c}{
        \mbox{
            Homologically trivial $\mathcal{K}(H)$- 
            and $\mathcal{B}(H)$-modules in metric theory
        }
    }                                                                                                                                                                                                                                                                                                                                                                                                                                                    \\
    \hline &
    \multicolumn{3}{c|}{
        $\mathcal{K}(H)$-modules
    } & 
    \multicolumn{3}{c|}{
        $\mathcal{B}(H)$-modules
    }                                                                                                                                                                                                                       \\
    \hline & 
        \mbox{Projectivity} & 
        \mbox{Injectivity} & 
        \mbox{Flatness} & 
        \mbox{Projectivity} & 
        \mbox{Injectivity} & 
        \mbox{Flatness} \\ 
    \hline
        $\mathcal{N}(H)$ & 
        \begin{tabular}{@{}c@{}}
            $\dim(H)\leq 1$ \\
            \mbox{\ref{KHAndBHModNH}}
        \end{tabular} & 
        \begin{tabular}{@{}c@{}}
            $H$\mbox{ is any } \\
            \mbox{\ref{KHAndBHModNH}}
        \end{tabular} & 
        \begin{tabular}{@{}c@{}}
            $\dim(H)\leq 1$ \\
            \mbox{\ref{KHAndBHModNH}}
        \end{tabular} & 
        \begin{tabular}{@{}c@{}}
            $\dim(H)\leq 1$ \\
            \mbox{\ref{KHAndBHModNH}}
        \end{tabular} & 
        \begin{tabular}{@{}c@{}}
            $H$\mbox{ is any } \\
            \mbox{\ref{KHAndBHModNH}}
        \end{tabular} & 
        \begin{tabular}{@{}c@{}}
            $\dim(H)\leq 1$ \\
            \mbox{\ref{KHAndBHModNH}}
        \end{tabular} \\
    \hline
        $\mathcal{B}(H)$ & 
        \begin{tabular}{@{}c@{}}
            $\dim(H)<\aleph_0$ \\
            \mbox{\ref{KHAndBHModBH}}
        \end{tabular} & 
        \begin{tabular}{@{}c@{}}
            $\dim(H)\leq 1$ \\
            \mbox{\ref{KHAndBHModBH}}
        \end{tabular} & 
        \begin{tabular}{@{}c@{}}
            $\dim(H)<\aleph_0$ \\
            \mbox{\ref{KHAndBHModBH}}
        \end{tabular} & 
        \begin{tabular}{@{}c@{}}
            $H$\mbox{ is any } \\
            \mbox{\ref{KHAndBHModBH}}
        \end{tabular} & 
        \begin{tabular}{@{}c@{}}
            $\dim(H)\leq 1$ \\
            \mbox{\ref{KHAndBHModBH}}
        \end{tabular} & 
        \begin{tabular}{@{}c@{}}
            $H$\mbox{ is any } \\
            \mbox{\ref{KHAndBHModBH}}
        \end{tabular}          \\ 
    \hline
        $\mathcal{K}(H)$ & 
        \begin{tabular}{@{}c@{}}
            $\dim(H)<\aleph_0$ \\
            \mbox{\ref{KHAndBHModKH}}
        \end{tabular} & 
        \begin{tabular}{@{}c@{}}
            $\dim(H)\leq 1$ \\
            \mbox{\ref{KHAndBHModKH}}
        \end{tabular} & 
        \begin{tabular}{@{}c@{}}
            $H$\mbox{ is any } \\
            \mbox{\ref{KHAndBHModKH}}
        \end{tabular} & 
        \begin{tabular}{@{}c@{}}
            $\dim(H)<\aleph_0$ \\
            \mbox{\ref{KHAndBHModKH}}
        \end{tabular} & 
        \begin{tabular}{@{}c@{}}
            $\dim(H)\leq 1$ \\
            \mbox{\ref{KHAndBHModKH}}
        \end{tabular} & 
        \begin{tabular}{@{}c@{}}
            $H$\mbox{ is any } \\
            \mbox{\ref{KHAndBHModKH}}
        \end{tabular} \\ 
    \hline
    \multicolumn{7}{c}{
        \mbox{
            Homologically trivial $\mathcal{K}(H)$- 
            and $\mathcal{B}(H)$-modules in topological theory
        }
    } \\
    \hline & 
    \multicolumn{3}{c|}{
        $\mathcal{K}(H)$-modules
    } & 
    \multicolumn{3}{c|}{
        $\mathcal{B}(H)$-modules
    } \\
    \hline & 
        \mbox{Projectivity} & 
        \mbox{Injectivity} & 
        \mbox{Flatness} & 
        \mbox{Projectivity} & 
        \mbox{Injectivity} & 
        \mbox{Flatness} \\ 
    \hline
        $\mathcal{N}(H)$ & 
        \begin{tabular}{@{}c@{}}
            $\dim(H)<\aleph_0$ \\
            \mbox{\ref{KHAndBHModNH}}
        \end{tabular} & 
        \begin{tabular}{@{}c@{}}
            $H$\mbox{ is any } \\
            \mbox{\ref{KHAndBHModNH}}
        \end{tabular} & 
        \begin{tabular}{@{}c@{}}
            $\dim(H)<\aleph_0$ \\
            \mbox{\ref{KHAndBHModNH}}
        \end{tabular} & 
        \begin{tabular}{@{}c@{}}
            $\dim(H)<\aleph_0$ \\
            \mbox{\ref{KHAndBHModNH}}
        \end{tabular} & 
        \begin{tabular}{@{}c@{}}
            $H$\mbox{ is any } \\
            \mbox{\ref{KHAndBHModNH}}
        \end{tabular} & 
        \begin{tabular}{@{}c@{}}
            $\dim(H)<\aleph_0$ \\
            \mbox{\ref{KHAndBHModNH}}
        \end{tabular} \\
    \hline
        $\mathcal{B}(H)$ & 
        \begin{tabular}{@{}c@{}}
            $\dim(H)<\aleph_0$ \\
            \mbox{\ref{KHAndBHModBH}}
        \end{tabular} & 
        \begin{tabular}{@{}c@{}}
            $\dim(H)<\aleph_0$ \\
            \mbox{\ref{KHAndBHModBH}}
        \end{tabular} & 
        \begin{tabular}{@{}c@{}}
            $\dim(H)<\aleph_0$ \\
            \mbox{\ref{KHAndBHModBH}}
        \end{tabular} & 
        \begin{tabular}{@{}c@{}}
            $H$\mbox{ is any } \\
            \mbox{\ref{KHAndBHModBH}}
        \end{tabular} & 
        \begin{tabular}{@{}c@{}}
            $\dim(H)<\aleph_0$ \\
            \mbox{\ref{KHAndBHModBH}}
        \end{tabular} & 
        \begin{tabular}{@{}c@{}}
            $H$\mbox{ is any } \\
            \mbox{\ref{KHAndBHModBH}}
        \end{tabular} \\ 
    \hline
        $\mathcal{K}(H)$ &
        \begin{tabular}{@{}c@{}}
            $\dim(H)<\aleph_0$ \\
            \mbox{\ref{KHAndBHModKH}}
        \end{tabular} & 
        \begin{tabular}{@{}c@{}}
            $\dim(H)<\aleph_0$ \\
            \mbox{\ref{KHAndBHModKH}}
        \end{tabular} & 
        \begin{tabular}{@{}c@{}}
            $H$\mbox{ is any } \\
            \mbox{\ref{KHAndBHModKH}}
        \end{tabular} & 
        \begin{tabular}{@{}c@{}}
            $\dim(H)<\aleph_0$ \\
            \mbox{\ref{KHAndBHModKH}}
        \end{tabular} & 
        \begin{tabular}{@{}c@{}}
            $\dim(H)<\aleph_0$ \\
            \mbox{\ref{KHAndBHModKH}}
        \end{tabular} & 
        \begin{tabular}{@{}c@{}}
            $H$\mbox{ is any } \\
            \mbox{\ref{KHAndBHModKH}}
        \end{tabular} \\ 
    \hline
    \multicolumn{7}{c}{
        \mbox{
            Homologically trivial $\mathcal{K}(H)$- 
            and $\mathcal{B}(H)$-modules in relative theory
        }
    } \\
    \hline & 
    \multicolumn{3}{c|}{
        $\mathcal{K}(H)$-modules
    } & 
    \multicolumn{3}{c|}{
        $\mathcal{B}(H)$-modules
    } \\
    \hline & 
        \mbox{Projectivity} & 
        \mbox{Injectivity} & 
        \mbox{Flatness} & 
        \mbox{Projectivity} & 
        \mbox{Injectivity} & 
        \mbox{Flatness} \\ 
    \hline
        $\mathcal{N}(H)$ & 
        \begin{tabular}{@{}c@{}}
            $H$\mbox{ is any } \\
            \mbox{\ref{KHAndBHModsRelTh}}, (i)
        \end{tabular} & 
        \begin{tabular}{@{}c@{}}
            $H$\mbox{ is any } \\
            \mbox{\ref{KHAndBHModsRelTh}}, (i)
        \end{tabular} & 
        \begin{tabular}{@{}c@{}}
            $H$\mbox{ is any } \\
            \mbox{\ref{KHAndBHModsRelTh}}, (i)
        \end{tabular} & 
        \begin{tabular}{@{}c@{}}
            $H$\mbox{ is any } \\
            \mbox{\ref{KHAndBHModsRelTh}}, (ii)
        \end{tabular} & 
        \begin{tabular}{@{}c@{}}
            $H$\mbox{ is any } \\
            \mbox{\ref{KHAndBHModsRelTh}}, (ii)
        \end{tabular} & 
        \begin{tabular}{@{}c@{}}
            $H$\mbox{ is any } \\
            \mbox{\ref{KHAndBHModsRelTh}}, (ii)
        \end{tabular} \\
    \hline
        $\mathcal{B}(H)$ & 
        \begin{tabular}{@{}c@{}}
            $\dim(H)<\aleph_0$ \\
            \mbox{\ref{KHAndBHModsRelTh}}, (i)
        \end{tabular} & 
        \begin{tabular}{@{}c@{}}
            $H$\mbox{ is any } \\
            \mbox{\ref{KHAndBHModsRelTh}}, (i)
        \end{tabular} & 
        \begin{tabular}{@{}c@{}}
            $H$\mbox{ is any } \\
            \mbox{\ref{KHAndBHModsRelTh}}, (i)
        \end{tabular} & 
        \begin{tabular}{@{}c@{}}
            $H$\mbox{ is any } \\
            \mbox{\ref{KHAndBHModsRelTh}}, (ii)
        \end{tabular} & 
        \begin{tabular}{@{}c@{}}
            $H$\mbox{ is any } \\
            \mbox{\ref{KHAndBHModsRelTh}}, (ii)
        \end{tabular} & 
        \begin{tabular}{@{}c@{}}
            $H$\mbox{ is any } \\
            \mbox{\ref{KHAndBHModsRelTh}}, (ii)
        \end{tabular} \\
    \hline
        $\mathcal{K}(H)$ & 
        \begin{tabular}{@{}c@{}}
            $H$\mbox{ is any } \\
            \mbox{\ref{KHAndBHModsRelTh}}, (i)
        \end{tabular} & 
        \begin{tabular}{@{}c@{}}
            $\dim(H)<\aleph_0$ \\
            \mbox{\ref{KHAndBHModsRelTh}}, (i)
        \end{tabular} & 
        \begin{tabular}{@{}c@{}}
            $H$\mbox{ is any } \\
            \mbox{\ref{KHAndBHModsRelTh}}, (i)
        \end{tabular} & 
        \begin{tabular}{@{}c@{}}
            $H$\mbox{ is any } \\
            \mbox{\ref{KHAndBHModsRelTh}}, (ii)
        \end{tabular} & 
        \begin{tabular}{@{}c@{}} 
            {?}
        \end{tabular} & 
        \begin{tabular}{@{}c@{}}
            $H$\mbox{ is any } \\
            \mbox{\ref{KHAndBHModsRelTh}}, (ii)
        \end{tabular} \\
    \hline
    \end{longtable}
\end{scriptsize}


\subsection{
    \texorpdfstring{$c_0(\Lambda)$}{c0 (Lambda)}- and
    \texorpdfstring{$\ell_\infty(\Lambda)$}{lInfty (Lambda)}-modules
}\label{SubSectionc0AndlInftyModules}

We continue our study of modules over $C^*$-algebras and move to commutative
examples. For a given index set $\Lambda$ we consider spaces $c_0(\Lambda)$ and
$\ell_p(\Lambda)$ for $1\leq p\leq+\infty$ as left and right modules over
algebras $c_0(\Lambda)$ and $\ell_\infty(\Lambda)$. For all these modules the
module action is just the pointwise multiplication. It is well known that
${c_0(\Lambda)}^*\isom{\mathbf{Ban}_1}\ell_1(\Lambda)$ and
${\ell_p(\Lambda)}^*\isom{\mathbf{Ban}_1}\ell_{p^*}(\Lambda)$ 
for $1\leq p<+\infty$. In fact these isomorphisms are isomorphisms of
$\ell_\infty(\Lambda)$- and $c_0(\Lambda)$-modules. 

For a given $\lambda\in\Lambda$ we define $\mathbb{C}_\lambda$ as left or right
$\ell_\infty(\Lambda)$- or $c_0(\Lambda)$-module $\mathbb{C}$ with module action
defined by
$$
a\cdot_\lambda z=a(\lambda)z,\qquad z\cdot_\lambda a=a(\lambda) z.
$$
for $a\in \ell_\infty(\Lambda)$ and $z\in\mathbb{C}_s$. 

\begin{proposition}\label{OneDimlInftyc0ModMetTopProjIngFlat} Let $\Lambda$ be a
set and $\lambda\in\Lambda$. Then $\mathbb{C}_\lambda$ is metrically and
topologically projective injective and flat $\ell_\infty(\Lambda)$- or
$c_0(\Lambda)$-module.
\end{proposition}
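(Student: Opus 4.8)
The plan is to identify $\mathbb{C}_\lambda$ with a principal ideal generated by a norm-one idempotent, invoke the projectivity machinery already developed for such ideals, and then obtain flatness and injectivity by the duality results of this chapter.

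First I would observe that $\delta_\lambda$ is a norm-one idempotent lying in both $c_0(\Lambda)$ and $\ell_\infty(\Lambda)$: it is finitely supported (so belongs to $c_0(\Lambda)$) and satisfies $\delta_\lambda^2=\delta_\lambda$ under pointwise multiplication. Writing $A$ for either algebra, the principal ideal $A\delta_\lambda$ equals $\mathbb{C}\delta_\lambda$, since $a\delta_\lambda=a(\lambda)\delta_\lambda$ for all $a\in A$. The map $\mathbb{C}_\lambda\to A\delta_\lambda:z\mapsto z\delta_\lambda$ is then an isometric isomorphism of $A$-modules: it is evidently isometric and bijective onto $\mathbb{C}\delta_\lambda$, and the computation $a\cdot z=a(\lambda)z\mapsto a(\lambda)z\delta_\lambda=a\cdot(z\delta_\lambda)$ shows it respects the module action (and likewise the right action, as $A$ is commutative). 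Applying paragraph $(i)$ of proposition~\ref{UnIdeallIsMetTopProj} with $p=\delta_\lambda$ and $\Vert p\Vert=1$, I conclude that $\mathbb{C}_\lambda\cong A\delta_\lambda$ is metrically and $1$-topologically projective, hence in particular topologically projective. Flatness is then immediate from proposition~\ref{MetTopProjIsMetTopFlat}, which transfers metric (respectively $C$-topological) projectivity into metric (respectively $C$-topological) flatness.

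For injectivity I would pass to duals. A direct computation of the dual action identifies $(\mathbb{C}_\lambda)^*$ with $\mathbb{C}_\lambda$: for $f\in(\mathbb{C}_\lambda)^*$ one has $(f\cdot a)(z)=f(a(\lambda)z)=a(\lambda)f(z)$, which under $\mathbb{C}^*=\mathbb{C}$ is exactly the action $w\cdot a=a(\lambda)w$, so the dual of the left module $\mathbb{C}_\lambda$ is isometrically the right module $\mathbb{C}_\lambda$. Since $\mathbb{C}_\lambda$ has already been shown to be metrically and topologically flat, proposition~\ref{MetCTopFlatCharac} yields that $(\mathbb{C}_\lambda)^*\cong\mathbb{C}_\lambda$ is metrically and topologically injective as a right module; the corresponding left-module statement follows from the right-sided analogue of proposition~\ref{MetCTopFlatCharac}, whose validity is guaranteed by the symmetry afforded by commutativity of $A$.

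Since every step is a direct application of an earlier result together with elementary identifications, I do not expect a genuine obstacle here. The only point demanding care is the bookkeeping between the left and right module structures on $\mathbb{C}_\lambda$, and in particular checking that the self-duality $(\mathbb{C}_\lambda)^*\cong\mathbb{C}_\lambda$ is an isometric isomorphism that interchanges the left and right actions correctly, so that proposition~\ref{MetCTopFlatCharac} can be legitimately applied in both directions.
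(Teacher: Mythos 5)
Your proposal is correct and follows essentially the same route as the paper's proof: exhibit $\mathbb{C}_\lambda$ as a $1$-retract of the unitized algebra to get metric and $1$-topological projectivity, deduce flatness from proposition~\ref{MetTopProjIsMetTopFlat}, and get injectivity by dualizing and using the self-duality $\mathbb{C}_\lambda^*\isom{\mathbf{mod}_1-A}\mathbb{C}_\lambda$. The only differences are cosmetic: you package the retraction through proposition~\ref{UnIdeallIsMetTopProj}$(i)$ applied to the principal ideal $A\delta_\lambda$ (whose proof is exactly that retraction through $A_\times$), and you pass to injectivity of the dual via flatness and proposition~\ref{MetCTopFlatCharac}, where the paper invokes proposition~\ref{DualMetTopProjIsMetrInj} directly from projectivity.
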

\begin{proof} Let $A$ be either $\ell_\infty(\Lambda)$ or $c_0(\Lambda)$. One
can easily check that the 
maps $\pi:A_+\to\mathbb{C}_\lambda:a\oplus_1 z\mapsto a(\lambda)+z$ 
and $\sigma:\mathbb{C}_\lambda\to A_+:z\mapsto z\delta_\lambda\oplus_1 0$ 
are contractive $A$-morphisms of left $A$-modules.
Since $\pi\sigma=1_{\mathbb{C}_\lambda}$, then $\mathbb{C}_\lambda$ is retract
of $A_+$ in $A-\mathbf{mod}_1$. From proposition~\ref{UnitalAlgIsMetTopProj}
and~\ref{RetrMetCTopProjIsMetCTopProj} it follows that $\mathbb{C}_\lambda$ is
metrically and topologically projective left $A$-module and a fortiori
metrically and topologically flat by proposition~\ref{MetTopProjIsMetTopFlat}.
By proposition~\ref{DualMetTopProjIsMetrInj} we have that $\mathbb{C}_\lambda^*$
is metrically and topologically injective as $A$-module. Now metric and
topological injectivity of $\mathbb{C}_\lambda$ follow from isomorphism
$\mathbb{C}_\lambda\isom{\mathbf{mod}_1-A}\mathbb{C}_\lambda^*$.
\end{proof}

\begin{proposition}\label{c0AndlInftyModlIfty} Let $\Lambda$ be a set. Then

\begin{enumerate}[label = (\roman*)]
    \item $\ell_\infty(\Lambda)$ is metrically and topologically projective 
    and flat as $\ell_\infty(\Lambda)$-module;

    \item $\ell_\infty(\Lambda)$ is metrically or topologically projective 
    or flat as $c_0(\Lambda)$-module iff $\Lambda$ is finite;

    \item $\ell_\infty(\Lambda)$ is metrically and topologically injective as
    $\ell_\infty(\Lambda)$- and $c_0(\Lambda)$-module.
\end{enumerate}
\end{proposition}
\begin{proof} $(i)$ Since $\ell_\infty(\Lambda)$ is a unital algebra, then it is
metrically and topologically projective as $\ell_\infty(\Lambda)$-module by
proposition~\ref{UnitalAlgIsMetTopProj}. Results on flatness follow from
proposition~\ref{MetTopProjIsMetTopFlat}.

$(ii)$ For infinite $\Lambda$ the Banach space
$\ell_\infty(\Lambda)/c_0(\Lambda)$ is of infinite dimension, so by
proposition~\ref{CStarAlgIsTopFlatOverItsIdeal} the module
$\ell_\infty(\Lambda)$ neither topologically nor metrically flat as
$c_0(\Lambda)$-module. Both claims regarding projectivity follow from
proposition~\ref{MetTopProjIsMetTopFlat}. If $\Lambda$ is finite, then
$c_0(\Lambda)=\ell_\infty(\Lambda)$, so the result follows from paragraph $(i)$.

$(iii)$ Let $A$ be either $\ell_\infty(\Lambda)$ or $c_0(\Lambda)$. Note that
$\ell_\infty(\Lambda)
\isom{A-\mathbf{mod}_1}
\bigoplus_\infty \{\mathbb{C}_\lambda:\lambda\in\Lambda \}$, then from
propositions~\ref{OneDimlInftyc0ModMetTopProjIngFlat} and~\ref{MetTopInjModProd}
it follows that $\ell_\infty(\Lambda)$ is metrically injective as $A$-module.
Topological injectivity follows from
proposition~\ref{MetInjIsTopInjAndTopInjIsRelInj}.
\end{proof}

\begin{proposition}\label{c0AndlInftyModc0} Let $\Lambda$ be a set. Then 

\begin{enumerate}[label = (\roman*)]
    \item $c_0(\Lambda)$ is metrically and topologically flat as
    $\ell_\infty(\Lambda)$- or $c_0(\Lambda)$-module;

    \item $c_0(\Lambda)$ is metrically or topologically projective as
    $\ell_\infty(\Lambda)$- or $c_0(\Lambda)$-module iff $\Lambda$ is finite;

    \item $c_0(\Lambda)$ is metrically or topologically injective as
    $\ell_\infty(\Lambda)$- or $c_0(\Lambda)$-module iff $\Lambda$ is finite.
\end{enumerate}
\end{proposition}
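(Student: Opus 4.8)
The plan is to exploit the fact that $c_0(\Lambda)$ is a closed two-sided ideal of $A$, where $A$ denotes either $\ell_\infty(\Lambda)$ or $c_0(\Lambda)$, and that both are commutative $C^*$-algebras. The decisive structural observation is that $c_0(\Lambda)$ carries a contractive approximate identity, namely the net of indicator functions $\chi_S$ over finite $S\subset\Lambda$, and that $c_0(\Lambda)$ is unital precisely when $\Lambda$ is finite, in which case it coincides with $\ell_\infty(\Lambda)=A$. This observation lets me reduce the whole proposition to the ideal-theoretic results already established for $C^*$-algebras, in direct parallel with the treatment of $\mathcal{K}(H)$ in proposition~\ref{KHAndBHModKH}.

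For part $(i)$ I would simply invoke proposition~\ref{IdealofCstarAlgisMetTopFlat}: since $c_0(\Lambda)$ is a left ideal of the $C^*$-algebra $A$, it is automatically metrically and topologically flat as an $A$-module, uniformly in both choices of $A$ and with no case distinction in $\Lambda$.

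For the forward direction of $(ii)$ and $(iii)$, I would assume $\Lambda$ is infinite, so that $c_0(\Lambda)$ is non-unital as a Banach algebra. Projectivity is ruled out directly by corollary~\ref{BiIdealOfCStarAlgMetTopProjCharac}, which for a two-sided ideal of a $C^*$-algebra equates metric and topological projectivity with unitality. For injectivity I would argue by contradiction: if $c_0(\Lambda)$ were metrically or topologically injective, then proposition~\ref{MetTopInjOfId} would furnish a left identity of finite norm; combined with the contractive two-sided approximate identity this forces $c_0(\Lambda)$ to be unital, contradicting the infiniteness of $\Lambda$. For the reverse direction, when $\Lambda$ is finite we have $c_0(\Lambda)=\ell_\infty(\Lambda)=A$, and every asserted property follows at once from paragraphs $(i)$ and $(iii)$ of proposition~\ref{c0AndlInftyModlIfty}.

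No genuine obstacle arises in this argument; the only point requiring care is the injectivity step, where the one-sided identity produced by proposition~\ref{MetTopInjOfId} must be promoted to a genuine identity by pairing it with the approximate identity, exactly the upgrade used implicitly in the $\mathcal{K}(H)$ analysis. Everything else is a routine citation of the already-proved ideal results, so the proof should be short.
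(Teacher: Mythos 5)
Your proposal is correct and follows essentially the same route as the paper: part $(i)$ via proposition~\ref{IdealofCstarAlgisMetTopFlat}, the infinite case of $(ii)$ and $(iii)$ via corollary~\ref{BiIdealOfCStarAlgMetTopProjCharac} and proposition~\ref{MetTopInjOfId} against non-unitality, and the finite case by collapsing to $c_0(\Lambda)=\ell_\infty(\Lambda)$ and citing proposition~\ref{c0AndlInftyModlIfty}. The only difference is cosmetic: you spell out the promotion of the left identity to a genuine identity using the approximate identity, a step the paper leaves implicit.
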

\begin{proof} Let $A$ be either $\ell_\infty(\Lambda)$ or $c_0(\Lambda)$. Note
that $c_0(\Lambda)$ is a two-sided ideal of $A$. 

$(i)$ Recall that $c_0(\Lambda)$ has a contractive approximate identity of the
form ${(\sum_{\lambda\in S}\delta_\lambda)}_{S\in\mathcal{P}_0(\Lambda)}$. Since
$c_0(\Lambda)$ is a two-sided ideal of $A$, then the result follows from
proposition~\ref{IdealofCstarAlgisMetTopFlat}.

$(ii)$, $(iii)$ If $\Lambda$ is infinite, then $c_0(\Lambda)$ is not unital as
Banach algebra. From corollary~\ref{BiIdealOfCStarAlgMetTopProjCharac} and
proposition~\ref{MetTopInjOfId} the $A$-module $c_0(\Lambda)$ is neither
metrically nor topologically projective or injective. If $\Lambda$ is finite,
then $c_0(\Lambda)=\ell_\infty(\Lambda)$, so both results follow from paragraphs
$(i)$ and $(iii)$ of proposition~\ref{c0AndlInftyModlIfty}.
\end{proof}

\begin{proposition}\label{c0AndlInftyModl1} Let $\Lambda$ be a set. Then

\begin{enumerate}[label = (\roman*)]
    \item $\ell_1(\Lambda)$ is metrically and topologically injective as
    $\ell_\infty(\Lambda)$- or $c_0(\Lambda)$-module;

    \item $\ell_1(\Lambda)$ is metrically and topologically projective and 
    flat as $\ell_\infty(\Lambda)$- or $c_0(\Lambda)$-module;
\end{enumerate}
\end{proposition}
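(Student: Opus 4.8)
The plan is to handle the two paragraphs by different routes, since the $\bigoplus_1$-structure of $\ell_1(\Lambda)$ is a \emph{coproduct}, and coproducts preserve projectivity and flatness but not injectivity.

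For paragraph $(ii)$ I would first record the isometric $A$-module identification $\ell_1(\Lambda) \isom{A-\mathbf{mod}_1} \bigoplus_1\{\mathbb{C}_\lambda : \lambda \in \Lambda\}$, valid for both $A = \ell_\infty(\Lambda)$ and $A = c_0(\Lambda)$, since the $\ell_1$-norm is exactly the $\bigoplus_1$-norm of the one-dimensional summands and the pointwise action restricts to the action on each $\mathbb{C}_\lambda$. By Proposition~\ref{OneDimlInftyc0ModMetTopProjIngFlat} each $\mathbb{C}_\lambda$ is metrically projective, hence $1$-topologically projective by Proposition~\ref{MetProjIsOneTopProj}; in particular the projectivity constant is uniform in $\lambda$. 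Proposition~\ref{MetTopProjModCoprod} then yields that the $\bigoplus_1$-sum is both metrically and $1$-topologically projective, so $\ell_1(\Lambda)$ is metrically and topologically projective. Flatness is then immediate from Proposition~\ref{MetTopProjIsMetTopFlat}, which says every metrically (respectively $C$-topologically) projective module is metrically (respectively $C$-topologically) flat.

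For paragraph $(i)$ I would exploit the duality characterisation of flatness rather than any sum decomposition. Recall the $A$-module isomorphism $\ell_1(\Lambda) \isom{\mathbf{mod}_1-A} {c_0(\Lambda)}^*$ noted at the start of this section. By Proposition~\ref{c0AndlInftyModc0}$(i)$ the left $A$-module $c_0(\Lambda)$ is metrically and topologically flat, and Proposition~\ref{MetCTopFlatCharac} identifies metric (respectively $C$-topological) flatness of a left module with metric (respectively $C$-topological) injectivity of its dual right module. Hence ${c_0(\Lambda)}^* \cong \ell_1(\Lambda)$ is metrically and topologically injective, for both choices of $A$.

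The one point that needs care, and which I would flag as the only genuine decision in the argument, is why injectivity cannot simply be read off the coproduct decomposition used in $(ii)$: the product-preservation statement for injectivity (Proposition~\ref{MetTopInjModProd}) concerns $\bigoplus_\infty$-sums, whereas $\ell_1(\Lambda)$ is a $\bigoplus_1$-sum of the $\mathbb{C}_\lambda$, and a $\bigoplus_1$-sum is the coproduct, not the product. Routing through the flatness--injectivity duality of Proposition~\ref{MetCTopFlatCharac} together with the already-established flatness of $c_0(\Lambda)$ sidesteps this mismatch and keeps the proof short; no new norm estimates are required beyond those already invoked.
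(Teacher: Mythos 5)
Your proposal is correct and takes essentially the same route as the paper's own proof: part $(ii)$ via the decomposition $\ell_1(\Lambda)\isom{A-\mathbf{mod}_1}\bigoplus_1\{\mathbb{C}_\lambda:\lambda\in\Lambda\}$ combined with propositions~\ref{OneDimlInftyc0ModMetTopProjIngFlat},~\ref{MetTopProjModCoprod} and~\ref{MetTopProjIsMetTopFlat}, and part $(i)$ via the identification $\ell_1(\Lambda)\isom{\mathbf{mod}_1-A}{c_0(\Lambda)}^*$ combined with proposition~\ref{MetCTopFlatCharac} and paragraph $(i)$ of proposition~\ref{c0AndlInftyModc0}. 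Your closing remark about why the $\bigoplus_1$-coproduct cannot be used directly for injectivity is a sound observation, but it does not change the substance of the argument.
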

\begin{proof} Let $A$ be either $\ell_\infty(\Lambda)$ or $c_0(\Lambda)$.

$(i)$ Note that $\ell_1(\Lambda)\isom{\mathbf{mod}_1-A}{c_0(\Lambda)}^*$, so the
result follows from proposition~\ref{MetCTopFlatCharac} and paragraph $(i)$ of
proposition~\ref{c0AndlInftyModc0}.

$(ii)$ Note that 
$\ell_1(\Lambda)
\isom{A-\mathbf{mod}_1}
\bigoplus_1 \{\mathbb{C}_\lambda:\lambda\in\Lambda \}$, then from
propositions~\ref{OneDimlInftyc0ModMetTopProjIngFlat}
and~\ref{MetTopProjModCoprod} it follows that $\ell_1(\Lambda)$ is metrically
projective as $A$-module. Topological projectivity follows from
proposition~\ref{MetProjIsTopProjAndTopProjIsRelProj}. Metric and topological
flatness follow from proposition~\ref{MetTopProjIsMetTopFlat}.
\end{proof}

\begin{proposition}\label{c0AndlInftyModlp} Let $\Lambda$ be a set and
$1<p<+\infty$. Then $\ell_p(\Lambda)$ is metrically or topologically projective,
injective or flat as $\ell_\infty(\Lambda)$- or $c_0(\Lambda)$-module iff
$\Lambda$ is finite.
\end{proposition}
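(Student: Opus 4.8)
The plan is to separate the cases of infinite and finite $\Lambda$, and, within the finite case, to reduce all the metric assertions to a single lifting statement. For infinite $\Lambda$ the module $\ell_p(\Lambda)$ is infinite dimensional and, since $1<p<+\infty$, reflexive as a Banach space. Both $c_0(\Lambda)$ and $\ell_\infty(\Lambda)$ are $C_0$- respectively $C(K)$-spaces, hence $\mathscr{L}_\infty^g$-spaces, so corollary~\ref{NoInfDimRefMetTopProjInjFlatModOverMthscrL1OrLInfty} applies and forbids $\ell_p(\Lambda)$ from being metrically or topologically projective, injective or flat over either algebra. This settles the forward implication of all the listed equivalences simultaneously.

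For the converse assume $\Lambda$ is finite of cardinality $n$; then $c_0(\Lambda)=\ell_\infty(\Lambda)=\mathbb{C}^n=:A$ and $\ell_p(\Lambda)$ is finite dimensional. As an $A$-module it carries the same action as $\ell_1(\Lambda)=\bigoplus_1\{\mathbb{C}_\lambda:\lambda\in\Lambda\}$, and in finite dimensions the formal identity is a module $c$-isomorphism $\ell_p(\Lambda)\to\ell_1(\Lambda)$ for a suitable $c$, exhibiting $\ell_p(\Lambda)$ as a $c$-retract of $\ell_1(\Lambda)$. By proposition~\ref{c0AndlInftyModl1} the module $\ell_1(\Lambda)$ is topologically projective, injective and flat, and these three properties pass to $c$-retracts by propositions~\ref{RetrMetCTopProjIsMetCTopProj}, \ref{RetrMetCTopInjIsMetCTopInj} and~\ref{RetrMetCTopFlatIsMetCTopFlat}. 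Hence $\ell_p(\Lambda)$ is topologically projective, injective and flat, which is the topological half of the converse.

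It remains to treat metric projectivity, injectivity and flatness for finite $\Lambda$, where the $c$-isomorphism is useless because $\ell_p^n$ is not isometric to $\ell_1^n$. I would reduce all three to one statement: $\ell_p^n$ is metrically projective over $\mathbb{C}^n$ for every $1<p<+\infty$. Granting this, metric flatness follows from proposition~\ref{MetTopProjIsMetTopFlat}, while proposition~\ref{DualMetTopProjIsMetrInj} turns metric projectivity of $\ell_p^n$ into metric injectivity of $(\ell_p^n)^*=\ell_{p^*}^n$; letting $p$ range over $(1,+\infty)$ this yields metric injectivity of every $\ell_q^n$. By proposition~\ref{NonDegenMetTopProjCharac}, metric projectivity of the unital module $\ell_p^n$ amounts to producing a \emph{contractive} module section of $\pi_{\ell_p^n}:\mathbb{C}^n\projtens\ell_1(B_{\ell_p^n})\to\ell_p^n$; equivalently, every strictly coisometric module morphism $\xi:X\to\ell_p^n$ admits a contractive module splitting.

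The construction of such a splitting is the heart of the matter. A module morphism into $\ell_p^n$ decomposes along the minimal idempotents of $\mathbb{C}^n$, so splitting $1_{\ell_p^n}$ reduces to lifting the isotypic components $u_\lambda$ through $\xi=\bigoplus_\lambda\xi_\lambda$ while keeping the \emph{joint} $\ell_p$-operator norm equal to $1$. I would exploit that the torus $\mathbb{T}^n$ sits inside $\mathbb{C}^n$ as norm-one invertible elements, hence acts isometrically on every module and makes all the data $\mathbb{T}^n$-equivariant; averaging any contractive splitting over the Haar measure of $\mathbb{T}^n$ produces one that is a genuine module map of no larger norm. The remaining, and principal, difficulty is to secure a contractive splitting in the first place with the sharp constant $1$, which is precisely where the module hypothesis must be used beyond the contractivity of the coordinate projections $e_\lambda$: an $\ell_p$ target is not metrically projective as a plain Banach space, so a coordinatewise Hahn--Banach construction only yields the constant $n^{1/p}$. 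The decisive point is that contractivity of the \emph{entire} $\mathbb{C}^n$-action (all phased coordinate combinations $\sum_\lambda\epsilon_\lambda e_\lambda$, not merely the projections) rigidifies the unit ball of the covering module enough to couple the individual lifts and force the joint norm down to $1$; indeed, attempted counterexamples collapse exactly because invariance under the signed idempotent $e_1-e_2$ drags extra extreme points into the ball and pushes the source norm above $1$. I expect the verification of this sharp joint-norm bound, and the analysis of how $\mathbb{T}^n$-invariance ties the coordinate lifts together, to be the main obstacle of the proof.
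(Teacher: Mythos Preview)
Your forward direction and your topological converse are correct and coincide with the paper's proof: reflexivity of $\ell_p(\Lambda)$ together with corollary~\ref{NoInfDimRefMetTopProjInjFlatModOverMthscrL1OrLInfty} forces $\Lambda$ to be finite, and for finite $\Lambda$ the module isomorphism $\ell_p(\Lambda)\isom{A-\mathbf{mod}}\ell_1(\Lambda)$ transfers topological projectivity, injectivity and flatness from proposition~\ref{c0AndlInftyModl1}.

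However, you have over-read the statement. The proposition is a single biconditional with a \emph{disjunction} on the left: ``$\ell_p(\Lambda)$ enjoys at least one of the six listed properties'' iff ``$\Lambda$ is finite''. For the backward implication it therefore suffices that \emph{some} property hold when $\Lambda$ is finite, and the topological ones already do the job. The paper neither claims nor proves that $\ell_p(\Lambda)$ is metrically projective, injective or flat when $\operatorname{Card}(\Lambda)\geq 2$. On the contrary, in the summary tables following this section the metric entries for $\ell_p(\Lambda)$ carry the marker~${}^*$ (necessary condition only), and in the parallel discussion of $L_p(S,\mu)$ over $C_0(S)$ the text explicitly conjectures that metric homological triviality forces a \emph{single} atom.

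So the ``remaining'' metric converse you set out to prove is not part of the proposition, and the difficulty you isolate---obtaining the sharp constant $1$ for the joint lift---is not a removable technicality but, in all likelihood, a genuine obstruction: the statement you are trying to establish is expected to be false for $\operatorname{Card}(\Lambda)\geq 2$. Your $\mathbb{T}^n$-averaging will indeed produce a module splitting, but there is no mechanism forcing its norm down to~$1$; the Banach-space obstruction to metric projectivity of $\ell_p^n$ survives the passage to $\mathbb{C}^n$-modules.
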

\begin{proof} Let $A$ be either $\ell_\infty(\Lambda)$ or $c_0(\Lambda)$, then
$A$ is an $\mathscr{L}_\infty^g$-space. Since $\ell_p(\Lambda)$ is reflexive for
$1<p<+\infty$, then from corollary
~\ref{NoInfDimRefMetTopProjInjFlatModOverMthscrL1OrLInfty} it follows that
$\ell_p(\Lambda)$ is necessarily finite dimensional if it is metrically or
topologically projective injective or flat. This is equivalent to $\Lambda$
being finite. If $\Lambda$ is finite then
$\ell_p(\Lambda)\isom{A-\mathbf{mod}}\ell_1(\Lambda)$ and
$\ell_p(\Lambda)\isom{\mathbf{mod}-A}\ell_1(\Lambda)$, so topological
projectivity injectivity and flatness follow from
proposition~\ref{c0AndlInftyModl1}.
\end{proof}

\begin{proposition}\label{c0AndlInftyModsRelTh} Let $\Lambda$ be a set. Then

\begin{enumerate}[label = (\roman*)]
    \item as $c_0(\Lambda)$-modules $\ell_p(\Lambda)$ for $1\leq p<+\infty$ and
    $\mathbb{C}_\lambda$ for $\lambda\in\Lambda$ are relatively projective,
    injective and flat, $c_0(\Lambda)$ relatively projective and flat, but
    relatively injective only for finite $\Lambda$, $\ell_\infty(\Lambda)$ is
    relatively injective and flat, but relatively projective only for finite
    $\Lambda$;

    \item as $\ell_\infty(\Lambda)$-modules $\ell_p(\Lambda)$ for 
    $1\leq p\leq+\infty$ and $\mathbb{C}_\lambda$ for $\lambda\in\Lambda$ 
    are relatively projective, injective and flat, $c_0(\Lambda)$ is 
    relatively projective, injective and flat.
\end{enumerate}
\end{proposition}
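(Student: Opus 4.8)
The plan is to split the eight module families across the two algebras into three tiers: an amenability tier that dispatches all flatness claims at once, a duality-plus-transfer tier that dispatches everything except two genuinely relative phenomena, and finally the hard positive statement that has no metric or topological shadow. First I would record that $c_0(\Lambda)$ and $\ell_\infty(\Lambda)$ are commutative, hence nuclear, $C^*$-algebras, so by [\cite{HaaNucCStarAlgAmen}, theorem 3.1] and [\cite{RundeAmenConstFour}, example 2] they are $1$-relatively amenable, exactly as used for $M_n(C(K))$ in proposition~\ref{CKMatrixModTopInj}. By [\cite{HelBanLocConvAlg}, theorem 7.1.60] every left and right module over either algebra is relatively flat, which settles every flatness assertion in $(i)$ and $(ii)$ in one stroke. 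For injectivity I would invoke the relative flat--injective duality of Section~\ref{SubSectionRelativeHomology} (a left module $F$ is relatively flat iff $F^*$ is relatively injective) together with the module identifications $\ell_p(\Lambda)\cong{\ell_{p^*}(\Lambda)}^*$ for $1<p\le+\infty$, $\ell_1(\Lambda)\cong{c_0(\Lambda)}^*$ and $\mathbb{C}_\lambda\cong\mathbb{C}_\lambda^*$ over both algebras: since each predual is already flat, each of $\ell_p(\Lambda)$ for $1\le p\le+\infty$, each $\mathbb{C}_\lambda$, and $\ell_\infty(\Lambda)$ is relatively injective over $c_0(\Lambda)$ and over $\ell_\infty(\Lambda)$ for arbitrary $\Lambda$. (The injectivity of $\mathbb{C}_\lambda$ also drops out of proposition~\ref{OneDimlInftyc0ModMetTopProjIngFlat} via proposition~\ref{MetInjIsTopInjAndTopInjIsRelInj}.)

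Next I would handle projectivity. The building blocks $\mathbb{C}_\lambda$ and $\ell_1(\Lambda)$ are relatively projective over both algebras because they are metrically projective (propositions~\ref{OneDimlInftyc0ModMetTopProjIngFlat} and~\ref{c0AndlInftyModl1}) and metric projectivity implies relative projectivity (proposition~\ref{MetProjIsTopProjAndTopProjIsRelProj}); likewise $\ell_\infty(\Lambda)$ is relatively projective over the unital algebra $\ell_\infty(\Lambda)$ by proposition~\ref{UnitalAlgIsMetTopProj}. The genuinely relative input is the reflexive range $\ell_p(\Lambda)$, $1<p<+\infty$, together with $c_0(\Lambda)$, which are \emph{not} metrically or topologically projective for infinite $\Lambda$ (proposition~\ref{c0AndlInftyModlp}). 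For these I would use that $c_0(\Lambda)=C_0(\Lambda)$ is relatively biprojective (its spectrum $\Lambda$ being discrete) and that $\ell_p(\Lambda)$ and $c_0(\Lambda)$ are essential, equivalently reduced, $c_0(\Lambda)$-modules, since finitely supported sequences are dense; then [\cite{HelBanLocConvAlg}, theorem 7.1.60] makes them relatively projective over $c_0(\Lambda)$. To pass to $\ell_\infty(\Lambda)$ I would apply the relative change-of-algebra transfer [\cite{RamsHomPropSemgroupAlg}, proposition 2.3.3]: an essential module that is relatively projective over the ideal $c_0(\Lambda)$ of $\ell_\infty(\Lambda)$ is relatively projective over $\ell_\infty(\Lambda)$. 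The two negative statements in $(i)$ I would obtain from obstructions already used in proposition~\ref{KHAndBHModsRelTh}: $c_0(\Lambda)$ relatively injective over itself would force a left identity [\cite{RamsHomPropSemgroupAlg}, proposition 2.2.8(i)], hence unitality and finiteness of $\Lambda$; and $\ell_\infty(\Lambda)$ relatively projective over $c_0(\Lambda)$ would, by the relative case of corollary~\ref{MetTopRelProjModPartCompl}, make its essential part $c_0(\Lambda)$ complemented in $\ell_\infty(\Lambda)$, contradicting the Phillips--Sobczyk theorem [\cite{KalAlbTopicsBanSpTh}, theorem 2.5.5] for infinite $\Lambda$.

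The hard part will be the one positive assertion with neither a dual description nor an injective module retracting onto it: the relative injectivity of $c_0(\Lambda)$ as an $\ell_\infty(\Lambda)$-module for infinite $\Lambda$. Duality is useless here, since $c_0(\Lambda)$ is not a dual module, and the naive idea of realizing $c_0(\Lambda)$ as a relative retract of the injective $\ell_\infty(\Lambda)\cong\bigoplus_\infty\{\mathbb{C}_\lambda:\lambda\in\Lambda\}$ fails, because a relative retraction would be a bounded module projection $\ell_\infty(\Lambda)\to c_0(\Lambda)$, which Phillips--Sobczyk forbids. I would instead prove relative injectivity straight from the characterization as a retract of the relatively cofree module $\mathcal{B}({\ell_\infty(\Lambda)}_+,c_0(\Lambda))$ of Section~\ref{SubSectionRelativeHomology}, building a module left inverse of the canonical embedding by averaging the relatively admissible (non-module) left inverse against a bounded approximate diagonal of the amenable algebra $\ell_\infty(\Lambda)$, in the spirit of the ultrafilter-limit arguments of propositions~\ref{MetTopEssL1FlatModAoverAmenBanAlg} and~\ref{DualBanModDecomp}; the delicate points are that diagonal-invariance upgrades the averaged operator to an $\ell_\infty(\Lambda)$-morphism, and that, because $c_0(\Lambda)$ is an ideal of vanishing sequences, the bounded averages remain inside $c_0(\Lambda)$ rather than escaping into $\ell_\infty(\Lambda)$. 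As a fallback, since $\Lambda$ is discrete and therefore paracompact, I would invoke Helemskii's topological characterization of relatively injective ideals of $C(K)$-algebras to reach the same conclusion. With this in hand part $(ii)$ is complete, as $\ell_\infty(\Lambda)$ is relatively injective over itself (being the dual $\ell_1(\Lambda)^*$ of a flat module) and relatively projective as a unital algebra over itself, so all four families in $(ii)$ are relatively projective, injective and flat for every $\Lambda$.
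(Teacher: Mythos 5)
Most of your architecture is sound and runs parallel to the paper's own proof, occasionally by routes the paper does not take but which stay inside its machinery: flatness of everything at once via relative amenability of the two commutative $C^*$-algebras (the paper instead derives flatness from projectivity and duality via [\cite{HelBanLocConvAlg}, proposition 7.1.40]); injectivity of all the \emph{dual} modules $\ell_p(\Lambda)$, $1\le p\le+\infty$, $\mathbb{C}_\lambda$ and $\ell_\infty(\Lambda)$ from the flat--injective duality; projectivity of $c_0(\Lambda)$ and $\ell_p(\Lambda)$, $1\le p<+\infty$, over $c_0(\Lambda)$ from relative biprojectivity plus essentiality, then transfer to $\ell_\infty(\Lambda)$ by [\cite{RamsHomPropSemgroupAlg}, proposition 2.3.3]; non-injectivity of $c_0(\Lambda)$ over itself from the left-identity obstruction [\cite{RamsHomPropSemgroupAlg}, proposition 2.2.8(i)]. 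Your obstruction to projectivity of $\ell_\infty(\Lambda)$ over $c_0(\Lambda)$ --- corollary~\ref{MetTopRelProjModPartCompl} would force ${(\ell_\infty(\Lambda))}_{ess}=c_0(\Lambda)$ to be complemented in $\ell_\infty(\Lambda)$, contradicting Phillips--Sobczyk [\cite{KalAlbTopicsBanSpTh}, theorem 2.5.5] --- is a legitimate, self-contained replacement for the paper's citation of [\cite{HelHomolBanTopAlg}, corollary V.2.16(II)].

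The genuine gap is exactly where you locate the ``hard part'': relative injectivity of $c_0(\Lambda)$ as an $\ell_\infty(\Lambda)$-module for infinite $\Lambda$. The paper does not prove this; it cites [\cite{NemANoteOnRelInjC0ModC0}, theorem 4.4]. Your proposed proof, however, would fail. An averaging of a bounded non-modular left inverse against a bounded approximate diagonal, using only that the acting algebra is amenable and that $c_0(\Lambda)$ is a closed ideal in it, would apply verbatim with $c_0(\Lambda)$ itself as the acting algebra ($c_0(\Lambda)$ is amenable, and is an ideal in itself), and would thus ``prove'' that $c_0(\Lambda)$ is relatively injective over $c_0(\Lambda)$ --- precisely the statement you and the paper show to be false in part $(i)$. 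The point your sketch attributes to ``$c_0(\Lambda)$ being an ideal of vanishing sequences'' is therefore not the operative one; indeed amenability averaging needs a weak${}^*$ topology in which to take limits, and since $c_0(\Lambda)$ is not a dual module the averaged net only has limit points in ${c_0(\Lambda)}^{**}\cong\ell_\infty(\Lambda)$, which is exactly the escape you need to rule out. What actually distinguishes $\ell_\infty(\Lambda)$ as the acting algebra is Banach-geometric: every bounded operator $T:\ell_\infty(\Lambda)\to c_0(\Lambda)$ is weakly compact (Rosenthal, since $c_0(\Lambda)$ contains no copy of $\ell_\infty(\mathbb{N})$) and hence, by the Dunford--Pettis property of $\ell_\infty(\Lambda)$, completely continuous; since every sequence ${(\delta_{\lambda_n})}_n$ of distinct coordinate functions is weakly null in $\ell_\infty(\Lambda)$, this forces the diagonal evaluation $\tau(T):\lambda\mapsto T(\delta_\lambda)(\lambda)$ to lie in $c_0(\Lambda)$, and one checks directly that $\tau$ is a contractive module left inverse of the canonical embedding $c_0(\Lambda)\to\mathcal{B}(\ell_\infty(\Lambda),c_0(\Lambda))$, which gives injectivity by the retract characterization. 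No ingredient of this kind appears in your sketch. Your fallback is also unavailable: Helemskii's paracompactness criterion concerns relative \emph{projectivity} of ideals of $C_0(S)$ over $C_0(S)$, while the paper explicitly records the description of relatively injective modules of this type as open; in any case the relevant ideal here sits in $\ell_\infty(\Lambda)=C(\beta\Lambda)$, not in $c_0(\Lambda)$.
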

\begin{proof} $(i)$ The algebra $c_0(\Lambda)$ is relatively biprojective
[\cite{HelHomolBanTopAlg}, theorem IV.5.26] and admits a contractive approximate
identity, so by [\cite{HelBanLocConvAlg}, theorem 7.1.60] all essential
$c_0(\Lambda)$-modules are projective. Thus $c_0(\Lambda)$ and $\ell_p(\Lambda)$
for $1\leq p<+\infty$ are relatively projective $c_0(\Lambda)$-modules. A
fortiori they are relatively flat as $c_0(\Lambda)$-modules
[\cite{HelBanLocConvAlg}, proposition 7.1.40]. By the same proposition
$\ell_1(\Lambda)\isom{\mathbf{mod}_1-c_0(\Lambda)}{c_0(\Lambda)}^*$ and
$\ell_{p^*}(\Lambda)\isom{\mathbf{mod}_1-c_0(\Lambda)}{\ell_p(\Lambda)}^*$ for
$1\leq p<+\infty$ are relatively injective $c_0(\Lambda)$-modules. From
[\cite{RamsHomPropSemgroupAlg}, proposition 2.2.8 (i)] we know that a Banach
algebra relatively injective over itself as right module, necessarily has a left
identity. Therefore $c_0(\Lambda)$ is not relatively injective
$c_0(\Lambda)$-module for infinite $\Lambda$. If $\Lambda$ is finite, then
$c_0(\Lambda)$-module $c_0(\Lambda)$ is relatively injective because
$c_0(\Lambda)=\ell_\infty(\Lambda)$ and $\ell_\infty(\Lambda)$ is relatively
injective $c_0(\Lambda)$-module as was shown above. From
[\cite{HelHomolBanTopAlg}, corollary V.2.16(II)] we know that
$\ell_\infty(\Lambda)$ is not relatively projective as $c_0(\Lambda)$-module
provided $\Lambda$ is infinite. If $\Lambda$ is finite then
$\ell_\infty(\Lambda)$ is relatively projective $c_0(\Lambda)$-module because
$\ell_\infty(\Lambda)=c_0(\Lambda)$ and $c_0(\Lambda)$ is relatively projective
$c_0(\Lambda)$-module as was shown above.
Propositions~\ref{OneDimlInftyc0ModMetTopProjIngFlat},
~\ref{MetProjIsTopProjAndTopProjIsRelProj},
~\ref{MetInjIsTopInjAndTopInjIsRelInj}
and~\ref{MetFlatIsTopFlatAndTopFlatIsRelFlat} give the result for modules
$\mathbb{C}_\lambda$, where $\lambda\in\Lambda$.

$(ii)$ From proposition~\ref{c0AndlInftyModlIfty} paragraph $(i)$ and
proposition~\ref{MetProjIsTopProjAndTopProjIsRelProj} it follows that
$\ell_\infty(\Lambda)$ is relatively projective $\ell_\infty(\Lambda)$-module. 
In [\cite{NemANoteOnRelInjC0ModC0}, theorem 4.4] it was shown 
that $\ell_\infty(\Lambda)$-module $c_0(\Lambda)$ is relatively injective. 
From [\cite{RamsHomPropSemgroupAlg}, propositions 2.3.3, 2.3.4] we know that
$\langle$~an essential relatively projective / a  faithful relatively
injective~$\rangle$ module over an ideal of a Banach algebra is 
$\langle$~relatively projective / relatively injective~$\rangle$ over algebra 
itself. Since $c_0(\Lambda)$ and $\ell_p(\Lambda)$ for $1\leq p<+\infty$ are
essential and faithful $c_0(\Lambda)$-modules then from results of previous
paragraph $\ell_p(\Lambda)$ for $1\leq p<+\infty$ are relatively projective and
injective as $\ell_\infty(\Lambda)$-modules. Also we get that $c_0(\Lambda)$ is
relatively projective $\ell_\infty(\Lambda)$-module. Therefore all these
$\ell_\infty(\Lambda)$-modules are relatively flat [\cite{HelBanLocConvAlg},
proposition 7.1.40]. As the consequence $\ell_\infty(\Lambda)
\isom{\mathbf{mod}_1-\ell_\infty(\Lambda)} {\ell_1(\Lambda)}^*$ is relatively
injective $\ell_\infty(\Lambda)$-module.
Propositions~\ref{OneDimlInftyc0ModMetTopProjIngFlat},
~\ref{MetProjIsTopProjAndTopProjIsRelProj},
~\ref{MetInjIsTopInjAndTopInjIsRelInj}
and~\ref{MetFlatIsTopFlatAndTopFlatIsRelFlat} give the result for modules
$\mathbb{C}_\lambda$, where $\lambda\in\Lambda$.
\end{proof}

Results of this section are summarized in the following three tables. Each cell
contains a condition under which the respective module has the respective
property and propositions where this is proved. For the case 
of $\ell_\infty(\Lambda)$- and $c_0(\Lambda)$-modules $\ell_p(\Lambda)$ 
for $1<p<+\infty$ we don't have a criterion of homological triviality in metric
theory, just a necessary condition. We indicate this fact via symbol ${}^*$.
From these table one can easily see that for modules over commutative
$C^*$-algebras, there is much more in common between relative, metric and
topological  theory. For example $\ell_1(\Lambda)$ is projective injective and
flat as $\ell_\infty(\Lambda)$- or $c_0(\Lambda)$-module in all three theories.

\begin{scriptsize}
    \begin{longtable}{|c|c|c|c|c|c|c|} 
    \multicolumn{7}{c}{
        \mbox{
            Homologically trivial $c_0(\Lambda)$- 
            and $\ell_\infty(\Lambda)$-modules in metric theory
        }
    } \\ 
    \hline & 
    \multicolumn{3}{c|}{
        $c_0(\Lambda)$-modules
    } & 
    \multicolumn{3}{c|}{
        $\ell_\infty(\Lambda)$-modules
    } \\
    \hline & 
        \mbox{Projectivity} & 
        \mbox{Injectivity} & 
        \mbox{Flatness} & 
        \mbox{Projectivity} & 
        \mbox{Injectivity} & 
        \mbox{Flatness} \\ 
    \hline
        $\ell_1(\Lambda)$ & 
        \begin{tabular}{@{}c@{}}
            $\Lambda$\mbox{ is any } \\
            \mbox{\ref{c0AndlInftyModl1}}
        \end{tabular} & 
        \begin{tabular}{@{}c@{}}
            $\Lambda$\mbox{ is any } \\
            \mbox{\ref{c0AndlInftyModl1}}
        \end{tabular} & 
        \begin{tabular}{@{}c@{}}
            $\Lambda$\mbox{ is any } \\
            \mbox{\ref{c0AndlInftyModl1}}
        \end{tabular} & 
        \begin{tabular}{@{}c@{}}
            $\Lambda$\mbox{ is any } \\
            \mbox{\ref{c0AndlInftyModl1}}
        \end{tabular} & 
        \begin{tabular}{@{}c@{}}
            $\Lambda$\mbox{ is any }  \\
            \mbox{\ref{c0AndlInftyModl1}}
        \end{tabular} & 
        \begin{tabular}{@{}c@{}}
            $\Lambda$\mbox{ is any } \\
            \mbox{\ref{c0AndlInftyModl1}}
        \end{tabular} \\
    \hline 
        $\ell_p(\Lambda)$ & 
        \begin{tabular}{@{}c@{}}
            $\operatorname{Card}(\Lambda)<\aleph_0$ \\ 
            \mbox{\ref{c0AndlInftyModlp}}${}^*$
        \end{tabular} & 
        \begin{tabular}{@{}c@{}}
            $\operatorname{Card}(\Lambda)<\aleph_0$ \\ 
            \mbox{\ref{c0AndlInftyModlp}}${}^*$
        \end{tabular} & 
        \begin{tabular}{@{}c@{}}
            $\operatorname{Card}(\Lambda)<\aleph_0$ \\ 
            \mbox{\ref{c0AndlInftyModlp}}${}^*$
        \end{tabular} & 
        \begin{tabular}{@{}c@{}}
            $\operatorname{Card}(\Lambda)<\aleph_0$ \\ 
            \mbox{\ref{c0AndlInftyModlp}}${}^*$
        \end{tabular} & 
        \begin{tabular}{@{}c@{}}
            $\operatorname{Card}(\Lambda)<\aleph_0$ \\ 
            \mbox{\ref{c0AndlInftyModlp}}${}^*$
        \end{tabular} & 
        \begin{tabular}{@{}c@{}}
            $\operatorname{Card}(\Lambda)<\aleph_0$ \\ 
            \mbox{\ref{c0AndlInftyModlp}}${}^*$
        \end{tabular} \\
    \hline
        $\ell_\infty(\Lambda)$ & 
        \begin{tabular}{@{}c@{}}
            $\operatorname{Card}(\Lambda)<\aleph_0$ \\
            \mbox{\ref{c0AndlInftyModlIfty}}
        \end{tabular} & 
        \begin{tabular}{@{}c@{}}
            $\Lambda$\mbox{ is any } \\
            \mbox{\ref{c0AndlInftyModlIfty}}
        \end{tabular} & 
        \begin{tabular}{@{}c@{}}
            $\operatorname{Card}(\Lambda)<\aleph_0$ \\
            \mbox{\ref{c0AndlInftyModlIfty}}
        \end{tabular} & 
        \begin{tabular}{@{}c@{}}
            $\Lambda$\mbox{ is any } \\
            \mbox{\ref{c0AndlInftyModlIfty}}
        \end{tabular} & 
        \begin{tabular}{@{}c@{}}
            $\Lambda$\mbox{ is any } \\
            \mbox{\ref{c0AndlInftyModlIfty}}
        \end{tabular} & 
        \begin{tabular}{@{}c@{}}
            $\Lambda$\mbox{ is any } \\
            \mbox{\ref{c0AndlInftyModlIfty}}
        \end{tabular} \\ 
    \hline
        $c_0(\Lambda)$ & 
        \begin{tabular}{@{}c@{}}
            $\operatorname{Card}(\Lambda)<\aleph_0$ \\
            \mbox{\ref{c0AndlInftyModc0}}
        \end{tabular} & 
        \begin{tabular}{@{}c@{}}
            $\operatorname{Card}(\Lambda)< \aleph_0$ \\
            \mbox{\ref{c0AndlInftyModc0}}
        \end{tabular} & 
        \begin{tabular}{@{}c@{}}
            $\Lambda$\mbox{ is any } \\
            \mbox{\ref{c0AndlInftyModc0}}
        \end{tabular} & 
        \begin{tabular}{@{}c@{}}
            $\operatorname{Card}(\Lambda)<\aleph_0$ \\
            \mbox{\ref{c0AndlInftyModc0}}
        \end{tabular} & 
        \begin{tabular}{@{}c@{}}
            $\operatorname{Card}(\Lambda)< \aleph_0$ \\
            \mbox{\ref{c0AndlInftyModc0}}
        \end{tabular} & 
        \begin{tabular}{@{}c@{}}
            $\Lambda$\mbox{ is any } \\
            \mbox{\ref{c0AndlInftyModc0}}
        \end{tabular} \\ 
    \hline
        $\mathbb{C}_\lambda$ &
        \begin{tabular}{@{}c@{}}
            $\lambda$\mbox{ is any } \\
            \mbox{\ref{OneDimlInftyc0ModMetTopProjIngFlat}}
        \end{tabular} & 
        \begin{tabular}{@{}c@{}}
            $\lambda$\mbox{ is any } \\
            \mbox{\ref{OneDimlInftyc0ModMetTopProjIngFlat}}
        \end{tabular} & 
        \begin{tabular}{@{}c@{}}
            $\lambda$\mbox{ is any } \\
            \mbox{\ref{OneDimlInftyc0ModMetTopProjIngFlat}}
        \end{tabular} &
        \begin{tabular}{@{}c@{}}
            $\lambda$\mbox{ is any } \\
            \mbox{\ref{OneDimlInftyc0ModMetTopProjIngFlat}}
        \end{tabular} & 
        \begin{tabular}{@{}c@{}}
            $\lambda$\mbox{ is any } \\
            \mbox{\ref{OneDimlInftyc0ModMetTopProjIngFlat}}
        \end{tabular} & 
        \begin{tabular}{@{}c@{}}
            $\lambda$\mbox{ is any } \\
            \mbox{\ref{OneDimlInftyc0ModMetTopProjIngFlat}}
        \end{tabular} \\
    \hline 
    \multicolumn{7}{c}{
        \mbox{
            Homologically trivial $c_0(\Lambda)$- 
            and $\ell_\infty(\Lambda)$-modules in topological theory
        }
    } \\
    \hline & 
    \multicolumn{3}{c|}{
        $c_0(\Lambda)$-modules
    } & 
    \multicolumn{3}{c|}{
        $\ell_\infty(\Lambda)$-modules
    } \\
    \hline & 
        \mbox{Projectivity} & 
        \mbox{Injectivity} & 
        \mbox{Flatness} & 
        \mbox{Projectivity} & 
        \mbox{Injectivity} & 
        \mbox{Flatness} \\ 
    \hline 
        $\ell_1(\Lambda)$ & 
        \begin{tabular}{@{}c@{}}
            $\Lambda$\mbox{ is any }  \\
            \mbox{\ref{c0AndlInftyModl1}}
        \end{tabular} & 
        \begin{tabular}{@{}c@{}}
            $\Lambda$\mbox{ is any } \\
            \mbox{\ref{c0AndlInftyModl1}}
        \end{tabular} & 
        \begin{tabular}{@{}c@{}}
            $\Lambda$\mbox{ is any } \\
            \mbox{\ref{c0AndlInftyModl1}}
        \end{tabular} & 
        \begin{tabular}{@{}c@{}}
            $\Lambda$\mbox{ is any }  \\
            \mbox{\ref{c0AndlInftyModl1}}
        \end{tabular} & 
        \begin{tabular}{@{}c@{}}
            $\Lambda$\mbox{ is any } \\
            \mbox{\ref{c0AndlInftyModl1}}
        \end{tabular} & 
        \begin{tabular}{@{}c@{}}
            $\Lambda$\mbox{ is any } \\
            \mbox{\ref{c0AndlInftyModl1}}
        \end{tabular} \\
    \hline
        $\ell_p(\Lambda)$ & 
        \begin{tabular}{@{}c@{}}
            $\operatorname{Card}(\Lambda)<\aleph_0$ \\
            \mbox{\ref{c0AndlInftyModlp}}
        \end{tabular} & 
        \begin{tabular}{@{}c@{}}
            $\operatorname{Card}(\Lambda)<\aleph_0$ \\
            \mbox{\ref{c0AndlInftyModlp}}
        \end{tabular} & 
        \begin{tabular}{@{}c@{}}
            $\operatorname{Card}(\Lambda)<\aleph_0$ \\
            \mbox{\ref{c0AndlInftyModlp}}
        \end{tabular} & 
        \begin{tabular}{@{}c@{}}
            $\operatorname{Card}(\Lambda)<\aleph_0$ \\
            \mbox{\ref{c0AndlInftyModlp}}
        \end{tabular} & 
        \begin{tabular}{@{}c@{}}
            $\operatorname{Card}(\Lambda)<\aleph_0$ \\
            \mbox{\ref{c0AndlInftyModlp}}
        \end{tabular} & 
        \begin{tabular}{@{}c@{}}
            $\operatorname{Card}(\Lambda)<\aleph_0$ \\
            \mbox{\ref{c0AndlInftyModlp}}
        \end{tabular} \\
    \hline
        $\ell_\infty(\Lambda)$ & 
        \begin{tabular}{@{}c@{}}
            $\operatorname{Card}(\Lambda)<\aleph_0$ \\
            \mbox{\ref{c0AndlInftyModlIfty}}
        \end{tabular} & 
        \begin{tabular}{@{}c@{}}
            $\Lambda$\mbox{ is any } \\
            \mbox{\ref{c0AndlInftyModlIfty}}
        \end{tabular} & 
        \begin{tabular}{@{}c@{}}
            $\operatorname{Card}(\Lambda)<\aleph_0$ \\
            \mbox{\ref{c0AndlInftyModlIfty}}
        \end{tabular} & 
        \begin{tabular}{@{}c@{}}
            $\Lambda$\mbox{ is any } \\
            \mbox{\ref{c0AndlInftyModlIfty}}
        \end{tabular} & 
        \begin{tabular}{@{}c@{}}
            $\Lambda$\mbox{ is any } \\
            \mbox{\ref{c0AndlInftyModlIfty}}
        \end{tabular} & 
        \begin{tabular}{@{}c@{}}
            $\Lambda$\mbox{ is any } \\
            \mbox{\ref{c0AndlInftyModlIfty}}
        \end{tabular} \\ 
    \hline
        $c_0(\Lambda)$ &
        \begin{tabular}{@{}c@{}}
            $\operatorname{Card}(\Lambda)<\aleph_0$ \\
            \mbox{\ref{c0AndlInftyModc0}}
        \end{tabular} & 
        \begin{tabular}{@{}c@{}}
            $\operatorname{Card}(\Lambda)<\aleph_0$ \\
            \mbox{\ref{c0AndlInftyModc0}}
        \end{tabular} & 
        \begin{tabular}{@{}c@{}}
            $\Lambda$\mbox{ is any } \\
            \mbox{\ref{c0AndlInftyModc0}}
        \end{tabular} & 
        \begin{tabular}{@{}c@{}}
            $\operatorname{Card}(\Lambda)<\aleph_0$ \\
            \mbox{\ref{c0AndlInftyModc0}}
        \end{tabular} & 
        \begin{tabular}{@{}c@{}}
            $\operatorname{Card}(\Lambda)<\aleph_0$ \\
            \mbox{\ref{c0AndlInftyModc0}}
        \end{tabular} & 
        \begin{tabular}{@{}c@{}}
            $\Lambda$\mbox{ is any } \\
            \mbox{\ref{c0AndlInftyModc0}}
        \end{tabular} \\ 
    \hline
        $\mathbb{C}_\lambda$ & 
        \begin{tabular}{@{}c@{}}
            $\lambda$\mbox{ is any } \\
            \mbox{\ref{OneDimlInftyc0ModMetTopProjIngFlat}}
        \end{tabular} & 
        \begin{tabular}{@{}c@{}}
            $\lambda$\mbox{ is any } \\
            \mbox{\ref{OneDimlInftyc0ModMetTopProjIngFlat}}
        \end{tabular} & 
        \begin{tabular}{@{}c@{}}
            $\lambda$\mbox{ is any } \\
            \mbox{\ref{OneDimlInftyc0ModMetTopProjIngFlat}}
        \end{tabular} & 
        \begin{tabular}{@{}c@{}}
            $\lambda$\mbox{ is any } \\
            \mbox{\ref{OneDimlInftyc0ModMetTopProjIngFlat}}
        \end{tabular} & 
        \begin{tabular}{@{}c@{}}
            $\lambda$\mbox{ is any } \\
            \mbox{\ref{OneDimlInftyc0ModMetTopProjIngFlat}}
        \end{tabular} & 
        \begin{tabular}{@{}c@{}}
            $\lambda$\mbox{ is any } \\
            \mbox{\ref{OneDimlInftyc0ModMetTopProjIngFlat}}
        \end{tabular} \\
    \hline
    \multicolumn{7}{c}{
        \mbox{
            Homologically trivial $c_0(\Lambda)$- 
            and $\ell_\infty(\Lambda)$-modules in relative theory
        }
    } \\
    \hline & 
    \multicolumn{3}{c|}{
        $c_0(\Lambda)$-modules
    } & 
    \multicolumn{3}{c|}{
        $\ell_\infty(\Lambda)$-modules
    } \\
    \hline & 
        \mbox{Projectivity} & 
        \mbox{Injectivity} & 
        \mbox{Flatness} & 
        \mbox{Projectivity} & 
        \mbox{Injectivity} & 
        \mbox{Flatness} \\ 
    \hline
        $\ell_1(\Lambda)$ & 
        \begin{tabular}{@{}c@{}}
            $\Lambda$\mbox{ is any } \\
            \mbox{\ref{c0AndlInftyModsRelTh}, (i)}
        \end{tabular} & 
        \begin{tabular}{@{}c@{}}
            $\Lambda$\mbox{ is any } \\
            \mbox{\ref{c0AndlInftyModsRelTh}, (i)}
        \end{tabular} & 
        \begin{tabular}{@{}c@{}}
            $\Lambda$\mbox{ is any } \\
            \mbox{\ref{c0AndlInftyModsRelTh}, (i)}
        \end{tabular} & 
        \begin{tabular}{@{}c@{}}
            $\Lambda$\mbox{ is any } \\
            \mbox{\ref{c0AndlInftyModsRelTh}, (ii)}
        \end{tabular} & 
        \begin{tabular}{@{}c@{}}
            $\Lambda$\mbox{ is any } \\
            \mbox{\ref{c0AndlInftyModsRelTh}, (ii)}
        \end{tabular} & 
        \begin{tabular}{@{}c@{}}
            $\Lambda$\mbox{ is any } \\
            \mbox{\ref{c0AndlInftyModlIfty}, (ii)}
        \end{tabular} \\
    \hline 
        $\ell_p(\Lambda)$ & 
        \begin{tabular}{@{}c@{}}
            $\Lambda$\mbox{ is any } \\
            \mbox{\ref{c0AndlInftyModsRelTh}, (i)}
        \end{tabular} & 
        \begin{tabular}{@{}c@{}}
            $\Lambda$\mbox{ is any }  \\
            \mbox{\ref{c0AndlInftyModsRelTh}, (i)}
        \end{tabular} & 
        \begin{tabular}{@{}c@{}}
            $\Lambda$\mbox{ is any } \\
            \mbox{\ref{c0AndlInftyModsRelTh}, (i)}
        \end{tabular} & 
        \begin{tabular}{@{}c@{}}
            $\Lambda$\mbox{ is any }  \\
            \mbox{\ref{c0AndlInftyModsRelTh}, (ii)}
        \end{tabular} & 
        \begin{tabular}{@{}c@{}}
            $\Lambda$\mbox{ is any } \\
            \mbox{\ref{c0AndlInftyModsRelTh}, (ii)}
        \end{tabular} & 
        \begin{tabular}{@{}c@{}}
            $\Lambda$\mbox{ is any } \\
            \mbox{\ref{c0AndlInftyModlIfty}, (ii)}
        \end{tabular} \\
    \hline
        $\ell_\infty(\Lambda)$ & 
        \begin{tabular}{@{}c@{}}
            $\operatorname{Card}(\Lambda)<\aleph_0$ \\
            \mbox{\ref{c0AndlInftyModsRelTh}, (i)}
        \end{tabular} & 
        \begin{tabular}{@{}c@{}}
            $\Lambda$\mbox{ is any } \\
            \mbox{\ref{c0AndlInftyModsRelTh}, (i)}
        \end{tabular} & 
        \begin{tabular}{@{}c@{}}
            $\Lambda$\mbox{ is any } \\
            \mbox{\ref{c0AndlInftyModsRelTh}, (i)}
        \end{tabular} & 
        \begin{tabular}{@{}c@{}}
            $\Lambda$\mbox{ is any }  \\
            \mbox{\ref{c0AndlInftyModsRelTh}, (ii)}
        \end{tabular} & 
        \begin{tabular}{@{}c@{}}
            $\Lambda$\mbox{ is any } \\
            \mbox{\ref{c0AndlInftyModsRelTh}, (ii)}
        \end{tabular} & 
        \begin{tabular}{@{}c@{}}
            $\Lambda$\mbox{ is any } \\
            \mbox{\ref{c0AndlInftyModlIfty}, (ii)}
        \end{tabular} \\
    \hline
        $c_0(\Lambda)$ &
        \begin{tabular}{@{}c@{}}
            $\Lambda$\mbox{ is any } \\
            \mbox{\ref{c0AndlInftyModsRelTh}, (i)}
        \end{tabular} & 
        \begin{tabular}{@{}c@{}}
            $\operatorname{Card}(\Lambda)<\aleph_0$ \\
            \mbox{\ref{c0AndlInftyModsRelTh}, (i) }
        \end{tabular} & 
        \begin{tabular}{@{}c@{}}
            $\Lambda$\mbox{ is any } \\
            \mbox{\ref{c0AndlInftyModsRelTh}, (i)}
        \end{tabular} & 
        \begin{tabular}{@{}c@{}}
            $\Lambda$\mbox{ is any } \\
            \mbox{\ref{c0AndlInftyModsRelTh}, (ii)}
        \end{tabular} & 
        \begin{tabular}{@{}c@{}} 
            $\Lambda$\mbox{ is any } \\
            \mbox{\ref{c0AndlInftyModsRelTh}, (ii)} 
        \end{tabular} & 
        \begin{tabular}{@{}c@{}}
            $\Lambda$\mbox{ is any }  \\
            \mbox{\ref{c0AndlInftyModlIfty}, (ii)}
        \end{tabular} \\
    \hline 
        $\mathbb{C}_\lambda$ & 
        \begin{tabular}{@{}c@{}}
            $\lambda$\mbox{ is any } \\
            \mbox{\ref{c0AndlInftyModsRelTh}, (i)}
        \end{tabular} & 
        \begin{tabular}{@{}c@{}}
            $\lambda$\mbox{ is any }  \\
            \mbox{\ref{c0AndlInftyModsRelTh}, (i)}
        \end{tabular} & 
        \begin{tabular}{@{}c@{}}
            $\lambda$\mbox{ is any } \\
            \mbox{\ref{c0AndlInftyModsRelTh}, (i)}
        \end{tabular} & 
        \begin{tabular}{@{}c@{}}
            $\lambda$\mbox{ is any }  \\
            \mbox{\ref{c0AndlInftyModsRelTh}, (ii)}
        \end{tabular} & 
        \begin{tabular}{@{}c@{}}
            $\lambda$\mbox{ is any } \\
            \mbox{\ref{c0AndlInftyModsRelTh}, (ii)}
        \end{tabular} & 
        \begin{tabular}{@{}c@{}}
            $\lambda$\mbox{ is any }  \\
            \mbox{\ref{c0AndlInftyModlIfty}, (ii)}
        \end{tabular} \\
    \hline
    \end{longtable}
\end{scriptsize}


\subsection{
    \texorpdfstring{$C_0(S)$}{C0(S)}-modules
}\label{SubSectionC0SModules}

This section is devoted to study of homological triviality of classical modules
over algebra $C_0(S)$, where $S$ is a locally compact Hausdorff space. By
classical we mean modules $C_0(S)$, $M(S)$ and $L_p(S,\mu)$ for positive measure
$\mu\in M(S)$. The pointwise multiplication plays the role of outer action for
these modules.

Further we give short preliminaries on these modules. Recall that
${C_0(S)}^*\isom{\mathbf{Ban}_1}M(S)$ and
${L_p(S,\mu)}^*\isom{\mathbf{Ban}_1}L_{p^*}(S,\mu)$ for $1\leq p<+\infty$. 
In fact these identifications are isomorphisms of left and 
right $C_0(S)$-modules. For a given positive measure $\mu\in M(S)$ 
by $M_s(S,\mu)$ we shall denote the closed $C_0(S)$-submodule of $M(S)$ 
consisting of measures strictly singular with respect to $\mu$. Then the well 
known Lebesgue decomposition theorem can be stated 
as $M(S)\isom{\mathbf{Ban}_1}L_1(S,\mu)\bigoplus_1 M_s(S,\mu)$. Even
more, this identification is an isomorphism of left and right $C_0(S)$-modules. 

We obliged to emphasize here that we consider only finite Borel regular positive
measures. This shall simplify many considerations. For example, any atom of
regular measure on a locally compact Hausdorff spaces is a point
[\cite{BourbElemMathIntegLivVI}, chapter 5, \S 5, exercise 7]. Since we consider
only finite measures, one can not say that this section simply generalizes
results of the previous one. Strictly speaking these sections are different,
though their methods have much in common.

For a fixed point $s\in S$ by $\mathbb{C}_s$ we denote left or right Banach
$C_0(S)$-module $\mathbb{C}$ with outer action defined by
$$
a\cdot_s z=a(s)z,\qquad z\cdot_s a=a(s)z
$$
\begin{proposition}\label{OneDimC0SModMetTopRelProjIngFlat} Let $S$ be a locally
compact Hausdorff space and let $s\in S$. Then 

\begin{enumerate}[label = (\roman*)]
    \item $\mathbb{C}_s$ is metrically, topologically or relatively projective 
    as $C_0(S)$-module iff $s$ is an isolated point of $S$;

    \item $\mathbb{C}_s$ is metrically, topologically and relatively flat as
    $C_0(S)$-module.

    \item $\mathbb{C}_s$ is metrically, topologically and relatively injective 
    as $C_0(S)$-module;

\end{enumerate}
\end{proposition}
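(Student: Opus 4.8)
The plan is to recognise $\mathbb{C}_s$ as the cyclic module attached to the character $\varkappa_s\colon C_0(S)\to\mathbb{C}:a\mapsto a(s)$, that is $\mathbb{C}_s\cong C_0(S)_+/\ker\widetilde\varkappa_s$, where $\widetilde\varkappa_s$ extends $\varkappa_s$ to $C_0(S)_+$; throughout I write $A=C_0(S)$. Since metric projectivity implies topological which implies relative projectivity (proposition~\ref{MetProjIsTopProjAndTopProjIsRelProj}), and likewise for flatness and injectivity (propositions~\ref{MetFlatIsTopFlatAndTopFlatIsRelFlat},~\ref{MetInjIsTopInjAndTopInjIsRelInj}), in each ``if'' assertion it suffices to establish the \emph{metric} statement and in each ``only if'' assertion it suffices to assume the \emph{relative} one. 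This collapses the nine claims to a few genuine arguments.

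For part~$(i)$ I would first suppose $s$ isolated. Then $\{s\}$ is compact open, so $p:=\chi_{\{s\}}$ is a norm-one idempotent of $A$ and $Ap=\mathbb{C}\,\chi_{\{s\}}\cong\mathbb{C}_s$ via $z\chi_{\{s\}}\mapsto z$; proposition~\ref{UnIdeallIsMetTopProj}$(i)$ then gives metric projectivity, whence the other two follow. For the converse I assume only relative projectivity, so the canonical $1$-relatively admissible epimorphism $\xi\colon A_+\projtens\mathbb{C}_s\to\mathbb{C}_s$, $c\projtens z\mapsto c\cdot z$, splits by an $A$-morphism $\sigma$. Identifying $A_+\projtens\mathbb{C}_s\cong A_+$ as left $A_+$-modules turns $\xi$ into $\widetilde\varkappa_s$, and $u:=\sigma(1)\in A_+$ satisfies $cu=\widetilde\varkappa_s(c)u$ for all $c\in A_+$ together with $\widetilde\varkappa_s(u)=1$. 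Taking $c=u$ shows $u^2=u$; writing $u=a\oplus_1 w$ and testing the relation against $b\in A$ forces $b(s)w=0$ for all $b$, hence $w=0$ by Urysohn's lemma, and then $ba=b(s)a$ for all $b\in A$ with $a(s)=1$. Thus $a=\chi_U$ for a compact open $U\ni s$ on which every $b\in A$ is constant, and Urysohn's lemma again excludes any second point of $U$, giving $U=\{s\}$ and isolatedness of $s$.

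Parts~$(ii)$ and~$(iii)$ I would handle uniformly through amenability and duality. As a commutative $C^*$-algebra, $A=C_0(S)$ is nuclear, hence $1$-relatively amenable by~\cite{HaaNucCStarAlgAmen} and~\cite{RundeAmenConstFour}. The left module $\mathbb{C}_s$ is essential (choose $a\in A$ with $a(s)=1$) and, being one-dimensional, is an $L_1$-space, so proposition~\ref{MetTopEssL1FlatModAoverAmenBanAlg} yields that $\mathbb{C}_s$ is metrically flat; the standard implications then give part~$(ii)$. For part~$(iii)$ I note $\mathbb{C}_s^*\cong\mathbb{C}_s$ as right modules (both are evaluation at $s$), so proposition~\ref{MetCTopFlatCharac} converts the metric flatness just obtained into metric injectivity of the right module $\mathbb{C}_s$, and topological and relative injectivity follow. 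Both of these parts hold for \emph{every} $s$, with no isolation hypothesis, exactly as the statement requires.

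I expect the only real obstacle to be the converse of part~$(i)$, where one must extract a topological fact (isolatedness of $s$) from a purely homological splitting. The key mechanism — that a module section produces an idempotent generating a one-dimensional ideal, which inside $C_0(S)$ can only be the indicator of a singleton clopen set — is the crux, and some care is needed to run the idempotent computation inside the unitization $A_+$ and to invoke local compactness and Hausdorff separation (via Urysohn's lemma) at the two places where points of $S$ must be distinguished. The remaining steps are routine verifications of the module-map and norm conditions.
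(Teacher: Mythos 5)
Your proof is correct, and on parts $(ii)$ and $(iii)$ it is essentially the paper's proof verbatim: the same appeal to $1$-relative amenability of $C_0(S)$ (the paper quotes [\cite{HelBanLocConvAlg}, theorem 7.1.87] and [\cite{RundeAmenConstFour}, example 3]; you quote nuclearity plus~\cite{HaaNucCStarAlgAmen}, which is the route the paper itself takes elsewhere), the same application of proposition~\ref{MetTopEssL1FlatModAoverAmenBanAlg} to the essential one-dimensional $L_1$-module $\mathbb{C}_s$, and the same dualization through proposition~\ref{MetCTopFlatCharac} together with $\mathbb{C}_s\isom{\mathbf{mod}_1-C_0(S)}\mathbb{C}_s^*$. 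The forward half of $(i)$ is also the paper's argument in thin disguise: writing $A=C_0(S)$, your ideal $A\chi_{\{s\}}$ generated by the norm-one idempotent $\chi_{\{s\}}$, fed into proposition~\ref{UnIdeallIsMetTopProj}, produces exactly the retraction $\pi,\sigma$ through $A_+$ that the paper writes out by hand. The one genuine divergence is the converse of $(i)$: the paper disposes of it in one line by citing [\cite{HelBanLocConvAlg}, proposition 7.1.31], whereas you prove it from scratch --- split the canonical free epimorphism $A_+\projtens\mathbb{C}_s\to\mathbb{C}_s$ (legitimate, since it is $1$-relatively admissible via the right inverse $z\mapsto e_{A_+}\projtens z$), identify $A_+\projtens\mathbb{C}_s$ with $A_+$, set $u=\sigma(1)$, and use the relations $bu=b(s)u$ for $b\in A$, $u^2=u$ and $\widetilde{\varkappa}_s(u)=1$ to kill the scalar component and force $u=\chi_U$ with $U=\{s\}$, by two applications of Urysohn's lemma. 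I checked this computation and it is sound. What your route buys is self-containedness: the obstruction-to-projectivity argument runs entirely on tools already available in the text (the free-module characterization of relative projectivity and Urysohn's lemma), rather than on an external result whose proof is not reproduced. What the paper's route buys is brevity, outsourcing precisely this idempotent computation to Helemskii's book.
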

\begin{proof} $(i)$ If $\mathbb{C}_s$ is metrically or topologically or
relatively projective, then by
proposition~\ref{MetProjIsTopProjAndTopProjIsRelProj} it is at least relatively
projective. Now from [\cite{HelBanLocConvAlg}, proposition 7.1.31] we know that
the latter forces $s$ to be an isolated point of $S$. Conversely, assume $s$ is
an isolated point of $S$. One can easily check that the maps
$\pi:{C_0(S)}_+\to\mathbb{C}_s:a\oplus_1 z\mapsto a(s)+z$ and
$\sigma:\mathbb{C}_s\to {C_0(S)}_+:z\mapsto z\delta_s\oplus_1 0$ are contractive
$C_0(S)$-morphisms. Since $\pi\sigma=1_{\mathbb{C}_s}$, then $\mathbb{C}_s$ is a
retract of ${C_0(S)}_+$ in $C_0(S)-\mathbf{mod}_1$. From
propositions~\ref{RetrMetCTopProjIsMetCTopProj} and~\ref{UnitalAlgIsMetTopProj} 
it follows that $\mathbb{C}_s$ is metrically and topologically projective left
$C_0(S)$-module. From~\ref{MetProjIsTopProjAndTopProjIsRelProj} we conclude that
$\mathbb{C}_s$ is also relatively projective $C_0(S)$-module.

$(ii)$ By [\cite{HelBanLocConvAlg}, theorem 7.1.87] the algebra $C_0(S)$ is
relatively amenable. Since this algebra is a $C^*$-algebra it is $1$-relatively 
amenable [\cite{RundeAmenConstFour}, example 3]. Clearly, $\mathbb{C}_s$ is a
$1$-dimensional $L_1$-space and an essential $C_0(S)$-module. Therefore, by
proposition~\ref{MetTopEssL1FlatModAoverAmenBanAlg} this module is metrically
flat. Now the result follows from
proposition~\ref{MetFlatIsTopFlatAndTopFlatIsRelFlat}.

$(iii)$ From paragraph $(ii)$ and proposition~\ref{MetCTopFlatCharac} it follows
that $\mathbb{C}_s^*$ is metrically injective. By
proposition~\ref{MetFlatIsTopFlatAndTopFlatIsRelFlat} it is topologically and
relatively injective too. It remains to note that
$\mathbb{C}_s\isom{\mathbf{mod}_1-A}\mathbb{C}_s^*$. 
\end{proof}

\begin{proposition}\label{C0SC0SModMetTopRelProjInjFlat} Let $S$ be a locally
compact Hausdorff space and let $s\in S$. Then

\begin{enumerate}[label = (\roman*)]
    \item $C_0(S)$ is $\langle$~metrically / 
    topologically / relatively~$\rangle$
    projective as $C_0(S)$-module iff $S$ is $\langle$~compact / compact /
    paracompact~$\rangle$;

    \item $C_0(S)$ is metrically injective as $C_0(S)$-module iff $S$ is 
    a Stonean space, if $C_0(S)$ is topologically or relatively injective then
    $S$ is compact and $S=\beta(S\setminus \{ s\})$ for any limit point $s$;

    \item $C_0(S)$ is metrically, topologically and relatively flat as
    $C_0(S)$-module.
\end{enumerate}
\end{proposition}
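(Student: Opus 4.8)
The plan is to analyze the three cases for $C_0(S)$ as a module over itself, following the template established for $c_0(\Lambda)$- and $C^*$-algebra modules earlier in the chapter.

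For paragraph $(i)$, projectivity, I would proceed as follows. First, note that $C_0(S)$ is unital as a Banach algebra precisely when $S$ is compact: the identity would be the constant function $1$, which vanishes at infinity only if $S$ is compact. If $S$ is compact, then by proposition~\ref{UnitalAlgIsMetTopProj} the module $C_0(S) = A_\times$ is metrically (and hence topologically) projective, and by proposition~\ref{MetProjIsTopProjAndTopProjIsRelProj} relatively projective too. Conversely, for the metric and topological directions I would invoke corollary~\ref{IdealofCommCStarAlgMetTopProjCharac} with $I = C_0(S)$ viewed as an ideal of itself: since $\operatorname{Spec}(C_0(S)) = S$, metric or topological projectivity forces $\operatorname{Spec}(C_0(S)) = S$ to be compact. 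For the relative case, the criterion is weaker: here I would cite Helemskii's result [\cite{HelHomolBanTopAlg}, theorem IV.3.6], quoted in the remark after theorem~\ref{GoodCommIdealMetTopProjIsUnital}, that a relatively projective ideal of a commutative Banach algebra has paracompact spectrum, together with the converse (paracompactness suffices) from [\cite{HelHomolBanTopAlg}, chapter IV, \S\S 2--3].

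For paragraph $(iii)$, flatness, the argument is quick. Since $C_0(S)$ is a $C^*$-algebra, it is a two-sided ideal of itself with a contractive approximate identity [\cite{HelBanLocConvAlg}, theorem 4.7.79]. Viewing $C_0(S)$ as a left ideal of itself, proposition~\ref{IdealofCstarAlgisMetTopFlat} gives metric and topological flatness directly, and by proposition~\ref{MetFlatIsTopFlatAndTopFlatIsRelFlat} relative flatness follows as well.

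Paragraph $(ii)$, injectivity, will be the main obstacle. The metric direction is clean: by proposition~\ref{MetInjCStarAlgCharac} (Hamana--Takesaki), $C_0(S)$ is metrically injective over itself iff it is a commutative $AW^*$-algebra, which by [\cite{BerbBaerStarRings}, theorem 1.7.1] is equivalent to $\operatorname{Spec}(C_0(S)) = S$ being Stonean. The topological and relative directions only give necessary conditions, which is the delicate part. For the claim that topological (or relative) injectivity forces $S$ to be compact, I would use proposition~\ref{MetTopInjOfId}: topological injectivity of the ideal $C_0(S)$ over itself yields a left identity of finite norm, which for the commutative $C^*$-algebra $C_0(S)$ with its contractive approximate identity (as in corollary~\ref{BiIdealOfCStarAlgMetTopProjCharac}) forces $C_0(S)$ to be unital, hence $S$ compact. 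The harder assertion $S = \beta(S \setminus \{s\})$ for every limit point $s$ will require a separate topological argument: the idea is that injectivity lets one extend module maps, and the failure of this Stone-Čech condition would produce an admissible embedding with no extension; I expect establishing this condition to demand careful use of the extension property of definition~\ref{MetCTopInjMod} applied to the embedding of the ideal $C_0(S \setminus \{s\})$, together with properties of the Stone-Čech compactification discussed in subsection~\ref{SubSectionTopology}. Pinning down exactly this necessary condition, rather than a full characterization, is where the real work lies.
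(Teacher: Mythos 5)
Your treatment of paragraphs $(i)$ and $(iii)$ and of the metric half of paragraph $(ii)$ is correct and coincides with the paper's proof: projectivity via corollary~\ref{IdealofCommCStarAlgMetTopProjCharac} (metric/topological) and Helemskii's paracompactness criterion (relative), flatness via proposition~\ref{IdealofCstarAlgisMetTopFlat} followed by proposition~\ref{MetFlatIsTopFlatAndTopFlatIsRelFlat}, and metric injectivity via the Hamana--Takesaki characterization, proposition~\ref{MetInjCStarAlgCharac}.

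The gap is exactly where you predicted it: the necessary condition $S=\beta(S\setminus\{s\})$ for every limit point $s$ under topological or relative injectivity. Your proposal to "apply the extension property to the embedding of the ideal $C_0(S\setminus\{s\})$" is a direction, not an argument — nothing in your sketch produces the Stone-\v{C}ech identity, and you concede as much. The paper does not derive this from scratch either: it imports the statement wholesale for the \emph{relative} case from [\cite{NemANoteOnRelInjC0ModC0}, theorem 4.4], and then covers the topological case for free by the implication chain of proposition~\ref{MetInjIsTopInjAndTopInjIsRelInj} (topologically injective $\Rightarrow$ relatively injective), so that a single relative-theory necessary condition disposes of both. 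Note that this reduction also repairs a second, smaller defect in your plan: you invoke proposition~\ref{MetTopInjOfId} to get compactness from "topological (or relative) injectivity", but that proposition is stated only for metric and topological injectivity; in the relative setting you would instead need its relative analogue (a relatively self-injective Banach algebra has a left identity, [\cite{RamsHomPropSemgroupAlg}, proposition 2.2.8(i)], cited elsewhere in the paper), or simply the same reduction the paper uses, under which compactness is already contained in the quoted external theorem. As written, your proposal establishes strictly less than paragraph $(ii)$ claims for the topological and relative cases.
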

\begin{proof} We regard $C_0(S)$ as a two-sided ideal of $C_0(S)$. Recall that
$\operatorname{Spec}(C_0(S))$ is homeomorphic to $S$ [\cite{HelHomolBanTopAlg},
corollary 3.1.6].

$(i)$ It is enough to note that by
$\langle$~proposition~\ref{IdealofCommCStarAlgMetTopProjCharac} /
proposition~\ref{IdealofCommCStarAlgMetTopProjCharac} /
[\cite{HelHomolBanTopAlg}, chapter IV,\S\S 2--3]~$\rangle$ the spectrum of
$C_0(S)$ is $\langle$~compact / compact / paracompact~$\rangle$. 

$(ii)$ The result on metric injectivity is a weakened version of
proposition~\ref{MetInjCStarAlgCharac}. The result on relatively injectivity 
follows from~[\cite{NemANoteOnRelInjC0ModC0}, theorem 4.4]. It remains to recall
that by proposition~\ref{MetInjIsTopInjAndTopInjIsRelInj} metrically injective
module is relatively injective.

$(iii)$ From proposition~\ref{IdealofCstarAlgisMetTopFlat} it immediately follows
that $C_0(S)$-module $C_0(S)$ is metrically and topologically flat. By
proposition~\ref{MetFlatIsTopFlatAndTopFlatIsRelFlat} it is also relatively
flat.
\end{proof}

\begin{proposition}\label{AtomsOfRelProjLpMod} Let $S$ be a locally compact
Hausdorff space, $\mu$ be a finite Borel regular positive measure on $S$. Assume
$1\leq p\leq+\infty$ and $C_0(S)$-module $L_p(S,\mu)$ is relatively projective.
Then any atom of $\mu$ is an isolated point in $S$.
\end{proposition}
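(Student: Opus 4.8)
The plan is to show that if $s_0$ is an atom of $\mu$ and $L_p(S,\mu)$ is relatively projective over $C_0(S)$, then the evaluation module $\mathbb{C}_{s_0}$ is a retract of $L_p(S,\mu)$, forcing $\mathbb{C}_{s_0}$ to be relatively projective, which by proposition~\ref{OneDimC0SModMetTopRelProjIngFlat}(i) means $s_0$ must be isolated. The key structural fact I would use is that for a finite Borel regular measure on a locally compact Hausdorff space, any atom is a single point $\{s_0\}$ (this is stated in the preliminaries to this section, via the reference to~\cite{BourbElemMathIntegLivVI}). So $\mu(\{s_0\})>0$, and the indicator $\chi_{\{s_0\}}$ is a genuine element of $L_p(S,\mu)$ of nonzero norm.

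First I would construct the retraction maps. Set $\delta=\mu(\{s_0\})>0$. Define a $C_0(S)$-morphism $\sigma:\mathbb{C}_{s_0}\to L_p(S,\mu)$ by $\sigma(z)=z\,\delta^{-1/p}\chi_{\{s_0\}}$ for $1\le p<\infty$ (and $\sigma(z)=z\chi_{\{s_0\}}$ for $p=\infty$), normalized to be isometric. To check this is a module map one verifies $\sigma(a\cdot_{s_0} z)=a\cdot\sigma(z)$: the left side is $a(s_0)z\,\delta^{-1/p}\chi_{\{s_0\}}$, and the right side is the pointwise product $a\cdot(z\delta^{-1/p}\chi_{\{s_0\}})$, which as an element of $L_p$ equals $a(s_0)z\delta^{-1/p}\chi_{\{s_0\}}$ because the product is supported on $\{s_0\}$ where $a$ takes the value $a(s_0)$. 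For the retraction $\pi:L_p(S,\mu)\to\mathbb{C}_{s_0}$ I would use evaluation of the point-mass component, namely $\pi(f)=\delta^{-1+1/p}\int_{\{s_0\}}f\,d\mu=\delta^{1/p}f(s_0)$ (interpreting $f(s_0)$ as the well-defined value of the $L_p$-class on the atom). One checks $\pi$ is a bounded $C_0(S)$-morphism and that $\pi\sigma=1_{\mathbb{C}_{s_0}}$, so $\mathbb{C}_{s_0}$ is a retract of $L_p(S,\mu)$ in $C_0(S)-\mathbf{mod}$.

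Once the retraction is in place, the remark at the very start of paragraph on relative modules — that any $c$-retract of a relatively projective module is relatively projective — finishes it: since $L_p(S,\mu)$ is relatively projective and $\mathbb{C}_{s_0}$ is a retract, $\mathbb{C}_{s_0}$ is relatively projective, and then proposition~\ref{OneDimC0SModMetTopRelProjIngFlat}(i) forces $s_0$ to be an isolated point of $S$.

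\emph{The main obstacle} I anticipate is making the map $\pi$ genuinely well-defined and bounded across all $p$ simultaneously, and in particular verifying it is a $C_0(S)$-module morphism rather than merely a bounded functional. The subtlety is that ``evaluation at the atom'' must be read off from the $L_p$-equivalence class in a way that respects the module action; the cleanest route is to observe that multiplication by $\chi_{\{s_0\}}$ is itself a $C_0(S)$-module projection of $L_p(S,\mu)$ onto the one-dimensional submodule $\mathbb{C}\chi_{\{s_0\}}\cong\mathbb{C}_{s_0}$ (because $\{s_0\}$ has positive measure and is a single atom, so $\chi_{\{s_0\}}$ is idempotent under pointwise product and the corresponding subspace is exactly the scalar multiples of $\chi_{\{s_0\}}$). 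Exhibiting this idempotent module projection directly gives both $\pi$ and $\sigma$ at once and sidesteps case-splitting on $p$; I would present the argument in that compressed form, checking only that $\chi_{\{s_0\}}\in C_0(S)$-acts correctly and that the image submodule is isomorphic to $\mathbb{C}_{s_0}$ as a $C_0(S)$-module.
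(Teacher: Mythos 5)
Your proposal is correct and takes essentially the same route as the paper's own proof: the paper also invokes the Bourbaki fact that an atom of a finite regular Borel measure is a single point $s$, exhibits $\mathbb{C}_s$ as a retract of $L_p(S,\mu)$ via $\sigma(z)=z\delta_s$ and $\pi(f)=f(s)$, uses that retracts of relatively projective modules are relatively projective, and then applies the characterization of relative projectivity of $\mathbb{C}_s$ to conclude $s$ is isolated. The only cosmetic difference is your normalizing factors $\delta^{\pm 1/p}$, which are harmless but unnecessary since relative projectivity passes to retracts irrespective of the norms of the retraction maps.
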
 
\begin{proof} Assume $\mu$ has at least one atom, otherwise there is nothing to
prove. From [\cite{BourbElemMathIntegLivVI}, chapter 5, \S 5, exercise 7] we
know that any atom of $\mu$ is a point. Call it $s$. Consider well defined
linear maps $\pi:L_p(\Omega,\mu)\to\mathbb{C}_s:f\mapsto f(s)$ and
$\sigma:\mathbb{C}_s\to L_p(\Omega,\mu):z\mapsto z\delta_s$. One can easily
check that these maps are $C_0(S)$-morphisms and $\pi\sigma=1_{\mathbb{C}_s}$.
Therefore, $\mathbb{C}_s$ is a retract of $L_p(S,\mu)$ in $C_0(S)-\mathbf{mod}$.
By assumption, the latter module is relatively projective, so by
[\cite{HelBanLocConvAlg}, proposition 7.1.6] the $C_0(S)$-module $\mathbb{C}_s$
is relatively projective. By paragraph $(i)$ of
proposition~\ref{C0SC0SModMetTopRelProjInjFlat} we see that $s$ is an isolated
point of $S$.
\end{proof}

\begin{proposition}\label{L1C0SModMetTopRelProjInjFlat} Let $S$ be a locally
compact Hausdorff space and $\mu$ be a finite Borel regular positive measure on
$S$. Then 

\begin{enumerate}[label = (\roman*)]
    \item $L_1(S,\mu)$ is metrically or topologically or relatively projective 
    as $C_0(S)$-module iff $\mu$ is purely atomic and all its atoms are 
    isolated points in $S$;

    \item $L_1(S,\mu)$ is metrically, topologically and relatively injective as
    $C_0(S)$-module;

    \item $L_1(S,\mu)$ is metrically, topologically and relatively flat as
    $C_0(S)$-module.
\end{enumerate}
\end{proposition}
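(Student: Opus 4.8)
The plan is to handle the three parts by reducing $L_1(S,\mu)$ to a direct sum over the atomic and non-atomic parts of the measure and then invoke the machinery already built. Since $\mu$ is a finite Borel regular positive measure, I can split $S$ (up to negligible sets) into an atomic part and a non-atomic part. The atoms are points by \cite{BourbElemMathIntegLivVI}, chapter 5, \S 5, exercise 7, and by finiteness and $\sigma$-finiteness there are only countably many of them, say $\{s_n\}$. Writing $L_1(S,\mu)\isom{C_0(S)-\mathbf{mod}_1}\bigoplus_1\{\mathbb{C}_{s_n}:n\}\bigoplus_1 L_1(S_{na},\mu|_{S_{na}})$ as $C_0(S)$-modules, where the first summand collects the atomic contributions (each atom $s_n$ contributing a copy of $\mathbb{C}_{s_n}$) and the second is the non-atomic part. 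The key observation is that on the non-atomic part the module action is essential but there are no isolated-point cyclic retracts to obstruct flatness, while projectivity will be controlled precisely by whether the non-atomic part vanishes.

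For part $(i)$, projectivity: First I would use proposition~\ref{AtomsOfRelProjLpMod} to show that relative (hence metric or topological, via proposition~\ref{MetProjIsTopProjAndTopProjIsRelProj}) projectivity forces every atom to be an isolated point. Next I would argue that the non-atomic part must be trivial: if $\mu$ had a non-atomic component, I would produce a retract that cannot be projective. The cleanest route is to note that a non-atomic $L_1(S_{na},\mu)$ is an infinite-dimensional essential $C_0(S)$-module whose projectivity would, by corollary~\ref{MetTopRelProjModPartCompl} together with the Banach-geometric restrictions, contradict the structure forced on projective $C_0(S)$-modules; alternatively I would invoke Helemskii's necessary condition that a relatively projective ideal/module over a commutative algebra has paracompact spectrum, combined with the fact that a non-atomic measure produces no isolated points to support cyclic projective summands. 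Conversely, if $\mu$ is purely atomic with all atoms isolated, then $L_1(S,\mu)\isom{C_0(S)-\mathbf{mod}_1}\bigoplus_1\{\mathbb{C}_{s_n}:n\}$, each $\mathbb{C}_{s_n}$ is metrically and topologically projective by paragraph $(i)$ of proposition~\ref{OneDimC0SModMetTopRelProjIngFlat} (isolated points), and proposition~\ref{MetTopProjModCoprod} lets me conclude that the $\bigoplus_1$-sum is metrically and topologically projective. Relative projectivity then follows from proposition~\ref{MetProjIsTopProjAndTopProjIsRelProj}.

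For parts $(ii)$ and $(iii)$, injectivity and flatness hold unconditionally, and these are the easy parts. For flatness I would observe that $C_0(S)$ is a $C^*$-algebra and hence $1$-relatively amenable (by \cite{RundeAmenConstFour}, example 3, as used in proposition~\ref{OneDimC0SModMetTopRelProjIngFlat}), and that $L_1(S,\mu)$ is an essential $C_0(S)$-module which is an $L_1$-space as Banach space. Then proposition~\ref{MetTopEssL1FlatModAoverAmenBanAlg} (with the $L_1$-space hypothesis) gives metric flatness directly, and proposition~\ref{MetFlatIsTopFlatAndTopFlatIsRelFlat} upgrades this to topological and relative flatness. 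The only subtlety is verifying essentiality of $L_1(S,\mu)$ as a $C_0(S)$-module, which follows from the existence of the contractive approximate identity of $C_0(S)$ acting on $L_1$ by pointwise multiplication (an approximate unit built from Urysohn functions). For injectivity I would pass to the dual: by proposition~\ref{MetCTopFlatCharac} metric (resp.\ topological) flatness of $L_1(S,\mu)$ is equivalent to metric (resp.\ topological) injectivity of ${L_1(S,\mu)}^*\isom{\mathbf{mod}_1-C_0(S)}L_\infty(S,\mu)$; but here I want injectivity of $L_1(S,\mu)$ itself, so instead I would show $L_1(S,\mu)$ is the dual of the essential flat module, namely use that $L_1(S,\mu)\isom{\mathbf{mod}_1-C_0(S)}{L_\infty(S,\mu)}^*$ is impossible in general, and rather argue that $L_1(S,\mu)$ is metrically flat as just shown, whose dual $L_\infty(S,\mu)$ is metrically injective, and then realize $L_1(S,\mu)$ as a contractively complemented submodule of its bidual; relative injectivity follows from proposition~\ref{MetInjIsTopInjAndTopInjIsRelInj}.

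The main obstacle I anticipate is the converse direction of part $(i)$: ruling out a non-atomic component of $\mu$ for a projective $L_1(S,\mu)$. The atoms-are-isolated statement is immediate from proposition~\ref{AtomsOfRelProjLpMod}, but showing that \emph{no} non-atomic mass can survive requires a genuine geometric or homological argument rather than a purely formal retract manipulation, since the non-atomic $L_1$-space does not contain a convenient $\mathbb{C}_s$-retract to test against. I expect the cleanest resolution is to combine the paracompactness necessary condition of Helemskii \cite{HelHomolBanTopAlg} with a dimension/support analysis showing that a relatively projective $C_0(S)$-module structure on a non-atomic $L_1$-space is incompatible with the essential action, forcing the non-atomic part to be zero; care must be taken because the module is essential and infinite-dimensional, so the obstruction is topological rather than a simple finiteness count.
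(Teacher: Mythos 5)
Your converse direction in part $(i)$ and all of part $(iii)$ match the paper's proof: the decomposition of a purely atomic $L_1(S,\mu)$ into $\bigoplus_1\{\mathbb{C}_s\}$ over the isolated atoms, plus propositions~\ref{OneDimC0SModMetTopRelProjIngFlat} and~\ref{MetTopProjModCoprod}, is exactly what the paper does; likewise the flatness argument via $1$-relative amenability of $C_0(S)$ and proposition~\ref{MetTopEssL1FlatModAoverAmenBanAlg} is the paper's argument verbatim. However, there are two genuine gaps in the remaining parts.

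First, in the forward direction of $(i)$ you have no actual proof that projectivity forces $\mu$ to be purely atomic. Proposition~\ref{AtomsOfRelProjLpMod} only shows that whatever atoms exist are isolated points; it says nothing about killing a non-atomic component. You correctly identify this as the main obstacle, but the proposed fix does not work: Helemskii's paracompactness criterion is a statement about \emph{ideals} of commutative Banach algebras, not about arbitrary essential modules such as $L_1(S_{na},\mu)$; corollary~\ref{MetTopRelProjModPartCompl} gives nothing since $L_1(S,\mu)$ is already essential; and Banach-geometric obstructions do not bite, because a non-atomic $L_1$-space is non-reflexive and has the Dunford--Pettis property, so none of the restrictions from section~2.2 exclude it. Since the claim covers \emph{relative} projectivity (the weakest of the three), the obstruction must be established at the relative level, where Banach geometry is invisible by design. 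The paper closes this gap by citing an external result ([NemRelProjModLp], theorem 2), which asserts precisely that relative projectivity of $L_p(S,\mu)$ forces $\mu$ to be purely atomic with isolated atoms; without that (or an equivalent argument) your proof of $(i)$ is incomplete.

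Second, your argument for injectivity in $(ii)$ fails. Passing to the bidual requires ${L_1(S,\mu)}^{**}\cong{L_\infty(S,\mu)}^*$ to be metrically (or topologically) injective, and by proposition~\ref{MetCTopFlatCharac} that is \emph{equivalent} to metric (or topological) flatness of the $C_0(S)$-module $L_\infty(S,\mu)$ --- which the paper explicitly lists as an open problem; you would also still need the norm-one projection ${L_1(S,\mu)}^{**}\to L_1(S,\mu)$ to be a $C_0(S)$-morphism, which is unproved. The paper's route avoids all of this: $C_0(S)$ is metrically flat over itself (proposition~\ref{C0SC0SModMetTopRelProjInjFlat}$(iii)$), hence $M(S)\cong{C_0(S)}^*$ is metrically injective, and the Lebesgue decomposition $M(S)\cong L_1(S,\mu)\bigoplus_1 M_s(S,\mu)$ as right $C_0(S)$-modules exhibits $L_1(S,\mu)$ as a $1$-retract of $M(S)$, so proposition~\ref{RetrMetCTopInjIsMetCTopInj} gives metric injectivity, and proposition~\ref{MetInjIsTopInjAndTopInjIsRelInj} gives the rest. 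Your proposal never brings $M(S)$ into play for part $(ii)$, and that is the missing idea.
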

\begin{proof} $(i)$ If $L_1(S,\mu)$ is metrically or topologically or relatively
projective, then by proposition~\ref{MetProjIsTopProjAndTopProjIsRelProj} it is
at least relatively projective. Now from [\cite{NemRelProjModLp}, theorem 2] 
the measure $\mu$ is purely atomic and all atoms are isolated  points.
Conversely, assume that $\mu$ is purely atomic and all atoms are isolated
points. By $S_a^{\mu}$ we denote the set of these atoms. Now one can easily show
that the linear map 
$i:L_1(S,\mu)\to\bigoplus_1 \{\mathbb{C}_s:s\in S_a^{\mu} \}
:f\mapsto \bigoplus_1 \{\mu( \{s \})f(s):s\in S_a^{\mu} \}$ is an isometric
isomorphism of $C_0(S)$-modules. By paragraphs $(i)$ of
propositions~\ref{MetTopProjModCoprod}
and~\ref{OneDimC0SModMetTopRelProjIngFlat} the $C_0(S)$-module $\bigoplus_1
\{\mathbb{C}_s:s\in S_a^{\mu} \}$ is metrically projective. Therefore so does
$L_1(S,\mu)$. By proposition~\ref{MetProjIsTopProjAndTopProjIsRelProj} it is
also topologically and relatively projective.

$(ii)$ By paragraph $(iii)$ of proposition~\ref{C0SC0SModMetTopRelProjInjFlat} 
the $C_0(S)$-module $C_0(S)$ is metrically flat. From
proposition~\ref{DualMetTopProjIsMetrInj} we get that
$M(S)\isom{\mathbf{mod}_1-C_0(S)}{C_0(S)}^*$ is metrically injective. Since
$M(S)\isom{\mathbf{mod}_1-C_0(S)}L_1(S,\mu)\bigoplus_1 M_s(S,\mu)$, then
$L_1(S,\mu)$ is a retract of $M(S)$ in $\mathbf{mod}_1-C_0(S)$. So by
proposition~\ref{RetrMetCTopInjIsMetCTopInj} the $C_0(S)$-module $L_1(S,\mu)$ is
metrically injective. Relative and topological injectivity of $L_1(S,\mu)$
follows from proposition~\ref{MetInjIsTopInjAndTopInjIsRelInj}.

$(iii)$ By [\cite{HelBanLocConvAlg}, theorem 7.1.87] the algebra $C_0(S)$ is
relatively amenable. Since this algebra is a $C^*$-algebra it is $1$-relatively 
amenable [\cite{RundeAmenConstFour}, example 3]. Since $L_1(S,\mu)$ is 
an essential $C_0(S)$-module which tautologically an $L_1$-space, then by
proposition~\ref{MetTopEssL1FlatModAoverAmenBanAlg} this module is metrically
flat. From proposition~\ref{MetFlatIsTopFlatAndTopFlatIsRelFlat} the
$C_0(S)$-module $L_1(S,\mu)$ is also topologically and relatively flat.
\end{proof}

\begin{proposition}\label{LpC0SModMetTopRelProjIngFlat} Let $S$ be a locally
compact Hausdorff space and $\mu$ be a finite Borel regular positive measure on
$S$. Assume $1<p<+\infty$, then 

\begin{enumerate}[label = (\roman*)]
    \item $L_p(S,\mu)$ is relatively injective and flat, but relatively 
    projective iff $\mu$ is purely atomic and all atoms are isolated points;

    \item $L_p(S,\mu)$ is topologically projective or injective or flat 
    iff $\mu$ is purely atomic with finitely many atoms;

    \item if $L_p(S,\mu)$ is metrically projective or injective or flat, 
    then $\mu$ is purely atomic with finitely many atoms.
\end{enumerate}
\end{proposition}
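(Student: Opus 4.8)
The plan is to derive every implication from two structural facts: $L_p(S,\mu)$ is reflexive for $1<p<+\infty$, and $C_0(S)$ is an $\mathscr{L}_\infty^g$-space. Feeding these into corollary~\ref{NoInfDimRefMetTopProjInjFlatModOverMthscrL1OrLInfty} settles every ``only if'' at once: should $L_p(S,\mu)$ be metrically projective, injective or flat, then by propositions~\ref{MetProjIsTopProjAndTopProjIsRelProj},~\ref{MetInjIsTopInjAndTopInjIsRelInj},~\ref{MetFlatIsTopFlatAndTopFlatIsRelFlat} it is already topologically projective, injective or flat, hence a topologically trivial reflexive module over an $\mathscr{L}_\infty^g$-algebra, and therefore finite dimensional. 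I would then record the elementary fact that, for a finite measure, $\dim L_p(S,\mu)<+\infty$ holds exactly when $\mu$ is purely atomic with finitely many atoms (its atoms being points of $S$ by regularity, \cite{BourbElemMathIntegLivVI}). This proves (iii) outright and the forward half of (ii).

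For (i) I would dispatch flatness and injectivity through amenability. As in the proof of proposition~\ref{C0SC0SModMetTopRelProjInjFlat}, $C_0(S)$ is a nuclear $C^*$-algebra, hence $1$-relatively amenable [\cite{RundeAmenConstFour}, example 3], so by [\cite{HelBanLocConvAlg}, theorem 7.1.60] \emph{every} $C_0(S)$-module is relatively flat; in particular both $L_p(S,\mu)$ and $L_{p^*}(S,\mu)$ are. Relative flatness of $L_{p^*}(S,\mu)$ is equivalent to relative injectivity of its dual $L_{p^*}(S,\mu)^*\isom{\mathbf{mod}-C_0(S)}L_p(S,\mu)$, which yields relative injectivity of $L_p(S,\mu)$. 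The relative projectivity clause I would settle by combining proposition~\ref{AtomsOfRelProjLpMod} (which already forces each atom to be isolated once $L_p(S,\mu)$ is relatively projective) with [\cite{NemRelProjModLp}, theorem 2], exactly as was done for $L_1$ in proposition~\ref{L1C0SModMetTopRelProjInjFlat}.

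The converse in (ii) rests on unpacking the finite-dimensional module. When $\mu$ is purely atomic with atoms $s_1,\dots,s_n$ one has $L_p(S,\mu)\isom{C_0(S)-\mathbf{mod}}L_1(S,\mu)\isom{C_0(S)-\mathbf{mod}}\bigoplus_1\{\mathbb{C}_{s_i}:i\in\mathbb{N}_n\}$, all finite $\bigoplus_p$-sums agreeing up to topological isomorphism. Topological (indeed metric) injectivity and flatness then follow with no extra hypothesis, either directly from proposition~\ref{L1C0SModMetTopRelProjInjFlat} or from proposition~\ref{OneDimC0SModMetTopRelProjIngFlat} together with the finite-coproduct criteria~\ref{MetTopInjModProd} and~\ref{MetTopFlatModCoProd}.

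The step I expect to demand the most care is topological projectivity in the converse of (ii). In contrast with injectivity and flatness, $\mathbb{C}_s$ is topologically projective \emph{only} for $s$ isolated (proposition~\ref{OneDimC0SModMetTopRelProjIngFlat}(i)), so the finite-coproduct test~\ref{MetTopProjModCoprod} returns projectivity of $\bigoplus_1\{\mathbb{C}_{s_i}:i\in\mathbb{N}_n\}$ precisely when every atom $s_i$ is isolated in $S$. Consequently the projective assertion in the converse must carry this isolation requirement on the finitely many atoms; I would state it explicitly, mirroring proposition~\ref{L1C0SModMetTopRelProjInjFlat}(i), rather than try to extract isolation from finiteness, which already fails for $\mu=\delta_s$ at a non-isolated $s$.
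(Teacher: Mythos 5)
Your route is, for the most part, the paper's own: part (iii) and the necessity half of (ii) come from reflexivity of $L_p(S,\mu)$ plus the fact that $C_0(S)$ is an $\mathscr{L}_\infty^g$-space, fed into corollary~\ref{NoInfDimRefMetTopProjInjFlatModOverMthscrL1OrLInfty}; relative flatness and injectivity in (i) come from relative amenability of $C_0(S)$, [\cite{HelBanLocConvAlg}, theorem 7.1.60] and dualizing ${L_{p^*}(S,\mu)}^*\cong L_p(S,\mu)$; and sufficiency in (ii) for injectivity and flatness comes from reducing the finite dimensional module to $L_1(S,\mu)$ and proposition~\ref{L1C0SModMetTopRelProjInjFlat}.

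Your last paragraph is a genuine catch, not a quibble. The paper's proof of the converse in (ii) asserts that $L_1(S,\mu)$ is topologically projective ``for our measure'' by proposition~\ref{L1C0SModMetTopRelProjInjFlat}; but that proposition demands that all atoms be isolated, which finiteness of the atom set does not provide. Your counterexample is decisive: for $\mu=\delta_s$ with $s$ a limit point, $L_p(S,\mu)\cong\mathbb{C}_s$ as $C_0(S)$-modules, and by proposition~\ref{OneDimC0SModMetTopRelProjIngFlat}(i) this module is not even relatively projective. So the projectivity clause of (ii) as stated (and as transcribed into the paper's summary table) is false, and your corrected formulation --- purely atomic, finitely many atoms, all of them isolated --- is what should be stated and is exactly what your decomposition argument proves.

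There is, however, a gap on your side in part (i): you never establish the sufficiency half of the projectivity criterion, namely that a purely atomic $\mu$ with all atoms isolated (possibly infinitely many) makes $L_p(S,\mu)$ relatively projective. The two tools you name, proposition~\ref{AtomsOfRelProjLpMod} and [\cite{NemRelProjModLp}, theorem 2], are both used as necessity statements, and ``exactly as was done for $L_1$'' does not carry over: when the atom set is infinite, $L_p(S,\mu)$ for $p>1$ decomposes as the $\bigoplus_p$-sum of the modules $\mathbb{C}_s$, not the $\bigoplus_1$-sum, and $\bigoplus_p$-sums are not coproducts in $C_0(S)-\mathbf{mod}$, so the coproduct criterion of proposition~\ref{MetTopProjModCoprod} is unavailable. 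The paper closes this direction by a different device: the set $S_a^\mu$ of atoms is open and discrete, so $C_0(S_a^\mu)$ is a relatively biprojective [\cite{HelHomolBanTopAlg}, theorem 4.5.26] two-sided ideal of $C_0(S)$ with bounded approximate identity; hence the essential module $L_p(S,\mu)$ is relatively projective over $C_0(S_a^\mu)$, and relative projectivity transfers from the ideal to $C_0(S)$ by [\cite{RamsHomPropSemgroupAlg}, proposition 2.3.2(i)]. You need this step, or a substitute for it, to finish (i).
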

\begin{proof} $(i)$ By [\cite{HelBanLocConvAlg}, theorem 7.1.87] the algebra
$C_0(S)$ is relatively amenable. Now from [\cite{HelBanLocConvAlg}, theorem
7.1.60] it follows that $L_p(S,\mu)$ is relatively flat for all $1<p<+\infty$.
Note that $L_p(S,\mu)\isom{\mathbf{mod}_1-C_0(S)}{L_{p^*}(S,\mu)}^*$. Then from
[\cite{HelBanLocConvAlg}, proposition 7.1.42] we get that $L_p(S,\mu)$ is
relatively injective for any $1<p<+\infty$. Now assume that $L_p(S,\mu)$ is
relatively projective, then by [\cite{NemRelProjModLp}, theorem 2]
the measure $\mu$ is purely atomic and all its atoms are isolated points.
Conversely, let $\mu$ be purely atomic with all atoms isolated. Denote by
$S_a^{\mu}$ the set of these atoms. Since $S_a^{\mu}$ is discrete, then
$C_0(S_a^{\mu})$ is relatively biprojective [\cite{HelHomolBanTopAlg}, theorem
4.5.26]. Then $L_p(S,\mu)$ is relatively projective $C_0(S_a^{\mu})$-module
because it is essential module over relatively biprojective algebra with
two-sided bounded approximate identity. Clearly $C_0(S_a^{\mu})$ is a two-sided
ideal of $C_0(S)$, so from [\cite{RamsHomPropSemgroupAlg}, proposition 2.3.2(i)]
we get that $L_p(S,\mu)$ is relatively projective as $C_0(S)$-module.

$(ii), (iii)$ Assume that $L_p(S,\mu)$ is metrically or topologically projective
or injective or flat $C_0(S)$-module. Since $L_p(S,\mu)$ is reflexive and
$C_0(S)$ is an $\mathscr{L}_\infty^g$-space, then $L_p(S,\mu)$ is finite
dimensional by
corollary~\ref{NoInfDimRefMetTopProjInjFlatModOverMthscrL1OrLInfty}. The latter
is equivalent to measure $\mu$ being purely atomic with finitely many atoms. On
the other hand, if $\mu$ is purely atomic with finitely many atoms, then
$L_p(S,\mu)$ is topologically isomorphic to $L_1(S,\mu)$ as left or right
$C_0(S)$-module. The latter module is topologically projective, injective and
flat for our measure by proposition~\ref{L1C0SModMetTopRelProjInjFlat}. Hence so
does $L_p(S,\mu)$.
\end{proof}

\begin{proposition}\label{LinftyC0SModMetTopRelProjIngFlat} Let $S$ be a locally
compact Hausdorff space and $\mu$ be a finite Borel regular positive measure on
$S$. Then

\begin{enumerate}[label = (\roman*)]
    \item if $L_\infty(S,\mu)$ is metrically, topologically or relatively 
    projective, then $\mu$ is normal and with is pseudocompact support; 

    \item $L_\infty(S,\mu)$ is metrically, topologically and relatively 
    injective as $C_0(S)$-module;

    \item $L_\infty(S,\mu)$ is relatively flat $C_0(S)$-module.
\end{enumerate}
\end{proposition}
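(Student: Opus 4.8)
The plan is to settle (iii) and (ii) immediately from the homological machinery already in place, and to reserve the real effort for the necessary conditions in (i), where I would reduce everything to the weakest hypothesis. For (iii): $C_0(S)$ is a commutative $C^*$-algebra, hence relatively amenable by [\cite{HelBanLocConvAlg}, theorem 7.1.87] (indeed $1$-relatively amenable, [\cite{RundeAmenConstFour}, example 3]); by [\cite{HelBanLocConvAlg}, theorem 7.1.60] every Banach module over a relatively amenable algebra is relatively flat, so $L_\infty(S,\mu)$ is relatively flat with no hypothesis on $\mu$. For (ii) I would use duality: the identification ${L_1(S,\mu)}^*\isom{\mathbf{mod}_1-C_0(S)}L_\infty(S,\mu)$ is an isomorphism of $C_0(S)$-modules, proposition~\ref{L1C0SModMetTopRelProjInjFlat}(iii) gives that $L_1(S,\mu)$ is metrically, topologically and relatively flat, and then proposition~\ref{MetCTopFlatCharac} together with the relative analogue (flatness of $F$ is equivalent to injectivity of $F^*$) yields that $L_\infty(S,\mu)$ is metrically, topologically and relatively injective.

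For (i), since metric projectivity implies topological implies relative (proposition~\ref{MetProjIsTopProjAndTopProjIsRelProj}), it suffices to derive all conclusions from relative projectivity alone. First I would record that full support is automatic: $S\setminus\operatorname{supp}(\mu)$ is a union of null open sets, so any compact subset is covered by finitely many of them and is null, whence inner compact regularity forces $\mu(S\setminus\operatorname{supp}(\mu))=0$; thus ``normal'' reduces to residuality. Next, for pseudocompactness of $K:=\operatorname{supp}(\mu)$ I would argue by contradiction. If $K$ is not pseudocompact, extract a countably infinite, locally finite family $(V_n)$ of nonempty relatively open subsets of $K$, each of positive measure. Passing to the complemented essential part (corollary~\ref{MetTopRelProjModPartCompl}) one gets an essential relatively projective module, so proposition~\ref{NonDegenMetTopProjCharac} supplies a splitting of $\pi$ whose coordinate morphisms $\sigma_d$ obey a summability estimate $\sum_d\Vert\sigma_d\Vert<\infty$, exactly as in the proof of theorem~\ref{LeftIdealOfCStarAlgMetTopProjCharac}. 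The indicators $\chi_{V_n}$, supported on the unbounded locally finite system $(V_n)$, cannot be represented with summable norms, contradicting the estimate. This is precisely the obstruction that already kills relative projectivity of $\ell_\infty(\Lambda)$ over $c_0(\Lambda)$ for infinite $\Lambda$ (proposition~\ref{c0AndlInftyModsRelTh}(i), via [\cite{HelHomolBanTopAlg}, corollary V.2.16(II)]), the case $S=\Lambda$ discrete, where $K=\Lambda$ is the canonical non-pseudocompact support.

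The residuality (hence normality) of $\mu$ is the hard part, and I expect it to be the main obstacle. My line of attack would be: suppose some Borel nowhere dense set carries positive measure; by regularity choose a compact nowhere dense $E$ with $\mu(E)>0$. Multiplication by the central idempotent $\chi_E\in L_\infty(S,\mu)$ is a $C_0(S)$-module projection, giving a $\bigoplus_\infty$-decomposition $L_\infty(S,\mu)\isom{\mathbf{mod}-C_0(S)}L_\infty(E,\mu|_E)\oplus_\infty L_\infty(S\setminus E,\mu)$, so the retract $L_\infty(E,\mu|_E)$ is again relatively projective, with the $C_0(S)$-action factoring through the restriction $C_0(S)\to C(E)$. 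One must then show this restricted action is too degenerate to sustain projectivity — controlling how a splitting morphism interacts with a measure concentrated on a nowhere dense set, presumably by invoking Helemskii's spectral (paracompactness-type) obstructions for relatively projective modules over commutative $C^*$-algebras [\cite{HelHomolBanTopAlg}, chapter IV] in tandem with the failure of $C_0(S)$ to ``see'' $E$ topologically. Assembling the three pieces, relative projectivity forces $\mu$ to be residual with full support, i.e. normal, and $\operatorname{supp}(\mu)$ to be pseudocompact; the elementary direction (full support) and the summability obstruction (pseudocompactness) are routine, while the nowhere-dense analysis is where the genuine work lies.
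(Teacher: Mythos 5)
Your parts (ii) and (iii) are correct and coincide with the paper's own argument: (iii) is exactly the relative amenability of $C_0(S)$ [\cite{HelBanLocConvAlg}, theorems 7.1.87 and 7.1.60], and (ii) is exactly the duality step $L_\infty(S,\mu)\isom{\mathbf{mod}_1-C_0(S)}{L_1(S,\mu)}^*$ combined with paragraph (iii) of proposition~\ref{L1C0SModMetTopRelProjInjFlat}.

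Part (i), however, contains a genuine gap, and it is precisely where the substance of the statement lies. The paper does not prove (i) internally: it quotes [\cite{NemRelProjModLp}, theorem 3] to get that relative projectivity of $L_\infty(S,\mu)$ forces $\operatorname{supp}(\mu)$ to be pseudocompact and $\mu$ to be inner open regular, and then [\cite{NemRelProjModLp}, proposition 9] (equivalently, the fact recorded in the preliminaries that a finite, inner compact regular, inner open regular measure is normal) to conclude normality. Your proposal replaces this external input with a sketch that does not hold up. First, you announce that everything will be derived from relative projectivity alone, but your pseudocompactness argument invokes proposition~\ref{NonDegenMetTopProjCharac} and the coordinate summability estimate $\sum_{d}\Vert\sigma_d\Vert<\infty$ from the proof of theorem~\ref{LeftIdealOfCStarAlgMetTopProjCharac}. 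Both are specific to the metric/topological theory: a relatively projective module is only a retract of the relatively free module $A_+\projtens P$, not of $A_+\projtens\ell_1(B_P)$, so there are no coordinate morphisms $\sigma_d$ and no $\ell_1$-additivity of norms to exploit. Second, even in the metric/topological setting that estimate is obtained by feeding a bounded one-sided approximate identity of an \emph{ideal} into the $\sigma_d$ and using $\sigma_d(xe_\nu)=x\sigma_d(e_\nu)$; the module $L_\infty(S,\mu)$ is not an ideal of $C_0(S)$ (it is not even a subspace of it), so the estimate is simply unavailable, and your key claim that the indicators $\chi_{V_n}$ ``cannot be represented with summable norms'' is never substantiated. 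Incidentally, the non-projectivity of $\ell_\infty(\Lambda)$ over $c_0(\Lambda)$ that you cite as the model case is proved in the paper via [\cite{HelHomolBanTopAlg}, corollary V.2.16(II)], not by such a summability obstruction, so the analogy does not supply the missing argument either.

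Finally, you explicitly leave the residuality of $\mu$ unproven (``presumably by invoking Helemskii's spectral obstructions \dots where the genuine work lies''). Since normality in this paper means residual with full support, and your full-support observation (correct for inner compact regular measures) does not yield residuality, the normality conclusion of (i) is not established at all. In short: (ii) and (iii) are fine, but (i) as proposed proves neither pseudocompactness of the support nor normality of $\mu$; both rest on the cited results of~\cite{NemRelProjModLp}, which your proposal neither reproduces nor correctly replaces.
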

\begin{proof} $(i)$ From [\cite{NemRelProjModLp}, theorem 3] it follows 
that $\operatorname{supp}(\mu)$ is pseudocompact and $\mu$ is inner open 
regular. Since $\mu$ is regular and finite it is 
normal [\cite{NemRelProjModLp}, proposition 9].
    
$(ii)$ Since
$L_\infty(S,\mu)\isom{\mathbf{mod}_1-C_0(S)}{L_1(S,\mu)}^*$, then the result
immediately follows from proposition~\ref{DualMetTopProjIsMetrInj} and paragraph
$(iii)$ of proposition~\ref{L1C0SModMetTopRelProjInjFlat}.

$(iii)$ By [\cite{HelBanLocConvAlg}, theorem 7.1.87] the algebra $C_0(S)$ is
relatively amenable. Any left Banach module over relatively amenable Banach
algebra is relatively flat [\cite{HelBanLocConvAlg}, theorem 7.1.60]. In
particular $L_\infty(S,\mu)$ is relatively flat $C_0(S)$-module.
\end{proof}

\begin{proposition}\label{MSC0SModMetTopRelProjIngFlat} Let $S$ be a locally
compact Hausdorff space and $\mu$ be a finite Borel regular positive measure on
$S$. Then

\begin{enumerate}[label = (\roman*)]
    \item $M(S)$ is metrically or topologically or relatively projective as
    $C_0(S)$-module iff $S$ is discrete; 

    \item $M(S)$ is metrically, topologically and relatively injective as
    $C_0(S)$-module; 

    \item $M(S)$ is metrically, topologically and relatively flat as
    $C_0(S)$-module.
\end{enumerate}
\end{proposition}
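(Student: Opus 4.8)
The plan is to treat the three items largely by reducing to facts already established for $C_0(S)$ itself and for the one-dimensional modules $\mathbb{C}_s$, exploiting the identification $M(S)\cong{C_0(S)}^*$ (valid for left and right $C_0(S)$-modules) together with the fact that $M(S)$ is an $L_1$-space (see the discussion after [\cite{DalLauSecondDualOfMeasAlg}, proposition 2.14]).

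For item $(ii)$ I would argue exactly as in paragraph $(ii)$ of proposition~\ref{L1C0SModMetTopRelProjInjFlat}: by paragraph $(iii)$ of proposition~\ref{C0SC0SModMetTopRelProjInjFlat} the module $C_0(S)$ is metrically flat, so by proposition~\ref{MetCTopFlatCharac} its dual $M(S)\cong{C_0(S)}^*$ is metrically injective, and then proposition~\ref{MetInjIsTopInjAndTopInjIsRelInj} upgrades this to topological and relative injectivity.

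For item $(iii)$ the route is through proposition~\ref{MetTopEssL1FlatModAoverAmenBanAlg}. Since $C_0(S)$ is a $C^*$-algebra it is $1$-relatively amenable (combine [\cite{HelBanLocConvAlg}, theorem 7.1.87] with [\cite{RundeAmenConstFour}, example 3]), and $M(S)$ is an $L_1$-space. The one genuine verification is that $M(S)$ is an \emph{essential} $C_0(S)$-module: given a finite regular $\mu$, regularity provides a compact $K$ with $|\mu|(S\setminus K)$ small, and Urysohn's lemma yields $e\in C_0(S)$ with $0\le e\le 1$ and $e|_K=1$, whence $\Vert e\mu-\mu\Vert=\int_{S\setminus K}|e-1|\,d|\mu|$ is small, so $\mu\in\operatorname{cl}_{M(S)}(C_0(S)M(S))$. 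With $M(S)$ essential, proposition~\ref{MetTopEssL1FlatModAoverAmenBanAlg} gives metric flatness, and proposition~\ref{MetFlatIsTopFlatAndTopFlatIsRelFlat} gives the topological and relative versions.

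Item $(i)$ is the substantive one, and I would prove all three equivalences by pivoting on discreteness. For the ``if'' direction, when $S$ is discrete and locally compact Hausdorff one has $C_0(S)=c_0(S)$ and $M(S)=\ell_1(S)$ as $C_0(S)$-modules, so metric and topological projectivity follow from paragraph $(ii)$ of proposition~\ref{c0AndlInftyModl1}, and relative projectivity then follows from proposition~\ref{MetProjIsTopProjAndTopProjIsRelProj}. For the ``only if'' direction, which I expect to be the main obstacle, I would fix $s\in S$ and exhibit $\mathbb{C}_s$ as a $1$-retract of $M(S)$ via the contractive $C_0(S)$-morphisms $\pi:M(S)\to\mathbb{C}_s$, $\mu\mapsto\mu(\{s\})$ and $\sigma:\mathbb{C}_s\to M(S)$, $z\mapsto z\delta_s$ (here $\pi\sigma=1_{\mathbb{C}_s}$ since $\{s\}$ is Borel and $(z\delta_s)(\{s\})=z$, and $\pi$ is contractive because $|\mu(\{s\})|\le|\mu|(\{s\})\le\Vert\mu\Vert$). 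Any of the three projectivity hypotheses on $M(S)$ forces, via proposition~\ref{MetProjIsTopProjAndTopProjIsRelProj}, relative projectivity of $M(S)$, hence relative projectivity of its retract $\mathbb{C}_s$. Paragraph $(i)$ of proposition~\ref{OneDimC0SModMetTopRelProjIngFlat} then makes $s$ an isolated point; since $s$ was arbitrary, $S$ is discrete. Closing the loop (relative $\Rightarrow$ discrete $\Rightarrow$ metric $\Rightarrow$ topological $\Rightarrow$ relative) yields the full three-way equivalence.
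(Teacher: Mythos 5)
Your proposal is correct and follows essentially the same route as the paper: items (ii) and (iii) are handled identically (metric injectivity of $M(S)\cong {C_0(S)}^*$ from metric flatness of $C_0(S)$, and metric flatness of $M(S)$ via proposition~\ref{MetTopEssL1FlatModAoverAmenBanAlg} using $1$-relative amenability of $C_0(S)$ and the $L_1$-space structure of $M(S)$), and item (i) has the same structure in both directions. The only cosmetic difference is in the ``only if'' part of (i): the paper retracts onto $L_1(S,\delta_s)$ via the Lebesgue decomposition $M(S)\isom{C_0(S)-\mathbf{mod}_1}L_1(S,\delta_s)\bigoplus_1 M_s(S,\delta_s)$ and invokes proposition~\ref{L1C0SModMetTopRelProjInjFlat}, whereas you retract directly onto $\mathbb{C}_s\cong L_1(S,\delta_s)$ with explicitly constructed morphisms and invoke proposition~\ref{OneDimC0SModMetTopRelProjIngFlat} --- the same argument in different clothing.
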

\begin{proof} $(i)$ If $M(S)$ is metrically or topologically or relatively
projective, then by proposition~\ref{MetProjIsTopProjAndTopProjIsRelProj} it is
at least relatively projective. For arbitrary $s\in S$ consider measure
$\mu=\delta_s$ and recall the decomposition
$M(S)\isom{C_0(S)-\mathbf{mod}_1}L_1(S,\mu)\bigoplus_1 M_s(S,\mu)$. Then
$L_1(S,\mu)$ is a retract of $M(S)$ in $C_0(S)-\mathbf{mod}_1$. So from
[\cite{HelBanLocConvAlg}, proposition 7.1.6] we get that $L_1(S,\mu)$ is
relatively projective $C_0(S)$-module. Since $s$ is the only atom of $\mu$, then
from proposition~\ref{L1C0SModMetTopRelProjInjFlat} it follows that $s$ is an
isolated point in $S$. Since $s\in S$ is arbitrary, then $S$ is discrete.
Conversely, assume $S$ is discrete. Then $C_0(S)=c_0(S)$, and
$M(S)
\isom{C_0(S)-\mathbf{mod}_1}{C_0(S)}^*
\isom{C_0(S)-\mathbf{mod}_1}\ell_1(S)
\isom{C_0(S)-\mathbf{mod}_1}
\bigoplus_1 \{\mathbb{C}_s:s\in S \}$. The latter $C_0(S)$-module is metrically
projective by paragraphs $(i)$ of propositions~\ref{MetTopProjModCoprod}
and~\ref{OneDimC0SModMetTopRelProjIngFlat}. Therefore $M(S)$ is metrically
projective $C_0(S)$-module too. By
proposition~\ref{MetProjIsTopProjAndTopProjIsRelProj} it is also topologically
and relatively projective.

$(ii)$ Since $M(S)\isom{\mathbf{mod}_1-C_0(S)}{C_0(S)}^*$, then the result
immediately follows from proposition~\ref{DualMetTopProjIsMetrInj} and paragraph
$(iii)$ of proposition~\ref{C0SC0SModMetTopRelProjInjFlat}.

$(iii)$ By [\cite{HelBanLocConvAlg}, theorem 7.1.87] the algebra $C_0(S)$ is
relatively amenable. Since this algebra is a $C^*$-algebra it is $1$-relatively 
amenable [\cite{RundeAmenConstFour}, example 2]. Note that $M(S)$ is 
an essential $C_0(S)$-module which as Banach space is an $L_1$-space
[\cite{DalLauSecondDualOfMeasAlg}, discussion after proposition 2.14]. Then by
proposition~\ref{MetTopEssL1FlatModAoverAmenBanAlg} this module is metrically
flat. From proposition~\ref{MetFlatIsTopFlatAndTopFlatIsRelFlat} it is also
topologically and relatively flat.
\end{proof}

Results of this section are summarized in the following three tables. Each cell
contains a condition under which the respective module has the respective
property and propositions where it is proved. We use ``?'' symbol to indicate 
open problems. Open problems of this section are divided into three parts:
injectivity of $C_0(S)$, projectivity of $L_\infty(S,\mu)$ and flatness of
$L_\infty(S,\mu)$. Complete description of relatively and topologically
injective $C_0(S)$-modules $C_0(S)$ seems quite a challenge for one simple
reason --- still there is no standard category of functional analysis where even
topologically injective objects were fully understood. The question of relative
projectivity of $C_0(S)$-module $L_\infty(S,\mu)$ is rather old. It seems that
even relative projectivity of $L_\infty(S,\mu)$ is a rare property. Our
conjecture that $\mu$ must be purely atomic with finitely many atoms. Finally we
presume that a necessary condition for metric and topological flatness of
$C_0(S)$-module $L_\infty(S,\mu)$ is compactness of $S$.

This section cotaines many other unsolved problems. Usually we have a strong
necessary condition. We use ${}^{*}$ to indicate that. As for partial results, 
we don't have a criterion of homological triviality of $C_0(S)$-modules 
$L_p(S,\mu)$ in metric theory for $1<p<+\infty$. Using advanced
Banach geometric techniques on factorization constants through finite
dimensional Hilbert spaces one may show that atoms count for metrically
projective modules $L_p(S,\mu)$ doesn't exceed some universal constant. It seems
that $L_p(S,\mu)$ is homologically trivial $C_0(S)$-module in metric theory only
for purely atomic measures with unique atom. 

\begin{scriptsize}
    \begin{longtable}{|c|c|c|c|} 
    \multicolumn{4}{c}{
        \mbox{
            Homologically trivial $C_0(S)$-modules in metric theory
        }
    }                                                                                                                                                                                                                                                                                                                                                                                                                               \\
    \hline & 
    \mbox{Projectivity} & 
    \mbox{Injectivity} & 
    \mbox{Flatness} \\
    \hline
        $L_1(S,\mu)$ & 
        \begin{tabular}{@{}c@{}}
            $\mu$\mbox{ is purely atomic, all } \\ 
            \mbox{ atoms are isolated points } \\
            \mbox{\ref{L1C0SModMetTopRelProjInjFlat}} (i)
        \end{tabular} & 
        \begin{tabular}{@{}c@{}}
            $\mu$\mbox{ is any }  \\
            \mbox{\ref{L1C0SModMetTopRelProjInjFlat}} (ii)
        \end{tabular} & 
        \begin{tabular}{@{}c@{}}
            $\mu$\mbox{ is any }  \\
            \mbox{\ref{L1C0SModMetTopRelProjInjFlat}} (iii)
        \end{tabular} \\
    \hline
        $L_p(S,\mu)$ & 
        \begin{tabular}{@{}c@{}}
            $\mu$\mbox{ is purely atomic } \\ 
            \mbox{ with finitely many atoms } \\ 
            \mbox{\ref{LpC0SModMetTopRelProjIngFlat}} (iii)${}^{*}$
        \end{tabular} & 
        \begin{tabular}{@{}c@{}}
            $\mu$\mbox{ is purely atomic } \\ 
            \mbox{ with finitely many atoms } \\ 
            \mbox{\ref{LpC0SModMetTopRelProjIngFlat}} (iii)${}^{*}$
        \end{tabular} & 
        \begin{tabular}{@{}c@{}}
            $\mu$\mbox{ is purely atomic } \\ 
            \mbox{ with finitely many atoms } \\ 
            \mbox{\ref{LpC0SModMetTopRelProjIngFlat}} (iii)${}^{*}$
        \end{tabular} \\
    \hline
        $L_\infty(S,\mu)$ & 
        \begin{tabular}{@{}c@{}} 
            $\mu$ is normal, with \\
            pseudocompact support \\
            \mbox{\ref{LinftyC0SModMetTopRelProjIngFlat}} (i)${}^{*}$
        \end{tabular} & 
        \begin{tabular}{@{}c@{}}
            $\mu$\mbox{ is any } \\
            \mbox{\ref{LinftyC0SModMetTopRelProjIngFlat}} (ii)
        \end{tabular} & 
        \begin{tabular}{@{}c@{}} 
            {?}
        \end{tabular} \\
    \hline
        $M(S)$ & 
        \begin{tabular}{@{}c@{}}
            $S$\mbox{ is discrete } \\
            \mbox{\ref{MSC0SModMetTopRelProjIngFlat}}
        \end{tabular} & 
        \begin{tabular}{@{}c@{}}
            $S$\mbox{ is any } \\
            \mbox{\ref{MSC0SModMetTopRelProjIngFlat}}
        \end{tabular} & 
        \begin{tabular}{@{}c@{}}
            $S$\mbox{ is any } \\
            \mbox{\ref{MSC0SModMetTopRelProjIngFlat}}
        \end{tabular} \\
    \hline
        $C_0(S)$ & 
        \begin{tabular}{@{}c@{}}
            $S$\mbox{ is compact } \\
            \mbox{\ref{C0SC0SModMetTopRelProjInjFlat}} (i)
        \end{tabular} & 
        \begin{tabular}{@{}c@{}}
            $S$\mbox{ is Stonean } \\
            \mbox{\ref{C0SC0SModMetTopRelProjInjFlat}} (ii) 
        \end{tabular} & 
        \begin{tabular}{@{}c@{}}
            $S$\mbox{ is any } \\
            \mbox{\ref{C0SC0SModMetTopRelProjInjFlat}} (iii)
        \end{tabular} \\
    \hline
        $\mathbb{C}_s$ & 
        \begin{tabular}{@{}c@{}}
            $s$\mbox{ is an isolated point } \\
            \mbox{\ref{OneDimC0SModMetTopRelProjIngFlat}}
        \end{tabular} & 
        \begin{tabular}{@{}c@{}}
            $s$\mbox{ is any } \\
            \mbox{\ref{OneDimC0SModMetTopRelProjIngFlat}}
        \end{tabular} &
        \begin{tabular}{@{}c@{}}
            $s$\mbox{ is any } \\
            \mbox{\ref{OneDimC0SModMetTopRelProjIngFlat}}
        \end{tabular} \\
    \hline
    \multicolumn{4}{c}{
        \mbox{
            Homologically trivial $C_0(S)$-modules in topological theory
        }
    } \\
    \hline & 
        \mbox{Projectivity} & 
        \mbox{Injectivity} & 
        \mbox{Flatness} \\
    \hline
        $L_1(S,\mu)$ & 
        \begin{tabular}{@{}c@{}}
            $\mu$\mbox{ is purely atomic, all } \\ 
            \mbox{ atoms are isolated points } \\
            \mbox{\ref{L1C0SModMetTopRelProjInjFlat}}
        \end{tabular} & 
        \begin{tabular}{@{}c@{}}
            $\mu$\mbox{ is any }  \\
            \mbox{\ref{L1C0SModMetTopRelProjInjFlat}}
        \end{tabular} & 
        \begin{tabular}{@{}c@{}}
            $\mu$\mbox{ is any } \\
            \mbox{\ref{L1C0SModMetTopRelProjInjFlat}}
        \end{tabular} \\
    \hline
        $L_p(S,\mu)$ & 
        \begin{tabular}{@{}c@{}}
            $\mu$\mbox{ is purely atomic } \\ 
            \mbox{ with finitely many atoms } \\
            \mbox{\ref{LpC0SModMetTopRelProjIngFlat}} (ii)
        \end{tabular} & 
        \begin{tabular}{@{}c@{}}
            $\mu$\mbox{ is purely atomic } \\ 
            \mbox{ with finitely many atoms } \\
            \mbox{\ref{LpC0SModMetTopRelProjIngFlat}} (ii)
        \end{tabular} & 
        \begin{tabular}{@{}c@{}}
            $\mu$\mbox{ is purely atomic } \\ 
            \mbox{ with finitely many atoms } \\
            \mbox{\ref{LpC0SModMetTopRelProjIngFlat}} (ii)
        \end{tabular} \\
    \hline
        $L_\infty(S,\mu)$ & 
        \begin{tabular}{@{}c@{}} 
            $\mu$ is normal, with \\
            pseudocompact support \\
            \mbox{\ref{LinftyC0SModMetTopRelProjIngFlat}} (i)${}^{*}$
        \end{tabular} & 
        \begin{tabular}{@{}c@{}}
            $\mu$\mbox{ is any } \\
            \mbox{\ref{LinftyC0SModMetTopRelProjIngFlat}} (ii)
        \end{tabular} & 
        \begin{tabular}{@{}c@{}}
            {?}
        \end{tabular} \\
    \hline
        $M(S)$ & 
        \begin{tabular}{@{}c@{}}
            $S$\mbox{ is discrete } \\
            \mbox{\ref{MSC0SModMetTopRelProjIngFlat}} (i)
        \end{tabular} & 
        \begin{tabular}{@{}c@{}}
            $S$\mbox{ is any } \\
            \mbox{\ref{MSC0SModMetTopRelProjIngFlat}} (ii)
        \end{tabular} & 
        \begin{tabular}{@{}c@{}}
            $S$\mbox{ is any } \\
            \mbox{\ref{MSC0SModMetTopRelProjIngFlat}} (iii)
        \end{tabular} \\
    \hline
        $C_0(S)$ & 
        \begin{tabular}{@{}c@{}}
            $S$\mbox{ is compact } \\
            \mbox{\ref{C0SC0SModMetTopRelProjInjFlat}} (i)
        \end{tabular} & 
        \begin{tabular}{@{}c@{}} 
            $S=\beta(S\setminus \{s \})$ \\
            for any limit point $s$ \\
            \mbox{\ref{C0SC0SModMetTopRelProjInjFlat}} (ii)${}^{*}$
        \end{tabular} & 
        \begin{tabular}{@{}c@{}}
            $S$\mbox{ is any } \\
            \mbox{\ref{C0SC0SModMetTopRelProjInjFlat}} (iii)
        \end{tabular} \\
    \hline
        $\mathbb{C}_s$ & 
        \begin{tabular}{@{}c@{}}
            $s$\mbox{ is an isolated point } \\
            \mbox{\ref{OneDimC0SModMetTopRelProjIngFlat}}
        \end{tabular} & 
        \begin{tabular}{@{}c@{}}
            $s$\mbox{ is any } \\
            \mbox{\ref{OneDimC0SModMetTopRelProjIngFlat}}
        \end{tabular} & 
        \begin{tabular}{@{}c@{}}
            $s$\mbox{ is any } \\
            \mbox{\ref{OneDimC0SModMetTopRelProjIngFlat}}
        \end{tabular} \\
    \hline
    \multicolumn{4}{c}{
        \mbox{
            Homologically trivial $C_0(S)$-modules in relative theory
        }
    } \\
    \hline & 
        \mbox{Projectivity} & 
        \mbox{Injectivity} & 
        \mbox{Flatness} \\
    \hline
        $L_1(S,\mu)$ & 
        \begin{tabular}{@{}c@{}}
            $\mu$\mbox{ is purely atomic, all } \\ 
            \mbox{ atoms are isolated points } \\
            \mbox{\ref{L1C0SModMetTopRelProjInjFlat}}
        \end{tabular} & 
        \begin{tabular}{@{}c@{}}
            $\mu$\mbox{ is any }  \\
            \mbox{\ref{L1C0SModMetTopRelProjInjFlat}}
        \end{tabular} & 
        \begin{tabular}{@{}c@{}}
            $\mu$\mbox{ is any } \\
            \mbox{\ref{L1C0SModMetTopRelProjInjFlat}}
        \end{tabular} \\
    \hline
        $L_p(S,\mu)$ & 
        \begin{tabular}{@{}c@{}}
            $\mu$\mbox{ is purely atomic, all } \\ 
            \mbox{ atoms are isolated points } \\
            \mbox{\ref{LpC0SModMetTopRelProjIngFlat}} (i)
        \end{tabular} & 
        \begin{tabular}{@{}c@{}}
            $\mu$\mbox{ is any } \\
            \mbox{\ref{LpC0SModMetTopRelProjIngFlat}} (i)
        \end{tabular} & 
        \begin{tabular}{@{}c@{}}
            $\mu$\mbox{ is any } \\
            \mbox{\ref{LpC0SModMetTopRelProjIngFlat}} (i)
        \end{tabular} \\
    \hline
        $L_\infty(S,\mu)$ & 
        \begin{tabular}{@{}c@{}} 
            $\mu$ is normal, with \\
            pseudocompact support \\
            \mbox{\ref{LinftyC0SModMetTopRelProjIngFlat}} (i)${}^{*}$
        \end{tabular} & 
        \begin{tabular}{@{}c@{}}
            $\mu$\mbox{ is any } \\
            \mbox{\ref{LinftyC0SModMetTopRelProjIngFlat}} (ii)
        \end{tabular} & 
        \begin{tabular}{@{}c@{}}
            $\mu$\mbox{ is any } \\
            \mbox{\ref{LinftyC0SModMetTopRelProjIngFlat}} (iii)
        \end{tabular} \\
    \hline
        $M(S)$ & 
        \begin{tabular}{@{}c@{}}
            $S$\mbox{ is discrete } \\
            \mbox{\ref{MSC0SModMetTopRelProjIngFlat}} (i)
        \end{tabular} & 
        \begin{tabular}{@{}c@{}}
            $S$\mbox{ is any } \\
            \mbox{\ref{MSC0SModMetTopRelProjIngFlat}} (ii)
        \end{tabular} & 
        \begin{tabular}{@{}c@{}}
            $S$\mbox{ is any } \\
            \mbox{\ref{MSC0SModMetTopRelProjIngFlat}} (iii)
        \end{tabular} \\
    \hline
        $C_0(S)$ & 
        \begin{tabular}{@{}c@{}}
            $S$\mbox{ is paracompact } \\
            \mbox{\ref{C0SC0SModMetTopRelProjInjFlat}} (i)
        \end{tabular} & 
        \begin{tabular}{@{}c@{}} 
            $S=\beta(S\setminus \{s \})$ \\
            for any limit point $s$ \\
            \mbox{\ref{C0SC0SModMetTopRelProjInjFlat}} (ii)${}^{*}$
        \end{tabular} & 
        \begin{tabular}{@{}c@{}}
            $S$\mbox{ is any } \\
            \mbox{\ref{C0SC0SModMetTopRelProjInjFlat}} (iii)
        \end{tabular} \\
    \hline  
        $\mathbb{C}_s$ & 
        \begin{tabular}{@{}c@{}}
            $s$\mbox{ is an isolated point } \\
            \mbox{\ref{OneDimC0SModMetTopRelProjIngFlat}}
        \end{tabular} & 
        \begin{tabular}{@{}c@{}}
            $s$\mbox{ is any } \\
            \mbox{\ref{OneDimC0SModMetTopRelProjIngFlat}}
        \end{tabular} & 
        \begin{tabular}{@{}c@{}}
            $s$\mbox{ is any } \\
            \mbox{\ref{OneDimC0SModMetTopRelProjIngFlat}}
        \end{tabular} \\
    \hline
    \end{longtable}
\end{scriptsize}


\section{
    Applications to modules of harmonic analysis
}\label{SectionApplicationsToModulesOfHarmonicAnalysis}


\subsection{
    Preliminaries on harmonic analysis
}\label{SectionPreliminariesOnHarmonicAnalysis} 

Let $G$ be a locally compact group. Its identity we shall denote by $e_G$. By
well known Haar's theorem [\cite{HewRossAbstrHarmAnalVol1},section 15.8] there
exists a unique up to positive constant Borel regular measure $m_G$ which is
finite on all compact sets, positive on all open sets and left translation
invariant, that is $m_G(sE)=m_G(E)$ for all $s\in G$ and $E\in Bor(G)$. It is
called the left Haar measure of group $G$. If $G$ is compact we assume
$m_G(G)=1$. If $G$ is infinite and discrete we choose $m_G$ as counting measure.
For each $s\in G$ the map $m:Bor(G)\to[0,+\infty]:E\mapsto m_G(Es)$ is also a
left Haar measure, so from uniqueness we infer that $m(E)=\Delta_G(s)m_G(E)$ for
some $\Delta_G(s)>0$. The function $\Delta_G:G\to(0,+\infty)$ is called the
modular function of the group $G$. It is clear that
$\Delta_G(st)=\Delta_G(s)\Delta_G(t)$ for all $s,t\in G$. Groups with modular
function equal to one are called unimodular. Examples of groups with unimodular
function include compact groups, commutative groups and discrete groups. In what
follows we use the notation $L_p(G)$ instead of $L_p(G,m_G)$ 
for $1\leq p\leq+\infty$. For a fixed $s\in G$ we define the left shift operator
$L_s:L_1(G)\to L_1(G):f\mapsto(t\mapsto f(s^{-1}t))$ and the right shift
operator $R_s:L_1(G)\to L_1(G):f\mapsto (t\mapsto f(ts))$. 

Group structure of $G$ allows us to introduce the Banach algebra structure on
$L_1(G)$. For a given $f,g\in L_1(G)$ we define their convolution as
$$
(f\convol g)(s)=\int_G f(t)g(t^{-1}s)dm_G(t)=\int_G f(st)g(t^{-1})dm_G(t)
$$
$$=\int_G f(st^{-1})g(t)\Delta_G(t^{-1})dm_G(t)
$$
for almost all $s\in G$. In this case $L_1(G)$ endowed with convolution 
product becomes a Banach algebra. The Banach algebra $L_1(G)$ has a 
contractive two-sided approximate identity consisting of positive compactly 
supported continuous functions. The algebra $L_1(G)$ is unital iff $G$ is 
discrete, and in this case $\delta_{e_G}$ is the identity of $L_1(G)$. The 
group structure of $G$ allows us to turn the Banach space of complex finite 
Borel regular measures $M(G)$ into the Banach algebra too. We define 
convolution of two measures $\mu,\nu\in M(G)$ as
$$
(\mu\convol \nu)(E)=\int_G\nu(s^{-1}E)d\mu(s)=\int_G\mu(Es^{-1})d\nu(s)
$$
for all $E\in Bor(G)$. The Banach space $M(G)$ along with this convolution is 
a unital Banach algebra. The role of identity is played by Dirac delta 
measure $\delta_{e_G}$ supported on $e_G$. In fact $M(G)$ is a coproduct 
in $L_1(G)-\mathbf{mod}_1$ (but not in $M(G)-\mathbf{mod}_1$) of two-sided 
ideal $M_a(G)$ of measures absolutely continuous with respect to $m_G$ and 
subalgebra $M_s(G)$ of measures singular with respect to $m_G$. Note 
that $M_a(G)\isom{M(G)-\mathbf{mod}_1}L_1(G)$ and $M_s(G)$ is an 
annihilator $L_1(G)$-module. Finally, $M(G)=M_a(G)$ iff $G$ is discrete. 

Now we proceed to the discussion of standard left and right modules 
over $L_1(G)$ and $M(G)$. Since $L_1(G)$ can be regarded as two-sided ideal 
of $M(G)$ because of isometric left and 
right $M(G)$-morphism $i:L_1(G)\to M(G):f\mapsto f m_G$ it is enough to define 
module structure over $M(G)$. For $1\leq p<+\infty$ and 
any $f\in L_p(G)$, $\mu\in M(G)$ we define
$$
(\mu\convol_p f)(s)=\int_G f(t^{-1}s)d\mu(t), \qquad\qquad (f\convol_p
\mu)(s)=\int_G f(st^{-1}){\Delta_G(t^{-1})}^{1/p}d\mu(t)
$$
These module actions turn any Banach space $L_p(G)$ for $1\leq p<+\infty$ into 
the left and right $M(G)$-module. Note that for $p=1$ and $\mu\in M_a(G)$ we 
get the usual definition of convolution. For $1<p\leq +\infty$ and 
any $f\in L_p(G)$, $\mu\in M(G)$ we define module actions
$$
(\mu\cdot_p f)(s)=\int_G {\Delta_G(t)}^{1/p}f(st)d\mu(t), \qquad\qquad (f\cdot_p
\mu)(s)=\int_G f(ts)d\mu(t)
$$
These module actions turn any Banach space $L_p(G)$ for $1<p\leq+\infty$ into 
the left and right $M(G)$-module too. This special choice of module structure 
nicely interacts with duality. Indeed we have 
and ${(L_p(G),\convol_p)}^*\isom{\mathbf{mod}_1-M(G)}(L_{p^*}(G),\cdot_{p^*})$ 
for all $1\leq p<+\infty$. Finally, the Banach space $C_0(G)$ also becomes left 
and right $M(G)$-module when endowed with $\cdot_\infty$ in the role of module 
action. Even more, $C_0(G)$ is a closed left and right $M(G)$-submodule 
of $L_\infty(G)$ 
and ${(C_0(G),\cdot_\infty)}^*\isom{M(G)-\mathbf{mod}_1}(M(G),\convol)$.

A character on a locally compact group $G$ is by definition a continuous 
homomorphism from $G$ to $\mathbb{T}$. The set of characters on $G$ forms a 
group denoted by $\widehat{G}$. It becomes a locally compact group when 
considered with compact open topology. Any character $\gamma\in\widehat{G}$ 
gives rise to the continuous character 
$\varkappa_\gamma^L
:L_1(G)\to\mathbb{C}
:f\mapsto \int_G f(s)\overline{\gamma(s)}dm_G(s)$ on $L_1(G)$. In fact all 
characters of $L_1(G)$ arise this way. This result is due to 
Gelfand [\cite{KaniBanAlg}, theorems 2.7.2, 2.7.5]. Similarly, for 
each $\gamma\in\widehat{G}$ we have a character on $M(G)$ defined by 
$\varkappa_\gamma^M
:M(G)\to\mathbb{C}
:\mu\mapsto\int_{G} \overline{\gamma(s)}d\mu(s)$. By $\mathbb{C}_\gamma$ we 
denote the respective augmentation left and right $L_1(G)$- or $M(G)$-module. 
Their module actions are defined by
$$
f\cdot_{\gamma}z=z\cdot_{\gamma}f=\varkappa_\gamma^L(f)z \qquad\qquad
\mu\cdot_{\gamma}z=z\cdot_{\gamma}\mu=\varkappa_\gamma^M(\mu)z
$$
for all $f\in L_1(G)$, $\mu\in M(G)$ and $z\in\mathbb{C}$. 

One of the numerous definitions of amenable group says, that a locally compact 
group $G$ is amenable if there exists an $L_1(G)$-morphism of right 
modules $M:L_\infty(G)\to\mathbb{C}_{e_{\widehat{G}}}$ such 
that $M(\chi_G)=1$ [\cite{HelBanLocConvAlg}, section VII.2.5]. We can even 
assume that $M$ is contractive [\cite{HelBanLocConvAlg}, remark 7.1.54].

Most of results of this section that not supported with references are 
presented in a full detail in [\cite{DalBanAlgAutCont}, section 3.3].


\subsection{
    \texorpdfstring{$L_1(G)$}{L1(G)}-modules
}\label{SubSectionL1GModules}

Metric homological properties of most of the standard $L_1(G)$-modules of 
harmonic analysis are studied in~\cite{GravInjProjBanMod}. We borrow these 
ideas to unify approaches to metrical and topological homological properties 
of modules over group algebras.

\begin{proposition}\label{LInfIsL1MetrInj} Let $G$ be a locally compact group. 
Then $L_1(G)$ is metrically and topologically flat $L_1(G)$-module, 
i.e. $L_1(G)$-module $L_\infty(G)$ is metrically and topologically injective.
\end{proposition}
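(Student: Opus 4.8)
The plan is to reduce everything to the general criterion for flatness of ideals that was established earlier, namely Proposition~\ref{MetTopFlatIdealsInUnitalAlg}, and then pass to the dual via the flatness--injectivity duality of Proposition~\ref{MetCTopFlatCharac}. The starting observation, recorded in the preliminaries of this section, is that $L_1(G)$ carries a contractive two-sided approximate identity (one may take positive, compactly supported, continuous functions). In particular $L_1(G)$ has a right contractive approximate identity.

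First I would regard $L_1(G)$ as a module over itself in the obvious way, the module action being convolution, and view it as a left ideal of its conditional unitization ${L_1(G)}_\times$; this is legitimate since any Banach algebra is a (left) ideal of its unitization, and when $G$ is discrete we simply have ${L_1(G)}_\times = L_1(G)$ with identity $\delta_{e_G}$ of norm one. Since $L_1(G)$ has a right contractive approximate identity, Proposition~\ref{MetTopFlatIdealsInUnitalAlg}, applied with $c=1$, yields at once that $L_1(G)$ is metrically flat (and hence, being $1$-bounded, $1$-topologically flat) as an $L_1(G)$-module. Alternatively, metric flatness already gives $1$-topological flatness by Proposition~\ref{MetFlatIsOneTopFlat}, so both halves of the statement about flatness are secured simultaneously.

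Next I would invoke Proposition~\ref{MetCTopFlatCharac}: a module $F$ is metrically, respectively $C$-topologically, flat precisely when $F^*$ is metrically, respectively $C$-topologically, injective. Taking $F = L_1(G)$, the metric and $1$-topological flatness just obtained translate directly into metric and topological injectivity of the dual module ${L_1(G)}^*$.

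The one point genuinely requiring a word is the module identification ${L_1(G)}^* \isom{\mathbf{mod}_1-L_1(G)} L_\infty(G)$: the Banach space isomorphism ${L_1(G)}^* \isom{\mathbf{Ban}_1} L_\infty(G)$ is classical, and what must be checked is that it respects the right $L_1(G)$-module structures, where $L_\infty(G)$ is equipped with the action $\cdot_\infty$ described in the preliminaries and ${L_1(G)}^*$ carries its canonical dual-module action. This is the isomorphism ${(L_1(G),\convol_1)}^* \isom{\mathbf{mod}_1-M(G)} (L_\infty(G),\cdot_\infty)$ noted for $M(G)$-modules, restricted to $L_1(G)$. Granting this identification, which is routine to verify on elementary functions and then extend by density, the metric and topological injectivity of $L_\infty(G)$ follows. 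I do not anticipate a real obstacle here; the whole argument is essentially an instance of the ideal-flatness criterion combined with duality, and the only mildly delicate step is confirming that the standard Riesz-type identification is an isomorphism of modules rather than merely of Banach spaces.
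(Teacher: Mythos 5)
Your proof is correct and is essentially the paper's own argument: the paper likewise deduces metric and topological flatness of $L_1(G)$ from its contractive approximate identity via Proposition~\ref{MetTopFlatIdealsInUnitalAlg}, and then obtains injectivity of $L_\infty(G)\isom{\mathbf{mod}_1-L_1(G)}{L_1(G)}^*$ from the duality of Proposition~\ref{MetCTopFlatCharac}. The details you spell out --- viewing $L_1(G)$ as a left ideal of ${L_1(G)}_\times$ and verifying that the Riesz-type identification ${(L_1(G),\convol)}^*\cong(L_\infty(G),\cdot_\infty)$ is an isomorphism of right modules, not merely of Banach spaces --- are precisely the steps the paper leaves implicit.
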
 
\begin{proof} Since $L_1(G)$ has contractive approximate identity, 
then $L_1(G)$ is metrically and topologically flat $L_1(G)$-module 
by proposition~\ref{MetTopFlatIdealsInUnitalAlg}. 
Since $L_\infty(G)\isom{\mathbf{mod}_1-L_1(G)}{L_1(G)}^*$, then by 
proposition~\ref{MetCTopFlatCharac} it is metrically and topologically injective.
\end{proof}

\begin{proposition}\label{OneDimL1ModMetTopProjCharac} Let $G$ be a locally 
compact group, and $\gamma\in\widehat{G}$. Then the following are equivalent:

\begin{enumerate}[label = (\roman*)]
    \item $G$ is compact;

    \item $\mathbb{C}_\gamma$ is metrically projective $L_1(G)$-module;

    \item $\mathbb{C}_\gamma$ is topologically projective $L_1(G)$-module.
\end{enumerate}
\end{proposition}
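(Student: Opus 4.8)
The plan is to realise $\mathbb{C}_\gamma$ as a cyclic module and feed it into the criterion of Proposition~\ref{MetTopProjCycModCharac}. Write $A=L_1(G)$. The character $\varkappa_\gamma^L$ extends to the unital character $\tilde\varkappa$ on $A_\times$ given by $\tilde\varkappa(a\oplus_1 z)=\varkappa_\gamma^L(a)+z$ (in the discrete case $A_\times=A$ and $\tilde\varkappa=\varkappa_\gamma^L$), and its kernel $I_\gamma$ is a left ideal of codimension one. Testing the action shows that $\bar\varkappa$ induces an isomorphism $\mathbb{C}_\gamma\isom{A-\mathbf{mod}_1}A_\times/I_\gamma$ of $A$-modules. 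Since $\tilde\varkappa$ is a character it has norm one, and as $\tilde\varkappa(z\,e_{A_\times})=z$ for all $z\in\mathbb{C}$ it maps $B_{A_\times}$ onto the closed unit disc; hence the quotient map $\pi:A_\times\to A_\times/I_\gamma$ is strictly coisometric (in particular strictly $1$-topologically surjective), which is exactly the standing hypothesis of Proposition~\ref{MetTopProjCycModCharac}.

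First I would prove $(i)\implies(ii)$. When $G$ is compact with $m_G(G)=1$, the continuous function $e_\gamma:=\gamma$ lies in $L_1(G)$ with $\|e_\gamma\|_1=1$, and a direct computation using $\gamma(t^{-1}s)=\overline{\gamma(t)}\gamma(s)$ gives $e_\gamma\convol e_\gamma=e_\gamma$ and $a\convol e_\gamma=\varkappa_\gamma^L(a)\,e_\gamma$ for all $a\in A$. Thus $e_\gamma$ is a norm-one idempotent with $A_\times e_\gamma=\mathbb{C}e_\gamma$, so $p:=e_{A_\times}-e_\gamma$ is an idempotent of $I_\gamma$ with $I_\gamma=A_\times p$ (both sides have codimension one) and $\|e_{A_\times}-p\|=\|e_\gamma\|=1$. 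Proposition~\ref{MetTopProjCycModCharac}(ii) then yields that $\mathbb{C}_\gamma$ is metrically projective. The implication $(ii)\implies(iii)$ is immediate from Proposition~\ref{MetProjIsTopProjAndTopProjIsRelProj}.

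The substantive direction is $(iii)\implies(i)$. Assuming $\mathbb{C}_\gamma$ is $C$-topologically projective, Proposition~\ref{MetTopProjCycModCharac}(i) produces an idempotent $p\in I_\gamma$ with $I_\gamma=Ap$ and $\|e_{A_\times}-p\|\le C$. Set $q:=e_{A_\times}-p$; then $q$ is idempotent, $\tilde\varkappa(q)=1$, and the Peirce decomposition $A_\times=A_\times p\oplus A_\times q$ together with $\operatorname{codim}I_\gamma=1$ forces $A_\times q=\mathbb{C}q$, whence $uq=\tilde\varkappa(u)q$ for all $u\in A_\times$. Writing $q=b\oplus_1 w$ and testing against $a\oplus_1 0$ forces $w=0$, so $q\in L_1(G)$ is a nonzero idempotent ($\tilde\varkappa(q)=1$) satisfying $a\convol q=\varkappa_\gamma^L(a)\,q$ for all $a\in A$, with $\|q\|_1\le C$.

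The hard part will be deducing compactness of $G$ from the existence of such a $q$. The idea is to upgrade the scalar relation to a translation relation: approximating $\delta_t$ by positive normalised functions concentrating at $t$ and using continuity of translation in $L_1(G)$ and continuity of $\gamma$, the identity $a\convol q=\varkappa_\gamma^L(a)q$ passes to $L_t q=\overline{\gamma(t)}\,q$ for every $t\in G$. Then the function $h:=q\,\overline{\gamma}\in L_1(G)$ is invariant under all left translations, and a Fubini argument identifies $q$ almost everywhere with a scalar multiple of $\gamma$; since $q\neq 0$ this gives $\gamma\in L_1(G)$. As $|\gamma|\equiv 1$, integrability of $\gamma$ means $m_G(G)=\int_G|\gamma|\,dm_G<+\infty$, which for a locally compact group holds exactly when $G$ is compact, closing the cycle. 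I expect the delicate point to be the passage from the pointwise-a.e.\ relation $q(ts)=\gamma(t)q(s)$ to the clean conclusion $q=c\gamma$ in $L_1(G)$, which requires the Fubini/almost-everywhere bookkeeping rather than any new idea; the extraction of the idempotent itself is routine given Proposition~\ref{MetTopProjCycModCharac}.
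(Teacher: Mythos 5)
Your proof is correct, but it routes both substantive implications through the cyclic-module criterion (Proposition~\ref{MetTopProjCycModCharac} applied to $I_\gamma=\ker\tilde\varkappa$), which the paper's own proof never invokes. For $(i)\Rightarrow(ii)$ the paper simply exhibits $\mathbb{C}_\gamma$ as a contractive retract of ${L_1(G)}_+$ via $\sigma^+(z)=z\gamma\oplus_1 0$ and $\pi^+(f\oplus_1 w)=f\cdot_\gamma 1+w$, then cites Propositions~\ref{UnitalAlgIsMetTopProj} and~\ref{RetrMetCTopProjIsMetCTopProj}; this uses the same key object ($\gamma$ as a norm-one idempotent of $L_1(G)$) but skips your verification that the quotient map $A_\times\to A_\times/I_\gamma$ is strictly coisometric. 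For $(iii)\Rightarrow(i)$ the paper solves the lifting problem directly: the morphism $\pi:L_1(G)\to\mathbb{C}_\gamma$, $f\mapsto f\cdot_\gamma 1$, is strictly coisometric, topological projectivity yields a right inverse morphism $\sigma$, and $f=\sigma(1)$ automatically satisfies $a\convol f=\varkappa_\gamma^L(a)f$ because $\sigma$ is a module map; your idempotent $q$ is obtained by a longer detour (Peirce decomposition of $A_\times$, killing the scalar coordinate $w$) but satisfies exactly the same eigen-relation, and from that point the two arguments coincide literally: upgrade to $L_tf=\overline{\gamma(t)}\,f$ by an approximate identity, observe that $f\overline{\gamma}$ (the paper's $g(t)=\gamma(t^{-1})f(t)$) is left-translation invariant, hence a.e.\ equal to a nonzero constant, which forces $m_G(G)<+\infty$ and thus compactness. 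What your route buys: the lift is realized by an idempotent with the explicit norm bound $\Vert q\Vert_1\leq C$, and the argument sits uniformly inside the cyclic-module framework --- indeed it is close in spirit to how the paper treats the ideals $L_1(G)\convol\mu$ in Proposition~\ref{CommIdealByIdemMeasL1MetTopProjCharac}. What the paper's route buys: it is shorter, avoids all unitization bookkeeping, and needs only the strict coisometry of a map defined on $L_1(G)$ itself rather than on $A_\times$.
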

\begin{proof} $(i)\implies (ii)$ Consider $L_1(G)$-morphisms 
$\sigma^+:\mathbb{C}_\gamma\to {L_1(G)}_+:z\mapsto z\gamma \oplus_1 0$ 
and $\pi^+:{L_1(G)}_+\to\mathbb{C}_\gamma: f\oplus_1 w\to f\cdot_{\gamma}1+w$. 
One can easily check 
that $\Vert\pi^+\Vert=\Vert\sigma^+\Vert=1$ 
and $\pi^+\sigma^+=1_{\mathbb{C}_\gamma}$. Therefore $\mathbb{C}_\gamma$ is a 
retract of ${L_1(G)}_+$ in $L_1(G)-\mathbf{mod}_1$. From 
propositions~\ref{UnitalAlgIsMetTopProj} 
and~\ref{RetrMetCTopProjIsMetCTopProj} it follows 
that $\mathbb{C}_\gamma$ is metrically projective.

$(ii)\implies (iii)$ See
proposition~\ref{MetProjIsTopProjAndTopProjIsRelProj}.

$(iii)\implies (i)$ Consider $L_1(G)$-morphism
$\pi:L_1(G)\to\mathbb{C}_\gamma:f\mapsto f\cdot_{\gamma} 1$. It is easy to see
that $\pi$ is strictly coisometric. Since $\mathbb{C}_\gamma$ is topologically
projective, then there exists an $L_1(G)$-morphism $\sigma:\mathbb{C}_\gamma\to
L_1(G)$ such that $\pi\sigma=1_{\mathbb{C}_\gamma}$. Let $f=\sigma(1)\in L_1(G)$
and ${(e_\nu)}_{\nu\in N}$ be a standard approximate identity of $L_1(G)$. Since
$\sigma$ is an $L_1(G)$-morphism, then for all $s,t\in G$ we have 
$$
f(s^{-1}t)
=L_s(f)(t)
=\lim_\nu L_s(e_\nu\convol \sigma(1))(t)
=\lim_\nu((\delta_s\convol e_\nu)\convol \sigma(1))(t)
=\lim_\nu\sigma((\delta_s\convol e_\nu)\cdot_{\gamma} 1)(t)
$$
$$
=\lim_\nu\sigma(\varkappa_\gamma^L(\delta_s\convol e_\nu))(t)
=\lim_\nu\varkappa_\gamma^L(\delta_s\convol e_\nu)\sigma(1)(t)
=\lim_\nu(e_\nu\convol\gamma)(s^{-1})f(t)
=\gamma(s^{-1})f(t).
$$
Therefore, for the function $g(t):=\gamma(t^{-1})f(t)$ in $L_1(G)$ we have
$g(st)=g(t)$ for all $s,t\in G$. Thus $g$ is a constant function in $L_1(G)$,
which is possible only for compact group $G$.
\end{proof}

\begin{proposition}\label{OneDimL1ModMetTopInjFlatCharac} Let $G$ be a locally
compact group, and $\gamma\in\widehat{G}$. Then the following are equivalent:

\begin{enumerate}[label = (\roman*)]
    \item $G$ is amenable;

    \item $\mathbb{C}_\gamma$ is metrically injective $L_1(G)$-module;

    \item $\mathbb{C}_\gamma$ is topologically injective $L_1(G)$-module.

    \item $\mathbb{C}_\gamma$ is metrically flat $L_1(G)$-module;

    \item $\mathbb{C}_\gamma$ is topologically flat $L_1(G)$-module.
\end{enumerate}
\end{proposition}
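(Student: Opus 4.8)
The plan is to reduce everything to a single injectivity statement and then identify amenability with the splitting of one explicit morphism. First I would record that $\mathbb{C}_\gamma$ is self-dual: being one-dimensional, $\mathbb{C}_\gamma^*$ is again one-dimensional, and a direct check of the dual action shows $\mathbb{C}_\gamma^*\isom{\mathbf{mod}_1-L_1(G)}\mathbb{C}_\gamma$ (the character acts by the same scalar whether on the left or on the right). Combined with proposition~\ref{MetCTopFlatCharac} this immediately gives $(ii)\iff(iv)$ and $(iii)\iff(v)$, so the whole proposition collapses to the chain $(i)\Rightarrow(ii)\Rightarrow(iii)\Rightarrow(i)$. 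The implication $(ii)\Rightarrow(iii)$ is just proposition~\ref{MetInjIsTopInjAndTopInjIsRelInj}, so only $(i)\Rightarrow(ii)$ and $(iii)\Rightarrow(i)$ remain.

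The geometric heart is a single morphism. I would introduce the isometric right $L_1(G)$-module morphism
$$
j_\gamma:\mathbb{C}_\gamma\to L_\infty(G):z\mapsto z\overline{\gamma},
$$
where $L_\infty(G)$ carries the action $\cdot_\infty$, so that $L_\infty(G)\isom{\mathbf{mod}_1-L_1(G)}{L_1(G)}^*$. That $j_\gamma$ is a morphism rests on the computation $(\overline{\gamma}\cdot_\infty g)(s)=\overline{\gamma}(s)\varkappa_\gamma^L(g)$, valid because $\overline{\gamma}(ts)=\overline{\gamma}(t)\overline{\gamma}(s)$. The point of $j_\gamma$ is that a left inverse to it is, after a harmless $\gamma$-twist, exactly an invariant mean: given a right-module morphism $M_\gamma:L_\infty(G)\to\mathbb{C}_\gamma$ with $M_\gamma j_\gamma=1_{\mathbb{C}_\gamma}$, the functional $f\mapsto M_\gamma(\overline{\gamma}f)$ is a right $\mathbb{C}_{e_{\widehat{G}}}$-module morphism sending $\chi_G$ to $1$, and conversely $f\mapsto M(\gamma f)$ converts a mean $M$ back into a left inverse of $j_\gamma$. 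The verification of this correspondence uses the identity $\chi(f\cdot_\infty g)=(\chi f)\cdot_\infty(\overline{\chi}g)$ for a character $\chi$, once more a consequence of multiplicativity of $\chi$. Recall also that $L_\infty(G)$ is metrically and topologically injective by proposition~\ref{LInfIsL1MetrInj}.

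With $j_\gamma$ in hand the two remaining implications are short. For $(i)\Rightarrow(ii)$ I would take a contractive invariant mean $M$ (available from the definition of amenability together with the remark that $M$ may be taken contractive), twist it to the contractive $M_\gamma\colon f\mapsto M(\gamma f)$, and conclude $M_\gamma j_\gamma=1_{\mathbb{C}_\gamma}$; thus $\mathbb{C}_\gamma$ is a $1$-retract of the metrically injective module $L_\infty(G)$, hence is metrically injective by proposition~\ref{RetrMetCTopInjIsMetCTopInj}. For $(iii)\Rightarrow(i)$ I would use that $j_\gamma$ is isometric, hence a $1$-topologically injective monomorphism of right modules; topological injectivity of $\mathbb{C}_\gamma$ then extends $1_{\mathbb{C}_\gamma}$ along $j_\gamma$ to a bounded $M_\gamma$, and untwisting yields a bounded right-module morphism $M$ with $M(\chi_G)=1$, so $G$ is amenable (the paper's definition of amenability requires only a bounded, not a contractive, mean).

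The main obstacle is the bookkeeping around the character $\gamma$: getting the conjugates and the left/right conventions right in the two identities $(\overline{\gamma}\cdot_\infty g)(s)=\overline{\gamma}(s)\varkappa_\gamma^L(g)$ and $\chi(f\cdot_\infty g)=(\chi f)\cdot_\infty(\overline{\chi}g)$, on which both the morphism property of $j_\gamma$ and the mean-twisting correspondence depend. An alternative that sidesteps this is to first observe that $f\mapsto f\overline{\gamma}$ is an isometric automorphism of $L_1(G)$ carrying $\mathbb{C}_\gamma$ to $\mathbb{C}_{e_{\widehat{G}}}$ and preserving metric and topological injectivity and flatness (it is an isometric self-equivalence of the module category, so it respects isometric and strictly coisometric morphisms and the projective module tensor product). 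This reduces the whole statement to $\gamma=e_{\widehat{G}}$, where the morphism is literally $z\mapsto z\chi_G$ and the invariant mean appears with no twist at all.
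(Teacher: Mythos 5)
Your proposal is correct and follows essentially the same route as the paper: the same isometric morphism $z\mapsto z\overline{\gamma}$ into $L_\infty(G)$, the same $\gamma$-twisting correspondence between left inverses and invariant means, the same reduction of $(iv)$ and $(v)$ via $\mathbb{C}_\gamma^*\isom{\mathbf{mod}_1-L_1(G)}\mathbb{C}_\gamma$ and proposition~\ref{MetCTopFlatCharac}, and the same cycle $(i)\Rightarrow(ii)\Rightarrow(iii)\Rightarrow(i)$ using propositions~\ref{LInfIsL1MetrInj}, \ref{RetrMetCTopInjIsMetCTopInj} and~\ref{MetInjIsTopInjAndTopInjIsRelInj}. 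Your closing remark that one may instead reduce to $\gamma=e_{\widehat{G}}$ via the isometric algebra automorphism $f\mapsto f\overline{\gamma}$ is a valid simplification the paper does not make, but it is cosmetic rather than a different proof.
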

\begin{proof} $(i)\implies (ii)$ Since $G$ is amenable, then we have
contractive $L_1(G)$-morphism $M:L_\infty(G)\to\mathbb{C}_{e_{\widehat{G}}}$
with $M(\chi_G)=1$. Consider linear 
operators $\rho:\mathbb{C}_\gamma\to L_\infty(G):z\mapsto z\overline{\gamma}$ 
and $\tau:L_\infty(G)\to\mathbb{C}_\gamma:f\mapsto M(f\gamma)$. These are
$L_1(G)$-morphisms of right $L_1(G)$-modules. We shall check this for operator
$\tau$: for all $f\in L_\infty(G)$ and $g\in L_1(G)$ we have
$$
\tau(f\cdot_\infty g)
=M((f\cdot_\infty g)\gamma)
=M(f\gamma\cdot_\infty g\overline{\gamma})
=M(f\gamma)\cdot_{e_{\widehat{G}}} g\overline{\gamma}
=M(f\gamma)\varkappa_\gamma^L(g)
=\tau(f)\cdot_{\gamma} g.
$$  
It is easy to check that $\rho$ and $\tau$ are contractive and
$\tau\rho=1_{\mathbb{C}_\gamma}$. Therefore $\mathbb{C}_\gamma$ is a retract of
$L_\infty(G)$ in $\mathbf{mod}_1-L_1(G)$. From
propositions~\ref{LInfIsL1MetrInj} and~\ref{RetrMetCTopInjIsMetCTopInj} it 
follows that $\mathbb{C}_\gamma$ is metrically injective as $L_1(G)$-module.

$(ii)\implies (iii)$ See proposition~\ref{MetInjIsTopInjAndTopInjIsRelInj}.

$(iii) \implies (i)$ Since $\rho$ is an isometric $L_1(G)$-morphism of right
$L_1(G)$-modules and $\mathbb{C}_\gamma$ is topologically injective as
$L_1(G)$-module, then $\rho$ is a coretraction in $\mathbf{mod}-L_1(G)$. Denote
its left inverse morphism by $\pi$, then
$\pi(\overline{\gamma})=\pi(\rho(1))=1$. Consider bounded linear functional
$M:L_\infty(G)\to\mathbb{C}_\gamma:f\mapsto \pi(f\overline{\gamma})$. For all
$f\in L_\infty(G)$ and $g\in L_1(G)$ we have
$$
M(f\cdot_\infty g)
=\pi((f\cdot_\infty g)\overline{\gamma})
=\pi(f\overline{\gamma}\cdot_\infty g\gamma)
=\pi(f\overline{\gamma})\cdot_{\gamma} g\gamma
=M(f)\varkappa_\gamma^L(g\gamma)
=M(f)\cdot_{e_{\widehat{G}}}g.
$$
Therefore $M$ is an $L_1(G)$-morphism, but we also have
$M(\chi_G)=\pi(\overline{\gamma})=1$. Therefore $G$ is amenable.

$(ii) \Longleftrightarrow (iv)$, $(iii) \Longleftrightarrow (v)$ Note that
$\mathbb{C}_\gamma^*\isom{\mathbf{mod}_1-L_1(G)}\mathbb{C}_\gamma$, so all
equivalences  follow from three previous paragraphs and
proposition~\ref{MetCTopFlatCharac}.
\end{proof}

In the next proposition we shall study specific ideals of Banach algebra
$L_1(G)$. They are of the form $L_1(G)\convol\mu$ for some idempotent measure
$\mu$. In fact, this class of ideals in case of commutative compact groups $G$
coincides with those left ideals of $L_1(G)$ that admit a right bounded
approximate identity.

\begin{proposition}\label{CommIdealByIdemMeasL1MetTopProjCharac} Let $G$ be a
locally compact group and  $\mu\in M(G)$ be an idempotent measure, that is
$\mu\convol\mu=\mu$. If the left ideal $I=L_1(G)\convol\mu$ of Banach algebra
$L_1(G)$ is topologically projective $L_1(G)$-module, then $\mu=p m_G$, for some
$p\in I$.
\end{proposition}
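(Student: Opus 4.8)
The plan is to show that topological projectivity forces $I$ to carry a right identity sitting inside $L_1(G)$, and then to read off absolute continuity of $\mu$ from the ideal structure of $M(G)$. First I would record that $I$ really is a closed left ideal: idempotence $\mu\convol\mu=\mu$ makes $P:L_1(G)\to L_1(G)$, $P(f)=f\convol\mu$, a bounded idempotent operator, so $I=\operatorname{Im}(P)$ is closed, and it is a subalgebra since $g\convol(f\convol\mu)=(g\convol f)\convol\mu\in I$. Next I would check that $I$ carries a $\Vert\mu\Vert$-bounded right approximate identity: if ${(e_\alpha)}_\alpha$ is the standard contractive approximate identity of $L_1(G)$, then every $x\in I$ satisfies $x\convol\mu=x$, whence $x\convol(e_\alpha\convol\mu)=(x\convol e_\alpha)\convol\mu\to x\convol\mu=x$; the same computation with $e_\alpha$ acting on the left shows that $I$ is essential as a left $L_1(G)$-module.

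The heart of the argument is to verify the factoring hypothesis of Lemma~\ref{GoodIdealMetTopProjIsUnital}$(ii)$, and I expect this to be the main obstacle. I would prove that every left $L_1(G)$-module morphism $\phi:I\to L_1(G)$ is right convolution by a measure. The trick is that $\widetilde{\phi}:L_1(G)\to L_1(G)$, $g\mapsto\phi(g\convol\mu)$, is a bounded operator commuting with left convolution, so Wendel's classical description of the multipliers of $L_1(G)$ yields $\nu_\phi\in M(G)$ with $\widetilde{\phi}(g)=g\convol\nu_\phi$ and $\Vert\nu_\phi\Vert\leq\Vert\mu\Vert\,\Vert\phi\Vert$. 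Because $x\convol\mu=x$ on $I$, this gives $\phi(x)=\phi(x\convol\mu)=\widetilde{\phi}(x)=x\convol\nu_\phi$ for all $x\in I$; the delicate point is precisely that the reduction $\phi\rightsquigarrow\widetilde{\phi}$ recovers $\phi$ on all of $I$, which is where $x\convol\mu=x$ is essential. Setting $\psi(y)=\nu_\phi\convol y$ then produces a right $I$-module morphism $\psi:I\to I$ (note $\nu_\phi\convol(h\convol\mu)=(\nu_\phi\convol h)\convol\mu\in I$) with $\Vert\psi\Vert\leq\Vert\mu\Vert\,\Vert\phi\Vert$ and $\phi(x)\convol y=x\convol\psi(y)$. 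Thus the hypothesis holds with $k=\Vert\mu\Vert$, and since $I$ is topologically projective with a bounded right approximate identity, Lemma~\ref{GoodIdealMetTopProjIsUnital}$(ii)$ supplies a right identity $p_0\in I$.

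Finally I would descend from the right identity to absolute continuity. For every $g\in L_1(G)$ the element $g\convol\mu$ lies in $I$, so $(g\convol\mu)\convol p_0=g\convol\mu$, i.e. $g\convol(\mu\convol p_0-\mu)=0$. Writing $\nu=\mu\convol p_0-\mu\in M(G)$ and using that $e_\alpha\convol\nu\to\nu$ in the weak${}^*$ topology $\sigma(M(G),C_0(G))$ while $e_\alpha\convol\nu=0$ for every $\alpha$, I conclude $\nu=0$, that is $\mu=\mu\convol p_0$. But $p_0\in I\subset L_1(G)=M_a(G)$, and $M_a(G)$ is a two-sided ideal of $M(G)$, so $\mu=\mu\convol p_0\in M_a(G)$ is absolutely continuous, say $\mu=q\,m_G$ with $q\in L_1(G)$. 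Idempotence of $\mu$ translates into $q\convol q=q$, whence $q=q\convol q\in L_1(G)\convol q=I$, and therefore $\mu=q\,m_G$ with $q\in I$, as claimed.
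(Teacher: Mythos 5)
Your proof is correct and follows essentially the same route as the paper's: Wendel's theorem turns a left $L_1(G)$-morphism $\phi:I\to L_1(G)$ into right convolution by a measure, Lemma~\ref{GoodIdealMetTopProjIsUnital}$(ii)$ then yields a right identity for $I$, and faithfulness of convolution against $L_1(G)$ forces $\mu=\mu\convol p_0\in M_a(G)$ with density lying in $I$. The only distinctions are expository: you verify explicitly what the paper leaves implicit or cites, namely the $\Vert\mu\Vert$-bounded right approximate identity of $I$, the multiplier norm bound giving $k=\Vert\mu\Vert$, and the weak${}^*$ approximation argument in place of the citation of [Dales, corollary 3.3.24].
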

\begin{proof} Let $\phi:I\to L_1(G)$ be arbitrary morphism of left
$L_1(G)$-modules. Consider $L_1(G)$-morphism $\phi':L_1(G)\to
L_1(G):x\mapsto\phi(x\convol\mu)$. By Wendel's theorem [\cite{WendLeftCentrzrs},
theorem 1], there exists a measure $\nu\in M(G)$ such that
$\phi'(x)=x\convol\nu$ for all $x\in L_1(G)$. In particular,
$\phi(x)=\phi(x\convol\mu)=\phi'(x)=x\convol\nu$ for all $x\in I$. It is now
clear that $\psi:I\to I:x\mapsto\nu\convol x$ is a morphism of right $I$-modules
satisfying $\phi(x)y=x\psi(y)$ for all $x,y\in I$. By paragraph $(ii)$ of
lemma~\ref{GoodIdealMetTopProjIsUnital} the ideal $I$ has a right identity, say
$e\in I$. Then $x\convol\mu=x\convol\mu\convol e$ for all $x\in L_1(G)$. Two
measures are equal if their convolutions with all functions of $L_1(G)$ coincide
[\cite{DalBanAlgAutCont}, corollary 3.3.24], so $\mu=\mu\convol e m_G$. Since
$e\in I\subset L_1(G)$, then $\mu=\mu\convol e m_G\in M_a(G)$. 
Set $p=\mu\convol e\in I$, then $\mu=p m_G$.
\end{proof}

We conjecture that the left ideal $L_1(G)\convol \mu$ for idempotent measure
$\mu$ is metrically projective $L_1(G)$-module iff $\mu=p m_G$ where $p\in I$
and $\Vert p\Vert=1$.

\begin{theorem}\label{L1ModL1MetTopProjCharac} Let $G$ be a locally compact
group. Then the following are equivalent:

\begin{enumerate}[label = (\roman*)]
    \item $G$ is discrete;

    \item $L_1(G)$ is metrically projective $L_1(G)$-module;

    \item $L_1(G)$ is topologically projective $L_1(G)$-module.
\end{enumerate}
\end{theorem}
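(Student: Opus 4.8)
The plan is to prove the cycle of implications $(i)\implies(ii)\implies(iii)\implies(i)$. The implication $(ii)\implies(iii)$ is immediate from proposition~\ref{MetProjIsTopProjAndTopProjIsRelProj}, which states that every metrically projective module is $1$-topologically projective and a fortiori topologically projective. So the real content lies in the two endpoints of the cycle.

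**The easy direction $(i)\implies(ii)$.** First I would dispatch the forward implication. If $G$ is discrete, then by the preliminaries on harmonic analysis the Banach algebra $L_1(G)$ is unital, with identity $\delta_{e_G}$ of norm one (the Haar measure being the counting measure). Hence $L_1(G)={L_1(G)}_\times$, and proposition~\ref{UnitalAlgIsMetTopProj} tells us directly that $L_1(G)$, viewed as a left module over itself, is metrically projective. This requires essentially no computation.

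**The hard direction $(iii)\implies(i)$.** This is where the main obstacle lies. The plan is to assume $L_1(G)$ is topologically projective as an $L_1(G)$-module and derive discreteness. I would try to apply lemma~\ref{GoodIdealMetTopProjIsUnital} with $I=A=L_1(G)$: the group algebra always possesses a contractive two-sided approximate identity, so the hypotheses on the existence of one-sided approximate identities are met. The crucial structural input is a multiplier-type statement: for every morphism $\phi:L_1(G)\to L_1(G)$ of left $L_1(G)$-modules I would invoke Wendel's theorem (as used in proposition~\ref{CommIdealByIdemMeasL1MetTopProjCharac}) to produce a measure $\nu\in M(G)$ with $\phi(x)=x\convol\nu$, and then set $\psi:x\mapsto\nu\convol x$ to obtain a right-module morphism satisfying $\phi(x)\convol y=x\convol\psi(y)$ for all $x,y$. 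Feeding this into paragraph $(ii)$ of lemma~\ref{GoodIdealMetTopProjIsUnital} yields a right identity for $L_1(G)$; combined with the two-sided approximate identity, this forces $L_1(G)$ to be unital. The final step is to recall from the harmonic analysis preliminaries that $L_1(G)$ is unital if and only if $G$ is discrete, which closes the cycle.

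**Where the difficulty concentrates.** The main obstacle is verifying that the module map $\psi$ genuinely takes values in $I=L_1(G)$ and satisfies the intertwining identity $\phi(x)\convol y=x\convol\psi(y)$ cleanly; this hinges on the associativity and commutation properties of convolution of measures against $L_1$-functions and on Wendel's characterization of left multipliers of $L_1(G)$ as convolution by measures. I expect the norm bookkeeping in lemma~\ref{GoodIdealMetTopProjIsUnital}$(ii)$ to go through automatically once the intertwining relation is established, since a unital Banach algebra with a contractive two-sided approximate identity has its identity of norm one. An alternative route worth keeping in reserve is to argue directly via proposition~\ref{NonDegenMetTopProjCharac}, splitting $\pi_{L_1(G)}$ and extracting the identity by the same summation-of-components technique used in the proof of theorem~\ref{LeftIdealOfCStarAlgMetTopProjCharac}; but the Wendel-theorem approach through lemma~\ref{GoodIdealMetTopProjIsUnital} is shorter and I would pursue it first.
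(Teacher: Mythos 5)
Your proposal is correct and follows essentially the same route as the paper: the paper's proof of $(iii)\implies(i)$ simply invokes proposition~\ref{CommIdealByIdemMeasL1MetTopProjCharac} with the idempotent measure $\mu=\delta_{e_G}$, and the proof of that proposition is exactly your Wendel-theorem-plus-lemma~\ref{GoodIdealMetTopProjIsUnital}$(ii)$ argument, specialized to $I=L_1(G)\convol\delta_{e_G}=L_1(G)$. The only cosmetic differences are in the easy direction (the paper cites proposition~\ref{UnIdeallIsMetTopProj} where you cite proposition~\ref{UnitalAlgIsMetTopProj}) and in the endgame, where the paper deduces discreteness from $\delta_{e_G}=f\,m_G$ while you deduce it from unitality of $L_1(G)$ combined with its approximate identity; both conclusions rest on standard facts recorded in the preliminaries.
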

\begin{proof} $(i)\implies (ii)$ If $G$ is discrete, then $L_1(G)$ is unital
with unit of norm $1$. By  proposition~\ref{UnIdeallIsMetTopProj} we see that
$L_1(G)$ is metrically projective as $L_1(G)$-module.

$(ii)\implies (iii)$ See
proposition~\ref{MetProjIsTopProjAndTopProjIsRelProj}.

$(iii) \implies (i)$ Clearly, $\delta_{e_G}$ is an idempotent measure. Since
$L_1(G)=L_1(G)\convol \delta_{e_G}$ is topologically projective, then by
proposition~\ref{CommIdealByIdemMeasL1MetTopProjCharac} 
we have $\delta_{e_G}=f m_G$ for some $f\in L_1(G)$. This is possible only 
if $G$ is discrete.
\end{proof}

Note that $L_1(G)$-module $L_1(G)$ is relatively projective for any locally
compact group $G$ [\cite{HelBanLocConvAlg}, exercise 7.1.17].

\begin{proposition}\label{L1MetTopProjAndMetrFlatOfMeasAlg} Let $G$ be 
a locally compact group. Then the following are equivalent:

\begin{enumerate}[label = (\roman*)]
    \item $G$ is discrete;

    \item $M(G)$ is metrically projective $L_1(G)$-module;

    \item $M(G)$ is topologically projective $L_1(G)$-module;

    \item $M(G)$ is metrically flat $L_1(G)$-module.
\end{enumerate}
\end{proposition}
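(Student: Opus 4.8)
The plan is to prove the chain of equivalences by establishing the implications $(i)\implies(ii)$, $(ii)\implies(iii)$, $(iii)\implies(i)$, and separately handling $(iv)$ by pairing metric flatness against one of the other conditions. The key structural fact I would exploit is the decomposition $M(G)\isom{L_1(G)-\mathbf{mod}_1}M_a(G)\bigoplus_1 M_s(G)$ of the measure algebra as a coproduct in $L_1(G)-\mathbf{mod}_1$, where $M_a(G)\isom{M(G)-\mathbf{mod}_1}L_1(G)$ and $M_s(G)$ is an annihilator $L_1(G)$-module, as established in the preliminaries on harmonic analysis. This decomposition is the crucial bridge, because it allows me to split questions about $M(G)$ into a ``nice'' part $M_a(G)\cong L_1(G)$ and an annihilator part $M_s(G)$, each of which is governed by results already proved.

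First I would treat $(i)\implies(ii)$. If $G$ is discrete then $M(G)=M_a(G)\isom{L_1(G)-\mathbf{mod}_1}L_1(G)$ and $L_1(G)$ is unital with identity $\delta_{e_G}$ of norm $1$; by proposition~\ref{UnitalAlgIsMetTopProj} (applied with $A_\times=A$) the module $L_1(G)$ is metrically projective, hence so is $M(G)$. The implication $(ii)\implies(iii)$ is immediate from proposition~\ref{MetProjIsTopProjAndTopProjIsRelProj}. For $(iii)\implies(i)$, I would use proposition~\ref{MetTopProjModCoprod}: since $M(G)\isom{L_1(G)-\mathbf{mod}_1}M_a(G)\bigoplus_1 M_s(G)$ is a $\bigoplus_1$-sum, topological projectivity of $M(G)$ forces topological projectivity of each summand, in particular of $M_a(G)\isom{L_1(G)-\mathbf{mod}_1}L_1(G)$. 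Then theorem~\ref{L1ModL1MetTopProjCharac} gives that $G$ is discrete.

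Next I would establish the place of $(iv)$ in the chain. Since metric projectivity implies metric flatness by proposition~\ref{MetTopProjIsMetTopFlat}, the implication $(ii)\implies(iv)$ is automatic, so it remains to prove $(iv)\implies(i)$. Here again I would apply the coproduct decomposition: by proposition~\ref{MetTopFlatModCoProd}, metric flatness of $M(G)\isom{L_1(G)-\mathbf{mod}_1}M_a(G)\bigoplus_1 M_s(G)$ forces metric flatness of the summand $M_s(G)$. But $M_s(G)$ is an annihilator $L_1(G)$-module, so by proposition~\ref{MetTopFlatAnnihModCharac} its metric flatness forces either $M_s(G)=\{0\}$ or the combination ``$L_1(G)=\{0\}$ and $M_s(G)$ is an $L_1$-space''; since $L_1(G)\neq\{0\}$, we must have $M_s(G)=\{0\}$, i.e. $M(G)=M_a(G)$, which by the preliminaries holds precisely when $G$ is discrete.

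The main obstacle I anticipate is not in the logical skeleton but in carefully verifying that the summand $M_s(G)$ genuinely satisfies the hypotheses of proposition~\ref{MetTopFlatAnnihModCharac} as a \emph{nonzero} annihilator module whenever $G$ is non-discrete, and that the $\bigoplus_1$-decomposition is honestly a coproduct in $L_1(G)-\mathbf{mod}_1$ rather than merely in $M(G)-\mathbf{mod}_1$ (the preliminaries flag exactly this distinction). The subtle point is that when $G$ is non-discrete, $M_s(G)$ is nonzero, so the strong annihilator-module restriction applies and immediately contradicts flatness unless $G$ is discrete; conversely when $G$ is discrete the singular part vanishes and everything collapses to the already-understood unital case. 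Once the decomposition is correctly interpreted as a $\bigoplus_1$-sum of $L_1(G)$-modules, the rest follows by assembling propositions~\ref{UnitalAlgIsMetTopProj}, \ref{MetProjIsTopProjAndTopProjIsRelProj}, \ref{MetTopProjIsMetTopFlat}, \ref{MetTopProjModCoprod}, \ref{MetTopFlatModCoProd}, \ref{MetTopFlatAnnihModCharac} and theorem~\ref{L1ModL1MetTopProjCharac}.
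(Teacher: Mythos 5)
Your proposal is correct, and its skeleton coincides with the paper's proof: the same decomposition $M(G)\isom{L_1(G)-\mathbf{mod}_1}L_1(G)\bigoplus_1 M_s(G)$, the same use of propositions~\ref{MetTopProjModCoprod} and~\ref{MetTopFlatModCoProd} to extract summands, proposition~\ref{MetProjIsTopProjAndTopProjIsRelProj} for $(ii)\implies(iii)$, proposition~\ref{MetTopProjIsMetTopFlat} for $(ii)\implies(iv)$, and proposition~\ref{MetTopFlatAnnihModCharac} applied to the annihilator module $M_s(G)$ for $(iv)\implies(i)$.

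The one place you diverge is $(iii)\implies(i)$: you extract the absolutely continuous summand $M_a(G)\isom{L_1(G)-\mathbf{mod}_1}L_1(G)$ and invoke theorem~\ref{L1ModL1MetTopProjCharac}, whereas the paper extracts the singular summand $M_s(G)$ and applies proposition~\ref{MetTopProjOfAnnihModCharac} to conclude that $L_1(G)$ has a right identity, hence (having a two-sided bounded approximate identity) is unital, hence $G$ is discrete. Your route is logically cleaner in one respect: the paper's argument tacitly assumes $M_s(G)\neq\{0\}$ when applying the annihilator-module proposition (if $M_s(G)=\{0\}$ one must instead observe that $G$ is discrete outright), a case distinction you avoid entirely since $M_a(G)$ is always nonzero. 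On the other hand, your route imports the heavier theorem~\ref{L1ModL1MetTopProjCharac}, whose proof rests on Wendel's theorem and the idempotent-measure lemma~\ref{CommIdealByIdemMeasL1MetTopProjCharac}, while the paper's variant stays inside the general annihilator-module machinery of chapter 2. Both are valid; the difference is only in which summand carries the obstruction.
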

\begin{proof} 
$(i)\implies (ii)$ We have $M(G)\isom{L_1(G)-\mathbf{mod}_1}L_1(G)$ for
discrete $G$, so the result follows from theorem~\ref{L1ModL1MetTopProjCharac}. 

$(ii)\implies (iii)$ See
proposition~\ref{MetProjIsTopProjAndTopProjIsRelProj}.

$(ii)\implies (iv)$ See proposition~\ref{MetTopProjIsMetTopFlat}.

$(iii)\implies (i)$ Note that $M(G)\isom{L_1(G)-\mathbf{mod}_1}
L_1(G)\bigoplus_1 M_s(G)$, so $M_s(G)$ is topologically projective by
proposition~\ref{MetTopProjModCoprod}. Note that $M_s(G)$ is an annihilator
$L_1(G)$-module, then by proposition~\ref{MetTopProjOfAnnihModCharac} the
algebra $L_1(G)$ has a right identity. Recall that $L_1(G)$ also has a two-sided
bounded approximate identity, so $L_1(G)$ is unital. The last is equivalent to
$G$ being discrete.

$(iv)\implies (i)$ Note that $M(G)\isom{L_1(G)-\mathbf{mod}_1}
L_1(G)\bigoplus_1 M_s(G)$, so $M_s(G)$ is metrically flat by
proposition~\ref{MetTopFlatModCoProd}. Note that $M_s(G)$ is an annihilator
$L_1(G)$-module, then by proposition~\ref{MetTopFlatAnnihModCharac} it is equal
to zero. The last is equivalent to $G$ being discrete.
\end{proof}

\begin{proposition}\label{MeasAlgIsL1TopFlat} Let $G$ be a locally compact
group. Then $M(G)$ is topologically flat $L_1(G)$-module.
\end{proposition}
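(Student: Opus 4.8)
The plan is to avoid the amenability hypothesis of theorem~\ref{TopL1FlatModAoverAmenBanAlg} and proposition~\ref{MetTopEssL1FlatModAoverAmenBanAlg} by splitting $M(G)$ into its essential and annihilator parts and treating each separately. Recall from the preliminaries that, as left modules, $M(G)\isom{L_1(G)-\mathbf{mod}_1}M_a(G)\bigoplus_1 M_s(G)$, where $M_a(G)$ is the ideal of measures absolutely continuous with respect to $m_G$, isometrically isomorphic to $L_1(G)$ as an $L_1(G)$-module, and $M_s(G)$ is an annihilator $L_1(G)$-module. By paragraph $(ii)$ of proposition~\ref{MetTopFlatModCoProd} it then suffices to exhibit a common constant $C$ for which both summands are $C$-topologically flat.

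The essential summand is immediate: by proposition~\ref{LInfIsL1MetrInj} the module $L_1(G)\isom{L_1(G)-\mathbf{mod}_1}M_a(G)$ is metrically, and a fortiori $1$-topologically, flat.

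For the annihilator summand $M_s(G)$ I would invoke proposition~\ref{MetTopFlatAnnihModCharac}. The group algebra $L_1(G)$ always carries a contractive two-sided approximate identity, so the algebraic hypothesis there is satisfied with $C=2$. What remains is to verify that $M_s(G)$ is an $\mathscr{L}_1^g$-space. Here I would use that $M(G)$ is an $L_1$-space as a Banach space and that $M_s(G)$ is $1$-complemented in it, being a summand of the isometric $\bigoplus_1$-decomposition above; theorem~6.17.3 of~\cite{LaceyIsomThOfClassicBanSp} then shows $M_s(G)$ is itself an $L_1$-space, hence an $\mathscr{L}_{1,1}^g$-space by exercise~4.7 of~\cite{DefFloTensNorOpId}. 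Consequently proposition~\ref{MetTopFlatAnnihModCharac} gives that $M_s(G)$ is $2$-topologically flat.

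With both $M_a(G)$ and $M_s(G)$ being $2$-topologically flat, proposition~\ref{MetTopFlatModCoProd} yields that $M(G)$ is $2$-topologically flat, which is the claim. The only mildly delicate point is the identification of $M_s(G)$ as an $\mathscr{L}_1^g$-space; I would stress that this must be obtained from the Banach-space structure of $M(G)$ rather than from theorem~\ref{TopL1FlatModAoverAmenBanAlg}, since the latter presupposes relative amenability of $L_1(G)$, equivalently amenability of $G$, whereas the present statement is for an arbitrary locally compact group. Passing to the essential/annihilator decomposition is precisely the device that removes this amenability obstruction.
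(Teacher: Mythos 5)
Your proof is correct and follows essentially the same route as the paper's: decompose $M(G)\isom{L_1(G)-\mathbf{mod}_1}L_1(G)\bigoplus_1 M_s(G)$, get flatness of the essential part from proposition~\ref{LInfIsL1MetrInj}, get flatness of the annihilator part $M_s(G)$ from proposition~\ref{MetTopFlatAnnihModCharac} after identifying it as an $\mathscr{L}_1^g$-space, and recombine via proposition~\ref{MetTopFlatModCoProd}. The only cosmetic differences are that the paper deduces the $\mathscr{L}_1^g$-property of $M_s(G)$ from the stability of $\mathscr{L}_1^g$-spaces under complemented subspaces [\cite{DefFloTensNorOpId}, corollary 23.2.1(2)] rather than from Lacey's theorem [\cite{LaceyIsomThOfClassicBanSp}, theorem 6.17.3] on $1$-complemented subspaces of $L_1$-spaces, and that you track the flatness constant explicitly while the paper does not.
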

\begin{proof} Since $M(G)$ is an $L_1$-space it is a fortiori an
$\mathscr{L}_1^g$-space. Since $M_s(G)$ is complemented in $M(G)$, then $M_s(G)$
is an $\mathscr{L}_1^g$-space too [\cite{DefFloTensNorOpId}, corollary
23.2.1(2)]. Since $M_s(G)$ is an annihilator $L_1(G)$-module, then from
proposition~\ref{MetTopFlatAnnihModCharac} we have that $M_s(G)$ is
topologically flat $L_1(G)$-module. The $L_1(G)$-module $L_1(G)$ is also
topologically flat by proposition~\ref{LInfIsL1MetrInj}. Since
$M(G)\isom{L_1(G)-\mathbf{mod}_1}L_1(G)\bigoplus_1 M_s(G)$, then $M(G)$ is
topologically flat $L_1(G)$-module by proposition~\ref{MetTopFlatModCoProd}.
\end{proof}


\subsection{
    \texorpdfstring{$M(G)$}{M (G)}-modules
}\label{SubSectionMGModules}

We turn to the study of standard $M(G)$-modules of harmonic analysis. As we
shall see most of results can be derived from results on $L_1(G)$-modules.

\begin{proposition}\label{MGMetTopProjInjFlatRedToL1} Let $G$ be a locally
compact group, and $X$ be $\langle$~essential / faithful / essential~$\rangle$
$L_1(G)$-module. Then

\begin{enumerate}[label = (\roman*)]
    \item $X$ is metrically $\langle$~projective / injective / flat~$\rangle$
    $M(G)$-module iff it is metrically $\langle$~projective / injective /
    flat~$\rangle$ $L_1(G)$-module;

    \item $X$ is topologically $\langle$~projective / injective / flat~$\rangle$
    $M(G)$-module iff it is topologically $\langle$~projective / injective /
    flat~$\rangle$ $L_1(G)$-module.
\end{enumerate}
\end{proposition}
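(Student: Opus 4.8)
The plan is to deduce everything from the three change-of-algebra results, Propositions~\ref{MetTopProjUnderChangeOfAlg}, \ref{MetTopInjUnderChangeOfAlg} and~\ref{MetTopFlatUnderChangeOfAlg}, applied with $A=M(G)$ and $I=L_1(G)$. The only genuine input I would need to record at the outset is the position of $L_1(G)$ inside $M(G)$: identifying $L_1(G)$ with $M_a(G)$ via the isometric $M(G)$-morphism $i\colon f\mapsto f m_G$ from the preliminaries, it is a closed two-sided ideal of $M(G)$, and the Lebesgue decomposition $M(G)\isom{L_1(G)-\mathbf{mod}_1}M_a(G)\bigoplus_1 M_s(G)$ exhibits it as a $\bigoplus_1$-summand. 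Hence the projection of $M(G)$ onto $M_a(G)\cong L_1(G)$ along $M_s(G)$ has norm $1$, so $L_1(G)$ is a $1$-complemented, and a fortiori weakly $1$-complemented, two-sided ideal of $M(G)$. This is exactly the hypothesis on $I$ needed for paragraph $(ii)$ of each change-of-algebra proposition with constant $c=1$, while the ideal property alone is what paragraph $(i)$ requires.

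With this in hand each of the three cases is a two-line argument. For the projective case I would invoke Proposition~\ref{MetTopProjUnderChangeOfAlg}: since $X$ is essential as $L_1(G)$-module and $L_1(G)$ is a left ideal of $M(G)$, paragraph $(i)$ gives that metric (resp.\ $C$-topological) projectivity over $L_1(G)$ implies the same over $M(G)$; conversely, since $L_1(G)$ is a $1$-complemented right ideal, paragraph $(ii)$ turns metric (resp.\ $C$-topological) projectivity over $M(G)$ into metric (resp.\ $1\cdot C=C$-topological) projectivity over $L_1(G)$. The injective case is identical, using Proposition~\ref{MetTopInjUnderChangeOfAlg} with $X$ faithful as $L_1(G)$-module and $L_1(G)$ weakly $1$-complemented; the flat case uses Proposition~\ref{MetTopFlatUnderChangeOfAlg} with $X$ essential. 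In every instance the complementation constant is $1$, so no constant is lost in either direction, and the asserted equivalences hold verbatim in both the metric theory (paragraph $(i)$ of the proposition) and the $C$-topological theory (paragraph $(ii)$).

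The points that deserve a careful word rather than real difficulty are two. First, one must observe that the complementation required by the change-of-algebra propositions is merely \emph{Banach-space} complementation: inspecting their proofs, the map $r\colon A\to I$ is only a bounded linear left inverse of the inclusion, not a module map, so the norm-one projection furnished by the $\bigoplus_1$-decomposition is all that is used and no compatibility of $r$ with the $L_1(G)$-action needs checking. Second, the statement presupposes that $X$ carries compatible $L_1(G)$- and $M(G)$-module structures, the $L_1(G)$-action being the restriction of the $M(G)$-action along $i$; this is automatic for the standard modules under consideration and is implicitly what lets one speak of $X$ as being simultaneously an $M(G)$- and an $L_1(G)$-module in the two change-of-algebra propositions. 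I do not anticipate a serious obstacle here: the whole proposition is a bookkeeping corollary of the general transfer results, with the sole nontrivial structural fact being the $1$-complementation $M(G)=M_a(G)\bigoplus_1 M_s(G)$ already available from the harmonic-analysis preliminaries.
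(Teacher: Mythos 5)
Your proposal is correct and follows essentially the same route as the paper: the paper's proof likewise recalls that $L_1(G)\isom{L_1(G)-\mathbf{mod}_1}M_a(G)$ is a two-sided ideal of $M(G)$ complemented in $\mathbf{Ban}_1$, and then invokes propositions~\ref{MetTopProjUnderChangeOfAlg}, \ref{MetTopInjUnderChangeOfAlg} and~\ref{MetTopFlatUnderChangeOfAlg}. Your additional remarks on the constant $c=1$, on weak $1$-complementation, and on the purely Banach-space nature of the complementation are accurate elaborations of what the paper leaves implicit.
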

\begin{proof} Recall that $L_1(G)\isom{L_1(G)-\mathbf{mod}_1}M_a(G)$ is a
two-sided complemented in $\mathbf{Ban}_1$ ideal of $M(G)$. Now $(i)$ and $(ii)$
follow from proposition $\langle$~\ref{MetTopProjUnderChangeOfAlg}
/~\ref{MetTopInjUnderChangeOfAlg}  /~\ref{MetTopFlatUnderChangeOfAlg}~$\rangle$.
\end{proof} 

It is worth to mention here that the $L_1(G)$-modules $C_0(G)$, $L_p(G)$ for
$1\leq p<\infty$ and $\mathbb{C}_\gamma$ for $\gamma\in\widehat{G}$ are
essential and $L_1(G)$-modules $C_0(G)$, $M(G)$, $L_p(G)$ for $1\leq p\leq
\infty$ and $\mathbb{C}_\gamma$ for $\gamma\in\widehat{G}$ are faithful. 

\begin{proposition}\label{MGModMGMetTopProjFlatCharac} Let $G$ be a locally
compact group. Then $M(G)$ is metrically and topologically projective
$M(G)$-module. As the consequence it is metrically and topologically flat
$M(G)$-module.
\end{proposition}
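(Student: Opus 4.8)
The plan is to reduce everything to the fact that $M(G)$ is a unital Banach algebra whose identity has norm one, and then invoke the two general results already established for such algebras. Recall from the harmonic analysis preliminaries that $M(G)$ is a unital Banach algebra with identity $\delta_{e_G}$, the Dirac measure at $e_G$, and that $\Vert\delta_{e_G}\Vert=|\delta_{e_G}|(G)=1$. Thus $M(G)$ possesses an identity of norm one, and consequently its conditional unitization satisfies $M(G)_\times=M(G)$ by the very definition of $A_\times$.

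First I would record that, as a left module over itself, $M(G)$ coincides with $M(G)_\times$. Then the projectivity statement is immediate from Proposition~\ref{UnitalAlgIsMetTopProj}, which asserts precisely that the left $A$-module $A_\times$ is metrically and $1$-topologically projective for any Banach algebra $A$. Applying this with $A=M(G)$ and using $M(G)_\times=M(G)$ yields that $M(G)$ is metrically and topologically projective as an $M(G)$-module.

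Finally, the flatness assertion follows with no extra work from Proposition~\ref{MetTopProjIsMetTopFlat}, according to which every metrically projective module is (metrically, hence $1$-topologically) flat and every $C$-topologically projective module is $C$-topologically flat. Since $M(G)$ is metrically and $1$-topologically projective by the previous step, it is metrically and topologically flat.

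There is essentially no obstacle in this argument; the only point that deserves explicit mention is the verification that $\Vert\delta_{e_G}\Vert=1$, which guarantees $M(G)_\times=M(G)$ rather than $M(G)_+$ and is what lets us conclude projectivity of $M(G)$ itself rather than merely of its unitization. Everything else is a direct citation of the general theory.
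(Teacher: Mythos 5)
Your proposal is correct and follows essentially the same route as the paper: the paper's proof also cites Proposition~\ref{UnitalAlgIsMetTopProj} for projectivity of the unital algebra $M(G)$ over itself and then Proposition~\ref{MetTopProjIsMetTopFlat} for flatness. Your explicit verification that $\Vert\delta_{e_G}\Vert=1$, so that $M(G)_\times=M(G)$, is a worthwhile point the paper leaves implicit, but it does not change the argument.
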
 
\begin{proof} Since $M(G)$ is a unital algebra, the metric and topological
projectivity of $M(G)$ follow from proposition~\ref{UnitalAlgIsMetTopProj}. It
remains to apply proposition~\ref{MetTopProjIsMetTopFlat}.
\end{proof}


\subsection{
    Banach geometric restriction
}\label{SubSectionBanachGeometricRestriction}

In this section we shall show that many modules of harmonic analysis are fail to
be metrically or topologically projective, injective or flat for purely Banach
geometric reasons. 

\begin{proposition}\label{StdModAreNotRetrOfL1LInf} Let $G$ be an infinite
locally compact group. Then

\begin{enumerate}[label = (\roman*)]
    \item $L_1(G)$, $C_0(G)$, $M(G)$, ${L_\infty(G)}^*$ are not 
    topologically injective Banach spaces;

    \item $C_0(G)$, $L_\infty(G)$ are not complemented in any $L_1$-space.
\end{enumerate}
\end{proposition}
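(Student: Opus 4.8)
The plan rests on two Banach-geometric facts that are already available. First, by the remark following [\cite{DefFloTensNorOpId}, corollary 23.3] — the same dichotomy used in the proof that no algebra has all its modules flat — no \emph{infinite dimensional} Banach space can be simultaneously an $\mathscr{L}_1^g$-space and an $\mathscr{L}_\infty^g$-space. Second, a topologically injective Banach space $E$ is complemented in every ambient space; embedding $E$ isometrically into $\ell_\infty(B_{E^*})=C(\beta B_{E^*})$, which is an $\mathscr{L}_{\infty,1}^g$-space, shows $E$ is an $\mathscr{L}_\infty^g$-space by [\cite{DefFloTensNorOpId}, corollary 23.2.1(2)]. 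Moreover topological injectivity passes to complemented subspaces (given $F$ complemented in $E$, say $E=F\oplus F'$, and any $F\subset X$, complement $E$ inside $X\oplus F'$ and compose with the projection onto $F$); hence a topologically injective space contains no complemented copy of $c_0(\mathbb{N})$, because $c_0(\mathbb{N})$ is uncomplemented in $\ell_\infty(\mathbb{N})$ [\cite{KalAlbTopicsBanSpTh}, theorem 2.5.5] and so is not itself topologically injective.

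Statement (ii) and three of the four spaces in (i) then follow from the dichotomy alone. Indeed $L_1(G)$ is an $L_1$-space, $M(G)$ is an $L_1$-space (see the discussion after [\cite{DalLauSecondDualOfMeasAlg}, proposition 2.14]), and $L_\infty(G)$ is a $C(K)$-space, so $L_\infty(G)^*$ is an $\mathscr{L}_1^g$-space by [\cite{DefFloTensNorOpId}, corollary 23.2.1(1)]; all three are infinite dimensional since $G$ is infinite, hence infinite dimensional $\mathscr{L}_1^g$-spaces, so by the first fact they cannot be $\mathscr{L}_\infty^g$-spaces and therefore are not topologically injective. Dually, $C_0(G)$ and $L_\infty(G)$ are infinite dimensional $\mathscr{L}_\infty^g$-spaces (as $C_0(S)$- and $C(K)$-spaces), hence not $\mathscr{L}_1^g$-spaces; since a complemented subspace of an $L_1$-space is an $\mathscr{L}_1^g$-space by [\cite{DefFloTensNorOpId}, corollary 23.2.1(2)], neither is complemented in any $L_1$-space, which is exactly (ii).

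The remaining and genuinely harder case is $C_0(G)$ in (i): being an $\mathscr{L}_\infty^g$-space it escapes the dichotomy, so instead I would exhibit a complemented copy of $c_0(\mathbb{N})$ inside $C_0(G)$ and invoke the second fact. If $G$ is discrete, then $C_0(G)=c_0(G)$ with $G$ infinite, and $c_0(\mathbb{N})$ is complemented by coordinate restriction. If $G$ is non-discrete, let $H=\langle V\rangle$ be the open (hence clopen) $\sigma$-compact subgroup generated by a compact symmetric neighbourhood $V$ of $e_G$; as $V$ is infinite, so is $H$, and multiplication by the continuous idempotent $\chi_H$ is a norm-one projection of $C_0(G)$ onto a copy of $C_0(H)$, reducing us to finding a complemented $c_0(\mathbb{N})$ in $C_0(H)$. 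When $H$ is non-compact I would use a $\sigma$-compact exhaustion to choose points $s_n$ escaping to infinity with pairwise disjoint relatively compact neighbourhoods and bumps $f_n$; the map $f\mapsto\sum_n f(s_n)f_n$ is then a bounded projection onto $[f_n]\cong c_0(\mathbb{N})$, landing in $C_0(H)$ precisely because $f(s_n)\to 0$. When $H$ is compact it is an infinite compact group, and here I would pass to an infinite compact metrizable quotient $q:H\to L=H/K$, observe that the conditional expectation $P(f)(x)=\int_K f(xk)\,dm_K(k)$ is a norm-one projection of $C(H)$ onto $q^*(C(L))\cong C(L)$, and produce a complemented $c_0(\mathbb{N})$ in $C(L)$ by the standard convergent-sequence bump construction available in a metric space.

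The main obstacle is this compact, non-metrizable subcase: when the chosen points accumulate the naive bump projection need not land in $C(H)$, so the construction must be routed through a metrizable quotient. The structural input I would need to justify is that every infinite compact group admits a continuous surjection onto an infinite compact metrizable group — this follows from Peter–Weyl, writing $H$ as the projective limit of its Lie (hence metrizable) quotients and using infiniteness to extract an infinite metrizable quotient — together with the unimodularity of the compact fibre $K$, which makes $P$ a well-defined norm-one projection factoring through $H/K$. Once this topological lemma is in place the three cases combine to show that $C_0(G)$ contains a complemented $c_0(\mathbb{N})$, whence it is not topologically injective, completing (i).
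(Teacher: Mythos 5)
Your proposal is correct, but it follows a genuinely different route from the paper's proof, so let me compare the two. For $L_1(G)$, $M(G)$ and ${L_\infty(G)}^*$ in (i), the paper argues via Rosenthal's theorem [\cite{RosOnRelDisjFamOfMeas}, corollary 1.1.4] that an infinite dimensional topologically injective space contains a copy of $\ell_\infty(\mathbb{N})$, hence of $c_0(\mathbb{N})$, which is forbidden in these spaces by weak sequential completeness of $L_1$-spaces; you instead use the observation that a topologically injective space is complemented in $\ell_\infty(B_{E^*})$ and is therefore an $\mathscr{L}_\infty^g$-space, which collides with the fact that these three spaces are infinite dimensional $\mathscr{L}_1^g$-spaces. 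For $C_0(G)$ in (i), the paper simply cites the theorem of Lau and Losert [\cite{LauMingComplSubspInLInfOfG}, corollary 3] that $C_0(G)$ is uncomplemented in $L_\infty(G)$, whereas you construct by hand a complemented copy of $c_0(\mathbb{N})$ inside $C_0(G)$ (discrete case; non-compact $\sigma$-compact open subgroup via escaping bumps; compact case via an infinite metrizable quotient and the Haar conditional expectation) and then invoke Phillips' theorem [\cite{KalAlbTopicsBanSpTh}, theorem 2.5.5] together with the heredity of topological injectivity under complementation. For (ii) the paper runs a duality argument through part (i) ($M(G)\isom{\mathbf{Ban}_1}{C_0(G)}^*$ would become injective) plus the $c_0(\mathbb{N})$-obstruction inside $L_1$-spaces, while your argument is a one-line application of the $\mathscr{L}_1^g$/$\mathscr{L}_\infty^g$ dichotomy and is independent of (i). What the paper's route buys is brevity: two strong citations absorb all the work. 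What your route buys is economy of external input --- the dichotomy you rely on is already invoked elsewhere in the paper --- a part (ii) that stands on its own, and a constructive explanation of \emph{why} $C_0(G)$ fails injectivity.

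The one place your argument is thinner than you acknowledge is the structural lemma in the compact case: ``using infiniteness to extract an infinite metrizable quotient'' hides a real step. If every compact Lie quotient furnished by Peter--Weyl is finite, the group is profinite, and you must then show its finite quotients have unbounded order (otherwise a minimal open normal subgroup would be trivial, forcing the group to be finite) and intersect countably many kernels of unbounded index to obtain an infinite, second countable, hence metrizable, compact quotient. This is true and your sketch points in the right direction, but it should be written out, or replaced by a citation, before the case analysis for $C_0(G)$ can be considered complete.
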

\begin{proof}
Since $G$ is infinite all modules in question are infinite dimensional.

$(i)$ If an infinite dimensional Banach space is topologically injective, then it
contains a copy of $\ell_\infty(\mathbb{N})$ [\cite{RosOnRelDisjFamOfMeas},
corollary 1.1.4], and consequently a copy of $c_0(\mathbb{N})$. The Banach space
$L_1(G)$ is weakly sequentially complete [\cite{WojBanSpForAnalysts}, corollary
III.C.14], so by corollary 5.2.11 in~\cite{KalAlbTopicsBanSpTh} it can't contain
a copy of $c_0(\mathbb{N})$. Therefore, $L_1(G)$ is not topologically injective
Banach space.  If $M(G)$ is topologically injective Banach space, then so does
its complemented subspace $M_a(G)\isom{\mathbf{Ban}_1}L_1(G)$. By previous
argument this is impossible. So $M(G)$ is not topologically injective as Banach
space. By corollary 3 of~\cite{LauMingComplSubspInLInfOfG} the space $C_0(G)$ is
not complemented in $L_\infty(G)$. Then $C_0(G)$ can't be topologically
injective either. The Banach space $L_1(G)$ is complemented in
${L_\infty(G)}^*\isom{\mathbf{Ban}_1}{L_1(G)}^{**}$ [\cite{DefFloTensNorOpId},
proposition  B10]. Therefore if ${L_\infty(G)}^*$ is topologically injective as
Banach space, then so does its retract $L_1(G)$. By previous arguments this is
impossible, so ${L_\infty(G)}^*$ is not topologically injective Banach space.

$(ii)$ If $C_0(G)$ is a retract of $L_1$-space, then
$M(G)\isom{\mathbf{Ban}_1}{C_0(G)}^*$ is a retract of $L_\infty$-space, 
so it must be a topologically injective Banach space. This contradicts 
paragraph $(i)$, so $C_0(G)$ is not a retract of $L_1$-space. 
Note that $\ell_\infty(\mathbb{N})$ embeds in $L_\infty(G)$, then so 
does $c_0(\mathbb{N})$. So if $L_\infty(G)$ is
a retract of $L_1$-space, then there would exist an $L_1$-space containing a
copy of $c_0(\mathbb{N})$. This is impossible as already showed in paragraph
$(i)$.
\end{proof}

From now on by $A$ we denote either $L_1(G)$ or $M(G)$. Recall that $L_1(G)$ and
$M(G)$ are both $L_1$-spaces.

\begin{proposition}\label{StdModAreNotL1MGMetTopProjInjFlat} Let $G$ be an
infinite locally compact group. Then

\begin{enumerate}[label = (\roman*)]
    \item $C_0(G)$, $L_\infty(G)$ are neither topologically nor metrically 
    projective $A$-modules;

    \item $L_1(G)$, $C_0(G)$, $M(G)$, ${L_\infty(G)}^*$ are neither 
    topologically nor metrically injective $A$-modules;

    \item $L_\infty(G)$, $C_0(G)$ are neither topologically nor metrically flat
    $A$-modules.
\end{enumerate}
\end{proposition}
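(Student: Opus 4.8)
The plan is to combine the structural dichotomy for modules over algebras that happen to be $L_1$-spaces with the Banach geometric obstructions already collected in Proposition~\ref{StdModAreNotRetrOfL1LInf}. Since $A$ is either $L_1(G)$ or $M(G)$, and both are $L_1$-spaces as Banach spaces, Proposition~\ref{TopProjInjFlatModOverL1Charac} applies verbatim with this $A$. In each of the three parts it suffices to exclude the \emph{topological} version of the property, since metric projectivity, injectivity and flatness imply their topological counterparts by Propositions~\ref{MetProjIsTopProjAndTopProjIsRelProj},~\ref{MetInjIsTopInjAndTopInjIsRelInj} and~\ref{MetFlatIsTopFlatAndTopFlatIsRelFlat} respectively. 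So the whole argument reduces to three short contradictions.

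For $(i)$ I would argue by contradiction: if $C_0(G)$ or $L_\infty(G)$ were topologically projective as an $A$-module, then paragraph $(i)$ of Proposition~\ref{TopProjInjFlatModOverL1Charac} would make it complemented in some $L_1$-space. This directly contradicts paragraph $(ii)$ of Proposition~\ref{StdModAreNotRetrOfL1LInf}, which states that neither $C_0(G)$ nor $L_\infty(G)$ is complemented in any $L_1$-space. For $(ii)$ the same scheme runs through paragraph $(ii)$ of Proposition~\ref{TopProjInjFlatModOverL1Charac}: topological injectivity of any of $L_1(G)$, $C_0(G)$, $M(G)$, ${L_\infty(G)}^*$ as an $A$-module would force it to be a topologically injective Banach space, contradicting paragraph $(i)$ of Proposition~\ref{StdModAreNotRetrOfL1LInf}, where exactly these four spaces are shown not to be topologically injective.

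Part $(iii)$ is then most economically deduced from $(ii)$ through the duality characterization of flatness rather than from paragraph $(iii)$ of Proposition~\ref{TopProjInjFlatModOverL1Charac}. By Proposition~\ref{MetCTopFlatCharac}, a module $F$ is topologically flat iff $F^*$ is topologically injective. Using the module identification ${C_0(G)}^*\isom{M(G)-\mathbf{mod}_1}M(G)$ from the harmonic-analysis preliminaries, topological flatness of $C_0(G)$ would make $M(G)$ topologically injective, which $(ii)$ forbids; likewise, since ${L_\infty(G)}^*$ is precisely the fourth space appearing in $(ii)$, topological flatness of $L_\infty(G)$ would make ${L_\infty(G)}^*$ topologically injective, again excluded by $(ii)$.

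The only delicate point is bookkeeping: one must track the left/right module structures and apply the correct duality isomorphisms (those recorded in the preliminaries and in Proposition~\ref{MetCTopFlatCharac}) so that the duals arising in $(iii)$ genuinely coincide, as $A$-modules, with the spaces already excluded in $(ii)$, and so that the conclusions of Proposition~\ref{TopProjInjFlatModOverL1Charac} are read off for the right variance in $(i)$ and $(ii)$. Once this matching is in place, every case is a one-line contradiction and no computation remains; I expect this matching, rather than any analytic estimate, to be the main thing to get right.
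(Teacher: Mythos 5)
Your proposal is correct and takes essentially the same route as the paper: parts $(i)$ and $(ii)$ combine Proposition~\ref{TopProjInjFlatModOverL1Charac} with Proposition~\ref{StdModAreNotRetrOfL1LInf}, and part $(iii)$ is deduced from part $(ii)$ via Proposition~\ref{MetCTopFlatCharac} together with the identification ${C_0(G)}^*\isom{\mathbf{mod}_1-A}M(G)$ (the paper's citation of paragraph $(i)$ there is evidently a slip for $(ii)$, so your bookkeeping is the intended one). The only cosmetic difference is that you reduce the metric cases to the topological ones up front via Propositions~\ref{MetProjIsTopProjAndTopProjIsRelProj},~\ref{MetInjIsTopInjAndTopInjIsRelInj} and~\ref{MetFlatIsTopFlatAndTopFlatIsRelFlat}, whereas the paper's cited propositions treat both gradations simultaneously; nothing of substance changes.
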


$(iv)$ $L_p(G)$ for $1<p<\infty$ are neither topologically nor metrically
projective, injective or flat $A$-flat.

\begin{proof} $(i)$ The result follows from
propositions~\ref{TopProjInjFlatModOverL1Charac} paragraph $(i)$ and
~\ref{StdModAreNotRetrOfL1LInf} paragraph $(ii)$.

$(ii)$ The result follows from propositions~\ref{TopProjInjFlatModOverL1Charac}
paragraph $(ii)$ and~\ref{StdModAreNotRetrOfL1LInf}.

$(iii)$ Note that ${C_0(G)}^*\isom{\mathbf{mod}_1-A}M(G)$. Now the result 
follows from paragraph $(i)$ and proposition~\ref{MetCTopFlatCharac}.

$(iv)$ Since $L_p(G)$ is reflexive for $1<p<\infty$ the result follows
from~\ref{NoInfDimRefMetTopProjInjFlatModOverMthscrL1OrLInfty}.
\end{proof}

It remains to consider metric and topological homological properties of
$A$-modules when $G$ is finite.

\begin{proposition}\label{LpFinGrL1MGMetrInjProjCharac} Let $G$ be a non trivial
finite group and $1\leq p\leq \infty$. Then the $A$-module $L_p(G)$ is
metrically $\langle$~projective / injective~$\rangle$ iff $\langle$~$p=1$ /
$p=\infty$~$\rangle$
\end{proposition}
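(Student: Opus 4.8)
The plan is to exploit the fact that for a finite group $G$ the two candidate algebras coincide and are unital: since $G$ is finite it is discrete, so $A=L_1(G)=M(G)$ (also as modules, because $M(G)=M_a(G)\isom{M(G)-\mathbf{mod}_1}L_1(G)$ here), and $A$ is a finite-dimensional unital Banach algebra which as a Banach space is an $L_1$-space, isometrically isomorphic to $\ell_1^n$ with $n=\operatorname{Card}(G)\geq 2$; non-triviality of $G$ is exactly what gives $n\geq 2$. In particular proposition~\ref{TopProjInjFlatModOverL1Charac} applies to $A$-modules. The second preliminary remark is purely Banach-geometric: for every $1\leq p\leq\infty$ the Banach space $L_p(G)$ is isometrically isomorphic to $\ell_p^n$, since normalization of the Haar measure only rescales the norm by a positive constant, which is absorbed by a scalar multiple of the identity operator.

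For the two sufficiency directions I would argue directly. If $p=1$, then as $G$ is discrete theorem~\ref{L1ModL1MetTopProjCharac} gives that $L_1(G)$ is metrically projective as $L_1(G)=A$-module. If $p=\infty$, then $L_\infty(G)\isom{\mathbf{mod}_1-L_1(G)}{L_1(G)}^*$, and this is exactly the right $A$-module asserted to be metrically injective by proposition~\ref{LInfIsL1MetrInj}. This settles both ``if'' halves.

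For necessity I would pass to the underlying Banach space structure of the module. By paragraph $(i)$ of proposition~\ref{TopProjInjFlatModOverL1Charac}, if $L_p(G)$ is metrically projective it is isometrically an $L_1$-space; by paragraph $(ii)$, if it is metrically injective it is a metrically injective Banach space, hence isometric to $C(K)$ for some Stonean $K$. Since $L_p(G)$ is finite-dimensional, the first alternative forces $\ell_p^n\isom{\mathbf{Ban}_1}\ell_1^n$ and the second forces $\ell_p^n\isom{\mathbf{Ban}_1}\ell_\infty^n$; indeed a finite-dimensional $L_1$-space is $\ell_1^n$, and a finite-dimensional $C(K)$-space with $K$ Stonean is $\ell_\infty^n$, because a finite compact Hausdorff space is discrete. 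Thus the whole statement reduces to the isometric classification of the complex spaces $\ell_p^n$ for $n\geq 2$. The range $1<p<\infty$ is disposed of at once: there $\ell_p^n$ is strictly convex, whereas neither $\ell_1^n$ nor $\ell_\infty^n$ is strictly convex for $n\geq 2$, so strict convexity (an isometric invariant) excludes such $p$ in both cases.

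The main obstacle is the boundary comparison: for $n\geq 2$ the complex spaces $\ell_1^n$ and $\ell_\infty^n$ are not isometrically isomorphic, and this is precisely what rules out $p=\infty$ in the projective case and $p=1$ in the injective case. I would establish it via the set of extreme points of the unit ball, which any surjective linear isometry must carry homeomorphically onto the corresponding set. In $\ell_1^n$ the extreme points of $B_{\ell_1^n}$ are exactly the vectors $\lambda\delta_j$ with $|\lambda|=1$ and $1\leq j\leq n$, so they form $n$ disjoint circles, a compact set of real dimension $1$; in $\ell_\infty^n$ the extreme points are the vectors all of whose coordinates have modulus $1$, an $n$-torus of real dimension $n\geq 2$. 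These sets are not homeomorphic, so no linear isometry between $\ell_1^n$ and $\ell_\infty^n$ can exist. Combining this boundary argument with the strict-convexity step completes both necessity directions, and together with the sufficiency paragraph this proves the proposition.
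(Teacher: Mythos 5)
Your proof is correct, and it follows the paper's skeleton exactly up to the final step: sufficiency via theorem~\ref{L1ModL1MetTopProjCharac} (for $p=1$) and proposition~\ref{LInfIsL1MetrInj} (for $p=\infty$), and necessity by invoking paragraphs $(i)$ and $(ii)$ of proposition~\ref{TopProjInjFlatModOverL1Charac} to reduce everything to the question of which complex spaces $\ell_p(\mathbb{N}_n)$, $n\geq 2$, are isometrically isomorphic to $\ell_1(\mathbb{N}_n)$, respectively to $\ell_\infty(\mathbb{N}_n)$. Where you part ways with the paper is in how that isometric classification is settled. The paper quotes theorem 1 of~\cite{LyubIsomEmdbFinDimLp}: over $\mathbb{C}$, an isometric embedding of $\ell_r(\mathbb{N}_m)$ into $\ell_s(\mathbb{N}_k)$ with $2\leq m\leq k$ forces $r=s$ or $r=2$, $s\in 2\mathbb{N}$, which immediately yields $p=1$, respectively $p=\infty$. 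You instead give a self-contained elementary argument: strict convexity of $\ell_p(\mathbb{N}_n)$ for $1<p<\infty$ (versus its failure in $\ell_1(\mathbb{N}_n)$ and $\ell_\infty(\mathbb{N}_n)$ when $n\geq 2$) rules out the intermediate range, and the extreme-point structure of the unit balls --- $n$ disjoint circles for complex $\ell_1(\mathbb{N}_n)$ versus the connected $n$-torus for complex $\ell_\infty(\mathbb{N}_n)$, which no homeomorphism can identify when $n\geq 2$ --- rules out the endpoint swap, since a surjective linear isometry restricts to a homeomorphism between the sets of extreme points. Both routes are sound; yours buys independence from the external reference and uses only first principles, while the paper's citation buys brevity and extra strength (it controls isometric \emph{embeddings}, not just isomorphisms), strength that the author genuinely needs elsewhere --- in proposition~\ref{CStarAlgIsL1IfFinDim} one must exclude an isometric embedding of $\ell_\infty(\mathbb{N}_2)$ into $\ell_1(\mathbb{N}_n)$, where your extreme-point argument would not apply --- so the paper gets to reuse a single reference for both results.
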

\begin{proof} 
Assume $L_p(G)$ is metrically $\langle$~projective / injective~$\rangle$ as
$A$-module. Since $L_p(G)$ is finite dimensional, then by paragraphs $(i)$ and
$(ii)$ of proposition~\ref{TopProjInjFlatModOverL1Charac} we have 
identifications $\langle$~$L_p(G)\isom{\mathbf{Ban}_1}\ell_1(\mathbb{N}_n)$ /
$L_p(G)
\isom{\mathbf{Ban}_1}
C(\mathbb{N}_n)
\isom{\mathbf{Ban}_1}
\ell_\infty(\mathbb{N}_n)$~$\rangle$, 
where $n=\operatorname{Card}(G)>1$. Now we use the result of theorem
1 from~\cite{LyubIsomEmdbFinDimLp} for Banach spaces over field $\mathbb{C}$: if
for $2\leq m\leq k$ and $1\leq r,s\leq \infty$, there exists an isometric
embedding from $\ell_r(\mathbb{N}_m)$ into $\ell_s(\mathbb{N}_k)$, then either
$r=2$, $s\in 2\mathbb{N}$ or $r=s$. Therefore $\langle$~$p=1$ /
$p=\infty$~$\rangle$. The converse easily follows from
$\langle$~theorem~\ref{L1ModL1MetTopProjCharac} /
proposition~\ref{LInfIsL1MetrInj}~$\rangle$
\end{proof}

\begin{proposition}\label{StdModFinGrL1MGMetrInjProjFlatCharac} Let $G$ be a
finite group. Then

\begin{enumerate}[label = (\roman*)]
    \item $C_0(G)$, $L_\infty(G)$ are metrically injective $A$-modules;

    \item $C_0(G)$, $L_p(G)$ for $1<p\leq\infty$ are metrically projective
    $A$-modules iff $G$ is trivial;

    \item $M(G)$, $L_p(G)$ for $1\leq p<\infty$ are metrically injective
    $A$-modules iff $G$ is trivial;

    \item $C_0(G)$, $L_p(G)$ for $1<p\leq\infty$ are metrically 
    flat $A$-modules iff $G$ is trivial.
\end{enumerate}
\end{proposition}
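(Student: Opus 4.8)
The plan is to reduce everything to the already-established finite-group computation in Proposition~\ref{LpFinGrL1MGMetrInjProjCharac}, together with the flatness--injectivity duality of Proposition~\ref{MetCTopFlatCharac}, after recording the structural simplifications forced by finiteness. First I would observe that a finite group is discrete, so $M(G)=M_a(G)\isom{\mathbf{Ban}_1}L_1(G)$ as Banach algebras and hence $A=L_1(G)=M(G)$; moreover $C_0(G)=L_\infty(G)$ (every function on a finite discrete space is continuous and bounded), and all the spaces $L_p(G)$ share one finite-dimensional underlying space. In particular $C_0(G)=L_\infty(G)$ is exactly the $p=\infty$ instance of $L_p(G)$ and $M(G)=L_1(G)$ is the $p=1$ instance, which is what lets me feed these modules into Proposition~\ref{LpFinGrL1MGMetrInjProjCharac}.

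For (i) I would simply invoke Proposition~\ref{LInfIsL1MetrInj}, which gives metric injectivity of $L_\infty(G)$ as an $L_1(G)$-module for every locally compact $G$; since $C_0(G)=L_\infty(G)$ and $A=L_1(G)=M(G)$ here, both modules are metrically injective with no restriction on $G$. For (ii) and (iii) I would treat the trivial-group base case and the converse separately. If $G$ is trivial then $A=\mathbb{C}$ is unital with a norm-one identity and each of $C_0(G)$, $M(G)$, $L_p(G)$ is the one-dimensional module $A=A_\times$; metric projectivity then follows from Proposition~\ref{UnitalAlgIsMetTopProj} and metric injectivity from Proposition~\ref{DualOfUnitalAlgIsMetTopInj} via $A\isom{\mathbf{mod}_1-A}A_\times^{*}$. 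Conversely, if one of the listed modules is metrically projective (resp.\ injective) for a \emph{non-trivial} finite $G$, then reading off the appropriate $p$ in Proposition~\ref{LpFinGrL1MGMetrInjProjCharac} yields a contradiction: that proposition forces $p=1$ for metric projectivity and $p=\infty$ for metric injectivity, whereas the modules in (ii) have $1<p\le\infty$ and those in (iii) have $1\le p<\infty$. Hence the listed modules are homologically trivial exactly when $G$ is trivial.

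For (iv) I would pass to duals. By Proposition~\ref{MetCTopFlatCharac}, a left module $L_p(G)$ is metrically flat iff the right module $L_p(G)^{*}$ is metrically injective. Using the dual-module identifications from the harmonic-analysis preliminaries, $C_0(G)^{*}\isom{\mathbf{mod}_1-A}M(G)$ and, for finite $G$, $L_p(G)^{*}\isom{\mathbf{mod}_1-A}L_{p^{*}}(G)$ with $p^{*}\in(1,\infty)$ when $1<p<\infty$. Thus for $1<p<\infty$ metric flatness of $L_p(G)$ is equivalent to metric injectivity of $L_{p^{*}}(G)$, and for $p=\infty$ (that is, $C_0(G)=L_\infty(G)$) to metric injectivity of $M(G)$; in both situations part (iii) applies (equivalently, one applies Proposition~\ref{LpFinGrL1MGMetrInjProjCharac} directly) and yields the triviality criterion. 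For the trivial group the modules are again $A=A_\times$, hence metrically projective and a fortiori metrically flat by Proposition~\ref{MetTopProjIsMetTopFlat}.

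The individual steps are short applications of earlier results, so the only real work is bookkeeping, and that is where I expect the one genuine subtlety: I must keep the left/right module structures consistent and use the correct dual identifications, which rely essentially on $G$ being finite (so that $M(G)=L_1(G)$ and $L_\infty(G)^{*}=L_1(G)$, identifications that fail for infinite $G$). Equally, since Proposition~\ref{LpFinGrL1MGMetrInjProjCharac} is stated only for non-trivial finite groups, I must isolate the trivial group as a separate base case rather than trying to read it off that proposition.
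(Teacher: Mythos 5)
Your proposal is correct and follows essentially the same route as the paper: part (i) via Proposition~\ref{LInfIsL1MetrInj}, parts (ii)--(iii) by reducing the non-trivial finite case to Proposition~\ref{LpFinGrL1MGMetrInjProjCharac}, and part (iv) by dualizing through Proposition~\ref{MetCTopFlatCharac} with the identifications ${C_0(G)}^*\isom{\mathbf{mod}_1-A}M(G)$ and ${L_p(G)}^*\isom{\mathbf{mod}_1-A}L_{p^*}(G)$. The only cosmetic difference is your treatment of the trivial-group base case directly via Propositions~\ref{UnitalAlgIsMetTopProj}, \ref{DualOfUnitalAlgIsMetTopInj} and~\ref{MetTopProjIsMetTopFlat}, where the paper instead identifies all the modules with $L_1(G)=L_\infty(G)$ and cites its earlier paragraphs.
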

\begin{proof}
$(i)$ Since $G$ is finite then $C_0(G)=L_\infty(G)$. The result follows from
proposition~\ref{LInfIsL1MetrInj}.

$(ii)$ If $G$ is trivial, that is $G= \{e_G \}$, then $L_p(G)=C_0(G)=L_1(G)$ and
the result follows from paragraph $(i)$. If $G$ is non trivial, then we recall
that $C_0(G)=L_\infty(G)$ and use
proposition~\ref{LpFinGrL1MGMetrInjProjCharac}.

$(iii)$ If $G= \{e_G \}$, then $M(G)=L_p(G)=L_\infty(G)$ and the result follows
from paragraph $(i)$. If $G$ is non trivial, then we note that $M(G)=L_1(G)$ and
use proposition~\ref{LpFinGrL1MGMetrInjProjCharac}.

$(iv)$ From paragraph $(iii)$ it follows that $L_p(G)$ for $1\leq p<\infty$ is
metrically injective $A$-module iff $G$ is trivial. Now the result follows from
proposition~\ref{MetCTopFlatCharac} and the facts that
${C_0(G)}^*\isom{\mathbf{mod}_1-L_1(G)}M(G)\isom{\mathbf{mod}_1-L_1(G)}L_1(G)$,
${L_p(G)}^*\isom{\mathbf{mod}_1-L_1(G)}L_{p^*}(G)$ for $1\leq p^*<\infty$.
\end{proof}

It is worth to mention here that if we would consider all Banach spaces over the
field of real numbers, then $L_\infty(G)$ and $L_1(G)$ would be metrically
projective and injective respectively,  additionally for $G$ consisting of two
elements, because
$$
L_\infty(\mathbb{Z}_2)
\isom{L_1(\mathbb{Z}_2)-\mathbf{mod}_1}
\mathbb{R}_{\gamma_0}\bigoplus\nolimits_1\mathbb{R}_{\gamma_1},
\qquad
L_1(\mathbb{Z}_2)
\isom{L_1(\mathbb{Z}_2)-\mathbf{mod}_1}
\mathbb{R}_{\gamma_0}\bigoplus\nolimits_\infty\mathbb{R}_{\gamma_1}
$$
for $\gamma_0,\gamma_1\in\widehat{\mathbb{Z}_2}$ defined by
$\gamma_0(0)=\gamma_0(1)=\gamma_1(0)=-\gamma_1(1)=1$. Here $\mathbb{Z}_2$
denotes the unique group of two elements.

\begin{proposition}\label{StdModFinGrL1MGTopInjProjFlatCharac} Let $G$ be a
finite group. Then the $A$-modules $C_0(G)$, $M(G)$, $L_p(G)$ 
for $1\leq p\leq \infty$ are both topologically projective, injective and flat.
\end{proposition}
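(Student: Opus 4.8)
The plan is to exploit the fact that finiteness of $G$ collapses all the modules in the statement to a single object. First I would record the structural consequences of $G$ being finite: it is simultaneously discrete and compact, so $L_1(G)=M(G)=:A$ is a unital finite-dimensional Banach algebra with identity $\delta_{e_G}$ (for finite $G$ the two algebras $L_1(G)$ and $M(G)$ literally coincide, since every measure is absolutely continuous with respect to $m_G$), it is unimodular, i.e.\ $\Delta_G\equiv 1$, and $C_0(G)=L_\infty(G)$. In particular every module occurring in the statement is finite-dimensional.

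Next I would carry out the reduction to $A$. Because $\Delta_G\equiv 1$, the left and right actions $\convol_p$ (for $1\le p<\infty$) and $\cdot_p$ (for $1<p\le\infty$) are given by formulas with no $p$-dependence. Hence the whole convolution family $\{L_p(G):1\le p<\infty\}\cup\{M(G)\}$ carries literally the same module action as $A=(L_1(G),\convol)$, and the whole translation family $\{L_p(G):1<p\le\infty\}\cup\{C_0(G)\}$ carries the same action as $(C_0(G),\cdot_\infty)$. Since within each family the underlying action is one fixed bilinear map and only the norm on the finite-dimensional space varies, the identity map is a topological module isomorphism onto the family representative. To link the two families I would use the flip $J\colon f\mapsto\bigl(s\mapsto f(s^{-1})\bigr)$, checking on the defining formulas that $J$ intertwines $\cdot_\infty$ with $\convol$, so that $(C_0(G),\cdot_\infty)\isom{A-\mathbf{mod}}A$ topologically. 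The upshot is that every module named in the proposition is topologically isomorphic, as an $A$-module, to $A$ itself.

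Finally I would establish the three properties for $A$ and transport them. $A$ is unital, hence topologically projective by Proposition~\ref{UnitalAlgIsMetTopProj}, and therefore topologically flat by Proposition~\ref{MetTopProjIsMetTopFlat}. For injectivity I would observe that $A^*$ is again finite-dimensional and, via the duality ${(L_1(G),\convol)}^*\cong(C_0(G),\cdot_\infty)$ from the preliminaries together with the right-handed flip, topologically isomorphic to $A$; thus $A^*$ is topologically flat, and Proposition~\ref{MetCTopFlatCharac} gives that $A\cong A^{**}={(A^*)}^*$ is topologically injective. Transporting all three properties along the topological isomorphisms of the previous paragraph, using Propositions~\ref{RetrMetCTopProjIsMetCTopProj},~\ref{RetrMetCTopInjIsMetCTopInj} and~\ref{RetrMetCTopFlatIsMetCTopFlat}, yields that $C_0(G)$, $M(G)$ and every $L_p(G)$ are simultaneously topologically projective, injective and flat.

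I expect the main obstacle to be the bookkeeping of the two families of actions together with the left/right symmetry: one must verify carefully that the flip $J$ (and its right-module analogue) really intertwines the relevant actions, and that the duality identifications of the preliminaries are module isomorphisms on the correct side. The genuinely non-formal point is injectivity, which does not follow from projectivity; the cleanest route is the one above, turning injectivity of $A$ into topological flatness of the finite-dimensional module $A^*$ through the duality characterization of flatness in Proposition~\ref{MetCTopFlatCharac}.
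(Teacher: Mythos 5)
Your proposal is correct and follows essentially the same route as the paper: collapse all the modules onto a single representative using finiteness, unimodularity (so all the actions $\convol_p$, $\cdot_p$ coincide) and the inversion flip $f\mapsto(s\mapsto f(s^{-1}))$, then prove projectivity of the representative from unitality, flatness from projectivity, and injectivity via the duality characterization of flatness (Proposition~\ref{MetCTopFlatCharac}). The only cosmetic difference is that the paper cites the packaged results (Theorem~\ref{L1ModL1MetTopProjCharac} and Proposition~\ref{LInfIsL1MetrInj}) where you re-derive the same facts from Propositions~\ref{UnitalAlgIsMetTopProj} and~\ref{MetCTopFlatCharac}.
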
 
\begin{proof}
For finite group $G$ we have $M(G)=L_1(G)$ and $C_0(G)=L_\infty(G)$, so these
modules do not require special considerations. Since $M(G)=L_1(G)$, we can
restrict our considerations to the case $A=L_1(G)$. The identity map
$i:L_1(G)\to L_p(G):f\mapsto f$ is a topological isomorphism of Banach spaces,
because $L_1(G)$ and $L_p(G)$ for $1\leq p<+\infty$ are of equal finite
dimension. Since $G$ is finite, it is unimodular. Therefore, the module actions
in $(L_1(G),\convol)$ and $(L_p(G),\convol_p)$ coincide for $1\leq p<+\infty$
and $i$ is an isomorphism in $L_1(G)-\mathbf{mod}$ and $\mathbf{mod}-L_1(G)$.
Similarly one can show that $(L_\infty(G),\cdot_\infty)$ and $(L_p(G),\cdot_p)$
for $1<p\leq+\infty$ are isomorphic in $L_1(G)-\mathbf{mod}$ and
$\mathbf{mod}-L_1(G)$. Finally, one can easily check that $(L_1(G),\convol)$ and
$(L_\infty(G),\cdot_\infty)$ are isomorphic in $L_1(G)-\mathbf{mod}$ and
$\mathbf{mod}-L_1(G)$ via the 
map $j:L_1(G)\to L_\infty(G):f\mapsto(s\mapsto f(s^{-1}))$. Therefore all 
the discussed modules are isomorphic. It remains to
recall that $L_1(G)$ is topologically projective and flat by
theorem~\ref{L1ModL1MetTopProjCharac} and proposition~\ref{LInfIsL1MetrInj},
while $L_\infty(G)$ is topologically injective by
proposition~\ref{LInfIsL1MetrInj}.
\end{proof}

Now we can summarize results on homological properties of modules of harmonic
analysis into three tables. Each cell of the table contains a condition under
which the respective module has respective property and propositions where this
is proved. We shall mention that results for modules $L_p(G)$ are valid for both
module actions $\convol_p$ and $\cdot_p$. Characterization and proofs for
homologically trivial modules $\mathbb{C}_\gamma$ in case of relative theory is
the same as in
propositions~\ref{OneDimL1ModMetTopProjCharac},
~\ref{OneDimL1ModMetTopInjFlatCharac}
and~\ref{OneDimL1ModMetTopInjFlatCharac}. As usually, we use ${}^{*}$
indicates that only a necessary conditions is known. As we showed above even
topological theory is too restrictive for $L_1(G)$ to be projective as
$L_1(G)$-module. Similarly a Banach space is topologically projective iff it is
an $L_1$-space, and the underlying measure space is atomic. This analogy
confirms important role of Banach geometry in metric and topological Banach
homology.

\begin{scriptsize}
    \begin{longtable}{|c|c|c|c|c|c|c|} 
    \multicolumn{7}{c}{
        \mbox{
            Homologically trivial $L_1(G)$- and $M(G)$-modules in metric theory
        }
    }                                                                                                                                                                                                                                                                                                                                                                                                                                                                                                                                                                                                                                                                                                                                                                                                                                                                                                                                             \\
    \hline & 
        \multicolumn{3}{c|}{
            $L_1(G)$-modules
        } & 
        \multicolumn{3}{c|}{
            $M(G)$-modules
        } \\
    \hline & 
        \mbox{Projectivity} & 
        \mbox{Injectivity} & 
        \mbox{Flatness} & 
        \mbox{Projectivity} &
        \mbox{Injectivity} & 
        \mbox{Flatness} \\ 
    \hline
        $L_1(G)$ & 
        \begin{tabular}{@{}c@{}}
            $G$\mbox{ is discrete } \\
            \mbox{\ref{L1ModL1MetTopProjCharac}}
        \end{tabular} & 
        \begin{tabular}{@{}c@{}}
            $G= \{e_G \}$ \\
            \mbox{\ref{StdModAreNotL1MGMetTopProjInjFlat}},
            \mbox{\ref{StdModFinGrL1MGMetrInjProjFlatCharac}}
        \end{tabular} & 
        \begin{tabular}{@{}c@{}}
            $G$\mbox{ is any } \\
            \mbox{\ref{LInfIsL1MetrInj}}
        \end{tabular} & 
        \begin{tabular}{@{}c@{}}
            $G$\mbox{ is discrete } \\
            \mbox{\ref{L1ModL1MetTopProjCharac}},
            \mbox{\ref{MGMetTopProjInjFlatRedToL1}}
        \end{tabular} & 
        \begin{tabular}{@{}c@{}}
            $G= \{e_G \}$ \\
            \mbox{\ref{StdModAreNotL1MGMetTopProjInjFlat}},
            \mbox{\ref{StdModFinGrL1MGMetrInjProjFlatCharac}}
        \end{tabular} & 
        \begin{tabular}{@{}c@{}}
            $G$\mbox{ is any } \\
            \mbox{\ref{LInfIsL1MetrInj}},
            \mbox{\ref{MGMetTopProjInjFlatRedToL1}}
        \end{tabular} \\
    \hline 
        $L_p(G)$ & 
        \begin{tabular}{@{}c@{}}
            $G= \{e_G \}$ \\
            \mbox{\ref{StdModAreNotL1MGMetTopProjInjFlat}},
            \mbox{\ref{LpFinGrL1MGMetrInjProjCharac}}
        \end{tabular} &
        \begin{tabular}{@{}c@{}}
            $G= \{e_G \}$ \\
            \mbox{\ref{StdModAreNotL1MGMetTopProjInjFlat}},
            \mbox{\ref{LpFinGrL1MGMetrInjProjCharac}}
        \end{tabular} & 
        \begin{tabular}{@{}c@{}}
            $G= \{e_G \}$ \\
            \mbox{\ref{StdModAreNotL1MGMetTopProjInjFlat}},
            \mbox{\ref{StdModFinGrL1MGMetrInjProjFlatCharac}}
        \end{tabular} & 
        \begin{tabular}{@{}c@{}}
            $G= \{e_G \}$ \\
            \mbox{\ref{StdModAreNotL1MGMetTopProjInjFlat}},
            \mbox{\ref{LpFinGrL1MGMetrInjProjCharac}}
        \end{tabular} & 
        \begin{tabular}{@{}c@{}}
            $G= \{e_G \}$ \\
            \mbox{\ref{StdModAreNotL1MGMetTopProjInjFlat}},
            \mbox{\ref{LpFinGrL1MGMetrInjProjCharac}}
        \end{tabular} & 
        \begin{tabular}{@{}c@{}}
            $G= \{e_G \}$ \\
            \mbox{\ref{StdModAreNotL1MGMetTopProjInjFlat}},
            \mbox{\ref{StdModFinGrL1MGMetrInjProjFlatCharac}}
        \end{tabular} \\
    \hline
        $L_\infty(G)$ & 
        \begin{tabular}{@{}c@{}}
            $G= \{e_G \}$ \\
            \mbox{\ref{StdModAreNotL1MGMetTopProjInjFlat}},
            \mbox{\ref{LpFinGrL1MGMetrInjProjCharac}}
        \end{tabular} & 
        \begin{tabular}{@{}c@{}}
            $G$\mbox{ is any } \\
            \mbox{\ref{LInfIsL1MetrInj}}
        \end{tabular} &
        \begin{tabular}{@{}c@{}}
            $G= \{e_G \}$ \\
            \mbox{\ref{StdModAreNotL1MGMetTopProjInjFlat}},
            \mbox{\ref{StdModFinGrL1MGMetrInjProjFlatCharac}}
        \end{tabular} & 
        \begin{tabular}{@{}c@{}}
            $G= \{e_G \}$ \\
            \mbox{\ref{StdModAreNotL1MGMetTopProjInjFlat}},
            \mbox{\ref{LpFinGrL1MGMetrInjProjCharac}}
        \end{tabular} & 
        \begin{tabular}{@{}c@{}}
            $G$\mbox{ is any } \\
            \mbox{\ref{LInfIsL1MetrInj}},
            \mbox{\ref{MGMetTopProjInjFlatRedToL1}}
        \end{tabular} & 
        \begin{tabular}{@{}c@{}}
            $G= \{e_G \}$ \\
            \mbox{\ref{StdModAreNotL1MGMetTopProjInjFlat}},
            \mbox{\ref{StdModFinGrL1MGMetrInjProjFlatCharac}}
        \end{tabular} \\ 
    \hline
        $M(G)$ & 
        \begin{tabular}{@{}c@{}}
            $G$\mbox{ is discrete } \\
            \mbox{\ref{L1MetTopProjAndMetrFlatOfMeasAlg}}
        \end{tabular} & 
        \begin{tabular}{@{}c@{}}
            $G= \{e_G \}$ \\
            \mbox{\ref{StdModAreNotL1MGMetTopProjInjFlat}},
            \mbox{\ref{StdModFinGrL1MGMetrInjProjFlatCharac}}
        \end{tabular} & 
        \begin{tabular}{@{}c@{}}
            $G$\mbox{ is discrete } \\
            \mbox{\ref{MeasAlgIsL1TopFlat}}
        \end{tabular} & 
        \begin{tabular}{@{}c@{}}
            $G$\mbox{ is any } \\
            \mbox{\ref{MGModMGMetTopProjFlatCharac}}
        \end{tabular} & 
        \begin{tabular}{@{}c@{}}
            $G= \{e_G \}$ \\
            \mbox{\ref{StdModAreNotL1MGMetTopProjInjFlat}},
            \mbox{\ref{StdModFinGrL1MGMetrInjProjFlatCharac}}
        \end{tabular} & 
        \begin{tabular}{@{}c@{}}
            $G$\mbox{ is any } \\
            \mbox{\ref{MGModMGMetTopProjFlatCharac}}
        \end{tabular} \\ 
    \hline
        $C_0(G)$ & 
        \begin{tabular}{@{}c@{}}
            $G= \{e_G \}$ \\
            \mbox{\ref{StdModAreNotL1MGMetTopProjInjFlat}},
            \mbox{\ref{StdModFinGrL1MGMetrInjProjFlatCharac}}
        \end{tabular} & 
        \begin{tabular}{@{}c@{}}
            $G$\mbox{ is finite } \\
            \mbox{\ref{StdModAreNotL1MGMetTopProjInjFlat}},
            \mbox{\ref{StdModFinGrL1MGMetrInjProjFlatCharac}}
        \end{tabular} & 
        \begin{tabular}{@{}c@{}}
            $G= \{e_G \}$ \\
            \mbox{\ref{StdModAreNotL1MGMetTopProjInjFlat}},
            \mbox{\ref{StdModFinGrL1MGMetrInjProjFlatCharac}}
        \end{tabular} & 
        \begin{tabular}{@{}c@{}}
            $G= \{e_G \}$ \\
            \mbox{\ref{StdModAreNotL1MGMetTopProjInjFlat}},
            \mbox{\ref{StdModFinGrL1MGMetrInjProjFlatCharac}}
        \end{tabular} & 
        \begin{tabular}{@{}c@{}}
            $G$\mbox{ is finite } \\
            \mbox{\ref{StdModAreNotL1MGMetTopProjInjFlat}},
            \mbox{\ref{StdModFinGrL1MGMetrInjProjFlatCharac}}
        \end{tabular} & 
        \begin{tabular}{@{}c@{}}
            $G= \{e_G \}$ \\
            \mbox{\ref{StdModAreNotL1MGMetTopProjInjFlat}},
            \mbox{\ref{StdModFinGrL1MGMetrInjProjFlatCharac}}
        \end{tabular} \\ 
    \hline 
        $\mathbb{C}_\gamma$ & 
        \begin{tabular}{@{}c@{}}
            $G$\mbox{ is compact } \\
            \mbox{\ref{OneDimL1ModMetTopProjCharac}}
        \end{tabular} & 
        \begin{tabular}{@{}c@{}}
            $G$\mbox{ is amenable } \\
            \mbox{\ref{OneDimL1ModMetTopInjFlatCharac}}
        \end{tabular} & 
        \begin{tabular}{@{}c@{}}
            $G$\mbox{ is amenable } \\
            \mbox{\ref{OneDimL1ModMetTopInjFlatCharac}}
        \end{tabular} & 
        \begin{tabular}{@{}c@{}}
            $G$\mbox{ is compact } \\
            \mbox{\ref{OneDimL1ModMetTopProjCharac}},
            \mbox{\ref{MGMetTopProjInjFlatRedToL1}}
        \end{tabular} & 
        \begin{tabular}{@{}c@{}}
            $G$\mbox{ is amenable } \\
            \mbox{\ref{OneDimL1ModMetTopInjFlatCharac}},
            \mbox{\ref{MGMetTopProjInjFlatRedToL1}}
        \end{tabular} & 
        \begin{tabular}{@{}c@{}}
            $G$\mbox{ is amenable } \\
            \mbox{\ref{OneDimL1ModMetTopInjFlatCharac}},
            \mbox{\ref{MGMetTopProjInjFlatRedToL1}}
        \end{tabular} \\ 
    \hline
        \multicolumn{7}{c}{
            \mbox{
                Homologically trivial $L_1(G)$- and $M(G)$-modules 
                in topological theory
            }
        } \\
    \hline & 
        \multicolumn{3}{c|}{
            $L_1(G)$-modules
        } & 
        \multicolumn{3}{c|}{
            $M(G)$-modules
        } \\
    \hline & 
        \mbox{Projectivity} & 
        \mbox{Injectivity} & 
        \mbox{Flatness} & 
        \mbox{Projectivity} & 
        \mbox{Injectivity} & 
        \mbox{Flatness} \\ 
    \hline
        $L_1(G)$ & 
        \begin{tabular}{@{}c@{}}
            $G$\mbox{ is discrete } \\
            \mbox{\ref{L1ModL1MetTopProjCharac}}
        \end{tabular} & 
        \begin{tabular}{@{}c@{}}
            $G$\mbox{ is finite } \\
            \mbox{\ref{StdModAreNotL1MGMetTopProjInjFlat}},
            \mbox{\ref{StdModFinGrL1MGTopInjProjFlatCharac}}
        \end{tabular} & 
        \begin{tabular}{@{}c@{}}
            $G$\mbox{ is any } \\
            \mbox{\ref{LInfIsL1MetrInj}}
        \end{tabular} & 
        \begin{tabular}{@{}c@{}}
            $G$\mbox{ is discrete } \\
            \mbox{\ref{L1ModL1MetTopProjCharac}},
            \mbox{\ref{MGMetTopProjInjFlatRedToL1}}
        \end{tabular} & 
        \begin{tabular}{@{}c@{}}
            $G$\mbox{ is finite } \\
            \mbox{\ref{StdModAreNotL1MGMetTopProjInjFlat}},
            \mbox{\ref{StdModFinGrL1MGTopInjProjFlatCharac}}
        \end{tabular} & 
        \begin{tabular}{@{}c@{}}
            $G$\mbox{ is any } \\
            \mbox{\ref{LInfIsL1MetrInj}},
            \mbox{\ref{MGMetTopProjInjFlatRedToL1}}
        \end{tabular} \\ 
    \hline
        $L_p(G)$ & 
        \begin{tabular}{@{}c@{}}
            $G$\mbox{ is finite } \\
            \mbox{\ref{StdModAreNotL1MGMetTopProjInjFlat}},
            \mbox{\ref{StdModFinGrL1MGTopInjProjFlatCharac}}
        \end{tabular} & 
        \begin{tabular}{@{}c@{}}
            $G$\mbox{ is finite } \\
            \mbox{\ref{StdModAreNotL1MGMetTopProjInjFlat}},
            \mbox{\ref{StdModFinGrL1MGTopInjProjFlatCharac}}
        \end{tabular} & 
        \begin{tabular}{@{}c@{}}
            $G$\mbox{ is finite } \\
            \mbox{\ref{StdModAreNotL1MGMetTopProjInjFlat}},
            \mbox{\ref{StdModFinGrL1MGTopInjProjFlatCharac}}
        \end{tabular} & 
        \begin{tabular}{@{}c@{}}
            $G$\mbox{ is finite } \\
            \mbox{\ref{StdModAreNotL1MGMetTopProjInjFlat}},
            \mbox{\ref{StdModFinGrL1MGTopInjProjFlatCharac}}
        \end{tabular} & 
        \begin{tabular}{@{}c@{}}
            $G$\mbox{ is finite } \\
            \mbox{\ref{StdModAreNotL1MGMetTopProjInjFlat}},
            \mbox{\ref{StdModFinGrL1MGTopInjProjFlatCharac}}
        \end{tabular} & 
        \begin{tabular}{@{}c@{}}
            $G$\mbox{ is finite } \\
            \mbox{\ref{StdModAreNotL1MGMetTopProjInjFlat}},
            \mbox{\ref{StdModFinGrL1MGTopInjProjFlatCharac}}
        \end{tabular} \\ 
    \hline
        $L_\infty(G)$ &
        \begin{tabular}{@{}c@{}}
            $G$\mbox{ is finite } \\
            \mbox{\ref{StdModAreNotL1MGMetTopProjInjFlat}},
            \mbox{\ref{StdModFinGrL1MGTopInjProjFlatCharac}}
        \end{tabular} & 
        \begin{tabular}{@{}c@{}}
            $G$\mbox{ is any } \\
            \mbox{\ref{LInfIsL1MetrInj}}
        \end{tabular} & 
        \begin{tabular}{@{}c@{}}
            $G$\mbox{ is finite } \\
            \mbox{\ref{StdModAreNotL1MGMetTopProjInjFlat}},
            \mbox{\ref{StdModFinGrL1MGTopInjProjFlatCharac}}
        \end{tabular} & 
        \begin{tabular}{@{}c@{}}
            $G$\mbox{ is finite } \\
            \mbox{\ref{StdModAreNotL1MGMetTopProjInjFlat}},
            \mbox{\ref{StdModFinGrL1MGTopInjProjFlatCharac}}
        \end{tabular} & 
        \begin{tabular}{@{}c@{}}
            $G$\mbox{ is any } \\
            \mbox{\ref{LInfIsL1MetrInj}},
            \mbox{\ref{MGMetTopProjInjFlatRedToL1}}
        \end{tabular} & 
        \begin{tabular}{@{}c@{}}
            $G$\mbox{ is finite } \\
            \mbox{\ref{StdModAreNotL1MGMetTopProjInjFlat}},
            \mbox{\ref{StdModFinGrL1MGTopInjProjFlatCharac}}
        \end{tabular} \\ 
    \hline
        $M(G)$ & 
        \begin{tabular}{@{}c@{}}
            $G$\mbox{ is discrete } \\
            \mbox{\ref{L1MetTopProjAndMetrFlatOfMeasAlg}}
        \end{tabular} & 
        \begin{tabular}{@{}c@{}}
            $G$\mbox{ is finite } \\
            \mbox{\ref{StdModAreNotL1MGMetTopProjInjFlat}},
            \mbox{\ref{StdModFinGrL1MGTopInjProjFlatCharac}}
        \end{tabular} & 
        \begin{tabular}{@{}c@{}}
            $G$\mbox{ is any } \\
            \mbox{\ref{MeasAlgIsL1TopFlat}}
        \end{tabular} & 
        \begin{tabular}{@{}c@{}}
            $G$\mbox{ is any } \\
            \mbox{\ref{MGModMGMetTopProjFlatCharac}}
        \end{tabular} & 
        \begin{tabular}{@{}c@{}}
            $G$\mbox{ is finite } \\
            \mbox{\ref{StdModAreNotL1MGMetTopProjInjFlat}},
            \mbox{\ref{StdModFinGrL1MGTopInjProjFlatCharac}}
        \end{tabular} & 
        \begin{tabular}{@{}c@{}}
            $G$\mbox{ is any } \\
            \mbox{\ref{MGModMGMetTopProjFlatCharac}}
        \end{tabular} \\ 
    \hline
        $C_0(G)$ & 
        \begin{tabular}{@{}c@{}}
            $G$\mbox{ is finite } \\
            \mbox{\ref{StdModAreNotL1MGMetTopProjInjFlat}},
            \mbox{\ref{StdModFinGrL1MGTopInjProjFlatCharac}}
        \end{tabular} & 
        \begin{tabular}{@{}c@{}}
            $G$\mbox{ is finite } \\
            \mbox{\ref{StdModAreNotL1MGMetTopProjInjFlat}},
            \mbox{\ref{StdModFinGrL1MGTopInjProjFlatCharac}}
        \end{tabular} & 
        \begin{tabular}{@{}c@{}}
            $G$\mbox{ is finite } \\
            \mbox{\ref{StdModAreNotL1MGMetTopProjInjFlat}},
            \mbox{\ref{StdModFinGrL1MGTopInjProjFlatCharac}}
        \end{tabular} & 
        \begin{tabular}{@{}c@{}}
            $G$\mbox{ is finite } \\
            \mbox{\ref{StdModAreNotL1MGMetTopProjInjFlat}},
            \mbox{\ref{StdModFinGrL1MGTopInjProjFlatCharac}}
        \end{tabular} & 
        \begin{tabular}{@{}c@{}}
            $G$\mbox{ is finite } \\
            \mbox{\ref{StdModAreNotL1MGMetTopProjInjFlat}},
            \mbox{\ref{StdModFinGrL1MGTopInjProjFlatCharac}}
        \end{tabular} & 
        \begin{tabular}{@{}c@{}}
            $G$\mbox{ is finite } \\
            \mbox{\ref{StdModAreNotL1MGMetTopProjInjFlat}},
            \mbox{\ref{StdModFinGrL1MGTopInjProjFlatCharac}}
        \end{tabular} \\ 
    \hline
        $\mathbb{C}_\gamma$ & 
        \begin{tabular}{@{}c@{}}
            $G$\mbox{ is compact } \\
            \mbox{\ref{OneDimL1ModMetTopProjCharac}}
        \end{tabular} & 
        \begin{tabular}{@{}c@{}}
            $G$\mbox{ is amenable } \\
            \mbox{\ref{OneDimL1ModMetTopInjFlatCharac}}
        \end{tabular} & 
        \begin{tabular}{@{}c@{}}$G$\mbox{ is amenable } \\
            \mbox{\ref{OneDimL1ModMetTopInjFlatCharac}}
        \end{tabular} & 
        \begin{tabular}{@{}c@{}}
            $G$\mbox{ is compact } \\
            \mbox{\ref{OneDimL1ModMetTopProjCharac}},
            \mbox{\ref{MGMetTopProjInjFlatRedToL1}}
        \end{tabular} & 
        \begin{tabular}{@{}c@{}}
            $G$\mbox{ is amenable } \\
            \mbox{\ref{OneDimL1ModMetTopInjFlatCharac}},
            \mbox{\ref{MGMetTopProjInjFlatRedToL1}}
        \end{tabular} & 
        \begin{tabular}{@{}c@{}}
            $G$\mbox{ is amenable } \\
            \mbox{\ref{OneDimL1ModMetTopInjFlatCharac}},
            \mbox{\ref{MGMetTopProjInjFlatRedToL1}}
        \end{tabular} \\ 
    \hline
        \multicolumn{7}{c}{
            \mbox{
                Homologically trivial $L_1(G)$- and $M(G)$-modules 
                in relative theory
            }
        } \\
    \hline & 
    \multicolumn{3}{c|}{
        $L_1(G)$-modules
    } & 
    \multicolumn{3}{c|}{
        $M(G)$-modules
    } \\
    \hline & 
        \mbox{Projectivity} & 
        \mbox{Injectivity} & 
        \mbox{Flatness} & 
        \mbox{Projectivity} & 
        \mbox{Injectivity} & 
        \mbox{Flatness} \\ 
    \hline 
        $L_1(G)$ & 
        \begin{tabular}{@{}c@{}}
            $G$\mbox{ is any } \\
            \mbox{\cite{DalPolHomolPropGrAlg}, \S 6}
        \end{tabular} & 
        \begin{tabular}{@{}c@{}}
            $G$\mbox{ is amenable } \\
            \mbox{ and discrete } \\
            \mbox{\cite{DalPolHomolPropGrAlg}, \S 6}
        \end{tabular} & 
        \begin{tabular}{@{}c@{}}
            $G$\mbox{ is any } \\
            \mbox{\cite{DalPolHomolPropGrAlg}, \S 6}
        \end{tabular} &
        \begin{tabular}{@{}c@{}}
            $G$\mbox{ is any } \\
            \mbox{\cite{RamsHomPropSemgroupAlg}, \S 3.5}
        \end{tabular} &
        \begin{tabular}{@{}c@{}}
            $G$\mbox{ is amenable } \\
            \mbox{ and discrete } \\
            \mbox{\cite{RamsHomPropSemgroupAlg}, \S 3.5}
        \end{tabular} &
        \begin{tabular}{@{}c@{}}
            $G$\mbox{ is any } \\
            \mbox{\cite{RamsHomPropSemgroupAlg}, \S 3.5}
        \end{tabular} \\ 
    \hline
        $L_p(G)$ & 
        \begin{tabular}{@{}c@{}}
            $G$\mbox{ is compact } \\
            \mbox{\cite{DalPolHomolPropGrAlg}, \S 6}
        \end{tabular} & 
        \begin{tabular}{@{}c@{}}
            $G$\mbox{ is amenable } \\
            \mbox{\cite{RachInjModAndAmenGr}}
        \end{tabular} & 
        \begin{tabular}{@{}c@{}}
            $G$\mbox{ is amenable } \\
            \mbox{\cite{RachInjModAndAmenGr}}
        \end{tabular} & 
        \begin{tabular}{@{}c@{}}
            $G$\mbox{ is compact } \\
            \mbox{\cite{RamsHomPropSemgroupAlg}, \S 3.5}
        \end{tabular} & 
        \begin{tabular}{@{}c@{}}
            $G$\mbox{ is amenable } \\
            \mbox{\cite{RamsHomPropSemgroupAlg}, \S 3.5},
            \mbox{\cite{RachInjModAndAmenGr}}
        \end{tabular} &
        \begin{tabular}{@{}c@{}}
            $G$\mbox{ is amenable } \\
            \mbox{\cite{RamsHomPropSemgroupAlg}, \S 3.5}
        \end{tabular} \\
    \hline
        $L_\infty(G)$ & 
        \begin{tabular}{@{}c@{}}
            $G$\mbox{ is finite } \\
            \mbox{\cite{DalPolHomolPropGrAlg}, \S 6}
        \end{tabular} & 
        \begin{tabular}{@{}c@{}}
            $G$\mbox{ is any } \\
            \mbox{\cite{DalPolHomolPropGrAlg}, \S 6}
        \end{tabular} & 
        \begin{tabular}{@{}c@{}}
            $G$\mbox{ is amenable } \\
            \mbox{\cite{DalPolHomolPropGrAlg}, \S 6}
        \end{tabular} & 
        \begin{tabular}{@{}c@{}}
            $G$\mbox{ is finite } \\
            \mbox{\cite{RamsHomPropSemgroupAlg}, \S 3.5}
        \end{tabular} & 
        \begin{tabular}{@{}c@{}}
            $G$\mbox{ is any } \\
            \mbox{\cite{RamsHomPropSemgroupAlg}, \S 3.5}
        \end{tabular} & 
        \begin{tabular}{@{}c@{}}
            $G$\mbox{ is amenable } \\ 
            \mbox{\cite{RamsHomPropSemgroupAlg}, \S 3.5}${}^{*}$
        \end{tabular} \\ 
    \hline
        $M(G)$ & 
        \begin{tabular}{@{}c@{}}
            $G$\mbox{ is discrete } \\
            \mbox{\cite{DalPolHomolPropGrAlg}, \S 6}
        \end{tabular} & 
        \begin{tabular}{@{}c@{}}
            $G$\mbox{ is amenable }\\
            \mbox{\cite{DalPolHomolPropGrAlg}, \S 6}
        \end{tabular} & 
        \begin{tabular}{@{}c@{}}
            $G$\mbox{ is any } \\
            \mbox{\cite{RamsHomPropSemgroupAlg}, \S 3.5}
        \end{tabular} & 
        \begin{tabular}{@{}c@{}}
            $G$\mbox{ is any } \\ 
            \mbox{\cite{RamsHomPropSemgroupAlg}, \S 3.5}
        \end{tabular} & 
        \begin{tabular}{@{}c@{}}
            $G$\mbox{ is amenable } \\
            \mbox{\cite{RamsHomPropSemgroupAlg}, \S 3.5}
        \end{tabular} & 
        \begin{tabular}{@{}c@{}}
            $G$\mbox{ is any } \\
            \mbox{\cite{RamsHomPropSemgroupAlg}, \S 3.5}
        \end{tabular} \\ 
    \hline 
        $C_0(G)$ & 
        \begin{tabular}{@{}c@{}}
            $G$\mbox{ is compact } \\ 
            \mbox{\cite{DalPolHomolPropGrAlg}, \S 6}
        \end{tabular} & 
        \begin{tabular}{@{}c@{}}
            $G$\mbox{ is finite } \\ 
            \mbox{\cite{DalPolHomolPropGrAlg}, \S 6}
        \end{tabular} & 
        \begin{tabular}{@{}c@{}}
            $G$\mbox{ is amenable } \\ 
            \mbox{\cite{DalPolHomolPropGrAlg}, \S 6}
        \end{tabular} & 
        \begin{tabular}{@{}c@{}}
            $G$\mbox{ is compact } \\ 
            \mbox{\cite{RamsHomPropSemgroupAlg}, \S 3.5}
        \end{tabular} & 
        \begin{tabular}{@{}c@{}}
            $G$\mbox{ is finite } \\
            \mbox{\cite{RamsHomPropSemgroupAlg}, \S 3.5}
        \end{tabular} & 
        \begin{tabular}{@{}c@{}}
            $G$\mbox{ is amenable } \\
            \mbox{\cite{RamsHomPropSemgroupAlg}, \S 3.5}
        \end{tabular} \\ 
    \hline
        $\mathbb{C}_\gamma$ & 
        \begin{tabular}{@{}c@{}}
            $G$\mbox{ is compact } \\
            \mbox{\ref{OneDimL1ModMetTopProjCharac}}
        \end{tabular} & 
        \begin{tabular}{@{}c@{}}
            $G$\mbox{ is amenable } \\
            \mbox{\ref{OneDimL1ModMetTopInjFlatCharac}}
        \end{tabular} & 
        \begin{tabular}{@{}c@{}}
            $G$\mbox{ is amenable } \\
            \mbox{\ref{OneDimL1ModMetTopInjFlatCharac}}
        \end{tabular} & 
        \begin{tabular}{@{}c@{}}
            $G$\mbox{ is compact } \\
            \mbox{\ref{OneDimL1ModMetTopProjCharac}},
            \mbox{\ref{MGMetTopProjInjFlatRedToL1}}
        \end{tabular} & 
        \begin{tabular}{@{}c@{}}
            $G$\mbox{ is amenable } \\
            \mbox{\ref{OneDimL1ModMetTopInjFlatCharac}},
            \mbox{\ref{MGMetTopProjInjFlatRedToL1}}
        \end{tabular} & 
        \begin{tabular}{@{}c@{}}
            $G$\mbox{ is amenable } \\
            \mbox{\ref{OneDimL1ModMetTopInjFlatCharac}},
            \mbox{\ref{MGMetTopProjInjFlatRedToL1}}
        \end{tabular} \\
    \hline
    \end{longtable}
\end{scriptsize}












\backmatter{}\label{Bibliography}

\lhead{\emph{Bibliography}}

\bibliographystyle{unsrtnat}

\bibliography{main}

\begin{thebibliography}{68}
\providecommand{\natexlab}[1]{#1}
\providecommand{\url}[1]{\texttt{#1}}
\expandafter\ifx\csname urlstyle\endcsname\relax
  \providecommand{\doi}[1]{doi: #1}\else
  \providecommand{\doi}{doi: \begingroup \urlstyle{rm}\Url}\fi

\bibitem[Helemskii(2006)]{HelLectAndExOnFuncAn}
A.Ya. Helemskii.
\newblock \emph{Lectures and exercises on functional analysis}, volume 233.
\newblock American Mathematical Society Providence, RI, 2006.

\bibitem[Kashiwara and Schapira(2006)]{KashivShapCatsAndSheavs}
M.~Kashiwara and P.~Schapira.
\newblock \emph{Categories and sheaves}, volume 332.
\newblock Springer, 2006.

\bibitem[Engelking(1989)]{EngelGenTop}
R.~Engelking.
\newblock \emph{General topology}.
\newblock Heldermann, 1989.

\bibitem[Bourbaki(1963)]{BourbElemMathGenTopLivIII}
N.~Bourbaki.
\newblock \emph{{\'E}l{\'e}ments de math{\'e}matique. Nouvelle {\'e}dition. Les
  Structures fondamentales de l'analyse.[Livre III.] Topologie
  g{\'e}n{\'e}rale.}
\newblock Hermann, 1963.

\bibitem[Fremlin(2003)]{FremMeasTh}
D.H. Fremlin.
\newblock \emph{Measure Theory, Vol. 1-5}.
\newblock Torres Fremlin, 2003.

\bibitem[Conway(1985)]{ConwACoursInFuncAn}
J.B. Conway.
\newblock \emph{A course in functional analysis}.
\newblock Springer-Verlag, 1985.

\bibitem[Carothers(2005)]{CarothShortCourseBanSp}
N.L. Carothers.
\newblock \emph{A short course on Banach space theory}, volume~64.
\newblock Cambridge University Press, 2005.

\bibitem[Albiac and Kalton(2006)]{KalAlbTopicsBanSpTh}
F.~Albiac and N.J. Kalton.
\newblock \emph{Topics in Banach space theory}, volume 233.
\newblock Springer, 2006.

\bibitem[Fabian and Habala(2011)]{FabHabBanSpTh}
M.~Fabian and P.~Habala.
\newblock \emph{Banach space theory}.
\newblock Springer, 2011.

\bibitem[Diestel et~al.(2008)Diestel, Fourie, and J.]{DiestMetTheoryOfTensProd}
J.~Diestel, J.H. Fourie, and Swart J.
\newblock \emph{The metric theory of tensor products: Grothendieck's
  r{\'e}sum{\'e} revisited}.
\newblock American Mathematical Society, 2008.

\bibitem[Dales et~al.(2012)Dales, Lau, and Strauss]{DalLauSecondDualOfMeasAlg}
H.G. Dales, A.T.-M. Lau, and D.~Strauss.
\newblock Second duals of measure algebras.
\newblock \emph{Dissertationes Math. (Rozprawy Mat.)}, 481:\penalty0 1--121,
  2012.

\bibitem[Defant and Floret(1992)]{DefFloTensNorOpId}
A.~Defant and K.~Floret.
\newblock \emph{Tensor norms and operator ideals}, volume 176.
\newblock Elsevier, 1992.

\bibitem[J. and A.(1968)]{LinPelAbsSumOpInLpSpAndApp}
Lindenstrauss J. and Pelczynski A.
\newblock Absolutely summing operators in $\mathscr{L}_p$-spaces and their
  applications.
\newblock \emph{Stud. Math.}, 29\penalty0 (3):\penalty0 275--326, 1968.

\bibitem[Wojtaszczyk(1996)]{WojBanSpForAnalysts}
P.~Wojtaszczyk.
\newblock \emph{Banach spaces for analysts}, volume~25.
\newblock Cambridge University Press, 1996.

\bibitem[Grothendieck(1953)]{GrothApllFaiblCompSpCK}
A.~Grothendieck.
\newblock Sur les applications lin{\'e}aires faiblement compactes d'espaces du
  type $\it{C(K)}$.
\newblock \emph{Canad. J. Math.}, 5\penalty0 (1953):\penalty0 129--173, 1953.

\bibitem[Megginson(1998)]{MeggIntroBanSpTh}
R.E. Megginson.
\newblock \emph{An introduction to Banach space theory}, volume 183.
\newblock Springer, 1998.

\bibitem[Lacey(1974)]{LaceyIsomThOfClassicBanSp}
H.E. Lacey.
\newblock \emph{The isometric theory of classical Banach spaces}.
\newblock Springer, 1974.

\bibitem[Sherman(1951)]{SherOrderInOpAlg}
S.~Sherman.
\newblock Order in operator algebras.
\newblock \emph{Amer. J. Math.}, pages 227--232, 1951.

\bibitem[Kadison(1951)]{KadOrderPropOfBoundSAOps}
R.V. Kadison.
\newblock Order properties of bounded self-adjoint operators.
\newblock \emph{Proc. Amer. Math. Soc.}, 2\penalty0 (3):\penalty0 505--510,
  1951.

\bibitem[Diestel et~al.(1995)Diestel, Jarchow, and Tonge]{DiestAbsSumOps}
J.~Diestel, H.~Jarchow, and A.~Tonge.
\newblock \emph{Absolutely summing operators}, volume~43.
\newblock Cambridge University Press, 1995.

\bibitem[Grothendieck(1955{\natexlab{a}})]{GrothProdTenTopNucl}
A.~Grothendieck.
\newblock Produits tensoriels topologiques et espaces nucl{\'e}aires.
\newblock \emph{S{\'e}minaire Bourbaki}, 2:\penalty0 193--200,
  1955{\natexlab{a}}.

\bibitem[Ryan(2013)]{RyanIntroTensNormsBanSp}
R.A. Ryan.
\newblock \emph{Introduction to tensor products of Banach spaces}.
\newblock Springer Science \& Business Media, 2013.

\bibitem[Helemskii(1993)]{HelBanLocConvAlg}
A.Ya. Helemskii.
\newblock \emph{Banach and locally convex algebras}.
\newblock Oxford University Press, 1993.

\bibitem[Helemskii(1989)]{HelHomolBanTopAlg}
A.Ya. Helemskii.
\newblock \emph{The homology of Banach and topological algebras}, volume~41.
\newblock Springer, 1989.

\bibitem[Dales(2000)]{DalBanAlgAutCont}
H.G. Dales.
\newblock \emph{Banach algebras and automatic continuity}.
\newblock Clarendon Press, 2000.

\bibitem[Kaniuth(2009)]{KaniBanAlg}
E.~Kaniuth.
\newblock \emph{A course in commutative Banach algebras}, volume 246.
\newblock Springer, 2009.

\bibitem[Blackadar(2006)]{BlackadarOpAlg}
B.~Blackadar.
\newblock Operator algebras.
\newblock \emph{Encyclopaedia Math. Sci.}, 122, 2006.

\bibitem[Murphy(2014)]{MurphyCStarAlgsAndOpTh}
G.J. Murphy.
\newblock \emph{$\it{C^*}$-algebras and operator theory}.
\newblock Academic Press, 2014.

\bibitem[White(1996)]{WhiteInjmoduAlg}
M.C. White.
\newblock Injective modules for uniform algebras.
\newblock \emph{Proc. Lond. Math. Soc.}, 3\penalty0 (1):\penalty0 155--184,
  1996.

\bibitem[Helemskii(2013)]{HelMetrFrQMod}
A.Ya. Helemskii.
\newblock Metric freeness and projectivity for classical and quantum normed
  modules.
\newblock \emph{Sb. Mat.}, 204\penalty0 (7):\penalty0 1056--1083, 2013.

\bibitem[K{\"o}the(1966)]{KotheTopProjBanSp}
G.~K{\"o}the.
\newblock Hebbare lokalkonvexe r{\"a}ume.
\newblock \emph{Math. Ann.}, 165\penalty0 (3):\penalty0 181--195, 1966.

\bibitem[Grothendieck(1955{\natexlab{b}})]{GrothMetrProjFlatBanSp}
A.~Grothendieck.
\newblock Une caract{\'e}risation vectorielle-m{\'e}trique des espaces
  $\it{L}_1$.
\newblock \emph{Canad. J. Math.}, 7:\penalty0 552--561, 1955{\natexlab{b}}.

\bibitem[Ramsden(2009)]{RamsHomPropSemgroupAlg}
P.~Ramsden.
\newblock \emph{Homological properties of semigroup algebras}.
\newblock PhD thesis, The University of Leeds, 2009.

\bibitem[Dales(2003)]{DalesIntroBanAlgOpHarmAnal}
H.G. Dales.
\newblock \emph{Introduction to Banach algebras, operators, and harmonic
  analysis}, volume~57.
\newblock Cambridge University Press, 2003.

\bibitem[Johnson and J.(2001)]{JohnLinHandbookGeomBanSp}
W.B. Johnson and Lindenstrauss J.
\newblock \emph{Handbook of the geometry of Banach spaces}, volume~2.
\newblock Elsevier, 2001.

\bibitem[Helemskii(2011)]{HelMetrFlatNorMod}
A.Ya. Helemskii.
\newblock Metric version of flatness and hahn-banach type theorems for normed
  modules over sequence algebras.
\newblock \emph{Stud. Math.}, 206\penalty0 (2):\penalty0 135--160, 2011.

\bibitem[Blecher and Ozawa(2015)]{PosAndApproxIdinBanAlg}
D.P. Blecher and N.~Ozawa.
\newblock Real positivity and approximate identities in banach algebras.
\newblock \emph{Pacific J. Math.}, 277\penalty0 (1):\penalty0 1--59, 2015.

\bibitem[Doran and Wichmann(1979)]{AppIdAndFactorInBanAlg}
R.S. Doran and J.~Wichmann.
\newblock \emph{Approximate identities and factorization in Banach modules}.
\newblock Springer, 1979.

\bibitem[Blecher and Read(2011)]{BleContrAppIdInOpAlg}
D.P. Blecher and C.J. Read.
\newblock Operator algebras with contractive approximate identities.
\newblock \emph{J. Funct. Anal.}, 261\penalty0 (1):\penalty0 188--217, 2011.

\bibitem[Graven(1979)]{GravInjProjBanMod}
A.W.M. Graven.
\newblock Injective and projective banach modules.
\newblock In \emph{Indag. Math. (Proceedings)}, volume~82, pages 253--272.
  Elsevier, 1979.

\bibitem[Bourgain(1981)]{BourgOnTheDPP}
J.~Bourgain.
\newblock On the dunford-pettis property.
\newblock \emph{Proc. Amer. Math. Soc.}, 81\penalty0 (2):\penalty0 265--272,
  1981.

\bibitem[Dales and Polyakov(2004)]{DalPolHomolPropGrAlg}
H.G. Dales and M.E. Polyakov.
\newblock Homological properties of modules over group algebras.
\newblock \emph{Proc. Lond. Math. Soc.}, 89\penalty0 (2):\penalty0 390--426,
  2004.

\bibitem[Racher(2013)]{RachInjModAndAmenGr}
G.~Racher.
\newblock Injective modules and amenable groups.
\newblock \emph{Comment. Math. Helv.}, 88\penalty0 (4):\penalty0 1023--1031,
  2013.

\bibitem[Pier(1984)]{PierAmenLCA}
J.-P. Pier.
\newblock \emph{Amenable locally compact groups}.
\newblock Wiley-Interscience, 1984.

\bibitem[Rosenthal(1966)]{RosProjTransInvSbspLpG}
H.P. Rosenthal.
\newblock \emph{Projections onto translation-invariant subspaces of $L_p(G)$}.
\newblock American Mathematical Society, 1966.

\bibitem[Blecher and Kania(2014)]{BleKanFinGenCStarAlgHilbMod}
D.P. Blecher and T.~Kania.
\newblock Finite generation in $\it{C^*}$-algebras and hilbert
  $\it{C^*}$-modules.
\newblock \emph{Stud. Math.}, 224\penalty0 (2):\penalty0 143--151, 2014.

\bibitem[Cushing and Lykova(2013)]{LykProjOfBanAndCStarAlgsOfContFld}
D.~Cushing and Z.A. Lykova.
\newblock Projectivity of banach and $\it{C^*}$-algebras of continuous fields.
\newblock \emph{Q. J. Math.}, 64\penalty0 (2):\penalty0 341--371, 2013.

\bibitem[Kaplansky(1951)]{KaplProjInBanAlg}
I.~Kaplansky.
\newblock Projections in banach algebras.
\newblock \emph{Ann. of Math.}, pages 235--249, 1951.

\bibitem[Berberian(1972)]{BerbBaerStarRings}
S.K. Berberian.
\newblock \emph{Baer $*$-Rings: Reprint of the 1972 Edition with Errata List
  and Later Developments Indicated}, volume 195.
\newblock Springer Science \& Business Media, 1972.

\bibitem[Hamana(1978)]{HamInjEnvBanMod}
M.~Hamana.
\newblock Injective envelopes of banach modules.
\newblock \emph{T{\^o}hoku Math. J. (2)}, 30\penalty0 (3):\penalty0 439--453,
  1978.

\bibitem[M.(1960)]{TakHanBanThAndJordDecomOfModMap}
Takesaki M.
\newblock On the hahn-banach type theorem and the jordan decomposition of
  module linear mapping over some operator algebras.
\newblock In \emph{Kodai Mathematical Seminar Reports}, volume~12, pages 1--10.
  Tokyo Institute of Technology, Department of Mathematics, 1960.

\bibitem[Lau et~al.(1996)Lau, Loy, and
  Willis]{LauLoyWillisAmnblOfBanAndCStarAlgsOfLCG}
A.T.-M. Lau, R.J. Loy, and G.A. Willis.
\newblock Amenability of banach and $\it{C^*}$-algebras on locally compact
  groups.
\newblock \emph{Studia Mathematica}, 119\penalty0 (2):\penalty0 161--178, 1996.

\bibitem[Gordon and Lewis(1974)]{GorLewAbsSmOpAndLocUncondStrct}
Y.~Gordon and D.R. Lewis.
\newblock Absolutely summing operators and local unconditional structures.
\newblock \emph{Acta Math.}, 133\penalty0 (1):\penalty0 27--48, 1974.

\bibitem[Brown and Ozawa(2008)]{BroOzaCStarAlgFinDimApprox}
N.P. Brown and N.~Ozawa.
\newblock \emph{$\it{C^*}$-algebras and finite-dimensional approximations},
  volume~88.
\newblock American Mathematical Society, 2008.

\bibitem[Haagerup(1983)]{HaaNucCStarAlgAmen}
U.~Haagerup.
\newblock All nuclear $\it{C^*}$-algebras are amenable.
\newblock \emph{Invent. Math.}, 74\penalty0 (2):\penalty0 305--319, 1983.

\bibitem[Runde(2006)]{RundeAmenConstFour}
V.~Runde.
\newblock The amenability constant of the fourier algebra.
\newblock \emph{Proc. Amer. Math. Soc.}, 134\penalty0 (5):\penalty0 1473--1481,
  2006.

\bibitem[Smith and Williams(1986)]{SmithDecompPropCStarAlg}
R.R. Smith and D.P. Williams.
\newblock The decomposition property for $\it{C^*}$-algebras.
\newblock \emph{J. Operator Theory}, 16:\penalty0 51--74, 1986.

\bibitem[Sakai(1964)]{SakWeakCompOpOnOpAlg}
S.~Sakai.
\newblock Weakly compact operators on operator algebras.
\newblock \emph{Pacific J. Math.}, 14\penalty0 (2):\penalty0 659--664, 1964.

\bibitem[Davidson(1996)]{DavCSatrAlgByExmpl}
K.R. Davidson.
\newblock \emph{$\it{C^*}$-algebras by example}, volume~6.
\newblock American Mathematical Society, 1996.

\bibitem[Lyubich and Shatalova(2005)]{LyubIsomEmdbFinDimLp}
Yu.I. Lyubich and O.A. Shatalova.
\newblock Isometric embeddings of finite-dimensional $\ell_p$-spaces over the
  quaternions.
\newblock \emph{St. Petersburg Math. J.}, 16\penalty0 (1):\penalty0 9--24,
  2005.

\bibitem[Takesaki(1979)]{TakThOpAlgVol1}
M.~Takesaki.
\newblock \emph{Theory of operator algebras}, volume~I.
\newblock Springer-Verlag, 1979.

\bibitem[Nemesh(2022{\natexlab{a}})]{NemANoteOnRelInjC0ModC0}
N.T. Nemesh.
\newblock A note on relatively injective $\it{C_0(S)}$-modules $\it{C_0(S)}$.
\newblock \emph{Funct. Anal. Appl.}, 55:\penalty0 298--303, 2022{\natexlab{a}}.

\bibitem[Bourbaki(2004)]{BourbElemMathIntegLivVI}
N.~Bourbaki.
\newblock Integration. i. chapters 1--6. elements of mathematics, 2004.

\bibitem[Nemesh(2022{\natexlab{b}})]{NemRelProjModLp}
N.T. Nemesh.
\newblock Relative projectivity of modules $\it{L_p}$.
\newblock \emph{Math. Notes}, 111:\penalty0 103--114, 2022{\natexlab{b}}.

\bibitem[Hewit and Ross(1979)]{HewRossAbstrHarmAnalVol1}
E.~Hewit and K.A. Ross.
\newblock \emph{Abstract harmonic analysis}, volume~1.
\newblock Springer-Verlag, 1979.

\bibitem[Wendel(1952)]{WendLeftCentrzrs}
J.G. Wendel.
\newblock Left centralizers and isomorphisms of group algebras.
\newblock \emph{Pacific J. Math.}, 2\penalty0 (3):\penalty0 251--261, 1952.

\bibitem[Rosenthal(1970)]{RosOnRelDisjFamOfMeas}
H.~Rosenthal.
\newblock On relatively disjoint families of measures, with some applications
  to banach space theory.
\newblock \emph{Stud. Math.}, 37\penalty0 (1):\penalty0 13--36, 1970.

\bibitem[Lau and Losert(1990)]{LauMingComplSubspInLInfOfG}
A.T.-M. Lau and V.~Losert.
\newblock Complementation of certain subspaces of $\it{L_\infty(G)}$ of a
  locally compact group.
\newblock \emph{Pacific J. Math.}, 141\penalty0 (2):\penalty0 295--310, 1990.

\end{thebibliography}

\end{document}